\setheadfoot{\onelineskip}{2\onelineskip} 
\crefname{axiom}{Axiom}{Axioms}
\setlist{nosep}
\tikzset{
   oriented WD/.style={
      every to/.style={out=0,in=180,draw},
      label/.style={
         font=\everymath\expandafter{\the\everymath\scriptstyle},
         inner sep=0pt,
         node distance=2pt and -2pt},
      semithick,
      node distance=1 and 1,
      decoration={markings, mark=at position .5 with {\arrow{stealth};}},
      ar/.style={postaction={decorate}},
      execute at begin picture={\tikzset{
         x=\bbx, y=\bby,
         every fit/.style={inner xsep=\bbx, inner ysep=\bby}}}
      },
   bbx/.store in=\bbx,
   bbx = 1.5cm,
   bby/.store in=\bby,
   bby = 1.75ex,
   bb port sep/.store in=\bbportsep,
   bb port sep=2,
   bb port length/.store in=\bbportlen,
   bb port length=4pt,
   bb min width/.store in=\bbminwidth,
   bb min width=1cm,
   bb rounded corners/.store in=\bbcorners,
   bb rounded corners=2pt,
   bb small/.style={bb port sep=1, bb port length=2.5pt, bbx=.4cm, bb min width=.4cm, bby=.7ex},
   bb/.code 2 args={
      \pgfmathsetlengthmacro{\bbheight}{\bbportsep * (max(#1,#2)+1) * \bby}
      \pgfkeysalso{draw,minimum height=\bbheight,minimum width=\bbminwidth,outer sep=0pt,
         rounded corners=\bbcorners,thick,
         prefix after command={\pgfextra{\let\fixname\tikzlastnode}},
         append after command={\pgfextra{\draw
            \ifnum #1=0{} \else foreach \i in {1,...,#1} {
               ($(\fixname.north west)!{\i/(#1+1)}!(\fixname.south west)$) +(-\bbportlen,0) 
coordinate (\fixname_in\i) -- +(\bbportlen,0) coordinate (\fixname_in\i')}\fi 
            \ifnum #2=0{} \else foreach \i in {1,...,#2} {
               ($(\fixname.north east)!{\i/(#2+1)}!(\fixname.south east)$) +(-\bbportlen,0) 
coordinate (\fixname_out\i') -- +(\bbportlen,0) coordinate (\fixname_out\i)}\fi;
         }}}
   },
   bb name/.style={append after command={\pgfextra{\node[anchor=north] at (\fixname.north) {#1};}}}
}
\tikzset{
	unoriented WD/.style={
		every to/.style={draw},
		shorten <=-\portlen, shorten >=-\portlen,
		label distance=-2pt,
		thick,
		node distance=\spacing,
		execute at begin picture={\tikzset{
			x=\spacing, y=\spacing}}
		},
	pack size/.store in=\psize,
	pack size = 8pt,
	spacing/.store in=\spacing,
	spacing = \psize,
	link size/.store in=\lsize,
	link size = 2pt,
	port len/.store in=\portlen,
	port len = \lsize,
	pack color/.store in=\pcolor,
	pack color=blue,
	surround sep/.store in=\ssep,
	surround sep=\psize,
	link/.style={
		circle,
		anchor=center,
		draw=black,
		fill=black,
		inner sep=0pt,
		minimum size=\lsize
	},
	pack/.style={
		circle,
		anchor=center,
		draw = \pcolor!50!black,
		fill = \pcolor!20,
		inner sep = .25*\psize,
		minimum size = \psize
	},
	outer pack/.style={
		ellipse,
		anchor=center,
		draw,
		inner sep=\ssep,
		color=\pcolor!50!black,
	},
	intermediate pack/.style={
		ellipse,
		anchor=center,
		dashed,
		draw,
		inner sep=\ssep,
		color=\pcolor!50!black,
	},
}
\newcommand{\mypic}[4]{
	\node (lend) {};
	\node [right=#1 of lend] (L) {#4};
	\node [right=#2 of L] (R) {#4};
	\node [right=#1 of R] (rend) {};
	\node at ($(L)!.5!(R)+(0,#2/2)$) (M) {};
	\draw[thick, black, #3] (L.center) -- (M.center) -- (R.center);
	\fill[fill=black!15] (L.center) -- (M.center) -- (R.center) -- cycle;
	\draw[thick, <->] (lend) -- (rend);
}
\theoremstyle{plain}
\newtheorem{theorem}{Theorem}[chapter] 
\newtheorem{proposition}[theorem]{Proposition}
\newtheorem{corollary}[theorem]{Corollary}
\newtheorem{lemma}[theorem]{Lemma}
\theoremstyle{definition}
\newtheorem{definition}[theorem]{Definition}
\newtheorem{notation}[theorem]{Notation}
\newtheorem{axiom}{Axiom}
\newtheorem*{axiom*}{Axiom}
\theoremstyle{remark}
\newtheorem{example}[theorem]{Example}
\newtheorem{remark}[theorem]{Remark}
\newtheorem{warning}[theorem]{Warning}
\renewcommand{\ss}{\subseteq}
\newcommand{\const}[1]{\mathtt{#1}}
\newcommand{\Set}[1]{\mathrm{#1}}
\newcommand{\cat}[1]{\mathcal{#1}}
\newcommand{\Cat}[1]{\mathbf{#1}}
\newcommand{\fun}[1]{\mathit{#1}}
\newcommand{\Fun}[1]{\mathsf{#1}}
\DeclarePairedDelimiter{\church}{\llbracket}{\rrbracket}
\DeclarePairedDelimiter{\Church}{\llbracket}{\rrbracket}
\DeclarePairedDelimiter{\abs}{\lvert}{\rvert}
\newcommand{\mach}[1]{\mathcal{#1}}
\DeclareMathOperator{\id}{id}
\DeclareMathOperator{\Hom}{Hom}
\DeclareMathOperator*{\colim}{colim}
\newcommand{\cocolon}{:\!}
\newcommand{\iso}{\cong}
\newcommand{\To}[1]{\xrightarrow{#1}}
\newcommand{\from}{\leftarrow}
\newcommand{\From}[1]{\xleftarrow{#1}}
\newcommand{\inj}{\rightarrowtail}
\newcommand{\wavyto}{\rightsquigarrow}
\newcommand{\tn}[1]{\textnormal{#1}}
\newcommand{\ol}[1]{\overline{#1}}
\newcommand{\wt}[1]{\widetilde{#1}}
\newcommand{\ubar}[1]{\underaccent{\bar}{#1}}
\newcommand{\hs}{\hspace{1.1pt}}
\newcommand{\delnext}[1]{}
\newcommand{\internal}[1]{\raisebox{-.03ex}{$\mathbbmtt{#1}$}}
\newcommand{\NN}{\mathbb{N}}
\newcommand{\QQ}{\mathbb{Q}}
\newcommand{\ZZ}{\mathbb{Z}}
\newcommand{\RR}{\mathbb{R}}
\newcommand{\RRub}{\RR^\infty}
\newcommand{\RRubhalf}{\RRub_{\geq0}}
\newcommand{\IR}{\mathbb{I\hs R}}
\newcommand{\II}{\bar{\ubar{\RR}}}
\newcommand{\LR}{\ubar{\RR}}
\newcommand{\UR}{\bar{\RR}}
\newcommand{\LRub}{\ubar{\RR}^\infty}
\newcommand{\URub}{\bar{\RR}^\infty}
\newcommand{\IRub}[1][\delnext]{\IR^\infty}
\newcommand{\IIub}{\II^\infty}
\newcommand{\IRpre}[1][\delnext]{\IR\hs_{#1,\tn{pre}}}
\newcommand{\IIpre}{\II_{\tn{pre}}}
\newcommand{\LRpre}{\ubar{\RR}_{\tn{pre}}}
\newcommand{\URpre}{\bar{\RR}_{\tn{pre}}}
\newcommand{\LRplus}{\ubar{\RR}^{+}}
\newcommand{\URplus}{\bar{\RR}^{+}}
\newcommand{\IIubpre}{\bar{\ubar{\RR}}^\infty_{\tn{pre}}}
\newcommand{\IRubpre}[1][\delnext]{\IR\hs^\infty_{#1,\tn{pre}}}
\newcommand{\LRubpre}{\ubar{\RR}^\infty_{\tn{pre}}}
\newcommand{\URubpre}{\bar{\RR}^\infty_{\tn{pre}}}
\newcommand{\LRpluspre}{\ubar{\RR}^{+}_{\tn{pre}}}
\newcommand{\URpluspre}{\bar{\RR}^{+}_{\tn{pre}}}
\newcommand{\tNN}{\internal{N}\hs}
\newcommand{\tQQ}{\internal{Q}\hs}
\newcommand{\tZZ}{\internal{Z}\hs}
\newcommand{\tRR}{\internal{R}\hs}
\newcommand{\tIR}{\internal{I\hs R}\hs}
\newcommand{\tII}{\bar{\ubar{\tRR}}\hs}
\newcommand{\tLR}{\ubar{\tRR}\hs}
\newcommand{\tUR}{\bar{\tRR}\hs}
\newcommand{\tRRub}{\tRR^\infty}
\newcommand{\tIRub}{\internal{I\hs R}^\infty}
\newcommand{\tLRub}{\ubar{\tRR}^{\infty}}
\newcommand{\tURub}{\bar{\tRR}^{\infty}}
\newcommand{\tIIub}{\bar{\ubar{\tRR}}^{\infty}}
\newcommand{\ctRR}{\mathrm{c}\tRR}
\newcommand{\ctIR}{\mathrm{c}\tIR}
\newcommand{\ctII}{\mathrm{c}\tII}
\newcommand{\ctLR}{\mathrm{c}\tLR}
\newcommand{\ctUR}{\mathrm{c}\tUR}
\newcommand{\ctRRub}{\mathrm{c}\tRRub}
\newcommand{\ctIRub}{\mathrm{c}\tIRub}
\newcommand{\ctIIub}{\mathrm{c}\tIIub}
\newcommand{\ctLRub}{\mathrm{c}\tLRub}
\newcommand{\ctURub}{\mathrm{c}\tURub}
\newcommand{\tIRpre}[1][\delnext]{\tIR\hs_{#1,\tn{pre}}}
\newcommand{\tIIpre}{\tII_{\tn{pre}}}
\newcommand{\tLRpre}{\ubar{\tRR}_{\tn{pre}}}
\newcommand{\tURpre}{\bar{\tRR}_{\tn{pre}}}
\newcommand{\tIIubpre}{\bar{\ubar{\tRR}}^\infty_{\tn{pre}}}
\newcommand{\tIRubpre}[1][\delnext]{\tIR\hs^\infty_{#1,\tn{pre}}}
\newcommand{\tLRubpre}[1][\delnext]{\ubar{\tRR}^\infty_{#1,\tn{pre}}}
\newcommand{\tURubpre}{\bar{\tRR}^\infty_{\tn{pre}}}
\newcommand{\IRinvInline}{\IR/\rhd}
\newcommand{\IRinv}{\IR_{/\rhd}}
\newcommand{\tConst}{\mathtt{C}}
\newcommand{\ShFun}[1]{\mathrm{Fn}(#1)}
\newcommand{\Ind}[1]{\Fun{Ind}\tn{-}#1}
\newcommand{\Psh}[1]{\Fun{Psh}(#1)}
\newcommand{\Shv}[1]{\Fun{Shv}(#1)}
\newcommand{\Cont}[1]{\Fun{Cont}(#1)}
\newcommand{\Tw}[1]{\Fun{Tw}(#1)}
\newcommand{\B}{\Fun{B}}
\newcommand{\AD}{\mathrm{AD}}
\newcommand{\Prop}{\const{Prop}}
\newcommand{\Time}{\const{Time}}
\newcommand{\unit}{\const{1}}
\newcommand{\Int}{\Cat{Int}}
\renewcommand{\Top}{\Cat{Top}}
\newcommand{\Predom}{\Cat{Predom}}
\newcommand{\unitspeed}{\mathrm{unit\_speed\ }}
\newcommand{\Bool}{2}
\newcommand{\BaseTopos}{\cat{B}}
\newcommand{\BaseSite}{S_{\IRinvInline}}
\newcommand{\op}{^\tn{op}}
\newcommand{\el}[1]{\tn{el}#1}
\newcommand{\binmeet}{\curlywedge}
\newcommand{\binjoin}{\curlyvee}
\newcommand{\join}{\mathbin{\star}}
\newcommand{\asSh}{\Fun{sh}}
\newcommand{\bang}{\,!}
\newcommand{\subint}[2]{\langle #1,#2\rangle}
\newcommand{\restrict}[2]{#1\big|\hspace{0in}_{#2}}
\newcommand{\restrictsm}[2]{#1|\hspace{0in}_{#2}}
\newcommand{\basicCover}{\rightslice}
\newcommand{\sqss}{\sqsubseteq}
\newcommand{\specupclose}{{\uparrow}}
\newcommand{\specdownclose}{{\downarrow}}
\newcommand{\upclose}{{\rotatebox[origin=c]{90}{$\twoheadrightarrow$}}}
\newcommand{\downclose}{{\rotatebox[origin=c]{90}{$\twoheadleftarrow$}}}
\newcommand{\down}{\mathord{\downarrow}}
\newcommand{\up}{\mathord{\uparrow}}
\newcommand{\specord}{\leqslant}
\newcommand{\upspecord}{\eqslantless}
\newcommand{\PoShv}[1]{\Fun{Shv}_0(#1)}
\newcommand{\Id}{\mathrm{Id}}
\newcommand{\RId}{\mathrm{RId}}
\newcommand{\cRId}{\mathrm{cRId}}
\newcommand{\RFilt}{\Omega_{\mathrm{Filt}}}
\newcommand{\Opens}{\Omega}
\renewcommand{\U}{\mathcal{U}}
\newcommand{\amap}[3]{#2(#1,#3)}
\newcommand{\apart}{\,\#\,}
\newcommand{\WB}[2]{\langle\!\langle#1,#2\rangle\!\rangle}
\newcommand{\wb}{W}
\newcommand{\ddt}{\frac{d}{dt}}
\newcommand{\imp}{\Rightarrow}
\renewcommand{\iff}{\Leftrightarrow}
\newcommand{\Pointwise}{\pi}
\newcommand{\AtSymbol}{{@}}
\newcommand{\SeeSymbol}{{\down}}  
\newcommand{\InSymbol}{{\upclose}}
\newcommand{\At}[2][]{\AtSymbol^{#1}_{#2}}
\newcommand{\See}[2][]{\SeeSymbol^{#1}_{#2}}
\newcommand{\In}[2][]{\InSymbol^{#1}_{#2}}
\newcommand{\AtInline}[1]{@{#1}}
\newcommand{\SeeInline}[1]{\SeeSymbol{#1}}
\newcommand{\Until}{\mathbin{\mathcal{U}}}
\newcommand{\Since}{\mathbin{\mathcal{S}}}
\newcommand{\Release}{\mathbin{\mathcal{R}}}
\newcommand{\vertss}{\rotatebox{270}{\!$\ss$}}
\newcommand{\hybrid}{\mathrm{Hyb}}
\newcommand{\side}[1]{$\scriptstyle #1$}
\newcommand{\fv}{\mathrm{fv}}
\newcommand{\Prem}{\;\mathit{Prem}}
\newcommand{\erase}[1]{}
\begin{document}

\title{\vspace{.5in}\HUGE Temporal Type Theory\\\bigskip\LARGE
A topos-theoretic approach to systems and behavior\\\vspace{2.5in}\textit{by}\vspace{-.1in}}

\author{
\LARGE \;Patrick Schultz\hspace{.5in} David I. Spivak
}
\date{}

\maketitle
\thispagestyle{empty}

\clearpage
\tableofcontents*

\chapter{Introduction}\label{sec:intro}

\section{Overview}
In this book we provide a new mathematical formalism for proving properties about the behavior of
systems. A system is a collection of interacting components, each of which may have some internal
implementation that is reflected in some external behavior. This external behavior is what other
neighboring systems interact with, through a shared environment. Properties of a behavior can be
established over a given duration (sometimes called frame or window)\index{time window} of time, and
we propose a mathematical language for working with these behavioral properties.

\subsection{Behavior types}

A \emph{behavior type}\index{behavior type} $B$ is the information of a set of ``different things
that can occur'' over any length of time. For example, a movie is a behavior type: given any
duration of time, there is a set of possible snippets of the movie that have said duration, and the
snippets of the movie are what we would call its behaviors. In fact, ``all possible movies'' is another
a behavior type, because to any duration of time, we could associate the set of all 24-frame/second
sequences of photographs together with an overlay of sound. All possible music, all possible fights
or boxing matches, all behaviors that an airplane or set of airplanes are capable of, etc.---each of
these can be modeled as a behavior type.

We give a category-theoretic description of behavior types, using the language of
sheaves.\index{sheaf} That is, to every time window\index{time!window}, say of length $\ell$, a
behavior type $B$ is responsible for providing a set $B(\ell)$ of all behaviors, called
\emph{length-$\ell$ sections of $B$}, that can possibly occur over this time window. And for every
inclusion of one time window into another, the behavior type $B$ is responsible for providing a
\emph{restriction map}\index{restriction} that restricts each long behavior to the shorter time
window. We say a behavior type \emph{has composable sections}\index{behavior!composable} when, for any two
overlapping time window, behaviors over the first that match behaviors over the second can be
\emph{glued} to form a behavior over the union time window.

Although many behavior types have composable sections, not all do. For example, the sheaf of
monotonic functions to $\RR$ has composable sections, because if for every small interval
$[t,t+\epsilon]$, we have $f(t)\leq f(t+\epsilon)$ then for every interval $[t_1,t_2]$ whatsoever,
we also have $f(t_1)\leq f(t_2)$. In contrast, the following sort of ``roughly monotonic'' behaviors are not composable\index{behavior!non-composable!relaxed monotonicity}\index{behavior!non-composable}:
\begin{equation}\label{eqn:relaxed_monotonicity}
  \forall t_1\forall t_2\ldotp(t_1+5\leq t_2)\imp f(t_1)\leq f(t_2).
\end{equation}
This formula says ``if you wait at least 5 seconds between taking samples, you will find $f$ to be
increasing.'' Such a property cannot be determined except on intervals of length at least 5. Any behavior---
if tested over a very short time window---will comply with this property, but if one glues two compliant
behaviors that agree along an overlap, the result may not be compliant. This example has the property that
it satisfies a composition property for pairs of intervals whose overlap is large enough. An example of a behavior type without any sort of composability is what we might call ``functions of bounded difference''
\index{behavior!non-composable!bounded difference}, e.g. satisfying
\begin{equation}\label{eqn:bounded_difference}
  \forall t_1\forall t_2\ldotp |f(t_1)-f(t_2)|\leq 5.
\end{equation}

\bigskip

Composable sections or not, however, all the behavior types we consider will be \emph{sheaves} in an
appropriate setting, and morphisms of sheaves allow us to connect one behavior type to another. For
example, there is a morphism from the sheaf corresponding to a movie to the sheaf corresponding to its sound-track. One
might formalize the system-environment notion as a morphism $S\to E$ and say two systems $S_1$ and $S_2$ share an
environment when given morphisms $S_1\to E\from S_2$. Thus it is through sheaf morphisms that different
behaviors can interact.\index{interaction} Two singers may be hearing the same drum beat, or two
airplanes may share the same communication channel.

The behavior types and their morphisms form a category $\BaseTopos$ of sheaves, and as such, a
\emph{(Grothendieck) topos}\index{topos}\index{Grothendieck topos|see {topos}}. Toposes are
particularly nice categories,\index{category!topos as} in that they enjoy many properties that make
them in some sense similar to the category of sets. One way to make this precise is to say that toposes satisfy the Giraud axioms
\cite{Artin.Grothendieck.Verdier:1971a}. But to more clearly draw the analogy with $\Cat{Set}$, let
us suffice to say that toposes are regular categories, and that they have limits and colimits,
effective equivalence relations, exponential objects, and a subobject classifier $\Omega$.

The subobject classifier\index{subobject classifier} in particular is an important object when
thinking about properties, e.g.\ properties of behavior. For $\Cat{Set}$, the subobject classifier is
$\Omega_\Cat{Set}=\{\tn{True, False}\}$. Many properties take the form of ``yes/no'' questions; such
questions distinguish elements of a set according to a given property. That is, for any set $S$ and
property $P$, there is a corresponding yes/no question $S\to\{\tn{True, False}\}$ that classifies
the subset of $P$-satisfying elements in $S$. For example the property of an integer being odd is
classified by a function $\texttt{is\_odd}\colon\ZZ\to\{\tn{True, False}\}$.

An analogous fact holds for behavior types. There is a behavior type $\Omega_\BaseTopos$ which
classifies behavior subtypes. Rather than ``yes/no'' questions, properties of behavior types are
``compliant when'' questions. For example, the property of a continuous function $f\colon\RR\to\RR$
being greater than 0 is classified in terms of the largest open subset $\{x\in\RR\mid f(x)>0\}$ on
which it is compliant.\index{behavior!property of} Similarly, the property of $f$ satisfying
\cref{eqn:bounded_difference} is classified by the collection of all open intervals over which $f$
has upper- and lower-bounds less than 5 apart.

Every topos has an associated internal language\index{language!internal|see
{internal}}\index{internal!language} and higher-order logic.\index{logic!higher-order} The symbols
of the logic are the typical ones, $\top$, $\bot$, $\wedge$, $\vee$, $\imp$, $\iff$, $\neg$, $\forall$, and
$\exists$, and the sort of reasoning steps allowed are also typical constructive
logic.\index{logic!constructive} For example, given $P$ and $P\imp (Q\wedge Q')$, one can derive
$Q$. However, our topos $\BaseTopos$ is not boolean, so $P\vee\neg P$ does not hold in general, nor does
$(\neg\forall x\ldotp Px)\imp\exists x\ldotp\neg Px$. Again, the reasoning is \emph{constructive},
meaning that a proof of a statement gives a witness to its truth: to prove an existential, one must
provide a witness for it and similarly for a disjunction.

One need not imagine behavior types as sheaves when doing this logic---it is all purely formal---and
yet anything that can be proven within this logic reflects a specific truth about behavior types as sheaves. The separate-but-connected relationship between the logic and its semantics allows us to work
with a highly abstract and complex topos using standard constructive logic, reasoning as though with
sets and their elements. Thus for example, one could prove sound theorems about $\BaseTopos$ in an
undergraduate course on formal logic, without ever discussing categories, let alone toposes.

\subsection{Goal: to prove properties of systems}

Our goal is to understand what can possibly occur when multiple components---each with their own
type of behavior---interact in any given way. If we know something about the behavior of each
component---if each component has a \emph{behavior-contract} or guarantee that specifies something about how it will
behave among all its possibilities---then we may be able to guarantee something about how the entire system
will behave. This is relevant to industries in which different suppliers provide different parts, each with
its own behavioral guarantee. In such a setting, one still wants to draw conclusions about the system formed
by arranging these components so that they interact in some specified way.

For example, a thermostat, a furnace, and a room comprise three components, each of which may be
guaranteed to satisfy a certain behavior contract:\index{behavior!contract|see {behavior
contract}}\index{behavior contract}\index{behavior!property of} the thermostat promises to sense
temperature of the room and promises to send a signal to the furnace if the temperature is too low.
The furnace promises to heat the room air at a certain rate when given the signal, and the room
promises to lose heat at some maximum rate. We may want to prove a behavior contract for the whole
thermostat-furnace-room system, e.g.\ that the temperature will remain within certain bounds. In
this book we provide a formal system---a temporal type theory and a higher-order temporal logic\index{logic!temporal!higher-order}---in which such proofs can be carried out.

In other words, we will provide a language for proving properties of interconnected dynamical
systems,\index{dynamical systems}\index{dynamical systems!interconnection of} broadly construed.
Dynamical systems are generally considered to come in three flavors: continuous, discrete, and
hybrid\index{dynamical system!continuous}\index{dynamical system!discrete}\index{dynamical
system!hybrid}, according to how time is being modeled. In the above example, the temperature of the
room could be modeled by a continuous dynamical system, the thermostat by a discrete dynamical
system, and the whole setup is a hybrid system. However our notion of behavior type is much more
general, serving as a sort of ``big tent'' into which other conceivable notions of behavior can be
translated and subsequently compared. For example, there is no differential
equation\index{differential equation} whose solution set is the roughly-monotonic curves from
\cref{eqn:relaxed_monotonicity}, because two such trajectories $f$ can ``cross'', but these
trajectories constitute a perfectly good behavior type in $\BaseTopos$. Behavior
types also include infinite-dimensional systems, and as such may be a good language for considering
adaptive control \cite{Aastrom.Wittenmark:2013a}, though we do not pursue that here.

There are already many different temporal logics---including linear temporal, (metric,
Halpern-Shoham) interval temporal, and signal temporal
\cite{Rescher.Urquhart:2012a,Alur.Feder.Henzinger:1996a,Maler.Nickovic:2004a,Halpern.Shoham:1991a}---for
describing behavior.\index{logic!temporal} Some of these generalize others, but none is most
general. While we do not prove it here, we believe and will give evidence for the assertion that our temporal logic can serve as a ``big
tent'', in which all other such logics can embed.

Some of these temporal logics have very powerful
model checkers that can produce a proof or counterexample in finite (though often exponential or
doubly-exponential) time, and we make no such claim: our logic is wildly undecidable. However, as a
big tent, our formalism can embed these proofs from other logics and integrate them as a last step
in a process.%
\footnote{It may be objected that these temporal logics are often boolean whereas our topos $\BaseTopos$ is
not; in this case one would simply embed the statements into the boolean subtopos
$\BaseTopos_{\neg\neg}\ss\BaseTopos$.\index{subtopos!boolean}}
Our work may also be useful to those who simply want a topos-theoretic and category-theory-friendly
approach to understanding behavior.

\section{Behavior types as sheaves}\label{sec:choices}

While the introduction has been informal so far, the work in this book is fairly technical. We
assume the reader has a good understanding of category theory, including basic familiarity with
Grothendieck toposes and internal languages; see \cite{Johnstone:2002a,MacLane.Moerdijk:1992a}. Some
familiarity with domain theory \cite{Gierz.Keimel.Lawson.Mislove.Scott:2003a}\index{domain theory},
and of frames and locales \cite{Picado.Pultr:2012a}\index{frame}\index{locale} would be useful but
is not necessary.

Consider the usual topological space of real numbers, $\RR$,\index{real numbers!topological space
of} which has a basis consisting of open intervals $(r,s)$.\index{interval!open} One might be
tempted to model a behavior-type as a sheaf $S$ on $\RR$,\index{sheaf} but we did not make this
choice for several reasons that we will soon explain. Nonetheless, sheaves on $\RR$ will be important for our work, and they give a
good starting point for the discussion, so let's briefly consider the structures and properties of a
sheaf $B$ on $\RR$.

For any open interval $(r,s)\ss\RR$, there is a set $B(r,s)$ whose elements we call
behaviors\index{behavior} of type $B$ over the interval. If $r\leq r'\leq s'\leq s$, there is a
restriction map $B(r,s)\to B(r',s')$ expressing how a system behavior over the longer interval
restricts to one over the shorter interval. Given two overlapping open intervals, say $(r_1,r_3)$
and $(r_2,r_4)$ where $r_1<r_2<r_3<r_4$, and given behaviors $b'\in B(r_1,r_3)$ and $b''\in
B(r_2,r_4)$ such that $\restrict{b'}{(r_2,r_3)}=\restrict{b''}{(r_2,r_3)}$, there exists a unique
behavior $b\in M(r_1,r_4)$ extending both: $\restrict{b}{(r_1,r_3)}=b'$ and
$\restrict{b}{(r_2,r_4)}=b''$. This sort of ``composition'' gluing\index{gluing!composition}
condition can be succinctly written as a finite limit:
\begin{equation}\label{eqn:behavior_composition}
  B(r_1,r_4)\cong B(r_1,r_3)\times_{B(r_2,r_3)}B(r_2,r_4)
\end{equation}
A gluing condition---expressed as the limit of a certain diagram---holds more generally for any open
covering of an open interval by other open intervals in $\RR$. However, it suffices to add just one more kind, which we might
call the ``continuity'' gluing condition\index{gluing!continuity}; namely those for the following sort of
telescoping inclusion:
\begin{equation}\label{eqn:behavior_continuity}
  B(r,s)\cong\lim_{r<r'<s'<s}B(r',s').
\end{equation}

These two gluing conditions---composition gluing and continuity gluing---on a functor $B$ allow us to identify it with a sheaf on $\RR$. However, we do
not take sheaves on $\RR$ as our model of behavior types for two reasons: non-composability of
behaviors and translation invariance of behavior types. We explain these next.

\subsection{Non-composable behaviors}\label{sec:noncomposable_behaviors}

One usually imagines behaviors as composable\index{behavior!composable}: given two behaviors that
match on an overlapping subinterval, they can be joined to form one behavior. This was expressed in
condition \eqref{eqn:behavior_composition}, but it is not always what we want. For example, if we glue
together two roughly monotonic curves as in \cref{eqn:relaxed_monotonicity}, the result may fail to
be roughly montonic.\index{behavior!non-composable}

Another way to see this is in terms of the internal logic \index{internal!logic} and its semantics.
\index{semantics} In any sheaf topos---though we will speak in terms of topological spaces---when a
proposition is true over some open set, it must be true over every open subset. Such a property in a
temporal setting is often called a \emph{safety property}\index{safety property}: if a system is to
be compliant over an interval of time, it must be compliant over every
sub-interval.\index{behavior!compliant} The semantics of safety properties is thus that of \emph{falsifiability}: a system
satisfies a proposition over an interval if and only if there is no subinterval on which the
proposition is false. For example, a property like ``if event $a$ happens, then event $b$ will
happen 10 seconds later'' is impossible to falsify on intervals of length less than 10 seconds, so
we must consider it to be vacuously satisfied on such short intervals. Again, if an interval is too short to falsify a proposition, the proposition is deemed true on that interval.

Now we can more clearly see why composition-gluing condition creates a problem. It says that if a proposition is true on
each element of open cover, then it is true on the union. In $\RR$, we have $(0,4)\cup(2,6)=(0,6)$,
so if a proposition is unfalsifiable on $(0,4)$ and on $(2,6)$ we must call it true on $(0,6)$.
Again consider the formula \cref{eqn:relaxed_monotonicity}. It gives an example of a proposition
that is unfalsifiable on intervals whose length is strictly less than 5, and hence would be true on
$(0,4)$ and $(2,6)$. Thus it would be forced by the composition-gluing condition to be true on
$(0,6)$---and, by induction, on an interval of any length---which is not the semantics we want.

The way we solve the above problem is by enlarging our topological space. Consider the
(non-Hausdorff) topological space $\IR$, called the \emph{interval domain}\index{interval domain},
whose points are compact intervals $[d,u]\ss\RR$. As with $\RR$, the space $\IR$ has a basis of open
sets indexed by pairs of real numbers $r<s$. Namely, for any $r<s$, the corresponding basic open is
$(r,s)\coloneqq\{[d,u]\mid r<d\leq u<s\}\ss\IR$. Then a subset $U\ss\IR$ is open if and only if it
can be expressed as a union of these basic opens.

Note that $(0,4)\cup(2,6)\neq(0,6)$ in this topology, because the left-hand side does not contain the
point (compact interval) $[1,5]$. We will see that $\IR$ is a domain, called the \emph{interval
domain}, meaning it is a topological space---the topology defined in the previous paragraph is called the Scott
topology\index{Scott topology|see {topology}}\index{topology!Scott} for the domain---and its points
have a natural poset structure that completely determines its frame\index{frame} of open sets.
Likewise, $\IR$ is a sober topological space,\index{topological space!sober} so its frame of open
sets completely determines its poset of points.

We use $\IR$ rather than $\RR$ as our main space of interest because it allows us to capture
non-composable behaviors.

\subsection{Translation-invariant behavior types}\label{sec:translation_invariance}

The second reason for not using $\RR$ persists even when we pass to $\IR$, but is simpler to
explain. Namely, both $\RR$ and $\IR$ come equipped with a specific reference point in time, the
origin. An important assumption in science is that experiments run today are still valid tomorrow,
all other things being equal. The concepts being tested may depend on durations of time, but they
are independent of the ``date'', say, the number of years since the big bang or the birth of an
influential person. Some behaviors, such as holidays, are dependent on date, but we regard such time
dependence\index{behavior type!temporal} as an additional feature---something to made
explicit---rather than as the norm.

Another reason not to use $\IR$ itself is important to be able to compare behaviors that occur at different times. But the open sets $(0,1)$ and $(5,6)$ have nothing in common in $\IR$, unless one includes the obvious translation action of $\RR$ on $\IR$\index{action!$\RR$}. Without building this action into the structure, there is no
connection between the behaviors that can occur over one interval and those that can
occur over another, and hence one cannot make any comparison. For example, it is awkward to express the concept of \emph{delay}---namely that one behavior is ``the same as'' another, despite the fact that they occur over different intervals---in the type theory of $\IR$.\index{delay}

To remedy this second problem, we work in a translation-invariant setting: A behavior that can occur over one interval could also occur over any other. In this setting, the time-line itself becomes a behavior type, we call $\Time$. Over a given interval of length $\ell$, a behavior of type $\Time$ can be regarded as the behavior of a perfect clock: it starts at some time $t_0$ and ends at $t_0+\ell$. If a specific behavior $b$ is dependent on a choice of clock time, then should will be explicit in the sense that $t:\Time$ will occur in the formula for $b$.

To get a bit more technical, our base topos $\BaseTopos$\index{base topos $\BaseTopos$|see {topos, of
behavior types}} is a quotient of $\Shv{\IR}$, i.e.\ there is a geometric surjection
$p_*\colon\Shv{\IR}\to\BaseTopos$. One characterization of geometric surjections is that the inverse
image part, in this case $p^*\colon\BaseTopos\to\Shv{\IR}$, is faithful. Another is that
$\BaseTopos$ is the category of coalgebras for the left-exact comonad $p^*p_*$ on $\Shv{\IR}$.
Intuitively, objects in $\BaseTopos$ can be thought of as sheaves $X\in\Shv{\IR}$ that are
\emph{translation-invariant}, i.e.\ for which one has coherent isomorphisms $X(a,b)\cong X(a+r,b+r)$
for every open interval $(a,b)$ and real $r\in\RR$. This is the translation action of $\RR$ on
$\IR$, mentioned above,\index{translation} and we denote by $\IRinv$ the localization of
$\IR$ at the collection of translation maps. $\IRinv$ is no longer a space, i.e.\ it is a category not a poset, but it has a natural
site structure and $\BaseTopos\coloneqq\Shv{\IRinv}$ is the corresponding topos of sheaves.

In
fact, $\BaseTopos$ is an \'etendue\index{etendue@\'{e}tendue|see {topos,
\'{e}tendu}}\index{topos!\'{e}tendue}, meaning there is a specific object, namely $\Time$, such that
the slice topos $\BaseTopos/\Time\cong\Shv{\IR}$ is localic.\index{topos!localic}
\[
\begin{tikzcd}[column sep=large]
  \Shv{\IR}\;\cong\;\BaseTopos/\Time \ar[r,shift left=7pt, "p_*"] \ar[r, shift right=7pt, "p_!"'] &
  \BaseTopos\;\cong\;\Shv{\IRinv} \ar[l, "p^*" description]
\end{tikzcd}
\]
The word \'etendue means ``extent''\index{extent} and indeed objects in $\IRinv$ are extents---or
durations---of time, over which behaviors can occur.

\subsection{Four relevant toposes}\label{sec:four_relevant_toposes}

All four toposes in the square below play a role in this work:\index{geometric morphism}\index{topos}
\[
\begin{tikzcd}
  \Shv{\RR}\ar[r,tail]\ar[d,two heads]&\Shv{\IR}\ar[d,two heads, near start, "p_*"]\\
  \BaseTopos_\pi\ar[r,tail]&\BaseTopos
\end{tikzcd}
\]
The toposes in the top row comprise temporal sheaves---those over a specified time-line---whereas
those in the bottom row comprise behaviors that are translation-invariant. Sheaves in the left
column have composable behaviors, whereas sheaves in the right column can include more general,
non-composable behaviors.\index{behavior!composable}\index{behavior!non-composable}\index{behavior
type!translation-invariant}\index{behavior type!temporal} We discussed $\Shv{\RR}$, $\Shv{\IR}$, and $\BaseTopos=\Shv{\IRinv}$ above.

The horizontal maps are geometric embeddings, i.e.\ they correspond to modalities, and the vertical
maps are geometric surjections. The topos $\BaseTopos_\pi$ and the left-hand and bottom maps are uniquely determined from the top and right-hand maps, by way of the
surjection-embedding factorization systems on toposes. The $\pi$ stands for
``pointwise'',\index{modality!pointwise $\pi$} which means that properties of sheaves in
$\BaseTopos_\pi$ can all be determined in neighborhoods of points in $\RR$.

\section{Temporal type theory}

The main subject of this book is the definition of a type theory---and its associated logic---that has its semantics in the topos $\BaseTopos=\Shv{\IRinv}$ discussed above.

The logic we present is higher-order logic, plus subtypes and quotient
types.\index{logic!higher-order}\index{type!subtype}\index{type!quotient} Higher-order logic---as
well as its strong connection with topos theory---has been very well-studied
\cite{Lambek.Scott:1988a,Fourman:1977a,Boileau.Joyal:1981a,Awodey:2016,Johnstone:2002a}, which is
part of our motivation for using it. In fact, toposes support not just higher-order logic, but also
dependent type theories.\index{type!dependent} This means that one can use an automated
proof-assistant based on dependent type theory---such as Coq or Lean\index{automated proof assistant}
\cite{Coquand.Huet:1988a,deMoura.Soonho.Avigad.VanDoorn.vonRaumer:2015a}---to validate proofs.

The primary goal of the type theory and logic we present is to support defining and reasoning about
behavior types.\index{logic!temporal} As such, the logic is capable of expressing statements from standard temporal logics, such as Linear Temporal Logic (LTL). The primary ``temporal operator'' from LTL is called \emph{until}. The
meaning of until---written $\Until$ in the logic---is often presented by a formula such as
\begin{equation}\label{eqn:temporal_until}
  (\phi_1 \Until \phi_2)(t) \coloneqq \exists(r:\Time)\ldotp \bigl[(t<r) \wedge \phi_2(r) \wedge
    \forall u\ldotp (t<u<r \imp \phi_1(u))\bigr].
\end{equation}
Here the temporal propositions $\phi_1$ and $\phi_2$ are being represented as propositions that have
an explicit dependence on time, i.e.\ as functions $\Time\to\Prop$. As one can attempt to read off
from \cref{eqn:temporal_until}, $\phi_1\Until\phi_2$ constructs a new temporal proposition which is
true if both the following hold: $\phi_2$ is true sometime in the future, and $\phi_1$ is true from
now until that point.

In fact, \cref{eqn:temporal_until} is an example of a formula in our logic. The type $\Prop$ is what
makes the logic higher-order, and the type $\Time$---which is definable from the one
atomic term of our theory, and whose semantics is the behavior type $\Time$ discussed in
\cref{sec:translation_invariance}---allows for temporal statements like $\phi_1\Until\phi_2$ to be
expressed. It is from this perspective that we find it reasonable to refer to our system as
\emph{temporal type theory}. See \cref{sec:temporal_logic} for a more in-depth discussion of how our
type theory and logic relates to existing temporal logics, in particular to LTL and Metric Temporal
Logic.

Every property we can discuss is a safety property\index{safety property}, in the sense described in
\cref{sec:noncomposable_behaviors}, so all of our connectives and quantifiers take safety properties
to safety properties. An example of the expressive power of the temporal type theory: we are able to
internally define real-valued functions of time, as well as their derivatives, and prove that the
derivative satisfies the Leibniz rule.

As a first test that our formal system is strong enough to be useful in practice, we use a
simplified version of the safe separation problem\index{National Airspace System!safe separation}
for airplanes in the US National Airspace System (NAS). There one wants to avoid situations in which
airplanes get too close to one another. To achieve this, the current system consists of an
interaction between radar, a Traffic Collision Avoidance System (or TCAS), pilot decision-making,
and actuators and thrusters on the surface of the airplane. The position of each airplane roughly
follows a differential equation, with time-varying parameters such as ``climb at rate $r(t)$'',
supplied by the pilot. The pilot's decisions take into account the commands from air traffic control
and the advice of the TCAS, which alerts the pilot to urgent situations and suggests corrective
maneuvers. In turn, this information is determined by the relative position of the airplanes,
completing the loop.

The NAS thus requires continuous interaction between many different types of behavior. Some of the
components are modeled continuously\index{dynamical system!continuous}, such as the motion of the
plane, while others are modeled discretely\index{dynamical system!discrete}, such as the TCAS alerts
and suggestions. The pilot generally takes the advice given and carries it out after some delay. It
is the combination of continuous, discrete, and delay behaviors that we believe is the essence of
the safe separation problem. In \cref{sec:applications} we will prove a version of the safe
separation property---a version that is greatly simplified but that still involves the above
essential elements---in which there is only one airplane that must obtain safe separation from the
ground.

\section{Related work}\label{sec:related_work}

\cite{Spivak.Vasilakopoulou.Schultz:2016a} presents a topos similar to that described above, in \cref{sec:choices}. The authors
(ourselves plus C. Vasilakopoulou) consider sheaves and presheaves on a certain category $\Int$ of
intervals: sheaves for behavior types and presheaves for behavior contracts. Sheaves on $\Int$ can be identified
with discrete Conduch\'{e} fibrations\index{discrete Conduch\'{e} fibration}---or unique
factorization liftings---over the monoid $(\RR_{\geq0},0,+)$ of nonnegative real numbers. This may
have interesting connections with decomposition spaces \cite{Galvez.Kock.Tonks:2015a}, but more
closely related are applications to dynamical systems, pursued in
\cite{Lawvere:1986a,Bunge.Fiore:2000a,Fiore:2000a}.\index{dynamical systems!categorical approaches}

This book considers instead a subtopos $\BaseTopos\ss\Shv{\Int}$, obtained by sheafifying $\Int$-sheaves or
$\Int$-presheaves with respect to the ``continuity'' gluing condition \eqref{eqn:behavior_continuity}, and
hence building in continuity at the ground floor. This is a natural step, as the category $\Int\op$
is already a continuous category\index{category!continuous} in the sense of
Johnstone and Joyal, and $\BaseTopos$ is equivalent to the topos of continuous functors
$\Int\op\to\Cat{Set}$; see \cite{Johnstone.Joyal:1982a} or \cref{chap:IRinv_continuous}. The present work develops a type theory with a semantics in the sheaf
topos $\BaseTopos$, but which can be used independently of the sheaf semantics.

There are also connections with \cite{Joyal.Nielsen.Winskel:1996a} and following works, where dynamics is considered in terms of morphisms from a given category of paths. For example in \cite{Haghverdi.Tabuada.Pappas:2003a} an object in the path category is an interval ``modeling a clock running on [an open interval] at unit rate'', a perspective quite similar to our own.

Our general approach to dynamical systems also has something in common with that of the early
cyberneticists,\index{cybernetics} such as Ashby or Weiner; for example, our notion of behavior type
is roughly what  Ashby \cite{Ashby:2013a} calls the ``field'' of a system. However, our work is more closely aligned with the relatively recent ``behavioral approach,''\index{behavioral approach} as
advocated in \cite{Willems:2007a}. There, a dynamical system is defined to be a triple
$(\mathbb{T},\mathbb{W},\mathcal{B})$, where $\mathbb{T}$ represents time, and which we fix to be
$\mathbb{T}=\RR$, where $\mathbb{W}$ is an arbitrary set of ``signal values,'' and where
$\mathcal{B}\subseteq\mathbb{W}^{\mathbb{T}}$ is the set of possible behaviors as a subset of all
functions $\mathbb{T}\to\mathbb{W}$.

An object of the category $\BaseTopos$ is closely related to such a dynamical system. The primary
difference is that, instead of specifying only those possible ``infinitely extended'' behaviors
$\mathbb{T}\to\mathbb{W}$, an object $B\in\BaseTopos$ must specify a subset
$B(a,b)\subseteq\mathbb{W}^{(a,b)}$ for every open interval $(a,b)\subseteq\RR$, subject to the
conditions that for any $f\colon(a,b)\to\mathbb{W}$,
\begin{itemize}
  \item if $f\in B(a,b)$ then $a\leq a'<b'\leq b$ implies $f|_{(a',b')}\in B(a',b')$;
  \item if $f|_{(a',b')}\in B(a',b')$ for any $a<a'<b'<b$, then $f\in B(a,b)$;
  \item if $f\in B(a,b)$, then $[t\mapsto f(t-r)]\in B(a+r,b+r)$ for any $r\in\RR$.
\end{itemize}
We find this to be more natural from an applications perspective, since an infinitely extended
behavior is by definition unobservable.

Behavior types $B\in\BaseTopos$ are also more general, in that we can consider behavior types in
which there exist behaviors $B(0,5)$ which admit no extension to $B(0,10)$, say. As an example of
this phenomenon, we could define $B(a,b)$ to be the set of all differentiable functions
$f\colon (a,b)\to\RR$ satifying the differential equation $f'=1+f^2$. Then
$\tan(t)\colon(-\frac{\pi}{2},\frac{\pi}{2})\to\RR$ defines a behavior in
$B(-\frac{\pi}{2},\frac{\pi}{2})$ for which there is no extension to a behavior on any larger
interval.

As another example, consider the behavior type $B$ of possible trajectories of population levels of
wolves and deer in some ecosystem. For the sake of analysis, we may want to consider the subobject
$B'\hookrightarrow B$ consisting of only the desired behaviors, say where neither the wolves nor the
deer go extinct. It is certainly possible for the ecosystem to be in a state in which, according to
the model being used, there does not exist a future in which neither species goes extinct.

Finally, as mentioned earlier in this introduction, one of the goals of this work is to serve as a
``big tent'', so that many approaches to models of systems can be embedded into our category
$\BaseTopos$, and many temporal logics invented to analyze those models can be embedded into our
axiomatics. In that sense, our goal is not to compete with other bodies of work, but to help weave
them all together.

\section{Notation, conventions, and background}

Here we lay out some of the basic notation and conventions used throughout the book. We then give a lightning review of sites, sheaves, and toposes.

\subsection{Notation and conventions for categories}

\begin{notation}\label{not:basic_for_cats}\index{notation}
  We say that a set $S$ is \emph{inhabited} if there exists an element $s\in S$. This is the constructive way to say that $S$ is nonempty. Similarly, we say that a category is inhabited if it has an object.\index{set!inhabited}
  
  For any $n\in\NN$, we sometimes write $n$ to denote the linearly ordered category
  $\bullet^1\to\cdots\to\bullet^n$ with $n$ objects.

  All categories in this book are 1-categories unless otherwise stated. We denote composition of
  morphisms $A\To{f}B\To{g}C$ in the classical ``Leibniz'' order $g\circ f\colon A\to C$, and we
  denote identity morphisms as $\id$. If $\cat{C}$ is a category, we may denote the hom-set between
  objects $c,c'\in\cat{C}$ either by $\cat{C}(c,c')$ or $\Hom_{\cat{C}}(c,c')$, or simply $[c,c']$ if
  $\cat{C}$ is known from context.
\end{notation}

\subsection{Definition of sites, sheaves, and toposes}

Whenever we speak of a topos in this book, we always mean a Grothendieck topos, which is the
category of sheaves on a site. We quickly remind the reader of the definition of site and topos. Readers for whom the definition seems overly abstract may simply skim it on a first reading. The
following is taken from \cite{Johnstone:2002a}.

\begin{definition}[Coverage, site, sheaf, topos, geometric morphism]
\label{def:coverage_site_sheaf_topos}\index{coverage}\index{site}\index{sheaf}\index{topos}\index{topology!Grothendieck|see {coverage}}\index{covering family|see {family}}
  Let $\cat{C}$ be a category. For any object $U\in\cat{C}$, a \emph{family over $U$} is a subset of
  $\Hom_{\cat{C}}(-,U)$. Explicitly, a family consists of an object $U$, a set $I$, and for each
  $i\in I$ a morphism $f_i\colon U_i\to U$ for some $U_i\in\cat{C}$. Let $\Phi(\cat{C})$ denote the
  set of families in $\cat{C}$.

  A \emph{coverage} $\chi$ consists of a set $T$ and a function $F\colon T\to \Phi(\cat{C})$
  satisfying the following condition. Suppose given an object $U\in\cat{C}$, a morphism $g\colon
  V\to U$, and a $t\in T$, such that $F(t)=(f_i\colon U_i\to U\mid i\in I)$. Then there exists a
  $t'\in T$ such that $F(t')=(h_j\colon V_j\to V\mid j\in J)$ and such that for each $j\in J$ there
  exists some $i\in I$ and some $g_{j,i}\colon V_j\to U_i$ making the following square commute:
  \[
  \begin{tikzcd}
    V_j\ar[r, "h_j"]\ar[d, "g_{j,i}"']&V\ar[d, "g"]\\
    U_i\ar[r, "f_i"']&U
  \end{tikzcd}
  \]
  A \emph{site} is a category $\cat{C}$ equipped with a coverage $\chi=(T,F)$. For any $t\in T$, we
  say $F(t)=(U,I,f)$ is a \emph{$\chi$-covering family over $U$}.

  Suppose given a functor $X\colon\cat{C}\op\to\Cat{Set}$, a family $(f_i\colon U_i\to U\mid i\in
  I)$, and an element $x_i\in X(U_i)$ for each $i\in I$. We say that the $x_i$ are
  \emph{compatible}\index{family!compatible} with respect to the family if, for every pair of
  elements $i,i'\in I$, object $W\in\cat{C}$, and commutative square
  \[
  \begin{tikzcd}
    W\ar[r, "g_i"]\ar[d, "g_{i'}"']&U_i\ar[d, "f_i"]\\
    U_{i'}\ar[r, "f_{i'}"']&U
  \end{tikzcd}
  \]
  the equation $X(g_i)(x_i)=X(g_{i'})(x_{i'})$ holds in $X(W)$.

  A \emph{sheaf} on a site $S=(\cat{C},\chi)$ is a functor $B\colon\cat{C}\op\to\Cat{Set}$ such that
  for each $\chi$-covering family $(f_i\colon U_i\to U\mid i\in I)$ and compatible family $\{b_i\mid i\in
  I\}$, there exists a unique $b\in B(U)$ such that $B(f_i)(b)=b_i$ for each $i\in I$. A
  \emph{morphism of sheaves} $B\to B'$ is simply a natural transformation of functors, and this
  defines the category of sheaves on $(\cat{C},\chi)$. The category of sheaves on the site $S$ is denoted
  $\Shv{S}$.

  A \emph{topos} is any category $\cat{E}$ for which there exists a site $S$ and an equivalence of
  categories $\cat{E}\cong\Shv{S}$. If $\cat{E}'$ is another topos, a \emph{geometric morphism} $f\colon\cat{E}\to\cat{E'}$ is a functor which has a left adjoint, such that the left adjoint preserves finite limits.
\end{definition}

\section{What to expect from the book}

We hope the reader can learn useful things by reading this book. Of course, what is learned depends
on the reader: her background, interests, level of effort, etc. Before giving an outline of the
chapters, we will discuss some potential items of interest.

\subsection{What the reader can hope to learn}

Unfortunately, this book could not be written for a general audience. For example, the authors are well-aware that anyone unfamiliar with toposes probably had difficulty knowing how to think about \cref{def:coverage_site_sheaf_topos}. There is certainly material in this book that is amenable to such readers, including \cref{sec:IR_half_plane}, \cref{sec:logical_prelims}, \cref{sec:axiomatics}, and \cref{sec:Real_numbers,sec:applications}.

However, as mentioned above in
\cref{sec:choices}, the book is mainly written for readers who have seen
toposes before. We would like to think students of all levels can learn something about toposes by reading this book.
Indeed, it can be considered as an extended example of a single topos $\BaseTopos$, as well as
its slices and subtoposes. \Cref{sec:logical_prelims} is a stand-alone chapter that is meant to introduce readers to type theory and logic, especially as they relate to toposes. For example, we provide detailed type-theoretic accounts of real numbers and other numeric objects that have semantics in an arbitrary topos.

While some knowledge about Scott domains would certainly be useful at times, it is not necessary. Again, we hope this
book will provide insight into the subject of domain theory by offering an extended example. Semantically, we
will work in a quotient of the domain $\IR$, but we will also have occasion to consider
domains internal to a topos. For example, the derivative of a continuous function is an
interval-valued function, and it is defined in a domain-theoretic style (see \cref{def:derivative}).

Finally, the reader will also learn about temporal logic. We spend a bit of space to try and convey
how existing temporal logics fit into our theory. But mostly, we hope the reader will have interest
in our own particular \emph{higher order temporal logic}, as it motivates everything in the book.

\subsection{Contributions}\label{sec:contributions}

Our main contribution is to offer a new temporal type theory (the first of its kind as far as we
know), together with a novel topos-theoretic semantics. We believe it can mediate between several
existing formal systems for dealing with time, e.g.\ for describing cyber-physical systems.

Along the way to understanding Dedekind real numbers---and generalizations like proper and improper
intervals---in our topos-theoretic semantics, we were led to a number of results which we believe
might be of independent interest. When using the temporal type theory to reason about behaviors
which are represented by continuous real-valued functions, we found the need to make use of Dedekind
real number objects in various subtoposes as well as the standard Dedekind real number object. In
the type theory, these appear as objects defined using versions of the standard Dedekind axioms
which have been modified by modal operators. Studying these ``modal Dedekind real number objects''
was greatly simplified by considering generalizations which remove some of the standard Dedekind
axioms. The reason is that these generalized numeric objects form \emph{domains}, which one can
think of as a particularly nice class of topological spaces which are intimately connected to order
theory. We found that for particular kinds of subtopos---in particular closed, quasi-closed, and
dense proper subtoposes---and for particular kinds of domains which are presented by sufficiently
nice bases, we could give strong comparisons between domains in a subtopos and domains in the
enclosing topos.

These general domain-theoretic results are collected in \cref{sec:predomain}---a section we tried to make readable,
independently of the rest of the book---while the special cases about generalized Dedekind numeric
objects are collected in \cref{sec:Real_numbers}. As an example application, the theory of
differentiation that we develop in the temporal type theory in \cref{sec:derivatives} makes
essential use of generalized Dedekind numeric objects in several different subtoposes.

In addition, we contribute a few other new ideas to the literature. In \cref{sec:proof_IR_sheaves}
we prove a result about dense morphisms of posites. While this formally follows from results in
\cite{Shulman:2012a}, we have provided a new direct proof of the simpler posite case. In
\cref{sec:toposes_types_logic} we give an informal introduction to type theory and higher-order
logic, a subject for which informal accounts seem to be lacking in the literature.

In \cref{sec:applications} we generalize the usual notion of hybrid system from the control theory
literature. We also show how to integrate several different temporal logics into our own.

Finally in \cref{sec:connected_dbifib_factorization} we prove the existence and various
properties---e.g.\ regarding interaction with the $\Ind$completion---of the (connected, discrete
bifribration) orthogonal factorization system on $\Cat{Cat}$. Although this material is well-known,
it seemed difficult to locate in the literature.

\subsection{Chapter outline}

This book has eight chapters and two appendices. \Cref{sec:intro} has hopefully summarized and explained the goals of this work: to study temporal properties of a very general class of behavior types, using the language and tools of toposes. We said that each behavior type is modeled as a set of possible behaviors over each interval of time, namely as sheaves on some sort of time-line. However, we made two choices that keep this from being as simple as one might be tempted to expect. The first, and more important, is that matching behaviors on overlapping intervals need not be composable (\cref{sec:noncomposable_behaviors}); the second is that the set of possible behaviors over an interval should not depend on its position in the time-line (\cref{sec:translation_invariance}).

We formalize these two ideas in the following two chapters. In \cref{sec:the_interval_domain} we define the time-line we will use, which is called the interval domain and denoted $\IR$. The real line sits inside it as the length-0 intervals $\RR\ss\IR$. We give a continuous bijection between $\IR$ and the upper half-plane in $\RR^2$, which not only allows the reader to visualize $\IR$ but also proves quite useful for semantic purposes throughout the book. In this chapter we also review the definition of posites, $(0,1)$-sheaves, Scott domains, and give four equivalent definitions of the time-line as a posite.

In \cref{chap:translation_invariance} we deal with translation invariance, roughly by taking the quotient of $\IR$ with respect to the translation action of the group $\RR$. The result is a category $\IRinv$, which is no longer a Scott domain, but instead a continuous category, in the sense of Johnstone and Joyal. The topos of sheaves on the corresponding site is $\BaseTopos$, our main topic of study throughout the book.

We transition from the external viewpoint to the internal viewpoint in \cref{sec:logical_prelims}. This chapter is fairly independent of the rest of the book, standing as a review of some connections between toposes, type theory, and higher-order logic. In particular, we spend the first half of the chapter reviewing these notions at a high-enough level not to get bogged down in specifics, but allowing the book to be fairly self-contained in terms of its type theory and logic. We then provide a short section recalling the notion of modalities $j$---also known as Lawvere-Tierney topologies---and their relationship to subtoposes. We spend the remainder of the chapter discussing real numbers and related numeric objects in subtoposes. In particular, we explain $j$-local arithmetic and inequalities.

The technical heart of the book is in \cref{sec:axiomatics,sec:soundness}. In \cref{sec:axiomatics}, we axiomatize the higher-order logic of our temporal type theory, which includes one atomic predicate---defining $\Time$---and several axioms. We also discuss a few modalities that correspond to important subtoposes of $\BaseTopos$. In \cref{sec:soundness} we prove the soundness of our axioms in $\BaseTopos$. This requires explaining the semantics of the various numeric objects, such as the real numbers, as well as the semantics of the modalities.

In \cref{sec:Real_numbers}, we work with numeric objects relative to various modalities $j$. For example, using what we call the ``point-wise'' modality, we have access to the sheaf of real-valued functions on the usual real line, inside of $\BaseTopos$. In particular, we compare these numeric types internally for differing modalities in \cref{sec:Real_numbers_modalities} and give their $\BaseTopos$-semantics in \cref{sec:semantics_numerics_modalities}. This section relies on technical work from \cref{sec:predomain}. Perhaps most interestingly, in \cref{sec:derivatives} we internally define the derivative of such a real-valued function with respect to $t:\Time$ and prove that this definition is linear and satisfies the Leibniz rule. We also prove externally that its $\BaseTopos$-semantics is that of derivatives in the usual sense.

The main body of the book concludes with \cref{sec:applications}, where we discuss several applications of the work. For example we give an embedding of discrete, continuous, and hybrid dynamical systems into our temporal type theory. We also explain delays, and our general perspective on behavior contracts for interconnected systems. One of our main inspirations for this work was a case study involving safe separation in the National Airspace System; this is discussed in \cref{sec:case_study}. Finally in \cref{sec:temporal_logic} we discuss the relationship between our higher-order temporal logic and some of the better known temporal logics from the literature.

The book also has two appendices. In \cref{sec:predomain} we define a technical tool---which we call predomains---by which to reduce the complexity of domains and the morphisms between them. We explain how our constructions work relative to arbitrary modalities, i.e.\ within arbitrary subtoposes. Finally, in \cref{chap:IRinv_continuous} we prove that $\IRinv$ is a continuous category in the sense of Johnstone and Joyal.

\section{Acknowledgments}
{
	\label{sec:acknowledgments}	
	Many of the ideas presented here began to take form in 2015 -- 2016 while working closely with 
Christina Vasilakopoulou, a postdoc in our group at the time. We benefitted greatly from---and 
thoroughly enjoyed---this collaboration.
	
	We have learned a great deal from our collaborators Kevin Schweiker, Srivatsan Varadarajan, and 
especially Alberto
	Speranzon at Honeywell Labs, as well as from our sponsors
	at NASA, including Alwyn Goodloe, Anthony Narkawicz, and Paul Minor.	We also want to thank Aaron 
Ames, Brendan Fong, Tobias Fritz, Ben Sherman, and R\'{e}my Tuy\'{e}ras for helpful discussions. 
Thanks also to Ben Sherman and R\'{e}my Tuy\'{e}ras for comments on an early draft of this book.
	
	The work presented here was supported by NASA grant NNH13ZEA001N, as well as by AFOSR grants 
FA9550--14--1--0031 and FA9550--17--1--0058.
}

\chapter{The interval domain}\label{sec:the_interval_domain}

In this chapter we will introduce the \emph{interval domain}\index{interval domain} $\IR$, which is
a topological space that represents the line of time in our work to come. The points of this space
can be thought of as compact intervals $[a,b]$ in $\RR$. The specialization
order\index{specialization order!on points} on points gives $\IR$ a non-trivial poset structure---in
fact it is a domain---and as such it is far from Hausdorff.\index{topological space!domain as}

The topos of sheaves on the space $\IR$ will play a very important role throughout the book, so we
begin in \cref{sec:posite} with a review of sheaves on topological spaces, or more generally on
posets equipped with a coverage. In \cref{sec:rev_cont_poset} we review the theory of domains, and
we define $\IR$ in \cref{sec:interval_domain}. In \cref{sec:IR_half_plane} we show how to view $\IR$ in terms of the usual Euclidean upper half-plane. Finally in \cref{sec:proof_IR_sheaves} we discuss Grothendieck posites, which allow us to prove the equivalence between four different formulations of the topos of sheaves on $\IR$.

There are a few places in this chapter where we refer to predomains, which are fairly technical and are the
subject of \cref{sec:predomain}. However, none of that material is necessary to understand the
the present chapter. It will become more important for the technical results about
arithmetic between real number objects in various subtoposes, which we will discuss later in
\cref{sec:Dedekind_j}. For the time being, we suggest the reader look briefly at the
predomain material if the interest arises, but otherwise feel free to skim it or take it on
faith.

\section{Review of posites and $(0,1)$-sheaves}\label{sec:posite}

The material in this section, \ref{sec:posite}, is largely taken from \cite{Nlab:posite}. For any poset $(P,\leq)$ and
$p\in P$, the down-set $\down p\coloneqq\{p'\in P\mid p'\leq p\}$ will play a fundamental role
throughout this section. For a set $S$, let $P(S)\coloneqq\{V\mid V\ss S\}$ denote its power set.
Given $V\ss S$, we write $\down V$ to denote the set $\bigcup_{v\in V}\down v$.

A posite is a site---i.e.\ a category equipped with a coverage (see
\cref{def:coverage_site_sheaf_topos})---for which the category is a
poset.\index{poset}\index{coverage}\index{posite}\index{coverage!on a posite} Any topological space
has an underlying posite: the poset is that of open subsets, and covering families are collections of open sets whose union is another.\index{topological space!as
site}\index{site!from a topological space|see {posite}} The definition of a coverage can be
simplified slightly on categories that are posets\index{poset!as category}. 

\begin{definition}[Coverage on poset]\label{def:coverage}\index{poset!coverage on|see {posite}}
  A \emph{coverage} on a poset $S$ is a relation ${\basicCover}\subseteq P(S)\times S$ such that:
  \begin{itemize}
    \item If $V\basicCover u$ then $V\subseteq\specdownclose u$.
    \item If $V\basicCover u$ and $u'\leq u$, then there exists a $V'\subseteq (\specdownclose V\cap\down u')$
      such that $V'\basicCover u'$,
  \end{itemize}
  If $V\basicCover u$, we say that $V$ is a \emph{basic cover}\index{cover, basic} of $u$. A
  \emph{posite} is a poset equipped with a coverage.
\end{definition}

\begin{example}
  If $B$ is a basis for a topological space $X$, then $B$ ordered by inclusion is a posite. For any
  $b\in B$ and $V\subseteq B$, we have $V\basicCover b$ iff $\bigcup V = b$ (where the union is
  taken as subsets of $X$, which is not necessarily the join in $B$).
\end{example}

As a posite is in particular a site,\index{site} the usual definition of sheaves of sets applies,
constructing a (localic) topos from any posite; see \cref{def:coverage_site_sheaf_topos} or
\cite[Theorem C.1.4.7]{Johnstone:2002a}. However, for any posite $S$ we can also consider ``sheaves
of truth values,'' or $(0,1)$-sheaves,\index{sheaf!of truth values|see {sheaf,
(0,1)}}\index{sheaf!(0,1)} thus constructing a frame $\PoShv{S}$. If $B$ is a basis for a
topological space $X$,\index{basis!for topology} then $\PoShv{B}$ will simply be the frame of
opens\index{frame} in $X$.

\begin{definition}[$(0,1)$-sheaf]\label{def:poset_ideal}
  Let $S$ be a posite. A \emph{$(0,1)$-sheaf}%
  \footnote{$(0,1)$-sheaves are also sometimes called ideals, but this terminology clashes with two
  other notions we use in this book that are also called ideals---namely, ideals in a poset and
  rounded ideals in a predomain (see \cref{def:ideal_dcpo,def:rounded_ideal_filter})---thus we call
  them $(0,1)$-sheaves to avoid confusion.\index{ideal!on a posite|see {sheaf, (0,1)}}}
  is a subset $I\subseteq S$ such that:
  \begin{description}[labelwidth=1.3in]
    \item[\quad Down-closure:] If $u\leq v$ and $v\in I$, then $u\in I$.
    \item[\quad Sheaf-condition:] If $V\basicCover u$ and $V\subseteq I$, then $u\in I$.
  \end{description}\index{sheaf condition}
  The poset of $(0,1)$-sheaves on $S$ is denoted $\PoShv{S}$.
\end{definition}

A coverage on a category $\cat{C}$ is called \emph{subcanonical}
\index{coverage!subcanonical} if the representable functor $\cat{C}(-,c)$ is in fact a sheaf, for every $c\in\cat{C}$. The coverage is called
\emph{canonical}\index{coverage!canonical} if it is the largest subcanonical coverage.

\begin{example}\label{ex:canonical_coverage_frame}\index{frame!canonical coverage of}
  A frame is a poset with infinite joins, finite meets, and which satisfies the infinite distributive law.
  Any frame $S$ has a \emph{canonical coverage}, with $V\basicCover u$ iff $\sup V=u$. For any
  topological space $X$, $\Shv{X}\iso\Shv{\Omega(X)}$, where $\Omega(X)$ is the frame of opens of
  $X$ with its canonical coverage.
\end{example}

\section{Domains and posites}\label{sec:rev_cont_poset}\index{domain|(}

In this section we will quickly review domains in general; see \cite{Gierz.Keimel.Lawson.Mislove.Scott:2003a} for a thorough treatment. We also show how to endow an arbitrary domain with a natural posite structure.

\subsection{Review of domains (continuous posets)}

Domains come at the intersection of order theory and topology. They can be thought of as spaces for
which the specialization order on points determines the topology, or as posets with ``two extra left
adjoints'', as we will see in \cref{def:domain}. In this section and throughout
the rest of the book, we often use the notation $\sqss$ for the order relation on domains.

\begin{definition}[Directed sets, dcpos, ideals, down-closure]
\label{def:ideal_dcpo}\index{directed}\index{dcpo|see {poset, directed complete}}\index{poset!directed complete}\index{partial order|see {poset}}\index{ideal}\index{down-closed}
  Let $(S,\sqss)$ be an arbitrary poset. A subset $D\subseteq S$ is called \emph{directed} (or \emph{up-directed}) if $D$ is
  inhabited and, for any $u_1,u_2\in D$, there exists a $u'\in D$ with $u_1\sqss u'$ and $u_2\sqss
  u'$. A \emph{directed-complete poset (dcpo)} is a poset $S$ for which every directed subset $D\ss
  S$ has a supremum $\sup D$. A subset $I\subseteq S$ is called an \emph{ideal} if it is directed
  and down-closed: $i\in I$ and $j\sqss i$ implies $j\in I$. The collection $\Id(S)$ of all ideals
  in $S$ is a poset ordered by inclusion. Given any element $u\in S$, its \emph{down-closure}
  $\mathord{\downarrow}u \coloneqq \{\,v\in S\mid v\sqss u\,\}$ is an ideal, and this map defines a
  monotonic map $\down\colon S\to\Id(S)$.
\end{definition}

\begin{proposition}\label{prop:directed_complete_poset}
  Let $S$ be a poset. The down-closure $\down\colon S\to\Id(S)$ has a left adjoint if and only if $S$ is
  directed-complete. The left adjoint $\mathrm{sup}\colon\Id(S)\to S$ sends any ideal $I\subseteq S$
  to its supremum $\sup I$.\index{directed-complete}\index{supremum}
\end{proposition}

\begin{definition}[Way below]
\label{def:way_below}\index{way-below}
  Let $(S,\sqss)$ be a directed-complete poset and let $u,v\in S$ be elements. One says that $u$ is \emph{way below} $v$,
  denoted $u\ll v$, if the following is satisfied for each ideal $I\subseteq S$: if $v\sqss\sup I$, then $u\in I$.
\end{definition}

\begin{remark}\label{rem:way_below}
  If $u\ll v$, then $u\sqss v$ since $v=\sup\specdownclose v$. From the definition, it is easy to check that if
  $u'\sqss u$ and $u\ll v$ and $v\sqss v'$, then $u'\ll v'$. It follows that $\ll$ is transitive.
\end{remark}

\begin{proposition}\label{prop:cont_poset}
  Let $S$ be a directed-complete poset. The following are equivalent:
  \begin{itemize}
    \item For every $u\in S$, the set $\downclose u \coloneqq \{\,v\in S\mid v\ll u\,\}$ is an ideal, and $u
      = \sup\downclose u$.
    \item The functor $\mathrm{sup}\colon\Id(S)\to S$ has a further left adjoint.
  \end{itemize}
\end{proposition}

The left adjoint to $\mathrm{sup}\colon\Id(S)\to S$ sends an element $u\in S$ to $\downclose u$.

\begin{definition}[Domain]
\label{def:domain}\index{poset!continuous|see {domain}}\index{domain!as poset}
  A directed-complete poset $(S,\sqss)$ satisfying one of the equivalent conditions of
  \cref{prop:cont_poset} is often called a \emph{continuous poset}; we will usually call it a
  \emph{domain}.%
  \footnote{This nomenclature is the same as that in \cite[Definition
  I-1.6]{Gierz.Keimel.Lawson.Mislove.Scott:2003a}. Continuous posets, and their generalizations to
continuous categories, are also discussed in \cite[C.4.2]{Johnstone:2002a} and
\cite{Johnstone.Joyal:1982a}.}
  The order $\sqss$ is called the \emph{specialization order}.\index{specialization order!on a
  domain|see {domain, as a poset}}
\end{definition}

\begin{example}
The category $2$ (see \cref{not:basic_for_cats}) is posetal: $0\sqss 1$. Every nonempty subset of it is directed. It is directed-complete and its ideals are $\{0\}$ and $\{0,1\}$. Its way-below relation coincides with its order $a\ll b\iff a\sqss b$, and it is a domain.
\end{example}

\begin{proposition}\label{prop:Scott_opens}
  Let $S$ be a directed-complete poset and $U\ss S$ an up-closed subset.\index{up-closed} Then the
  following are equivalent:
  \begin{enumerate}
    \item[1.] for any directed set $D\subseteq S$, if $\sup D\in U$ then there exists $s\in S$
      with $s\in D\cap U$.
    \item[2.] for any ideal $I\subseteq S$, if $\sup I\in U$ then there exists $s\in S$ with
      $s\in I\cap U$.
  \end{enumerate}
  If $S$ is a domain, then the above are also equivalent to the following:
  \begin{enumerate}
    \item[3.] for all $u\in U$ there exists some $u'\in U$ with $u'\ll u$.
  \end{enumerate}
\end{proposition}
\begin{proof}
  This is \cite[Lemma II-1.2]{Gierz.Keimel.Lawson.Mislove.Scott:2003a} and \cite[Proposition
  II-1.10]{Gierz.Keimel.Lawson.Mislove.Scott:2003a}, but it can also be checked directly. For
  example, $2\imp 1$ because for directed $D$, the down-closure $I\coloneqq\down D$ is an ideal, and
  given $s\in I\cap U$ we have $s\in U$ and $s\leq d$ for some $d\in D$. Since $U$ is up-closed,
  $d\in D\cap U$.
\end{proof}

We next define Scott open sets in any directed-complete partial order, even though all the dcpos we
consider in this paper will be domains.

\begin{definition}[Scott open set]
\index{Scott-open set|see {open set, Scott}}\index{open set!Scott}
  A subset $U\ss S$ in a dcpo is called \emph{Scott-open} if it is up-closed and satisfies any of
  the equivalent conditions of \cref{prop:Scott_opens}.
\end{definition}

We prove a few properties of domains that may be useful; others can be found in \cite[Section
II-1]{Gierz.Keimel.Lawson.Mislove.Scott:2003a}.

\begin{proposition}
\label{prop:basic_facts_domains}\index{domains!basic facts about}\index{basis!way-up closures as}
  Let $(S,\sqss)$ be a domain.
  \begin{enumerate}
    \item The way-below relation is interpolative: $s_1\ll s_2$ implies $\exists s$ with $s_1\ll
      s\ll s_2$.\index{interpolative!way below as}
    \item For any element $u\in S$, the set $\upclose u$ is Scott open.
    \item A set $U$ is Scott open iff $\up u\ss U$ and $\downclose u\cap U\neq\varnothing$ for each
      $u\in U$.
    \item The Scott-opens constitute a topology on the set $S$, called the \emph{Scott
      topology}.\index{topology!Scott}
    \item The set $\{\,\upclose u\mid u\in S\,\}$ is a basis for the Scott topology.
    \item The order $\sqss$ is the specialization order on points in the Scott topology.
    \item A set $C$ is closed in the Scott topology iff $\down c\ss C$ and $\sup I\in C$ for
      every ideal $I\ss C$.
    \item The closure of a point $s\in S$ is its down closure $\downarrow s$.
  \end{enumerate}
\end{proposition}
\begin{proof}
  Suppose $s_1\ll s_2$. Then one can check that $I\coloneqq\bigcup_{s\ll s_2}\downclose s$ is
  directed, and by \cref{rem:way_below} it is down-closed. For all $s\ll s_2$ we have $\downclose
  s\ss I$, which implies $s=\sup\downclose s\sqss\sup I$. It follows that
  $s_2=\sup\downclose s_2\sqss\sup I$, and this implies $s_1\in I$. This proves 1., and 2.\
  and 3.\ follow directly. Using 3.\ the only nontrivial part of 4.\ is showing that the
  intersection of two open sets is open, but this follows from the fact that $\downclose u$ is
  directed. For 5.\ each $\upclose u$ is open by the interpolative property, and 3.\ implies these
  form a basis. For 6.\ we need to show $u\sqss u'$ iff $(u\in U\imp u'\in U)$ holds for all open
  $U$. It suffices to consider $U=\upclose u_0$, in which case the forward direction is obvious and
  the backwards direction is $u=\sup\downclose u\sqss\sup\downclose u'=u'$. For 7.\ use that
  $C$ is closed iff its complement is up-closed and satisfies \cref{prop:Scott_opens}. For 8.\
  $\down s$ satisfies the properties from 7.\ and is the smallest such set containing $s$.
\end{proof}

\begin{example}\label{ex:Bool^S}\index{domain!powerset}\index{domain!of booleans}
  For any set $X$, the power-set $S\coloneqq\Bool^X$, ordered by inclusion $u\sqss v$ iff $u\ss v$,
  is a domain. One can check that $u\ll v$ iff $u$ corresponds to a finite subset of $v$.
\end{example}

We define a function between domains to be \emph{continuous}\index{continuous!Scott} if it satisfies
any of the  equivalent conditions in \cref{prop:continuous_map}; see \cite[Proposition
II-2.1]{Gierz.Keimel.Lawson.Mislove.Scott:2003a} for proof.

\begin{proposition}\label{prop:continuous_map}
  Let $S,T$ be domains, and let $f\colon S\to T$ be a function between their underlying sets. The
  following are equivalent:
  \begin{itemize}
    \item $f$ is continuous with respect to the Scott topology,
    \item $f$ is order-preserving and $f(\sup(I))=\sup(f(I))$ for all ideals $I$,
    \item $f(s)=\sup\{f(s')\mid s'\ll s\}$ for all $s\in S$.
  \end{itemize}
\end{proposition}

\begin{remark}\label{rem:domains_topspaces}\index{domain!as topological space}
  The inclusion functor from domains to topological spaces is fully faithful, and in fact, domains
  are particularly nice topological spaces. On the one hand, the frame of open sets in a domain can
  be recovered from---in fact is defined in terms of---its poset of points under specialization. On
  the other hand, any domain is a sober space (see \cite[Proposition
  II-1.11(ii)]{Gierz.Keimel.Lawson.Mislove.Scott:2003a}), meaning that its poset of points can be
  recovered from its frame of open sets.\index{topological space!sober}
\end{remark}

\subsection{Predomains and their corresponding posites}
\label{sec:predomains_posites}\index{predomain}

We give a couple technical results which involve material on predomains from \cref{sec:predomains},
and which will be used in the next section. One may choose to skip the proofs on a first reading.

A predomain is like a basis for a domain\index{basis!predomain as}. Indeed, a predomain $B$
determines a domain denoted $\RId(B)$, called the domain of \emph{rounded
ideals},\index{ideal!rounded}\index{rounded ideal|see {ideal}} but $B$ is typically more convenient
to work with because it involves much less data. We now show that for every predomain $B$, its upper
specialization order $(B,\upspecord)$\index{specialization order!on a predomain!upper} has a natural
posite structure, and $(0,1)$-sheaves on this posite can be identified with Scott opens in
$\RId(B)$. Thus, the set $B$ can directly encode the frame of opens of the space $\RId(B)$, and the
topos of sheaves on $\RId(B)$, in addition to its points. Since every domain is naturally a
predomain (see \cref{prop:domains_as_predomains}), the following results also hold when $B$ is a
domain.

\begin{proposition}
\label{prop:predomain_posite}\index{posite!from a predomain}
  Let $(B,\prec)$ be a predomain and $\upspecord$ its upper specialization order. Then there is a
  posite $S_B$ whose underlying poset is $(B,\upspecord)\op$, and whose coverage\index{coverage} is 
  defined by $V\basicCover b$ iff $V=\upclose b$.
\end{proposition}
\begin{proof}
  We only need to prove that $\basicCover$ is a coverage, i.e.\ the two conditions of
  \cref{def:coverage}. For the first condition, if $v\in\upclose b$ then $b\upspecord v$. For the
  second, suppose $b\upspecord b'$. The condition follows from the fact that $\upclose b' \subseteq
  \upclose b$.
\end{proof}

\begin{proposition}\label{prop:predomain_posite_opens}
  Let $(B,\prec)$ be a predomain, $\upspecord$ its upper specialization order, and consider
  the posite $S_B$ as defined in \cref{prop:predomain_posite}. The maps $I\mapsto\{\, b\in B\mid
  \exists b'\in I\ldotp b'\prec b\,\}$ and $U\mapsto\{\,b\in B\mid \upclose b\subseteq U\,\}$ define
  an isomorphism of frames, $\PoShv{S_B} \iso \Opens(B)$.
\end{proposition}
\begin{proof}
  The well-definedness of the maps and the fact that they are mutually inverse can be checked
  directly from \cref{def:poset_ideal,def:predomain,def:predomains_rounded_specs}.
\end{proof}

\begin{proposition}\label{prop:down_closed_up_closed}\index{specialization order!on a predomain!lower}
  Suppose that for a predomain $(B,\prec)$, lower specialization implies upper specialization; i.e.\
  for all $b,b'\in B$, if $b\specord b'$ then $b\upspecord b'$. Then there is a posite $S'_B$ with
  underlying poset $(B,\specord)\op$, and with coverage defined by $V\basicCover b$ iff $V=\upclose
  b$.
\end{proposition}
\begin{proof}
  The proof that $\basicCover$ defines a coverage is essentially the same as in
  \cref{prop:predomain_posite}.
\end{proof}

\begin{remark}
  We show in \cref{cor:S_iso_S'} that, in the setting of \cref{prop:down_closed_up_closed}, there is an isomorphism
  \[
    \PoShv{S_B} \iso \PoShv{S'_B}.
  \]
\end{remark}

\index{domain|)}

\section{The interval domain and its associated topos}
\label{sec:interval_domain}\index{interval domain|(}

In \cref{sec:def_interval_domain} we define one of the central objects of this book: the interval
domain $\IR$. If this is the reader's first encounter with the interval domain, its order may appear
opposite to what is expected. Namely $\sqss$ refers to the specialization order\index{specialization
order!on points} on intervals rather than to the inclusion order.

Next in \cref{sec:IR_predomain} there is a brief section on the relationship between $\IR$, its
subspace $\RR$, and the notion of Dedekind cuts, which will also be a central theme throughout the
book. In \cref{thm:IR_sheaves} we state four equivalent definitions of the topos $\Shv{\IR}$.

\subsection{Definition of the interval domain}\label{sec:def_interval_domain}

\begin{definition}[The interval domain]\label{def:interval_domain}
  The \emph{interval domain} $\IR$ is the set $\{\,(d,u)\in\RR\times\RR\mid d\leq u\,\}$, an element
  of which we denote by $[d,u]$, with order given by $[d,u]\sqss [d',u']$ iff $d\leq d'\leq u'\leq
  u$.%
  \footnote{
    Throughout this book, we often use $d$ or $\delta$ to stand for ``down'' and $u$ or $\upsilon$
    (upsilon) to stand for ``up''.
  }
\end{definition}

Conceptually, the interval domain is the set of bounded closed intervals in
$\RR$,\index{interval!compact} ordered by reverse inclusion. Thus the maximal elements of $\IR$ are
the zero-length intervals, which we can identify with elements of $\RR$. For intuition about the order, regard intervals as \emph{approximations of real numbers}, so that an increase in domain order corresponds
to an increase in precision.

We have the following characterization of the way-below relation (\cref{def:way_below}) in $\IR$.

\begin{proposition}\label{prop:way_below_IR}\index{way-below!in interval domain}
  Every ideal $I\ss\IR$ has a supremum. The way-below relation in $\IR$ is given by
  \[
    [d',u']\ll[d,u]\quad\text{iff}\quad d'<d\text{ and }u<u',
  \]
  and $[d,u]=\sup\downclose[d,u]$. Thus $\IR$ is indeed a domain in the sense of
  \cref{def:domain}.
\end{proposition}
\begin{proof}
  Define $\bar{d} = \sup_{[d,u]\in I} d$ and $\bar{u} = \inf_{[d,u]\in I} u$. We will show that
  $[\bar{d},\bar{u}]$ is the supremum of $I$ in $\IR$. First we must show that $\bar{d}\leq\bar{u}$,
  or equivalently, that for any $[d_1,u_1],[d_2,u_2]\in I$ we have $d_1\leq u_2$. But $I$ is
  directed, so there exists a $[d',u']\in I$ such that $[d_1,u_1]\sqss[d',u']$ and
  $[d_2,u_2]\sqss[d',u']$, and it follows that $d_1\leq d'\leq u'\leq u_2$. Finally, it is clear
  that $[\bar{d},\bar{u}]$ is the least upper bound of $I$.

  Now suppose $d'<d$ and $u<u'$; we want to show $[d',u']\ll[d,u]$. Suppose $I$ is an ideal with
  $\sup I = [\bar{d},\bar{u}]$, such that $[d,u]\sqss[\bar{d},\bar{u}]$. Then $d'<\bar{d}$ and
  $\bar{u}<u'$, which implies there exist $[d_1,u_1],[d_2,u_2]\in I$ with $d'\leq d_1$ and $u_2\leq
  u'$. Then by directedness, there exists a $[d'',u'']\in I$ with $[d_1,u_1]\sqss[d'',u'']$ and
  $[d_2,u_2]\sqss[d'',u'']$. It follows that $d'\leq d_1\leq d''$ and $u''\leq u_2\leq u'$, so
  $[d',u']\sqss[d'',u'']\in I$ shows $[d',u']\in I$.

  Conversely, suppose $[d',u']\ll[d,u]$. Define $I\coloneqq\{[d'',u'']\mid d''<d\leq u<u''\}$. It is
  easy to see that $I$ is an ideal, and that $\sup I = [d,u]$. Hence $[d',u']\in I$, showing that
  $d'<d$ and $u<u'$.
\end{proof}

We often make use of the following notation:\index{closure!down}\index{closure!way-up}
\[
  \down[d,u]=\{\,[d',u']\mid d'\leq d\leq u\leq u'\,\}
  \qquad\text{and}\qquad
  \upclose[d,u]=\{\,[d',u']\mid d<d'\leq u'<u\,\}.
\]

A set $U$ is called \emph{filtered} if it is nonempty and down-directed; it is an open filter if it is additionally open. Here is a proposition that gives a few facts about $\IR$ that will be useful later.\index{filter}
\begin{proposition}\label{prop:facts_on_IR}
\begin{enumerate}
	\item Sets of the form $\upclose[d,u]$, for $d\leq u$ form a basis for the topology on $\IR$.
	\item A subset $U$ is open iff it is up-closed and $\downclose[d,u]\cap U\neq\varnothing$ for every $[d,u]\in U$.\index{up-closed}
	\item $\IR$ has binary meets.
	\item $\IR$ has conditional joins: if $[d_1,u_1]$ and $[d_2,u_2]$ have an upper bound, then they have a join.
	\item For any extended real numbers $r_1,r_2\in\RR\cup\{-\infty,\infty\}$ with $r_1<r_2$, the open set $U_{r_1,r_2}\coloneqq\{[d,u]\in\IR\mid r_1<d\leq u<r_2\}$ is filtered.
	\item Every filtered open set $U$ is of the form $U_{r_1,r_2}$, for some $r_1<r_2$ in $\RR\cup\{-\infty,\infty\}$.
\end{enumerate}
\end{proposition}
\begin{proof}
1 and 2 are shown in \cref{prop:basic_facts_domains}. Let $[d_1,u_1]$ and $[d_2,u_2]$ be arbitrary points. It is easy to check that their meet is $[\min(d_1,d_2),\max(u_1,u_2)$, showing 3. For 4, if there exists $[d,u]$ above each, then $d_1\leq d$ and $d_2\leq d$, so $\max(d_1,d_2)\leq d$, and similarly $\min(u_1,u_2)\geq u$, so their join is $[\max(d_1,d_2),\min(u_1,u_2)$.

For 5, clearly $U=U_{r_1,r_2}$ is nonempty and it is open by condition 2. If $[d_1,u_1]\in U$ and $[d_2,u_2]\in U$ then $[\min(d_1,d_2),\max(u_1,u_2)]\in U$, so $U$ is down-directed.

For 6, let $U$ be an open filter, and let $r_1\coloneqq\inf_{[d,u]\in U}d$ and $r_2\coloneqq\sup_{[d,u]\in U}u$. Clearly $U\ss U_{r_1,r_2}$, so take any $[d,u]$ such that $r_1\leq d\leq u<r_2$; we must show $[d,u]\in U$. We know there exists $[d_1,u_1]$ and $[d_2,u_2]$ in $U$ such that $d_1\leq d$ and $u\leq u_2$. But then their meet is in $U$, and it contains $[d_1,u_2]$ and hence $[d,u]$.
\end{proof}

\subsection{Discussion of $\IR$, $\RR$, and Dedekind cuts}\label{sec:IR_predomain}

The logic and type theory we present in this book is constructive. As the interval domain plays an
important role within the type theory, not just in defining its semantics, we make a brief
digression to discuss how to define $\IR$ constructively, and its relation to standard constructive
definitions of the real numbers.

Each element of $\IR$ can be identified with a pair of subsets $\delta,\upsilon\subseteq\QQ$
satisfying the following conditions:
\begin{enumerate}
  \item $\delta$ and $\upsilon$ are inhabited,
  \item $d\in\delta \iff \exists(d'\in\delta)\ldotp d<d'$,
  \item $u\in\upsilon\iff \exists(u'\in\upsilon)\ldotp u'<u$, and
  \item $\delta\cap\upsilon = \varnothing$.
\end{enumerate}
The element $[d,u]\in\IR$ corresponds to $(\delta,\upsilon)$ where $q\in\delta$ iff $q<d$, and
$q\in\upsilon$ iff $u<q$.\index{cut}

When the pair of subsets $(\delta,\upsilon)$ satisfies the above four conditions, we call it a \emph{disjoint pair of cuts}, in the sense of Dedekind cuts. This is part of a larger story, including the domain structure for this formulation of $\IR$, which is fleshed out in \cref{prop:intervals_as_RIds} and \cref{sec:predomains}; see in particular \cref{ex:RId_predomain}.

Classically, it is easy to show that maximal elements of $\IR$ correspond precisely to real numbers.
However, constructively, maximality is slightly too weak to capture the Dedekind reals. Indeed, seeing each element of $\IR$ as a disjoint pair of cuts, it is a simple exercise to show that 
$(\delta,\upsilon)\in\IR$ is maximal iff it is a \emph{MacNeille real}\index{real number!MacNeille}, i.e.\ if it satisfies the following axioms for any $d,u:\QQ$
\begin{align*}
  &[(d<u) \wedge \neg(u\in\upsilon)] \imp d\in\delta\\
  &[(d<u) \wedge \neg(d\in\delta)] \imp u\in\upsilon.
\end{align*}
For most purposes, the \emph{Dedekind reals} $\RR$ are constructively better behaved\index{constructive logic|see {logic, constructive}}\index{logic!constructive}. Rather than maximal, a Dedekind real is a disjoint pair of cuts $(\delta,\upsilon)$ that is also \emph{located}, i.e.\ such that\index{located}
\[
  d<u \imp (d\in\delta \vee u\in\upsilon).
\]
Every Dedekind real is a MacNeille real; the converse holds classically but not constructively. Throughout this book, we will be focused on Dedekind reals.

\subsection{Five equivalent definitions of the topos $\Shv{\IR}$}

We first define two different posites,\index{posite} though we will see in \cref{thm:IR_sheaves}
that they define the same topos.\index{topos}\index{coverage!of interval domain}

\begin{definition}[The interval posite $S_{\IR}$]
\label{def:interval_posite}\index{posite!interval}\index{posite!rational interval}
  The \emph{interval posite}, which we denote $S_\IR$, is what we called $S'_{\IR}$ in
  \cref{prop:down_closed_up_closed}. Explicitly:
  \begin{itemize}
    \item The underlying set is $\IR=\{\,[d,u]\in\RR\times\RR\mid d\leq u\,\}$ as in \cref{def:interval_domain},
    \item the order is given by inclusion, $[d,u]\leq[d',u']$ iff $d'\leq d\leq u\leq u'$, and
    \item the coverage is given by $V\basicCover [d,u]$ iff $V=\{\,[d',u']\in\IR \mid d<d'\leq
      u'<u\,\}$.\index{coverage}
  \end{itemize}
  Note that if $d=u$ then $[d,u]$ has an empty covering family.
  \smallskip

  The \emph{rational interval posite}, denoted $S^\QQ_{\IR}$, is the posite $S_{\IRpre}$ as defined
  in \cref{prop:predomain_posite}. Explicitly:
  \begin{itemize}
    \item The underlying set is $\IRpre=\{\,(d,u)\in\QQ\times\QQ \mid d<u\,\}$,
    \item the order is given by inclusion, $(d,u)\leq(d',u')$ iff $d'\leq d<u\leq u'$; and
    \item the coverage is given by $ V\basicCover(d,u)$ iff $V=\{\,(d',u')\in\IRpre \mid
      d<d'<u'<u\,\}$.
  \end{itemize}
\end{definition}

\begin{remark}
Because we have $\RR\subseteq \IR$, the set $\RR$ of Dedekind reals inherits a subspace topology. There are posites\index{posite} $S_\RR$ and $S^\QQ_\RR$ whose frame of $(0,1)$-sheaves\index{sheaf!(0,1)} is the frame of opens in $\RR$. These posites have the same underlying posets as $S_\IR$ and $S^\QQ_\IR$ (respectively), but more covering families. We explain this connection for the reader's intuition; we will not need it for our work, though one will see reincarnations of it, e.g.\ in \cref{cor:covering_pi}.

Recall from \cref{def:interval_posite} that the posite $S_{\IR}$ for $\IR$ has the coverage $V\basicCover[d,u]$ iff $V=\{\,[d',u']\mid d<d'\leq u'<u\,\}$. We informally called these ``continuity'' coverings in \cref{eqn:behavior_continuity}.
The posite $S_{\RR}$ for $\RR$ has an expanded coverage: $V\basicCover[d,u]$ iff either
\begin{itemize}
	\item $V=\{\,[d',u']\mid d<d'\leq u'<u\,\}$, or
	\item $V=\{\,[d,u'],[d',u]\,\}$ where $d\leq d'\leq u'\leq u$.
\end{itemize}
In other words it includes the ``composition'' coverings of \cref{eqn:behavior_composition}. The coverage for the posite $S^\QQ_\RR$ is similarly analogous to that of $S^\QQ_\IR$.
\end{remark}

We can now give several equivalent constructions for the same topos. \Cref{thm:IR_sheaves} will be
proven in \cref{sec:proof_IR_sheaves}
\begin{theorem}\label{thm:IR_sheaves}
  The following categories are equivalent:
  \begin{enumerate}
    \item The category of sheaves $\Shv{\Opens(\IR)}$ on the frame of opens of $\IR$, with the
      canonical coverage $\{V_i\}_{i\in I}\basicCover U$ iff $\bigcup_{i\in I} V_i=U$.
    \item The category of sheaves $\Shv{S_{\IR}}$ on the interval posite $S_{\IR}$.
		\item The category of sheaves $\Shv{S^\QQ_{\IR}}$ on the rational interval posite $S^\QQ_{\IR}$.
    \item The category of $\Cont{\IR}$ of \emph{continuous functors} $\IR\to\Cat{Set}$, i.e.\ functors taking directed
      suprema in $\IR$ to colimits in $\Cat{Set}$.
  \end{enumerate}
\end{theorem}

\section{\texorpdfstring{$\IR$}{\unichar{"1D540}\unichar{"211D}} and the upper half-plane}\label{sec:IR_half_plane}\index{half-plane|(}

Let $H\coloneqq \{(x,y)\in\RR^2\mid y\geq 0\}$ denote the upper half-plane. It is common to consider the bijection $h\colon|\IR|\To\cong H$ from the set of points in $\IR$ to $H$, sending each interval to the pair consisting of its midpoint and radius. For the sake of mnemonic, say $(m,r)\coloneqq h([d,u])$, and call $(m,r)$ the \emph{half-plane representation}\index{interval domain!half-plane representation of} of $[d,u]$. Then the bijection is given by
\[m=\frac{u+d}{2},\qquad r=\frac{u-d}{2}\quad\qquad\text{and}\quad\qquad d=m-r,\quad u=m+r\]
Clearly the subset $\RR\ss\IR$ corresponds to the horizontal axis of $H$. Given a subset $S\ss|\IR|$, we also write $h(S)$ to mean the image of $S$ under $h$.

We can transport the specialization order and the way-below relation on $\IR$ across this bijection:
\begin{equation}\label{eqn:half_plane_order}
\begin{aligned}
	[d_1,u_1]&\sqss[d_2,u_2]&\text{iff}&&d_1\leq d_2&\leq u_2\leq u_1&\text{iff}&&|m_1-m_2|&\leq r_1-r_2\\
	[d_1,u_1]&\ll[d_2,u_2]&\text{iff}&&d_1< d_2&\leq u_2< u_1&\text{iff}&&|m_1-m_2|&< r_1-r_2
\end{aligned}
\end{equation}
We denote the order on the half-plane just as we do for points in the interval domain, writing $(m_1,r_1)\sqss(m_2,r_2)$ and $(m_1,r_1)\ll(m_2,r_2)$ in the above cases. The \emph{upper cone} of any $(m,r)\in H$ is denoted $\up(m,r)\coloneqq\{(m',r')\mid (m,r)\sqss(m',r')\}$. Similarly, the \emph{way-up closure}\index{closure!way-up} of $(m,r)$ is denoted $\upclose(m,r)\coloneqq\{(m',r')\mid (m,r)\ll(m',r')\}$. Both consist of points in the cone visually below $(m,r)$, as shown here:
\begin{equation}\label{eqn:up_way_up_cone}
\begin{tikzpicture}[baseline=(name)]
	\mypic{1}{2}{}{}
	\node [circle, inner sep=.5pt, draw, fill=black] at (M) {};
	\node [circle, inner sep=.5pt, draw, fill=black] at ($(L)!.3!(R)+(0,.3)$) (M') {};
	\node [above right = -7pt and -4pt of M, font=\tiny] {$(m,r)$};
	\node [above right = -5pt and -2pt of M', font=\tiny] {$(m',r')$};
	\node [above=10pt of rend, font=\footnotesize] (name) {$\up(m,r)$};
\end{tikzpicture}
\hspace{1in}
\begin{tikzpicture}[baseline=(name)]
	\mypic{1}{2}{dotted}{}
	\node [circle, inner sep=.5pt, draw, fill=black] at (M) {};
	\node [above right = -7pt and -4pt of M, font=\tiny] {$(m,r)$};
	\node [above=10pt of rend, font=\footnotesize] (name) {$\upclose(m,r)$};
\end{tikzpicture}
\end{equation}

\begin{warning}\label{warn:distinct_topologies}
Despite their underlying sets being isomorphic, we consider the interval domain $\IR$ and the half-plane $H$ to have distinct topologies: $\IR$ has the Scott topology and $H$ has the Euclidean topology. For every Scott-open set $U\ss\IR$, its image $h(U)$ is open in the Euclidean topology---giving us \cref{prop:H_to_IR_cont}---but the converse is not true.\index{topology!Scott}\index{topology!Euclidean} Thus if $h([d,u])=(m,r)$ then the sets $\upclose[d,u]$ and $\upclose(m,r)$ are isomorphic as sets, but they are not isomorphic as topological spaces.
\end{warning}

\begin{proposition}\label{prop:H_to_IR_cont}
Both $h$ and $h^{-1}$ are poset isomorphisms, and $h^{-1}\colon H\to\IR$ is continuous.
\end{proposition}
\begin{proof}
Since we defined the order on $H$ in terms of that on $\IR$ and the bijection $|\IR|\cong H$, the first claim is true by definition. For the second, note that the Scott topology has a basis $\{\upclose x\}_{x\in\IR}$ by \cref{prop:facts_on_IR}. Clearly the cone $\upclose(m,r)=\{\,(m',r')\mid |m-m'|<r-r'\,\}$ is open in the upper half-plane.
\end{proof}

\subsection{Open sets in $\IR$ as Lipschitz functions}\index{Lipschitz|(}

Recall that for any two metric spaces $(A,d_A)$ and $(B,d_B)$ and number $M\geq0$, an
\emph{$M$-Lipschitz} function is a function $f\colon A\to B$ such that for all $a_1,a_2\in A$, the
relation
\[
  d_B(f(a_1),f(a_2))\leq M*d_A(a_1,a_2)
\]
holds. The same definition makes sense for extended metrics (where we allow $d(x_1,x_2)=\infty$). We
will be interested in $1$-Lipschitz functions, which we just call \emph{Lipschitz}.

It is a standard fact that $M$-Lipschitz functions are absolutely continuous and hence
differentiable almost everywhere (non-differentiable on a set with Lebesgue measure 0), and that their
derivative is bounded between $-M$ and $M$.

Consider $\RR$ as a metric space in the usual way, i.e.\ $d(x_1,x_2)\coloneqq \abs{x_1-x_2}$, and
consider the space $\RRubhalf\coloneqq\RR_{\geq0}\cup\{\infty\}$ with the metric
\[
  d(y_1,y_2)\coloneqq
  \begin{cases}
    \abs{y_1-y_2}&\tn{if }y_1\in\RR_{\geq0}\text{ and }y_2\in\RR_{\geq0}\\
    \infty   &\tn{if }y_1\in\RR_{\geq0}\text{ and }y_2=\infty\\
    \infty   &\tn{if }y_1=\infty       \text{ and }y_2\in\RR_{\geq0}\\
    0        &\tn{if }y_1=\infty       \text{ and }y_2=\infty
  \end{cases}
\]
For any space $X$, the set of 1-Lipschitz functions $X\to\RRubhalf$ has the structure of a poset:
say that $f\leq g$ if $f(x)\leq g(x)$ for all $x$.

Our next goal is to show that the Scott open sets of $\IR$ are in one-to-one correspondence with the Lipschitz functions $\RR\to\RR^\infty_{\geq0}$; see \cref{thm:opens_Lipschitz}.

\begin{lemma}\label{lemma:open_Lipschitz}
  For any Scott open set $U\in\Omega_\IR$, the function $f_U\colon\RR\to\RRubhalf$, defined by
  \[
    f_U(m) \coloneqq \sup \{\,r\in\RRubhalf \mid (m,r) \in U\,\}
  \]
  is a 1-Lipschitz function and preserves order. Moreover, the following are equivalent:
  \begin{itemize}
    \item $U=\IR$,
    \item $f_U(m)=\infty$ for all $m\in\RR$,
    \item $f_U(m_0)=\infty$ for some $m_0\in\RR$.
  \end{itemize}
  And also the following are equivalent for all $m\in\RR$ and $r\in\RR_{\geq 0}$:
  \begin{itemize}
    \item $f_U(m)=r$
    \item $\upclose(m,r)\ss U$ and $(m,r)\notin U$.
  \end{itemize}
\end{lemma}
\begin{proof}
  Clearly $U\ss V$ implies $f_U(m)\leq f_V(m)$ for all $m$, so the assignment preserves order. We
  first prove the equivalences, and then show that $f=f_U$ is a 1-Lipschitz function at the end.

  For the first three, clearly the first implies the second, and the second implies the third.
  Suppose $f(m_0)=\infty$ for some $m_0\in\RR$. Then $U=\IR$ because for any $(m,r)$ we must have
  $(m_0,r+\abs{m-m_0})\in U$, which implies $(m,r)\in U$ since $U$ is up-closed.

  For the second pair, note that $\upclose(m,r)=\{(m',r')\mid \abs{m-m'}<r-r'\}$. It is easy to see
  that $\upclose(m,r)\ss U$ iff $r\leq f(m)$. By definition, Scott open sets are rounded, which
  implies $(m,r)\in U$ iff there is some $r'$ with $r<r'$ and $(m,r')\in U$. From this it is easy to
  see that $(m,r)\in U$ iff $r<f(m)$.

  It remains to check that $f$ is a 1-Lipschitz function. If $f(m)=\infty$ for any $m$ then $f$ is
  constant, and hence a 1-Lipschitz function. So suppose $f(m)\in\RR_{\geq 0}$ for all $m$; we want
  to show that $|f(m')-f(m)|\leq|m'-m|$ for all $m,m'\in\RR$. We may assume $m<m'$; we will show
  $f(m')-f(m)\leq m'-m$ and $f(m)-f(m')\leq m'-m$. For contradiction, suppose $m'-m+f(m)<f(m')$.
  Then $(m',m'-m+f(m))\in U$, which implies $(m,f(m))\in U$, which is false as shown above. The
  other case is similar.
\end{proof}

\begin{lemma}\label{lemma:Lipschitz_open}
  For any 1-Lipschitz function $f\colon\RR\to\RRubhalf$, the set $U_f\ss\IR$ defined by
  \[
    U_f\coloneqq\{(m,r)\in\IR\mid r<f(m)\}
  \]
  is Scott-open.
\end{lemma}
\begin{proof}
  To show that $U=U_f$ is Scott open, we must show that $(m,r)\in U$ iff there exists $(m',r')\in U$
  such that $\abs{m'-m}<r'-r$. The forward direction is easy: if $r<f(m)$ then there exists $r'$
  such that $r<r'<f(m)$, and the conclusion follows by letting $m'\coloneqq m$. For the backward
  direction, if $(m',r')$ satisfies $r'<f(m')$ and $|m'-m|\leq r'-r$, then
  $f(m')-f(m)\leq\abs{f(m')-f(m)}\leq\abs{m'-m}\leq r'-r$, so $r<f(m)$ and hence $(m,r)\in U$.
\end{proof}

\begin{theorem}\label{thm:opens_Lipschitz}
The assignments $U\mapsto f_U$ from \cref{lemma:open_Lipschitz} and $f\mapsto U_f$ from \cref{lemma:Lipschitz_open} are mutually inverse, giving an isomorphism between the poset $\Omega_\IR$ of open sets in the interval domain and the poset of 1-Lipschitz functions $\RR\to\RRubhalf$. 
\end{theorem}
\begin{proof}
It is straightforward to check that these two constructions are mutually inverse.
\end{proof}

Subsets of the form $\upclose[d,u]$ form a basis of $\IR$, by \cref{prop:basic_facts_domains}, so it is useful to consider its poset $\Omega(\upclose[d,u])$ of open subsets.

\begin{definition}[Dyck path]\label{def:dyck}\index{Dyck path}
  For $d\leq u$, let $[d,u]\ss\RR$ denote the closed interval in the usual topology. A
  \emph{(continuous) Dyck path on $[d,u]$} is a function $D\colon[d,u]\to\RR_{\geq 0}$ such that
  \begin{itemize}
    \item $D(d)=0=D(u)$, and
    \item $|D(x')-D(x)|\leq |x'-x|$ for all $x,x'\in[d,u]$.
  \end{itemize}
  The second condition is equivalent to saying that $D$ is a 1-Lipschitz function. The order is
  given by $D\leq D'$ iff $D(x)\leq D'(x)$ for all $x\in[d,u]$.
\end{definition}

\begin{corollary}\label{cor:opens_Dyck}
For any $d\leq u$, there is an isomorphism of posets between $\Omega(\upclose[d,u])$ and the poset of Dyck paths on $[d,u]$.
\end{corollary}
\begin{proof}
By \cref{thm:opens_Lipschitz}, $\{U\in\Omega_\IR\mid U\ss\upclose[d,u]\}$ is isomorphic to the set
\[\{f\colon\RR\to\RRubhalf\mid f\leq f_{\upclose[d,u]}\text{ and }f\text{ is a 1-Lipschitz function}\}\]
where $f_{\upclose[d,u]}$ is the Lipschitz function associated to the open set ${\upclose[d,u]}$ as in \cref{lemma:open_Lipschitz}. It is easy to see that $f_{\upclose[d,u]}$ is supported on the closed interval $[d,u]$, and in fact that it is the largest 1-Lipschitz function supported on this interval. Thus the above set is indeed isomorphic to the set of length-$\ell$ Dyck paths.
\end{proof}

Here are pictures of two Dyck paths on the interval $[d,u]$:
\[
\begin{tikzpicture}[x=1cm, y=1cm, text height=1.5ex, text depth=.25ex, baseline=(d)]
	\mypic{.5}{3}{dotted}{\tiny|};
	\node at ($(L)+(0,-2ex)$) (d) {$d$};
	\node at ($(R)+(0,-2ex)$) (u) {$u$};
\end{tikzpicture}
\hspace{1in}
\begin{tikzpicture}[x=1cm, y=1cm, text height=1.5ex, text depth=.25ex, baseline=(d)]
	\tikzmath{
		real \margin, \length;
		let \margin = .5;
		let \length = 3;
	}
	\node (lend) {};
	\node [right=\margin of lend] (L) {\tiny|};
	\node [right=\length of L] (R) {\tiny|};
	\node [right=\margin of R] (rend) {};
	\filldraw[fill=black!15, draw=black, dotted] 
		(L.center) to[bend left] +(1.1,.4) to[bend left] +(.7,-.4) -- +(.3,0) -- +(.7,.4) -- (R.center) --cycle;
	\draw[thick, <->] (lend) -- (rend);
	\node at ($(L)+(0,-2ex)$) (d) {$d$};
	\node at ($(R)+(0,-2ex)$) (u) {$u$};
\end{tikzpicture}
\]

\begin{remark}\index{Dyck path!discrete}
A \emph{discrete Dyck path} is a path in the quadrant $\NN\times\NN$ from $(0,0)$ to $(n,n)$, for some $n\in\NN$, such that every edge has length 1 and is oriented either upward or righward. Discrete Dyck paths are discussed in combinatorics literature because the number of discrete Dyck paths for varying $n$ is the $n$th Catalan number. Just as continuous Dyck paths represent the subobject classifier for sheaves on $\IR$, it is similarly the case that discrete Dyck paths represent the subobject classifier for presheaves on the category $\Tw{\ZZ,\leq}$ of intervals in $\ZZ$. Discrete Dyck paths have also been discussed in the context of temporal logic; see \cite{Ferrari:2016a}.
\end{remark}
\index{Lipschitz|)}

\subsection{Real-valued functions on $\IR$}\label{sec:real_valued_IR}

Recall that the underlying sets of the upper half-plane $H$ and the interval domain $\IR$ are in bijection, but we endow $H$ with the Euclidean topology and $\IR$ with the Scott topology; see \cref{warn:distinct_topologies}. In this section we use the function $h^{-1}\colon H\to\IR$ from \cref{prop:H_to_IR_cont}, which sends $(m,r)\mapsto[m-r,m+r]$, to better understand continuous functions from $\IR$ to various sorts of real-number objects.

Let $\LR$ denote the space of \emph{lower real numbers}\index{real number!lower}. Its underlying set is $\RR\cup\{\infty\}$ and a subset $U\ss\LR$ is open if either $U=\LR$ or there exists some $r\in\LR$ such that $U=\{r'\in\LR\mid r<r'\}$; in particular, $\{\infty\}$ is not open. In fact $\LR$ forms a domain; (see \cref{ex:RId_predomain}) the above topology coincides with the Scott topology, the usual $\leq$ coincides with the domain order $\sqss$, and the usual $<$ coincides with the way-below relation $\ll$.\index{inequality}\index{way-below}

For topological spaces $X,Y$, let $\Top(X,Y)$ denote the set of continuous maps $X\to Y$.

\begin{proposition}\label{prop:H_to_LR}
Let $H$ denote the upper half-plane, endowed with the $\sqss$ partial order from \cref{eqn:half_plane_order}. The function $H\to\IR$ induces an injection $\Top(\IR,\LR)\to\Top(H,\LR)$, whose image is precisely $\{\,f\in\Top(H,\LR)\mid f\text{ is monotonic}\,\}$.
\end{proposition}
\begin{proof}
Since $H\to\IR$ is continuous and bijective on points, composing with it induces the injection above. Certainly anything in the image is monotonic, because Scott-continuous functions are. So we need to show that if $f\colon H\to\LR$ is continuous and monotonic then it is Scott-continuous. By \cref{prop:continuous_map}, a function $f\colon\IR\to\LR$ is Scott-continuous iff $f(x)=\sup\{f(y)\mid y\ll x\}$ for all $x\in\IR$. 

Suppose $f$ is continuous and monotonic and take $x\in\IR$. If $f(x)<\infty$, then we want to show that for any $\epsilon>0$ there exists $y\ll x$ with $f(x)-\epsilon<y$. By monotonicity, this is equivalent to showing there exists $y\ll x$ with $|y-f(x)|<\epsilon$. But the $\epsilon$-ball around $f(x)$ is open, so its preimage $U$ is an open neighborhood of $x$ in $H$, and any open neighborhood of $x$ in $H$ intersects the open cone $\upclose x$ (see \cref{eqn:up_way_up_cone}), and we are done. If instead $f(x)=\infty$, we want to show that for any $N>0$ there exists $y\ll x$ with $N<y$; the proof is similar.
\end{proof}

Let $\UR$ denote the space of \emph{upper real numbers}\index{real number!upper}. Its underlying set is $\RR\cup\{-\infty\}$ and a set $U\ss\UR$ is open if either $U=\UR$ or there exists some $r\in\LR$ such that $U=\{r'\in\LR\mid r'<r\}$; in particular, $\{-\infty\}$ is not open. In fact $\UR$ forms a domain; (see \cref{ex:RId_predomain}) the above topology coincides with the Scott topology, the usual $\geq$ coincides with the domain order $\sqss$, and the usual $>$ coincides with the way-below relation $\ll$.

The proof of the following proposition is analogous to that of \cref{prop:H_to_LR}. Note that $f\colon H\to\UR$ is monotonic iff $x\sqss x'$ implies $f(x)>f(x')$, in keeping with the order on $\UR$.

\begin{proposition}
Let $H$ denote the upper half-plane, endowed with the above $\sqss$ partial order. The function $H\to\IR$ induces an injection $\Top(\IR,\UR)\to\Top(H,\UR)$, whose image is precisely $\{\,f\in\Top(H,\UR)\mid \forall x, x'\in H\ldotp x\leq x'\imp f(x)>f(x')\,\}$. 
\end{proposition}

For any space $X$ and function $f\colon X\to\RR\cup\{\infty\}$, one says that $f$ is \emph{lower semi-continuous}\index{function!lower semi-continuous}\index{function!upper semi-continuous} (respectively, $f\colon X\to\RR\cup\{-\infty\}$ is \emph{upper semi-continuous}) if it is continuous as a map to the lower reals $\LR\cong\RR\cup\{\infty\}$ (respectively, to the upper reals $\UR\cong\RR\cup\{-\infty\}$). A more intuitive way to think about a lower semi-continuous function $f$ is that for every point $x_0\in X$ and number $\epsilon>0$, there is a basic open neighborhood $U$ of $x_0$ such that $-\epsilon<f(x)-f(x_0)<\infty$ for all $x\in U$, i.e.\ $f$ may increase discontinuously, but it decreases continuously.

Let $\II$ denote the space of \emph{extended intervals}\index{interval!extended}, given by the product $\II=\LR\times\UR$. Then $\IR$ and $\RR$ can be identified with the following subsets of $\II$:
\[
\IR\cong\{(x,y)\in\II\mid x\leq y\}
\qquad\text{and}\qquad
\RR\cong\{(x,y)\in\II\mid x=y\}
\]

We have proved the following.

\begin{corollary}\label{cor:sheaf_IR_Dedekind}
  Choose $[d,u]\in\IR$ and let $(m,r)\in H$ be the corresponding point in $H$. The Scott-open
  $\upclose[d,u]\subseteq\IR$ corresponds to the subspace $\upclose(m,r)\subseteq H$ (which we
  consider with the Euclidean topology). Then we have isomorphisms\index{topology!Scott}\index{topology!Euclidean}
  \begin{align*}
    &\Top(\upclose[d,u],\LR) \cong \{\,f\colon \upclose(m,r)\to\RR\cup\{\infty\} \mid
      f\text{ is lower semi-continuous and order-preserving}\,\} \\
    &\Top(\upclose[d,u],\UR) \cong \{\,f\colon \upclose(m,r)\to\RR\cup\{-\infty\} \mid
      f\text{ is upper semi-continuous and order-reversing}\,\} \\
    &\Top(\upclose[d,u],\II) \cong \Top(\upclose[d,u],\LR) \times \Top(\upclose[d,u],\UR) \\
    &\Top(\upclose[d,u],\IR) \cong \{\,(\ubar{f},\bar{f})\in\Top(\upclose[d,u],\II) \mid
      \forall x\in \upclose(m,r)\ldotp \ubar{f}(x) \leq \bar{f}(x)\,\} \\
    &\Top(\upclose[d,u],\RR) \cong \{\,(\ubar{f},\bar{f})\in\Top(\upclose[d,u],\II) \mid
      \forall x\in \upclose(m,r)\ldotp \ubar{f}(x) = \bar{f}(x)\,\} \cong \RR.
  \end{align*}
\end{corollary}

In the last case, we have $\Top(\upclose[d,u],\RR)\iso\RR$ because $\ubar{f}=\bar{f}$ is both
order-preserving and order-reversing, and therefore constant since for any $x,y\in \upclose(m,r)$,
$\bar{f}(x)=\bar{f}(x\wedge y)=\bar{f}(y)$.

\index{half-plane|)}
\index{interval domain|)}

\section{Grothendieck posites}\label{sec:proof_IR_sheaves}

In this section, specifically in \cref{thm:IR_sheaves_proven}, we give the proof of
\cref{thm:IR_sheaves}. This material is not necessary for the rest of the book, so readers can feel
free to skip to \cref{chap:translation_invariance}.

\subsection{Basic theory of Grothendieck posites and dense morphisms}

\begin{definition}[Grothendieck posite]\label{def:Grothendieck_posite}
  Let $(S,\leq)$ be a poset. A \emph{Grothendieck coverage}\index{coverage!Grothendieck} on $S$ is a relation
  ${\basicCover}\subseteq P(S)\times S$ such that:
  \begin{itemize}
    \item If $V\basicCover u$ then $V\subseteq\specdownclose u$.
    \item If $\specdownclose V\basicCover u$, then $V\basicCover u$.
    \item If $V\basicCover u$ and $u'\leq u$, then $(\specdownclose V\cap\specdownclose u')
      \basicCover u'$.
    \item $\{u\}\basicCover u$ for all $u\in S$.
    \item If $W\basicCover u$ and $V\subseteq\specdownclose u$ such that for all $w\in W$,
      $(V\cap\specdownclose w)\basicCover w$, then $V\basicCover u$.
  \end{itemize}
  A \emph{Grothendieck posite}\index{posite!Grothendieck} is a poset equipped with a Grothendieck
  coverage. A Grothendieck coverage is in particular a coverage---in the sense of
  \cref{def:coverage}---so a Grothendieck posite has an underlying posite. A $(0,1)$-sheaf on a
  Grothendieck posite is simply a $(0,1)$-sheaf on the underlying posite; see \cref{def:poset_ideal}.
\end{definition}

\begin{remark}\label{rmk:Grothendieck_properties}
  It is easily shown that a Grothendieck posite additionally satisfies:
  \begin{itemize}
    \item If $V\basicCover u$ and $V\subseteq V'\subseteq \specdownclose u$, then $V'\basicCover u$.
    \item If $V\basicCover u$ and $V'\basicCover u$, then $(\specdownclose V \cap \specdownclose
      V')\basicCover u$.
  \end{itemize}
Furthermore, if $S$ is a poset, and we have a family of Grothendieck coverages $\{\basicCover_i\}_{i\in I}$
indexed by an arbitrary set $I$, then one checks directly from \cref{def:Grothendieck_posite} that the intersection $\basicCover$, defined by
$V\basicCover u$ iff $V\basicCover_i u$ for all $i\in I$, is also a Grothendieck coverage.
\end{remark}

\begin{lemma}\label{lem:poset_right_Kan}
  Let $S$ and $T$ be posets, and let $D(S)$
  denote the set of down-closed subsets of $S$, and similarly for $D(T)$. Any order-preserving map $F\colon S\to T$ induces an adjunction
  $F_* \dashv F^!$, where $F_*\colon D(T)\to D(S)$ and $F^!\colon D(S)\to D(T)$ are defined 
  \begin{equation}\label{eqn:F_upper_lower}
    F_*(J)=F^{-1}(J) \coloneqq \{ u\in S \mid Fu\in J \}
    \qquad\text{and}\qquad
    F^!(I) \coloneqq \{v\in T \mid F^{-1}(\specdownclose v) \subseteq I\}.
  \end{equation}
\end{lemma}
\begin{proof}
  This is easily checked directly. Alternatively, note that if we consider $S$ and $T$ as
  categories, then $D(S)$ and $D(T)$ are the functor categories to the free-arrow category $2$, $F_*$ is
  precomposition by $F$, and $F^!$ is right Kan extension along $F$.
\end{proof}

Recall from \cref{def:poset_ideal} the sheaf condition for down-closed subsets $I$ of a posite: If $V\basicCover u$ and $V\subseteq I$, then $u\in I$.\index{sheaf condition}

\begin{lemma}\label{lem:posite_sheafification}\index{sheafification!$(0,1)$}
  For any Grothendieck posite $S$, the inclusion $\PoShv{S}\hookrightarrow D(S)$ has a left adjoint
  $a\colon D(S)\to\PoShv{S}$, defined $a(I) \coloneqq \{u\in S \mid (I\cap\specdownclose u)\basicCover u\}$.
\end{lemma}
\begin{proof}
  First we check that $a(I)$ is a $(0,1)$-sheaf. For down-closure, suppose $u'\leq u\in a(I)$. Then $(I\cap\specdownclose
  u)\basicCover u$, so $(I\cap\specdownclose u') = (I\cap\specdownclose u\cap\specdownclose
  u')\basicCover u'$ by \cref{def:Grothendieck_posite}, and hence $u'\in a(I)$. For the sheaf condition, if $W\basicCover u$ and $W\subseteq a(I)$, then for all $w\in W$ we have $\down w\ss\down u$ and $(I\cap\specdownclose w)\basicCover w$, so $(I\cap\specdownclose u\cap\specdownclose w) =
  (I\cap\specdownclose w)\basicCover w$. Hence $(I\cap\specdownclose u)\basicCover
  u$ and $u\in a(I)$.

  Finally, it is simple to check that $I\subseteq a(I)$ for any $I\in D(S)$, and that $a(J)=J$ for
  any $J\in\PoShv{S}$, showing that $a$ is left adjoint to the inclusion.
\end{proof}

\begin{remark}\label{rem:PoShv_complete}
  It follows from \cref{lem:posite_sheafification} that the inclusion $\PoShv{S}\hookrightarrow
  D(S)$ preserves all existing meets, and indeed it is simple to check that $\PoShv{S}$ is closed
  under arbitrary intersections, so that $\PoShv{S}$ is a complete lattice. The join of a family $X\subseteq\PoShv{S}$ can be constructed $\sup X = \bigcap\{I\in\PoShv{S}\mid \forall x\in X\ldotp x\ss I\}$. It also follows that this join can be constructed $\sup X = a(\bigcup X)$. Recall from \cref{ex:canonical_coverage_frame}, that the canonical
  coverage on $\PoShv{S}$ is given by $X\basicCover J\iff\sup X = J$.
\end{remark}

\begin{definition}
  Let $(S,\basicCover)$ be a posite. We define a Grothendieck coverage $\widehat{\basicCover}$ on $S$,
  called the \emph{Grothendieck completion} of $\basicCover$, by taking $\widehat{\basicCover}$ to be
  the intersection of the set of Grothendieck coverages containing $\basicCover$.
\end{definition}

The next proposition shows that taking the Grothendieck completion of a posite doesn't change the
$(0,1)$-sheaves.

\begin{proposition}\label{prop:completion_PoShv_iso}
  Let $(S,\basicCover)$ be a posite. Let $\overline{\basicCover}$ be any coverage on $S$ containing
  $\basicCover$ and contained in the Grothendieck completion $\widehat{\basicCover}$, i.e.\ satisfying
  \begin{itemize}
    \item $V\basicCover u$ implies $V\mathop{\overline{\basicCover}} u$, and
    \item for any Grothendieck coverage $\widetilde{\basicCover}$ containing $\basicCover$,
      if $V\mathop{\overline{\basicCover}}u$ then $V\mathop{\widetilde{\basicCover}}u$.
  \end{itemize}
  Let $I\subseteq S$ be any down-closed subset. Then $I$ satisfies the sheaf condition for $\basicCover$ iff
  it satisfies the sheaf condition for $\overline{\basicCover}$.
  
  Hence if $\hat{S}$ is the posite with
  the same underlying poset as $S$, but with coverage $\widehat{\basicCover}$, then
  $\PoShv{S}\iso\PoShv{\hat{S}}$. Similarly, $\Shv{S}\iso\Shv{\hat{S}}$.
\end{proposition}
\begin{proof}
  Fix a down-closed $I\ss S$. Because $\overline{\basicCover}$ contains $\basicCover$, it is clear that if $I$ satisfies the sheaf
  condition for $\overline{\basicCover}$, then it does for $\basicCover$ as well. For the converse,
  suppose $I$ satisfies the sheaf condition for $\basicCover$. We define a coverage $\widetilde{\basicCover}$ by
  \[
    V\mathop{\widetilde{\basicCover}}u \iff (V\ss\down u)\wedge\forall(v\leq u)\ldotp (\specdownclose V\cap\specdownclose
    v)\subseteq I \imp v\in I.%
    \footnote{(We will neither need this nor prove this, but $\widetilde{\basicCover}$ is the largest coverage for which $I$ is a sheaf.)}
  \]
  Then $\widetilde{\basicCover}$ contains $\basicCover$; indeed, suppose $V\basicCover u$. Then $V\ss\down u$ and if $v\leq u$ then there exists a $V'\subseteq(\specdownclose V\cap\specdownclose v)$ such that $V'\basicCover v$ by \cref{def:coverage}. So
  if $(\specdownclose V\cap\specdownclose v)\subseteq I$, then $V'\subseteq I$ and so $v\in I$.

  Moreover, $\widetilde{\basicCover}$ is a Grothendieck coverage. We verify the last condition, leaving
  the other, simpler, checks to the reader. Suppose $W\mathop{\widetilde{\basicCover}} u$ and
  $V\subseteq\specdownclose u$, such that for all $w\in W$, $(V\cap\specdownclose
  w)\mathop{\widetilde{\basicCover}} w$; we want to show $V\mathop{\widetilde{\basicCover}} u$. Taking
  any $v\leq u$, we claim that $(\specdownclose V\cap\specdownclose v)\subseteq I$ implies
  $(\specdownclose W\cap\specdownclose v)\subseteq I$, which would then imply that $v\in I$ since
  $W\mathop{\widetilde{\basicCover}} u$. To see the claim, if $x\leq w\in W$ and $x\leq v$, then by
  assumption $(V\cap\specdownclose w)\mathop{\widetilde{\basicCover}} w$, so $(\specdownclose
  V\cap\specdownclose w\cap\specdownclose x)\subseteq(\specdownclose V\cap\specdownclose v)\subseteq
  I$ implies $x\in I$. Hence $\widetilde{\basicCover}$ is a Grothendieck coverage containing
  $\basicCover$.
  
  We have assumed $I$ satisfies the sheaf condition for $\basicCover$, and we want to show that
  it does so for $\overline{\basicCover}$, i.e. that $V\mathop{\overline{\basicCover}} u$ and $V\subseteq I$ implies $u\in I$. But
  $V\mathop{\overline{\basicCover}} u$ implies $V\mathop{\widetilde{\basicCover}}$, so if $V\subseteq I$ then
  $(\specdownclose V\cap\specdownclose u)=\specdownclose V\subseteq I$, hence $u\in I$.

  The proof of the last claim is analogous; see also \cite[Prop.\ C2.1.9]{Johnstone:2002a}.
\end{proof}

The following definition is adapted from \cite{Shulman:2012a}:

\begin{definition}\label{def:dense_posite_morphism}\index{posite!dense morphism of|(}\index{dense!posite morphism|see {posite, dense morphism of}}
  Let $F\colon S\to T$ be an order-preserving map between Grothendieck posites. Call $F$ a
  \emph{dense morphism of posites} if
  \begin{itemize}
    \item for all $u\in S$ and $V\subseteq\specdownclose u$, $V\basicCover u$ if and only if
      $FV\basicCover Fu$,
    \item for all $u\in T$, $(FS\cap\specdownclose u)\basicCover u$,
    \item for all $u,v\in S$, if $Fu\leq Fv$ then $(\specdownclose u\cap\specdownclose v)\basicCover
      u$.
  \end{itemize}
\end{definition}

\begin{proposition}\label{prop:dense_morphism_to_iso}
  A dense morphism of Grothendieck posites $F\colon S\to T$ induces an isomorphsim
  $\PoShv{S}\iso\PoShv{T}$.
\end{proposition}
\begin{proof}
  From \cref{lem:poset_right_Kan} we have a map $F_*\colon D(T)\to D(S)$ and its right adjoint
  $F^!$, and it easy to verify that $F_*$ restricts to a map $F_*\colon\PoShv{T}\to\PoShv{S}$.

  Given any sheaf $J\in\PoShv{T}$, we first show $J=F^!F_* J$. The containment $J\subseteq F^!F_*J$ is the
  unit of the adjunction. In the other direction, suppose $v\in F^!F_*J$. Unwinding \cref{eqn:F_upper_lower}, for any $u\in S$, $Fu\leq
  v$ implies $Fu\in J$; hence $(FS\cap\specdownclose
  v)\subseteq J$. Then $(FS\cap\specdownclose v)\basicCover v$ because $F$ is dense, and we obtain $v\in J$ by the sheaf condition. Thus indeed $J=F^!F_* J$.

  Similarly, for any $I\in\PoShv{S}$, we have $I=F_*F^! I$, where $F_*F^! I\subseteq I$ is simply
  the counit of the adjunction. To show the converse $I\ss F_*F^!I$, choose $u\in I$; we need to show $F^{-1}(\down Fu)\ss I$, so choose also $u'\in S$ with $Fu'\leq Fu$. Clearly $(\specdownclose
  u'\cap\specdownclose u)\subseteq I$, and by density $(\specdownclose u'\cap\specdownclose u)\basicCover u'$, hence $u'\in I$ by the sheaf condition on $I$.

All that remains is to verify that $F^!(I)$ is a $(0,1)$-sheaf for any $I\in\PoShv{S}$. We already know it is down-closed by \cref{lem:poset_right_Kan}, so suppose $V\subseteq F^!(I)$ and $V\basicCover v$. We want to show that $v\in F^!(I)$, so
  consider $u\in S$ with $Fu\leq v$; we want to show $u\in I$. Since $\basicCover$ is a Grothendieck coverage, $V\basicCover v$ and $Fu\leq v$ imply $(\specdownclose V \cap
  \specdownclose Fu)\basicCover Fu$, which implies $(FS \cap \specdownclose V \cap \specdownclose Fu) \basicCover Fu$ by density and \cref{rmk:Grothendieck_properties}. But $(FS \cap \specdownclose V \cap \specdownclose Fu) = F(F_*(\specdownclose V) \cap
  F_*(\specdownclose Fu))$, so $(F_*(\specdownclose V) \cap F_*(\specdownclose Fu)) \basicCover u$.
  Since $(F_*(\specdownclose V) \cap F_*(\specdownclose Fu)) \subseteq F_*(\specdownclose
  V) \subseteq F_*F^!(I) = I$, we have $u\in I$. Hence $v\in F^!(I)$.
\end{proof}

Recall from \cref{rem:PoShv_complete} the canonical coverage on $\PoShv{S}$ and the associated sheaf functor $a\colon D(S)\to\PoShv{S}$ from \cref{lem:posite_sheafification}.\index{sheafification!$(0,1)$}

\begin{proposition}\label{prop:posite_dense_inclusion}
  For any Grothendieck posite $S$, the map $i\colon S\to\PoShv{S}$ given by
  \[
    i(u) \coloneqq a(\specdownclose u) = \{ v\in S \mid (\specdownclose u \cap \specdownclose v)\basicCover
    v \}
  \]
  defines a dense morphism of posites, regarding $\PoShv{S}$ with its canonical coverage.
\end{proposition}
\begin{proof}
  First note that for $I\in\PoShv{S}$ and $v\in S$, we have $iv\ss I$ iff $v\in I$. To show that $i$ is a dense morphism, there are three conditions to check. First, consider $u\in S$, $V\subseteq\specdownclose u$,
  and we show that $V\basicCover u$ iff $iV\basicCover iu$, where $iV\coloneqq\{iv\mid v\in V\}$. 
  Since $V\ss \down u\ss iu$, we have $iV\ss iu$, so by \cref{rem:PoShv_complete},
  \[\textstyle
    \sup(iV) 
    = 
    \bigcap\{I\in\PoShv{S}\mid \forall v\in V\ldotp iv\subseteq I\}
    =
    \bigcap\{I\in\PoShv{S}\mid V\ss I\}
  \]
  and $iV\basicCover
  iu$ iff $\bigcap\{I\mid V\ss I\}=iu$. One containment is automatic, so $iV\basicCover iu$ iff $iu\ss\bigcap\{I\mid V\ss I\}$, and this holds iff for each $I\in\PoShv{S}$, $V\subseteq I$ implies $u\in I$. Thus by the sheaf condition,
  $V\basicCover u$ implies $iV\basicCover iu$. Conversely, suppose $V\subseteq I$ implies $u\in I$
  for any $I\in\PoShv{S}$. Then $V\subseteq a(\specdownclose V)$ implies $u\in a(\specdownclose V)$,
  hence $\specdownclose V = (\specdownclose V \cap \specdownclose u)\basicCover u$, and therefore
  $V\basicCover u$.

  For the second condition, we want to show that for any $I\in\PoShv{S}$, $(iS\cap\specdownclose
  I)\basicCover I$, i.e.\ $I = \sup \{ iu \mid u\in I \}$, which is clear.

  For the third and last condition, we want to show that for any $u,v\in S$, if $iu\subseteq iv$
  then $(\specdownclose u\cap\specdownclose v)\basicCover u$. But $u\in iu\subseteq iv$ implies
  $(\specdownclose u\cap\specdownclose v)\basicCover u$ by definition.
\end{proof}

\begin{proposition}\label{prop:sheaves_on_posite}
  For any posite $S$, consider the frame $\PoShv{S}$ with the canonical coverage. Then $\Shv{S}\iso\Shv{\PoShv{S}}$.
\end{proposition}
\begin{proof}
  Let $\hat{S}$ be the Grothendieck completion of $S$ as in \cref{prop:completion_PoShv_iso}, by which we obtain isomorphisms $\PoShv{S}\iso\PoShv{\hat{S}}$ and
  $\Shv{S}\iso\Shv{\hat{S}}$. Then \cref{prop:dense_morphism_to_iso,prop:posite_dense_inclusion}
  together give the middle isomorphism in
  \[
    \Shv{S}\iso\Shv{\hat{S}}\iso\Shv{\PoShv{\hat{S}}}\iso\Shv{\PoShv{S}}.\qedhere
  \]
\end{proof}

\index{posite!dense morphism of|)}

\subsection{Equivalence of the various toposes}

We construct two Grothendieck coverages associated to a predomain $B$, then show that they are the
Grothendieck completions of the posites $S_B$ and $S'_B$ from
\cref{prop:predomain_posite,prop:down_closed_up_closed}.

\begin{proposition}\label{prop:predomain_Grothendieck_posite}\index{posite!from a predomain}
  Let $(B,\prec)$ be a predomain and $\upspecord$ its upper specialization order. Then there is a
  Grothendieck posite $\hat{S}_B$ with underlying poset $(B,\upspecord)\op$, and with
  coverage\index{coverage} defined by $V\basicCover b$ iff $\upclose b\subseteq \specupclose
  V\subseteq\specupclose b$.
\end{proposition}
\begin{proof}
  We only need to prove that $\basicCover$ is a Grothendieck coverage, i.e.\ the five conditions of
  \cref{def:Grothendieck_posite}. The first four conditions are clear. For the final condition, suppose
  $W\basicCover b$, $V\subseteq\specupclose b$, and for all $w\in W$, $(V\cap\specupclose
  w)\basicCover w$. Thus $\upclose b\subseteq \specupclose W\subseteq\specupclose b$, and for all
  $w\in W$, $\upclose w\subseteq \specupclose V$. Then we want to show that $V\basicCover b$, i.e.\
  $\upclose b\subseteq \specupclose V$. Clearly $\upclose w\subseteq\specupclose V$ for all
  $w\in\specupclose W$, hence by the interpolative property of $\ll$ (see \cref{prop:basic_facts_domains}), we have $\upclose b\subseteq \bigcup_{w\in\specupclose W}\upclose w \subseteq
  \specupclose V$.
\end{proof}

Recall from \cref{prop:down_closed_up_closed}\index{specialization order!on a predomain!lower}
  that for a predomain $(B,\prec)$, we say lower specialization implies upper specialization if $b\specord b'$ implies $b\upspecord b'$ for all $b,b'\in B$.

\begin{proposition}
\label{prop:Grothendieck_down_closed_up_closed}\index{specialization order!on a predomain!lower}
  Suppose that for a predomain $(B,\prec)$, lower specialization implies upper specialization. Then there is a Grothendieck posite
  $\hat{S}'_B$ with underlying poset $(B,\specord)\op$, and with coverage defined by $V\basicCover
  b$ iff $\upclose b\subseteq \specupclose V\subseteq \{b'\in B\mid b\specord b'\}$, and there is an
  isomorphism
  \[
    \PoShv{\hat{S}_B} \iso \PoShv{\hat{S}'_B}
  \]
\end{proposition}
\begin{proof}
  The proof that $\basicCover$ defines a Grothendieck coverage is essentially the same as
  \cref{prop:predomain_posite}.

  To see that $\PoShv{(B,\upspecord)\op} \iso \PoShv{(B,\specord)\op}$, we show that the identity on
  elements map $(B,\specord)\op\to(B,\upspecord)\op$ is a dense morphism of sites. By assumption,
  $b\specord b'$ implies $b\upspecord b'$, so this map is order-preserving. Let us write
  ${\bar{\uparrow}}b = \{b'\in B\mid b\specord b'\}$ and $\specupclose b = \{b'\in B\mid b\upspecord
  b'\}$, so we have ${\bar{\uparrow}}b\subseteq\specupclose b$ for any $b\in B$, and we similarly
  write $\overline{\basicCover}$ for the coverage on $(B,\specord)\op$ and $\basicCover$ for the coverage
  on $(B,\upspecord)\op$. Then for any $V\subseteq{\bar{\uparrow}}b$, we clearly have
  $V\mathrel{\overline{\basicCover}} b$ iff $V\basicCover b$.

  The remaining two conditions of \cref{def:dense_posite_morphism} are similarly trivial to verify.
  The proposition then follows from \cref{prop:dense_morphism_to_iso}.
\end{proof}

\begin{proposition}\label{prop:predomain_posite_completion}
  For any predomain $B$, the Grothendieck posite $\hat{S}_B$ from \cref{prop:predomain_Grothendieck_posite} is the Grothendieck completion of the
  posite $S_B$ defined in \cref{prop:predomain_posite}. Similarly, if $b\specord b'$ implies
  $b\upspecord b'$, then $\hat{S}'_B$ is the Grothendieck completion of $S'_B$.
\end{proposition}
\begin{proof}
  Let us temporarily write $\widehat{\basicCover}$ for the coverage on $\hat{S}_B$, and $\basicCover$
  for the coverage on $S_B$. Clearly $V\basicCover b$ implies $V\mathop{\widehat{\basicCover}} b$.
  Suppose $\widetilde{\basicCover}$ is any Grothendieck coverage containing $\basicCover$, i.e.\ such
  that $\upclose b\mathop{\widetilde{\basicCover}} b$ for any $b\in B$. Then if
  $V\mathop{\widehat{\basicCover}} b$, i.e.\ if $\upclose b\subseteq \specupclose V\subseteq\specupclose
  b$, then $\specupclose V\mathop{\widetilde{\basicCover}} b$ by \cref{rmk:Grothendieck_properties},
  hence $V\mathop{\widetilde{\basicCover}}b$. Thus $\widetilde{\basicCover}$ contains $\widehat{\basicCover}$,
  so $\widehat{\basicCover}$ is the smallest Grothendieck coverage containing $\basicCover$.

  The second case is similar.
\end{proof}

\begin{corollary}\label{cor:S_iso_S'}
  Let $B$ be a predomain for which lower specialization implies upper specialization. Then
  $\PoShv{S_B} \iso \PoShv{S'_B}$.
\end{corollary}
\begin{proof}
  This follows from
  \cref{prop:Grothendieck_down_closed_up_closed,prop:predomain_posite_completion,prop:completion_PoShv_iso}.
\end{proof}

The following theorem was stated without proof in \cref{thm:IR_sheaves}; we can now prove the equivalence of 1,2, and 3, and sketch the proof that these are equivalent to 4.

\begin{theorem}\label{thm:IR_sheaves_proven}
  The following categories are equivalent:
  \begin{enumerate}
    \item The category of sheaves $\Shv{\Opens(\IR)}$ on the frame of opens of $\IR$, with the
      canonical coverage $\{V_i\}_{i\in I}\basicCover U$ iff $\bigcup_{i\in I} V_i=U$.
    \item The category of sheaves $\Shv{S'_{\IR}}$ on the interval posite $S_{\IR}$.
    \item The category of sheaves $\Shv{S^\QQ_{\IR}}$ on the rational interval posite $S^\QQ_{\IR}$.
    \item The category of $\Cont{\IR}$ of \emph{continuous functors} $\IR\to\Cat{Set}$, i.e.\
      functors taking directed suprema in $\IR$ to colimits in $\Cat{Set}$.
  \end{enumerate}
\end{theorem}
\begin{proof}
  The proof of the equivalence $\Shv{\Opens(\IR)} \iso \Cont{\IR}$ is sketched in
  \cref{rmk:sheaves_cont_functors}. For the rest, recall from \cref{def:interval_posite} that $S^\QQ_{\IR} = S_{\IRpre}$ and that $S_{\IR}=S'_{\IR}$, as defined in \cref{prop:down_closed_up_closed}. It follows from
  \cref{prop:predomain_posite_opens,cor:S_iso_S',prop:domains_as_predomains} that $\PoShv{S'_{\IR}}
  \iso \PoShv{S_{\IR}} \iso \Opens(\IR) \iso \PoShv{S_{\IRpre}}$. Hence by
  \cref{prop:sheaves_on_posite},
  \[
    \Shv{S'_{\IR}} \iso \Shv{\PoShv{S'_{\IR}}} \iso \Shv{\Opens(\IR)} \iso
    \Shv{\PoShv{S^\QQ_{\IR}}} \iso \Shv{S^\QQ_{\IR}}.\qedhere
  \]
\end{proof}

\chapter{Translation invariance}\label{chap:translation_invariance}

As discussed in \cref{sec:translation_invariance}, the topos $\Shv{S_{\IR}}$ of sheaves on the interval domain is slightly unsatisfactory as a model of
behaviors. For example, to serve as a compositional model of dynamical systems, we do not want the set of possible
behaviors in some behavior type to depend on any global time. In this chapter, we define a topos $\BaseTopos$\index{topos!of behavior types} of
``translation-invariant behavior types''\index{behavior type!translation-invariant} by defining a translation-invariant version of the interval domain and a corresponding site, denoted $\IRinv$ and $\BaseSite$ respectively, and letting $\BaseTopos\coloneqq\Shv{\BaseSite}$.\index{interval domain!translation-invariant} In an appendix, \cref{chap:IRinv_continuous}, we prove that $\IRinv$ is a continuous category in the sense of Johnstone-Joyal \cite{Johnstone.Joyal:1982a}, giving us an analogue to \cref{thm:IR_sheaves}; this is briefly discussed in \cref{sec:IRinv_continuous}. 

\section{Construction of the translation-invariant topos $\BaseTopos$}

Recall the interval domain $(\IR,\sqss)$ from \cref{def:interval_domain}. We will define $\BaseTopos$ in terms of a site $\IRinv$, obtained as the quotient of $\IR$ by a free $\RR$-action.\index{interval domain}\index{action, $\RR$}

\subsection{The translation-invariant interval domain, $\IRinv$}

We have two primary definitions of the translation-invariant interval domain $\IRinv$, and it is
easy to show their equivalence. On one hand, there is a continuous action $\rhd$\index{translation} of $\RR$ on $\IR$,%
\footnote{The $\rhd$ symbol used here (e.g.\ ``$r\rhd[d,u]$'') to denote a particular $\RR$-action on $\IR$, is different from the $\basicCover$ symbol, used in \cref{sec:the_interval_domain} (e.g.\ ``$V\basicCover u$'') to denote a basic cover in a posite.}
given by $r\rhd[d,u]\coloneqq [r+d,r+u]$, and we can consider the category of elements for this action. It is given by adjoining an isomorphism $(d,u,r):[d,u]\to[d+r,u+r]$
to $\IR$ for each $r\in\RR$, with $(d,u,r)^{-1}=(d+r,u+r,-r)$ and $(d+r,u+r,r')\circ(d,u,r) = (d,u,r+r')$.

On the
other hand, and more conveniently, we can simply take $\IRinv$ to be the category of orbits of the $\rhd$ action.\index{interval domain!translation-invariant} The
equivalence class containing an interval $[d,u]$ can be represented by its length $\ell\coloneqq u-d$, and the
equivalence class of a specialization $[d,u]\sqss[d',u']$ can be represented by the nonnegative numbers $r\coloneqq d'-d$ and $s\coloneqq u-u'$, how much to ``shave off'' from the left and right of the longer interval. This leads to the following definition.\index{length}\index{restriction}

\begin{definition}[The translation-invariant interval category $\IRinv$]\label{def:IRinv}
  Define the \emph{translation-invariant interval category}, denoted $\IRinv$, to have
  \begin{itemize}
    \item objects $\{\,\ell\in\RR\mid \ell\geq 0\,\}$,
    \item morphisms $\IRinv(\ell,\ell') = \{\,\subint{r}{s}\in\RR\times\RR\mid (r\geq 0)\wedge(s\geq
      0)\wedge(\ell=r+\ell'+s)\,\}$,
		\item identities $\subint{0}{0}$, and
    \item composition $\subint{r'}{s'}\circ\subint{r}{s} = \subint{r'+r}{s'+s}$.
  \end{itemize}
	We refer to an object $\ell\in\IRinv$ as a \emph{translation-invariant interval}.
\end{definition}
Note that one can consider $\rhd$ as an $\RR$-action on $\IR\op$ as well, and there is an isomorphism of categories $(\IR\op)_{/\rhd}\cong(\IR_{/\rhd})\op$; in the future we will elide the difference and denote them both simply by $\IRinv\op$.

\begin{remark}
The category $\IRinv$ is isomorphic to the twisted arrow category $\Tw{\B\RR_{\geq0}}$, where $\RR_{\geq 0}$ is the monoid of nonnegative real numbers\index{monoid!of nonnegative reals} and $\B\RR_{\geq 0}$ denotes the corresponding category with one object. This category was fundamental in an earlier paper \cite{Spivak.Vasilakopoulou.Schultz:2016a} and was considered by Lawvere and others \cite{Lawvere:1986a,Bunge.Fiore:2000a,Fiore:2000a} in the context of dynamical systems; see also \cite{Johnstone:1999a}.\index{category!twisted arrow}\index{dynamical systems!categorical approaches}
\end{remark}

There is an evident quotient functor $p\colon\IR\to\IRinv$, which sends $[d,u]\in\IR$ to
$p[d,u]\coloneqq u-d\in\IRinv$, and sends $[d,u]\sqss[d',u']$ to $\subint{r}{s}\colon\ell\to\ell'$, where $\ell\coloneqq u-d$, $\ell'\coloneq u'-d'$, $r\coloneqq d'-d$, and $s\coloneqq u-u'$. In \cref{lem:discrete_fibration} we will show that $p$ is a discrete bifibration,\index{functor!discrete bifibration}\index{bifibration|see {functor, discrete bifibration}} i.e.\ both a discrete fibration and a discrete opfibration. This roughly means that if $[d,u]$ is an interval with extent $\ell\coloneqq u-d$, then any extent containing $\ell$ (resp.\ contained in $\ell$) will have a unique lift to an interval containing $[d,u]$ (resp.\ contained in $[d,u]$).

\begin{lemma}\label{lem:discrete_fibration}
  The functor $p\colon\IR\to\IRinv$ is a discrete bifibration, and it is surjective on objects. The same holds for $p\op\colon\IR\op\to\IRinv\op$.
\end{lemma}
\begin{proof}
  Clearly, if $p$ is a bifibration and surjective on objects then so is $p\op$.
  
  It is obvious that $p$ is surjective on objects. Given $[d,u]\in\IR$ and $\subint{r}{s}\colon(u-d)\to\ell$, a morphism $[d,u]\sqss[d',u']$ projects to
  $\subint{r}{s}$ iff $r=d'-d$ and $s=u-u'$, hence $d'=d+r$ and $u'=u-s$ is the unique lift of $\subint{r}{s}$.
  This shows that $p$ is a discrete opfibration. The proof that $p$ is a discrete fibration is
  similar.
\end{proof}

Recall the way-below relation on $\IR$ from \cref{prop:way_below_IR}: $[d,u]\ll[d',u']$ iff $d<d'\leq u'<d$ iff $r>0$ and $s>0$. We follow Johnstone and Joyal (\cite{Johnstone.Joyal:1982a} and \cite[Definition C.4.2.12]{Johnstone:2002a}) and give the following definition.\index{way-below}

\begin{definition}[Wavy arrow]
\label{def:wavy_arrow}\index{wavy arrow}
For any $\ell,\ell'\in\IRinv$, we say that a morphism $\subint{r}{s}\colon\ell\to\ell'$ is a \emph{wavy arrow} and denote it $\subint{r}{s}\colon\ell\wavyto\ell'$, if $r>0$ and $s>0$.

We will use the same notation for morphisms in $\IRinv\op$, i.e.\ write $f\colon \ell'\wavyto\ell$, and call $f$ a wavy arrow, iff $f\op$ is a wavy arrow in $\IRinv$, as defined above.
\end{definition}

It is tempting to think of $\ell'\wavyto\ell$ as a ``strict subinterval'', but one must remember that this strictness applies to both sides, $r$ and $s$.

\subsection{The topos $\BaseTopos$ of behavior types}

We are now ready to define the site of translation-invariant intervals and the corresponding topos of sheaves.

\begin{definition}[The site $\BaseSite$ and the topos $\BaseTopos$ of behavior types]\label{def:BaseSite}\index{base site $\BaseSite$}\index{coverage!of base site}
We define the site of \emph{translation-invariant intervals}, denoted $\BaseSite$, as follows:
	\begin{itemize}
		\item The underlying category is $\IRinv\op$ as in \cref{def:IRinv}, and
		\item the coverage consists of one family, $\{\subint{r}{s}\colon\ell'\to\ell\mid r,s>0\}$, for each object $\ell$.
	\end{itemize}
That is, the collection of wavy arrows $\{\ell'\wavyto\ell\}\ss\IRinv\op(\ell',\ell)$ covers $\ell$. We refer to $\BaseTopos\coloneqq\Shv{\BaseSite}$ as the topos of \emph{behavior types}.\index{topos!of behavior types}
\end{definition}

\Cref{def:BaseSite} given above is analogous to the posite version given in \cref{def:interval_posite}. Indeed, in \cref{prop:IRinv_continous} we will show that for any $\ell'\in\IRinv$, the functor $\IRinv\op\to\Cat{Set}$ given by $\ell\mapsto\{f\colon\ell\wavyto\ell'\in\IRinv(\ell,\ell')\}$ is always flat, and hence an object of $\Ind(\IRinv)$. This assignment is functorial in $\ell'$ and is in fact left adjoint to $\colim\colon\Ind(\IRinv)\to\IRinv$. Thus it is analogous to the map $\downclose\colon\IR\to\Id(\IR)$ which is left adjoint to $\sup\colon\Id(\IR)\to\IR$ (see \cref{prop:cont_poset,prop:way_below_IR}).

\begin{remark}\label{rem:use_l>0}
The object $0\in\IRinv$ has an empty covering family in $\BaseSite$. Thus any object $X$ of the topos $\BaseTopos=\Shv{\BaseSite}$ has $X(0)\cong\{*\}$, and is hence completely determined by its values on objects $\ell>0$. It is often more convenient to work with the subsite $\BaseSite'\ss\BaseSite$ whose underlying category is the full subcategory of $\IRinv$ spanned by the objects $\ell>0$, and whose covering families match those in $\BaseSite$. In $\BaseSite'$, every covering family is filtered.
\end{remark}

\begin{proposition}\label{prop:quotient_geom_morphism}
The quotient functor $p\op\colon\IR\op\to\IRinv\op$ (see \cref{lem:discrete_fibration}) induces a geometric morphism
$p_*\colon\Shv{S_{\IR}}\to\BaseTopos$ with left exact left adjoint $p^*\colon\BaseTopos\to\Shv{S_\IR}$.\index{geometric morphism}
\end{proposition}
\begin{proof}
It suffices by \cite[C2.3.18]{Johnstone:2002a} to check that $p\op$ is cover-reflecting.\index{site!comorphism of} This is immediate once one consults \cref{def:interval_posite} and \cref{def:BaseSite}.
\end{proof}

A topos $\cat{E}$ is \emph{locally connected} if the global sections geometric morphism $\Gamma\colon\cat{E}\to\Cat{Set}$ is an essential geometric morphism, i.e.\ if the inverse image functor $\Fun{Cnst}$ has a left adjoint $\Pi_0$; see \cite[Lemma C.3.3.6]{Johnstone:2002a}. It is \emph{connected} if $\Fun{Cnst}$ is fully faithful; this is equivalent to the condition that $\Pi_0$ preserves the terminal object. To prove that $\cat{E}$ is locally connected, it suffices to show that it is the topos of sheaves on a locally connected site, meaning that for each object $U$, all covering sieves of $U$ are connected. 

\begin{proposition}\label{prop:BaseTopos_conn_locally_conn}\index{topos!connected}\index{topos!locally connected}
The topos $\BaseTopos$ is locally connected and connected.
\end{proposition}
\begin{proof}
By \cref{rem:use_l>0}, there is a defining site $\BaseSite'$ for $\BaseTopos$, in which all covering families are filtered; they are in particular connected, so $\BaseTopos$ is locally connected.

The composite $\Cat{Set}\To{\Fun{Cnst}}\Shv{\BaseSite'}\to\Psh{\BaseSite'}$ is fully faithful, because  the category underlying $\BaseSite'$ is connected. Since $\Shv{\BaseSite'}\to\Psh{\BaseSite'}$ is fully faithful, being the direct image of a geometric inclusion, it follows that $\Fun{Cnst}$ is fully faithful as well. 
\end{proof}

\begin{corollary}\label{cor:exponential_constant}
If $C,D$ are constant sheaves, then so is $C^D$.
\end{corollary}
\begin{proof}
We use the Yoneda lemma and \cref{prop:BaseTopos_conn_locally_conn}. For any $X\in\BaseTopos$, we have
\begin{align*}
	[X,\Fun{Cnst}(C)^{\Fun{Cnst}(D)}]
	&\cong[X\times\Fun{Cnst}(D),\Fun{Cnst}(C)]\\
	&\cong[\Pi_0(X\times\sqcup_{d\in D}1),C]\\
	&\cong[D\times\Pi_0(X),C]\cong[X,\Fun{Cnst}(C^D)].\qedhere
\end{align*}
\end{proof}

\section{\texorpdfstring{$\IRinv$}{\unichar{"1D540}\unichar{"211D}\unichar{"2215}\unichar{"25B9}} as
a continuous category}\label{sec:IRinv_continuous}

Continuous categories, defined by Johnstone and Joyal in \cite{Johnstone.Joyal:1982a}, are a generalization of domains (continuous posets)\index{category!continuous}\index{domain}. In fact, a poset is a domain iff it is continuous as a category. Under this generalization from posets to categories, ideals are replaced by Ind-objects,\index{Ind-object|see {object, $\Ind$-}}\index{object!$\Ind$-} directed suprema\index{supremum} are replaced by filtered colimits, the way-below relation \index{way-below} is replaced by the set of wavy-arrows (\cref{def:wavy_arrow})\index{wavy arrow}, and Scott-continuous\index{Scott continuous|see {continuous, Scott}}\index{continuous!Scott} morphisms between domains are replaced by continuous functors: those which preserve filtered colimits.

Recall that a \emph{point} of a topos\index{topos!point of}\index{point|see {topos, point of}} $\cat{E}$ is a geometric morphism\index{geometric morphism} $p\colon\Cat{Set}\to\cat{E}$, and a morphism of points $p\to q$ is a natural transformation of inverse images $p^*\to q^*$. Let $P_\cat{E}$ denote the category of points in $\cat{E}$. There is always a functor $s\colon\cat{E}\to\Cat{Set}^{P_\cat{E}}$, sending a sheaf $X$ and a point $p$ to the set $p^*(X)$. When $\cat{E}$ is the topos of sheaves on a continuous category, $s$ is fully faithful and its essential image is the category (topos) of continuous functors.

We discuss the above subject in greater detail in \cref{chap:IRinv_continuous}. For example, in \cref{prop:IRinv_continous} we prove that $\IRinv$ is a continuous category. We also prove an equivalence of categories 
\[\Shv{\BaseSite}\cong\Cont{\IRinv}\]
where $\Cont{\IRinv}$ denotes the category of continuous---i.e.\ filtered-colimit preserving---functors $\IRinv\to\Cat{Set}$. This gives two equivalent definitions of the topos $\BaseTopos$, analogous to the equivalence of \cref{thm:IR_sheaves} 3 and 4.

\section{The subobject classifier}\label{sec:subobject_classifier_BaseTopos}

The subobject classifier of the topos $\Shv{S_\IR}$ has a nice visual interpretation as the set of Lipschitz functions in the upper half-plane (see \cref{sec:IR_half_plane})\index{half-plane}. We can transport this geometric picture to $\BaseTopos$ because the semantics of the subobject classifier $\Omega$ in $\BaseTopos=\Shv{\BaseSite}$ is strongly related to those of the subobject classifier $\Omega_{\IR}$ for the topos $\Shv{S_\IR}$, as we now explain.\index{subobject classifier}

For any object $\ell\in\BaseSite$ and $a\in\RR$, there is a basic open set $\upclose[a,b]$ in $\IR$, where $b=a+\ell$; it is defined as
\[
\upclose[a,b]\coloneqq\{[x,x']\mid a<x\leq x'<b\}.
\]
Consider
the frame $\Omega_\IR(\upclose[a,b])$ of open subsets $U\ss\upclose[a,b]$. It is independent
of $a$ in the sense that there is a canonical isomorphism
$\Omega(\upclose[a,b])\cong\Omega(\upclose[0,\ell])$. Recall from \cref{prop:basic_facts_domains} that $\IR$ has a basis consisting
of such open intervals $\upclose[a,b]$.

\begin{proposition}\label{prop:Prop_Omega}
  There is a canonical bijection $\Omega(\ell)\To{\cong}\Omega_{\IR}(\upclose[0,\ell])$.
\end{proposition}
\begin{proof}
  As in any sheaf topos, the set $\Omega(\ell)$ can be identified with the set of
  closed sieves on $\ell$ in $\BaseSite$. This set depends only on the slice
  category $(\ell\downarrow\IRinv)$, and because the projection $p\colon\IR\to\IRinv$ is a discrete
  opfibration, as shown in \cref{prop:quotient_geom_morphism}, we indeed have $(\ell\downarrow\IRinv)\iso([0,\ell]\downarrow\IR)$.
\end{proof}

By the above proposition, any $\omega\in\Omega(\ell)$ corresponds to a Scott-open set
$U_\omega\ss\upclose[0,\ell]$ in $\IR$. We can use Dyck paths (\cref{cor:opens_Dyck}) to picture the subobject classifier $\Omega$: for any $\ell$, the poset $\Omega(\ell)$ is isomorphic to the set of Dyck paths on $[0,\ell]$.\index{Dyck path} Given $\subint{r}{s}\colon\ell'\to\ell$, the open set $U_{\omega'}$ corresponding to the restriction
$\omega'\coloneqq\omega|_{\subint{r}{s}}$ is
\[U_{\omega'} = U_\omega \cap\upclose[r,\ell-s].\]

\section{The behavior type $\Time$}
\label{sec:Time}\index{Time@$\Time$}\index{Time@$\Time$!semantics of}

Let $p^*\dashv p_*$ be the geometric morphism from \cref{prop:quotient_geom_morphism}. We claim that
it is \emph{\'etale}, meaning that there is an object $T\in\BaseTopos$ such that $p_*$ is equivalent
to the projection $\BaseTopos/T\to\BaseTopos$. It would follow that $p^*$ has a further left adjoint
$p_!$, and that $T=p_!(1)$.

\subsection{The geometric morphism $\Shv{S_\IR}\to\BaseTopos$ is \'{e}tale}

To prove that $p_*$ is \'etale, we show directly that its left adjoint $p^*$ has a further left adjoint, and
use this to show that $p_*$ is isomorphic to the projection $\BaseTopos/p_!(1)\to\BaseTopos$.

The left adjoint $p_!\colon\Shv{S_{\IR}}\to\BaseTopos$, if it exists, must be given by left Kan
extension along $p\op\colon\IR\to\IRinv$. The quotient functor $p$ is a discrete opfibration,
as shown in \cref{lem:discrete_fibration}. It follows that the left Kan extension of
$X\colon\IR\to\Cat{Set}$ is simply fiberwise coproduct:
\begin{equation}\label{eqn:p_!}
(p_!X)(\ell)=\coprod_{\{(d,u)\mid u-d=\ell\}}X([d,u])
\end{equation}
with restriction along $\subint{r}{s}\colon\ell'\to\ell$ given by $((d,u),x)|_{\subint{r}{s}} = ((d+r,u-s),x|_{(d+r,u-s)})$. It
only remains to show that $(p_!X)\colon\IRinv\to\Cat{Set}$ is indeed a sheaf, but this is clear given
that $X$ is a sheaf.

We define the sheaf $\Time\coloneqq p_!(1)$. Using the above formula for $p_!$, we can identify
it with the functor $\IRinv\to\Cat{Set}$ whose length-$\ell$ sections are given by
\begin{equation}\label{eqn:def_time}
  \Time(\ell)=\{\, (d,u)\in\RR^2 \mid u-d=\ell \,\},
\end{equation}
and whose restriction map for $\subint{r}{s}\colon\ell\to\ell'$ is given by $\Time\subint{r}{s}(d,u)
= (d+r,u-s)$.

\begin{proposition}\label{prop:IR_BaseTopos/Time}\index{topos!\'{e}tendue}
  There is an equivalence of categories $\Shv{S_{\IR}}\cong\BaseTopos/\Time$, such that the diagram
  \[
    \begin{tikzcd}[column sep=small]
      \Shv{S_{\IR}} \ar[rr,"\cong"] \ar[dr,"p_*"']
        && \BaseTopos/\Time \ar[dl,"P_*"] \\
      & \BaseTopos &
    \end{tikzcd}
  \]
  commutes, where $(P^*,P_*)$ is the base change geometric morphism along $!_\Time\colon\Time\to 1$
  in $\BaseTopos$.
\end{proposition}
\begin{proof}
  The equivalence sends any sheaf $X\in\Shv{S_{\IR}}$ to $p_!(!_X)$, where $!_X\colon X\to 1$ is the
  unique map to the terminal sheaf. Recall that $p_!(X)(\ell)=\coprod_{u-d=\ell}X(d,u)$; the
  function $p_!(!_X)_\ell$ sends $x\in X(d,u)$ to $(d,u)\in\Time(\ell)$. It is straightforward to
  check that this determines an equivalence.

  The base change geometric morphism $(P^*,P_*)$ is trivially \'etale, and in particular $P^*$
  has a further left adjoint $P_!\colon\BaseTopos/\Time\to\BaseTopos$, which is given by
  post-composition with $P$, i.e.\ $P_!$ is the standard projection out of the slice category.
  Then it suffices to show the diagram involving $p_!$ and $P_!$ commutes,
  but this is obvious.
\end{proof}

\begin{corollary}
$\BaseTopos$ is an \'{e}tendue.
\end{corollary}
\begin{proof}
It is easy to see by \cref{eqn:def_time} that the unique map $\Time\to 1$ is an epimorphism; so the result follows from \cref{thm:IR_sheaves_proven,prop:IR_BaseTopos/Time}.
\end{proof}

\begin{remark}\label{rem:time_clock_temporal}
  We think of the sheaf $\Time$ as the behavior type of a clock.\index{Time@$\Time$!as clock behavior} Given a behavior of length $\ell$,
  a $(d,u)\in\Time(\ell)$, we imagine this as the behavior of a clock which reads $d$ at the
  beginning of the interval and reads $u$ at the end. Clearly either one determines the other; see \cref{rem:time_semantics}. In temporal logic applications\index{logic!temporal} (see \cref{sec:temporal_logic}) it is
  sometimes helpful to think of the location where the clock reads zero as representing ``now'',
  even when that location is outside of the interval, i.e.\ when $d$ and $u$ are both positive or
  both negative.
\end{remark}

\subsection{$\BaseTopos$ as a quotient of $\Shv{S_{\IR}}$}

The topos $\BaseTopos$ is the quotient of the localic topos $\Shv{S_{\IR}}$.\index{topos!localic} One way to see this is that $p^*\colon\BaseTopos\to\Shv{S_{\IR}}$ is faithful. Another way to see it is that $\BaseTopos$ is equivalent to the category of coalgebras on the left-exact comonad $p^*p_*$ on $\BaseTopos/\Time$.\index{comonad}\index{coalgebra}

With $P$ as in \cref{prop:IR_BaseTopos/Time}, it is easy to see that for $X\in\BaseTopos$, we have $P^*X=X\times\Time$, and for $(\tau\colon Y\to\Time)\in\BaseTopos/\Time$, we have $P_!\tau=Y$ and $P_*\tau=\{f\colon\Time\to Y\mid \tau\circ f=\id_\Time\}$, i.e.\ $P_*\tau$ is the sheaf of sections of $\tau$. The sections of $p_!X$ were given in \cref{eqn:p_!}, though we repeat it here for the reader's convenience, along with those of $p^*$ and $p_*$. For $X\in\BaseTopos$, we have
\[p^*X([d,u])\cong X(u-d),\]
and for $Y\in\Shv{S_{\IR}}$, we have
\[
	p_!Y(\ell)\cong\coprod_{\{(d,u)\mid u-d=\ell\}}Y([d,u])
	\qquad\text{and}\qquad
	p_*Y(\ell)\cong\prod_{\{(d,u)\mid u-d=\ell\}}Y([d,u])
\]

For any sheaf $Y\in\Shv{S_\IR}$ and $r\in\RR$, let $Y\rhd r$ be the sheaf $(Y\rhd r)([d,u])\coloneqq Y([d+r,u+r])$. The comonad $p^*p_*$ thus sends $Y$ to $\prod_{r\in\RR}(Y\rhd r)$; the counit is projection at $r=0$, and the comultiplication is the obvious map $\prod_{r\in\RR}(Y\rhd r)\to\prod_{r_1,r_2}(Y\rhd(r_1+r_2))$. Thus an algebra for the comonad $p^*p_*$ is a sheaf $Y\in\Shv{S_\IR}$ equipped with a homomorphism $h_r\colon Y\to (Y\rhd r)$ for each $r\in\RR$, such that $h_0=\id_Y$ and $h_{r_1+r_2}=h_{r_1}\circ h_{r_2}$. Since $\RR$ is a group, this in particular implies that $h_r$ is an isomorphism for all $r\in\RR$.

\begin{proposition}
A sheaf in $\BaseTopos$ can be identified with a sheaf $Y\in\Shv{S_\IR}$ equipped with a translation isomorphism $h_r\colon Y\To{\cong}(Y\rhd r)$ for each $r\in\RR$, such that $h_0=\id_Y$ and $h_{r_1+r_2}=h_{r_1}\circ h_{r_2}$.\index{translation}
\end{proposition}

\chapter{Logical preliminaries}\label{sec:logical_prelims}\index{logic|(}\index{type theory|(}

In this chapter, we transition from an external point of view to an internal one. In
\cref{sec:the_interval_domain} we defined the interval domain $\IR$, and in
\cref{chap:translation_invariance} we defined a quotient $\BaseTopos\cong\Shv{\BaseSite}$ of its
topos of sheaves. A main goal of this book is to define a temporal type theory---including one atomic predicate and
ten axioms---that has semantics in $\BaseTopos$; we do this in \cref{sec:axiomatics}. In the present
chapter, we attempt to provide the reader with a self-contained account of the sort of type theory
and logic we will be using, as well as some important concepts definable therein.

We begin in \cref{sec:toposes_types_logic} with an informal introduction to our type theory and
logic, as well as its relation to toposes. We then discuss modalities in
\cref{sec:modalities},\index{modality}, which are the internal view of what in topos theory
literature are often called Lawvere-Tierney topologies or local operators. Logically, modalities are
internal monads on the type $\Prop$ of propositions, whereas semantically they correspond to
subtoposes. Finally in \cref{sec:Dedekind_j}, we discuss numeric types, e.g.\ the Dedekind real
numbers and related types, relative to an arbitrary modality. In this section we also discuss
inequalities between, and the arithmetic of, numeric types.

\section{Informal introduction to type theory}\label{sec:toposes_types_logic}

In this section we will informally introduce the type theory we use in the remainder of the book. We
justify the lack of formality in three ways. First, most of what we do is relatively standard
material: higher-order logic on top of the simply-typed lambda
calculus\index{lambda calculus, simply typed} with sum types\index{type!sum} and a type of natural
numbers\index{type!of natural numbers}, plus subtypes\index{type!subtype} and quotient
types\index{type!quotient}. However, we also make very limited use of dependent
types\index{type!dependent}, so providing full details of our type theory would be a major endeavor.
This is the second reason: a full account would simply take too much space to be worthwhile. And the
third reason is that we want to be users of logic---not logicians---here. Thus we will present
enough information to get readers started; for those who want a full account of categorical logic
and type theory, we recommend \cite{Jacobs:1999a}.

What will matter most is that our type theory is constructive\index{logic!constructive} and has an
object $\Prop$ of propositions---and hence has a higher-order logic.\index{logic!higher order} It is
well-known that this logic has semantics in any topos \cite{MacLane.Moerdijk:1992a}. Sometimes we
use dependent types, subtypes, and quotient types; it is less well-known but true that these also
have semantics in any topos \cite{Maietti:2005a}. While we were not able to find a reference, we
believe that the Calculus of Constructions (\cite{Coquand.Huet:1988a})---used in the automated proof
assistants Lean and
Coq\index{Coq|see {automated proof assistant}}\index{Lean|see {automated proof assistant}}---also
has sound semantics in any topos, so the system we present below could be formalized in either proof
assistant.%
\footnote{
  The dependent type theory in \cite{Maietti:2005a} is sound and complete for 1-toposes, relying
  heavily on extensivity of types\index{type!extensive}. This is fine for 1-toposes, but it makes
  computability impossible and hence is not useful for a proof assistant like Coq. On the other
  hand, the Calculus of Constructions (\cite{Coquand.Huet:1988a}) and related formalisms used in Coq
  and Lean are formulated using a hierarchy of universes. We believe this aspect of the Calculus of
  Constructions is sound for toposes, and we believe this follows from \cite{Streicher:2005a}, but
  could not find an explicit reference.
}

\subsection{Notions from the simply-typed lambda calculus}\label{sec:notions_lambda}\index{lambda calculus, simply typed|(}

In this section (\cref{sec:notions_lambda}) we give an informal account of the simply-typed lambda
calculus. The material is standard; see e.g.\ \cite{Lambek.Scott:1988a,Jacobs:1999a}. The key words
we will discuss are: \emph{types}, \emph{variables}, \emph{contexts}, \emph{terms}, \emph{substitution}, and
\emph{conversion rules}. As we get into \cref{sec:HOL} and begin to talk about logic, we will add
\emph{propositions}, \emph{premises}, and \emph{truth judgments}.\index{type}\index{variable}\index{context}\index{term}\index{substitution}\index{conversion rule}\index{proposition}\index{premise}\index{truth judgment}

It is important to understand the relationship between a type theory and its semantics in the
appropriate kind of category, say a topos $\cat{E}$, without conflating the two.\index{semantics!of a type theory} Each has value
independent of the other; for example, computers do not work on semantics, they work on syntax.
Having a syntax that ``compiles'' to our chosen semantics can be quite freeing, because proofs in
the logic are far easier to check than those in the semantics they compile to. Moreover, there is
not always a perfect correspondence between what is important in type theory and what is important
in semantics. For example, variables and contexts in a type theory have little semantic meaning.
Still, the connection between the type theory and its $\cat{E}$-semantics is important, so we
summarize the touch points now, and will continue to do so throughout the section.

Types in our type theory correspond to objects in $\cat{E}$. Variables and contexts---which are
lists of variables and types---mainly serve a bookkeeping role. They gain semantic meaning when they
are attached to terms. That is, a well-formed term always has a context and a type; it looks like
this
\begin{equation}\label{eqn:sample_term}
  a:A, b_1:B, b_2:B\vdash f(a,g(b_1,b_2),a):C
\end{equation}
Here, $A$, $B$, and $C$ are types; $a$, $b_1$, and $b_2$ are variables; $a$, $b_1$, $b_2$,
$g(b_1,b_2)$, and $f(a,g(b_1,b_2),a)$ are terms. These terms appear in the context $a:A, b_1:B, b_2:B$ and, for example the term $f(a,g(b_1,b_2),a)$
has type $C$. A term corresponds to a morphism in $\cat{E}$; its domain is the product of types in
its context, and its codomain is its type. So in the above example, $f(a,g(b_1,b_2),a)$ presents a
morphism $A\times B\times B\to C$.

There is a grammar for types and a grammar for terms, meaning that new types and terms can be
produced from old using what are called \emph{type constructors} and \emph{term constructors}.\index{constructor!type}\index{constructor!term} There
are several rules that dictate when the resulting types and terms are well-formed and when two terms
are declared equal; the latter are the conversion rules. Issues like which variables are free and
how substitution works for replacing a variable by a term (one must be careful that variables are
not accidentally bound in the process)\index{variable!free/bound}, can be carefully worked out. This is important for computer
applications, and may be interesting, but it will not concern us much here because mathematicians
have generally developed an engineer's sense of how to work with types and terms without needing to
know much about what goes on under the hood.

We now expand on the above summary of the simply-typed lambda calculus and its semantics.

\paragraph{Types.}\index{type}

We begin with types. One may declare atomic types $\const{T}_1,\const{T}_2,\ldots$ (though we will
not need to do so in this book). The type $\Prop$ of propositions, the type $\tNN$ of natural
numbers, the empty type $\const{empty}$, and the unit type $1$ are given as a base types, without
needing to declare them. From these, one may form new types by taking finite products, finite sums,
and arrow types; these are the type constructors. So suppose we declare $\const{T}$ to be our only
atomic type. Then we also have, for example, the three following types:\index{constructor!type}\index{type!atomic}\index{type!unit}\index{type!product}\index{type!empty}\index{type!of natural numbers}\index{type!of propositions}\index{type!arrow}
\[
  \const{T}\times \const{T}\times \const{T},\quad
  1, \quad
  \const{T}\times\Prop\to(\tNN\to (\const{T}+1)).
\]
To give types semantics in $\cat{E}$, one must choose an object for each atomic type
($\const{T}_i$). The rest is automatic: the type $\Prop$ corresponds to the subobject classifier in
$\Cat{E}$, the type $\tNN$ corresponds to the natural numbers object in $\cat{E}$, the types
$\const{empty}$ and $1$ correspond to the initial and final objects of $\cat{E}$, the product types
correspond to products in $\cat{E}$, and arrow types correspond to exponential objects in $\cat{E}$.

Before moving on, we make two notes. First, there is a common convention to reduce parenthetical clutter: a sequence of
arrow-types is parsed by ``right associativity''.\index{associativity!right} That is, $\tau_1\to(\tau_2\to\tau_3)$ can be
written simply as $\tau_1\to\tau_2\to\tau_3$. Second, we mentioned that one can declare some atomic types. We will later see that one can declare atomic terms and atomic predicates as well. All of these declarations make up what is called a \emph{signature}. We will present a specific signature for temporal type theory in \cref{sec:axiomatics}.\index{signature}

\paragraph{Variables and contexts.}

One assumes that an infinite set of symbols $V$ has been fixed in advance for use as variables. Then saying ``$a$
is a variable'' just means $a\in V$. We also assume that $V$ is disjoint from all other sets of
symbols we use, to avoid ambiguity.\index{variable}

On its own, a variable $a\in V$ has no type. For expressions involving $a$ to be meaningful, a type
for $a$ must first be declared. As mentioned above, a context is a list of distinct variables, each with a
type declaration\index{context}. For example, if $\tau_1$ and $\tau_2$ are types and $x$, $y$, and $z$ are
variables then $x:\tau_1,\,y:\tau_2,\,z:\tau_2$ is a context. Thus every expression will 
be considered relative to some (explicit or implicit) context, which defines the type of each variable used
in the expression. To make contexts explicit, we use the standard notation
$x:\tau_1,\,y:\tau_2\vdash e$ to state that the expression $e$ is being considered in the context
$x:\tau_1,\,y:\tau_2$. While contexts are technically lists, type theories such as ours have rules
that ensure that order does not matter.

A context, such as $x_1:\tau_1,\ldots,x_n:\tau_n$, is often denoted $\Gamma$; if $\Gamma'$ is
another context, we may write $\Gamma,\Gamma'$ to denote the union of these contexts.

\paragraph{Terms.}\index{term}

Terms are certain expressions which represent ``elements'' of types. A term $t$ may contain
variables as in \cref{eqn:sample_term}, in which case $t$ represents a parameterized family of elements. We require that
every term $t$ be explicitly paired with a context declaring the types of all free variables
occuring in $t$, and we also require that every term itself has a unique type. We write
$\Gamma\vdash t:\tau$ to record that the term $t$ has type $\tau$ in context $\Gamma$.

As part of a signature\index{signature}\index{term!atomic}, one may declare a finite number of atomic terms $c:\tau$, each having a specified type, in this case $c$ has type
$\tau$. Each declared atomic term is valid in the empty context, e.g.\ $\vdash c:\tau$ would be
judged ``valid''. A variable of a given type is also a term, but it is only valid in a context
containing it. For example, $a:\tNN, b:\const{T}\vdash a:\tNN$ is judged valid, assuming
$a$ and $b$ are variables.

For the base types $\Prop$, $\tNN$, $\const{empty}$, and $1$, as well as for each type constructor,
there are associated term constructors. We will hold off discussing $\Prop$ until \cref{sec:HOL},
because it is the most complex, in order to focus on the main idea here.\index{constructor!type}\index{constructor!term}

The type $\tNN$ has three term constructors:\label{page:Nats}
\begin{itemize}
  \item $\vdash 0:\tNN$,
  \item $a:\tNN\;\vdash s(a):\tNN$, \;and 
  \item $e:\tau,\;f:\tau\to\tau,\;g:\tNN\;\vdash\const{rec}(e,f,g):\tau$,
\end{itemize}
where the last exists for any type $\tau$. Similarly,
\begin{itemize}
  \item product types have term constructors called \emph{tupling} and \emph{projecting};
  \item sum types have term constructors called \emph{co-tupling} and \emph{co-projecting}; and
  \item arrow types have term constructors called \emph{lambda abstraction} and \emph{application}.
\end{itemize}
All of these are standard,%
\footnote{Note that what we call sum types are called ``coproduct types'' in \cite[Section 2.3]{Jacobs:1999a}.}
but we discuss the last of these to give more of the flavor.

We begin with the term constructor for evaluation, because it is straightforward:\index{evaluation}
\[
  f:\tau_1\to\tau_2,\;a:\tau_1\;\vdash \const{ap}(f,a):\tau_2.
\]
We often denote $\const{ap}(f,a)$ simply by $f(a)$ or $fa$.

\index{lambda abstraction}The lambda abstraction term constructor is a bit more subtle. The corresponding mathematical idiom
is something like this: ``Suppose $a$ is an integer. Using $a$ we can define a rational $b$ as
follows $\ldots$. This defines a function $\tZZ\to\tQQ$, given by $a\mapsto b$.'' For any context
$\Gamma$, types $\tau_1,\tau_2$, and term $\Gamma,\;a:\tau_1\vdash b:\tau_2$, where obviously $b$ is
allowed to use variables from the context, there is a term $\Gamma\vdash\lambda(a:\tau_1)\ldotp
b:(\tau_1\to\tau_2)$.
In case the notation is not clear, this says $\lambda(a:\tau_1)\ldotp b$ has arrow type
$\tau_1\to\tau_2$.\index{type!arrow}

In the expression $\lambda(a:\tau_1)\ldotp b$, the variable $a$ has been \emph{bound}\index{variable!free/bound}, meaning it is
no longer free. This has two consequences. The first is that the variable $a$ can be replaced by any
other variable without changing the term; this is called $\alpha$-conversion and we mention it
briefly when we discuss conversion rules; see \cref{eqn:example_conversions}. The second is that one
can no longer substitute an arbitrary expression for $a$; we discuss substitution next.\index{conversion!$\alpha$}

\paragraph{Substitution.}\index{substitution}
As always, a variable of type $\tau$ represents an indeterminate value of type $\tau$. Thus, if
$x:\tau\vdash e:\tau'$ is a term---so $e$ may contain the variable $x$---then we are free to replace
$x$ throughout $e$ with any concrete term of type $\tau$.

In this sense, terms can be substituted for free variables. Suppose $a:X$ is a variable of some type $X$ and
$\Gamma\vdash e:X$ is a term of the same type. Then $[a\coloneqq e]$ denotes a sort of
search-and-replace function, sending terms to terms. That is, for any term of the form
$\Gamma',a:X\vdash f:\tau$, we can search for free occurrences of $a$ in $f$ and replace them with
$e$, and the result is the term $\Gamma',\Gamma\vdash [a\coloneqq e]f:\tau$.

As the $[a\coloneqq e]$ notation can get unwieldy, we introduce a convenient shorthand. If
$\Gamma,x:\tau\vdash e:\tau'$ is a term, then we can write $\Gamma,x:\tau\vdash e(x):\tau'$ to draw
attention to the fact that the variable $x$ occurs in $e$ (even though other variables from $\Gamma$
might also occur in $e$). Then instead of $[x\coloneqq t]e$, we can more simply write $e(t)$.

\paragraph{Conversion rules.}\label{page:conversion_rules}\index{conversion}\index{conversion!universal properity as}
The term constructors are meant to express universal properties about our various type constructors,
and we want to force these to be true in the $\cat{E}$-semantics. So far, we have constructed the
terms---which correspond to morphisms in $\cat{E}$---but we have not specified anything that says
the appropriate diagrams commute. This is the job of conversion rules.

A conversion rule is a rule of the form
\[
  \Gamma\vdash e_1\equiv e_2:\tau,
\]
where $\Gamma\vdash e_1:\tau$ and $\Gamma\vdash e_2:\tau$ are terms. There are conversion rules for
each type constructor, but again they are standard as well as straightforward, so we only write out
a few, namely those for natural numbers and arrow types. We also leave out the context and the
typing to clarify the idea:
\begin{equation}\label{eqn:example_conversions}
\begin{aligned}
  \const{rec}(e,f,0)&\equiv e&\text{base case}\\
  \const{rec}(e,f,s(g))&\equiv f(\const{rec}(e,f,g))&\hspace{1in}\text{recursive step}\\
  (\lambda(a:\tau_1)\ldotp f(a))(e)&\equiv f(e)&\text{$\beta$-reduction}\\
  \lambda(a:\tau_1)\ldotp f(a) &\equiv f&\text{$\eta$-conversion}\\
  \lambda(a:\tau_1)\ldotp f(a)&\equiv\lambda(b:\tau_1)\ldotp f(b)&\text{$\alpha$-conversion}	
\end{aligned}
\end{equation}
If we want to be a bit more pedantic and add back in the context and type, the first above is
actually
\[
  e:\tau,\;f:\tau\to\tau,\;g:\tNN\vdash\const{rec}(e,f,0)\equiv e:\tau,
\]
etc. Hopefully the necessary context in each of the cases from \cref{eqn:example_conversions} is
clear enough.

\begin{example}[Arithmetic of natural numbers]\label{ex:addition_of_naturals}\index{arithmetic!of natural numbers}
  Suppose we want to construct a term $m:\tNN,n:\tNN\vdash \const{plus}(m,n):\tNN$ for addition of
  natural numbers. Define it to be
  \begin{equation}\label{eqn:addition_of_nats}
    m:\tNN,n:\tNN\vdash\const{rec}(m,\lambda(x:\tNN)\ldotp s(x),n):\tNN,
  \end{equation}
  where $s$ is the successor. We use the usual infix notation $m+n$ as shorthand for this term.
  In fact, we could have written \cref{eqn:addition_of_nats} before discussing conversion rules, but now that we have conversion rule we check that our definition
  actually does what we want: $m+0\equiv m$ and $m+s(n)\equiv s(m+n)$.
  Multiplication of natural numbers is given by
  \[
    m:\tNN,n:\tNN\vdash\const{rec}(0,\lambda(p:\tNN)\ldotp m+p, n).
  \]

  We can also define a function $+\colon(\tNN\times\tNN)\to\tNN$ by lambda abstraction:
  $+(m,n)\coloneqq\lambda((m,n):\tNN\times\tNN)\ldotp\const{plus}(m,n)$, and similarly for
  multiplication.
\end{example}

\paragraph{Propositions, premises, and truth judgments.}\index{proposition}\index{premise}\index{truth judgment}
From the perspective of specifying a type theory and logic, one has---in addition to types, terms,
etc.---things called \emph{propositions} $P$, which have a status similar to types and terms. While
a type is something that intuitively has ``elements'', and the type theory provides rules for
constructing terms of that type, a proposition is something that intuitively has a ``truth value'',
and the type theory provides rules for constructing proofs of that proposition.

We are presenting a higher-order logic, which means there is a special type $\Prop$, and
propositions are identified with terms of type $\Prop$. We will describe how to build new
propositions from old in \cref{sec:HOL}.

Before getting to that, we will quickly explain premises and truth judgments. A \emph{premise} is
like a context: it is a finite list of propositions. That is, if $P_1,\ldots,P_n$ are valid
propositions, then $\Theta=\{P_1,\ldots,P_n\}$ is a premise. Finally a \emph{truth judgment} consists of a
context $\Gamma$, a premise $\Theta$, and a proposition $Q$; it looks like this:\index{context!premise}
\begin{equation}\label{eqn:truth_judgment}
  \Gamma\mid\Theta\vdash Q.
\end{equation}
\Cref{eqn:truth_judgment} basically says that, in the context $\Gamma$ (where all the variables used in $P_1,\ldots,P_n$
and $Q$ are defined), the conjunction of $P_1$ through $P_n$ is enough to conclude $Q$. But we will
explain this more in the next section, where we discuss how one decides which truth judgments are
valid.
\index{lambda calculus, simply typed|)}

\subsection{Higher order logic}\label{sec:HOL}\index{logic!higher order}

Logic is a set of rules about propositions and predicates, by which we can decide validity. We
discussed the type $\Prop$ of propositions above; a \emph{predicate} is just a term of type $\Prop$,
say $\Gamma\vdash P:\Prop$.\index{predicate}\index{predicate!atomic} We often refer to $P$
simply as a proposition, since it is a term of type $\Prop$, rather than as a predicate. In other
words, the fact that there is a context---possibly empty---is assumed, as usual.

Just like a type theory begins with a set of atomic types and a set of atomic terms, it also begins
with a set of atomic predicates, again of the form $\Gamma\vdash P:\Prop$. Together the atomic types, terms, and predicates make up a signature\index{signature}.

\subsubsection{Obtaining new propositions from old}\index{equality}

One important way to form a proposition is by equality. Namely, given a type $\tau$ and terms
$a:\tau$ and $b:\tau$, there is a proposition $(a=b):\Prop$. There are also the logical connectives:
given propositions $\Gamma\vdash P:\Prop$ and $\Gamma\vdash Q:\Prop$, we have
\begin{gather*}
  \top:\Prop\qquad
  \bot:\Prop\qquad
  P\wedge Q:\Prop\qquad
  P\vee Q:\Prop\qquad
  P\imp Q:\Prop
\end{gather*}
all in the context $\Gamma$.%
\footnote{The propositions $\neg P$ and $P\iff Q$ are just shorthands for $P\imp\bot$ and $(P\imp Q)\wedge(Q\imp P)$.}
These are pronounced \emph{true, false, $P$ and $Q$, $P$ or $Q$}, and \emph{$P$ implies $Q$}, respectively. Given a predicate $\Gamma,x:\tau\vdash P:\Prop$, one also has the quantifiers
\[
  \Gamma\vdash\forall(x:\tau)\ldotp P:\Prop \qquad
  \Gamma\vdash\exists(x:\tau)\ldotp P:\Prop
\]
pronounced \emph{for all $x$ of type $\tau$, $P$} and \emph{there exists $x$ of type $\tau$ such that $P$}. Note that, just like lambda abstraction binds a variable in a term, the quantifiers bind a variable
in a proposition.

Before moving on, we note a common convention to reduce parenthetical clutter: a sequence of
implications is parsed by ``right associativity''. That is, $P\imp(Q\imp R)$ can be written simply
as $P\imp Q\imp R$. Once we have rules for deduction, we will be able to deduce that $P\imp Q\imp R$
is equivalent to $(P\wedge Q)\imp R$.\index{associativity!right}

\subsubsection{Valid truth judgments}\index{truth judgment|(}

To do logic, one begins with a set of axioms\index{axiom}, and proceeds to judge which propositions are true. The
technical form of this is the truth judgment \cref{eqn:truth_judgment}. In this section we give the
rules for determining which truth judgments are valid in a given premise context $\Gamma\mid\Theta$,
given a set $\cat{A}$ of axioms. Validity of a truth judgment is inductively \emph{derived} using
certain allowable steps, called \emph{proof rules}. We divide the proof rules into four main
subgroups: premises and axioms, equalities, the connectives, and the quantifiers.

\paragraph{Premises and axioms.}

An axiom is a truth judgment, i.e.\ a statement of the form $\Gamma\mid\Theta\vdash P$. Axioms are
automatically valid as truth judgments. Similarly, $\top$ is always valid. If $P:\Prop$ is in the
list of propositions defining a premise $\Theta$ then $\Gamma\mid\Theta\vdash P$ is valid. Technically, these
three rules could be written as follows:
\begin{equation}\label{eqn:premise_axiom}
  \begin{prooftree}
    \Hypo{\Gamma\vdash\Theta\Prem}
    \Infer1{\Gamma\mid\Theta\vdash\top}
  \end{prooftree}
\hspace{.5in}
  \begin{prooftree}
    \Hypo{\Gamma\vdash\Theta\Prem}
    \Infer[left label={\side{P\in\Theta}}]1{\Gamma\mid\Theta\vdash P}
  \end{prooftree}
\hspace{.5in}
    \begin{prooftree}
      \Infer[left label={\side{(\Gamma\mid\Theta\vdash P)\in\cat{A}}}]0{\Gamma\mid\Theta\vdash P}
    \end{prooftree}
\end{equation}
The first says that if $\Theta$ is a valid premise in context $\Gamma$ (i.e.\ if all the symbols
used in the propositions in $\Theta$ are defined in $\Gamma$), then $\Gamma\mid\Theta\vdash\top$ is
valid. The second is similar, but adds a side condition that $P$ is one of the propositions in
$\Theta$, in which case $\Gamma\mid\Theta\vdash P$ is valid. The third says that axioms are
automatically valid.

As expected, one can prove by induction that if one increases the premise by adding a new
proposition, the set of valid truths one can derive does not decrease. This is called
\emph{weakening}. Thus if $\Gamma\vdash Q:\Prop$ is a term and
$\Gamma\mid\Theta\vdash Q$ is a valid truth judgment, then so is $\Gamma\mid\Theta,P\vdash Q$ for
any proposition $P$ in the same context, $\Gamma\vdash P:\Prop$.

\paragraph{Equalities.}

Recall the notion of conversion rules from \cref{sec:notions_lambda}, page
\pageref{page:conversion_rules}. These are of the form $\Gamma\vdash e_1\equiv e_2:\tau$. Every such
conversion rule gives rise to a valid truth judgment $\Gamma\vdash e_1=e_2$. Written more
technically,
\[
  \begin{prooftree}
    \Hypo{\Gamma\vdash\Theta\Prem}
    \Hypo{\Gamma\vdash e_1\equiv e_2:\tau}
    \Infer2{\Gamma\mid\Theta\vdash e_1=e_2}
  \end{prooftree}
\]
New valid truth judgments of the form $\Gamma\vdash e_1=e_2$ can be generated from old by
reflexivity, symmetry, and transitivity, as well as by substitution. For example, we also have
\[
  \begin{prooftree}
    \Hypo{\Gamma\mid\Theta\vdash e_1=e_2}
    \Hypo{\Gamma\mid\Theta\vdash e_2=e_3}
    \Infer2{\Gamma\mid\Theta\vdash e_1=e_3}
  \end{prooftree}
\]

\paragraph{The connectives.}

Most of the valid truth judgments coming from connectives and quantifiers are fairly obvious, but
there are differences between classical logic and constructive logic, so we should be a bit careful.
This difference only makes an appearance in disjunction $\vee$ and existential quantification
$\exists$. Instead of starting there, we begin with a familiar case, namely conjunction. The rules
for conjunction are
\[
  \begin{prooftree}
    \Hypo{\Gamma\mid\Theta\vdash\phi_1}
    \Hypo{\Gamma\mid\Theta\vdash\phi_2}
    \Infer2{\Gamma\mid\Theta\vdash\phi_1\wedge\phi_2}
  \end{prooftree}
  \hspace{.5in}
  \begin{prooftree}
    \Hypo{\Gamma\mid\Theta\vdash\phi_1\wedge\phi_2}
    \Infer1{\Gamma\mid\Theta\vdash\phi_1}
  \end{prooftree}
  \hspace{.5in}
  \begin{prooftree}
    \Hypo{\Gamma\mid\Theta\vdash\phi_1\wedge\phi_2}
    \Infer1{\Gamma\mid\Theta\vdash\phi_2}
  \end{prooftree}
\]
These correspond to mathematical idioms such as ``We have $\phi_1$ and $\phi_2$, so in particular we
have $\phi_1$.''

The rules for implication are:
\[
  \begin{prooftree}
    \Hypo{\Gamma\mid\Theta,\phi_1\vdash\phi_2}
    \Infer1{\Gamma\mid\Theta\vdash\phi_1\imp\phi_2}
  \end{prooftree}
  \hspace{.5in}
  \begin{prooftree}
    \Hypo{\Gamma\mid\Theta\vdash\phi_1\imp\phi_2}
    \Hypo{\Gamma\mid\Theta\vdash\phi_1}
    \Infer2{\Gamma\mid\Theta\vdash\phi_2}
  \end{prooftree}
\]
The first corresponds to the mathematical idiom ``To prove $\phi_1\imp\phi_2$, we assume $\phi_1$
and attempt to show $\phi_2$,'' while the second corresponds to the idiom ``We know that $\phi_1$
implies $\phi_2$, so since $\phi_1$ holds, so does $\phi_2$.''

The constant $\bot$, representing falsehood, may seem at first sight rather useless, but it plays an
important role in constructive logic. Its only rule is the following, which says intuitively ``if we
can prove false, then we can prove anything'':
\[
  \begin{prooftree}
    \Hypo{\Gamma\vdash\phi:\Prop}
    \Hypo{\Gamma\mid\Theta\vdash\bot}
    \Infer2{\Gamma\mid\Theta\vdash\phi}
  \end{prooftree}
\]

In constructive logic, negation is not a primitive connective. Rather, $\neg P$ is simply shorthand
for $P\imp\bot$. Thus $\neg P$ means precisely ``if $P$ were true, we could prove false.''
So trivially, we have that $P\imp\bot$ implies $\neg P$, but it is emphatically \emph{not} the case
that $\neg P\imp \bot$ implies $P$. That is, in constructive logic there is no
``proof-by-contradiction'': just because assuming $P$ is false leads to a contradiction, that does
not provide a proof that $P$ is true. Instead, all we can say is that $\neg P\imp \bot$ implies
$\neg\neg P$. As an exercise, the reader might try proving that $\neg\neg\neg P$ implies $\neg P$.

There are three rules for disjunction. The first corresponds to the idiom ``We know that either
$\phi_1$ or $\phi_2$ holds. Either way we can prove $\phi_3$, so $\phi_3$ holds.''
\[
	\begin{prooftree}
    \Hypo{\Gamma\mid\Theta\vdash\phi_1\vee\phi_2}
    \Hypo{\Gamma\mid\Theta,\phi_1\vdash\phi_3}
    \Hypo{\Gamma\mid\Theta,\phi_2\vdash\phi_3}
    \Infer3{\Gamma\mid\Theta\vdash\phi_3}
  \end{prooftree}
\]
The next two are more obvious, but note that---other than via axioms---there is no way to derive
$\phi\vee\neg\phi$. The only way to derive a disjunction is to derive one of the disjuncts.
Similarly, given $\phi_1\imp\phi_2$, it does not necessarily follow that $\neg\phi_1\vee\phi_2$.
\[
  \begin{prooftree}
    \Hypo{\Gamma\vdash\phi_2:\Prop}
    \Hypo{\Gamma\mid\Theta\vdash\phi_1}
    \Infer2{\Gamma\mid\Theta\vdash\phi_1\vee\phi_2}
  \end{prooftree}
\hspace{.5in}
  \begin{prooftree}
    \Hypo{\Gamma\vdash\phi_1\Prop}
    \Hypo{\Gamma\mid\Theta\vdash\phi_2}
    \Infer2{\Gamma\mid\Theta\vdash\phi_1\vee\phi_2}
  \end{prooftree}
\]

\paragraph{The quantifiers.}

Like lambda-abstraction, the quantifiers change the context by binding variables. Here are the proof
rules for the universal quantifier:
\[
	\begin{prooftree}
    \Hypo{\Gamma,x:\tau\mid\Theta\vdash\phi}
    \Infer[left label={\side{x\notin\fv(\Theta)}}]1{\Gamma\mid\Theta\vdash\forall(x:\tau)\ldotp\phi}
  \end{prooftree}
\hspace{.5in}
  \begin{prooftree}
    \Hypo{\Gamma\mid\Theta\vdash\forall(x:\tau)\ldotp\phi(x)}
    \Hypo{\Gamma\vdash e:\tau}
    \Infer2{\Gamma\mid\Theta\vdash\phi(e)}
  \end{prooftree}
\]
The first rule corresponds to the mathematical idiom, ``To prove $\forall(x:\tau)\ldotp\phi$, it
suffices to take an arbitrary element $x$ of type $\tau$ and prove $\phi$.'' The word ``arbitrary''
signifies that $x$ is not already being used in the assumptions $\Theta$, which is formalized by
saying $x$ is not in the set $\fv(\Theta)$ of free variables occurring in $\Theta$. The second rule
corresponds to the idiom, ``Since $\phi$ holds for all $x$ of type $\tau$, it holds in particular
for $e$.''

Finally, we have the existential quantifier. The first proof rule corresponds to the idiom, ``To
prove that $\exists(x:\tau)\ldotp\phi$ holds, it suffices to find a witness $e$ such that $\phi(e)$ holds.''
\[
  \begin{prooftree}
    \Hypo{\Gamma,x:\tau\vdash\phi(x):\Prop}
    \Hypo{\Gamma\vdash e:\tau}
    \Hypo{\Gamma\mid\Theta\vdash\phi(e)}
    \Infer3{\Gamma\mid\Theta\vdash\exists(x:\tau)\ldotp\phi(x)}
  \end{prooftree}
\]
The second proof rule corresponds to the idiom, ``We know that there exists an $x$ for which
$\phi_1$ holds, and we know that for an arbitrary $x$, if $\phi_1$ holds then so does $\phi_2$, so
it follows that $\phi_2$ also holds.''
\[
  \begin{prooftree}
    \Hypo{\Gamma\mid\Theta\vdash\exists(x:\tau)\ldotp\phi_1}
    \Hypo{\Gamma,x:\tau\mid\Theta,\phi_1\vdash\phi_2}
    \Infer[left label={\side{ x\notin\fv(\Theta,\phi_2)}}]2{\Gamma\mid\Theta\vdash\phi_2}
  \end{prooftree}
\]

\begin{remark}\label{rem:HOL_redundancy}
  Higher order logic is extremely expressive, and the logic we have presented has considerable
  redundancy. In fact, with the higher order proposition type $\Prop$, only implication and
  universal quantification are needed. That is, we have the following equivalences:
  \begin{align*}
    \bot &\quad\iff\quad \forall(\alpha:\Prop)\ldotp \alpha \\
    \top &\quad\iff\quad \bot \imp \bot \\
    \phi \vee \psi &\quad\iff\quad
      \forall(\alpha:\Prop)\ldotp (\phi\imp\alpha) \imp (\psi\imp\alpha) \imp \alpha \\
    \phi \wedge \psi &\quad\iff\quad
      \forall(\alpha:\Prop)\ldotp (\phi \imp \psi \imp \alpha) \imp \alpha \\
    \exists(x:\tau)\ldotp \phi &\quad\iff\quad
      \forall(\alpha:\Prop)\ldotp (\forall(x:\tau)\ldotp \phi \imp \alpha) \imp \alpha.
  \end{align*}
  Even equality can be defined in this way. Given terms $e_1,e_2:\tau$,
  \[
    e_1 = e_2 \quad\iff\quad \forall(P:\tau\to\Prop)\ldotp P(e_1) \imp P(e_2).
  \]
  We mention this to give the reader an idea of some of the unexpected consequences of the type
  $\Prop$. However, the rules stated in \cref{sec:HOL}, before this remark, are easier to work with and---unlike the above higher-order reformulations---trivially generalize to the
  non-higher-order setting.
\end{remark}
\index{truth judgment|)}

\subsubsection{The extensionality axioms}\index{propositional extensionality|see {extensionality, propositional}}\index{extensionality!propositional}\index{function extensionality|see {extensionality, function}}\index{extensionality!function}
Propositional extensionality says that equivalent propositions are automatically equal:
$\forall(P,Q:\Prop)\ldotp(P\iff Q)\imp(P=Q)$. This is an axiom that we will assume throughout the
book. It is sound in any Grothendieck topos because propositions correspond externally to closed
sieves (see e.g.\ \cite[Prop III.3.3]{MacLane.Moerdijk:1992a}, which form a partial order, not just
a preorder). Moreover, propositional extensionality is assumed in proof assistants like Coq and Lean.

Function extensionality, like propositional extensionality, is sound in any topos and commonly assumed in proof assistants. It says that two functions $X\to Y$ are equal if and only if their values are equal on all $x:X$.

\begin{axiom}[Extensionality axioms]~
\label{ax:propositional_extensionality}\index{axiom!extensionality}
\index{extensionality!propositional}\index{extensionality!function}
\begin{description}
\item[Propositional extensionality]
  \[
    P:\Prop,Q:\Prop\mid P\iff Q\vdash P=Q
  \]
\item[Function extensionality]
 Let $X$ and $Y$ be types. Then
	\[
		f:X\to Y, g:X\to Y\mid\forall(x:X)\ldotp fx=gx\vdash f=g
	\]
\end{description}
\end{axiom}

Using \cref{eqn:premise_axiom}, the extensionality axioms can be used in derivations of valid truth judgments. The
specific type signature we give in \cref{sec:axiomatics} will contain ten more axioms, all of which
fit into the logic at the same point as this one. Again, the reason we do not include propositional
extensionality with the other ten is that it is sound in any topos, and is not specific to ours.

Often we will write axioms, propositions, definitions etc.\ more informally, e.g.\ replacing the formal syntax of the statements in \cref{ax:propositional_extensionality} with something like the following.
 \begin{description}
	\item[Propositional extensionality] Suppose that $P$ and $Q$ are propositions, and assume that $P\iff Q$ holds. Then $P=Q$.
\item[Function extensionality] Suppose that $X$ and $Y$ are types and that $f,g:X\to Y$ are functions. If $\forall(x:X)\ldotp fx=gx$ holds, then $f=g$. 
\end{description}

\subsection{Subtypes and quotient types}\label{sec:subtypes_quotients}

Here we add two new type constructors: subtypes and quotient types. We could not have covered these earlier, 
when discussing the other type constructors, because these two depend on the logical layer.

\paragraph{Subtypes.}\index{type!subtype}\index{constructor!type}

Let $\tau$ be a type, and suppose given a predicate in the one-variable context, $x:\tau\vdash
\phi:\Prop$. The corresponding subtype is denoted $\{x:\tau\mid\phi\}$.%
\footnote{It is natural to want to
extend this idea to contexts with more than one variable, but that takes us into dependent types;
see \cref{sec:dependent_types}.}

There are two term constructors, two conversion rules, and a truth judgment for subtypes. The first
term constructor says that if $\Gamma\vdash e:\tau$ is a term and
$\Gamma\mid\varnothing\vdash\phi(e)$ is a valid truth judgment, then there is a new term
$\Gamma\vdash\textsf{i}(e):\{x:\tau\mid\phi\}$. The second term constructor says that if
$\Gamma\vdash e':\{x:\tau\mid\phi\}$ then there is a new term $\Gamma\vdash \textsf{o}(e'):\tau$.
Then conversion rules say $\textsf{o}(\textsf{i}(e))\equiv e$ and $\textsf{i}(\textsf{o}(e'))\equiv
e'$. However, in practice it is more convenient to simply drop the $\textsf{i}$'s and
$\textsf{o}$'s.

The truth judgment for subtypes is that if $\Gamma,x:\tau\mid\Theta(x),\phi(x)\vdash\psi(x)$ is a
valid truth judgment, then so is
$y:\{z:\tau\mid\phi(z)\}\mid\Theta(\textsf{o}(y))\vdash\psi(\textsf{o}(y))$. With the informal
notation drapping $\textsf{i}$'s and $\textsf{o}$'s, this can be simplified to
$y:\{z:\tau\mid\phi(z)\}\mid\Theta(y)\vdash\psi(y)$.

\begin{remark}\label{rem:HOL_sum_type}
  We saw in \cref{rem:HOL_redundancy} that all of the first-order logic connectives can be defined
  in terms of just $\imp$ and $\forall$, by making clever use of the higher-order proposition type
  $\Prop$. With the addition of subtypes, this surprising expressivity of higher-order logic extends
  to the type theory. For example, the rules for sum types are redundant. The sum of two types $A$
  and $B$ can be \emph{defined} as
  \begin{multline}\label{eq:HOL_sum_type}
    A+B \quad\coloneqq\quad
      \{ (\phi,\psi) : (A\to\Prop)\times(B\to\Prop) \mid \\
        (\mathrm{is\_sing}(\phi) \wedge \mathrm{is\_empty}(\psi)) \vee
        (\mathrm{is\_empty}(\phi) \wedge \mathrm{is\_sing}(\psi)) \}
  \end{multline}
  where
  \begin{align*}
    \mathrm{is\_sing}(\phi) &\coloneqq \exists(a:A)\ldotp \phi(a) \wedge
      \forall(a':A)\ldotp \phi(a') \imp a=a' \\
    \mathrm{is\_empty}(\phi) &\coloneqq \forall(a:A)\ldotp \neg\phi(a).
  \end{align*}
  This can be made precise in two ways. First, if we have explicit sum types in the type theory, then it is
  possible to construct an isomorphism between the ``real'' sum $A+B$ and the type defined in
  \eqref{eq:HOL_sum_type}. Second, even without explicit sum types, it is possible to prove that the
  type defined in \eqref{eq:HOL_sum_type} satisfies all of the rules that define a sum type.

  Again, it is more convenient to just use the explicit sum type rules. However, there are
  properties of sum types which hold in a higher-order logic---and which can be proven from
  \eqref{eq:HOL_sum_type}---which are not provable in non-higher-order logic. We will see an example
  of such a property in \cref{prop:const_decidable_eq}, where we show that the sum of two types with decidable equality has decidable equality.
\end{remark}

\paragraph{Quotient types.}\index{type!quotient}
Let $\tau$ be a type, and suppose given a predicate in the two-variable context, also known as a
binary relation, $x:\tau,y:\tau\vdash R(x,y):\Prop$\index{relation!binary}. The corresponding quotient type is denoted
$\tau/R$.%
\footnote{As with subtypes, it is natural to want to extend the above idea to contexts with more
than two variables of the same type, but doing so takes us into dependent types; see
\cref{sec:dependent_types}.}

There are two term constructors, two conversion rules, and a truth judgment for quotient types. The
first term constructor says that if $\Gamma\vdash e:\tau$ is a term, then there is a new term
$\Gamma\vdash[e]_R:\tau/R$. We read $[e]_R$ as ``the equivalence class of $e$.'' The second says
that if $\Gamma,z:\tau\vdash e'(z):\tau'$ is a term and $\Gamma,x:\tau,y:\tau\mid R(x,y)\vdash
e'(x)=e'(y)$ is a valid truth judgment, then there is a new term $\Gamma,a:\tau/R\vdash\textsf{pick
}x\textsf{ from }a\textsf{ in }e'(x):\tau'$. The intuition is that $a$ represents an equivalence
class, so we can pick some representative $x$ from $a$ and form the term $e'(x)$, with the guarantee
that it doesn't matter which representative we choose. The first conversion rule says
$\textsf{pick }x\textsf{ from }[e]_R\textsf{ in }e'(x)\equiv e'(e)$, and the second says
$\textsf{pick }x \textsf{ from }Q\textsf{ in }e'([x]_R)\equiv e'(Q)$.

The truth judgment for quotient types is that if $\Gamma\vdash e:\tau$ and $\Gamma\vdash e':\tau$
are terms, then $\Gamma\mid R(e,e')\vdash [e]_R=[e']_R$ is a valid truth judgment.

\begin{example}\label{ex:integers_rationals}\index{type!of integers}\index{type!of rationals}
  Recall the definition of addition and multiplication of natural numbers from
  \cref{ex:addition_of_naturals}. We can also define the inequality $\leq$ for natural numbers,
  $m:\tNN,n:\tNN\vdash\const{leq}(m,n)$, to be the term
  $\const{leq}(m,n)\coloneqq\exists(p:\tNN)\ldotp n+p=m$. Write $m\leq n$ if $\const{leq}(m,n)$.

  With quotient types in hand, we can define the types $\tZZ$ and $\tQQ$. For the former, let $R$ be
  the following equivalence relation on $\tNN\times\tNN$,
  \[
    (p,m):\tNN\times\tNN, (p',m'):\tNN\times\tNN\vdash p+m'=p'+m:\Prop
  \]
  Then $\tZZ$ is defined to be the type $(\tNN\times\tNN)/R$. One can construct addition,
  subtraction, and multiplication for $\tZZ$ using the term constructors for quotient types. For
  example, addition in $\tZZ$ is given by the term
  \[
    z:\tZZ, z':\tZZ\vdash\textsf{pick }(p,m)\textsf{ from }z\textsf{ in }(\textsf{pick
    }(p',m')\textsf{ from }z'\textsf{ in }[p+p',m+m']_R:\tZZ.
  \]
  To construct this term---as mentioned above---one must check that the terms $p+p'$ and $m+m'$ are well-defined, i.e.\
  independent of the choice of $(p,m)$ and $(p',m')$.

  Similarly, for $\tQQ$, let $\tNN_+$ denote the subset type $\tNN_+\coloneqq\{n:\tNN\mid n\geq
  1\}$. Then define $S$ be the following equivalence relation on $\tZZ\times\tNN_+$,
  \[
    (n,d):\tZZ\times\tNN_+, (n',d'):\tZZ\times\tNN_+\vdash d'*n=d*n':\Prop
  \]
  and let $\tQQ=(\tZZ\times\tNN_+)/S$. Again, one can construct addition, subtraction, and
  multiplication for $\tQQ$, as well as the partial reciprocal function, using the term constructors
  for quotient types.
\end{example}

\subsection{Dependent types}\label{sec:dependent_types}\index{type!dependent}

Consider the inequality $x:\tNN, y:\tNN \vdash x\leq y$. Then it is natural to form the subtype
$\tNN_{\leq y}\coloneqq\{x:\tNN \mid x\leq y\}$. However, this type \emph{depends} on a variable
$y$, which the type theory we have sketched so far is unable to handle. Intuitively, it is best to
think of the type $\{x:\tNN \mid x\leq y\}$ as a \emph{family} of types, parameterized by $y:\tNN$.
In particular, for each concrete term such as $4:\tNN$ (where $4$ is shorthand for $ssss0$; see page 
\pageref{page:Nats}), there is a type $\tNN_{\leq 4}\coloneqq\{x:\tNN \mid x\leq 4\}$ obtained by
substitution $[y\coloneqq 4]$.\index{type!family}

While such type families are an intuitively natural and useful concept, formalizing dependent type
theory (where types are allowed to depend on terms) is significantly more subtle than the simple
type theory we have so far described. As an illustration of the extra difficulty, consider the type
families $\{x:\tNN \mid x\leq y\}$ and $\{x:\tNN \mid x\leq (y+0)\}$. We know that $y\equiv y+0$ by
the definition of addition---i.e.\ we consider $y$ and $y+0$ to be \emph{the same} term---and as
such we should consider $\tNN_{\leq y}$ and $\tNN_{\leq(y+0)}$ to be \emph{the same} type, even
though they are syntactically different. Thus in a dependent type theory, one must extend the
conversion relation to types as well as terms.

In the theory presented in the rest of this book, there are a few occasions where we need to
consider subtypes and quotient types which are technically only well-formed in a dependent type
theory, such as $\tNN_{\leq y}$. We trust that it will be intuitively clear how to work with these
types, and we refer the reader interested in the details of dependent type theory to sources such as
\cite{Jacobs:1999a,Voevodsky:2013a}.

Recall the basic idea of the semantics of type theory in a topos $\cat{E}$: each type is
assigned an object of $\cat{E}$, each term is assigned a morphism, and propositions
$a:A\vdash\phi:\Prop$ correspond to subobjects of $A$, i.e.\ monomorphisms
$\{a:A\mid\phi\}\hookrightarrow A$. The semantics of a dependent type $a:A\vdash B$ is an arbitrary morphism $B\to A$. So one can think of dependent types as
generalizing subtypes: a proposition that is dependent on a context determines a monomorphism into the context, whereas a
type that is dependent on a context determines an arbitrary morphism into the context. For example, the dependent
type $y:\tNN\vdash\tNN_{\leq y}$---which we think of as a family of types parameterized by $y$---is represented by the composite
\[
  \{x:\tNN,y:\tNN\mid x\leq y\} \hookrightarrow \tNN\times\tNN \To{\tn{pr}_2} \tNN.
\]
A concrete member of the family, such as $\tNN_{\leq 4}$, is represented by a \emph{fiber} of this
map, e.g.\ $\tNN_{\leq 4}$ is represented by the pullback
\[
  \begin{tikzcd}
    \tNN_{\leq 4} \ar[r] \ar[dd] \ar[ddr,phantom,"\lrcorner"{very near start}]
    & \{x:\tNN,y:\tNN\mid x\leq y\} \ar[d] \\
    & \tNN\times\tNN \ar[d,"\tn{pr}_2"] \\
    1 \ar[r,"4"'] & \tNN
  \end{tikzcd}
\]
In general, a helpful intuition is that non-dependent types (with no free variables) are represented
by objects of $\cat{E}$, while dependent types in the context $(x_1:A_1,\dots,x_n:A_n)$ are
represented by objects of the slice category $\cat{E}/(A_1\times\cdots\times A_n)$.

\section{Modalities}
\label{sec:modalities}\index{modality}\index{topos!modality in|see {modality}}

Now that we have informally laid out the basics of the type theory we use in this book, we proceed to discuss some constructions one can do with it. In this section we discuss how modalities allow one to consider subtoposes within the type theory of the larger topos.

A modality is a term of type $\Prop\to\Prop$ satisfying a few axioms. Topos-theoretically, a modality is precisely what \cite{MacLane.Moerdijk:1992a} calls a
\emph{Lawvere-Tierney topology} and what \cite{Johnstone:2002a} calls a \emph{local operator}; we
use the term ``modality'' to emphasize the logical viewpoint.\index{local operator|see
{modality}}\index{topology!Lawvere-Tierney|see {modality}}

We define modalities in \cref{sec:modalities_in_general}, discuss various related notions---such as closed propositions and sheaves---in \cref{sec:closed_separated_sheaves}, and finally discuss the relationship between modalities and subtoposes in \cref{sec:modalitites_subtoposes}.

\subsection{Definition of modality}\label{sec:modalities_in_general}

In any topos, a modality is an internal monad\index{monad!on $\Prop$} on $\Prop$, considered as an
internal poset. The definition can be given type-theoretically as follows:

\begin{definition}\label{def:modality}
A \emph{modality} is a map $j:\Prop\to\Prop$ satisfying the following three conditions for any $P,Q:\Prop$,
\begin{itemize}
	\item $P\imp jP$,
	\item $jjP\imp jP$, and
	\item $(P\imp Q)\imp jP\imp jQ$.
\end{itemize}
\end{definition}

The third condition can be replaced by various equivalent statements, as we now show.

\begin{lemma}\label{lemma:modality}
  Suppose $j:\Prop\to\Prop$ satisfies $P\imp jP$ and $jjP\imp jP$ for any $P:\Prop$. Then the
  following are equivalent:
  \begin{enumerate}
    \item $(P\imp Q)\imp jP\imp jQ$,
    \item $jP\imp (P\imp jQ)\imp jQ$,
    \item $j(P\imp Q)\imp jP\imp jQ$.
    \item $(jP\wedge jQ)\iff j(P\wedge Q)$.
  \end{enumerate}
\end{lemma}
\begin{proof}
  $2\imp 1$ and $3\imp 1$ are easy. $1\imp 2$ follows from $jjQ\imp jQ$. We next show $1\imp 3$. But 3
  is equivalent to $jP\imp j(P\imp Q)\imp jQ$, so first assume $jP$. Assuming 1, we have $(P\imp
  Q)\imp jQ$. So if we apply 1 again on this last implication and assume $j(P\imp Q)$ then we have $jjQ$
  hence $jQ$.

  Next we show $3\imp 4$. It is easy to show
  $j(P\wedge Q)\imp jP$ and hence $j(P\wedge Q)\imp(jP\wedge jQ)$. We need to show the converse
  $(jP\wedge jQ)\imp j(P\wedge Q)$, which is equivalent to $jP\imp jQ\imp j(P\wedge Q)$. Clearly
  $P\imp (Q\imp P\wedge Q)$, so applying $j$ and using 3 with the assumption $jP$ and again with
  $jQ$, we obtain $j(P\wedge Q)$ as desired.

  Finally for $4\imp 1$, note that $(P\imp Q)$ is equivalent to $(P\wedge Q)\iff P$ and hence to
  $(P\wedge Q)=P$ by propositional extensionality, \cref{ax:propositional_extensionality}. Assuming $(P\wedge Q)=P$ and $jP$ we have
  $j(P\wedge Q)$ and hence $jQ$ by 4.
\end{proof}

\begin{example}\label{ex:modalities_order}\index{modality!order on}
  Modalities can be ordered by reverse implication, $j\leq j'$ iff $j'P\imp jP$ for
  all $P:\Prop$. The top element in this ordering is the identity modality $P\mapsto P$. The bottom
  element is the constant modality $P\mapsto\top$. Somewhere in between is the
  double-negation\index{modality!double negation} modality $P\mapsto\neg\neg P$. We will discuss the
  relationship between modalities and subtoposes in \cref{sec:modalitites_subtoposes}.
  \index{poset!of modalities|see{modalities, order on}}
\end{example}

\subsection{$j$-closed propositions, $j$-separated types, and $j$-sheaves}\label{sec:closed_separated_sheaves}

Given a modality $j$ and a type $X$, we can internally express the semantic notion that the sheaf
associated to $X$ is $j$-separated or a $j$-sheaf.

\begin{definition}
\label{def:j_closed}\index{proposition!$j$-closed}
  A proposition $P:\Prop$ is called \emph{$j$-closed} if it satisfies $jP\imp P$. A predicate
  $P:X\to\Prop$ is called \emph{$j$-closed} if $\forall(x:X)\ldotp jPx\imp Px$.
\end{definition}

For any modality $j$, the reflection sending a proposition to a $j$-closed proposition is given by $P\mapsto jP$. 

The following remark is straightforward to verify but \emph{very useful in practice}. We hope the reader takes careful note of it, because we will use it often. Reading a proof and wondering ``where did the $j$'s go?'' is a sign that the reader should revisit this remark. The goal is to understand how logic in the $j$-subtopos is constructed in the larger topos.

\begin{remark}[$j$-logic]\label{rem:j-logic}\index{logic!relative to a modality}
  Let $j$ be a modality. If $P:\Prop$ is written as a complex expression, $jP$ can often be simplified
  recursively. For example, we have the following equivalences and implications:
  \begin{equation}\label{eqn:j_logic}
  \begin{gathered}
      j\top\Leftrightarrow\top
    \hspace{.7in}
      j(\phi\wedge\psi)\Leftrightarrow j\phi\wedge j\psi
    \hspace{.7in}
      j(\psi\imp\phi)\imp(\psi\imp j\phi)
    \\
      j\forall(x:X)\ldotp\phi(x)\imp\forall(x:X)\ldotp j\phi(x)
    \hspace{.7in}
      \exists(x:X)\ldotp j\phi(x)\imp j\exists(x:X)\ldotp\phi(x)
  \end{gathered}
  \end{equation}
  Now assume $\phi$ is $j$-closed. We can replace two of the implications by equivalences:
  \begin{gather*}
      j(\psi\imp\phi)\Leftrightarrow(\psi\imp\phi)
    \hspace{.7in}
      j\forall(x:X)\ldotp\phi(x)\Leftrightarrow\forall(x:X)\ldotp\phi(x)
  \end{gather*}
  and, quite usefully, when proving $\phi$ we can drop $j$ from the front of all hypotheses:
  \[
    [(\psi_1\wedge\cdots\wedge\psi_n)\imp\phi]\iff[(j\psi_1\wedge\cdots\wedge j\psi_n)\imp\phi].
  \]
\end{remark}

Here is an example of a proof that uses the above remark. We go through it slowly.

\begin{proposition}\label{prop:dec_eq_disjunction}
Suppose $P:\Prop$ is decidable, i.e.\ $P\vee\neg P$ holds. For any $Q:\Prop$ we have $j(P\vee Q)\iff P\vee jQ$.
\end{proposition}
\begin{proof}
For one direction we suppose $j(P\vee Q)$ and prove $P\vee jQ$. If $P$ holds we are done, so suppose $\neg P$. We will prove $jQ$, so by \cref{rem:j-logic} we can drop $j$ from the hypothesis, at which point we have $P\vee Q$ and thus $Q$.

The other direction is easier and does not depend on $P$ being decidable. In case $P$, we have $P\vee Q$ so $j(P\vee Q)$. In case $jQ$ we can drop the $j$, and $Q$ implies $P\vee Q$ and hence $j(P\vee Q$).
\end{proof}

\begin{definition}[$j$-separated, $j$-sheaf]\label{def:separated_sheaf}
\index{type!$j$-separated}\index{type!$j$-sheaf}\index{sheaf!for a modality $j$|see {type, $j$-sheaf}}
  A type $X$ is \emph{$j$-separated} iff it satisfies $\forall(x,x':X)\ldotp j(x=x')\imp(x=x')$.
  The \emph{$j$-separification of $X$}\index{separification}, denoted $\mathrm{sep}_jX$ is the
  quotient of $X$ by the internal equivalence relation $j(x=x')$. The \emph{$j$-sheafification of
  $X$},\index{sheafification} denoted $\asSh_j(X)$, is the subtype of predicates $\phi:X\to\Prop$
  satisfying
  \begin{description}
    \item[\quad $j$-closed predicate:] $\forall(x:X)\ldotp j\phi(x)\imp\phi(x)$,
    \item[\quad $j$-local singleton:] $j\exists(x:X)\ldotp\forall(x':X)\ldotp \phi(x')\iff j(x=x').$
  \end{description}\index{singleton}
  If $X$ is separated, one may drop the inner $j$ from the $j$-local singleton condition. If $\{\cdot\}:X\to(X\to\Prop)$ is the usual singleton function, then the $j$-local singleton condition is equivalent to $j\exists(x:X)\ldotp j(\phi=\{x\})$.

  There is a map $\eta_j\colon X\to \asSh_j(X)$; it sends $x:X$ to the predicate
  $\eta_j(x):X\to\Prop$ given on $x':X$ by
  \begin{equation}\label{eqn:sheafification_inclusion}
    \eta_j(x)(x')\coloneqq j(x=x').
  \end{equation}
  It is easy to check that $\eta_j(x)$ is $j$-closed and $j$-locally singleton, and $X$ is
  $j$-separated iff $\eta_j$ is an internal injection. $X$ is called a \emph{$j$-sheaf} if $\eta_j$
  is also an internal surjection, i.e.\ satisfies
  $\forall(\phi:\asSh_j(X))\ldotp\exists(x:X)\ldotp\phi=\eta_j(x)$.
\end{definition}

\begin{proposition}\label{prop:characterize_j_sheaf}
  A type $X$ is a $j$-sheaf iff it is $j$-separated and, for any $j$-closed predicate
  $\phi:X\to\Prop$ the singleton condition on $\phi$ is $j$-closed in the sense that
  \begin{equation}\label{eqn:sheaf2}
    j\big(\exists(x:X)\ldotp\forall(x':X)\ldotp\phi(x') \iff (x=x')\big)
      \imp \exists(x:X)\ldotp\forall(x':X)\ldotp\phi(x') \iff (x=x').
  \end{equation}
\end{proposition}
\begin{proof}
  By \cref{def:separated_sheaf}, we may assume $X$ is $j$-separated, and we need to show that
  $\eta_j$ is surjective iff \cref{eqn:sheaf2} holds for every $j$-closed predicate $\phi$. First
  suppose $\eta_j$ is surjective and that $\phi$ is a $j$-closed predicate that satisfies the
  hypothesis of implication \eqref{eqn:sheaf2}. Then by definition we have $\phi:\asSh_j(X)$, so
  $\exists(x:X)\ldotp\phi=\eta_j(x)$, which is the conclusion of \eqref{eqn:sheaf2}. For the other
  direction, if every $j$-closed predicate $\phi:X\to\Prop$ satisfies \eqref{eqn:sheaf2} then
  $\eta_j$ is surjective.
\end{proof}

\begin{proposition}\label{prop:prop_j-j-sheaf}
  For any modality $j$, the type $\Prop_j\coloneqq\{P:\Prop\mid jP\imp P\}$ is a
  $j$-sheaf.\index{type!$j$-sheaf}
\end{proposition}
\begin{proof}
  It is easy to check that $\Prop_j$ is $j$-separated. So choose a $j$-closed predicate
  $\phi:\Prop_j\to\Prop$ and assume the hypothesis of \eqref{eqn:sheaf2}. Let
  $P\coloneqq\forall(Q:\Prop_j)\ldotp\phi(Q)\imp Q$, which is $j$-closed by \cref{rem:j-logic}. It
  is our candidate for the required existential; i.e.\ using \cref{prop:characterize_j_sheaf} we
  will be done if we can show $\forall(P':\Prop_j)\ldotp \phi(P')\iff (P=P')$. One checks using
  \cref{rem:j-logic} that $P$ is $j$-closed. Thus we can drop the $j$ from our hypothesis, so we
  have some $x:\Prop_j$ satisfying $\forall(x':\Prop_j)\ldotp\phi(x')\iff (x=x')$.

  First we show $\phi(P)$. It is easy to show $P\imp x$, because we have $\phi(x)$ by hypothesis.
  But we also have $x\imp P$ because for any $Q:\Prop_j$ such that $\phi(Q)$, we have $Q=x$. Thus
  $x=P$, so $\phi(P)$, as desired. Choosing $P':\Prop_j$, it remains to show that
  $\phi(P')\imp(P=P')$. But given $\phi(P')$ and the already-established $\phi(P)$, we have $P=x=P'$
  by hypothesis, so we are done.
\end{proof}

\begin{proposition}\label{prop:subtype_j_sheaf}\index{sheaf!$j$}
  Given a $j$-sheaf $X$ and a $j$-closed predicate $\psi:X\to\Prop_j$, the subtype $\{x:X\mid\psi
  x\}$ is a $j$-sheaf.
\end{proposition}
\begin{proof}
  Any subtype of a separated type is separated, so it suffices to prove \eqref{eqn:sheaf2} for any
  $\phi:\{x:X\mid\psi x\}\to\Prop_j$. The hypothesis says $j\exists(x:X)\ldotp\psi
  x\wedge\forall(x':X)\ldotp\psi x'\imp(\phi x\iff (x=x'))$. It follows easily that
  $j\exists(x:X)\ldotp\forall(x':X)\ldotp(\psi x'\wedge\phi x)\iff (x=x')$, and we may drop the $j$
  because $X$ is a $j$-sheaf. The conclusion of \eqref{eqn:sheaf2} follows directly.
\end{proof}

\begin{proposition}\label{prop:product_j_sheaves}\index{sheaf!product of}
  The product of $j$-sheaves is a $j$-sheaf.
\end{proposition}
\begin{proof}
  We use \cref{prop:characterize_j_sheaf}. It is easy to see that the product of separated sheaves
  is separated. Supposing $j\exists((x_1,x_2):X_1\times X_2)\ldotp\forall((x_1',x_2'):X_1\times
  X_2)\ldotp\phi(x_1',x_2')\iff(x_1,x_2)=(x_1,'x_2')$, one proves
  \[
    j\exists(\phi_1:X_1\to\Prop)(\phi_2:X_2\to\Prop)\ldotp
    \forall(x_1':X_1)(x_2':X_2)\ldotp \phi(x_1',x_2') \iff \phi_1(x_1')\wedge\phi_2(x_2')
  \]
  by taking $(x_1,x_2)$ from the hypothesis and letting $\phi_i(x_i')\coloneqq(x_i=x_i')$ for
  $i=1,2$. We now obtain $j\exists(x_i:X_i)\forall(x_i':X_i)\ldotp\phi_1(x_i')\iff(x_i=x_i')$, and
  the result follows from \cref{prop:characterize_j_sheaf}. 
\end{proof}

\begin{proposition}\label{prop:comm_existential_unit_surj}
Let $X$ a type. Suppose that $j$ is a modality satisfying
\[\forall(P:X\to\Prop)\ldotp[j\exists(x:X)\ldotp Px]\imp[\exists(x:X)\ldotp jPx].\]
Then the map $\eta_j:X\to\asSh_j(X)$ is surjective.
\end{proposition}
\begin{proof}
Suppose $\phi:X\to\Prop_j$ satisfies $j\exists(x:X)\ldotp\forall(x':X)\ldotp\phi(x')\iff(x=x')$.
Then by hypothesis, there is some $x:X$ satisfying $j\forall(x':X)\ldotp\phi(x')\iff(x=x')$, and this implies $\forall(x':X)\ldotp j(\phi(x')\iff(x=x'))$, which in turn implies $\forall(x':X)\ldotp \phi(x')\iff j(x=x')$.
\end{proof}

\begin{proposition}\index{separated}
  Suppose $X$ has decidable equality. If $j$ is a dense modality (i.e.\ $j\bot=\bot$) then $X$ is
  $j$-separated.
\end{proposition}
\begin{proof}
  If $X$ has decidable equality then it is easy to check $j(x_1=x_2)\imp(x_1=x_2)$.
\end{proof}

\begin{proposition}\label{prop:sep_predicates}
  Suppose $X$ decidable equality. Let $j$ be a modality and let $X\to\mathrm{sep}_j(X)$ denote its
  $j$-separification. Then the natural map below is an isomorphism:
  \[
    (\mathrm{sep}_j(X)\to\Prop_j)\To{\cong}(X\to\Prop_j).
  \]
\end{proposition}
\begin{proof}
  The map $f:(X\to\Prop_j)\to(\mathrm{sep}_j(X)\to\Prop_j)$, applied to any $P:X\to\Prop_j$ is given
  as follows. Any $y:\mathrm{sep}_j(X)$ is represented by some $x:X$, so we may let $f(P)(y)=P(x)$
  and it remains to prove that this respects the equivalence relation: $j(x=x')\imp (P(x)=P(x'))$.
  $x=x'$ implies $Px\iff Px'$, so it suffices to see that $Px\iff Px'$ is $j$-closed; see
  \cref{eqn:j_logic}.
\end{proof}

\subsection{Modalities and subtoposes}\label{sec:modalitites_subtoposes}

In this section we discuss how the logical notion of modality corresponds to the semantic notion of
subtopos. Much of the following is taken from \cite[A.4.4 and A.4.5]{Johnstone:2002a}; again, what
we call modalities are called \emph{local operators} there.\index{modality!order on}
\index{topos!subtopos of}\index{subtopos!corresponding to a modality}

There are several equivalent definitions of subtoposes of $\cat{E}$. One is a subcategory $\cat{E}'\ss\cat{E}$ with a left-exact left-adjoint. Another is a modality $j$ on $\cat{E}$, by which one can define the subcategory $\cat{E}_j$ of $j$-sheaves and the left-exact left adjoint $\asSh_j$ defined in \cref{def:separated_sheaf}.

As mentioned in \cref{ex:modalities_order}, there is a partial order on modalities: $j\leq j'$ iff
$\forall(P:\Prop)\ldotp j'P\imp jP$. With this partial order, modalities on a topos $\cat{E}$ form a
lattice (in fact a co-Heyting algebra), and there is a poset isomorphism between that of modalities
and that of subtoposes of $\cat{E}$ under inclusion.

To every proposition $U:\Prop$ we can associate three modalities: the open modality
$o(U)$\index{modality!open}, the closed modality $c(U)$\index{modality!closed}, and the quasi-closed
modality $q(U)$\index{modality!quasi-closed}. A topos of sheaves for one of these is respectively called an \emph{open subtopos}, a \emph{closed subtopos}, and a \emph{quasi-closed subtopos}. If $\cat{E}=\Shv{L}$ for some locale $L$, then $U$ can be identified with an open subspace $U\ss L$, and its topos of sheaves is equivalent to $\Shv{L}_{o(U)}$. Similarly, $\Shv{L}_{c(U)}$ is equivalent to the topos of sheaves on the closed subspace complementary to $U$, and $\Shv{L}_{q(U)}$ is equivalent to the topos of sheaves on the localic intersection of all dense subspaces of the complement of $U$. Logically, these modalities are defined as follows:
\begin{itemize}
  \item $o(U)P \coloneqq U \imp P$,
  \item $c(U)P \coloneqq U \vee P$,
  \item $q(U)P \coloneqq (P \imp U) \imp U$.
\end{itemize}
We say that a modality $j$ is \emph{open} if there exists $U:\Prop$ such that $j=o(U)$, and
similarly that $j$ is \emph{closed} or \emph{quasi-closed} if there exists $U:\Prop$ such that $j=c(U)$ or $j=q(U)$.

\begin{remark}
  In a boolean topos, the closed modality $U\vee P$ is the same as the open modality $\neg U\imp P$,
  but not in an arbitrary topos.
\end{remark}

\begin{example}
  Consider the case $U=\bot$, corresponding to the empty subtopos. The associated open modality
  $o(U)$ sends $P$ to $\top$, i.e.\ it is the terminal modality. The associated closed modality
  $c(U)$ is the identity, i.e.\ it is the initial modality.

  The most interesting is the associated quasi-closed modality $q(U)$, which in this case is
  double-negation, sending $P$ to $\neg\neg P$. It corresponds to a boolean subtopos which is dense
  in $\cat{E}$. Semantically, the double-negation modality\index{modality!double negation} can be
  thought of as ``almost always''. Every quasi-closed modality $q(U)$ is a double-negation modality, relative to
  the closed modality $j=c(U)$, in the sense that $q(U)P=(P\imp j\bot)\imp j\bot$.
\end{example}

As mentioned above, $j_1$ corresponds to a subtopos of $j_2$ iff $j_2(P)\imp j_1(P)$ for all
propositions $P$. For example, it is easy to check that for any proposition $U$, we have
$\forall(P:\Prop)\ldotp c(U)(P)\imp q(U)(P)$. In the lattice of modalities, the join of $j_1$ and
$j_2$ is given by ($j_1(P)\wedge j_2(P)$) of modalities. If terms $c:\tConst$ of a constant sheaf
index modalities $j_c$, then $\forall (c:\tConst)\ldotp j_c$ is a modality, and it corresponds to
their $\tConst$-indexed join.

If $j_1$ and $j_2$ are modalities then it is not necessarily the case that $j_1j_2$ will be a modality. It is easy to prove directly from definitions that $j_1j_2$ is a modality if either $j_1$ is open or $j_2$ is closed. If $j_1j_2$ is a modality then it is the meet of $j_1$ and $j_2$. Modalities $j_1$ and $j_2$ are
called \emph{disjoint}\index{modality!disjoint} if their meet $j$ is the bottom element, i.e.\ if
$j\bot$ holds.


\section{Dedekind $j$-numeric types}\label{sec:Dedekind_j}

\index{real number!Dedekind}
\index{real number|seealso {numeric object}}\index{real numbers|seealso {type, of real numbers}}
\index{object!real numbers}\index{type!of real numbers}
\index{real numbers|seealso {object, real numbers}}

We continue to work within the type theory discussed in \cref{sec:toposes_types_logic}. Using the notion of modality discussed in \cref{sec:modalities}, we define various sorts of numeric objects that exist and are internally definable for any topos $\cat{E}$ and modality $j$.

\subsection{Background}

From our work in \cref{sec:toposes_types_logic}, and in particular \cref{ex:integers_rationals}, we
have seen that the types $\tNN$, $\tZZ$, and $\tQQ$, together with their additive and multiplicative
structures (see \cref{ex:addition_of_naturals}) exist in our type theory. It turns out that,
semantically, the sheaves corresponding to $\tNN$, $\tZZ$, and $\tQQ$ are (locally) constant in any
topos, corresponding to the images of the sets $\NN,\ZZ,\QQ$ under the inverse image part of the
unique geometric morphism $\cat{E}\leftrightarrows\Cat{Set}$.

One manifestation of this constancy is that the $=$ and $<$ relations on $\tNN$, $\tZZ$, or $\tQQ$
are decidable in any topos. Another is that one can get away with blurring the distinction between
internal rational numbers---i.e.\ terms of the type $\tQQ$ in the internal language---and external
rational numbers; similarly for $\tNN$ and $\tZZ$. 

With $\tQQ$ in hand, the standard construction of the type $\tRR$ of real numbers from the rationals by Dedekind cuts can also be
carried out in the type theory with good results. However, unlike $\tNN$, $\tZZ$, and $\tQQ$, one should not expect the type $\tRR$ of real numbers to be semantically constant in an arbitrary topos, and similarly one should expect neither the $=$ nor the $<$ relation to be decidable.%
\footnote{
  It will turn out that the type $\tRR$ of real numbers \emph{is} constant in our main topos of interest, $\BaseTopos$, but not in many of the subtoposes we consider. For example the type $\tRR_\pi$ of real numbers in $\BaseTopos_\pi$ does not have decidable equality, roughly because continuous functions that are unequal globally may become equal locally.
}
In particular one cannot generally regard a section of $\tRR$ as an ordinary real number. Perhaps the best
intuition in an arbitrary topos is that terms of the type $\tRR$ correspond semantically to
continuous real-valued functions on the topos (as a generalized topological space).

In Dedekind's construction of the real numbers \cite{Dedekind:1872a}, a real number $x:\tRR$ is
defined in terms of two subsets of rational numbers: those that are less than $x$ and those that are
greater than $x$. In our context, we replace the external notion, ``two subsets of $\QQ$'' with the
internal notion, ``two subtypes of $\tQQ$''. They are classified by predicates $\delta:\tQQ\to\Prop$
and $\upsilon:\tQQ\to\Prop$, which roughly correspond to the predicates ``is less than $x$'' and
``is greater than $x$''.%
\footnote{
  Throughout this book, we use $\delta$ (delta) and $d$ for ``down'' and $\upsilon$ (upsilon) and
  $u$ for ``up''. Thus $\delta$ classifies a ``down-set'' of $\tQQ$, and $\upsilon$ classifies an ``up-set''.
}

These satisfy several axioms: for example for any $q_1<x$, there exists $q_2$ such that $q_1<q_2$ and $q_2<x$. This is translated into the logical statement
\begin{equation}\label{eqn:roundedness_example}
\forall(q_1:\tQQ)\ldotp\delta q_1\imp\exists(q_2:\tQQ)\ldotp (q_1<q_2)\wedge\delta q_2.
\end{equation}
This is one of the usual Dedekind axioms---often called \emph{roundedness}---which forces $\delta$ to act like $<$ rather than $\leq$. As mentioned above, the Dedekind axioms make sense in any topos $\cat{E}$, giving the standard construction of the type $\tRR$ of real numbers in $\cat{E}$; see e.g.\ \cite[VI.8]{MacLane.Moerdijk:1992a}.

However, we make two slightly non-standard moves. The first is that we will be interested in several numeric types---not just the real numbers---which are defined by relaxing some of the Dedekind axioms. Doing so in the topos $\Cat{Set}$ would yield a definition of the sets $\LR$, $\UR$, $\II$, $\IR$, and $\RR$, discussed in
\cref{sec:real_valued_IR}; in this book we will want to consider the analogous objects in other toposes.

The other non-standard move is that we want to work relative to an arbitrary modality $j$, as defined in \cref{def:modality}. To do so, we take each of the standard Dedekind axioms and apply $j$
``at every position''; the result will be an axiom for the associated numeric type in the $j$-subtopos. For example, the roundedness
axiom \eqref{eqn:roundedness_example} becomes
\[
  j\forall(q_1:\tQQ)\ldotp j[j\delta q_1\imp j\exists(q_2:\tQQ)\ldotp j(q_1<q_2)\wedge j\delta q_2].
\]
But if $\delta$ is a $j$-closed proposition, then we can use $j$-logic (see \cref{rem:j-logic}) to find an equivalent statement that is much simpler:
\[
  \forall(q_1:\tQQ)\ldotp \delta q_1\imp j\exists(q_2:\tQQ)\ldotp (q_1<q_2)\wedge\delta q_2.
\]
We make similar replacements of all the axioms from \cite[VI.8]{MacLane.Moerdijk:1992a}.

The result of our two non-standard moves is that we obtain what we call the Dedekind $j$-numeric
types, which we now formally define.

\subsection{Definition of the Dedekind $j$-numeric types}\label{sec:define_Dedekind}

In the definition below, one can simply remove the $j$'s (i.e.\ use the identity modality $jP=P$) to obtain the usual notion of Dedekind real numbers, and related numeric types.

\begin{definition}[Dedekind $j$-numeric types]
\label{def:local_reals}\index{numeric type}
\index{Dedekind numeric type|see {numeric type}}
  Let $\cat{E}$ be a topos, let $\tQQ$ be the type of rational numbers\index{type!rational numbers},
  and let $j$ be a modality on $\cat{E}$.\index{modality} Consider the following conditions on
  predicates $\delta:\tQQ\to\Prop$ and $\upsilon:\tQQ\to\Prop$,
  \begin{tabbing}
    \qquad\=
      0a.\quad\= $\forall(q:\tQQ)\ldotp j(\delta q)\imp\delta q$\hspace{1.2in}\=
      0b.\quad\= $\forall(q:\tQQ)\ldotp j(\upsilon q)\imp\upsilon q$
    \\
      \>1a.\> $\forall q_1,q_2\ldotp (q_1<q_2)\imp\delta q_2\imp\delta q_1$
      \>1b.\> $\forall q_1,q_2\ldotp(q_1<q_2)\imp\upsilon q_1\imp\upsilon q_2$
    \\
      \>2a.\> $\forall q_1\ldotp\delta q_1\imp j\exists q_2\ldotp (q_1<q_2)\wedge\delta q_2$
      \>2b.\> $\forall q_2\ldotp\upsilon q_2\imp j\exists q_1\ldotp (q_1<q_2)\wedge\upsilon q_1$
    \\
      \>3a.\> $j\exists q\ldotp\delta q$
      \>3b.\> $j\exists q\ldotp\upsilon q$
    \\
      \>4.\> $\forall q\ldotp(\delta q \wedge \upsilon q)\imp j\bot$ \\
      \>5.\> $\forall q_1,q_2\ldotp (q_1<q_2) \imp j(\delta q_1\vee\upsilon q_2)$
  \end{tabbing}
  We refer to these conditions as $j$-closed (0),\index{cut!$j$-closed} down/up-closed (1),
  $j$-rounded (2),\index{cut!$j$-rounded} $j$-bounded (3)\index{cut!$j$-bounded}, $j$-disjoint
  (4)\index{cuts!$j$-disjoint}, and $j$-located (5)\index{cuts!$j$-located}. It is easy to check that
  the $j$-disjointness condition is equivalent to $\forall(q_1,q_2:\tQQ)\ldotp\delta
  q_1\imp\upsilon q_2\imp j(q_1<q_2)$.
  
  When $j=\id$ is the trivial modality, we refer to the conditions simply as \emph{rounded}, \emph{bounded}, etc. In
  particular, axioms 0a and 0b can be dropped entirely when $j=\id$.

  We define ten real-number-like types, including the usual type of Dedekind real numbers $\tRR_j$
  for the subtopos $\cat{E}_j$, by using various subsets of these axioms. We call these
  \emph{Dedekind $j$-numeric types}; see \cref{table:numeric_types}.\index{numeric type!$j$-local}
  \begin{table}[h]
  \index{type!of intervals}\index{type!of lower real numbers}\index{type!of improper intervals}
  \index{type!of upper real numbers}\index{type!of real numbers}\index{type!of unbounded intervals}
  \index{type!of unbounded lower real numbers}\index{type!of unbounded improper intervals}
  \index{type!of unbounded upper real numbers}\index{type!of unbounded real numbers}
    \[
    \renewcommand{\arraystretch}{1.45}
    \begin{array}{l|l|l}
      \text{Name of type in }\cat{E}&\text{Notation}&\text{Definition}\\\hline
      \text{$j$-local unbounded lower reals}&\tLRub_j&\{\delta\mid\text{0a, 1a, 2a}\}\\
      \text{$j$-local unbounded upper reals}&\tURub_j&\{\upsilon\mid\text{0b, 1b, 2b}\}\\
      \text{$j$-local unbounded improper intervals}&\tIIub_j&\{(\delta,\upsilon)\mid\text{0, 1, 2}\}\\
      \text{$j$-local unbounded (proper) intervals}&\tIRub_j&\{(\delta,\upsilon)\mid\text{0, 1, 2, 4}\}\\
      \text{$j$-local unbounded reals}&\tRRub_j&\{(\delta,\upsilon)\mid\text{0, 1, 2, 4, 5}\}\\
      \text{$j$-local lower reals}&\tLR_j&\{\delta\mid\text{0a, 1a, 2a, 3a}\}\\
      \text{$j$-local upper reals}&\tUR_j&\{\upsilon\mid\text{0b, 1b, 2b, 3b}\}\\
      \text{$j$-local improper intervals}&\tII_j&\{(\delta,\upsilon)\mid\text{0, 1, 2, 3}\}\\
      \text{$j$-local (proper) intervals}&\tIR_j&\{(\delta,\upsilon)\mid\text{0, 1, 2, 3, 4}\}\\
      \text{$j$-local real numbers}&\tRR_j&\{(\delta,\upsilon)\mid\text{0, 1, 2, 3, 4, 5}\}
    \end{array}
    \]
  \caption{Ten Dedekind $j$-numeric types}\label{table:numeric_types}
  \end{table}
  We refer to those having only one cut ($\tLR_j$, $\tUR_j$, $\tLRub_j$, and $\tURub_j$) as
  \emph{one-sided} and to those having both cuts ($\tRR_j$, $\tIR_j$, $\tII_j$, $\tRRub_j$,
  $\tIRub_j$, and $\tIIub_j$) as \emph{two-sided}. We refer to those without the locatedness axiom
  ($\tLR_j$, $\tUR_j$, $\tLRub_j$, $\tURub_j$, $\tIR_j$, $\tII_j$, $\tIRub_j$, and $\tIIub_j$) as
  the \emph{$j$-numeric domains}, a name that will be justified in
  \cref{prop:intervals_as_RIds}.\index{numeric type!one-sided}\index{numeric
  type!two-sided}\index{numeric type!domain}\index{numeric domain|see {numeric type, domain}}

  Given unbounded reals $r=(\delta,\upsilon):\tRRub_j$ and $r'=(\delta',\upsilon'):\tRRub_j$, we often
  use the more familiar notation for inequalities between them or involving a rational $q:\tQQ$:\index{inequality!$j$-local}
  \begin{align*}
    q<r&\coloneqq\delta q&
    r<q&\coloneqq\upsilon q&
    r<_jr'&\coloneqq j\exists(q:\tQQ)\ldotp (r<q)\wedge(q<r')\\
    r\leq_j q&\coloneqq\delta q\imp j\bot&
    q\leq_j r&\coloneqq\upsilon q\imp j\bot&
    r'\leq_j r&\coloneqq(r<_jr')\imp j\bot
  \end{align*}
The same notation makes sense when $r,r':\tRR_j$ are (bounded) reals, but one should be a bit careful with other $j$-numeric types. We will discuss this more in \cref{sec:inequalities}.
\end{definition}

\begin{remark}\label{rem:rationals_as_reals}
  We associate to any $q:\tQQ$ a pair of cuts $(\delta_q,\upsilon_q)$, defined as follows on
  $q':\tQQ$
  \begin{equation}\label{eqn:q_cuts}
    \delta_q q'\iff j(q<q')
    \qquad\text{and}\qquad
    \upsilon_q q'\iff j(q'<q).
  \end{equation}
  It is easy to check that $(\delta_q,\upsilon_q)$ is a $j$-local real number, so we have a map
  $\tQQ\to\tRR_j$. In particular, for $r=(\delta_r,\upsilon_r):\tRR_j$, we write $q=r$ to mean
  $\forall(q':\tQQ)\ldotp (\delta_q q' \iff \delta_r q') \wedge (\upsilon_q q' \iff \upsilon_r q')$.

  There are also obvious isomorphisms $\tII_j\cong\tLR_j\times\tUR_j$ and $\tIIub_j \cong \tLRub_j
  \times \tURub_j$. Here is a diagram of relationships among the Dedekind $j$-numeric types:
  \begin{equation}\label{eqn:relations_among_Dedekinds}
  \begin{tikzcd}[column sep=15pt, row sep = 5pt]
    \tQQ\ar[rr,hook]&&\tRR_j\ar[dd, hook]\ar[rr, hook]&&
    \tIR_j\ar[dd, hook]\ar[rr, hook]&&
    \tII_j\ar[dd, hook]\ar[dl, two heads]\ar[dr, two heads]
  \\
    &&&&
    &\tLR_j&&
    \tUR_j
  \\
    &&
    \tRRub_j\ar[rr, hook]&&
    \tIRub_j\ar[rr, hook]&&
    \tIIub_j\ar[dl, two heads]\ar[dr, two heads]
  \\
    &&&&
    &\tLRub_j\ar[from=uu, hook, crossing over]&&
    \tURub_j\ar[from=uu, hook, crossing over]
  \end{tikzcd}
  \end{equation}
\end{remark}

\begin{remark}\label{rem:numeric_j_sheaf}\index{separated!$j$-numeric types as}
  Each of the Dedekind $j$-numeric types, $\tRR_j$, $\tIR_j$, $\tII_j$, $\tLR_j$, $\tUR_j$,
  $\tRRub_j$, $\tIRub_j$, $\tIIub_j$, $\tLRub_j$, and $\tURub_j$ is a $j$-sheaf. Proving this requires function extensionality; see \cref{ax:propositional_extensionality}.\index{extensionality!function}
\end{remark}

\subsection{Some preliminary numerical facts}\label{sec:preliminary_numeric}

Most of the material in this section is taken from \cite{Bauer.Taylor:2009a}. The results here will mainly be used in the next
section, to show that addition and multiplication preserve locatedness; see
\cref{thm:mult_is_continuous}. Throughout the section, $j$ is an arbitrary modality.

\begin{proposition}[$\tQQ$ is Archimedean]
\label{prop:Archimedean}\index{Archimedean property}
\[\forall(q_1,q_2:\tQQ)\ldotp(q_1>0\wedge q_2>0)\imp\exists(k:\tNN)\ldotp q_2<k*q_1.\]
\end{proposition}
\begin{proof}
We may assume $q_1=\frac{a+1}{m+1}$ and $q_2=\frac{b+1}{n+1}$ for $a,b,m,n\in\tNN$. Then we can set $k\coloneqq(m+1)*(b+1)$.
\end{proof}

\begin{proposition}\label{prop:within_sequence_j}
  For any $n:\tNN$, rationals $q_0<q_1<\cdots<q_{n+2}$, and real $r:\tRR_j$, we have
  \[
    q_0<r<q_{n+2}\imp j\exists(k:\tNN)\ldotp (0\leq k\leq n) \wedge (q_k<r<q_{k+2}).
  \]
\end{proposition}
\begin{proof}
We proceed by induction on $n$. The claim is trivial for $n=0$, so suppose it is true for arbitrary $n$. Suppose further that there are rationals $q_0<q_1<\cdots<q_{n+3}$ and $q_0<r<q_{n+3}$. By locatedness, $j(q_{n+1}<r\vee r<q_{n+2})$; we can drop the $j$ by \cref{rem:j-logic},%
\footnote{Throughout this section, we will use \cref{rem:j-logic} without mentioning it.}
to obtain $q_{n+1}<r\vee r<q_{n+2}$. In the first case we have $q_{n+1}<r<q_{n+3}$ and in the second we use the inductive hypothesis.
\end{proof}

The next two propositions say that any real number can be bounded between rationals that are arbitrarily close together, in the additive and the multiplicative sense.

\begin{proposition}\label{prop:arithmetic_located_j}\index{located!arithmetically}
  Any $r:\tRR_j$ is \emph{$j$-arithmetically located} in the sense that
  \[
    \forall(p:\tQQ)\ldotp p>0\imp j\exists(d,u:\tQQ)\ldotp(d<r<u)\wedge(0<u-d<p).
  \]
\end{proposition}
\begin{proof}
By boundedness, $j\exists(d',u':\tQQ)\ldotp d'<r<u'$, and by the Archimedean property (\cref{prop:Archimedean}), $\exists(k:\tNN)\ldotp u'-d'<k*\frac{p}{3}$. For each $i:\tNN$, let $q_i\coloneqq d'+i*\frac{p}{3}$, so $d'=q_0<q_1<\cdots<q_{k+2}$ and $u'<q_k<q_{k+2}$. By \cref{prop:within_sequence_j}, there is some $i$ with $0\leq i\leq k$ with $q_i<r<q_{i+2}$. The conclusion is satisfied using $d\coloneqq q_i$ and $u\coloneqq q_{i+2}$, because $u-d=\frac{2p}{3}<p$.
\end{proof}

\begin{proposition}\label{prop:multiplicatively_located}\index{located!multiplicatively}
  Any $r:\tRR_j$ with $0<r$ is \emph{$j$-multiplicatively located} in the sense that
  \[
    \forall(p,p':\tQQ)\ldotp 0<p<p'\imp j\exists(d,u:\tQQ)\ldotp
    (0<d<r<u)\wedge\left(\frac{u}{d}<\frac{p'}{p}\right).
  \]
\end{proposition}
\begin{proof}
  By roundedness $j\exists(q:\tQQ)\ldotp 0<q<r$. Choose $q':\tQQ$ such that $0<q'<q(p'-p)$. By
  arithmetic locatedness, $j\exists(d,u:\tQQ)\ldotp(d<r<u)\wedge(0<u-d<\frac{q'}{p'})$. Then $q<u$
  by disjointness, so $p'(u-d)<q'<q(p'-p)<u(p'-p)$, hence $up<p'd$ and $\frac{u}{d}<\frac{p'}{p}$.
\end{proof}

The standard definition of addition of unbounded improper
intervals\index{arithmetic!addition}\index{function!arithmetic|see {arithmetic}}, relativized to the $j$-modality, is given by
$(\delta_1,\upsilon_1)+(\delta_2,\upsilon_2)=(\delta',\upsilon')$, where
\begin{equation}\label{eqn:addition_unbounded_improper}
\begin{aligned}
  \delta'q&\iff j\exists(q_1,q_2:\tQQ)\ldotp\delta_1q_1\wedge\delta_2q_2\wedge(q<q_1+q_2)\\
  \upsilon'q&\iff j\exists(q_1,q_2:\tQQ)\ldotp\upsilon_1 q_1\wedge\upsilon_2 q_2\wedge(q_1+q_2<q).
\end{aligned}
\end{equation}

\begin{theorem}\label{thm:monoids}\index{monoid!numeric objects as}
The type $\tIIub_j$ has the structure of a commutative monoid, with the addition operation $+$ from \cref{eqn:addition_unbounded_improper}. Each of the types in \cref{eqn:relations_among_Dedekinds} is closed under this operation, and each of the maps shown there is a monoid homomorphism.
\end{theorem}
\begin{proof}
We begin by working only with lower cuts; upper cuts are similar. It is easy to see that $+$ is commutative and associative. For example the lower cut for both sides of $r_1+(r_2+r_3)=(r_1+r_2)+r_3$ will be equivalent to
\[j\exists(q_1,q_2,q_3:\tQQ)\ldotp\delta_1q_1\wedge\delta_2q_2\wedge\delta_3q_3\wedge(q<q_1+q_2+q_3).
\]
The additive unit is $0:\tQQ$; indeed it suffices to check the following equivalence for any lower cut $\delta$ and rational $q$:
\[
\delta q\iff j\exists (q_1,q_2:\tQQ)\ldotp q_1<0\wedge \delta q_2\wedge(q<q_1+q_2).
\]
For the backwards direction, the existential is equivalent to $\exists(q_2:\tQQ)\ldotp \delta q_2\wedge(q<q_2)$, so we obtain $\delta q$ by down-closure (1a) and $j$-closure (0a). For the forwards direction, suppose $\delta q$. Then by roundedness (2a) twice, $\exists(q',q'':\tQQ)\ldotp(q<q'<q'')\wedge\delta q''$, and the conclusion is satisfied by letting $q_1\coloneq q'-q''$ and $q_2\coloneqq q''$.

Thus $\tIIub_j$ has the structure of a commutative monoid. It is straightforward to check that if $\delta_1$ and $\delta_2$ are bounded (3a) then so is their sum, so the operation restricts to one on $\tII_j$, which then also has the structure of a commutative monoid.

For $j$-disjointness (4) and $j$-locatedness (5), we consider both lower and upper cuts. If $r_1$ and $r_2$ are $j$-disjoint then we want to see that $(\delta',\upsilon')=r_1+r_2$ is too. Suppose $\delta'q\wedge\upsilon'q$; we want to show $j\bot$. From
\[
j\exists(q_1,q_2,q_1',q_2':\tQQ)\ldotp\delta_1q_1\wedge\delta_2q_2\wedge(q<q_1+q_2)\wedge\upsilon_1q_1'\wedge\upsilon_2q_2'\wedge(q_1'+q_2'<q)
\]
one obtains $q_1'+q_2'<q_1+q_2$ and by Trichotomy of rationals, $(q_1'<q_1)\vee(q_2'<q_2)$. Either way, we obtain $j\bot$ from down/up closure and disjointness.

Suppose $r_1,r_2:\tRR_j$; we need to show $r_1+r_2$ is $j$-located, so take $q<q'$ and let $p\coloneqq\frac{q'-q}{2}$. By \cref{prop:arithmetic_located_j}, we can find rationals $q_1<r_1<q_1'$ and $q_2<r_2<q_2'$ such that $(q_1'-q_1)<p$ and $(q_2'-q_2)<p$. It is easy to check that $q_1+q_2<r_1+r_2<q_1'+q_2'$. Either $q<q_1+q_2$ or $q\geq q_1+q_2$; in the first case we get $q<r_1+r_2$ and in the second case we can get $r_1+r_2<q'$, as desired.
\end{proof}

\begin{corollary}\label{cor:reals_group}\index{group, real numbers as}
The monoid structure on $\tRR_j$ is in fact that of a group.
\end{corollary}
\begin{proof}
Take any $(\delta_1,\upsilon_1):\tRR_j$ and define cuts $\delta_2,\upsilon_2$ on $q:\tQQ$ by
\[
	\delta_2q\iff\upsilon_1(-q)
	\qquad\text{and}\qquad
	\upsilon_2q\iff\delta_1(-q).
\]
It is easy to check that $(\delta_2,\upsilon_2)$ is a $j$-local real. Let $(\delta',\upsilon')\coloneqq(\delta_1,\upsilon)+(\delta_2,\upsilon_2)$ be their sum, as in \cref{eqn:addition_unbounded_improper}. We want to show $\delta'q\iff j(q<0)$ and $\upsilon'q\iff j(0<q)$. Consider the latter, the former being similar; we must check
\[j(0<q)\iff j\exists(q_1,q_2:\tQQ)\ldotp\upsilon_1q_1\wedge\delta_1(-q_2)\wedge(q_1+q_2<q).\]
The forward direction is \cref{prop:arithmetic_located_j}; the backwards direction comes down to the fact that $(\delta_1,\upsilon_1)$ is $j$-disjoint.
\end{proof}

\subsection{Numeric domains}

In \cref{def:local_reals} we referred to eight of the $j$-numeric types as \emph{$j$-numeric domains}. In this section, specifically \cref{prop:intervals_as_RIds}, we show that they actually are domains
internal to $\cat{E}$.

In fact, these eight domains correspond to the
eight predomains we defined in \cref{ex:our_j_domains}. Though the subject of predomains is a bit technical, its value to us it that predomains
provide a very useful tool for defining arithmetic operations between numeric types.\index{numeric
type!as $j$-rounded ideals}

Again, the eight predomains from \cref{ex:our_j_domains} can be defined internally to any topos.
Write $\tLRpre$, $\tURpre$, $\tIIpre$, $\tIRpre[j]$, $\tLRubpre$, $\tURubpre$, $\tIIubpre$, and
$\tIRubpre[j]$ to denote the corresponding internal constructions in any topos $\cat{E}$ and
modality $j$. For example $\tLRpre$ is just the rationals with the usual $<$ ordering: $\tLRpre=(\tQQ,<)$. Because our type theory has sum types (see \cref{sec:notions_lambda}) we can also make sense of the unbounded predomains, such as  $\tLRubpre=(\tQQ+\{\infty\},<)$.

Recall the $j$-local rounded ideal construction from \cref{def:predomain_modality}.

\begin{proposition}\label{prop:intervals_as_RIds}
  Let $\cat{E}$ be a topos, and let $j$ be a modality on $\cat{E}$. Then we have
  isomorphisms\index{numeric type!domain}
  \begin{align*}
    \tLR_j&\cong\RId_j(\tLRpre)&
    \tUR_j&\cong\RId_j(\tURpre)&
    \tII_j&\cong\RId_j(\tIIpre)&
    \tIR_j&\cong\RId_j(\tIRpre[j])\\
    \tLRub_j&\cong\RId_j(\tLRubpre)&
    \tURub_j&\cong\RId_j(\tURubpre)&
    \tIIub_j&\cong\RId_j(\tIIubpre)&
    \tIRub_j&\cong\RId_j(\tIRubpre[j])
  \end{align*}
  Thus each $j$-numeric domain in the sense of \cref{def:local_reals} is in fact an internal domain---and in
  particular an internal topological space---in the subtopos
  $\cat{E}_j$.\index{internal!domain}\index{numeric type!domain}
\end{proposition}
\begin{proof}
  Consider the four conditions in the definition of $j$-rounded ideal found in
  \cref{def:predomain_modality}; below we will refer to them 0', 1', 2', 3'. They match exactly with
  the $j$-closed, up/down-closed, $j$-roundedness, and $j$-boundedness conditions of
  \cref{def:local_reals}. The first two isomorphisms are hence direct. \Cref{prop:RId_product} is
  easily modified to the context of a modality $j$, showing that
  \begin{equation}\label{eqn:rounded_ideal_product_Q}
    \RId_j(\tIIpre) = \RId_j(\tLRpre\times\tURpre) \cong \RId_j(\tLRpre)\times\RId_j(\tURpre)
    \cong \tLR\times\tUR \cong \tII,
  \end{equation}
  giving the third isomorphism. It is not quite as automatic, but still easy to see that we have the
  fourth isomorphism $\tIR_j\cong\RId_j(\tIRpre[j])$, because a rounded ideal $I\ss\tIIpre$ is
  contained in $\tIRpre[j]$ iff $(q,q')\in I\imp j(q<q')$ iff $\delta q\imp\upsilon q'\imp j(q<q')$
  iff $I$ is $j$-disjoint.

  The bottom row is more interesting; we consider only the case for unbounded lower reals, the
  others being similar. We want to obtain the isomorphism
  \[
    \{\delta':\tQQ\sqcup\{-\infty\}\to\Prop\mid\text{0', 1', 2', 3'}\}
    \cong
    \{\delta:\tQQ\to\Prop\mid\text{0a, 1a, 2a}\}
  \]
  because the left-hand side is $\RId_j(\tLRubpre)$ and the right-hand side is $\tLRub$.

  Given $\delta'$ on the left, define $\delta q\coloneqq\delta' q$; it is easy to check that it
  satisfies 0a, 1a, 2a. Conversely, given $\delta$ on the right, define $\delta' q'$ to be $\delta
  q'$ if $q'\in\tQQ$ and $\top$ if $q'=-\infty$. Recall from \cref{ex:predomains} that we defined
  the order relation so that $-\infty<-\infty$. With that in mind, the conditions 0', 1', 2', 3' are
  easy.

  The second statement follows directly from \cref{prop:intervals_as_RIds,cor:RIdj_IsDomain}.
\end{proof}

Domains have two useful relations, $\sqss$ and $\ll$; see \cref{sec:rev_cont_poset}. Thus by \cref{prop:intervals_as_RIds}, these
symbols make sense for each of the eight $j$-numeric domains.

\begin{proposition}\label{prop:sqss_ll}\index{way-below!for numeric types}
\index{specialization order!for numeric types}
  Suppose that $\delta,\delta':\tLR_j$ or $\delta,\delta':\tLRub_j$. And suppose that
  $\upsilon,\upsilon':\tUR_j$ or $\upsilon,\upsilon':\tURub_j$. Then
  \begin{align*}
    \delta'\sqss\delta&\iff\forall(q:\tQQ)\ldotp\delta' q\imp\delta q&
    \delta'\ll\delta&\iff j\exists(q:\tQQ)\ldotp\delta q\wedge(\delta'q\imp j\bot)\\
    \upsilon'\sqss\upsilon&\iff\forall(q:\tQQ)\ldotp\upsilon' q\imp\upsilon q&
    \upsilon'\ll\upsilon&\iff j\exists(q:\tQQ)\ldotp\upsilon q\wedge(\upsilon'q'\imp j\bot)
  \end{align*}
  For $(\delta,\upsilon)$ and $(\delta',\upsilon')$ in $\tII_j$, $\tIIub_j$, $\tIR_j$, or
  $\tIRub_j$, we have
  \[
    [(\delta,\upsilon)\sqss(\delta',\upsilon')]\iff[(\delta\sqss\delta')\wedge(\upsilon\sqss\upsilon')]
    \quad\text{and}\quad
    [(\delta,\upsilon)\ll(\delta',\upsilon')]\iff[(\delta\ll\delta')\wedge(\upsilon\ll\upsilon')]
  \]
\end{proposition}
\begin{proof}
  Consider the $\delta$ case; the rest are similar. The statement for $\sqss$ is basically just the definition of these domains, and comes directly from
  \cref{cor:RIdj_IsDomain}. The statement about $\ll$ will also follow from \cref{cor:RIdj_IsDomain}.
  Indeed, it says
  \[
    \delta'\ll\delta \iff
    j\exists(q:\tQQ)\ldotp\delta q\wedge\forall(q':\tQQ)\ldotp\delta'q'\imp j(q'<q),
  \]
  and the right-hand side is equivalent to $j\exists(q:\tQQ)\ldotp\delta q\wedge(\delta'q\imp
  j\bot)$ by trichotomy for rationals, $(q<q')\vee(q=q')\vee(q>q')$.\index{trichotomy}
\end{proof}

\begin{remark}
  Rather than define Dedekind $j$-numeric types as predicates on $\tQQ$, as we did in
  \cref{def:local_reals}, one could also define them as predicates on $\asSh_j\tQQ$, the
  $j$-sheafification of rationals. However, it turns out that doing so makes no difference:
  semantically, it defines the same topos of sheaves.
  
  Indeed, it is enough by \cref{prop:sep_predicates} to check semantically that
  $j$-separification agrees with $j$-sheafification,
  $\mathrm{sep}_j\mathcal{Q}\cong\asSh_j\mathcal{Q}$, for the constant type $\mathcal{Q}$ on any set
  $Q$. So suppose given a $j$-covering family $\{V_i\to V\}_{i\in I}$ and sections
  $q_i\in\mathcal{Q}(V_i)$ such that $j(q_i=q_{i'})$ for $i,i'\in I$. For each $i$, we may assume
  each $q_i\in Q$, so it has an extension $\ol{q_i}\in\mathcal{Q}(V)$. We want to show
  $j(\ol{q_i}=\ol{q_{i'}})$. We have $(\ol{q_i}=\ol{q_{i'}})\vee\neg(\ol{q_i}=\ol{q_{i'}})$, so we
  may assume $\neg(\ol{q_i}=\ol{q_{i'}})$ from which we obtain $\neg(q_i=q_{i'})$ and hence $j\bot$,
  which proves the result.
\end{remark}

Before concluding this section, we record a lemma that will become useful in \cref{sec:differentiablility}, when we discuss differentiability.

\begin{lemma}\label{lemma:located_spec_disjoint}
  Let $x,y:\tII_j$ be $j$-improper intervals such that $x\sqss y$. If $x$ is $j$-located and $y$ is
  $j$-disjoint, then $x=y$. 
\end{lemma}
\begin{proof}
  Write $x=(\delta_x,\upsilon_x)$ and $y=(\delta_y,\upsilon_y)$. For all $q:\tQQ$ we have $\delta_x
  q\imp\delta_yq$ and $\upsilon_xq\imp\upsilon_yq$, so it suffices to show $\delta_y q\imp\delta_xq$
  for any $q:\tQQ$ (the $\upsilon$ case is similar). Since $\delta_x$ is $j$-closed, we may drop $j$
  from the front of all hypotheses by \cref{rem:j-logic}.

  Assuming $\delta_yq$, we have $\exists q'\ldotp q<q'\wedge\delta_yq'$ by $j$-roundedness. Since
  $x$ is $j$-located we have $\delta_xq\vee\upsilon_xq'$. In the first case we are done; in the
  second case we have $\upsilon_yq'$ which is a contradiction because $y$ is $j$-disjoint.
\end{proof}

\subsection{Inequalities}
\label{sec:inequalities}\index{inequality!$j$-local}

In this section we will discuss inequalities for Dedekind $j$-numeric objects, and in \cref{sec:arithmetic} we discuss their arithmetic. We learned some of this from \cite{Kaucher:1980a} and \cite{Sainz.Armengol.Calm.Herrero.Jorba.Vehi:2014a}, which in particular discuss the order-theoretic and algebraic properties of the extended interval domain $\tII$.

The proofs in the remainder of this chapter,  \cref{sec:inequalities,sec:j_constant_numerics,sec:arithmetic},
will depend heavily on the work on predomains\index{predomain} discussed
in \cref{sec:predomain}, however the results can usually be stated without it. We write $\tQQ$ to
denote the type of rational numbers, together with the usual $<$ ordering, and we write $\tQQ\op$
for the opposite ordering. Both of these are predomains, and we will often consider the predomain
$\tQQ\times\tQQ\op$.

We defined inequalities for reals in \cref{def:local_reals}. We can easily generalize the definition to $j$-local improper intervals. Given two
$j$-local unbounded improper intervals $x_1,x_2:\tIIub_j$, we define $x_1<_jx_2$ by
\begin{equation}\label{eqn:j_less_than}
  x_1<_jx_2\iff j\exists(q:\tQQ)\ldotp (x_1<q)\wedge(q<x_2),
\end{equation}
where as usual $x_1<q$ and $q<x_2$ respectively mean $\upsilon_1 q$ and $\delta_2 q$, for
$x_i=(\delta_i,\upsilon_i)$.%
\footnote{In \cref{def:local_reals} we gave the warning that one should be a bit careful with the $<$ relation on other numeric types. As an example of what can violate standard intuition, let $x=(\delta,\upsilon):\tII$ be given by $\delta q\iff q<1$ and $\upsilon q\iff -1<q$. Then $x<x$ holds.

For the one-sided numeric types, e.g.\ $\tLR$, there is a good notion of $\leq$, namely $\forall(q:\tQQ)\ldotp\delta q\imp\delta'q$. To obtain $<$ one might try $(\delta\leq\delta')\wedge(\delta\neq\delta')$, but this is semantically too strong in a general topos.}
The same definition also works for the other two-sided $j$-numeric
types $\tIRub_j$, $\tRRub_j$, $\tII_j$, $\tIR_j$, and $\tRR_j$.

\begin{proposition}\label{prop:<_open}\index{inequality!as open subset}
  For any modality $j$, the set $\{(x_1,x_2)\mid x_1<_j x_2\}\ss\tIIub_j\times\tIIub_j$ is an open
  subset. The same is true when $\tIIub_j$ is replaced by any of the other two-sided $j$-numeric
  types.
\end{proposition}
\begin{proof}
  The other two-sided $j$-numeric types are subspaces of $\tIIub_j$, so it suffices to check that
  one. Begin by defining an open of the predomain $\tIIubpre\times\tIIubpre$ by
  \[
    U_<\coloneqq\{(d_1,u_1),(d_2,u_2)\mid u_1<d_2\}\in\Opens(\tIIubpre\times\tIIubpre).
  \]
  This is easily seen to be an open in the sense of \cref{def:predomains_rounded_specs}: it is
  up-closed (decreasing $u_1$ and increasing $d_2$ stays in the set) and rounded (if $u_1<d_2$, then
  we can easily find $d'_1,u'_2,d'_2,u'_2$ with $d'_1<d_1$, $u_1<u'_1$, $d'_2<d_2$, $u_2<u'_2$, and
  still $u'_1<d'_2$).

  By \cref{thm:opens_iso_Scott_opens}, this determines a Scott open subset of
  $\RId_j(\tIIubpre\times\tIIubpre)$ given by $\U_{U_<}=\{I\mid\exists(d_1,u_1,d_2,u_2)\in I\ldotp
  u_1<d_2\}$. Under the isomorphism $\tIIub_j\times\tIIub_j\cong\RId_j(\tIIubpre\times\tIIubpre)$
  from \cref{prop:intervals_as_RIds,prop:RId_product}, this is equivalent to
  \[
    \Bigl\{\bigl((\delta_1,\upsilon_1),(\delta_2,\upsilon_2)\bigr) \Bigm|
      j\exists(d_1,u_1,d_2,u_2)\ldotp \delta_1 d_1 \wedge
      \upsilon_1 u_1 \wedge \delta_2 d_2 \wedge \upsilon_2 u_2 \wedge u_1 < d_2\Bigr\},
  \]
  or more simply $\{(\delta_1,\upsilon_1),(\delta_2,\upsilon_2)\mid j\exists q\ldotp \upsilon_1 q
  \wedge \delta_2 q\}$, for which \cref{eqn:j_less_than} is shorthand. In the language of
  \cref{def:predomain_modality}, $(x_1<_jx_2)$ iff $(x_1,x_2)\models_jU_<$.
\end{proof}

We now move on to discussing the non-strict inequality, $\leq_j$.

\begin{proposition}\label{prop:non-strict_inequality}
  The following are equivalent for all $a,b:\tRR_j$:
  \begin{enumerate}
    \item $(b <_j a)\imp j\bot$
    \item $\forall(q:\tQQ)\ldotp q<a\imp q<b$
    \item $\forall(q:\tQQ)\ldotp b<q\imp a<q$
  \end{enumerate}  
\end{proposition}
\begin{proof}
  This is proven for an arbitrary topos in \cite[Lemma D.4.7.6]{Johnstone:2002a}, so it easily
  applies to the $j$-subtopos. However it is also straightforward, so we we show the proof for
  $1\imp 2$; the other implications are similarly easy.

  Write $a=(\delta_a,\upsilon_a)$ and $b=(\delta_b,\upsilon_b)$. To prove $1\imp 2$, assume $(b <_j
  a)\imp j\bot$, and choose $q$ such that $\delta_aq$. It suffices to prove $j\delta_b q$ because
  $\delta_bq$ is $j$-closed; thus we may drop $j$ from the front of all hypotheses. By
  $j$-roundedness, we have $j\exists q'\ldotp q<q'\wedge\delta_aq'$, so (dropping $j$) we choose
  such a $q'$ and, by $j$-locatedness, we have $\upsilon_b q'\vee\delta_b q$. In one case we are
  done; in the other we have $\delta_aq'\wedge\upsilon_bq'$, meaning $b<_ja$, and we obtain $j\bot$
  by assumption.
\end{proof}

Suppose $a,b:\tRR_j$ are $j$-local reals. We write
\begin{equation}\label{eqn:def_leq}\index{inequality!non-strict}
  a\leq_jb
\end{equation}
iff any of the equivalent conditions of \cref{prop:non-strict_inequality}, e.g.\ $(b<_ja)\imp
j\bot$, is satisfied.

It is easy to check that the relation $\leq_j$ on $\tRR_j$ is reflexive, transitive, and
anti-symmetric, $(a\leq_jb)\wedge(b\leq_ja)\imp a=b$. It is also easy to check that
$(a<_jb)\imp(a\leq_jb)$, as well as the following facts:
\begin{gather*}
  (a<_jb)\wedge(b\leq_jc)\imp (a<_jc)\qquad\text{and}\qquad
  (a\leq_jb)\wedge(b<_jc)\imp (a<_jc).
\end{gather*}
As mentioned above, \cref{prop:non-strict_inequality} is about $j$-local reals, \emph{not} $j$-local
improper intervals. 

\begin{remark}
The above story also works for unbounded reals and one-sided numeric types. For $a,b:\tRRub_j$ one
can define $a\leq_j b$ to be any of the following equivalent conditions
  \begin{enumerate}
    \item $(b <_j a)\imp j\bot$
    \item $\forall(q:\tQQ\cup\{-\infty\})\ldotp q<_ja\imp q<_jb$
    \item $\forall(q:\tQQ\cup\{\infty\})\ldotp b<_jq\imp a<_jq$
  \end{enumerate}
  For $a,b:\tLR$ or $a,b:\tLRub$, only condition 2 is defined, and so we define $a\leq_j b$ as
  condition 2; this agrees with the domain order on $\tLR$ and $\tLRub$. For $a,b:\tUR$ or
  $a,b:\tURub$, only condition 3 is defined, and so we define $b\geq_ja$ as condition 3; this agrees
  with the domain order on $\tUR$ and $\tURub$.
\end{remark}

\subsection{$j$-constant numeric types}\label{sec:j_constant_numerics}\index{numeric type!$j$-constant}

The present section (\cref{sec:j_constant_numerics}) is non-standard material, regarding what we call \emph{constant numeric types}. The technical underpinnings of these ideas is given in \cref{sec:constant_predomain}.

\begin{definition}[$j$-constant numeric types]\label{def:constant_dedekind}
  Let $j$ be a modality. Say that $\phi:\tQQ\to\Prop$ is \emph{$j$-decidable} if it satisfies
  \[
    \forall(q:\tQQ)\ldotp\phi q\vee (\phi q\imp j\bot).
  \]
  Let $\ctLR_j$ denote the subtype of $\tLR_j$ consisting of those $\delta$ that are $j$-decidable,
  and similarly define $\ctUR_j$, $\ctIR_j$, $\ctII_j$, $\ctRR_j$ $\ctLRub_j$, $\ctURub_j$,
  $\ctIRub_j$, $\ctIIub_j$, and $\ctRRub_j$. We refer to these as the \emph{$j$-constant numeric
  types}.
\end{definition}

Note that $\ctII_j=\ctLR_j\times\ctUR_j$ and $\ctIIub_j=\ctLRub_j\times\ctURub_j$.

\begin{remark}\label{rem:basis_for_our_domains}\index{domain!basis of}
  For each of the eight $j$-numeric domains (see \cref{prop:sqss_ll}), the constant types form a basis in the sense of
  \cref{def:basis_domain}; indeed, this follows from
  \cref{ex:constant_predomains,prop:predom_to_RId_j}. This fact implies, for example, that for any
  $x,y:\tII_j$,
  \[
    (x\ll y)\iff\exists(c:\ctII_j)\ldotp x\ll c\ll y
    \quad\text{and}\quad
    x\sqss y\iff\forall(c:\ctII_j)\ldotp(c\ll x)\imp(c\sqss y)
  \]
  and similarly when $\tII$ is replaced with any of the other $j$-numeric domains.
\end{remark}

\begin{proposition}\label{prop:const_lower_upper_reals}
  For any $j$ there are internal bijections
  \[
    \ctLRub_j\cong\ctRRub_j\cong\ctURub_j.
  \]
\end{proposition}
\begin{proof}
  For the first claim, consider the functions $\ctRRub_j\to\ctLRub_j$ and $\ctRRub_j\to\ctURub_j$
  given by sending $(\delta,\upsilon)$ to $\delta$ and $\upsilon$ respectively. In the first case,
  the inverse sends $\delta$ to $(\delta,\upsilon)$ where for any $q:\tQQ$,
  \begin{equation}\label{eqn:upsilon_from_delta}
    \upsilon q\iff j\exists(q':\tQQ)\ldotp(q'<q)\wedge(\delta q\imp j\bot)
  \end{equation}
  It is easy to check that $\upsilon$ is $j$-closed, up-closed, and $j$-rounded, and that the pair
  $(\delta,\upsilon)$ is $j$-disjoint and $j$-located. To see that $\upsilon$ is $j$-decidable, we
  proceed as in \cref{prop:const_type_decidably_inhabited}. Take $q:\tQQ$ and consider the constant
  type $\tQQ_{<q}=\{q':\tQQ\mid q'<q\}$. We have $\forall(q':\tQQ_{<q})\ldotp(\delta q'\imp
  j\bot)\vee\delta q'$, so $\forall(q':\tQQ_{<q})\ldotp\exists(q'':\tQQ_{<q})\ldotp(\delta q''\imp
  j\bot)\vee\delta q'$, and thus $\upsilon q\vee\forall(q':\tQQ_{<q})\ldotp\delta q'$ by
  \cref{ax:distributivity}. It is easy to show that $\forall(q':\tQQ_{<q})\ldotp\delta q'$ implies
  $\upsilon q\imp j\bot$, so $\upsilon$ is indeed $j$-constant. Finally, to see that the two
  functions are mutually inverse, one takes $(\delta,\upsilon):\ctRRub_j$ and shows using
  $j$-locatedness that \cref{eqn:upsilon_from_delta} holds for any $q:\tQQ$.
\end{proof}

\begin{corollary}\label{cor:bijections_constant_intervals_j}
  There are internal bijections
  \[
    \ctIIub_j\cong\ctRRub_j\times\ctRRub_j
    \qquad\text{and}\qquad
    \ctIRub_j\cong\{(d,u)\in\ctRRub_j\times\ctRRub_j\mid d\leq_j u\}
  \]
\end{corollary}
\begin{proof}
  We obtain the left-hand isomorphism from \cref{prop:const_lower_upper_reals} and the isomorphisms
  $\ctIIub_j\cong\ctLRub_j\times\ctURub_j$ and $\ctII_j\cong\ctLR_j\times\ctUR_j$. Using the
  left-hand isomorphism, we will also have the right-hand isomorphism if we can show that for any
  $d,u:\tRRub$ we have $[\forall q\ldotp ((q<d\wedge u<q)\imp j\bot)]\iff d\leq_j u$. The left-hand
  side is constructively equivalent to $(\exists q\ldotp q<d\wedge u<q)\imp j\bot$ and the
  right-hand side is by definition $(u<_jd)\imp j\bot$, so it suffices to show $u<_jd\iff
  j\exists(q:\tQQ)\ldotp q<d\wedge u<q$, but that is the definition (\ref{def:local_reals}).
\end{proof}

\begin{notation}\label{not:constant_intervals}\index{notation}
  For $d,u:\ctRRub_j$, we may write $[d,u]:\ctIIub_j$ to denote the constant interval
  $(\delta,\upsilon):\ctIIub_j$ given by
  \[
    \delta q\iff j(q<d)
    \qquad\text{and}\qquad
    \upsilon q\iff j(u<q)
  \]
  Thus we have $[-,-]:\ctRRub_j\times\ctRRub_j\to\ctIIub_j$.

  Recall from \cref{ex:predomains} that $\tIIubpre=\tQQ\times\tQQ$ with $(q_1,q_2)\prec
  (q_1',q_2')\iff (q_1<q_1')\wedge(q_2'<q_2)$. Composing $[-,-]$ with the square of the map
  $i:\tQQ\to\tRRub_j$, one obtains $[i,i]:\tIIubpre\to\tIIub_j$ which sends $(q,q')$ to $[q,q']$; it
  is the usual inclusion $\downclose$ of the predomain into its associated domain (see
  \cref{lemma:rcj_downclosure_RId}).
\end{notation}

Let $f:X\to Y$ be a function. One might say it is $j$-injective if it satisfies
$\forall(x,x':X)\ldotp f(x)=f(x')\imp j(x=x')$, but for \cref{prop:const_j_bijective} we want a
stronger notion. So say that $f$ is a \emph{strongly $j$-injective} if it satisfies
\begin{equation}\label{eqn:strongly_j_injective}
  \forall(x,x':X)\ldotp f(x)=f(x')\imp (x=x')\vee j\bot.
\end{equation}

\begin{proposition}\label{prop:const_j_bijective}
  There are functions
  \[
    \ctRR_j\sqcup\{\infty\}\to\ctLR_j
    \qquad\text{and}\qquad
    \tRR_j\sqcup\{-\infty\}\to\ctUR_j.
  \]
  which are internally surjective and strongly $j$-injective.
\end{proposition}
\begin{proof}
  The map $\ctRR_j\sqcup\{\infty\}\to\ctLR_j$ sends $(\delta,\upsilon):\ctRR_j$ to $\delta$, and
  sends $\infty$ to the unique predicate $\delta$ satisfying $\forall q\ldotp\delta q$. We will show
  it is an internal surjection and a $j$-injection, the claim for $\ctUR_j$ being similar. Take
  $\delta:\ctLR_j$ and define $\upsilon$ as in \cref{eqn:upsilon_from_delta}, and the pair
  $(\delta,\upsilon)$ is in $\ctRR_j$ iff $\upsilon$ is $j$-bounded. We already have shown that
  $\forall q\ldotp\upsilon q\vee (\upsilon q\imp j\bot)$ and again follows from
  \cref{prop:const_type_decidably_inhabited} that $(\exists q\ldotp\upsilon q)\vee\forall
  q\ldotp(\upsilon q\imp j\bot)$. One can prove that the second case is equivalent to $\forall
  q\ldotp\delta q$, and so our function is indeed surjective.

  To see that the map is strongly $j$-injective, take $x,x':\ctRR_j\sqcup\{\infty\}$. We need only
  consider two of the four cases, namely when $x,x':\ctRR_j$, and when $x:\ctRR_j$ and $x'=\infty$.
  For the former we need to show that if both $x=(\delta,\upsilon)$ and $x'=(\delta,\upsilon')$ are
  in $\ctRR_j$ then $(\upsilon=\upsilon')\vee j\bot$. So take $q:\tQQ$; we want to show $\upsilon
  q\iff\upsilon'q$. Because $\upsilon$ and $\upsilon'$ are constant, it suffices to consider the
  case $\upsilon q\wedge(\upsilon'q\imp j\bot)$. We obtain $j\bot$ by $j$-roundedness of $\upsilon$,
  $j$-disjointness of $(\delta,\upsilon)$ and $j$-locatedness of $(\delta,\upsilon')$.

  Finally, suppose $x=(\delta,\upsilon):\ctRR_j$ and $x'=\infty$. It suffices to show that if
  $\delta q$ holds for all $q$ then $j\bot$. We get this because $\upsilon$ is $j$-bounded and
  $(\delta,\upsilon)$ is $j$-disjoint.
\end{proof}

\begin{proposition}\label{prop:const_predomains_RIDs}\index{numeric type!domain}
  For any $j$, the subtype of $j$-constant elements of any $j$-numeric domain is again a domain, namely
  the domain of constant rounded ideals on the associated predomain:
  \begin{align*}
    \ctLR_j  &\cong\cRId_j(\tLRpre)  &
    \ctUR_j  &\cong\cRId_j(\tURpre)  &
    \ctII_j  &\cong\cRId_j(\tIIpre)  &
    \ctIR_j  &\cong\cRId_j(\tIRpre[j])  \\
    \ctLRub_j&\cong\cRId_j(\tLRubpre)&
    \ctURub_j&\cong\cRId_j(\tURubpre)&
    \ctIIub_j&\cong\cRId_j(\tIIubpre)&
    \ctIRub_j&\cong\cRId_j(\tIRubpre[j])
  \end{align*}
\end{proposition}
\begin{proof}
  This is just a matter of seeing that
  \cref{def:constant_dedekind,def:predom_const_approx_decidable} are equivalent.
\end{proof}

\subsection{Arithmetic}
\label{sec:arithmetic}
\index{arithmetic|(}

The usual arithmetic operations on the usual set $\II$ of extended intervals are well-known (see
\cite{Kaucher:1980a} and \cite{Goldsztejn:2011a}, who calls them ``modal intervals''). For example
addition is straightforward, $(a_1,b_1)+(a_2,b_2)=(a_1+a_2,b_1+b_2)$, whereas multiplication
involves multiple cases depending on signs. It turns out that disjoint intervals and located
disjoint intervals---i.e.\ real numbers---are each closed under these
operations.\index{arithmetic!Kaucher}\index{arithmetic}\index{arithmetic!$j$-local}

Here we discuss the internal $j$-local arithmetic for an arbitrary modality $j$. Our main task is to
understand when 2-variable functions in $\tIIpre\coloneqq(\tQQ\times\tQQ\op)$ give rise to
continuous 2-variable functions on $\tII$. For elements $b_1=(q_1,q_1')$ and $b_2=(q_2,q_2')$ in
$\tQQ\times\tQQ\op$ we write $b_1\prec b_2$ to mean $q_1<q_2$ and $q_2'<q_1'$. We write $b_1\specord b_2$
to mean $q_1\leq q_2$ and $q_2'\leq q_1'$. This notation is aligned with that in
\cref{sec:predomains}.\index{predomain}

\Cref{prop:binary_fn_interval_domains} is a bit technical, but it paves the way for the $j$-arithmetic, from \cref{thm:addition_is_continuous} through the end of the chapter.

\begin{proposition}\label{prop:binary_fn_interval_domains}
\index{arithmetic!via approximable mapping}\index{approximable mapping}
  Let $f\colon \tIIpre\times \tIIpre\to \tIIpre$ be a function satisfying the following four
  conditions:
  \begin{equation}\label{eqn:four_conditions_approximable}
  \begin{aligned}
    (b_1'\prec b_1)\imp f(b_1',b_2)\specord f(b_1,b_2)&\quad&
    (c\prec f(b_1,b_2))\imp\exists b_1'\ldotp(b_1'\prec b_1)\wedge(c\prec f(b_1',b_2))\\
    (b_2'\prec b_2)\imp f(b_1,b_2')\specord f(b_1,b_2)&\quad&
    (c\prec f(b_1,b_2))\imp\exists b_2'\ldotp(b_2'\prec b_2)\wedge(c\prec f(b_1,b_2'))
  \end{aligned}
  \end{equation}
  Then $f$ determines an approximable mapping $f^*\colon \tIIpre\times \tIIpre\to \tIIpre$, given by
  $f^*(b_1,b_2,b')\iff f(b_1,b_2)<b'$. In turn, $f^*$ determines a continuous morphism of domains
  \[
    F_j\coloneqq\RId_j(f^*)\colon\tII_j\times\tII_j\to\tII_j
  \]
  for any modality $j$, and this morphism preserves constants.

  Exactly the same statements are true for the other $j$-numeric domains, i.e.\ when the predomain
  $\tIIpre$ is replaced by $\tLRpre$, $\tURpre$, $\tIRpre[j]$, $\tLRubpre$, $\tURubpre$,
  $\tIIubpre$, or $\tIRubpre[j]$, and the domain $\tII_j$ is replaced by the corresponding domain of
  $j$-rounded ideals, $\tLR_j$, $\tUR_j$, $\tIR_j$, $\tLRub_j$, $\tURub_j$, $\tIIub_j$, or
  $\tIRub_j$.
\end{proposition}
\begin{proof}
  Both $\tQQ$ and $\tQQ\op$ are linear and unbounded, so the predomain $\tQQ\times\tQQ\op$ is
  rounded and has binary joins given by $(\max,\min)$. The four conditions are exactly those
  required by \cref{prop:functions_approx_mappings}, so have an approximable mapping $f^*\colon
  \tIIpre\times \tIIpre\to \tIIpre$. It is decidable (in the sense of
  \cref{def:predom_const_approx_decidable}) because the $<$-relation on $\tQQ$ is decidable.

  By \cref{prop:approximable_j,prop:RId_product}, we obtain a morphism of
  domains\index{domain!morphism of}
  \[
    \RId_j(f^*)\colon\RId_j(\tIIpre)\times\RId_j(\tIIpre)\to\RId_j(\tIIpre),
  \]
  or simply $\tII_j\times\tII_j\to\tII_j$, since by definition $\RId_j(\tIIpre)=\tII_j$. This map
  preserves constants by \cref{prop:decidable_approx_pres_consts}.
\end{proof}

For any $f\colon\tIIpre\times\tIIpre\to\tIIpre$, we write $\ubar{f}$ and $\bar{f}$ for its first and
second projections via the isomorphism $\tIIpre\to\tLRpre\times\tURpre$. Then the continuous map
$F_j$ from \cref{prop:binary_fn_interval_domains} is given by the formula
$F_j\big((\delta_1,\upsilon_1),(\delta_2,\upsilon_2)\big)\coloneqq(\delta',\upsilon')$ where
\begin{equation}
\label{dia:two_var}
  \begin{aligned}
    \delta'd' &\Leftrightarrow j\exists(d_1,d_2,u_1,u_2:\tQQ)\ldotp
      \Big[ \delta_1d_1 \wedge \delta_2d_2 \wedge \upsilon_1u_1 \wedge \upsilon_2u_2
      \wedge \Big(d' < \ubar{f}\big( (d_1,u_1), (d_2,u_2) \big) \Big) \Big] \\
    \upsilon'u' &\Leftrightarrow j\exists(d_1,d_2,u_1,u_2:\tQQ)\ldotp
      \Big[ \delta_1d_1 \wedge \delta_2d_2 \wedge \upsilon_1u_1 \wedge \upsilon_2u_2
      \wedge \Big( \bar{f}\big( (d_1,u_1), (d_2,u_2) \big) < u' \Big) \Big]
  \end{aligned}
\end{equation}
Indeed, this follows directly from \cref{eqn:function_RId(H)_j}.

We include one more technical lemma here, even though it will not be used again until \cref{sec:Real_numbers}. The reader can feel free to skip to \cref{thm:addition_is_continuous}.

\begin{lemma}\label{prop:Fj1j2j3}
  Let $\phi_1,\phi_2,\phi_3:\Prop$ be such that $(\phi_1\vee\phi_2)\imp\phi_3$, and let $j_1$,
  $j_2$, and $j_3$ be the corresponding closed modalities. Let
  $f:\tIIpre\times\tIIpre\to\tIIpre$ be a function satisfying the four conditions \cref{eqn:four_conditions_approximable} and the following ``density'' condition:
  \[\forall(c:\tIIpre)\ldotp\exists(b_1,b_2:\tIIpre)\ldotp c\prec f(b_1,b_2).\]
  Then there is an induced continuous function
  $F_{j_1,j_2,j_3}:\tII_{j_1}\times\tII_{j_2}\to\tII_{j_3}$ and it preserves constants.
\end{lemma}
\begin{proof}
  This follows from \cref{prop:binary_fn_interval_domains,prop:functions_approx_mappings,prop:j1j2j3j'_decidable_const}.
\end{proof}

\subsubsection{Addition}\index{arithmetic!addition}\index{arithmetic!interval|(}
As mentioned above, the usual addition of improper intervals is defined coordinate-wise,
$(d_1,u_1)+(d_2,u_2)=(d_1+d_2,u_1+u_2)$, where $d_1,u_1,d_2,u_2:\tQQ$ are rational numbers.

\begin{theorem}\label{thm:addition_is_continuous}\index{arithmetic!as continuous}
  For any modality $j$, addition defines a continuous function
  $+_j\colon\tII_j\times\tII_j\to\tII_j$, which preserves constants. The formula for addition
  reduces to $(\delta_1,\upsilon_1)+(\delta_2,\upsilon_2)=(\delta',\upsilon')$, where
  \begin{equation}\label{eqn:addition}
    \begin{aligned}
    \delta'q &\Leftrightarrow j\exists(q_1,q_2:\tQQ)\ldotp
      \delta_1q_1 \wedge \delta_2q_2 \wedge (q<q_1+q_2) \\
    \upsilon'q &\Leftrightarrow j\exists(q_1,q_2:\tQQ)\ldotp
      \upsilon_1q_1 \wedge \upsilon_2q_2 \wedge (q_1+q_2<q)
    \end{aligned}
  \end{equation}
  A similar statement holds when $\tII_j$ is replaced with any of the other Dedekind $j$-numeric
  types.
\end{theorem}
\begin{proof}
  Working in the predomain $\tLRpre=(\tQQ,<)$, it is clear that if $q_1<q_2$ then $q_1+q'\leq
  q_2+q'$ and $q'+q_1\leq q'+q_2$ for any $q'$. It is also clear that if $q'<q_1+q_2$ then there
  exists some $q_1'<q_1$ such that $q'<q_1'+q_2$ and similarly for $q_2$. Thus the conditions of
  \cref{prop:binary_fn_interval_domains} are satisfied, and this defines an addition operation for
  eight of the ten Dedekind $j$-numeric types. It is easy to see that \cref{eqn:addition} agrees
  with \cref{dia:two_var}.

  It remains to check that if $x_1$ and $x_2$ are $j$-located, then so is their sum. So choose
  rationals $d<u$; we want to show $j((d<x_1+x_2)\vee(x_1+x_2<u))$. By the arithmetic locatedness of
  $x_1$ (\cref{prop:arithmetic_located_j}), we have
  \[
    j\exists(d_1,u_1:\tQQ)\ldotp(d_1<x_1<u_1)\wedge(0<u_1-d_1<u-d).
  \]
  Then $d-d_1<u-u_1$ so there exists $d_2, u_2$ such that $d-d_1<d_2<u_2<u-u_1$, and hence
  $d<d_1+d_2$ and $u_1+u_2<u$. By locatedness of $x_2$, we have $j(d_2<x_2\vee x_2<u_2)$, hence
  \begin{multline*}
    j[(\exists(d_1,d_2:\tQQ)\ldotp (d_1<x_1) \wedge (d_2<x_2) \wedge (d<d_1+d_2)) \vee {} \\
    (\exists(u_1,u_2:\tQQ)\ldotp (x_1<u_1) \wedge (x_2<u_2) \wedge (u_1+u_2<u))]
    \qedhere
  \end{multline*}
\end{proof}

\subsubsection{Subtraction}

In \cref{ex:predomain_multiplication} we defined approximable mappings for the difference, the max,
and the product of two improper intervals. The formula for subtraction reduces to
$(\delta_1,\upsilon_1)-_j(\delta_2,\upsilon_2)=(\delta',\upsilon')$, where
\begin{equation}\label{eqn:subtraction}\index{arithmetic!subtraction}
\begin{aligned}
  \delta'q&\iff j\exists(d,u:\tQQ)\ldotp\delta_1d\wedge\upsilon_2u\wedge(q<d-u)\\
  \upsilon'q&\iff j\exists(d,u:\tQQ)\ldotp\upsilon_1u\wedge\delta_1d\wedge(u-d<q).
\end{aligned}
\end{equation}

\begin{remark}
  Note that this notion of subtraction extends the classical one for the set $\IR$ of proper
  intervals, where one thinks of $[d,u]$ as representing the set $\{x\mid d\leq x\leq u\}$. Then the
  difference is given by
  \[
    [d_1,u_1]-[d_2,u_2]=[d_1-u_2,u_1-d_2],
  \]
  which is the set of all pairwise differences. However, note that $\IR$ \emph{does not form a
  group}, and neither does $\tIR$! Subtraction is defined but is not inverse to addition.

  Often the classical extended interval domain $\II$ is used to correct this problem, embedding
  $\IR$ into something that does form a group, where for example additive inverse of $[d,u]$ is
  $[-d,-u]$, sending proper intervals to improper intervals. However this operation is not
  constructively well-behaved; there does not seem to be an analogous operation on $\tII$ in a
  topos. Thus we use the definition given in \cref{eqn:subtraction}, as it is the most convenient
  for us, and simply supply the above warning. Note that subtraction is inverse to addition for
  real numbers: $(\tRR_j,+_j,0)$ forms an abelian group for any $j$, as shown in
  \cref{cor:reals_group}.
\end{remark}

\subsubsection{Maximum}

The formula for the max function given in \cref{ex:predomain_multiplication} reduces to
$\max\!_j((\delta_1,\upsilon_1),(\delta_2,\upsilon_2))=(\delta',\upsilon')$, where
\begin{equation}\label{eqn:maximization}\index{arithmetic!max function}
\begin{aligned}
  \delta'q &\iff j\exists(q_1,q_2:\tQQ)\ldotp
    \delta_1q_1 \wedge \delta_2q_2 \wedge (q<\max(q_1,q_2)) \\
  \upsilon'q &\iff j\exists(q_1,q_2:\tQQ)\ldotp
    \upsilon_1q_1 \wedge \upsilon_2q_2 \wedge (\max(q_1,q_2)<q).
\end{aligned}
\end{equation}
However, we can do better.

\begin{proposition}\label{prop:max_simple}
  We have the following equivalences:
  \[
    q<\max\!_j(x_1,x_2)\iff j(q<x_1\vee q<x_2)
    \qquad\text{and}\qquad
    \max\!_j(x_1,x_2)<q\iff x_1<q\wedge x_2<q
  \]
\end{proposition}
\begin{proof}
  Begin with the first statement. For the converse direction, suppose $q<x_1$, the case $q<x_2$
  being similar. Then by roundedness $j\exists q_1\ldotp q<q_1<x_1$, so for any $q_2<x_2$ we have
  $q<\max(q_1,q_2)$. For the forwards direction, suppose $q<\max\!_j(x_1,x_2)$ in the sense of
  \cref{eqn:maximization}. Then there exists $q_1,q_2$ with $q_1<x_1$ and $q_2<x_2$ and
  $q<\max(q_1,q_2)$. Then either $q<q_1$ or $q<q_2$, and we are done. The second statement is
  similarly straightforward, but uses that $j(P\wedge Q)=jP\wedge jQ$.
\end{proof}

\subsubsection{Multiplication}

The formula for multiplication is more involved, but again it comes directly from
\cref{ex:predomain_multiplication}. It is given by
$(\delta_1,\upsilon_1)*_j(\delta_2,\upsilon_2)=(\delta',\upsilon')$, where
\begin{equation}\label{eqn:multiplication}\index{arithmetic!multiplication}
  \begin{aligned}
    \begin{split}
      \delta'q \iff j\exists(d_1,d_2,u_1,u_2:\tQQ)\ldotp
        \delta_1d_1 \wedge \delta_2d_2 \wedge \upsilon_1u_1 \wedge \upsilon_2u_2 \wedge \hspace{1.2in}\\
      q<\max(d_1^+ d_2^+,u_1^- u_2^-)-\max(u_1^+ d_2^-, u_2^+ d_1^-) \\
    \end{split}\\
    \begin{split}
      \upsilon'q \iff j\exists(d_1,d_2,u_1,u_2:\tQQ)\ldotp
        \delta_1d_1 \wedge \delta_2d_2 \wedge \upsilon_1u_1 \wedge \upsilon_2u_2 \wedge \hspace{1.2in}\\
      \max(u_1^+ u_2^+,d_1^- d_2^-)-\max(d_1^+ u_2^-, u_1^-d_2^+)<q
    \end{split}
  \end{aligned}
\end{equation}
where $q^+=\max(q,0)$ and $q^-=\max(-q,0)$ for any $q:\tQQ$. It may be helpful to note that $q=q^+
-q^-$.

The following lemma---which simplifies the above multiplication formula in the case of positive
proper intervals---is straightforward to prove.

\begin{lemma}\label{lemma:product_of_positives}
  Suppose $(\delta_1,\upsilon_1)$ and $(\delta_2,\upsilon_2)$ are each $j$-disjoint. If they are also positive, i.e.\
  $\delta_10$ and $\delta_20$, then their product is $(\delta_1,\upsilon_1)*_j(\delta_2,\upsilon_2)
  = (\delta',\upsilon')$ where
  \begin{align*}
    \delta'q &\iff j\exists(d_1,d_2:\tQQ)\ldotp
      \delta_1d_1 \wedge \delta_2d_2 \wedge (q<d_1*d_2) \wedge (0<d_1)\wedge (0<d_2) \\
    \upsilon'q &\iff j\exists(u_1,u_2:\tQQ)\ldotp \upsilon_1u_1\wedge\upsilon_2u_2\wedge (u_1*u_2<q)
  \end{align*}
\end{lemma}

\begin{theorem}\label{thm:mult_is_continuous}\index{arithmetic!as continuous}
  For any modality $j$, multiplication defines a continuous function
  $*_j\colon\tII_j\times\tII_j\to\tII_j$, which preserves constants. A similar statement holds when
  $\tII_j$ is replaced with any of the other two-sided $j$-numeric types, $\tIR_j$, $\tRR_j$,
  $\tIIub_j$, $\tIRub_j$, or $\tRRub_j$.
\end{theorem}
\begin{proof}
  The first statement follows from \cref{prop:binary_fn_interval_domains} and from
  \cref{ex:predomain_multiplication}, where it is shown that all of the functions whose composite is
  \cref{eqn:multiplication} are approximable mappings between predomains, e.g.\ $(q\mapsto
  q^+):\tIIpre\to\tIIpre$. The statements for $\tIR_j$, $\tIIub_j$, and $\tIRub_j$ follow similarly,
  replacing $\tIIpre$ with $\tIRpre[j]$, $\tIIubpre$, and $\tIRubpre[j]$ respectively. It remains to
  prove the result for $\tRR_j$ and $\tRRub_j$. For this it suffices to show that if
  $x_1,x_2:\tRRub_j$ are located then so is $y\coloneqq x_1*_jx_2$. Take any rationals $d<u$.

  As a first step, we prove $j(d<y\vee y<u)$ under two preliminary assumptions: that $0<x_1$ and
  that $0<d$. By the multiplicative locatedness of $x_1$ (\cref{prop:multiplicatively_located})
  \[
    j\exists(d_1,u_1:\tQQ)\ldotp 0<d_1<x_1<u_1\wedge\left(\frac{u_1}{d_1}<\frac{u}{d}\right).
  \]
  Choose $q, q':\tQQ$ such that $u_1d<q<q'<ud_1$ and define $d_2\coloneqq\frac{q}{d_1u_1}$ and
  $u_2\coloneqq\frac{q'}{d_1u_1}$. So $d<d_1d_2$, $d_2<u_2$, and $u_1u_2<u$, and by locatedness of
  $x_2$, we have $j(d_2<x_2\vee x_2<u_2)$. Hence,
  \begin{multline*}
    j[(\exists(d_1,d_2:\tQQ)\ldotp(d_1<x_1)\wedge(d_2<x_2)\wedge(d<d_1d_2))\vee\\
    (\exists(u_1,u_2:\tQQ)\ldotp(x_1<u_1)\wedge(x_2<u_2)\wedge(u_1u_2<u))]
  \end{multline*}
  giving $j(d<y\vee y<u)$ as desired.

  As a second step we retain the assumption $0<x_1$, but we drop the second assumption and consider
  general $d<u$. Either $d<0$ or $0<u$. If $0<u$ then there exists $d'$ with $\max(0,d)<d'<u$, so
  $j(d'<y\vee y<u)$ by the first step, and hence $j(d<y\vee y<u)$. If $d<0$ then $-u<-d$ and, just
  as before, there exists $d'$ with $\max(0,-u)<d'<-d$, so $j(d'<-y\vee -y<-d)$ by the first step,
  so $j(d<y\vee y<u)$.

  Finally, dropping all preliminary assumptions, take any located $x_1$ and $x_2$; we will show
  $d<y\vee y<u$ for $y\coloneqq x_1*_jx_2$. Let $m\coloneqq\max(-d,u)$, so since $d<0\vee 0<u$ we
  have $0<m$, and hence by locatedness we have
  \[
    (-1<x_1\vee x_1<0)\wedge(0<x_1\vee x_1<1)\wedge(-m<x_2\vee x_2<0)\wedge(0<x_2\vee x_2<m).
  \]
  This implies $(x_1<0)\vee(0<x_1)\vee(x_2<0)\vee (0<x_2)\vee(|x_1|<1\wedge|x_2|<m)$. The first four
  cases are covered by the second step, above. In the last case $-1<x_1<1$ implies $-|x_2|<y<|x_2|$,
  and $|x_2|<m$ implies $|x_2|<-d\vee |x_2|<u$, hence $(d<-|x_2|<y)\vee(y<|x_2|<u)$.
\end{proof}

\subsubsection{Division}

Division is just multiplication by a reciprocal. We next define a reciprocal function $r\mapsto 1/r$ for positive $j$-local reals $r>0$. It is easily extended to a reciprocal for negative $j$-reals $r<0$ by $r\mapsto -(1/(-r))$.

Consider the
predomain $(\tQQ^{>0},<)$, its opposite $(\tQQ^{>0},>)$, and the product $(\tII^{>0})_{\tn{pre}}$.
There is a function $\tIIpre^{>0}\to\tIIpre^{>0}$ sending $(d,u)$ to $(1/u,1/d)$, and it induces an
approximable mapping by \cref{prop:func_to_approx_map}. Thus for any
$(\delta,\upsilon):\tII^{>0}_j$, define the reciprocal to be $(\delta',\upsilon'):\tII^{>0}_j$,
where for a positive rational $q>0$ we put
\begin{equation}\label{eqn:reciprocal}\index{arithmetic!reciprocal}
  \delta'q\iff j\exists(u:\tQQ)\ldotp \upsilon u\wedge(q*u<1)
  \qquad\text{and}\qquad
  \upsilon'q\iff j\exists(d:\tQQ)\ldotp\delta d\wedge(1<q*d)
\end{equation}

\begin{lemma}\label{lemma:reciprocals}
  $(0<q<1/x)\iff x<1/q$, for any $x:\tII_j$ and $q:\tQQ$.
\end{lemma}
\begin{proof}
  Since $x$ is $j$-closed, it suffices to show $j(x<1/q)$. By definition, $(0<q<1/x)\iff
  j\exists(u:\tQQ)\ldotp(x<u)\wedge (q*u<1)$, and we finish with a chain of equivalences
  \[
    j\exists(u:\tQQ)\ldotp(x<u)\wedge (q*u<1)
    \quad\iff\quad
    j\exists(u:\tQQ)\ldotp(x<u)\wedge (u<1/q)
    \quad\iff\quad
    j(x<1/q).\qedhere
  \]
\end{proof}

\begin{proposition}\label{prop:reciprocal}
  If $x:\tII^{>0}_j$ is $j$-disjoint then $1/x$ is $j$-disjoint; and if $x$ is $j$-located then
  $1/x$ is $j$-located. Moreover, for any positive real $r:\tRR_j^{>0}$ we have $r*r'=1$, where
  $r'\coloneqq(\delta',\upsilon')$ is as in \cref{eqn:reciprocal}.
\end{proposition}
\begin{proof}
  Let $(\delta',\upsilon')$ be the reciprocal of $x$. For $j$-disjointness, suppose $0<q$ and
  $\delta'q\wedge\upsilon' q$. Then $j\exists d,u\ldotp\delta d\wedge\upsilon u\wedge q*u<1<q*d$,
  and we obtain $j\bot$. For $j$-locatedness, suppose $q_1<q_2$. Then $1/q_2<1/q_1$, so
  $\delta(1/q_2)\vee\upsilon(1/q_1)$, and the result follows by \cref{lemma:reciprocals}.

  For the final claim, $1:\tRR_j$ is the $j$-local real represented by $1:\tQQ$; see
  \cref{eqn:q_cuts}. We first show that if $x$ is $j$-disjoint then $q<x*(1/x)\imp q<1$:
  \begin{align}
    q<x*(1/x)\nonumber
    &\iff j\exists q_1,q_2\ldotp(0<q_1<x)\wedge(0<q_2<1/x)\wedge(q<q_1*q_2)\\\label{eqn:equiv_recip}
    &\iff j\exists q_1,q_2\ldotp(0<q_1<x)\wedge(x<1/q_2)\wedge (q<q_1*q_2)\\\nonumber
    &\imp j\exists q_1,q_2\ldotp(0<q_1<1/q_2)\wedge q<q_1*q_2\\\nonumber
    &\iff j(q<1)
  \end{align}
  The first equivalence is \cref{lemma:product_of_positives}, the second is
  \cref{lemma:reciprocals,rem:j-logic}, the implication is by $j$-disjointness, and the fourth is an
  easy calculation. It remains to prove that if $r:\tRR_j$ is real then $q<1\imp q<x*(1/x)$; assume
  $q<1$. By \cref{prop:multiplicatively_located}, we have
  \[
    j\exists q_1,q_2\ldotp(0<q_1<r<q_2)\wedge(q<q_1/q_2),
  \]
  which is equivalent to \eqref{eqn:equiv_recip} when $x\coloneqq r$, so we obtain $q<r*(1/r)$.
\end{proof}

One must be careful to note that for numeric types other than $\tRR$, the distributive law does not
hold. For example, if $x=[0,1]$, $y=[1,1]$, and $z=[-1,-1]$, then
\begin{align*}
  x*(y+z)
  =[0,1]*([1,1] + [-1, -1])
  &= [0,0]\\
  &\neq [-1,1]
  =[0,1]+[-1,0]
  =x*y+x*z.
\end{align*}
\index{arithmetic!interval|)}
However, it is well-known that the real numbers form a commutative ring in any topos.

\begin{proposition}\label{prop:field}\index{real numbers!field of}
  The type $\tRR_j$ of $j$-local real numbers forms a commutative ring satisfying the additional
  axiom
  \begin{equation}\label{eqn:j-field}
    \forall(r:\tRR_j)\ldotp \big(j(r<0\vee r>0)\iff j\exists(r':\tRR_j)\ldotp r*_jr'=1\big).
  \end{equation}
\end{proposition}
\begin{proof}
  In any topos, including the subtopos $\cat{E}_j$, the Dedekind real numbers object $R$ forms a
  residue field, in particular a commutative ring $(R,0,+,1,*)$ satisfying $\forall(r:R)\ldotp
  (r<0\vee r>0)\iff\exists(r':R)\ldotp r*r'=1$. A constructive proof can be found in
  \cite[Proposition D.4.7.10]{Johnstone:2002a}.
\end{proof}

\begin{proposition}\label{prop:j_on_intervals}
  For any morphism of modalities $j'\imp j$, there is an induced continuous map
  \begin{align*}
  j\colon\tRR_{j'}\to\tRR_{j}
  \end{align*}
  defined by sending $(\delta,\upsilon)$ to $(j\delta,j\upsilon)$, and it preserves constants, arithmetic, and inequalities. The analogous statement holds for each of the other nine Dedekind numeric types.
\end{proposition}
\begin{proof}
The continuity of the above-defined $j$ follows from \cref{prop:predomain_modality_map}. More explicitly, one can check that each of the $j'$-axioms for $\delta$ (or $\upsilon$) immediately implies the corresponding $j$-axiom for $j\delta$ (or $j\upsilon$). The map is continuous because it is the restriction of a morphism of domains.

The fact that $j$ preserves constants follows from \cref{prop:j_preserves_constants}. The fact that it preserves arithmetic follows from \cref{prop:approximable_j_square}, because all arithmetic operations are induced by approximable mappings. For inequalities, suppose $a<_{j'}b$ in the sense of \cref{eqn:j_less_than}, i.e.\ $j'\exists q\ldotp\upsilon_aq\wedge\delta_bq$. This implies $j\exists q\ldotp j\upsilon_aq\wedge j\delta_bq$, as desired.
\end{proof}

\index{arithmetic|)}

\index{type theory|(}
\index{logic|)}

\chapter{Axiomatics}\label{sec:axiomatics}\index{logic|(}

In \cref{sec:logical_prelims} we explained a connection between toposes, type theory, and logic. We also discussed modalities and numeric types in an arbitrary topos. In the current chapter we will lay out the signature---meaning the atomic types, atomic terms, and axioms---for our specific topos, $\BaseTopos$. It turns out that our signature consists of no atomic types, one atomic term, and ten axioms.

The real numbers and other numeric objects are particularly relevant in our study of $\BaseTopos$. We introduced Dedekind $j$-numeric objects in \cref{def:local_reals}, where the $j$ signifies an arbitrary modality. We will use that generality in \cref{sec:Real_numbers}, but not in the axioms. That is, in the present chapter, none of our numeric objects will be decorated by a $j$.

\begin{notation}
Recall from \cref{def:local_reals} that by  $\tII$, $\tRR$, and $\tRRub$, we mean the Dedekind numeric types $\tII_j$, $\tRR_j$, and $\tRRub_j$, where $j=\id$ is the identity modality. These numeric types are called the \emph{improper intervals}, the \emph{real numbers}, and the \emph{unbounded real numbers}, respectively.\index{numeric type}
\end{notation}

One reason that these Dedekind numeric types are so important here is that $\Time$ is defined in terms of them. Semantically speaking, a behavior of type $\Time$ assigns to every duration $\ell$ some interval $[t_0,t_0+\ell]$ of real numbers: the values shown on a given clock during that interval. In other words, a behavior of type $\Time$ is a ``time window''.\index{time window}\index{Time@$\Time$!as clock behavior} We will introduce the type $\Time$ in \cref{sec:first_axioms_time}, but we begin in \cref{sec:constant_types} with a discussion of what we call constant types.

Note: Although we will write our proofs in typical mathematical style, it is important to make note of the fact that \emph{every proof and definition in \cref{sec:axiomatics} can be formalized in the type theory of \cref{sec:logical_prelims}}.

\section{Constant types}\label{sec:constant_types}\index{type!constant|(}

The notion of ``constant types'' plays an important role in these axiomatics. The word ``constant'' does not really refer to the type itself, but to its intended semantics.

Rather than defining a
property of constancy for types, we consider it an axiomatic property. Specifically, whenever in an
axiom we say ``for all constant types $\tConst$'', we regard the axiom as an axiom \emph{schema}.

Recall that a proposition $P:\Prop$ is called
\emph{decidable}\index{decidable}\index{proposition!decidable|see {decidable}} if $P\vee\neg P$
holds. We call $P:X\to\Prop$ a \emph{decidable predicate}\index{predicate, decidable|see {decidable}} if $P(x)$
is decidable for all $x:X$. If $P$ is a decidable predicate, we also refer to the subtype $\{\,x:X\mid P(x)\,\}$ as a \emph{decidable subtype}.\index{type!subtype!decidable}\index{type!product}

\begin{definition}[Constant types]\label{def:constant_types}
  We define a \emph{constant type} to be any member of the smallest collection of types containing
  $\unit$, $\tNN$, $\tZZ$, $\tQQ$, and $\tRRub$, and that is closed under finite sums, finite
  products, function types, and decidable subtypes.
\end{definition}

We will see in \cref{cor:decidable_reals} that $\tRR$ is also a constant type. This is just a definition; the axioms throughout this chapter will provide all the associate type-theoretic meaning.

\begin{remark}
  It may come as a bit of a surprise to experts in topos theory that the real numbers object $\tRR$
  is semantically a constant sheaf in $\BaseTopos=\Shv{\BaseSite}$. We prove this in
  \cref{cor:Dedekind_cut_semantics}, during our discussion of soundness.

  For the time being, we can only give some evidence for the claim, namely that the type of real
  numbers object in the slice topos $\Shv{S_\IR}$ is constant (see \cref{prop:IR_BaseTopos/Time}).
  Indeed, for any topological space $X$, the internal sheaf of Dedekind reals is externally the
  sheaf of continuous real-valued functions on X, and these are all constant for $X=\IR$, as we saw
  in \cref{cor:sheaf_IR_Dedekind}.
\end{remark}

\subsection{First axioms for constant types}

We now begin laying out the axioms and their more immediate consequences. Our first axiom says that
the cuts $\delta,\upsilon$ defining any real number are decidable.\index{cuts!decidable} The
soundness of \cref{ax:decidable_reals} is proven in \cref{prop:axiom1}.

\setcounter{axiom}{0}
\begin{axiom}\label{ax:decidable_reals}\index{axiom!of $\BaseTopos$!decidable reals}
  $\forall ((\delta,\upsilon):\tRRub)(q:\tQQ)\ldotp
    (\delta q \vee \neg\delta q) \wedge (\upsilon q \vee \neg\upsilon q)$.
\end{axiom}

We can now begin proving some propositions. As mentioned above, every proof in \cref{sec:axiomatics}---though written in typical mathematical style---can be formalized in a standard type theory and higher-order logic, as outlined in \cref{sec:logical_prelims}.

\begin{proposition}[Trichotomy]
\label{prop:trichotomy}\index{trichotomy}
  For all $r_1,r_2:\tRRub$, either $r_1<r_2$ or $r_1=r_2$ or $r_1>r_2$. In particular
  $(r_1=r_2)\vee\neg(r_1=r_2)$.
\end{proposition}
\begin{proof}
  Let $r=(\delta,\upsilon)\coloneqq r_2-r_1$; it suffices to show that $(\delta
  0)\vee(0=r)\vee(\upsilon 0)$. By \cref{ax:decidable_reals} it suffices to show $0=r$ assuming
  $\neg\delta 0$ and $\neg\upsilon 0$. It follows easily that for all $q:\tQQ$ we have
  $(q<0)\iff\delta q$ and $(0<q)\iff\upsilon q$, so $0=r$. The second statement is clear because
  $(r<0)\imp\neg(0=r)$.
\end{proof}

The next axiom is referred to by \cite[C.1.1.16.e]{Johnstone:2002a} as a ``dual Frobenius rule''.\index{dual Frobenius rule} It also is related to the notion of ``covertness'' for locales; one may say that any constant object is covert\index{covertness}. Kuratowski-finite objects in a topos also satisfy this condition; see \cite[Lemma D.5.4.6]{Johnstone:2002a}. The soundness of \cref{ax:distributivity} is proven in \cref{prop:axiom2}.%

\begin{axiom}\label{ax:distributivity}\index{axiom!of $\BaseTopos$!dual Frobenius}
  For any constant type $\tConst$, any $P:\Prop$, and any $P'\colon\tConst\to\Prop$,
  \begin{equation*}
    \left[ \forall(c:\tConst)\ldotp P\vee
    P'(c) \right] \imp \left[ P \vee \forall(c:\tConst)\ldotp P'(c) \right].
  \end{equation*}
\end{axiom}
Note that the converse is straightforward to prove constructively, hence \cref{ax:distributivity}
implies $\left[ \forall(c:\tConst)\ldotp P\vee P'(c) \right] \iff \left[ P \vee
\forall(c:\tConst)\ldotp P'(c) \right]$.

\begin{remark}
  \cref{ax:distributivity} is a co-distributivity property ($\bigwedge_c (P\vee
  P'(c))=P\vee\bigwedge_c P'(c)$),\index{co-distributivity} with universal quantification over a
  constant type taking the role of infinite intersections.
\end{remark}

We say that a type $X$ is \emph{inhabited} if it satisfies
$\exists(x:X)\ldotp\top$.\index{type!inhabited} 

\begin{proposition}\label{prop:const_type_decidably_inhabited}
  For any constant type $\tConst$ and decidable predicate\index{decidable} $P:\tConst\to\Prop$ (satisfying
  $\forall(c:\tConst)\ldotp Pc\vee\neg Pc$), we have 
  \begin{equation}\label{eqn:const_type_dec}
    \exists(c:\tConst)\ldotp Pc\vee\forall(c:\tConst)\ldotp\neg Pc.
  \end{equation}
  It follows immediately that both $\exists(c:\tConst)\ldotp Pc$ and $\forall(c:\tConst)\ldotp Pc$ are decidable. In particular, the proposition that $\tConst$ is inhabited, namely $\exists(c:\tConst)\ldotp\top$,
  is decidable.
\end{proposition}
\begin{proof}
  Because $P$ is decidable, we have $\forall(c:\tConst)\ldotp(\exists(c':\tConst)\ldotp Pc')\vee\neg
  Pc$, and \cref{eqn:const_type_dec} follows from \cref{ax:distributivity}. It follows immediately that $\exists(c:C)\ldotp Pc$ is decidable; since $\neg P$ is decidable, it also follows immediately that $\forall(c:C)\ldotp Pc$ is decidable. The final statement follows by taking $Pc=\bot$.
\end{proof}

Say that a type $A$ \emph{has decidable equality}\index{decidable equality} if $\forall(a,a':A)\ldotp(a=a')\vee\neg(a=a')$ holds.

\begin{proposition}\label{prop:types_decidable_equality}
Let $A$ and $B$ be types. If $A$ and $B$ have decidable equality, then so do the types $A+B$, $A\times B$, and $\{a:A\mid Pa\}$ for any decidable predicate $P:A\to\Prop$.
\end{proposition}
\begin{proof}
The case of
  decidable subtypes is easiest, and we leave this case to the reader. Consider the case of finite products. Let $A$ and $B$ be types with decidable equality, and
  let $x,y:A\times B$ be terms. Then we have $(\pi_A(x) = \pi_A(y)) \vee \neg(\pi_A(x) = \pi_A(y))$
  and simililarly for $\pi_B$. In the case $\pi_A(x)=\pi_A(y)$ and $\pi_B(x)=\pi_B(y)$, we get $x =
  (\pi_A(x),\pi_B(x)) = (\pi_A(y),\pi_B(y)) = y$. For the other three cases, suppose without loss of
  generality that $\neg(\pi_A(x)=\pi_A(y))$. Then if $x=y$, it follows that $\pi_A(x)=\pi_A(y)$
  giving $\bot$. Hence $\neg(x=y)$.

  For the case of finite sums, we make use of \cref{rem:HOL_sum_type}, including the notation $\mathrm{is\_empty}$ and $\mathrm{is\_sing}$. Let $A$ and $B$ be
  types with decidable equality, and let $x,y:A+B$ be terms. By \eqref{eq:HOL_sum_type}, we can
  consider $x$ to be a pair $(x_A,x_B):(A\to\Prop)\times(B\to\Prop)$ such that $(\mathrm{is\_sing}(x_A)
  \wedge \mathrm{is\_empty}(x_B)) \vee (\mathrm{is\_empty}(x_A) \wedge \mathrm{is\_sing}(x_B))$. We similarly
  let $y$ be the pair $(y_A,y_B)$.

  First consider the case $\mathrm{is\_sing}(x_A) \wedge \mathrm{is\_sing}(y_A)$. Then there exist elements
  $a_x,a_y:A$ such that $x_A(a_x)$, $y_A(a_y)$, $\forall(a:A)\ldotp x_A(a)\imp a=a_x$, and
  $\forall(a:A)\ldotp x_Y(a)\imp a=a_y$. By decidability of $A$, we have
  $(a_x=a_y)\vee\neg(a_x=a_y)$. It is now easy to show that $a_x=a_y \imp x=y$ (using propositional
  extensionality) and $\neg(a_x=a_y)\imp\neg(x=y)$.

  In the case $\mathrm{is\_sing}(x_A) \wedge \mathrm{is\_empty}(y_A)$, we have $x_A(a_x)$ and $\neg
  y_A(a_x)$, therefore $\neg(x=y)$. The other two cases are symmetric.
\end{proof}

\begin{corollary}[Constant types have decidable equality]\label{prop:const_decidable_eq}
  If $\tConst$ is a constant type, then it has decidable equality.
\end{corollary}
\begin{proof}
  The type $1$ obviously has decidable equality, and one can prove by induction that $\tNN$ and
  $\tZZ$ do too. The type $\tQQ$ is a quotient of $\tNN\times\tZZ$, under which equality
  $(n,z)=(n',z')$ holds iff $n*z'=n'*z$, which is decidable. And $\tRRub$ has decidable equality by
  \cref{prop:trichotomy}.
  
  By \cref{def:constant_types}, it suffices to check that finite sums, finite products, function types, and
  decidable subtypes of types with decidable equality have decidable equality. This has been done already in \cref{prop:types_decidable_equality} for all but function types. So suppose that $\tConst$ and $\tConst'$ are constant types with decidable equality. Given $f,g:\tConst\to\tConst'$, take $P:\tConst\to\Prop$ to be the decidable predicate $P\coloneqq\lambda(c:\tConst)\ldotp fc=gc$. By \cref{prop:const_type_decidably_inhabited}, $\forall(c:\tConst)\ldotp fc=gc$ is decidable, and the result follows by function extensionality, \cref{ax:propositional_extensionality}.\index{extensionality!function}
\end{proof}

Because constant types are closed under decidable subtypes (see
\cref{def:constant_types}), we have the following corollary to
\cref{ax:distributivity}.

\begin{corollary}\index{decidable}
  For any constant type $\tConst$, any $P:\Prop$, $P':\tConst\to\Prop$, and any $Q:\tConst\to\Prop$
  such that $\forall(c:\tConst)\ldotp Q(c)\vee\neg Q(c)$,
  \[
    [\forall(c:\tConst)\ldotp Q(c)\imp (P\vee P'(c))] \iff
    [P \vee \forall(c:\tConst)\ldotp Q(c)\imp P'(c)].
  \]
\end{corollary}

Recall from \cref{def:local_reals} the terminology of Dedekind numeric objects, including the conditions called ``roundedness, boundedness, disjointness, and locatedness''. We will use these names freely in this section. For example, any $u:\tRRub$ is rounded, meaning $\forall(q:\tQQ)\ldotp q<u\imp\exists(q':\tQQ)\ldotp q<q'<u$.

\begin{proposition}\label{prop:stronger_located}
  Let $d,u,r:\tRRub$ with $d<u$. Then $(d<r)\vee(r<u)$.
\end{proposition}
\begin{proof}
  By definition, $d<u\iff\exists(q:\tQQ)\ldotp d<q<u$. By roundedness there exists $q':\tQQ$ with
  $q<q'<u$. Then by locatedness $(q<r)\vee(r<q')$, which implies the result.
\end{proof}

\Cref{def:constant_types} says that $\tRRub$ is constant; now we can prove that $\tRR$ is constant
also.

\begin{corollary}\label{cor:decidable_reals}
  The type $\tRR$ of Dedekind reals is constant.\index{real numbers!as decidable}
\end{corollary}
\begin{proof}
  By \cref{def:local_reals}, $\tRR=\{(\delta,\upsilon):\tRRub\mid P(\delta,\upsilon)\}$,
  where  $P:\tRRub\to\Prop$ is the boundedness condition:
  $P(\delta,\upsilon)=(\exists(q:\tQQ)\ldotp\delta q)\wedge(\exists(q:\tQQ)\ldotp\upsilon q)$. It
  suffices by \cref{def:constant_types} to show that $P$ is decidable. This follows from
  \cref{prop:const_type_decidably_inhabited,ax:decidable_reals}.
\end{proof}

A function $f:A\to B$ is an \emph{internal surjection}\index{internal!surjection} if it satisfies
$\forall(b:B)\ldotp\exists(a:A)\ldotp f(a)=b$. It is an \emph{internal
injection}\index{internal!injection} if it satisfies $\forall(a,a':A)\ldotp (f(a)=f(a'))\imp
(a=a')$. It is an \emph{internal bijection}\index{internal!bijection} if it satisfies both. We will not need the following, but it provides an example of something we can now prove in the type theory.

\begin{proposition}\label{prop:unbounded_reals_inftys}
  There is an internal bijection $\tRR\sqcup\unit\sqcup\unit\to\tRRub$.
\end{proposition}
\begin{proof}
  Define the function to send $(\delta,\upsilon):\tRR$ to $(\delta,\upsilon):\tRRub$, it sends the
  first copy of $\unit$ to $(\bot,\top)$, or more precisely the pair
  $(\lambda(q:\tQQ)\ldotp\bot,\lambda(q:\tQQ)\ldotp\top)$, and similarly it sends the second copy of
  $\unit$ to $(\top,\bot)$. This is clearly injective since $\tQQ$ is inhabited and the cuts for
  real numbers are bounded. The function is surjective because for every $(\delta,\upsilon):\tRRub$,
  we can decide if $\delta$ or $\upsilon$ is bounded or not by
  \cref{prop:const_type_decidably_inhabited}. If one is unbounded (i.e.\ $\bot$), then the other is
  $\top$ by locatedness.
\end{proof}

\index{type!constant|)}

\subsection{Interlude: how to read the axiomatics section}

Now that the reader has seen some axioms and propositions that follow, he or she may be wondering
about how the authors chose the axioms and consequences they did. Like in any mathematical theory,
the soundness of an axiom or result is deduced from firm principles, whereas the choice of which
axioms and results to announce is decided by history and necessity. One is science, the other is
art.\index{axioms!authors choice of}

The consistency of all of our axioms is proven in \cref{sec:justification}, by showing that they are
sound in the sheaf topos $\BaseTopos=\Shv{\BaseSite}$. The soundness of the lemmas, propositions,
etc.\ in this section then follow from the soundness of higher-order logic for toposes; see e.g.\
\cite{Lambek.Scott:1988a}.

Note that there is little hope that a finite set of axioms would be complete for $\BaseTopos$. Thus
we had to make choices, and the question one might ask is how we did so. Indeed, we carried many
competing criteria in mind while choosing the ten axioms and the consequences we announce. We were
guided throughout by the National Airspace System ``safe-separation'' case study, detailed in
\cref{sec:case_study}, which involves a combination of differential equations, discrete-time
signaling, and time-delays. We chose this example because it was sufficiently complex and
``exterior'' to ourselves as mathematicians. Moreover, when tackling it, we were forced to define
derivatives with respect to our internal notion of $t:\Time$, and we were able to prove the Leibniz
property internally to the logic. Having done so, we gained confidence that our system is indeed
fairly robust.

Our criteria roughly were that the axioms should:
\begin{itemize}
  \item be powerful enough to prove the ``safe-separation property'';
  \item be powerful enough to prove other facts we thought ``should be provable'';
  \item be written at a ``low logical level'', e.g.\ commutation of $\vee$ and $\wedge$;
  \item capture ``simple geometric facts'' about the topos $\BaseTopos$; and
  \item be ``not too numerous'' in number;
\end{itemize}
For example \cref{ax:distributivity} turns out to be quite powerful, it is written at a low logical
level, and it tells us about how constant objects behave in $\BaseTopos$. However, sometimes the above
criteria conflict with each other. For example
\cref{ax:time_not_decidable_d,ax:time_not_decidable_u} are chosen for their power, even though they
are logically a bit convoluted and the geometric facts they convey are not intuitive.

A reader who has understood the rules of higher-order constructive logic (see \cref{sec:HOL}) will have no trouble with the proofs in this
section: they are all simply logical deductions. We try to give hints about the semantics when
possible, but our goal is build up enough theory to internally define derivatives and eventually
succeed in proving the safety property. We now return to these axioms and their results.

\section{Introducing $\Time$}
\label{sec:first_axioms_time}\index{Time@$\Time$}

We gave a semantic definition of $\Time$ as an object in $\BaseTopos$ in \cref{sec:Time}. Below we
will define a certain type---in our internal language---that we also call $\Time$. We will prove
that the first is a valid semantic meaning of the second in \cref{lemma:Time_sub_II}.

$\Time$ will be defined as a subtype of the extended intervals $\tII$ (see \cref{def:local_reals}),
classified by a predicate\index{unit speed}\index{type!of improper intervals}
\[
  \unitspeed: \tII\to\Prop.
\]
This is the one atomic term we need to define our temporal type theory. With it in hand, we define $\Time$ to be the following subtype (see \cref{sec:subtypes_quotients}):
\begin{equation}\label{eqn:define_time}
  \Time \coloneqq \{\,(\delta, \upsilon):\tII \mid \unitspeed(\delta, \upsilon)\,\}.
\end{equation}
Just like we reserved the letters $q$, $r$, etc.\ for variables of type rationals and reals (see
\cref{def:local_reals}), we reserve the letters $t,t',t_1,t_2$, etc.\ for variables of type $\Time$.

\begin{notation}\label{not:time}
We use the usual notation for inequalities comparing times $t=(\delta_t,\upsilon_t):\Time$ with rationals
$q:\tQQ$ and unbounded reals $r=(\delta_r,\upsilon_r):\tRRub$,\index{inequality!involving $\Time$}
\begin{equation}\label{eqn:comparing_time}
\begin{aligned}
  q < t &\coloneqq \delta_t q&\qquad&&r<t&\coloneqq\exists(q:\tQQ)\ldotp \upsilon_r q\wedge\delta_tq\\
  t < q &\coloneqq \upsilon_t q&&&t<r&\coloneqq\exists(q:\tQQ)\ldotp \upsilon_tq\wedge \delta_r q
\end{aligned}
\end{equation}
Thus using our notation, the top right-hand equation could be rewritten $r<t\iff\exists q\ldotp
r<q<t$. We also introduce the following notations for $d,u,r:\tRRub$ and $t:\Time$,
\begin{itemize}
  \item $[d,u] \ll t \coloneqq (d < t) \wedge (t < u)$
  \item $t \apart [d,u] \coloneqq (t < d) \vee (u < t)$
  \item $t \apart r \coloneqq t \apart [r,r]$
\end{itemize}
The symbol $\apart$\index{apart} is pronounced \emph{apart}.
\end{notation}

Note that $t\apart 0$ means $(t<0)\vee(0<t)$ which is strictly stronger than $\neg(t=0)$. Indeed, we will see in \cref{prop:t_not_0} that the latter is automatically true.

\Cref{ax:time_inhabited} ensures that $\Time$ is inhabited (it is equivalent to
$\exists(t:\Time)\ldotp\top$ by
\cref{eqn:define_time}).\index{type!inhabited}\index{Time@$\Time$!as
inhabited} \Cref{ax:time_complementary_cuts} implies the cuts are disjoint, but it is stronger than
disjointness\index{cuts!disjoint}; it can be read as saying ``if the time $t$ is never greater than
$q$, then it is always less than $q$,'' and dually. The soundness of
\cref{ax:time_inhabited,ax:time_complementary_cuts,ax:time_not_decidable_d,ax:time_not_decidable_u}
is shown in \cref{prop:axiom3}.

\begin{axiom}\index{axiom!of $\BaseTopos$!basics of $\Time$}
  \begin{enumerate}[label=3\alph*., ref=3\alph*]
    \item $\exists((\delta,\upsilon):\tII)\ldotp\unitspeed(\delta,\upsilon)$.
      \label[axiom]{ax:time_inhabited}
    \item $\forall(q:\tQQ)((\delta,\upsilon):\Time)\ldotp (\neg\delta q \iff \upsilon q)
      \wedge (\neg\upsilon q \iff \delta q)$.
      \label[axiom]{ax:time_complementary_cuts}
    \item $\forall(t:\Time)(P:\tQQ\to\Prop)\ldotp (\forall q\ldotp Pq\vee\neg Pq) \imp (\forall
      q\ldotp Pq\vee q<t) \imp \exists q\ldotp Pq\wedge q<t$.
      \label[axiom]{ax:time_not_decidable_d}
    \item $\forall(t:\Time)(P:\tQQ\to\Prop)\ldotp (\forall q\ldotp Pq\vee\neg Pq) \imp (\forall
      q\ldotp Pq\vee t<q) \imp \exists q\ldotp Pq\wedge t<q$.
      \label[axiom]{ax:time_not_decidable_u}
  \end{enumerate}
\end{axiom}

\Cref{ax:time_complementary_cuts} can be rewritten less pedantically using
\eqref{eqn:comparing_time}:
\[
  \forall(q:\tQQ)(t:\Time)\ldotp(\neg(q<t)\iff(t<q))\wedge(\neg(t<q)\iff(q<t)).
\]

The primary use of \cref{ax:time_not_decidable_d,ax:time_not_decidable_u} will be to ensure that
many axioms stated in terms of rationals in fact generalize to reals. For a first example, we can
generalize \cref{ax:time_complementary_cuts}.

\begin{proposition}\label{prop:time_complementary_real_cuts}
  For all times $t:\Time$ and unbounded reals $r:\tRRub$
  \[
    \neg(t<r) \iff r<t.
    \qquad\text{and}\qquad
    \neg(r<t) \iff t<r
   \]
\end{proposition}
\begin{proof}
  We prove $\neg(t<r) \iff r<t$. The left side is by definition $\neg(\exists
  q\ldotp(t<q)\wedge(q<r))$. This is constructively equivalent to $\forall
  q\ldotp\neg((t<q)\wedge(q<r))$. Using $(q<r)\vee\neg(q<r)$ from \cref{ax:decidable_reals} this is
  equivalent to $\forall q\ldotp \neg(t<q)\vee\neg(q<r)$, which is equivalent to $\forall
  q\ldotp\neg(q<r)\vee(q<t)$ by \cref{ax:time_complementary_cuts}. Because $q<r$ is decidable by
  \cref{ax:decidable_reals}, we can apply \cref{ax:time_not_decidable_d} to get $\exists
  q\ldotp\neg(q<r)\wedge(q<t)$. Taking such a $q$, there exists a $q'$ with $q<q'<t$ because $t$ is
  rounded ($\Time\ss\tII$), and because $r$ is located we have $(r<q')\vee(q<r)$. Since $\neg(q<r)$,
  we have proven $\exists q'\ldotp (r<q')\wedge(q'<t)$, i.e.\ $r<t$, as desired.
\end{proof}

In particular the axioms for $\Time$ imply that it is nowhere constant.\index{Time@$\Time$!as
nowhere constant}

\begin{proposition}\label{prop:t_not_0}
  $\forall(t:\Time)(r:\tRRub)\ldotp\neg(t=r)$.
\end{proposition}
\begin{proof}
  If $t=r$ then $\neg(t<r)$ and $\neg(r<t)$, which implies $\bot$ by
  \cref{prop:time_complementary_real_cuts}.
\end{proof}

\Cref{prop:time_apartness_complement} says that if $t$ is never between $d$ and $u$, then it is
either always less than $d$ or always greater than $u$.

\begin{proposition}\label{prop:time_apartness_complement}
  For all times $t:\Time$ and reals $d,u:\tRRub$, if $d<u$ then $\neg([d,u]\ll
  t)\iff(t\apart[d,u])$.
\end{proposition}
\begin{proof}
  Let $d<u$. The backwards direction follows from \cref{ax:time_complementary_cuts}, so consider the
  forwards direction. Assume $\neg(\exists(d',u':\tQQ)\ldotp d<d'<t<u'<u)$, we want to show
  $(t<d)\vee(u<t)$. By \cref{ax:decidable_reals,ax:time_complementary_cuts}, the assumption is
  constructively equivalent to $\forall(d',u':\tQQ)\ldotp\neg(d<d'<t)\vee\neg(t<u'<u)$. Using
  \cref{ax:distributivity}, we can split this up into $(\forall d'\ldotp\neg(d<d'<t))\vee(\forall
  u'\ldotp\neg(t<u'<u))$. By \cref{prop:time_complementary_real_cuts} this is equivalent to
  $(t<d)\vee(u<t)$.
\end{proof}

\begin{proposition}\label{prop:apartness_continuity}
  For all $t:\Time$ and $r,r':\tRRub$, there is an equivalence between $t\apart[r,r']$ and
  $\forall(q,q':\tQQ)\ldotp ((r<q) \wedge (q'<r')) \imp t\apart[q,q']$.
\end{proposition}
\begin{proof}
  The proposition is proven by the following series of equivalences:
  \begin{alignat*}{2}
    &\forall(q,q':\tQQ)\ldotp ((r<q) \wedge (q'<r')) \imp t\apart[q,q'] && \\
    &\forall(q,q':\tQQ)\ldotp \neg(r<q) \vee \neg(q'<r') \vee (t<q) \vee (q'<t)
      \qquad && \text{\cref{ax:decidable_reals}} \\
    &(\forall q\ldotp \neg(r<q)\vee(t<q)) \vee (\forall q'\ldotp\neg(q'<r')\vee(q'<t))
      \qquad && \text{\cref{ax:distributivity}} \\
    &(t<r)\vee(r'<t) \qquad && \text{\cref{ax:time_not_decidable_d,ax:time_not_decidable_u}}
      \qedhere
  \end{alignat*}
\end{proof}

Recall from \cref{thm:monoids} that, even without any axioms introduced in this chapter, $\tRR$
forms a group under $+$. The next axiom encodes the
essential connection between $\Time$ and $\tRR$, namely that $\Time$ is an
``$\tRR$-torsor.''\index{torsor!$\Time$ as} This simply means that we can add a real number to a
time to get a new translated time, that given any two times we can compute the difference between
them as a real number, and moreover that these two operations are
inverse.\index{arithmetic!involving $\Time$} The soundness of \cref{ax:torsor} is proven in
\cref{prop:axiom4}.

\begin{axiom}\label{ax:torsor}\index{axiom!of $\BaseTopos$!$\Time$ as torsor}
  $\Time$ is an $\tRR$-torsor:
  \begin{enumerate}[label=4\alph*.]
    \item \label[axiom]{ax:torsor_act} $\forall(t:\Time)(r:\tRR)\ldotp t+r\in\Time$
    \item \label[axiom]{ax:torsor_diff}$\forall(t_1,t_2:\Time)\ldotp\exists!(r:\tRR)\ldotp
      t_1+r=t_2.$
  \end{enumerate}
\end{axiom}

\begin{proposition}\label{prop:torsor_for_comparison}
  For any $t:\Time$ and $r,r':\tRR$, it is the case that $(t+r<r')\iff (t<r'-r)$.
\end{proposition}
\begin{proof}
  It follows from definitions (\cref{eqn:comparing_time,eqn:addition_unbounded_improper}) that,
  $t+r<r'$ iff $\exists(q_1,q_2:\tQQ)\ldotp(t<q_1)\wedge(r<q_2)\wedge (q_1+q_2<r')$. This holds iff
  $\exists(q_1',q_2')\ldotp(t<q_1+q_2')\wedge(q_1'<s)\wedge(r<-q_2')$, where $q_1'=q_1+q_2$ and
  $q_2'=-q_2$. The latter holds iff $t<r'+(-r)$; see \cref{cor:reals_group}.
\end{proof}

\begin{proposition}\label{prop:time_decidable_eq}
$\Time$ has decidable equality.
\end{proposition}
\begin{proof}
Combine \cref{ax:torsor}, \cref{thm:monoids}, and \cref{prop:const_decidable_eq} applied to the constant type $\tRR$.
\end{proof}

The next axiom states that for any terms $t:\Time$ and $q:\tQQ$, the propositions $t<q$ and $q<t$
are each \emph{coprime}.\index{coprime|see {axiom}} Semantically, this corresponds to the fact that
the corresponding open subsets of the interval domain are filtered (inhabited, up-closed,
down-directed)\index{filter}; see \cref{prop:facts_on_IR}. The soundness of \cref{ax:coprime} is
proven in \cref{prop:axiom5}.

\begin{axiom}\label{ax:coprime}\index{axiom!of $\BaseTopos$!coprime}
  For all $d, u:\tQQ$, all $t:\Time$, and all $P,Q:\Prop$,
  \begin{enumerate}[label=5\alph*.]
    \item $(t<u \imp (P\vee Q)) \imp [(t<u \imp P) \vee (t<u \imp Q)]$\label[axiom]{ax:coprime_u}.
    \item $(d<t \imp (P\vee Q)) \imp [(d<t \imp P) \vee (d<t \imp Q)]$\label[axiom]{ax:coprime_d}.
  \end{enumerate}
\end{axiom}

The converse of each statement is easy. We can immediately generalize \cref{ax:coprime} from $\tQQ$
to $\tRR$.

\begin{proposition}\label{prop:real_coprime}
  For all $d, u:\tRRub$, all $t:\Time$, and all $P,Q:\Prop$,
  \begin{enumerate}
    \item $(t<u \imp (P\vee Q)) \iff [(t<u \imp P) \vee (t<u \imp Q)]$.
    \item $(d<t \imp (P\vee Q)) \iff [(d<t \imp P) \vee (d<t \imp Q)]$.
  \end{enumerate}
\end{proposition}
\begin{proof}
  The two statements are similar, and the backwards direction of each is trivial, so we just
  consider the forwards direction of the first. Choosing $u=(\delta,\upsilon):\tRRub$, we can
  rewrite the hypothesis by undwinding definitions \eqref{eqn:comparing_time} and using
  \cref{ax:coprime}:
  \begin{align*}
    \left[ (\exists q\ldotp t<q \wedge \delta q) \imp (P\vee Q) \right]
    &\iff \forall q\ldotp \left[ (t<q \wedge \delta q) \imp (P\vee Q) \right] \\
    &\iff \forall q\ldotp \left[ \delta q \imp (t<q \imp (P\vee Q)) \right] \\
    &\iff \forall q\ldotp \left[ \delta q \imp ((t<q \imp P) \vee (t<q \imp Q)) \right]
  \end{align*}
  We can similarly rewrite the conclusion:
  \begin{align*}
    &\left[ (\exists q\ldotp t<q \wedge \delta q) \imp P \right] \vee
      \left[ (\exists q\ldotp t<q \wedge \delta q) \imp Q \right] \\
    &\qquad \iff \left[ \forall q\ldotp(t<q \wedge \delta q) \imp P \right] \vee
      \left[ \forall q\ldotp(t<q \wedge \delta q) \imp Q \right] \\
    &\qquad \iff \forall q_1,q_2\ldotp \left[(t<q_1 \wedge \delta q_1) \imp P\right] \vee
      \left[(t<q_2\wedge\delta q_2) \imp Q\right]
  \end{align*}
  where the backwards direction of the last iff is by two applications of \cref{ax:distributivity}.

  We will now prove the desired (reformulated) implication. Let $q_1$ and $q_2$ be arbitrary
  rationals, and assume without loss of generality that $q_1\leq q_2$. By \cref{ax:decidable_reals},
  we have $\delta q_2 \vee \neg\delta q_2$. If $\neg\delta q_2$ then $(t<q_2 \wedge \delta q_2) \imp
  Q$ vacuously. On the other hand, if $\delta q_2$, then by the hypothesis we have $(t<q_2\imp
  P)\vee(t<q_2\imp Q)$. In the second case we're clearly done. In the first case, since we have
  $(t<q_1\wedge\delta q_1)$ implies $t<q_2$ because $q_1\leq q_2$, and this implies $P$.
\end{proof}

\begin{lemma}\label{lem:interval_coprime}
  For all $d,u:\tRRub$, and all $t:\Time$, the proposition $[d, u]\ll t$ is coprime, i.e.\ for all
  $P,Q:\Prop$,
  \[
    ([d,u]\ll t \imp (P\vee Q)) \iff [([d,u]\ll t \imp P) \vee ([d,u]\ll t \imp Q)].
  \]
\end{lemma}
\begin{proof}
  For any $R:\Prop$, $[d,u]\ll t \imp R$ is equivalent to $d<t \imp (t<u \imp R)$. The lemma then
  follows from two applications of \cref{prop:real_coprime}.
\end{proof}

\begin{lemma}\label{lem:time_lems}
  For any $d,u:\tRRub$ and any $t:\Time$, if $d<u$ then
  \begin{enumerate}[label=\alph*.]
    \item $(t<u \imp d<t) \iff d<t,$
    \item $(d<t \imp t<u) \iff t<u,$
    \item $(t<u \imp t<d) \iff (t<d \vee u<t),$
    \item $(d<t \imp u<t) \iff (t<d \vee u<t).$
  \end{enumerate}
  If $d<r<u$ and $d<r'<u$, then
  \begin{enumerate}[resume, label=\alph*.]
    \item $([d,u]\ll t \imp t<r) \iff t\apart[r,u]$,
    \item $([d,u]\ll t \imp r<t) \iff t\apart[d,r]$,
    \item $([d,u]\ll t \imp t\apart[r,r']) \iff t\apart[r,r']$.
    \item $(t<u \imp t\apart[r,r']) \iff t\apart[r,r']$,
    \item $(d<t \imp t\apart[r,r']) \iff t\apart[r,r']$,
  \end{enumerate}
  For any $r$, if $d<u$, then
  \begin{enumerate}[resume, label=\alph*.]
    \item $(t\apart[r,d] \imp t<u) \iff t<u$,
    \item $(t\apart[u,r] \imp d<t) \iff d<t$.
  \end{enumerate}
  If $(r<u)\wedge(r'<u)$ or $(d<r)\wedge(d<r')$, (making no assumption about the order of $d,u$), then
  \begin{enumerate}[resume, label=\alph*.]
    \item $(t\apart[u,d] \imp t\apart[r,r']) \iff t\apart[r,r']$.
  \end{enumerate}
\end{lemma}
\begin{proof}
  The reverse directions are all trivial.

  For the forward direction of a., it suffices by \cref{prop:time_complementary_real_cuts} to show
  that $\neg(t < d)$: supposing $t < d$, then $t < u$ (since $d < u$), which by assumption implies
  $d < t$, and by \cref{prop:time_complementary_real_cuts} again this is a contradiction. The proof
  of b.\ is dual.

  For the forward direction of c., it suffices by \cref{prop:time_apartness_complement} to show that
  $\neg([d,u]\ll t)$: if $d < t < u$ then by assumption $t < d$, a contradiction (by
  \cref{prop:time_complementary_real_cuts}). Similarly for d.

  For the forward direction of e., it suffices by c.\ to show that $t<u\imp t<r$. Assuming $t<u$,
  we have $d<t\imp t<r$, which by b.\ implies $t<r$. Similarly for f.

  For the forward direction of g., suppose $[d,u]\ll t \imp t\apart[r,r']$. By
  \cref{lem:interval_coprime}, we have $([d,u]\ll t \imp t<r) \vee ([d,u]\ll t \imp r'<t)$,
  which by e.\ and f.\ implies $t\apart[r,u] \vee t\apart[d,r']$, or equivalently $t\apart[r,r']$.

  Finally, h.\ and i.\ are immediate from g., using just $([d,u]\ll t)\imp (t<u)$; j.\ follows
  easily from b., and likewise k.\ from a.; and l.\ is immediate from h.\ and i using
  \[
    (t\apart[u,d] \imp t\apart[r,r']) \iff [(t<u \imp t\apart[r,r']) \wedge (d<t \imp
    t\apart[r,r'])]. \qedhere
  \]
\end{proof}

We will also need a sort of infinitary version of \cref{ax:coprime}.\index{coprime!infinitary|see
{axiom}} The soundness of \cref{ax:infinitary_coprime} is proven in \cref{prop:axiom6}.

\begin{axiom}\label{ax:infinitary_coprime}\index{axiom!of $\BaseTopos$!infinitary coprime}
  Let $\tConst$ be an inhabited constant type. Then for all $t:\Time$ and
  $P:\tConst\to\Prop$,
  \begin{enumerate}[label=6\alph*.]
    \item $\forall(u:\tRRub)\ldotp(t<u \imp \exists(c:\tConst)\ldotp P(c)) \imp [\forall(u':\tQQ)\ldotp
      u'<u\imp\exists(c:\tConst)\ldotp t<u'\imp P(c)]$\label[axiom]{ax:infinitary_coprime_u}.
    \item $\forall(d:\tRRub)\ldotp(d<t \imp \exists(c:\tConst)\ldotp P(c)) \imp [\forall(d':\tQQ)\ldotp
      d<d'\imp\exists(c:\tConst)\ldotp d'<t\imp P(c)]$\label[axiom]{ax:infinitary_coprime_d}.
  \end{enumerate}
\end{axiom}

\begin{proposition}\label{prop:converse_ax_infinitary_coprime}
  The converse of \cref{ax:infinitary_coprime_u} and the converse of \cref{ax:infinitary_coprime_d}
  also hold.
\end{proposition}
\begin{proof}
  This follows from the definition of $t<u$, i.e.\ that $\exists (u':\tQQ)\ldotp t<u'<u$.
\end{proof}

\begin{remark}
Note that \cref{ax:infinitary_coprime} without the extra $u'<u$ clause, namely
\[(t<u \imp \exists(c:\tConst)\ldotp P(c)) \imp^? \exists(c:\tConst)\ldotp t<u\imp P(c),\]
does not hold in our semantics. Indeed, let $\tConst\coloneqq\tQQ$ and in the context of some $t,u$ define $P(q)\coloneqq t<q<u$. While it is true that $t<u$ implies $\exists q\ldotp t<q<u$, it is not the case that there exists $q$ with $(t<u)\imp t<q<u$.
\end{remark}

\begin{proposition}\label{prop:real_inf_coprime}
  Let $\tConst$ be an inhabited constant type. Then for all $d,u:\tRRub$, all $t:\Time$, and all
  $P:\tConst\to\Prop$, the following are equivalent:
  \begin{enumerate}
    \item $t<u \imp \exists(c:\tConst)\ldotp P(c)$
    \item $\forall(u':\tQQ)\ldotp u'<u\imp\exists(c:\tConst)\ldotp t<u'\imp P(c)$
    \item $\forall(u':\tRR)\ldotp u'<u\imp\exists(c:\tConst)\ldotp t<u'\imp P(c)$.
  \end{enumerate}
  Likewise, the following are equivalent:
  \begin{enumerate}
    \item $d<t \imp \exists(c:\tConst)\ldotp P(c)$
    \item $\forall(d':\tQQ)\ldotp d<d'\imp\exists(c:\tConst)\ldotp d'<t\imp P(c)$
    \item $\forall(d':\tRR)\ldotp d<d'\imp\exists(c:\tConst)\ldotp d'<t\imp P(c)$.
  \end{enumerate}
\end{proposition}
\begin{proof}
  $3\imp2$ is obvious, $2\iff 1$ is \cref{prop:converse_ax_infinitary_coprime}, and $2\imp 3$ is the
  definition of $u'<u$: there exists $u'':\tQQ$ with $u'<u''<u$.
\end{proof}

\begin{lemma}
\label{lem:inf_interval_coprime}
\index{existential quantifier, commuting with}
  Let $\tConst$ be an inhabited constant type. Then for all $d,u:\tRRub$, all $t:\Time$, and all
  $P:\tConst\to\Prop$, the following are equivalent:
  \begin{enumerate}
    \item $[d,u]\ll t\imp \exists(c:\tConst)\ldotp P(c)$
    \item $\forall(d',u':\tQQ)\ldotp (d<d'\wedge u'<u)\imp\exists(c:\tConst)\ldotp [d',u']\ll
      t\imp P(c)$
    \item $\forall(d',u':\tRR)\ldotp (d<d'\wedge u'<u)\imp\exists(c:\tConst)\ldotp [d',u']\ll
      t\imp P(c)$.
  \end{enumerate}
\end{lemma}
\begin{proof}
  $3\imp 2\imp 1$ is obvious, and $1\imp 3$ follows from two applications of
  \cref{prop:real_inf_coprime}.
\end{proof}

As a sort of dual to \cref{lem:interval_coprime} (saying $[d, u]\ll t$ is coprime), the next axiom
states that for any $d\le u$, the proposition $t\apart[u,d]$ is \emph{prime}.\index{prime} The
soundness of \cref{ax:prime} is proven in \cref{prop:axiom7}.

\begin{axiom}\label{ax:prime}\index{axiom!of $\BaseTopos$!prime}
  For all $d, u:\tQQ$ with $d\le u$ and all $t:\Time$, the proposition $t\apart[u,d]$ is prime,
  i.e.\ for all propositions $P,Q:\Prop$,
  \begin{equation}\label{eqn:primeness}
    \left[ (P\wedge Q) \imp t\apart[u,d] \right] \imp
      \left[ (P \imp t\apart[u,d]) \vee (Q \imp t\apart[u,d]) \right].
  \end{equation}
\end{axiom}

Again, the converse trivially holds, and the statement is immediately generalized from rationals to
reals.

\begin{proposition}\label{prop:prime_reals}
  For all $d,u:\tRRub$ with $d\leq u$ and all $t:\Time$, the proposition $t\apart[u,d]$ is prime,
  i.e.\ \cref{eqn:primeness} holds for all $P,Q:\Prop$.
\end{proposition}
\begin{proof}
  Assuming $(P\wedge Q)\imp t\apart[u,d]$, we want to prove $(P\imp t\apart[u,d])\vee(Q\imp
  t\apart[u,d])$. Using \cref{prop:apartness_continuity}, it follows that $P\imp t\apart[u,d]$ is
  equivalent to $\forall(d',u':\tQQ)\ldotp ((d'<d)\wedge(u<u'))\imp(P\imp t\apart[u',d'])$. This
  combined with \cref{ax:distributivity} shows that the desired conclusion is equivalent to
  \begin{multline}\label{eq:prime_reals_goal}
    \forall(d_1,u_1,d_2,u_2:\tQQ)\ldotp ((d_1<d)\wedge(u<u_1)\wedge(d_2<d)\wedge(u<u_2)) \\
      \imp [(P\imp t\apart[u_1,d_1]) \vee (Q\imp t\apart[u_2,d_2])].
  \end{multline}
  Likewise, the assumption is equivalent to
  \[
    \forall(d',u':\tQQ)\ldotp ((d'<d)\wedge(u<u'))\imp((P\wedge Q)\imp t\apart[u',d']),
  \]
  and applying \cref{ax:prime} to this gives
  \begin{equation}\label{eq:prime_reals_hypo}
    \forall(d',u':\tQQ)\ldotp ((d'<d)\wedge(u<u'))\imp[(P\imp t\apart[u',d']) \vee (Q\imp
    t\apart[u',d'])].
  \end{equation}

  To prove \cref{eq:prime_reals_goal}, suppose we are given rationals $d_1<d$, $u<u_1$, $d_2<d$, and
  $u<u_2$. Let $d'\coloneqq\max(d_1,d_2)$ and $u'\coloneqq\min(u_1,u_2)$. From
  \cref{eq:prime_reals_hypo}, we have $(P\imp t\apart[u',d'])\vee(Q\imp t\apart[u',d'])$, which
  implies the result.
\end{proof}

\section{Important modalities in temporal type theory}
\label{sec:modalities_in_BaseTopos}\index{modality!on $\BaseTopos$}

Recall from \cref{sec:modalities} the notion of modalities $j:\Prop\to\Prop$ in a type theory and their relationship with toposes. In the current section we will specifically be interested in modalities with interesting semantics in $\BaseTopos$, and in the slice topos\index{topos!slice}
$\BaseTopos/\Time$; we call the latter \emph{temporal modalities}\index{modality!temporal}. A
temporal modality\index{modality!temporal} is a map $j:\Time\to(\Prop\to\Prop)$ such that
$j(t)$---or as we will generally denote it, $j^t$---is a modality for all $t:\Time$.

\subsection{Definition of the  modalities $\See{}$, $\At{}$, $\In{}$, and $\pi$}\label{sec:define_See_At_In_pi}

In \cref{sec:modalitites_subtoposes} we discussed three sorts of modalities that arise from any given proposition $U:\Prop$, namely the open modality $o(U)$, the closed modality $c(U)$, and the quasi-closed modality $q(U)$. Below we will introduce some modalities that turn out to be useful in our type theory. Given $d,u:\tRR$ and $t:\Time$, we will use the closed modality $c(t\apart[d,u])$, the quasi-closed modality $q(t\apart[d,u])$, and the open modality $o(t\ll[d,u])$; see \cref{not:time}. These are denoted $\See[t]{[d,u]}$, $\At[t]{[d,u]}$, and $\In[t]{[d,u]}$, respectively. We will also introduce one more modality, that does not depend on $d$, $u$, or $t$.

\begin{notation}[$\See{}$, $\At{}$, $\In{}$, $\pi$]\label{not:our_modalities}\index{notation}~\\
\vspace{-.15in}

  For $t:\Time$ and $d,u:\tRRub$, recall that $t\apart[d,u]=(d<t)\vee (t<u)$ and $[d,u]\ll t=(d<t)\wedge (t<u)$. Given $P:\Prop$, we write
  \begin{itemize}[itemsep=4pt, topsep=5pt]
    \item $\See[t]{[d,u]}P \coloneqq t\apart[u,d] \vee P$
    \item $\At[t]{[d,u]}P \coloneqq (P \imp t\apart[u,d]) \imp t\apart[u,d]$
    \item $\In[t]{[d,u]}P\coloneqq ([d,u]\ll t)\imp P$
    \item $\Pointwise P \coloneqq \forall(t:\Time)\ldotp \At[t]{[0,0]}P$
  \end{itemize}\index{modality!$\At{},\See{},\In{},\pi$}



  Note that $\pi$ is the only one among the above list that is a modality on $\BaseTopos$; the rest
  are temporal modalities, i.e.\ they exist only in the context of some $t:\Time$. One can pronounce
  the notation $\pi P$ as ``Pointwise, $P$ holds'', which means $P$ holds at every $0$-length point.
  Note that we will not use the symbol $\pi$ to indicate projections in this book, so it is not
  overloaded if we use it to denote this modality. 
  
  When there is only one $t:\Time$ in the context, we often drop it from the notation, e.g.\ writing 
  $\See{[d,u]}$ rather than $\See[t]{[d,u]}$. One can pronounce the notation
  $\At{[d,u]}P$ as ``At $[d,u]$, $P$ holds''. The authors often pronounce the notation $\In{[d,u]}P$
  as ``In the interval $[d,u]$, $P$ holds'' and $\See{[d,u]}P$ as ``Seeing the interval $[d,u]$, $P$
  holds''.\index{modality!pronouncing}
  
  When $d=u$, we often simplify the notation by writing $\See{d}\coloneqq\See{[d,d]}$ and $\At{d}\coloneqq\At{[d,d]}$. For example, with this notation, we have $\pi P\coloneqq\forall(t:\Time)\ldotp\At{0}P$.
\end{notation}

\begin{remark}\label{rem:modalities_geom_interesting}
The modalities in \cref{not:our_modalities} correspond semantically to geometrically
interesting subtoposes. We will discuss this in detail in \cref{sec:semantics_of_modalities}, but we give the idea for the time being.

For any $d,u:\tRRub$ with $d\leq u$, there is an associated point $[d,u]\in\IR$; see \cref{sec:def_interval_domain}. This point defines at least three subsets of $\IR$, an open
subspace $\upclose[d,u]\ss\IR$, a closed subspace $\down[d,u]$, and the one-point subspace
$\{[d,u]\}$, which in fact is never closed; in fact, the closure of $\{[d,u]\}$ is $\down[d,u]$. As our notation hopefully makes obvious, the subspaces $\upclose[d,u]$, $\downclose[d,u]$, and $\{[d,u]\}$  correspond to the
modalities $\In{[d,u]}$, $\See{[d,u]}$ and $\At{[d,u]}$ respectively; see also
\cref{sec:modalitites_subtoposes}.

In \cref{not:our_modalities} the above modalities were defined for any $d,u:\tRRub$, regardless of order. When $u<d$ or either is $\pm\infty$, the above ``subspace'' explanation does not apply exactly, but it can be massaged a bit. That is, for any $d,u\in\RRub$, we can consider the subspace $\down[d,u]\ss\IR$ defined as follows:
\begin{equation}\label{eqn:abuse_down[d,u]}
  \down[d,u]\coloneqq\{[d',u']\in\IR\mid(d'\leq d)\wedge(u\leq u')\}\ss\IR
\end{equation}
In other words, if $d\leq u$ then $\down[d,u]$ denotes the set of intervals containing $[d,u]$,
whereas if $u<d$ then $\down[d,u]$ denotes the set of intervals which nontrivially intersect
$[u,d]$.

It remains to consider the semantics of the pointwise modality $\pi$. It defines one of the ``four relevant toposes'' discussed early on, in \cref{sec:four_relevant_toposes}. In the context of $t:\Time$, the $\pi$-modality corresponds to the subspace $\RR\ss\IR$.
\end{remark}

\subsection{Interlude on the semantics of $\See{}$, $\At{}$, $\In{}$, and $\pi$}
\index{Lipschitz|(}\index{Dyck path|(}

We briefly explain the semantics of the $\See{[d,u]}$ and $\At{[d,u]}$ modalities for $d\leq u$ in
$\RR$ using Dyck paths; see \cref{def:dyck}\index{modality!picturing with Dyck paths}. On any
length $\ell$, a proposition $P$ corresponds to an open subspace of $\upclose[0,\ell]$, and can hence be drawn as a Dyck path: $P$ is true at $[d,u]$ iff $[d,u]$ is under the Dyck path.

We first explain in a few steps how the modalities $\See{[d,u]}$ and $\At{[d,u]}$ operate. We begin
by drawing two arbitrary propositions $P,Q$ dashed, each together with the proposition
$t\apart[u,d]=(t<u)\vee (d<t)$ dotted.
\[
\begin{tikzpicture}[font=\tiny, text height=1.5ex, text depth=.25ex]
  \tikzmath{
    real \margin, \outer, \inner, \smheight, \height, \dist;
    let \margin = .5;
    let \outer = 1.5;
    let \inner =.7;
    \height=(\inner+\outer)/2;
    \smheight=\inner/2;
    let \dist = 1;
  };
  \node (lend1) {};
  \node [right=\margin of lend1] (L1) {|};
  \node [right=\outer of L1] (d1) {|};
  \node [right=\inner of d1] (u1) {|};
  \node [right=\outer of u1] (R1) {|};
  \node [right=\margin of R1] (rend1) {};
  \node at ($(L1)!.5!(u1)+(0,\height)$) (L1u1) {};
  \node at ($(d1)!.5!(R1)+(0,\height)$) (d1R1) {};
  \node at ($(d1)!.5!(u1)+(0,\smheight)$) (little1) {};
  \draw [thick, <->] (lend1) -- (rend1);
  \draw [thick, dotted] (L1.center) -- (L1u1.center) -- (little1.center) -- (d1R1.center) -- (R1.center);
  \draw [thick, dashed] (L1.center) to[bend left=10pt] node[pos=.7, above] {$P$} (R1.center);
  \node at ($(d1)+(0,-1.5ex)$) {$d$};
  \node at ($(u1)+(0,-1.5ex)$) {$u$};
  \node [right=\dist of rend1] (lend2) {};
  \node [right=\margin of lend2] (L2) {|};
  \node [right=\outer of L2] (d2) {|};
  \node [right=\inner of d2] (u2) {|};
  \node [right=\outer of u2] (R2) {|};
  \node [right=\margin of R2] (rend2) {};
  \node at ($(L2)!.5!(u2)+(0,\height)$) (L2u2) {};
  \node at ($(d2)!.5!(R2)+(0,\height)$) (d2R2) {};
  \node at ($(d2)!.5!(u2)+(0,\smheight)$) (little2) {};
  \draw [thick, <->] (lend2) -- (rend2);
  \draw [thick, dotted] (L2.center) -- (L2u2.center) -- (little2.center) -- (d2R2.center) -- (R2.center);
  \draw [thick, dashed] (L2.center) to[bend left=30pt] node[pos=.7, below] {$Q$} (R2.center);
  \node at ($(d2)+(0,-1.5ex)$) {$d$};
  \node at ($(u2)+(0,-1.5ex)$) {$u$};
\end{tikzpicture}
\]
Next we draw the propositions $\See{[d,u]}P$ and $\See{[d,u]}Q$.
\[
\begin{tikzpicture}[font=\tiny, text height=1.5ex, text depth=.25ex]
  \tikzmath{
    real \margin, \outer, \inner, \height, \maxheight, \smheight, \dist;
    let \margin = .5;
    let \outer = 1.5;
    let \inner =.7;
    \height=(\inner+\outer)/2;
    \maxheight=(2*\outer+\inner)/2;
    \smheight=\inner/2;
    let \dist = 1;
  };
  \node (lend1) {};
  \node [right=\margin of lend1] (L1) {|};
  \node [right=\outer of L1] (d1) {|};
  \node [right=\inner of d1] (u1) {|};
  \node [right=\outer of u1] (R1) {|};
  \node [right=\margin of R1] (rend1) {};
  \node at ($(L1)!.5!(u1)+(0,\height)$) (L1u1) {};
  \node at ($(d1)!.5!(R1)+(0,\height)$) (d1R1) {};
  \node at ($(d1)!.5!(u1)+(0,\smheight)$) (little1) {};
  \draw [thick, <->] (lend1) -- (rend1);
  \draw [thick, dashed] (L1.center) -- (L1u1.center) -- (little1.center) -- (d1R1.center) -- node[pos=.3, above right] {$\See{[d,u]}P$} (R1.center);
  \node at ($(d1)+(0,-1.5ex)$) {$d$};
  \node at ($(u1)+(0,-1.5ex)$) {$u$};
  \node [right=\dist of rend1] (lend2) {};
  \node [right=\margin of lend2] (L2) {|};
  \node [right=\outer of L2] (d2) {|};
  \node [right=\inner of d2] (u2) {|};
  \node [right=\outer of u2] (R2) {|};
  \node [right=\margin of R2] (rend2) {};
  \node at ($(L2)!.5!(u2)+(0,\height)$) (L2u2) {};
  \node at ($(d2)!.5!(R2)+(0,\height)$) (d2R2) {};
  \draw [thick, <->] (lend2) -- (rend2);
  \path [name path = Lu] (L2.center) -- (L2u2.center) -- (u2.center);
  \path [name path = dR] (d2.center) -- (d2R2.center) -- (R2.center);
  \path [name path = Q] (L2.center) to[bend left=30pt] (R2.center);
  \draw [name intersections={of=Lu and Q, by={trash, x}}, name intersections={of=Q and dR, by=y}] [thick, dashed]
    (L2.center) -- (L2u2.center) -- (x) to[bend left=30pt] (y) -- (d2R2.center) -- node[pos=.3, above right] {$\See{[d,u]}Q$} (R2.center);
  \node at ($(d2)+(0,-1.5ex)$) {$d$};
  \node at ($(u2)+(0,-1.5ex)$) {$u$};
\end{tikzpicture}
\]
Next we draw the propositions $P\imp t\apart[u,d]$ and $Q\imp t\apart[u,d]$, on our way to the
$\At{}$-modality:
\[
\begin{tikzpicture}[font=\tiny, text height=1.5ex, text depth=.25ex]
  \tikzmath{
    real \margin, \outer, \inner, \height, \maxheight, \smheight, \dist;
    let \margin = .5;
    let \outer = 1.5;
    let \inner =.7;
    \height=(\inner+\outer)/2;
    \maxheight=(2*\outer+\inner)/2;
    \smheight=\inner/2;
    let \dist = 1;
  };
  \node (lend1) {};
  \node [right=\margin of lend1] (L1) {|};
  \node [right=\outer of L1] (d1) {|};
  \node [right=\inner of d1] (u1) {|};
  \node [right=\outer of u1] (R1) {|};
  \node [right=\margin of R1] (rend1) {};
  \node at ($(L1)!.5!(R1)+(0,\maxheight)$) (apex) {};
  \draw [thick, <->] (lend1) -- (rend1);
  \draw [thick, dotted] (L1.center) -- (apex.center) -- node[pos=.5, above right] {$P\imp t\apart[u,d]$} (R1.center);
  \node at ($(d1)+(0,-1.5ex)$) {$d$};
  \node at ($(u1)+(0,-1.5ex)$) {$u$};
  \node [right=\dist of rend1] (lend2) {};
  \node [right=\margin of lend2] (L2) {|};
  \node [right=\outer of L2] (d2) {|};
  \node [right=\inner of d2] (u2) {|};
  \node [right=\outer of u2] (R2) {|};
  \node [right=\margin of R2] (rend2) {};
  \node at ($(L2)!.5!(u2)+(0,\height)$) (L2u2) {};
  \node at ($(d2)!.5!(R2)+(0,\height)$) (d2R2) {};
  \node at ($(d2)!.5!(u2)+(0,\smheight)$) (little2) {};
  \draw [thick, <->] (lend2) -- (rend2);
  \draw [thick, dotted] (L2.center) -- (L2u2.center) -- (little2.center) -- (d2R2.center) -- node[pos=.2, above right] {$Q\imp t\apart[u,d]$}(R2.center);
  \node at ($(d2)+(0,-1.5ex)$) {$d$};
  \node at ($(u2)+(0,-1.5ex)$) {$u$};
\end{tikzpicture}
\]
We are finally ready to draw the propositions $\At{[d,u]}P$ and $\At{[d,u]}Q$:
\[
\begin{tikzpicture}[font=\tiny, text height=1.5ex, text depth=.25ex]
  \tikzmath{
    real \margin, \outer, \inner, \height, \maxheight, \smheight, \dist;
    let \margin = .5;
    let \outer = 1.5;
    let \inner =.7;
    \height=(\inner+\outer)/2;
    \maxheight=(2*\outer+\inner)/2;
    \smheight=\inner/2;
    let \dist = 1;
  };
  \node (lend2) {};
  \node [right=\margin of lend2] (L2) {|};
  \node [right=\outer of L2] (d2) {|};
  \node [right=\inner of d2] (u2) {|};
  \node [right=\outer of u2] (R2) {|};
  \node [right=\margin of R2] (rend2) {};
  \node at ($(L2)!.5!(u2)+(0,\height)$) (L2u2) {};
  \node at ($(d2)!.5!(R2)+(0,\height)$) (d2R2) {};
  \node at ($(d2)!.5!(u2)+(0,\smheight)$) (little2) {};
  \draw [thick, <->] (lend2) -- (rend2);
  \draw [thick, dotted] (L2.center) -- (L2u2.center) -- (little2.center) -- (d2R2.center) -- node[pos=.2, above right] {$\At{[d,u]}P$} (R2.center);
  \node at ($(d2)+(0,-1.5ex)$) {$d$};
  \node at ($(u2)+(0,-1.5ex)$) {$u$};
  \node [right=\dist of rend2] (lend1) {};
  \node [right=\margin of lend1] (L1) {|};
  \node [right=\outer of L1] (d1) {|};
  \node [right=\inner of d1] (u1) {|};
  \node [right=\outer of u1] (R1) {|};
  \node [right=\margin of R1] (rend1) {};
  \node at ($(L1)!.5!(R1)+(0,\maxheight)$) (apex) {};
  \draw [thick, <->] (lend1) -- (rend1);
  \draw [thick, dotted] (L1.center) -- (apex.center) -- node[pos=.5, above right] {$\At{[d,u]}Q$} (R1.center);
  \node at ($(d1)+(0,-1.5ex)$) {$d$};
  \node at ($(u1)+(0,-1.5ex)$) {$u$};
\end{tikzpicture}
\]
The picture on the left is also that of $\At{[d,u]}\bot=t\apart[u,d]=\See{[d,u]}\bot$. That on the
right is $\top$. We can see that applying the $\At{[d,u]}$ to any $P:\Prop$ returns one or the other
of these, based on whether the point $[d,u]$ is strictly below the Dyck path $P$ or not.

Next, on the left we draw $d<t<u$ dotted and $P$ dashed. On the right we draw $\In{[d,u]}P$.
\[
\begin{tikzpicture}[font=\tiny, text height=1.5ex, text depth=.25ex]
  \tikzmath{
    real \margin, \outer, \inner, \height, \maxheight, \smheight, \dist;
    let \margin = .5;
    let \outer = 1;
    let \inner = 1.7;
    \height=(\inner+\outer)/2;
    \maxheight=(2*\outer+\inner)/2;
    \smheight=\inner/2;
    let \dist = 1;
  };
  \node (lend1) {};
  \node [right=\margin of lend1] (L1) {|};
  \node [right=\outer of L1] (d1) {|};
  \node [right=\inner of d1] (u1) {|};
  \node [right=\outer of u1] (R1) {|};
  \node [right=\margin of R1] (rend1) {};
  \node at ($(d1)!.5!(u1)+(0,\smheight)$) (middle1) {};
  \draw [thick, dotted] (d1.center) -- (middle1.center) -- (u1.center);
  \draw [thick, <->] (lend1) -- (rend1);
  \draw [thick, dashed] (L1.center) to[bend left=15pt] node [pos=.7, above right] {$P$} (R1.center);
  \node at ($(d1)+(0,-1.5ex)$) {$d$};
  \node at ($(u1)+(0,-1.5ex)$) {$u$};
  \node [right=\dist of rend1] (lend2) {};
  \node [right=\margin of lend2] (L2) {|};
  \node [right=\outer of L2] (d2) {|};
  \node [right=\inner of d2] (u2) {|};
  \node [right=\outer of u2] (R2) {|};
  \node [right=\margin of R2] (rend2) {};
  \node at ($(d2)!.5!(u2)+(0,\smheight)$) (middle2) {};
  \draw [thick, <->] (lend2) -- (rend2);
  \path [name path = In] (d2.center) -- (middle2.center) -- (u2.center);
  \path [name path = P] (L2.center) to[bend left=15pt] (R2.center);
  \path [name path = LL] (L2) -- +(\height,\height);
  \path [name path = RR] (R2) -- +(-\height,\height);
  \path [name path = Lmid, name intersections={of=In and P, by={InP1, InP2}}] (InP1) -- +(-\height,\height);
  \path [name path = Rmid, name intersections={of=In and P, by={InP1, InP2}}] (InP2) -- +(\height,\height);
  \draw [name intersections={of=LL and Lmid, by={LLmid}}, name intersections={of=RR and Rmid, by={RRmid}}, name intersections={of=In and P, by={InP1, InP2}}, thick, dashed]
  (L2.center) -- (LLmid) -- (InP1) to[bend left=15pt] (InP2) -- (RRmid) --  node [above right] {$\In{[d,u]}P$} (R2.center);
  \node at ($(d2)+(0,-1.5ex)$) {$d$};
  \node at ($(u2)+(0,-1.5ex)$) {$u$};
\end{tikzpicture}
\]
Finally, we draw a proposition $P$ on the left and $\pi P$ on the right. Thinking in terms of
curves, one can see that $\pi$ removes the subtlety of ``height'', replacing a Lipschitz curve $P$
with the maximal one that is zero at the same places as $P$:
\[
\begin{tikzpicture}[font=\tiny, text height=1.5ex, text depth=.25ex]
  \tikzmath{
    real \margin, \outer, \inner, \height, \dist;
    let \margin = .5;
    let \outer = 1.1;
    let \inner = 1.5;
    \height=\inner/2;
    let \dist = 1;
  };
  \node (lend1) {};
  \node [right=\margin of lend1] (L1) {|};
  \node [right=\outer of L1] (d1) {};
  \node [right=\inner of d1] (u1) {};
  \node [right=\outer of u1] (R1) {|};
  \node [right=\margin of R1] (rend1) {};
  \draw [thick, <->] (lend1) -- (rend1);
  \draw [thick, dashed] (L1.center) to[bend left=50pt] (d1.center) -- (u1.center) to[bend left=20pt] node [above] {$P$} (R1.center);
  \node [right=\dist of rend1] (lend2) {};
  \node [right=\margin of lend2] (L2) {|};
  \node [right=\outer of L2] (d2) {};
  \node [right=\inner of d2] (u2) {};
  \node [right=\outer of u2] (R2) {|};
  \node [right=\margin of R2] (rend2) {};
  \node at ($(L2)!.5!(d2)+(0,\height)$) (midL2) {};
  \node at ($(u2)!.5!(R2)+(0,\height)$) (midR2) {};
  \draw [thick, <->] (lend2) -- (rend2);
  \draw [thick, dashed] (L2.center) -- (midL2.center) -- (d2.center) -- (u2.center) -- (midR2.center) --  node [above right] {$\pi P$} (R2.center);
\end{tikzpicture}
\]
\index{Lipschitz|)}\index{Dyck path|)}

\subsection{First properties of the modalities $\See{}$, $\At{}$, $\In{}$, and $\pi$}\label{sec:first_properties_of_our_modalities}

Before introducing any new axioms in terms of our modalities, we explore some basic consequences of their definitions. We break \cref{sec:first_properties_of_our_modalities} into subsections; the first subsection is about $\pi$. The rest are about $\See{}$, $\At{}$, and $\In{}$ in various capacities, in particular regarding:
\begin{itemize}
	\item disjunction,
	\item implication,
	\item existential quantification,
	\item a De Morgan property,
	\item decidability, and
	\item containment and disjointness.
\end{itemize}

\subsubsection{First facts about $\pi$}

\begin{proposition}\label{prop:pi_R_version}
  For any $t:\Time$ and $P:\Prop$, there is an equivalence $\pi
  P\iff\forall(r:\tRR)\ldotp\At[t]{r}P$.
\end{proposition}
\begin{proof}
  By definition $\pi P\iff\forall(t':\Time)\ldotp\At[t']{0}P$. By the torsor axiom,
  \cref{ax:torsor_diff}, for any $t':\Time$ there exists a unique $r:\tRR$ such that $t=t'+r$. Thus
  $t'\apart 0$ is equivalent to $t\apart r$ by \cref{prop:torsor_for_comparison}, and hence
  $\forall(t':\Time)\ldotp\At[t']{0}P$ is equivalent to $\forall(r:\tRR)\ldotp\At[t]{r}P$.
\end{proof}

\begin{proposition}\label{prop:bounded_pi}
  Suppose given $P:\Prop$ and $t:\Time$. Then for any $a<b:\tRRub$
  we have
  \[
    \big(\forall(r:\tRR)\ldotp a<r<b\imp \At[t]{r}P\big)\imp a<t<b\imp \pi P
  \]
\end{proposition}
\begin{proof}
  Assume the hypotheses. By definition of $a<t<b$ and by roundedness of $t$, there exists
  $a_1,a_2,b_1,b_2:\tQQ$ such that $a<a_1<a_2<t$ and $t<b_2<b_1<b$. By \cref{prop:pi_R_version}, to
  prove $\pi P$ it suffices to show $\big(\forall(r:\RR)\ldotp \At[t]{r}P\big)\imp P$.

  Choose $r:\tRR$; we want to prove $\At[t]{r}P$. Because $r$ is located by \cref{def:local_reals} (5),
  we have either $a_1<r$ or $r<a_2$ and either $b_2<r$ or $r<b_1$. Thus we have four cases, one of
  which is $a<r<b$ and the other three of which imply $t\apart r$. In all cases we obtain
  $\At[t]{r}P$ as desired.
\end{proof}

\subsubsection{Disjunction}

For any modality $j$ it is easy to check that $(jP\vee jQ)\imp j(P\vee Q)$; the converse holds for $\See{}$ and $\In{}$, and most of the time for $\At{}$.

\begin{lemma}\label{lem:At_preserves_or}
  For any $d, u:\tRRub$ the modalities $\See{[d,u]}$ and $\In{[d,u]}$ commute with disjunction:
  \begin{align*}
    \See{[d,u]}(P\vee Q)&\iff\See{[d,u]}(P)\vee\See{[d,u]}(Q)\\
    \In{[d,u]}(P\vee Q)&\iff\In{[d,u]}(P)\vee\In{[d,u]}(Q).
  \intertext{When $d\leq u$, the modality $\At{[d,u]}$ also commutes with disjunction:}
    \At{[d,u]}(P\vee Q)&\iff\At{[d,u]}(P)\vee\At{[d,u]}(Q).
  \end{align*}
\end{lemma}
\begin{proof}
  The first statement is obvious by \cref{not:our_modalities}, and the second statement is just a reformulation of \cref{lem:interval_coprime}. The third statement follows from
  \cref{ax:prime}, because $(P\vee
  Q)\imp(t\apart[u,d])\iff[(P\imp(t\apart[u,d]))\wedge(Q\imp(t\apart[u,d]))]$.
\end{proof}

One may consider $\bot$ as nullary disjunciton. We will see in \cref{cor:pi_negneg}, after adding some axioms, that $\pi\bot=\bot$. For the other modalities, we have the following lemma.

\begin{lemma}\label{lemma:See_At_In_False}
  For all $t:\Time$ and $r_1,r_2:\tRRub$ we have $\See{[r_2,r_1]}\bot\iff\At{[r_2,r_1]}\bot\iff
  t\apart [r_1,r_2]$. Assuming $r_1<r_2$, we have
  \[
    \See{[r_2,r_1]}\bot\iff\At{[r_2,r_1]}\bot\iff\In{[r_1,r_2]}\bot.
  \]
\end{lemma}
\begin{proof}
  The first statement follows directly from the definitions, \cref{not:our_modalities}. The second,
  that $t\apart[r_1,r_2]\iff\neg([r_1,r_2]\ll t)$ is \cref{prop:time_apartness_complement}.
\end{proof}

\subsubsection{Implication}

For any modality $j$ it is easy to check that $j(P\imp Q)\imp (P\imp jQ)$ holds. The reader can verify immediately from \cref{not:our_modalities} that the converse holds for $\In{[d,u]}$, for any $d,u:\tRRub$,
\begin{equation}\label{eqn:In_out_of_imp}
  \In{[d,u]}(P\imp Q)\iff (P\imp\In{[d,u]}Q).
\end{equation}

\subsubsection{Existential quantification}\label{page:existential}

For each of the modalities, $\In{}$, $\See{}$, $\At{}$, and $\pi$, we will prove some form of commutation with existential quantifiers. We will say something about $\See{}$ and $\In{}$ now, and give the statements for $\At{}$ and $\pi$ in \cref{prop:At_preserves_exists} and \cref{lem:exists_Pi_closed}.

\begin{lemma}\label{eqn:exists_see_closed}\index{existential quantifier, commuting with}
Let $X$ be an inhabited type, and suppose given $t:\Time$ and $P:X\to\Prop$.
\[
  \See[t]{[d,u]}(\exists(x:X)\ldotp P(x)) \iff \exists(x:X)\ldotp\See[t]{[d,u]}P(x)
\]
\end{lemma}
\begin{proof}
$\See[t]{[d,u]}P(x)$ is shorthand for a disjunction, $t\apart[u,d]\vee P(x)$. For any type $X$, predicate $P:X\to\Prop$, and proposition $Q:\Prop$, one shows directly that $\exists(x:X)\ldotp Q\vee P(x)$ implies $Q\vee\exists(x:X)\ldotp P(x)$. The converse is easy too but requires $X$ to be inhabited.
\end{proof}

The following is a direct consequence of \cref{prop:comm_existential_unit_surj} and \cref{eqn:exists_see_closed}.

\begin{corollary}
For any inhabited type $X$, the inclusion $\eta_{\SeeInline{[d,u]}}:X\to\asSh_{\SeeInline{[d,u]}}(X)$, of $X$ into its $\See{[d,u]}$-sheafification, is surjective.
\end{corollary}

The best we can do for $\In{}$ was already given in \cref{lem:inf_interval_coprime}. It says that for an inhabited constant type $\tConst$,
  \begin{equation}\label{eqn:In_exists_commute_ish}
    \In{[d,u]}(\exists(c:\tConst)\ldotp P(c))\iff[\forall(d',u':\tRR)\ldotp (d<d'\wedge
    u'<u)\imp\exists(c:\tConst)\ldotp\In{[d',u']}P(c)].
  \end{equation}

\begin{remark}
  \Cref{eqn:In_exists_commute_ish} cannot be strengthened to avoid the quantification over $d',u'$. 
  To see this, take as a context $t:\Time$ and recall that $t$ is rounded. Then the basic idea is that $t<0\imp\exists(q:\tQQ)\ldotp t<q<0$ holds
  but $\exists(q:\tQQ)\ldotp t<0\imp t<q<0$ does not.
  
  More precisely consider the predicate $P:\tQQ\to\Prop$ given by
  \[
    P(q)\coloneqq (t<q)\wedge(q<0).
  \]
  It is easy to prove that $\In[t]{[-\infty,0]}\exists(q:\tQQ)\ldotp P(q)$, whereas one can show that
  $\exists(q:\tQQ)\ldotp\In[t]{[-\infty,0]} P(q)$ is not sound in our semantics.
\end{remark}

\subsubsection{De Morgan}
Semantically speaking, \cref{lem:DeMorgan} says the De Morgan laws hold in the closed subtopos $\down[d,u]$; see
\cite[Section D4.6]{Johnstone:2002a}.\index{De Morgan}

\begin{lemma}\label{lem:DeMorgan}
  Choose $d,u:\tRRub$, $t:\Time$, and $P:\Prop$. If $d\leq u$ then
  \[
    \At{[d,u]}P\vee(P\imp\At{[d,u]}\bot).
  \]
\end{lemma}
\begin{proof}
  Apply \cref{prop:prime_reals} to the implication $(P\wedge(P\imp t\apart[u,d]))\imp t\apart[u,d]$.
\end{proof}

Perhaps more memorably, the proposition $\At{}P$ is either equal to $\At{}\top$ or to $\At{}\bot$:
\[(\At{[d,u]}P=\At{[d,u]}\top)\vee(\At{[d,u]}P=\At{[d,u]}\bot).\]

\subsubsection{Decidability}

Recall the definition (\ref{def:j_closed}) of a proposition $P$ being $j$-closed for a modality
$j$.\index{proposition!$j$-closed} For any modality $j$, say that a $j$-closed proposition $P$ is \emph{$j$-decidable} if $P\vee(P\imp
j\bot)$ holds.\index{decidable!$j$-}

\begin{corollary}\label{cor:AtClosed_SeeComplemented}
  Choose $d,u:\tRRub$, $t:\Time$, and $P:\Prop$. Then
  \begin{enumerate}
    \item $P$ is $\At{[d,u]}$-closed iff it is $\See{[d,u]}$-closed and $\See{[d,u]}$-decidable.
    \item If $P$ is $\At{[d,u]}$-closed then it is $\At{[d,u]}$-decidable.
  \end{enumerate}
\end{corollary}
\begin{proof}
  The second statement follows from the first because $\See{[d,u]}\bot\iff\At{[d,u]}\bot$
  (\cref{lemma:See_At_In_False}).

  Clearly, if $\At{[d,u]}P\imp P$ then $P$ is $\See{[d,u]}$-closed, and it is
  $\See{[d,u]}$-decidable by \cref{lem:DeMorgan}. Conversely, suppose $\See{[d,u]}P\imp P$ and
  $P\vee(P\imp\See{[d,u]}\bot)$ and $\At{[d,u]}P$; we want to show $P$. We may assume
  $P\imp\See{[d,u]}\bot$, so applying $\At{[d,u]}$ we get $\At{[d,u]}\See{[d,u]}\bot$, which implies
  $\See{[d,u]}\bot$ by \cref{lemma:See_At_In_False}, and $\See{[d,u]}\bot$ implies $\See{[d,u]}P$
  and hence $P$.
\end{proof}

\subsubsection{Containment and disjointness}

There are a number of containment and disjointness relations among our modalities. We begin with
\cref{prop:simple_modalities}, which is just a matter of unwinding notation
(\ref{not:our_modalities}).\index{modality!disjoint}

\begin{proposition}\label{prop:simple_modalities}
  The following hold for any $P:\Prop$, $t:\Time$, and $d_1,d_2,u_1,u_2:\tRRub$,
  \begin{enumerate}
    \item If $d_1\leq d_2$ and $u_2\leq u_1$, then $\In{[d_1,u_1]}P\imp\In{[d_2,u_2]}P$.
    \item If $d_1\leq d_2$ and $u_2\leq u_1$, then $\See{[d_2,u_2]}P\imp\See{[d_1,u_1]}P$.
    \item $\See{[d_1,u_1]}\See{[d_2,u_2]}P\iff\See{[\min(d_1,d_2),\max(u_1,u_2)]}P$.
    \item $\In{[d_1,u_1]}\In{[d_2,u_2]}P\iff\In{[\max(d_1,d_2),\min(u_1,u_2)]}P$.
    \item $\See{[d_1,u_1]}P\imp\At{[d_1,u_1]}P$.
  \end{enumerate}
\end{proposition}

\begin{proposition}\label{prop:Atud}
  Suppose $d<u:\tRRub$. Then $\At{[u,d]}P\iff\In{[d,u]}\neg\neg P$.
\end{proposition}
\begin{proof}
  Suppose $\At{[u,d]}P$, or equivalently $(P\imp\At{[u,d]}\bot)\imp\At{[u,d]}\bot$. Then $\neg
  P\imp\In{[d,u]}\bot$ by \cref{lemma:See_At_In_False}, giving $\In{[d,u]}\neg\neg P$ by
  \cref{eqn:In_out_of_imp}. Conversely, suppose $\In{[d,u]}\neg\neg P$ and $(P\imp\At{[u,d]}\bot)$.
  Again by \cref{lemma:See_At_In_False,eqn:In_out_of_imp}, $P\imp\In{[d,u]}\bot$, so $\In{[d,u]}\neg
  P$, giving $\In{[d,u]}\bot$ and hence $\At{[u,d]}\bot$ as desired.
\end{proof}

\begin{lemma}\label{lem:nested_points}
  $\At{[d_1,u_1]}P \imp \See{[d_1,u_1]}\At{[d_2,u_2]}P$ holds for any $d_1\leq d_2\leq u_2\leq u_1$
  and $P:\Prop$.
\end{lemma}
\begin{proof}
  \Cref{lem:DeMorgan} gives $(P\imp t\apart{[u_2,d_2]})\vee\At{[d_2,u_2]}P$. The result follows
  since $P\imp t\apart{[u_2,d_2]}$ implies $P\imp t\apart{[u_1,d_1]}$, which, assuming
  $\At{[d_1,u_1]}P$, implies $t\apart{[u_1,d_1]}$.
\end{proof}

\Cref{lemma:points_in_subtoposes,lemma:points_not_in_subtoposes} semantically tell us when a point
is in, or is disjoint from, various open and closed subtoposes; see
\cref{sec:modalitites_subtoposes}.\index{modality!open}\index{modality!closed}\index{modality!quasi-closed}

\begin{lemma}\label{lemma:points_in_subtoposes}
  For any $P:\Prop$ and $d_1,d_2,u_1,u_2:\tRRub$,
  \begin{enumerate}
    \item if $d_1\leq d_2\wedge u_2\leq u_1$ then $\See{[d_2,u_2]}P\imp\At{[d_1,u_1]}P$
    \item if $d_1<d_2\leq u_2<u_1$ then $\In{[d_1,u_1]}P\imp\At{[d_2,u_2]}P$.
  \end{enumerate}
\end{lemma}
\begin{proof}
  The first is two applications of \cref{prop:simple_modalities}, and the second follows easily from
  \cref{lem:time_lems}(g).
\end{proof}

\begin{lemma}\label{lemma:points_not_in_subtoposes}
  For any $d_1,d_2,u_1,u_2:\tRRub$ with $d_2\leq u_2$,
  \begin{enumerate}
    \item if $d_1<d_2$ or $u_2<u_1$ then $\At{[d_2,u_2]}\See{[d_1,u_1]}\bot$
    \item if $d_1\leq d_2$ or $u_2\leq u_1$ then $\In{[d_2,u_2]}\At{[d_1,u_1]}\bot$.
  \end{enumerate}
\end{lemma}
\begin{proof}
  For the first, we assume $d_1<d_2$ or $u_2<u_1$ and need to show $(t\apart[u_1,d_1]\imp
  t\apart[u_2,d_2])\imp t\apart[u_2,d_2]$, but this is just \cref{lem:time_lems}(l). For the second,
  assume $(d_1\leq d_2)\vee(u_1\leq u_2)$. We need to show $d_2<t<u_2\imp t\apart[u_1,d_1]$, but
  this is direct.
\end{proof}

\begin{proposition}\label{prop:points_in_subtoposes}
  Let $d_1,u_1,d_2,u_2:\tRRub$. Then for any $P:\Prop$,
  \begin{align*}
    \At{[d_1,u_1]}\See{[d_2,u_2]}P
    &\iff [((d_1\leq d_2) \wedge (u_2\leq u_1)) \imp \At{[d_1,u_1]}P]. \\
    &\iff [(d_2<d_1) \vee (u_1<u_2) \vee \At{[d_1,u_1]}P] \\                                  
  \intertext{If $d_2\leq u_2$, then}
    \In{[d_1,u_1]}\At{[d_2,u_2]}P 
    &\iff [((d_1<d_2) \wedge (u_2<u_1)) \imp \At{[d_2,u_2]}P]\\
    &\iff [(d_2\leq d_1) \vee (u_1\leq u_2) \vee \At{[d_2,u_2]}P].
  \end{align*}
\end{proposition}
\begin{proof}
  $\At{[d_1,u_1]}\See{[d_2,u_2]}P$ implies $((d_1\leq d_2) \wedge (u_2\leq u_1)) \imp
  \At{[d_1,u_1]}P$ by \cref{lemma:points_in_subtoposes}. This implies $(d_2<d_1) \vee (u_1<u_2) \vee
  \At{[d_1,u_1]}P$ by trichotomy (\ref{prop:trichotomy}). This implies
  $\At{[d_1,u_1]}\See{[d_2,u_2]}P$ by \cref{lemma:points_not_in_subtoposes}.

  For the second statement, suppose $d_2\leq u_2$. Then $\In{[d_1,u_1]}\At{[d_2,u_2]}P$ implies
  $((d_1<d_2) \wedge (u_2<u_1)) \imp \At{[d_2,u_2]}P$ by \cref{lemma:points_in_subtoposes}. This
  implies $(d_2\leq d_1) \vee (u_1\leq u_2) \vee \At{[d_2,u_2]}P$ by trichotomy. This implies
  $\In{[d_1,u_1]}\At{[d_2,u_2]}P$ by \cref{lemma:points_not_in_subtoposes}.
\end{proof}

\begin{proposition}\label{prop:At_At}
  If $d_1\leq d_2\leq u_2\leq u_1$, then for any $P:\Prop$
  \[
    \At{[d_1,u_1]}\At{[d_2,u_2]}P \iff \See{[d_1,u_1]}\At{[d_2,u_2]}P.
  \]
  If $d_1<d_2\leq u_2$ or $d_2\leq u_2<u_1$, then
  \[
    \At{[d_2,u_2]}\At{[d_1,u_1]}P \iff \top.
  \]
\end{proposition}
\begin{proof}
  The first statement follows directly from \cref{lem:nested_points}. For the second it suffices to
  show $\At{[d_2,u_2]}\At{[d_1,u_1]}\bot$, which reduces to $\At{[d_2,u_2]}t\apart[u_1,d_1]$. This
  follows directly from \cref{lem:time_lems}(l).
\end{proof}

The next proposition looks at the intersection of $\upclose[d,u]$ and $\down[r_1,r_2]$.

\begin{proposition}\label{prop:open_closed_intersection}
  Choose $d,u,r,r':\tRRub$. If $r\leq d$ or $u\leq r'$, then
  \[
    \In{[d,u]}\See{[r,r']}P \iff \top.
  \]
  If $d<r$ and $r'<u$, then
  \begin{align*}
    \In{[d,u]}\See{[r,r']}\bot
    &\iff(t<d) \vee (t<r') \vee (u<t)\vee (r<t)
    \\
    &\iff t\apart[\mathrm{max}(d,r'),\mathrm{min}(u,r)]
  \end{align*}
  In particular, if $d<r<u$ and $d<r'<u$, then
  \[
    \In{[d,u]}\See{[r,r']} P \iff \See{[r,r']}\In{[d,u]}P.
  \]
\end{proposition}
\begin{proof}
  The first claim is easy: $\In{[d,u]}\See{[r,r']}\bot$ is $d<t<u\imp (t<r'\vee r<t)$, which is
  obvious. For the second claim, \cref{lem:interval_coprime} says that $\In{[d,u]}\See{[r,r']}\bot$
  is equivalent to $(d<t<u\imp t<r')\vee(d<t<u\imp r<t)$. Assume the first case; since either $d<r'$
  or $r'\leq d$, we may assume $d<r'$ in which case we apply \cref{lem:time_lems}(e). The second
  case is similar.

  For the third claim, the left-hand side is equivalent to $\In{[d,u]}((\See{[r,r']}\bot)\vee P)$
  and the right-hand side is equivalent to $(\See{[r,r']}\bot)\vee(\In{[d,u]}P)$. The result follows
  from the second claim and \cref{lem:interval_coprime}.
\end{proof}

\Cref{prop:In_union} says that the open set $\upclose[d,u]$ is covered by those way above it.
\begin{proposition}\label{prop:In_union}
  Choose $t:\Time$, $P:\Prop$, and $d,u:\tRRub$. If $u\leq d$ then $\In[t]{[d,u]}P=\top$. If $d<u$ then
  \[
    \In[t]{[d,u]}P\iff[\forall(d',u':\tQQ)\ldotp d<d'<u'<u\imp \In[t]{[d',u']}P].
  \]
\end{proposition}
\begin{proof}
  Since $\Time\ss\tII$ by \cref{eqn:define_time}, any $t:\Time$ is a pair of disjoint, rounded cuts.
  Disjointness gives the first statement and roundedness gives the second. See also
  \cref{eqn:comparing_time}.
\end{proof}

\section{Remaining axiomatics}\label{sec:remaining_axioms}

We have now laid out some of the basic consequences that follow from the definition of our modalities $\See{}$, $\At{}$, $\In{}$, and $\pi$. Because these correspond to semantically interesting subtoposes (see \cref{rem:modalities_geom_interesting}, we can begin to record facts about these subtoposes internally to the type theory.

For example, \cref{prop:points_in_subtoposes} internally characterizes exactly which points are in the subtoposes of the
form $\down[d,u]$ and $\upclose[d,u]$. We want to prove internally that those subtoposes
are the union of those points they contain. This will be done in \cref{prop:open_subtopos_union,prop:closed_subtopos_union}, which follows from our next axiom. \Cref{ax:enough_points}
can be read as saying that the entire topos is covered by the single point subtoposes $\{\,[d,u]\,\}$.

The soundness of \cref{ax:enough_points} is proven in \cref{prop:axiom8}.

\begin{axiom}\label{ax:enough_points}\index{axiom!of $\BaseTopos$!enough points}
  For all $P\colon\Prop$ and $t:\Time$,
  \[
    (\forall(d,u:\tQQ)\ldotp d<u \imp \At{[d,u]}P) \imp P.
  \]
\end{axiom}

Of course it follows directly that $(\forall(d,u:\tQQ)\ldotp d\leq u \imp \At{[d,u]}P) \imp P.$

\begin{corollary}\label{cor:times_not_located}
  For all $t:\Time$, we have
  \[
    \neg(\forall(d,u:\tQQ)\ldotp d<u \imp t\apart[u,d])
    \qquad\text{and}\qquad
    \neg(\forall(q:\tQQ)\ldotp t\apart q)
  \]
\end{corollary}
\begin{proof}
  For the first statement, simply take $P=\bot$ in \cref{ax:enough_points}. The second statement
  follows from the first, since $d<q<u$ and $t\apart q$ implies $t\apart[u,d]$.
\end{proof}

Recall from \cref{eqn:define_time} that $\Time$ is a subtype of the type of improper intervals $\tII$. Recall from \cref{def:local_reals} that an improper interval
$t=(\delta,\upsilon)$ is said to be located if $d<u$ implies $\delta d\vee\upsilon u$ for any
$d,u:\tQQ$. Since $t\apart [u,d]$ is defined as $(d<t)\vee (t<u)=\delta t\vee \upsilon u$, \cref{cor:times_not_located} reads directly as the statement ``a time $t$ (as a
pair of cuts) is not located.'' If times were located, they would be real numbers, which we've seen
are constant in the semantics. Hence the corollary can be thought of as saying that time is not
constant.

We have shown that $t:\Time$ is decidedly \emph{not} located; however, the next proposition shows that, relative to the
$\Pointwise$ modality, $t$ \emph{is} located, i.e.\ time is ``pointwise located.''\index{Time@$\Time$!as $\pi$-located}

\begin{proposition}\label{prop:Time_pi_located}
  \[
    \forall(t:\Time)(d,u:\tRRub)\ldotp d<u \imp \Pointwise(t\apart[u,d]).
  \]
\end{proposition}
\begin{proof}
  By \cref{prop:pi_R_version}, this is equivalent to
  \[
    \forall(t:\Time)(d,u:\tRRub)\ldotp d<u \imp \forall(r:\tRR)\ldotp \At{r}(t\apart[u,d]).
  \]
  Taking arbitrary $t$, $d$, $u$, and $r$, with $d<u$, we have $(r<u)\vee(d<r)$ by
  \cref{prop:stronger_located}. The result then follows from \cref{lem:time_lems}(l).
\end{proof}

We internally express the semantic fact that the topos $\Shv{\IR}$ is the union of the closed subtoposes
corresponding to $\down[q,q]$ for $q:\tQQ$ in \cref{prop:union_closed}. Note that this proposition
is a statement about subtoposes, or equivalently, about sub-locales. Indeed it \emph{does not} imply
the (decidedly false) external statement that the topological space $\IR$ is the union of its closed
subspaces of the form $\down[q,q]$ for $q:\tQQ$.

\begin{proposition}\label{prop:union_closed}
  The following holds for any proposition $P:\Prop$ and time $t:\Time$,
  \[
    [\forall(q:\tQQ)\ldotp\See{q}P]\imp P.
  \]
\end{proposition}
\begin{proof}
  Choose $t:\Time$ and assume $\forall(q:\tQQ)\ldotp\See{q}P$. This implies
  $P\vee\forall(q:\tQQ)\ldotp t\apart q$ by \cref{ax:distributivity}, and the conclusion follows
  from \cref{cor:times_not_located}.
\end{proof}

\begin{corollary}\label{cor:union_closed}
  The following holds for any proposition $P:\Prop$,
  \[
    [\forall(t:\Time)\ldotp\See[t]{0}P]\imp P.
  \]
\end{corollary}
\begin{proof}
  This follows from \cref{prop:union_closed,ax:torsor}, because given $t:\Time$ and
  $\forall(t':\Time)\ldotp\See[t']{0}P$, we obtain $\forall(r:\tRR)\ldotp\See[t]{r}P$, which implies
  $P$.
\end{proof}

\begin{lemma}\label{lem:At_decidable_prop}\index{decidable}
  Let $P$ be a decidable proposition, i.e.\ where $P\vee\neg P$ holds. Then
  \[
    \At{[d,u]}P \iff\See{[d,u]}P\qquad\text{and}\qquad\pi P\iff P.
  \]
\end{lemma}
\begin{proof}
  The converses to both statements are obvious (see \cref{prop:simple_modalities}), so we proceed to
  show $\At{[d,u]}P \imp\See{[d,u]}P$. Assume $P\vee\neg P$. Trivially, $P\imp\See{[d,u]}P$, so
  suppose $\neg P$. Then $\At{[d,u]}P$ implies $\At{[d,u]}\bot$, which is equivalent to
  $\See{[d,u]}\bot$, which implies $\See{[d,u]}P$.

  Since now $\At{0}P\imp\See{0}P$, we have $\pi P\imp P$ by \cref{prop:union_closed}.
\end{proof}

\begin{corollary}\label{cor:pi_negneg}
  $\bot$ is decidable, so $\pi\bot=\bot$ and we have $\pi P\imp \neg\neg P$ for all $P:\Prop$.
\end{corollary}

\begin{corollary}\label{cor:dec_eq_pi_sep}
If $X$ is a type with decidable equality, then it is $\pi$-separated.
\end{corollary}

\begin{proposition}\label{prop:Time_is_pi_closed}\index{Time@$\Time$!as $\pi$-closed}
  The cuts defining any $t:\Time$ are $\pi$-closed. That is, for all $q:\tQQ$,  we have
  $\pi(t<q)\imp t<q$ and $\pi(q<t)\imp q<t$.
\end{proposition}
\begin{proof}
  Choose $t:\Time$ and $q:\tQQ$, and suppose $\pi(t<q)$. By \cref{ax:time_complementary_cuts} it
  suffices to show $\neg(q<t)$, so assume $q<t$. Then we have $\pi(\bot)$, which implies $\bot$ by
  \cref{lem:At_decidable_prop}.
\end{proof}

\begin{remark}\label{ex:Time_subtype_rvar}\index{Time@$\Time$!as variable real}
\index{Time@$\Time$!semantics of}
  In \cref{eqn:define_time}, $\Time$ was defined as a subtype of the extended interval type $\tII$,
  but in fact it can also be identified with a subtype of the variable reals $\Time\ss\tRR_\pi$.
  Indeed consider the cuts $\delta,\upsilon:\tQQ\to\Prop$ defining any $t=(\delta,\upsilon):\Time$.
  By definition, $\delta$ is down-closed and $\upsilon$ is up-closed, and both are rounded and
  bounded. It follows they are down/up-closed (1), $j$-rounded (2), and $j$-bounded (3) for any $j$.
  The fact that they are $\pi$-closed (0) was shown in \cref{prop:Time_is_pi_closed}. The fact that
  they are $\pi$-disjoint (4) was shown in \cref{ax:time_complementary_cuts}. Finally, the fact that
  they are $\pi$-located (5) was shown in \cref{prop:Time_pi_located}. See also
  \cref{rem:time_semantics}
\end{remark}

In \cref{prop:closed_subtopos_union,prop:open_subtopos_union} we prove statements that reflect facts
about the subspaces $\down[d,u]$ and $\upclose[d,u]$ as collections of points in $\IR$.

\begin{proposition}\label{prop:closed_subtopos_union}
  For any $d,u:\tRRub$ and $P:\Prop$ we have:
  \[
    [\forall(d',u':\tQQ)\ldotp (d'\leq d \wedge u\leq u') \imp \At{[d',u']}P] \iff
    \See{[d,u]}P.
  \]
\end{proposition}
\begin{proof}
  The backwards implication follows from \cref{lemma:points_in_subtoposes}. By
  \cref{prop:points_in_subtoposes}, the left-hand side is equivalent to $\forall(d',u':\tQQ)\ldotp
  \At{[d',u']}\See{[d,u]}P$, which by \cref{ax:enough_points} implies $\See{[d,u]}P$.
\end{proof}

For any reals $d<u$, we have $\upclose[d,u]=\bigcup_{d<d'\leq u'<u}[d',u']$ by
\cref{prop:way_below_IR}. This is expressed internally by \cref{prop:open_subtopos_union}.

\begin{proposition}\label{prop:open_subtopos_union}
  For any $d,u:\tRRub$ and $P:\Time\to\Prop$, we have:
  \[
    [\forall(d',u':\tQQ)\ldotp d<d'\leq u'<u \imp \At{[d',u']}P] \iff \In{[d,u]} P.
  \]
\end{proposition}
\begin{proof}
  By \cref{prop:points_in_subtoposes}, the left-hand side is equivalent to
  \[
    \forall(d',u':\tQQ)\ldotp d'\leq u' \imp \In{[d,u]}\At{[d',u']}P,
  \]
  which is equivalent to
  \[
    d<t<u \imp \forall(d',u':\tQQ)\ldotp d'\leq u' \imp \At{[d',u']}P,
  \]
  and finally, applying \cref{ax:enough_points} this is equivlant to $d<t<u\imp P$.
\end{proof}

\begin{corollary}\label{cor:expanding_ins}
  For all $t:\Time$ and $P:\Prop$ we have
  \[
    [\forall(q:\tQQ)\ldotp\In{[-q,q]} P]\imp P.
  \]
\end{corollary}
\begin{proof}
  Choose $t$ and $P$, and suppose $\forall(q:\tQQ)\ldotp\In{[-q,q]} P$. By \cref{prop:union_closed}
  it suffices to show $\forall(q':\tQQ)\ldotp\See{[q',q']}P$, so choose $q'$. By
  \cref{prop:closed_subtopos_union}, it is enough to show that for any rationals $d'\leq q'\leq u'$
  we have $\At{[d',u']}P$. Let $q=\max(-d'-1,u'+1)$, so $-q<d'$ and $u<q$. Then $\In{[-q,q]}P$
  implies $\At{[d',u']}P$ by \cref{prop:open_subtopos_union}, and we are done.
\end{proof}

\begin{corollary}\label{cor:bounded_expanding_ins}
  For all $t:\Time$, $P:\Prop$, and $r_1,r_2:\tRR$ we have
  \[
    [\forall(q_1,q_2:\tQQ)\ldotp (q_1<r_1)\imp(r_2<q_2)\imp\In{[q_1,q_2]} P]\imp P.
  \]
\end{corollary}
\begin{proof}
  Assume the hypothesis. If $q:\tQQ$ is a rational such that $\max(-r_1,r_2)<q$, then
  $\In{[-q,q]}P$. Let $q_0$ be one such rational. Then for any $q:\tQQ$ either $q\leq q_0$ or
  $q_0<q$. In either case we have $\In{[-q,q]}P$ by \cref{prop:simple_modalities}, so we are done by
  \cref{cor:expanding_ins}.
\end{proof}

\begin{proposition}\label{prop:exists_all_ins}
  Let $\tConst$ be an inhabited constant type. Then for any $P:\Prop$ and $t:\Time$,
  \[
    [\forall(q:\tQQ)\ldotp\exists(c:\tConst)\ldotp\In{[-q,q]}P]\imp\exists(c:\tConst)\ldotp Pc.
  \]
\end{proposition}
\begin{proof}
  Assume the hypothesis. By \cref{cor:expanding_ins}, it suffices to show that
  $\In{[-q,q]}\exists(c:\tConst)\ldotp P(c)$ holds for any $q:\tQQ$. By
  \cref{lem:inf_interval_coprime}, this is equivalent to $\forall(d,u:\tQQ)\ldotp(-q<d\wedge
  u<q)\imp\exists(c:\tConst)\ldotp\In{[d,u]}P(c)$. But by hypothesis we have some $c:\tConst$ with
  $\In{[-q,q]}P(c)$, and this implies $\In{[d,u]}P(c)$ by \cref{prop:simple_modalities}.
\end{proof}

\Cref{ax:point_to_open} says that if a proposition $P$ holds at a point, then it holds in some open
neighborhood of the point. The converse of \cref{ax:point_to_open} holds, as we will see in \cref{prop:point_to_open}. The soundness of \cref{ax:point_to_open} is proven in \cref{prop:axiom9}.

\begin{axiom}\label{ax:point_to_open}\index{axiom!of $\BaseTopos$!point to neighborhood}
  For all times $t:\Time$, propositions $P:\Prop$, and $d,u:\tRR$ with $d\leq u$,
  \[
    \At{[d,u]}P \imp \See{[d,u]}\exists(d',u':\tQQ)\ldotp d'<d\leq u<u' \wedge \In{[d',u']}P.
  \]
\end{axiom}

In \cref{sec:first_properties_of_our_modalities}, page \pageref{page:existential}, we said that all of our modalities commute with existential quantification in one way or another. In \cref{prop:exists_all_ins} we give the limited sense in which the $\At{}$-modality commutes with existential quantification.\index{existential quantifier, commuting with}

\begin{proposition}\label{prop:At_preserves_exists}
  For any constant type $\tConst$, any $t:\Time$ and $P:\tConst\to\Prop$, and any $d,u:\tRR$ with
  $d\leq u$, if $\exists(c:\tConst)\ldotp \top$ then
  \[
    \At[t]{[d,u]}(\exists(c:\tConst)\ldotp P(c)) \iff \exists(c:\tConst)\ldotp\At[t]{[d,u]}P(c).
  \]
\end{proposition}
\begin{proof}
  The converse is obvious (see \cref{eqn:j_logic}), so we prove the forward direction. By
  \cref{ax:point_to_open}, $\At{[d,u]}(\exists(c:\tConst)\ldotp P(c))$ is equivalent to
  \[
    \See{[d,u]}\exists(d',u':\tQQ)\ldotp (d'<d)\wedge(u<u')\wedge
    \In{[d',u']}\exists(c:\tConst)\ldotp P(c)
  \]
  which is equivalent to
  \[
    \See{[d,u]}\exists(d',u':\tQQ)\ldotp (d'<d)\wedge(u<u')\wedge
    \forall(d'',u'':\tQQ)\ldotp d'<d''<u''<u' \imp \exists(c:\tConst)\ldotp \In{[d'',u'']}P(c)
  \]
  by \cref{lem:inf_interval_coprime}. Then by choosing $d'<d''<d$ and $u<u''<u'$, and using the fact
  (see e.g.\ \cref{prop:open_subtopos_union}) that $\In{[d'',u'']}P(c)$ implies $\At{[d,u]}P(c)$,
  this implies $\See{[d,u]}\exists(c:\tConst)\ldotp\At{[d,u]}P(c)$. It is easy to see that this is
  equivalent to $\exists(c:\tConst)\ldotp\At{[d,u]}P(c)$ as desired.
\end{proof}

The following is a direct consequence of \cref{prop:comm_existential_unit_surj} and \cref{prop:At_preserves_exists}.

\begin{corollary}
For any inhabited type $X$, the inclusion $\eta_{\AtInline{[d,u]}}:X\to\asSh_{\AtInline{[d,u]}}(X)$, of $X$ into its $\At{[d,u]}$-sheafification, is surjective.
\end{corollary}

\begin{lemma}\label{lem:see_exists_open}
  For any reals $d\leq r_1\leq u$ and $d\leq r_2\leq u$, the following are equivalent:
  \begin{enumerate}
    \item $\See{[r_1,r_2]} \exists(d',u':\tQQ)\ldotp(d'<d\leq u<u')\wedge \In{[d',u']}P$
    \item $\exists(d',u':\tQQ)\ldotp(d'<d\leq u<u')\wedge \In{[d',u']}\See{[r_1,r_2]}P$
  \end{enumerate}
\end{lemma}
\begin{proof}
  It is easy to prove $\exists(d',u':\tQQ)\ldotp d'<d \leq u<u'$. Hence 1.\ is equivalent to
  \[
    \exists(d',u':\tQQ)\ldotp(d'<d \leq u<u')\wedge \See{[r_1,r_2]}\In{[d',u']}P.
  \]
  By \cref{prop:open_closed_intersection} this is equivalent to 2.
\end{proof}

\begin{proposition}\label{prop:point_to_open}
  The following are equivalent, for any $t:\Time$, $P:\Prop$, and $d,u:\tRR$ with $d\leq u$:
  \begin{enumerate}
    \item $\At{[d,u]} P$
    \item $\See{[d,u]} \exists(d',u':\tQQ)\ldotp(d'<d \leq u<u')\wedge \In{[d',u']}P$
    \item $\exists(d',u':\tQQ)\ldotp (d'<d\leq u<u') \wedge \forall(d'',u'':\tQQ)\ldotp
      ((d'<d''\leq d) \wedge (u\leq u''<u')) \imp \At{[d'',u'']}P$
  \end{enumerate}
\end{proposition}
\begin{proof}
  $1\imp 2$ is \cref{ax:point_to_open}, and the converse $2\imp 1$ follows from
  \cref{lemma:points_in_subtoposes}.

  We show $2\iff 3$ by the chain of equivalences
  \begin{align*}
    &\See{[d,u]} \exists(d',u':\tQQ)\ldotp(d'<d\leq u<u')\wedge \In{[d',u']}P \\
    &\iff\quad \exists(d',u':\tQQ)\ldotp(d'<d\leq u<u') \wedge \In{[d',u']}\See{[d,u]}P \\
    &\iff\quad \exists(d',u':\tQQ)\ldotp(d'<d\leq u<u') \\
    &\qquad\qquad \wedge \forall(d'',u'':\tQQ)\ldotp
      d'<d''\leq u''<u'\imp \At{[d'',u'']}\See{[d,u]}P \\
    &\iff\quad \exists(d',u':\tQQ)\ldotp(d'<d\leq u<u') \\
    &\qquad\qquad \wedge \forall(d'',u'':\tQQ)\ldotp d'<d''\leq u''<u' \imp d''\leq d
      \imp u\leq u'' \imp \At{[d'',u'']}P \\
    &\iff\quad \exists(d',u':\tQQ)\ldotp(d'<d\leq u<u') \\
    &\qquad\qquad \wedge \forall(d'',u'':\tQQ)\ldotp
      d'<d''\leq d\leq u\leq u''<u' \imp \At{[d'',u'']}P
  \end{align*}
  where the first three equivalences are by \cref{lem:see_exists_open},
  \cref{prop:open_subtopos_union}, and \cref{prop:points_in_subtoposes} respectively.
\end{proof}

\begin{proposition}\label{prop:clean_pi}
  For any $P:\Prop$, we have
  \[
    \pi P\iff\forall(t:\Time)\ldotp\exists(q:\tQQ)\ldotp(0<q)\wedge (-q<t<q\imp P).
  \]
\end{proposition}
\begin{proof}
  By definition, $\pi P\iff\forall(t:\Time)\ldotp\At{0}P$, which by
  \cref{ax:point_to_open,cor:union_closed} is equivalent to
  \[
    \forall(t:\Time)\ldotp\exists(d,u:\tQQ)\ldotp (d<0<u)\wedge(d<t<u\imp P)
  \]
  and the result follows by taking $q=\min(-d,u)$.
\end{proof}

\begin{proposition}\label{prop:converse_covering_pi}
  Suppose $P:\Prop$ is $\pi$-closed. Then we have
  \[
    \forall(t:\Time)(r_1,r_2:\tRR)\ldotp r_1<r_2 \imp (t\apart[r_2,r_1]\imp P)\imp P.
  \]
\end{proposition}
\begin{proof}
  Suppose $\pi P\imp P$, and choose $t$ and $r_1<r_2$ with $t\apart[r_2,r_1]\imp P$; then
  $(t<r_2)\imp P$ and $(r_1<t)\imp P$. It suffices by \cref{prop:bounded_pi} to show
  $\forall(r:\tRR)\ldotp \At[t]{r}P$. Choose $r$ and suppose $P\imp t\apart r$; we want to show
  $t\apart r$. Since $r$ is located, either $r_1<r$ or $r<r_2$, so we may apply
  \cref{lem:time_lems}(l).
\end{proof}

\begin{corollary}\label{cor:converse_covering_pi}
  Suppose $P:\Prop$ is $\pi$-closed. Then we have
  \[
    \forall(t:\Time)(r_1,r_2,r_3,r_4:\tRR)\ldotp r_1<r_2<r_3<r_4 \imp
    \In{[r_1,r_3]}P \imp \In{[r_2,r_4]}P \imp\In{[r_1,r_4]}P.
  \]
\end{corollary}
\begin{proof}
  Given $t$, reals $r_1<r_2<r_3<r_4$, and the assumptions $\In{[r_1,r_3]}P \imp \In{[r_2,r_4]}P$ and
  $r_1<t<r_4$, we directly obtain $t<r_3\imp P$ and $r_2<t\imp P$; in other words
  $t\apart[r_3,r_2]\imp P$. Thus we obtain $P$ as desired from \cref{prop:converse_covering_pi}.
\end{proof}

The following axiom immediately implies the converses to
\cref{prop:converse_covering_pi,cor:converse_covering_pi}. The two together roughly say that
$\pi$-closed sheaves are those for which behaviors compose.\index{behavior!composable} The soundness
of \cref{ax:covering_pi} is proven in \cref{prop:axiom10}.

\begin{axiom}\label{ax:covering_pi}\index{axiom!of $\BaseTopos$!covering for $\pi$ modality}
  For any proposition $P:\Prop$, if
  \[
    \forall(t:\Time)(q_1,q_2:\tQQ)\ldotp q_1<q_2\imp(t\apart[q_2,q_1]\imp P)\imp P
  \]
  then $\Pointwise P \imp P$.
\end{axiom}

\begin{corollary}\label{cor:covering_pi}
  For any proposition $P:\Prop$, if
  \[
    \forall(t:\Time)(q_1,q_2,q_3,q_4:\tQQ)\ldotp
      q_1<q_2<q_3<q_4 \imp \In{[q_1,q_3]}P \imp \In{[q_2,q_4]}P \imp \In{[q_1,q_4]}P
  \]
  then $\Pointwise P \imp P$.
\end{corollary}
\begin{proof}
  Assume the hypothesis; to show $\pi P\imp P$, it suffices to show that the hypothesis of
  \cref{ax:covering_pi} holds. So choose $t:\Time$, rationals $q_2,q_3:\tQQ$ with $q_2<q_3$, and
  suppose $t\apart[q_3,q_2]\imp P$. Then $t<q_3\imp P$ and $q_2<t\imp P$. It follows that for any
  $q_1,q_2$, if $q_1<q_2$ and $q_3<q_4$ then $\In{[q_1,q_4]}P$. Thus we obtain $P$ by
  \cref{cor:bounded_expanding_ins}.
\end{proof}

\begin{proposition}\label{prop:constant_pi_sheaf}
If $\tConst$ is a constant type then it is a $\pi$-sheaf.
\end{proposition}
\begin{proof}
$\tConst$ is $\pi$-separated by \cref{cor:dec_eq_pi_sep,prop:const_decidable_eq}. To show it is a $\pi$-sheaf, we use \cref{prop:characterize_j_sheaf}. That is, we assume that $\phi:\tConst\to\Prop_\pi$ satisfies $\pi P$, where $P\coloneqq\exists(c:\tConst)\ldotp\forall(c':\tConst)\ldotp(\phi c'\iff c=c')$ is the ``local singleton condition'', and we prove $P$. By \cref{ax:covering_pi}, it suffices to show $(t\apart[q_2,q_1]\imp P)\imp P$ for any $t:\Time$ and rationals $q_1<q_2$.

Assuming $t\apart[q_2,q_1]\imp P$, we can use \cref{prop:real_inf_coprime} to choose intermediate rationals $q_1<q_1'<q_2'<q_2$ such that the following hold
\begin{equation}\label{eqn:conjunction_we_need}
\begin{aligned}
	\exists(c_1:\tConst)\ldotp(q_1'<t)&\imp\forall(c':\tConst)\ldotp(\phi c'\iff c_1=c')\\
	\exists(c_2:\tConst)\ldotp(t<q_2')&\imp\forall(c':\tConst)\ldotp(\phi c'\iff c_2=c')
\end{aligned}
\end{equation}
By decidable equality, either $c_1=c_2$ or $\neg(c_1=c_2)$. In the first case we obtain $t\apart[q_2',q_1']\imp\forall(c':\tConst)\ldotp(\phi c'\iff c=c_1)$, so we are done by \cref{prop:converse_covering_pi}, because $\forall(c':\tConst)\ldotp(\phi c'\iff c=c_1)$ is $\pi$-closed. In the second case, using $(q_1'<t<q_2')\imp c_1=c_2$, we have $t\apart[q_1',q_2']$ by \cref{prop:time_apartness_complement}. It follows that $t<q_2'$ or $q_1'<t$, and either way we use obtain $\exists(c:\tConst)\ldotp\forall(c':\tConst)\ldotp(\phi c'\iff c=c')$ from \cref{eqn:conjunction_we_need}.
\end{proof}

We will need the following lemma later.
\begin{lemma}\label{lemma:pi_t<0_0<t}
For any $t:\Time$, $P,Q:\Prop_\pi$, the statement $(t<0\wedge P)\vee(0<t\wedge Q)$ is $\pi$-closed.
\end{lemma}
\begin{proof}
Choose $t':\Time$ and $q_1<q_2$, and assume $t'\apart[q_2,q_1]\imp [(t<0\wedge P)\vee(0<t\wedge Q)]$; by \cref{ax:covering_pi} it suffices to show $[(t<0\wedge P)\vee(0<t\wedge Q)]$. Using \cref{ax:torsor} let $t'=t+r$ for $r:\tRR$, and let $q_1'\coloneqq q_1-r$ and $q_2'\coloneqq q_2-r$. By \cref{prop:real_coprime} we have $(t\apart[q_2',q_1']\imp(t<0\wedge P))\vee(t\apart[q_2',q_1']\imp(0<t\wedge Q))$. Since $t<0$ and $0<t$ are $\pi$-closed by \cref{prop:Time_is_pi_closed}, we are done by \cref{prop:converse_covering_pi}.
\end{proof}

A common application of \cref{ax:covering_pi} is to use the following proposition,
which says that the subtopos $\BaseTopos_\pi$ is ``proper'', see e.g.\ \cite[Def.
VI-6.20]{Gierz.Keimel.Lawson.Mislove.Scott:2003a}.

\begin{proposition}\label{lem:exists_Pi_closed}
  The inclusion $\Prop_{\pi}\subseteq\Prop$ is continuous, i.e.\ preserves directed joins indexed by
  constant types.

  In detail, let $D$ be a constant type and let $(D,\leq)$ be a directed preorder, i.e.\ one
  satisfying
  \[
    \exists(d:D)\ldotp\top
    \qquad\text{and}\qquad
    \forall(d_1,d_2:D)\ldotp\exists(d':D)\ldotp (d_1\leq d')\wedge(d_2\leq d').
  \]
  Let $P\colon D\to\Prop$ be monotonic ($\forall(d_1,d_2)\ldotp d_1\leq d_2 \imp Pd_1 \imp Pd_2$)
  and $\pi$-closed ($\forall(d:D)\ldotp \pi Pd \imp Pd$). Then
  \[[\pi\exists(d:D)\ldotp Pd]\imp[\exists(d:D)\ldotp Pd].\]
\end{proposition}
\begin{proof}
  To set up an application of \cref{ax:covering_pi}, let $t:\Time$, let $q_1<q_2$ be rationals, and
  suppose $t\apart[q_2,q_1]\imp\exists(d:D)\ldotp Pd$. By \cref{prop:real_inf_coprime}, this
  is equivalent to the conjunction of \cref{eq:exists_Pi_closed_a,eq:exists_Pi_closed_b}:
  \begin{gather}
    \label{eq:exists_Pi_closed_a}
    \forall(q'_1:\tQQ)\ldotp (q_1<q'_1) \imp \exists(d:D)\ldotp q'_1<t\imp Pd\\
    \label{eq:exists_Pi_closed_b}
    \forall(q'_2:\tQQ)\ldotp (q'_2<q_2) \imp \exists(d:D)\ldotp t<q'_2\imp Pd
  \end{gather}
  Choose any rationals $q'_1$ and $q'_2$ satisfying $q_1<q'_1<q'_2<q_2$. Then applying
  \eqref{eq:exists_Pi_closed_a} gives us a $d_1$ such that $q'_1<t\imp P(d_1)$ and applying
  \eqref{eq:exists_Pi_closed_b} gives us a $d_2$ such that $t<q'_2\imp P(d_2)$. By directedness of
  $D$, there is a $d$ such that $d_1\leq d$ and $d_2\leq d$, so by monotonicity of $P$ we have
  $t\apart[q'_2,q'_1]\imp Pd$. This implies $P(d)$ by \cref{prop:converse_covering_pi}.
\end{proof}

\index{logic|)}

\chapter{Semantics and soundness}
\label{sec:soundness}\index{soundness}\index{semantics}

In this chapter, we prove that the temporal type theory, developed in \cref{sec:logical_prelims,sec:axiomatics} is sound in the topos $\BaseTopos$. To do so, we begin in \cref{sec:JK_semantics} by recalling the Kripke-Joyal
semantics\index{semantics!Kripke-Joyal} by which to interpret type-theoretic formulas in the topos $\BaseTopos$. Then in
\cref{sec:Dedekind_time} we discuss the sheaf of real numbers and $\Time$. We then proceed to our main goal: proving
that our type signature---i.e.\ the one atomic predicate symbol and the ten axioms presented in \cref{sec:axiomatics}---are sound in $\BaseTopos$. This is done in \cref{sec:justification}, which begins with a table
\cref{table:summary} summarizing the type signature.

\section{Categorical semantics}\label{sec:JK_semantics}\index{base site $\BaseSite$}\index{semantics!categorical}

When proving that a type signature is sound in a topos $\cat{E}$, one must give $\cat{E}$-interpretations to each type, term, and predicate in the type theory. To do so, one first assigns an object of $\cat{E}$ to each atomic type and a morphism of $\cat{E}$ to each atomic term. Recall that predicates in the type theory are identified with terms of type $\Prop$. One thus assigns a morphism, whose codomain is the subobject classifier $\Omega_{\cat{E}}$, to each atomic predicate. Using categorical semantics (see e.g.\ \cite{Jacobs:1999a}), the remaining types and terms are then recursively assigned interpretations, as briefly discussed in \cref{sec:toposes_types_logic}. The remaining part of the signature is the set of axioms, which are in particular predicates, so the remaining part of the work is to prove that each axiom is interpreted as \emph{true} in $\cat{E}$. More precisely, the morphism $X\to\Omega_{\cat{E}}$ assigned to each must factor through $\const{true}\colon 1\to\Omega_{\cat{E}}$.

We hope that the informal introduction in \cref{sec:toposes_types_logic} is enough to give the reader a basic understanding of how to interpret the recursively-constructed types and terms in the topos, e.g.\ product types and arrow types are sent to product objects and exponential objects, projections are sent to projections, etc. The least straightforward part, and the part we will be focusing on, is the $\cat{E}$-semantics of the logical connectives and quantifiers, so we explain this in more detail below.

The usual way to express the semantic interpretation of a type $X$ or term $a:A, b:B\vdash e:C$, is using \emph{Church brackets}, e.g.\ $\church{X}$ or $\church{e}\colon\church{A}\times\church{B}\to\church{C}$ in $\cat{E}$. At certain points in this section, it is typographically more convenient to simply write $X\in\cat{E}$ or $A\times B\to C$, i.e.\ to drop the Church brackets. We hope this will not cause too much confusion.

We now recall the Kripke-Joyal sheaf semantics as it relates to our particular topos $\BaseTopos=\Shv{\BaseSite}$; see e.g.\ \cite[Theorem
VI.7.1]{MacLane.Moerdijk:1992a} for the general case. The definition of $\BaseSite$ is given in \cref{def:BaseSite}, but see also \cref{rem:use_l>0}, which says that since the object $0\in\BaseSite$ has an empty covering family, we may assume $\ell>0$ for each object $\ell\in\BaseSite$.

Given a sheaf $X$, a length $\ell>0$, a section $\alpha\in X(\ell)$, and a predicate $\phi:X\to\Prop$,\index{length}
one can use forcing notation\index{forcing notation} and write $\ell\Vdash\phi(\alpha)$ to mean that
$\alpha$ is a section in the subobject determined by $\phi$, or equivalently that $\phi(\alpha)$
factors through $\top:1\to\Prop$.

For any morphism $\subint{r}{s}\colon\ell'\to\ell$ in $\IRinv\op$ and section $\alpha\in X(\ell)$ we
denote the restriction by $\restrict{\alpha}{\subint{r}{s}}$.\index{restriction} An object $\ell$ in
$\BaseSite$ has only one non-trivial covering family,\index{family!covering} namely the set of all
wavy arrows%
\footnote{
  Recall from \cref{def:BaseSite} that a wavy arrow $\subint{r}{s}\colon\ell'\wavyto\ell$ in $\IRinv\op$ is an interval
  inclusion that is strict on both sides, i.e.\ where $r>0$ and $s>0$.
}
$\subint{r}{s}\colon\ell'\wavyto\ell$ in $\IRinv\op$. Thus predicates $\phi$ will always satisfy the
following two rules:

\begin{description}\label{page:monotonicity}
  \item[Monotonicity:]\index{monotonicity} If $\ell\Vdash\phi(\alpha)$, then
    $\ell'\Vdash\phi(\restrict{\alpha}{\subint{r}{s}})$ for all $\subint{r}{s}\colon\ell'\to\ell$.
  \item[Local character:]\index{local character} If
    $\ell'\Vdash\phi(\restrict{\alpha}{\subint{r}{s}})$ for all wavy arrows
    $\subint{r}{s}\colon\ell'\wavyto\ell$, then $\ell\Vdash\phi(\alpha)$.
\end{description}
\medskip

The usual Kripke-Joyal sheaf semantics simplifies in our topos $\BaseTopos$, in particular for the semantics of $\phi\vee\psi$ and $\neg\phi$. We will explain the simplification in \cref{rem:usual_JK_sem}. A summary of the Kripke-Joyal semantics for $\BaseTopos$ is shown in \cref{table:Joyal_Kripke}. For typographical reasons, we do not
use Church brackets in this section.

\begin{table}[h]
\begin{enumerate}[label=(\roman*)]
\itemsep.25em
  \item $\ell\Vdash \phi_1(\alpha)\wedge\phi_2(\alpha)$ iff $\ell\Vdash \phi_1(\alpha)$ and
    $\ell\Vdash\phi_2(\alpha)$;
  \item $\ell\Vdash \phi_1(\alpha)\vee\phi_2(\alpha)$ iff $\ell\Vdash \phi_1(\alpha)$ or
    $\ell\Vdash\phi_2(\alpha)$;
  \item $\ell\Vdash \phi_1(\alpha)\imp\phi_2(\alpha)$ iff for all $\subint{r}{s}\colon\ell'\to\ell$,
    $\ell'\Vdash\phi_1(\restrict{\alpha}{\subint{r}{s}})$ implies $\ell'\Vdash\phi_2(\restrict{\alpha}{\subint{r}{s}})$;
  \item $\ell\Vdash\neg\phi(\alpha)$ iff for all $\subint{r}{s}\colon\ell'\to\ell$, it is not the case that
    $\ell'\Vdash\phi(\restrict{\alpha}{\subint{r}{s}})$;
  \item $\ell\Vdash\exists(y:Y)\ldotp\phi(\alpha,y)$ iff, for each wavy arrow
    $\subint{r}{s}\colon\ell'\wavyto\ell$, there exists $\beta\in Y(\ell')$ such that $\ell'\Vdash\phi(\restrict{\alpha}{\subint{r}{s}},\beta)$;
  \item $\ell\Vdash\forall(y:Y)\ldotp\phi(\alpha,y)$ iff, for all $\subint{r}{s}\colon\ell'\to\ell$ and $\beta\in
    Y(\ell')$, we have $\ell'\Vdash\phi(\restrict{\alpha}{\subint{r}{s}},\beta)$.
\end{enumerate}
\caption{The Kripke-Joyal semantics in the case of $\BaseTopos=\Shv{\BaseSite}$}
\label{table:Joyal_Kripke}
\end{table}

\begin{remark}\label{rem:usual_JK_sem}
The table \cref{table:Joyal_Kripke} summarizes the Kripke-Joyal semantics in the topos $\BaseTopos=\Shv{\BaseSite}$, where a couple things simplify, namely the rules for $\ell\Vdash\phi_1(\alpha)\vee\phi_2(\alpha)$ and $\ell\Vdash\neg\phi_1(\alpha)$, where $\alpha\in X(\ell)$. The usual rule for these are
\begin{description}
  \item[\qquad] $\ell\Vdash\phi_1(\alpha)\vee\phi_2(\alpha)$ iff, for each
    $\subint{r}{s}\colon\ell'\wavyto\ell$, either $\ell'\Vdash\restrict{\phi_1}{\subint{r}{s}}$ or $\ell'\Vdash\restrict{\phi_2}{\subint{r}{s}}$.
  \item[\qquad] $\ell\Vdash\neg\phi(\alpha)$ iff the empty family is a cover for $\ell$.
\end{description}
The only object of $\BaseSite$ covered by the empty family is $\ell=0$, which we are not considering (as discussed in the opening remarks of \cref{sec:JK_semantics}). Thus the above rule for negation reduces to the one shown in \cref{table:Joyal_Kripke} (iv). To prove that the our simplification (ii) of the rule for implication is also valid, we will argue non-constructively. Clearly (ii) implies the above, so we want the converse.

So suppose that $\ell\Vdash\phi_1(\alpha)\vee\phi_2(\alpha)$ in the sense shown here, and suppose for contradiction that we have neither $\ell\Vdash \phi_1(\alpha)$ nor $\ell\Vdash\phi_2(\alpha)$. Then by local character, there is some $\subint{r_1}{s_1}\colon\ell_1\wavyto\ell$ and $\subint{r_2}{s_2}\colon\ell_2\wavyto\ell$ such that neither $\ell_1\Vdash\phi_1(\restrict{\alpha}{\subint{r_1}{s_1}})$ nor $\ell_2\Vdash\phi_2(\restrict{\alpha}{\subint{r_2}{s_2}})$. Let $r'\coloneqq\max(r_1,r_2)$ and $s'\coloneqq\min(s_1,s_2)$, giving $\subint{r'}{s'}\colon\ell'\wavyto\ell$. Then by assumption either 
$\ell'\Vdash\restrict{\phi_1}{\subint{r'}{s'}}$ or $\ell'\Vdash\restrict{\phi_2}{\subint{r'}{s'}}$, which is a contradiction by monotonicity.
\end{remark}

The following lemma is easy to verify, but we prove it here because will use it quite often, and without mentioning it again.

\begin{lemma}\label{lemma:peel_off_package}
  Let $\Phi$ be a formula of the form $\forall(a_1:A_1)\cdots(a_n:A_n)\ldotp
  \phi(a_1,\ldots,a_n)\imp\psi(a_1,\ldots,a_n)$. Then $\Phi$ is sound iff, for every
  $\ell\in\IRinv$, every $\alpha_1\in\church{A_1}(\ell)$ and so on through
  $\alpha_n\in\church{A_n}(\ell)$, if $\ell\Vdash \phi(\alpha_1,\ldots,\alpha_n)$ then $\ell\Vdash
  \psi(\alpha_1,\ldots,\alpha_n)$.
\end{lemma}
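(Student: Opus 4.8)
The plan is to unwind the definition of soundness for a universally quantified implication by peeling off one quantifier at a time, using the Kripke-Joyal rules in \cref{table:Joyal_Kripke}, and to notice that the monotonicity rule lets us collapse the ``for all restrictions'' clauses back down to a single condition at $\ell$ itself.

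First I would recall that $\Phi$ sound means $\ell\Vdash\Phi$ for every object $\ell\in\IRinv$ (equivalently, the interpretation $\church{\Phi}$ factors through $\top$, which by monotonicity and local character amounts to holding at every stage). Then I would apply rule (vi) for the outermost $\forall(a_1:A_1)$: $\ell\Vdash\forall(a_1:A_1)\ldotp\Psi_1$ iff for all $\subint{r}{s}\colon\ell'\to\ell$ and all $\alpha_1\in\church{A_1}(\ell')$ we have $\ell'\Vdash\Psi_1(\alpha_1)$, where $\Psi_1$ is the remaining formula $\forall(a_2:A_2)\cdots\ldotp\phi\imp\psi$. Iterating rule (vi) through all $n$ quantifiers, $\ell\Vdash\Phi$ becomes: for every chain of restrictions landing in some $\ell'$ and every $\alpha_1\in\church{A_1}(\ell'),\ldots,\alpha_n\in\church{A_n}(\ell')$, we have $\ell'\Vdash\phi(\alpha_1,\ldots,\alpha_n)\imp\psi(\alpha_1,\ldots,\alpha_n)$. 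Now apply rule (iii) for implication: this in turn says for every further restriction $\subint{r'}{s'}\colon\ell''\to\ell'$, if $\ell''\Vdash\phi(\restrict{\alpha_i}{\subint{r'}{s'}})$ then $\ell''\Vdash\psi(\restrict{\alpha_i}{\subint{r'}{s'}})$.

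The key observation is that all of these nested ``for all restrictions'' are subsumed by simply ranging over all objects $\ell\in\IRinv$ and all sections at $\ell$: composites of morphisms in $\IRinv\op$ are again morphisms in $\IRinv\op$, and restriction is functorial, so a section $\restrict{\alpha_i}{\subint{r'}{s'}}$ obtained by composing restrictions is just some section $\alpha_i'\in\church{A_i}(\ell'')$. Hence ``$\ell\Vdash\Phi$ for all $\ell$'' is equivalent to ``for every $\ell\in\IRinv$ and all $\alpha_1\in\church{A_1}(\ell),\ldots,\alpha_n\in\church{A_n}(\ell)$, if $\ell\Vdash\phi(\alpha_1,\ldots,\alpha_n)$ then $\ell\Vdash\psi(\alpha_1,\ldots,\alpha_n)$,'' which is exactly the claimed criterion. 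For the forward direction one specializes the restrictions to identities; for the backward direction one notes that the hypothesis, being stated for \emph{all} $\ell$, in particular applies to each $\ell''$ appearing after restriction.

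I do not expect a genuine obstacle here; the only point requiring a word of care is making the induction on $n$ (peeling the quantifiers) precise, and being explicit that the interleaved restrictions from rules (vi) and (iii) compose to a single restriction so that no generality is lost when we quantify over all $\ell$. One should also remark that the edge case $n=0$ (a bare implication $\phi\imp\psi$) is handled directly by rule (iii) together with the same composition-of-restrictions remark, and that $\ell=0$ is excluded throughout as in the opening remarks of \cref{sec:JK_semantics}.
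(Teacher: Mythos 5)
Your proposal is correct and follows essentially the same route as the paper: unwind the Kripke-Joyal rules for the quantifiers and the implication into a nested statement over restrictions, then observe that restrictions compose (the paper tracks this with cumulative offsets $R_k,S_k$), so the forward direction follows by taking identity restrictions and the backward direction by applying the stage-wise hypothesis at the final restricted stage. The only point to tighten is that after iterating the $\forall$ rule the sections $\alpha_i$ a priori live at different intermediate stages rather than a common $\ell'$, which is exactly what the composition-of-restrictions remark you flag is needed for.
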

\begin{proof}
  By definition, $\Phi$ is sound in the Kripke-Joyal semantics if $\ell\Vdash\Phi$ holds for all
  $\ell\in\IRinv$. This holds iff the following long formula does: For all $\ell\in\IRinv$,
  \begin{alignat*}{2}
    &\text{for all }\subint{r_1}{s_1}\colon\ell_1\to\ell
      &&\text{ and }a_1\in\church{A_1}(\ell_1),\\
    &\text{for all }\subint{r_2}{s_2}\colon\ell_2\to\ell_1
      &&\text{ and }a_2\in\church{A_2}(\ell_2),\\
    &\quad\vdots&&\quad\vdots\\
    &\text{for all }\subint{r_n}{s_n}\colon\ell_n\to\ell_{n-1}
      &\quad&\text{ and }a_n\in\church{A_n}(\ell_n),\text{ and }\\
    &\text{for all }\subint{r_{n+1}}{s_{n+1}}\colon\ell_{n+1}\to\ell_n,
      &&\text{ if }\ell_{n+1}\Vdash\phi(\restrict{a_1}{\subint{R_1}{S_1}},\ldots,\restrict{a_n}{\subint{R_n}{S_n}})\\
    &&&\quad\text{ then }\ell_{n+1}\Vdash\psi(\restrict{a_1}{\subint{R_1}{S_1}},\ldots,\restrict{a_n}{\subint{R_n}{S_n}})
  \end{alignat*}
  where $R_k=r_{k+1}+\cdots+r_{n+1}$ and $S_k=s_{k+1}+\cdots+s_{n+1}$.

  If the long formula above holds, then taking $r_i=0=s_i$ for each $i$, we have the desired
  conclusion: if $\ell\Vdash \phi(\alpha_1,\ldots,\alpha_n)$ then $\ell\Vdash
  \psi(\alpha_1,\ldots,\alpha_n)$. On the other hand, suppose it is the case that for every
  $\ell\in\IRinv$, if $\ell\Vdash \phi(\alpha_1,\ldots,\alpha_n)$ then $\ell\Vdash
  \psi(\alpha_1,\ldots,\alpha_n)$. Then in particular this holds for $\ell_{n+1}$ and
  $\restrict{\alpha_i}{\subint{R_i}{S_i}}$, so the long formula above also holds.
\end{proof}

\section{Constant objects and decidable predicates}\label{sec:const_decidable}\index{sheaf!constant}\index{decidable}

In order to prove that our axioms are sound, we need to interpret all of the types, terms, and predicates that appear in them. The most non-standard of these is the notion of constant types, so we record some of their semantic properties in this section.

\subsection{Constant objects in $\BaseTopos$}\label{sec:const_objects_basetopos}

For any sheaf topos $\cat{E}$, there is a unique geometric morphism
$\Gamma\colon\cat{E}\leftrightarrows\Cat{Set}\cocolon\Fun{Cnst}$ to the category of sets, and an
object $X\in\cat{E}$ is called \emph{constant} if it is in the image of the left adjoint,
$\Fun{Cnst}$. Because $\BaseTopos=\Shv{\BaseSite}$ is locally connected (\cref{prop:BaseTopos_conn_locally_conn}), constant presheaves $(\ell\mapsto C)\colon\IRinv\to\Cat{Set}$ are in fact sheaves. Thus for any constant type $\tConst$ and $\ell\in\IRinv$, we have $\church{\tConst}(\ell)=C$ and all the restriction maps are identity.

Morally, the converse should also hold: a type $\tConst$ in the type theory should be called constant if the corresponding sheaf
$\church{\tConst}$ is constant. However, in order to present a self-contained axiomatics, we gave a recursive formulation of the collection of constant types in \cref{def:constant_types}, namely as the smallest collection containing
  $\unit$, $\tNN$, $\tZZ$, $\tQQ$, and $\tRRub$, and that is closed under finite sums, finite
  products, exponentials, and decidable subtypes. In any sheaf topos, the objects $\tNN$, $\tZZ$, and $\tQQ$ \index{object!natural numbers}\index{object!integers}\index{object!rational numbers} are constant sheaves. We prove that $\tRRub$ is constant in \cref{cor:Dedekind_cut_semantics}. Sums and products of
constant sheaves are constant, as are complemented subobjects of constant sheaves. Exponentials of constant sheaves are constant by \cref{cor:exponential_constant}.

While we only consider this particular collection, the ideas in this section work in general. Later in \cref{sec:walks} we will allow ourselves to add new constant types to the signature, and considering them as such in the axiomatics.

\paragraph{Exponentiating constant types}

For any sheaf $X$, the exponential $X^\tConst$ has a simple description:\index{exponential
object} the set of sections $(X^\tConst)(\ell)$ is simply the set of functions $C\to X(\ell)$.
Indeed, $(X^\tConst)(\ell) \iso \BaseTopos(y(\ell),X^\tConst) \iso \BaseTopos(\tConst,X^{y(\ell)})
\iso \Cat{Set}(C,\Gamma(X^{y(\ell)})) \iso \Cat{Set}(C,X(\ell))$, where $\Gamma$ is the global
sections functor, right adjoint to the constant sheaf functor $\Cat{Set}\to\BaseTopos$. Given a
function $f\colon C\to X(\ell)$ and a morphism $\subint{r}{s}\colon\ell'\to\ell$, the restriction is
given by $(\restrict{f}{\subint{r}{s}})(c) = \restrict{(f(c))}{\subint{r}{s}}$.

In particular, for the type of predicates $\Prop^\tConst$, a section
$\phi\in\church{\Prop^\tConst}(\ell)$ is just a function $\phi\colon
C\to\church{\Prop}(\ell)=\Omega(\upclose[0,\ell])$, with restrictions given by
$(\restrict{\phi}{\subint{r}{s}})(c) = \phi(c) \cap\upclose[r,\ell-s]$.

\subsection{Pointwise semantics of predicates on constant types}

We just discussed the semantics of predicates on constant types, but there is another, dual, description of the set $\church{\Prop^\tConst}(\ell)$ which will be useful.
Using the characterization of opens in a domain\index{domain}\index{domain!as topological space} as
continuous maps to the domain $\Bool$,\index{domain!of booleans} we have
\[
  \church{\Prop^\tConst}(\ell) \iso \Cat{Set}(C,\Omega(\upclose[0,\ell])) \iso
  \Cat{Set}(C,\Top(\upclose[0,\ell],\Bool)) \iso \Top(\upclose[0,\ell], \Bool^C),
\]
where $\Bool^C$ is the powerset domain of $C$ (see \cref{ex:Bool^S}). Concretely, if $\phi\colon
C\to\Omega(\upclose[0,\ell])$ is a function, then we write the corresponding continuous map as follows for any $0< d\leq u<\ell$:
\begin{equation}\label{eqn:pointwise_semantics_notation}
  [d,u]\mapsto(\phi_{[d,u]}\subseteq C),\qquad\text{ given by }c\in\phi_{[d,u]}\text{ iff }[d,u]\in\phi(c).
\end{equation}
By \cref{prop:continuous_map}, continuity is simply the condition that $\phi_{[d,u]} =\!\!
\bigcup_{[d',u']\ll[d,u]}\!\phi_{[d',u']}.$ Equivalently, this time from the perspective of Kripke-Joyal semantics, we have
$c\in\phi_{[d,u]}$ iff there exists an open neighborhood of $[d,u]$---a subinterval
$\subint{r}{s}\colon\ell'\to\ell$ such that $r<d$ and $u<\ell-s$---such that
$\ell'\Vdash(\restrict{\phi}{\subint{r}{s}})(c)$.

The reason this dual description of predicates on constant types is useful is that their semantics
can often be understood ``pointwise''.\index{type!constant, pointwise semantics of}

\begin{proposition}\label{prop:coherent_pointwise}
  With notation as in \cref{eqn:pointwise_semantics_notation}, suppose given
  $\phi,\phi'\in\church{\Prop^\tConst}(\ell)$ and $\psi\in\church{\Prop^{\tConst\times\tConst'}}(\ell)\cong((\Prop^\tConst)^{\tConst'})(\ell)$. Then for any $[d,u]\in\upclose{[0,\ell]}$,
  \begin{itemize}[itemsep=3pt]
    \item $\top_{[d,u]} = C$
    \item $(\phi\wedge\phi')_{[d,u]} = \phi_{[d,u]} \cap \phi'_{[d,u]}$
    \item $\bot_{[d,u]} = \varnothing$
    \item $(\phi\vee\phi')_{[d,u]} = \phi_{[d,u]} \cup \phi'_{[d,u]}$
    \item $c\in(\exists(c':\tConst)\ldotp\psi(c'))_{[d,u]}$ iff there exists $ c'\in C$ such that
      $(c,c')\in\psi_{[d,u]}$.
  \end{itemize}
  There is less control over implication and universal quantification. For any $[d,u]\in\upclose{[0,\ell]}$,
  \begin{itemize}[itemsep=3pt]
    \item $(\phi\imp\phi') = \top$ iff $\phi_{[d,u]}\subseteq\phi'_{[d,u]}$ for all
      $[d,u]\in\upclose{[0,\ell]}$
    \item $(\forall(c:\tConst)\ldotp\phi(c)) = \top$ iff $\phi_{[d,u]}=C$ for all
      $[d,u]\in\upclose{[0,\ell]}$.
  \end{itemize}
\end{proposition}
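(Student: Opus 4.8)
The plan is to work through each clause by unwinding the dictionary established in \eqref{eqn:pointwise_semantics_notation}, which translates between sections $\phi\in\church{\Prop^\tConst}(\ell)$ and the associated families of subsets $\phi_{[d,u]}\subseteq C$, using the Kripke-Joyal rules in \cref{table:Joyal_Kripke}. For the first five (``coherent'') clauses, the key observation is the reformulation already stated in the excerpt: $c\in\phi_{[d,u]}$ iff there exists a subinterval $\subint{r}{s}\colon\ell'\to\ell$ with $r<d$ and $u<\ell-s$ such that $\ell'\Vdash(\restrict{\phi}{\subint{r}{s}})(c)$. So for $\top$ and $\bot$ the statements are immediate; for $\wedge$, given such neighborhoods witnessing $c\in\phi_{[d,u]}$ and $c\in\phi'_{[d,u]}$, I would intersect them (take $\max$ of the left endpoints and $\min$ of the right endpoints, as in \cref{rem:usual_JK_sem}) to get a common witness, and conversely a witness for $(\phi\wedge\phi')$ works for both by monotonicity and clause (i) of the table. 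For $\vee$, the forcing at a point splits as ``or'' by clause (ii) of the table (this is exactly where the simplification proved in \cref{rem:usual_JK_sem} is doing its work), so the union formula drops out directly. For the existential, I would similarly chase clause (v): note that $\church{\tConst'}$ is a constant sheaf, so every section over any $\ell'$ is (the restriction of) some fixed $c'\in C'$, and the ``for each wavy arrow there exists $\beta$'' condition collapses, after a local-character argument, to the existence of a single $c'\in C'$ witnessing $(c,c')\in\psi_{[d,u]}$.

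For the last two clauses, I would use \cref{lemma:peel_off_package} to reduce $(\phi\imp\phi')=\top$ and $(\forall(c:\tConst)\ldotp\phi(c))=\top$ to pointwise forcing conditions. For implication: $(\phi\imp\phi')=\top$ means $\ell\Vdash\phi(c)\imp\phi'(c)$ for all $c$ — wait, more carefully, $\phi\imp\phi'$ is itself a section of $\Prop^\tConst$, and asserting it equals $\top$ means $\ell\Vdash\forall(c:\tConst)\ldotp(\phi(c)\imp\phi'(c))$; by the $\forall$ and $\imp$ rules this says that for every $\subint{r}{s}\colon\ell'\to\ell$ and every $c\in C$, $\ell'\Vdash(\restrict{\phi}{\subint{r}{s}})(c)$ implies $\ell'\Vdash(\restrict{\phi'}{\subint{r}{s}})(c)$. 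I would then argue that this is equivalent to $\phi_{[d,u]}\subseteq\phi'_{[d,u]}$ for all $[d,u]$: the forward direction is immediate from the neighborhood characterization of $\phi_{[d,u]}$, and the reverse uses that $\ell'\Vdash(\restrict{\phi}{\subint{r}{s}})(c)$ forces $c\in(\restrict{\phi}{\subint{r}{s}})_{[d,u]}$ for all admissible $[d,u]$ in the smaller domain, plus local character to glue. The universal case is the special case $\phi'=\top$, or is proved identically.

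The main obstacle I anticipate is getting the implication/universal clauses exactly right — specifically, being careful that ``$(\phi\imp\phi')=\top$'' is a statement about the section over $\ell$ and must be tested against \emph{all} restrictions $\subint{r}{s}\colon\ell'\to\ell$ (not just $\ell$ itself), and then checking that quantifying the pointwise containment over all $[d,u]\in\upclose{[0,\ell]}$ genuinely captures the restricted conditions as well. The resolution is that $\upclose{[0,\ell]}$ already ``sees'' every restriction: a subinterval $\subint{r}{s}$ with domain $\ell'$ corresponds to the sub-poset of those $[d,u]$ with $r\le d\le u\le\ell-s$, so ranging over all of $\upclose{[0,\ell]}$ subsumes ranging over all restrictions. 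By contrast, the coherent connectives commute with restriction in a strong enough sense (they are computed ``stalkwise''/pointwise via the neighborhood formula) that no such subtlety arises, which is precisely the asymmetry the proposition is flagging. I expect the whole proof to be short once this bookkeeping is set up.
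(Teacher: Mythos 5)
Your overall strategy is the paper's own: unwind the Kripke--Joyal clauses through the neighborhood characterization of $c\in\phi_{[d,u]}$, treat the coherent connectives pointwise, and handle $\imp$ and $\forall$ by testing on interior points $[d,u]$ with monotonicity in one direction and local character to glue in the other. The paper in fact writes out only the existential clause (calling it the most difficult) and leaves the rest to the reader, so your extra detail on $\wedge$, $\vee$, $\imp$, $\forall$ is fine and matches what is intended.

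The one place you gloss the crux is exactly that existential clause. The condition ``for each wavy arrow there exists $\beta$'' does not collapse ``after a local-character argument'': local character runs in the opposite direction (from forcing on all wavy restrictions to forcing on the whole object) and cannot merge witnesses $c'$ that a priori vary with the wavy arrow into a single uniform one. The correct step---and the paper's---is a specialization. If $c\in\bigl(\exists(c':\tConst)\ldotp\psi(c')\bigr)_{[d,u]}$, you get a neighborhood $\subint{r}{s}\colon\ell'\wavyto\ell$ with $r<d\leq u<\ell-s$ on which the existential is forced; since $[d,u]$ lies strictly inside $\ell'$, you may choose the wavy arrow $\subint{r'}{s'}\colon\ell''\wavyto\ell'$ so that $\ell''$ still contains $[d,u]$, i.e.\ $r<r+r'<d\leq u\leq\ell-(s+s')<\ell-s$, and the witness $c'$ attached to that particular $\ell''$ already gives $(c,c')\in\psi_{[d,u]}$. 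The converse is where monotonicity (not local character) enters: a single $c'$ with $(c,c')\in\psi_{[d,u]}$ works uniformly for all wavy arrows into the witnessing neighborhood, so the existential is forced there. With that replacement your sketch is complete and agrees with the paper's proof.
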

\begin{proof}
  We prove the existential case---which is the most difficult case---to give the idea. The remaining
  cases are proven similarly.

  Consider a $\psi\in\church{\Prop^{\tConst\times\tConst'}}(\ell)$ and a point
  $[d,u]\in\upclose{[0,\ell]}$, and let $\psi'\coloneqq\exists(c':\tConst)\ldotp\psi(c')$, i.e.\
  $\psi'(c)=\exists(c':\tConst)\ldotp \psi(c,c')$. Unwinding the definitions, we find that
  $c\in\psi'_{[d,u]}$ if and only if there exists a $\subint{r}{s}\colon\ell'\wavyto\ell$ such that
  $r<d\leq u<\ell-s$, and such that for all $\subint{r'}{s'}\colon\ell''\wavyto\ell'$, there
  exists a $c'\in C$ such that $\ell''\Vdash(\restrict{\psi}{\subint{r+r'}{s+s'}})(c,c')$. But note
  that we can choose this $\ell''$ to itself be an open neighborhood of $[d,u]$. More precisely, we can
  choose $r',s'$ such that $r<r+r'<d\leq u\leq\ell-(s+s')<\ell-s$.
  
  Hence, the above simplifies to the following
  condition: $c\in\psi'_{[d,u]}$ if and only if there exists a $\subint{r}{s}\colon\ell'\wavyto\ell$
  such that $r<d$ and $u<\ell-s$, and such that there exists a $c'\in C$ such that
  $\ell'\Vdash(\restrict{\psi}{\subint{r}{s}})(c,c')$. In other words, $c\in\psi'_{[d,u]}$ if and only if
  there exists a $c'\in C$ such that $(c,c')\in\psi_{[d,u]}$.
\end{proof}

The class of first-order formulas generated by $\top$, $\wedge$, $\bot$, $\vee$, and $\exists$---in particular, not including $\imp$ and $\forall$---are
typically referred to as \emph{coherent formulas}.\index{formula, coherent}\index{coherent formula|see {formula, coherent}} If $\phi,\psi$ are
coherent formulas, then for any sequence of variables $x_1,\ldots,x_n$, a statement of the form
$\forall x_1,\ldots x_n\ldotp \phi\imp \psi$ is called a \emph{coherent axiom}.\index{axiom!coherent}
\Cref{prop:coherent_pointwise} says that the fragment of the type theory consisting of constant
types and coherent axioms has a particularly simple pointwise semantics.\index{semantics!pointwise}

\begin{remark}\label{rem:decidable}\index{decidable}
  Suppose $\phi:\tConst\to\Prop$ satisfies the additional condition $\forall(c:\tConst)\ldotp\phi
  c\vee\neg(\phi c)$, i.e.\ $\phi$ is a decidable predicate. It is easy to check that this is
  equivalent to the requirement that $\phi_{[d,u]}=\phi_{[d',u']}$, as subsets of $C$, for all
  $[d,u],[d',u']\in\upclose{[0,\ell]}$. In other words, we can identify $\phi$ with a subset of $C$.
\end{remark}

\section{Semantics of Dedekind numeric objects and $\Time$}\label{sec:Dedekind_time}

At this point, there is nothing stopping us from proving that each axiom from \cref{sec:axiomatics} is sound in $\BaseTopos$. However, these axioms make repeated use of certain condensed definitions---namely, the Dedekind numeric types, the subtype $\Time$, and certain modalities $j:\Prop\to\Prop$---the semantics of which it will be useful to unpack. We discuss the semantics of numeric types and $\Time$ in \cref{sec:sem_ded_numeric} and \cref{sec:sem_of_time}, and we discuss the semantics of the modalities in \cref{sec:semantics_of_modalities}.

\subsection{Semantics of Dedekind numeric objects}\label{sec:sem_ded_numeric}

In \cref{sec:const_decidable} we discussed the semantics of predicates on constant sheaves. A particularly important case is that of Dedekind cuts, which are
predicates $\tQQ\to\Prop$. In this section we discuss the semantics of the types $\tII$ and $\tRR$; see \cref{def:local_reals}. In particular we will show that the sheaf $\church{\tRR}$ of Dedekind reals in
$\BaseTopos=\Shv{\BaseSite}$ is constant.

\begin{definition}[Sheaf of continuous functions]
\label{def:ShFun}\index{sheaf!of continuous functions}\index{topological space!continuous functions to|see {sheaf}}
  Let $X$ be any topological space. Then we can define the \emph{sheaf of continuous $X$-valued
  maps} $\ShFun{X}\in\BaseTopos$ as follows: sections are given by
  \[
    \ShFun{X}(\ell) \coloneqq \Top(\upclose{[0,\ell]},X)
  \]
  where $\upclose{[0,\ell]}$ is given the Scott topology, and for any
  $f\in\Top(\upclose{[0,\ell]},X)$ and any $\subint{r}{s}\colon\ell\to\ell'$, the restriction
  $\restrict{f}{\subint{r}{s}}$ is given by the composition
  \begin{equation*}
    \begin{tikzcd}
      \upclose{[0,\ell']} \iso \upclose{[r,\ell-s]} \ar[r,hook]
      & \upclose{[0,\ell]} \ar[r,"f"] & X.
    \end{tikzcd}
  \end{equation*}

  In particular, if $P$ is any poset with directed joins, we define $\ShFun{P}$ by equipping $P$
  with the Scott topology.\index{topology!Scott} Hence $\ShFun{P}(\ell)$ is the set of
  Scott-continuous functions $\upclose{[0,\ell]}\to P$, i.e.\ monotonic maps which preserve directed
  joins.
\end{definition}

We consider the type of pairs $(\delta,\upsilon):(\tQQ\to\Prop)\times(\tQQ\to\Prop)$ satisfying the
axioms of \cref{def:local_reals} for the trivial modality $j=\id$. Note that each of those axioms is (equivalent to) a coherent axiom.
Hence \cref{prop:coherent_pointwise} allows us to use an alternative pointwise semantics to
understand Dedekind cuts. We explain this below and summarize in \cref{prop:Dedekind_cut_semantics}.\index{object!numeric!pointwise semantics of}\index{formula, coherent}

Recall the Dedekind numeric objects $\tLR$, $\tUR$, $\tII$, $\tIR$, $\tRR$, $\tLRub$, $\tURub$,
$\tIIub$, $\tIRub$, and $\tRRub$ from \cref{def:local_reals}, and let $\LR$, $\UR$, $\II$, $\IR$,
$\RR$, $\LRub$, $\URub$, $\IIub$, $\IRub$, and $\RRub$ be the corresponding external posets. E.g.\
$\LR$ is the partially ordered \emph{set} of subsets $D\subseteq\QQ$ which are non-empty,
down-closed, and rounded. With the strength of classical logic, we can identify $\LR$ with
$(\RR\cup\{\infty\},\leq)$, by sending a subset $D\ss\QQ$ to its supremum in $\RR$. Likewise, we can
identify $\UR$ with $(\RR\cup\{-\infty\},\geq)$, $\II$ with $\LR\times\UR$, $\IR$ with
$\{(\ubar{x},\bar{x})\in\LR\times\UR\mid\ubar{x}\leq\bar{x}\}$, and $\RR$ with the standard set of
real numbers (having the discrete order). For the unbounded versions, we identify $\LRub$ with
$(\RR\cup\{\infty,-\infty\},\leq)$, $\URub$ with $(\RR\cup\{\infty,-\infty\},\geq)$, $\IIub$ with
$\LRub\times\URub$, $\IR$ with $\{(\ubar{x},\bar{x})\in\LRub\times\URub\mid\ubar{x}\leq\bar{x}\}$,
and $\RRub$ with the $\RR\cup\{\infty,-\infty\}$ with the discrete order.

Thus, applying \cref{prop:coherent_pointwise}, we immediately derive the following very useful
proposition.

\begin{proposition}\label{prop:Dedekind_cut_semantics}
  There are isomorphisms of sheaves
  \begin{align*}
    \Church{\tLR} &\iso \ShFun{\LR} &
    \Church{\tUR} &\iso \ShFun{\UR} &
    \Church{\tII} &\iso \ShFun{\II} &
    \Church{\tIR} &\iso \ShFun{\IR} &
    \Church{\tRR} &\iso \ShFun{\RR}\\
    \Church{\tLRub} &\iso \ShFun{\LRub} &
    \Church{\tURub} &\iso \ShFun{\URub} &
    \Church{\tIIub} &\iso \ShFun{\IIub} &
    \Church{\tIRub} &\iso \ShFun{\IRub} &
    \Church{\tRRub} &\iso \ShFun{\RRub}
  \end{align*}
  where the sheaf of continuous maps $\ShFun{P}$ to a poset $P$ is defined in \cref{def:ShFun}.
\end{proposition}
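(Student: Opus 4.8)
The plan is to derive \cref{prop:Dedekind_cut_semantics} as a direct corollary of the pointwise semantics machinery already in place, in particular \cref{prop:coherent_pointwise} together with the external identifications of the posets $\LR,\UR,\II$, etc.\ just given. The key observation is that each of the Dedekind numeric types $\tLR$, $\tUR$, $\tII$, $\tIR$, $\tRR$ (and their unbounded variants) is defined in \cref{def:local_reals} as a subtype of a finite product of copies of $\Prop^\tQQ$ carved out by axioms which are (equivalent to) coherent axioms, with the modality taken to be $j=\id$. So the first step is to spell out, for each case, the explicit list of coherent axioms (non-emptiness, down-closure/up-closure, roundedness, and in the case of $\tIR$ the inequality $\ubar x\le\bar x$) and to note that they are indeed coherent, i.e.\ built only from $\top,\wedge,\bot,\vee,\exists$, so that \cref{prop:coherent_pointwise} applies verbatim.

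Next I would unwind $\church{\Prop^\tQQ}$ using the discussion in \cref{sec:const_objects_basetopos} and \cref{sec:const_decidable}: a section over $\ell$ is a function $\delta\colon\QQ\to\Omega(\upclose[0,\ell])$, equivalently (by the dual description in \cref{eqn:pointwise_semantics_notation}) a Scott-continuous assignment $[d,u]\mapsto(\delta_{[d,u]}\subseteq\QQ)$. Applying \cref{prop:coherent_pointwise} to each defining axiom, the condition that $(\delta,\upsilon)$ be a section of $\church{\tIR}(\ell)$ becomes, \emph{pointwise in $[d,u]$}, exactly the external condition that $\delta_{[d,u]}\in\LR$, $\upsilon_{[d,u]}\in\UR$, and $\delta_{[d,u]}\le\upsilon_{[d,u]}$ — in other words a Scott-continuous function $\upclose[0,\ell]\to\IR$, which is precisely $\ShFun{\IR}(\ell)$. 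The same computation handles $\tLR$, $\tUR$, $\tII$, $\tRR$ and the unbounded versions, using the poset identifications supplied before the statement; for $\tRR$ one additionally uses that locatedness forces $\delta_{[d,u]}$ and $\upsilon_{[d,u]}$ to determine a single real, so the target poset has the discrete order. One must also check naturality: the restriction maps on both sides are induced by the interval inclusions $\upclose[0,\ell']\iso\upclose[r,\ell-s]\hookrightarrow\upclose[0,\ell]$, and this is immediate from the restriction formula $(\restrict{\phi}{\subint{r}{s}})(c)=\phi(c)\cap\upclose[r,\ell-s]$ recorded in \cref{sec:const_objects_basetopos} translated through \cref{eqn:pointwise_semantics_notation}.

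The main obstacle — really the only non-bookkeeping point — is matching the \emph{continuity} conditions on the two sides. On the $\ShFun{P}$ side continuity means Scott-continuity of $[d,u]\mapsto\delta_{[d,u]}$ into $P$ with its Scott topology; on the $\church{\tLR}$ side it is the condition $\phi_{[d,u]}=\bigcup_{[d',u']\ll[d,u]}\phi_{[d',u']}$ from \cref{prop:continuous_map}, i.e.\ continuity into $\Bool^\QQ$. I would argue that since the poset operations (union of cuts, the order $\le$ on $\LR$, the embeddings $\LR\hookrightarrow\Bool^\QQ$ and $\UR\hookrightarrow\Bool^\QQ$, the pairing into $\IR$) are all Scott-continuous and jointly reflect directed joins, a map into $P$ is Scott-continuous iff its composite into the relevant power of $\Bool^\QQ$ is; this is what makes the pointwise-verified membership conditions assemble into an honest continuous map. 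Once that is in hand, the isomorphisms are simply the identity on underlying data, and \cref{cor:Dedekind_cut_semantics}'s claim that $\church{\tRR}$ (and $\church{\tRRub}$) is a constant sheaf follows because $\RR$ and $\RR\cup\{\infty,-\infty\}$ carry the discrete order, so every Scott-continuous map out of the connected poset $\upclose[0,\ell]$ is constant.
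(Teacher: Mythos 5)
Your proposal is correct and follows essentially the same route as the paper, which obtains \cref{prop:Dedekind_cut_semantics} directly from the observation that the defining axioms of the Dedekind numeric types are coherent, so that \cref{prop:coherent_pointwise} turns sections of $\church{\Prop^\tQQ}$ into Scott-continuous maps $\upclose[0,\ell]\to\Bool^\QQ$ landing pointwise in the external posets $\LR,\UR,\II,\IR,\RR$ and their unbounded variants. Your extra attention to matching Scott-continuity into $P$ with continuity into the ambient power of $\Bool^\QQ$ (and the constancy argument for the discretely ordered $\RR$, $\RRub$) fills in details the paper treats as immediate, in the spirit of its more technical alternate proof via predomains and rounded ideals.
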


From our work in \cref{sec:IR_half_plane}, we have the following.

\begin{corollary}\label{cor:Dedekind_cut_semantics}\index{half-plane}
  For any $\ell\in\IRinv$, let $(m,r) = (\frac{\ell}{2},\frac{\ell}{2})\in H$ be the midpoint-radius
  coordinates of $[0,\ell]$. Then we have the following:
  \begin{align*}
    \church{\tLR}(\ell) &\cong \{\,f\colon \upclose(m,r)\to\RR\cup\{\infty\} \mid
      f\text{ is lower semi-continuous and order-preserving}\,\} \\
    \church{\tUR}(\ell) &\cong \{\,f\colon \upclose(m,r)\to\RR\cup\{-\infty\} \mid
      f\text{ is upper semi-continuous and order-reversing}\,\} \\
    \church{\tII}(\ell) &\cong \church{\tLR}(\ell) \times \church{\tUR}(\ell) \\
    \church{\tIR}(\ell) &\cong \{\,(\ubar{f},\bar{f})\in\church{\tII}(\ell) \mid
      \forall x\in \upclose(m,r)\ldotp\ubar{f}(x)\leq\bar{f}(x)\,\} \\
    \church{\tRR}(\ell) &\cong \{\,(\ubar{f},\bar{f})\in\church{\tII}(\ell) \mid
      \forall x\in \upclose(m,r)\ldotp\ubar{f}(x)=\bar{f}(x)\,\} \cong \RR\\
    \church{\tLRub}(\ell) &\cong \{\,f\colon \upclose(m,r)\to\RR\cup\{-\infty,\infty\} \mid
      f\text{ is lower semi-continuous and order-preserving}\,\} \\
    \church{\tURub}(\ell) &\cong \{\,f\colon \upclose(m,r)\to\RR\cup\{-\infty,\infty\} \mid
      f\text{ is upper semi-continuous and order-reversing}\,\} \\
    \church{\tIIub}(\ell) &\cong \church{\tLR}(\ell) \times \church{\tUR}(\ell) \\
    \church{\tIRub}(\ell) &\cong \{\,(\ubar{f},\bar{f})\in\church{\tII}(\ell) \mid
      \forall x\in \upclose(m,r)\ldotp\ubar{f}(x)\leq\bar{f}(x)\,\} \\
    \church{\tRRub}(\ell) &\cong \{\,(\ubar{f},\bar{f})\in\church{\tII}(\ell) \mid
      \forall x\in \upclose(m,r)\ldotp\ubar{f}(x)=\bar{f}(x)\,\} \cong \RRub
\end{align*}
\end{corollary}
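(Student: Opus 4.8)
The plan is to derive the corollary by combining \cref{prop:Dedekind_cut_semantics}, which presents each of $\church{\tLR},\church{\tUR},\ldots,\church{\tRRub}$ as a sheaf $\ShFun{P}$ of Scott-continuous maps into an external poset, with the analysis of the interval poset carried out in \cref{sec:IR_half_plane}. So I would fix $\ell\in\IRinv$, put $(m,r)=(\frac{\ell}{2},\frac{\ell}{2})\in H$, and recall that $\upclose{[0,\ell]}$---the poset of subintervals $[a,b]\subseteq[0,\ell]$ ordered by reverse inclusion, with bottom element $[0,\ell]$---is identified with $\upclose(m,r)$ inside the half-plane $H$. By \cref{prop:Dedekind_cut_semantics}, $\church{\tLR}(\ell)\cong\Top(\upclose(m,r),\LR)$ with $\LR$ given the Scott topology, and similarly for the other nine types. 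The whole task is then to re-express these sets of Scott-continuous maps in elementary terms, using the key input from \cref{sec:IR_half_plane} that the Scott topology on $\upclose(m,r)$ has as its open sets exactly the up-sets that are open for the Euclidean topology on $H$.

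The main case is $\tLR$ (with $\tUR$ dual). Under the classical identification $\LR\cong(\RR\cup\{\infty\},\leq)$ recalled before \cref{prop:Dedekind_cut_semantics}, the Scott-open subsets of $\RR\cup\{\infty\}$ are precisely the up-sets $(a,\infty]$ for $a\in\RR\cup\{-\infty\}$, together with the empty set, since these are exactly the up-sets inaccessible by directed suprema. Hence $f\colon\upclose(m,r)\to\RR\cup\{\infty\}$ is Scott-continuous iff each preimage $f^{-1}(a,\infty]$ is Scott-open in $\upclose(m,r)$, and by the description of that topology this holds iff each $f^{-1}(a,\infty]$ is both an up-set and Euclidean-open---that is, iff $f$ is order-preserving and lower semi-continuous. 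This yields the stated description of $\church{\tLR}(\ell)$; the $\tUR$ case is identical using $\UR\cong(\RR\cup\{-\infty\},\geq)$, whose Scott-opens are the sets $[-\infty,a)$, so that Scott-continuity becomes ``order-reversing and upper semi-continuous.'' The unbounded versions $\tLRub,\tURub$ are the same, now allowing the values $\pm\infty$.

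For the remaining types I would use that $\ShFun{-}=\Top(\upclose(m,r),-)$ transports the poset-level constructions to the level of sheaves of sections. Since $\Top(X,-)$ preserves products and $\II=\LR\times\UR$, we get $\church{\tII}(\ell)\cong\church{\tLR}(\ell)\times\church{\tUR}(\ell)$. Since a continuous map into an equationally cut-out subspace is just a continuous map into the ambient space whose values lie in that subspace, and since---again by \cref{sec:IR_half_plane}---the subspace topology on $\IR\subseteq\LR\times\UR$ agrees with its intrinsic Scott topology, we obtain $\church{\tIR}(\ell)$ as the set of pairs $(\ubar{f},\bar{f})\in\church{\tII}(\ell)$ with $\ubar{f}\leq\bar{f}$ pointwise, and $\church{\tRR}(\ell)$ as those with $\ubar{f}=\bar{f}$ pointwise; likewise for the unbounded variants.

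It remains to identify $\church{\tRR}(\ell)$ with $\RR$ (and $\church{\tRRub}(\ell)$ with $\RRub$), which in particular shows these sheaves are constant. Given a pair $(\ubar{f},\bar{f})\in\church{\tRR}(\ell)$, the function $f\coloneqq\ubar{f}=\bar{f}$ is simultaneously order-preserving and order-reversing on $\upclose(m,r)$; since that poset has a bottom element, $f$ must be constant, with value $t\in(\RR\cup\{\infty\})\cap(\RR\cup\{-\infty\})=\RR$ (or $t\in\RRub$ for the unbounded type). Conversely every constant arises this way, and the restriction maps of \cref{def:ShFun} act as the identity on constants, so $\church{\tRR}\cong\RR$ and $\church{\tRRub}\cong\RRub$ as constant sheaves. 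The one genuine input is the half-plane analysis of \cref{sec:IR_half_plane}---the description of the Scott topology on $\upclose(m,r)$, and the compatibility of the subspace and Scott topologies on $\IR\subseteq\LR\times\UR$---and I expect that compatibility for the bounded and unbounded interval posets to be the only delicate point; granting it, the corollary is a mechanical unwinding of \cref{prop:Dedekind_cut_semantics}.
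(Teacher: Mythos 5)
Your proposal is correct and follows essentially the same route as the paper: the paper's proof is a one-line appeal to \cref{prop:Dedekind_cut_semantics} together with \cref{cor:sheaf_IR_Dedekind} from \cref{sec:IR_half_plane}, and your argument combines exactly these ingredients, merely spelling out the content of the latter (Scott-opens of $\upclose(m,r)$ as Euclidean-open up-sets, Scott-continuity into $\LR$, $\UR$, etc.\ as semicontinuous monotone maps, and the constancy of $\church{\tRR}$ and $\church{\tRRub}$ via the bottom element $[0,\ell]$). No gaps beyond the facts you explicitly defer to \cref{sec:IR_half_plane}, which is precisely what the paper defers to as well.
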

\begin{proof}
  This follows directly from \cref{prop:Dedekind_cut_semantics,cor:sheaf_IR_Dedekind}.
\end{proof}

The following gives an alternate proof of \cref{prop:Dedekind_cut_semantics}. It is more technical, but it has the advantage that it proves
that the semantics of our internal addition, multiplication, etc., is just pointwise addition,
multiplication, etc.

\begin{lemma}\index{predomain!semantics}
  Let $B$ be a constant predomain, such that $\church{B}$ is the constant sheaf on a set
  $\tilde{B}$. If the predomain order $\prec\colon B\times B\to\Prop$ is identified with a subset
  $\prec$ of $\tilde{B}\times \tilde{B}$ as in \cref{rem:decidable}, then $(\tilde{B},\prec)$ is an
  external predomain, $\church{\RId(B)} \iso \ShFun{\RId(\tilde{B})}$, and $\church{\Opens(B)} \iso
  \ShFun{\Opens(\tilde{B})}$.

  If $H\colon B\to B'$ is a decidable approximable mapping between constant predomains, then
  $H\colon B\times B'\to\Prop$ can similarly be identified with a subset $\tilde{H}\subseteq
  \tilde{B}\times \tilde{B}'$. Then $\tilde{H}$ defines an approximable mapping between the external
  predomains $\tilde{B}$ and $\tilde{B}'$, such that the sheaf homomorphism
  $\church{\RId(H)}\in\church{\RId(B)\to\RId(B')}$ makes the diagram commute:
  \[\begin{tikzcd}[column sep=tiny]
    \church{\RId(B)} \ar[d,"\church{\RId(H)}"'] \ar[r,phantom,"\iso"] &
    \ShFun{\RId(\tilde{B})} \ar[d,"\ShFun{\RId(\tilde{H})}"] \\
    \church{\RId(B')} \ar[r,phantom,"\iso"] &
    \ShFun{\RId(\tilde{B}')}
  \end{tikzcd}\]
\end{lemma}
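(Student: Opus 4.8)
The plan is to reduce everything to the pointwise semantics established in \cref{prop:coherent_pointwise} and \cref{rem:decidable}, and then to recognize the resulting pointwise data as exactly the external constructions $\RId(\tilde B)$, $\Opens(\tilde B)$, and $\RId(\tilde H)$. First I would check that $(\tilde B,\prec)$ is an external predomain: since $B$ is a constant predomain, each predomain axiom (reflexivity of the way-below–style order, interpolation, etc.) is a coherent axiom in the sense of the discussion following \cref{prop:coherent_pointwise}, and by \cref{rem:decidable} a decidable predicate such as $\prec\colon B\times B\to\Prop$ is constant, i.e.\ $\prec_{[d,u]}$ is independent of $[d,u]$ and thus identifies with a fixed subset $\prec\subseteq\tilde B\times\tilde B$. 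Applying the ``$\imp/\forall = \top$'' clauses of \cref{prop:coherent_pointwise} to each predomain axiom then shows those axioms hold for $(\tilde B,\prec)$ externally, so $(\tilde B,\prec)$ is an external predomain and $\RId(\tilde B)$, $\Opens(\tilde B)$ make sense.

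Next I would compute $\church{\RId(B)}(\ell)$ for each $\ell\in\IRinv$. A rounded ideal of $B$ is, internally, a predicate $I\colon B\to\Prop$ satisfying a list of coherent axioms (inhabited, $\prec$-down-closed, rounded/directed upward along $\prec$), so a section $I\in\church{\RId(B)}(\ell)$ is a section of $\church{\Prop^B}(\ell)$ satisfying those axioms. Using the dual pointwise description from \cref{eqn:pointwise_semantics_notation}, such an $I$ is a Scott-continuous map $[d,u]\mapsto I_{[d,u]}\subseteq\tilde B$, and by \cref{prop:coherent_pointwise} the ideal axioms hold internally iff each $I_{[d,u]}$ is a rounded ideal of $(\tilde B,\prec)$ for every $[d,u]\in\upclose{[0,\ell]}$ — that is, iff the map $[d,u]\mapsto I_{[d,u]}$ is a continuous map $\upclose{[0,\ell]}\to\RId(\tilde B)$, where $\RId(\tilde B)$ carries the Scott topology. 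Hence $\church{\RId(B)}(\ell)\iso\ShFun{\RId(\tilde B)}(\ell)$, and one checks this is natural in $\ell$ by comparing restriction maps: internal restriction of $I$ along $\subint{r}{s}$ is $[d,u]\mapsto I_{[d,u]}$ on the subinterval $\upclose[r,\ell-s]$, which is precisely the restriction in $\ShFun{\RId(\tilde B)}$. The same argument verbatim, with ``rounded ideal'' replaced by ``open'' (a $\prec$-up-closed, $\prec$-directed-complete subset, again cut out by coherent axioms), gives $\church{\Opens(B)}\iso\ShFun{\Opens(\tilde B)}$.

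For the functorial statement, I would first observe that a decidable approximable mapping $H\colon B\to B'$ is, by \cref{rem:decidable}, a subset $\tilde H\subseteq\tilde B\times\tilde B'$, and that the approximable-mapping axioms are once more coherent, so \cref{prop:coherent_pointwise} forces $\tilde H$ to be an external approximable mapping $\tilde B\to\tilde B'$. Internally, $\RId(H)$ sends a rounded ideal $I$ to $\{\,b'\in B' \mid \exists b\in B\ldotp b\in I \wedge (b,b')\in H\,\}$, an \emph{existentially} quantified coherent formula, so the existential clause of \cref{prop:coherent_pointwise} gives $(\RId(H)(I))_{[d,u]} = \RId(\tilde H)(I_{[d,u]})$ at every point $[d,u]$; this is exactly the statement that the square commutes under the identifications already established. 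Finally, specializing $B$ to the constant predomain whose ideal completion is $\tRR$ (and its order-theoretic variants) recovers \cref{prop:Dedekind_cut_semantics}, now with the bonus that addition, multiplication, etc.\ — being approximable mappings of the above decidable form — are computed pointwise.

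The main obstacle I anticipate is the interpolation/roundedness step in the pointwise computation of $\church{\RId(B)}(\ell)$: one must be careful that Scott-continuity of $[d,u]\mapsto I_{[d,u]}$ into $\RId(\tilde B)$ is genuinely equivalent to the internal roundedness axiom, which via \cref{eqn:pointwise_semantics_notation} already bakes in a continuity condition ($I_{[d,u]}=\bigcup_{[d',u']\ll[d,u]}I_{[d',u']}$). Disentangling ``continuity in the $\upclose{[0,\ell]}$ variable'' from ``directedness/roundedness inside $\tilde B$'' — and confirming that the Scott topology on $\RId(\tilde B)$ is the one making these match — is the delicate point; everything else is a mechanical application of \cref{prop:coherent_pointwise}.
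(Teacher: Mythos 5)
Your proposal follows the paper's own argument essentially verbatim: identify the predomain, rounded-ideal, and approximable-mapping axioms as coherent, apply \cref{prop:coherent_pointwise} and \cref{rem:decidable} to get the pointwise characterizations, use the identification $\church{B\to\Prop}(\ell)\iso\Top(\upclose[0,\ell],\Bool^{\tilde B})$ for continuity, and compute $\RId(H)(I)_{[d,u]}=\RId(\tilde H)(I_{[d,u]})$ from the coherent existential formula to get the commuting square. The continuity subtlety you flag at the end is real but is treated just as briefly in the paper, so your write-up matches its level of rigor.
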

\begin{proof}
  This is a simple repeated application of \cref{prop:coherent_pointwise,rem:decidable}.

  $(\tilde{B},\prec)$ is an external predomain because the single axiom in \cref{def:predomain} is
  equivalent to a pair of coherent axioms\index{axiom!coherent}, while $\church{\RId(B)} \iso \ShFun{\RId(\tilde{B})}$
  because the condition for $I\colon B\to\Prop$ to be a rounded ideal\index{ideal!rounded} is
  equivalent to the collection of coherent axioms:
  \begin{gather*}
    \exists(b:B)\ldotp Ib \\
    \forall(b,b':B)\ldotp Ib \wedge b'\prec b \imp Ib' \\
    \forall(b_1,b_2:B)\ldotp Ib_1 \wedge Ib_2 \imp \exists(b':B)\ldotp b_1\prec b' \wedge b_2\prec
    b'
  \end{gather*}
  hence $I\in\church{B\to\Prop}(\ell)$ is in $\church{\RId(B)}(\ell)$ if and only if for all
  $[d,u]\in\upclose[0,\ell]$, $I_{[d,u]}\in\RId(\tilde{B})$. Finally, the isomorphism
  $\church{B\to\Prop}(\ell)\iso\Top(\upclose[0,\ell],\Bool^{\tilde{B}})$ shows that this assignment
  is continuous, giving $\church{\RId(B)}\iso\ShFun{\RId(\tilde{B})}$. Likewise for
  $\church{\Opens(B)}\iso\ShFun{\Opens(\tilde{B})}$.

  From \cref{rem:decidable}, if $H$ is decidable then it determines a subset $\tilde{H}\subset
  \tilde{B}\times \tilde{B}'$. The axioms in \cref{def:approx_map} are easily seen to be equivalent
  to a collection of coherent axioms, so $H$ is an approximable mapping if and only if $\tilde{H}$
  is an external approximable mapping. The commuting diagram follows from the fact that
  $\exists(b:B)\ldotp Ib\wedge H(b,b')$ is a coherent formula\index{formula, coherent}, so for any
  $I\in\church{\RId(B)}(\ell)$ and any $[d,u]\in\upclose[0,\ell]$,
  \[
    \RId(H)(I)_{[d,u]} = \{\,b'\in \tilde{B}' \mid \exists(b\in \tilde{B})\ldotp b\in I_{[d,u]}
    \wedge (b,b')\in H_{[d,u]}\,\} = \RId(\tilde{H})(I_{[d,u]}). \qedhere
  \]
\end{proof}

\begin{corollary}\label{cor:arithmetic_semantics}\index{semantics!of arithmetic}
  The semantics of arithmetic and order on $\tII$ agrees with pointwise arithmetic and order on $\ShFun{\II}$.
\end{corollary}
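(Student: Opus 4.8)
The plan is to reduce the statement entirely to the preceding lemma, applied with the constant predomain $B=\tQQ$.

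First I would recall (from \cref{def:local_reals}) that the numeric type $\tII$ is assembled from the rationals: $\tLR$ is the type $\RId(\tQQ)$ of rounded ideals of the constant predomain $\tQQ$, $\tUR$ is obtained analogously (using $\Opens(\tQQ)$, or equivalently $\RId$ of $\tQQ$ with its order reversed), and $\tII\iso\tLR\times\tUR$. Since the usual order on $\QQ$ is decidable, $\tQQ$ is a constant predomain in the sense of the preceding lemma, with $\tilde{\tQQ}=\QQ$ and $\prec$ identified with the usual strict order on $\QQ$. The lemma then gives $\church{\tLR}\iso\ShFun{\LR}$ and $\church{\tUR}\iso\ShFun{\UR}$, hence $\church{\tII}\iso\ShFun{\II}$; this is the isomorphism of \cref{prop:Dedekind_cut_semantics}, along which the arithmetic and order must be transported.

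Next I would observe that each arithmetic operation on $\tII$---addition, negation, multiplication, and so on---is, by its definition in the type theory, presented by a \emph{decidable} approximable mapping between finite products of copies of the constant predomain $\tQQ$: the defining relation of such an operation is a quantifier-free boolean combination of rational (in)equalities such as $q_1+q_2\prec q$ or $q\prec q_1\cdot q_2$, and rational arithmetic and order are decidable, so \cref{rem:decidable} applies and the mapping is identified with an honest external approximable mapping $\tilde H$ computing the same rational arithmetic. The commuting square in the preceding lemma then says that, under the isomorphism $\church{\tII}\iso\ShFun{\II}$, the internal operation $\church{\RId(H)}$ corresponds to $\ShFun{\RId(\tilde H)}$. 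Since $\tilde H$ induces a Scott-continuous map on ideal completions, $\ShFun{\RId(\tilde H)}$ is well-defined and acts by postcomposition; over a length $\ell$ it sends a continuous $\II$-valued function on $\upclose[0,\ell]$ to the result of applying the ordinary real-interval operation at each point. That is exactly pointwise arithmetic on $\ShFun{\II}$.

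It remains to handle the order relation $\leq$ on $\tII$. This is a coherent predicate in the underlying rational data---it unwinds to an existentially quantified rational inequality---so \cref{prop:coherent_pointwise} shows that its pointwise semantics $(\cdot\leq\cdot)_{[d,u]}$ is computed pointwise on $\upclose[0,\ell]$, matching the pointwise order on $\ShFun{\II}$. Combining this with the previous paragraph gives the corollary. The only real work, and hence the main obstacle, is the bookkeeping in the middle step: for each operation on $\tII$ one must write down the approximable mapping that presents it and check that it is decidable so that \cref{rem:decidable} and the preceding lemma apply. Addition and negation are immediate; multiplication is the fussiest, since its presentation involves a case split on the signs of the interval endpoints, but this split is still quantifier-free in rational inequalities and hence decidable. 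Once each operation is exhibited in this form, the conclusion drops out of the commuting square with no further analysis of the Kripke-Joyal semantics.
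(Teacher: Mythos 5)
Your proposal is correct and follows the same route the paper intends: the corollary is stated as an immediate consequence of the preceding lemma on constant predomains and decidable approximable mappings (which the paper explicitly introduces to show that internal addition, multiplication, etc.\ become pointwise operations), and you apply exactly that lemma with $B=\tQQ$, plus \cref{prop:coherent_pointwise} for the order. The extra bookkeeping you describe---exhibiting each operation as a decidable approximable mapping---is precisely the content the paper leaves implicit.
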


In \cref{sec:Dedekind_j}, we discussed the type of real numbers in various
subtoposes, namely those corresponding to modalities $\In{}$, $\At{}$, $\See{}$, and $\pi$. This
level of understanding is not necessary for the purposes of verifying the soundness of our axioms,
so we do not discuss it further now. However, we do need to better understand the semantics of these
modalities, so we turn to that in \cref{sec:semantics_of_modalities}. But first we discuss the semantics of $\Time$.

\subsection{Semantics of $\Time$}\label{sec:sem_of_time}

Recall that $\Time$\index{Time@$\Time$!semantics of} was defined as a sheaf in \cref{eqn:def_time},
by
\begin{equation}\label{eqn:def_time_again}
  \Time(\ell)=\{\, (d,u)\in\RR^2 \mid u-d=\ell \,\}
\end{equation}
where for any subinterval $\subint{r}{s}\colon\ell'\to\ell$, the restriction is given by
$\restrict{(d,u)}{\subint{r}{s}} = (d+r,u-s)$. This is an external definition; its internal counterpart is
also denoted $\Time$ and was defined in \cref{eqn:define_time} to be a subtype of $\tII$, the type of bounded
rounded pairs of cuts $(\delta,\upsilon):(\tQQ\to\Prop)^2$. Thus we want to show that the sheaf $\Time$ is a
candidate semantics for the type $\Time$; this is the content of the following lemma.

\begin{lemma}\label{lemma:Time_sub_II}
  The sheaf $\Time$ is isomorphic to a subsheaf of $\Church{\tII}$.
\end{lemma}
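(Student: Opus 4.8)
The plan is to identify $\church{\tII}$ with $\ShFun{\II}$ via \cref{prop:Dedekind_cut_semantics} and then write down an explicit monomorphism of sheaves $\iota\colon\Time\to\ShFun{\II}$. Recall that $\II\iso\LR\times\UR$, where $\LR$ is identified with $(\RR\cup\{\infty\},\leq)$ and $\UR$ with $(\RR\cup\{-\infty\},\geq)$, so that the information order on $\II$ is reverse inclusion of intervals: $(\ubar x,\bar x)\sqsubseteq(\ubar y,\bar y)$ iff $\ubar x\leq\ubar y$ and $\bar x\geq\bar y$. By \cref{def:ShFun}, a section of $\ShFun{\II}$ over $\ell$ is a monotone, directed-join-preserving map $\upclose[0,\ell]\to\II$, where $\upclose[0,\ell]$ is the poset of subintervals $[a,b]\ss[0,\ell]$ ordered by reverse inclusion (so $[a,b]\leq[a',b']$ iff $a\leq a'$ and $b'\leq b$, with directed joins $\bigsqcup_i[a_i,b_i]=[\sup_i a_i,\inf_i b_i]$ and least element $[0,\ell]$). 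For a section $(d,u)\in\Time(\ell)$---so $u-d=\ell$---I would set
\[
  \iota_\ell(d,u)\colon\upclose[0,\ell]\longrightarrow\II,\qquad [a,b]\longmapsto(d+a,\ d+b),
\]
reading $(d+a,\ d+b)$ as an element of $\LR\times\UR$. Morally, $(d,u)$ names the width-$\ell$ interval $[d,u]\ss\RR$, and $[a,b]\in\upclose[0,\ell]$ names the subinterval $[d+a,d+b]\ss[d,u]$; since $d+a\leq d+b\leq d+\ell=u$, the image in fact lies in the subsheaf $\church{\tIR}\ss\church{\tII}$.

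Three routine verifications then remain. (i) \emph{$\iota_\ell(d,u)$ is a well-defined section}: it is monotone---indeed an order-embedding---because $[a,b]\leq[a',b']$ means $a\leq a'$ and $b'\leq b$, whence $(d+a,d+b)\sqsubseteq(d+a',d+b')$; and it preserves directed joins since it carries $[\sup_i a_i,\inf_i b_i]$ to $(d+\sup_i a_i,\ d+\inf_i b_i)$, which is exactly the join of the images in $\LR\times\UR$. (ii) \emph{$\iota$ is natural}: for $\subint{r}{s}\colon\ell'\to\ell$, the restriction of $\Time$ sends $(d,u)\mapsto(d+r,u-s)$, while the restriction of $\ShFun{\II}$ precomposes with $\upclose[0,\ell']\iso\upclose[r,\ell-s]\hookrightarrow\upclose[0,\ell]$, i.e.\ with $[a,b]\mapsto[r+a,r+b]$; both composites send $[a,b]\in\upclose[0,\ell']$ to $\bigl((d+r)+a,\ (d+r)+b\bigr)$, so the naturality square commutes. (iii) \emph{$\iota_\ell$ is injective}: evaluating $\iota_\ell(d,u)$ at the least element $[0,\ell]\in\upclose[0,\ell]$ returns $(d,\ d+\ell)=(d,u)$, so $(d,u)$ is recovered from its image. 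Hence $\iota$ is a monomorphism of sheaves, and $\Time$ is isomorphic to its image, a subsheaf of $\ShFun{\II}\iso\church{\tII}$.

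There is essentially no hard step here; the only thing that requires care is keeping the order conventions straight---that the lower cut varies covariantly and the upper cut contravariantly in $\II$---so that the shift-by-$(+r,-s)$ behaviour of the restriction maps of $\Time$ lines up with precomposition by subinterval inclusions, and so that $\iota_\ell(d,u)$ comes out monotone rather than antitone. One could alternatively describe $\iota_\ell(d,u)$ directly as the pair of cuts $\bigl(\delta,\upsilon\bigr)$ with $q\in\delta([a,b])$ iff $q<d+a$ and $q\in\upsilon([a,b])$ iff $q>d+b$, checking Scott-openness of these subsets of $\upclose[0,\ell]$ by hand, but routing through \cref{prop:Dedekind_cut_semantics} makes the continuity automatic.
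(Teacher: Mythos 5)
Your proof is correct and follows essentially the same route as the paper: both identify $\church{\tII}$ with $\ShFun{\II}$ via \cref{prop:Dedekind_cut_semantics} and send $(d,u)\in\Time(\ell)$ to the Scott-continuous map $[a,b]\mapsto(d+a,d+b)$ (the paper writes the upper component as $u-(\ell-u')$, which equals $d+u'$ since $u-d=\ell$). Your explicit checks of monotonicity, continuity, naturality, and injectivity are exactly the verifications the paper declares ``clear.''
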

\begin{proof}
  It suffices to construct a monomorphism $c\colon\Time\to\Church{\tII}$. For any
  $\ell\in\IRinv\op$, we have from \cref{prop:Dedekind_cut_semantics} that $\Church{\tII}(\ell) \iso
  \ShFun{\II} \iso \ShFun{\LR}\times\ShFun{\UR}$. Given any section $(d,u)\in\Time(\ell)$, define
  $c_\ell(d,u)\coloneqq(\ubar{d},\bar{u})$, where $\ubar{d}\colon\upclose{[0,\ell]}\to\LR$ is the
  continuous map $\ubar{d}([d',u'])=d+d'$, and $\bar{u}\colon\upclose{[0,\ell]}\to\UR$ is the
  continuous map $\bar{u}([d',u'])=u-(\ell-u')$.

  The maps $\ubar{d}$ and $\bar{u}$ are obviously monotonic (recall that $\bar{\RR}=(\RR,\geq)$) and
  continuous. It is also clear that the function $c_\ell\colon\Time(\ell)\to\Church{\II}(\ell)$ is
  injective for all $\ell$, and that these functions commute with restrictions, thus defining a
  monomorphism $c\colon\Time\to\Church{\tII}$.
\end{proof}

For any $\ell$ and $t=(d_t,u_t):\Time(\ell)$ and $r\in\RR=\church{\tRR}(\ell)$, we have
\begin{equation}\label{eqn:semantics_time}
  (\ell\Vdash r<t)\;\text{ iff }\;r\leq d_t
  \quad\qquad\text{and}\qquad\quad
  (\ell\Vdash t<r)\;\text{ iff }\;u_t\leq r.
\end{equation}
Indeed, by \cref{cor:arithmetic_semantics} and the above proof, $\ell\Vdash r<t$ holds iff for all $d'\in\upclose{[0,\ell]}$ we have $r<d_t+d'$, which is equivalent to $r\leq d_t$.

\begin{remark}
We saw in \cref{ex:Time_subtype_rvar} that, internally, there is a containment of types $\Time\ss\tRR_\pi$, and by definition there is a containment of types $\Time=\{t:\tII\mid\unitspeed t\}\ss\tII$.%
\footnote{We will see in \cref{prop:pi_inclusion_arith} that there is also containment of types $\tRR_\pi\ss\tII$.}
In \cref{lemma:Time_sub_II} we showed that the sheaf $\Time$ is a valid semantics for this type, but in fact it is not the only one.

As the name of the atomic term ``$\unitspeed$\!'' suggests, our axioms only constrain the semantics of the type $\Time=\{t:\tII\mid\unitspeed t\}$ to be a sheaf of ``unit-speed'' real-valued functions. More precisely, for any nonzero real number $0\neq r\in\RR$, there is a sheaf $\Time_r$ given by $\Time_r(\ell)\coloneqq\{\,(d,u)\in\RR^2\mid u-d=\ell*r\}$.

This is significant when thinking about the semantics of $\Time$. Our convention in \cref{eqn:def_time_again} and throughout this book is $\Time=\Time_1$. From this point of view one should interpret each section of time as a \emph{clock behavior}, counting upward like a stopwatch. While this is straightforward, it does have counter-intuitive aspects if we accidentally think of $t:\Time$ as indicating a point in time, e.g.\ the point where the clock shows $0$. This is often the natural interpretation of the ``rough monotonicity'' formula \eqref{eqn:relaxed_monotonicity} or simply the monotonicity formula
\begin{equation}\label{eqn:monotonicity}
  \forall(t_1,t_2:\Time)\ldotp t_1\leq t_2\imp f(t_1)\leq f(t_2).
\end{equation}
Namely it looks like $t_1\leq t_2$ means $t_1$ comes before $t_2$. But in fact $t_1\leq t_2$ means clock $t_1$ shows a value that is less than clock $t_2$, so its $0$-point $t_1$ comes \emph{after} that of $t_2$. Thus \cref{eqn:monotonicity} actually says that $f$ is \emph{decreasing}!

Consider instead the sheaf $\Time_{-1}$. Here it is appropriate to interpret $t:\Time$ as indicating a \emph{moment in time}, with sections acting as a count-down. Using $\Time_{-1}$ as semantics, \cref{eqn:monotonicity} indeed says that $f$ is increasing.

One way to get the best of both worlds is to use $\Time_1$, as we do, but to restrict oneself to terms in which at most one $\Time$ variable occurs, using the torsor axiom \cref{ax:torsor} to convert all other occurrances to real numbers. For example, the above formula becomes
\[\forall(t:\Time)(r:\tRR)\ldotp(0\leq r)\imp f(t)\leq f(t+r),\]
and it has the expected semantics, that $f$ is increasing.
\index{Time@$\Time$!direction of}
\end{remark}

\section{Semantics of the modalities
  \texorpdfstring{$\In{}$, $\See{}$, $\At{}$, and $\pi$}%
  {\unichar{"1D704}, @, \unichar{"1D709}, and \unichar{"1D70B}}}
  \label{sec:semantics_of_modalities}\index{modality!semantics of}

Recall that for any $d,u:\tQQ$ the temporal modalities $\In{[d,u]}$, $\See{[d,u]}$, and $\At{[d,u]}$
are defined by
\begin{gather*}
  \In{[d,u]}P\coloneqq (d<t<u)\imp P
  \qquad\qquad
  \See{[d,u]}P\coloneqq(t\apart[u,d])\vee P
  \\
  \At{[d,u]}P\coloneqq(P\imp t\apart[u,d])\imp t\apart[u,d]
\end{gather*}
where $t\apart[u,d]$ means $(t<u)\vee(d<t)$. The modality $\pi:\Prop\to\Prop$ is defined by
\[
  \pi P\coloneqq\forall(t:\Time)\ldotp\At{[0,0]}P.
\]

\begin{proposition}\label{prop:semantics_In_At_See}
  Suppose given $P\in\church{\Prop}(\ell)$ and $t=(d_t,u_t)\in\church{\Time}(\ell)$, for some
  $\ell$, as well as $d,u\in\RR=\church{\tRR}$. Consider the open set
  $P\in\Omega(\upclose[d_t,u_t])$.
  \begin{enumerate}\itemsep=2pt
    \item If $u\leq d$ then $\ell\Vdash\In[t]{[d,u]}P$.
    \item If $d<u$ then $\ell\Vdash\In[t]{[d,u]}P$ iff $\upclose[d,u]\cap\upclose[d_t,u_t]\ss P$.
    \item $\ell\Vdash\See[t]{[d,u]}P$ iff it is the case that $[d,u]\in\upclose[d_t,u_t]$ implies
      $P=\upclose[d_t,u_t]$.
    \item Suppose $d\leq u$. Then $\ell\Vdash\At[t]{[d,u]}P$ iff $[d,u]\in\upclose[d_t,u_t]$ implies
      $[d,u]\in P$.
    \item Suppose $u<d$. Then $\ell\Vdash\At[t]{[d,u]}P$ iff $\upclose[u,d]\cap
      \upclose[d_t,u_t]\neq\varnothing$ implies $\upclose[u,d]\cap P\neq \varnothing$.
  \end{enumerate}
\end{proposition}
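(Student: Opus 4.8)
The plan is to reduce each forcing statement, via the Kripke--Joyal clauses of \cref{table:Joyal_Kripke} and the time semantics \eqref{eqn:semantics_time}, to a concrete condition on $P$ viewed as an open subset of $\upclose[d_t,u_t]$; the five items then fall out by short case analyses. First I would fix the dictionary used throughout. Via the order-isomorphism $\upclose[0,\ell]\iso\upclose[d_t,u_t]$ given by translation by $d_t$ (legitimate as $u_t-d_t=\ell$), identify $\church{\Prop}(\ell)=\Omega(\upclose[0,\ell])$ with the Scott-open subsets of $\upclose[d_t,u_t]$; in particular $P$ is upward-closed for reverse inclusion, so $\upclose[x,y]\ss P$ iff $[x,y]\in P$. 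For any subinterval $\subint{r}{s}\colon\ell'\to\ell$ --- necessarily with $\ell'>0$, so that $[d_t+r,u_t-s]$ is a \emph{non-degenerate} subinterval of $[d_t,u_t]$ --- one has $\restrict{t}{\subint{r}{s}}=(d_t+r,u_t-s)$, and since $\restrict{P}{\subint{r}{s}}$ corresponds to $P\cap\upclose[d_t+r,u_t-s]$ one gets $\ell'\Vdash\restrict{P}{\subint{r}{s}}$ iff $[d_t+r,u_t-s]\in P$; these intervals exhaust all non-degenerate subintervals of $[d_t,u_t]$ as $\subint{r}{s}$ varies. Applying \eqref{eqn:semantics_time} at level $\ell'$ together with the conjunction and the (simplified, cf.\ \cref{rem:usual_JK_sem}) disjunction clauses then gives $\ell'\Vdash\restrict{(d<t<u)}{\subint{r}{s}}$ iff $d\le d_t+r$ and $u_t-s\le u$, and $\ell'\Vdash\restrict{(t\apart[u,d])}{\subint{r}{s}}$ iff $u_t-s\le u$ or $d\le d_t+r$.

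Items (1)--(3) are then immediate. Since $\In[t]{[d,u]}P=(d<t<u)\imp P$, the $\imp$-clause gives that $\ell\Vdash\In[t]{[d,u]}P$ iff every non-degenerate subinterval $[x,y]$ of $[d_t,u_t]$ with $d\le x$ and $y\le u$ lies in $P$: when $u\le d$ there is no such interval, which is (1); when $d<u$ these intervals are exactly the non-degenerate members of $\upclose[d,u]\cap\upclose[d_t,u_t]$, and using upward-closure of $P$ to include the boundary the condition becomes $\upclose[d,u]\cap\upclose[d_t,u_t]\ss P$, which is (2). Since $\See[t]{[d,u]}P=(t\apart[u,d])\vee P$, the simplified disjunction clause gives $\ell\Vdash\See[t]{[d,u]}P$ iff $\ell\Vdash t\apart[u,d]$ or $\ell\Vdash P$, i.e.\ iff $u_t\le u$ or $d\le d_t$ or $P=\upclose[d_t,u_t]$; rewriting ``$u_t\le u$ or $d\le d_t$'' as the negation of ``$[d,u]\in\upclose[d_t,u_t]$'' yields (3).

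Items (4)--(5), where $\At[t]{[d,u]}P=\big((P\imp t\apart[u,d])\imp t\apart[u,d]\big)$, are the heart of the matter, and I expect the bookkeeping here to be the main obstacle. Writing $Q=t\apart[u,d]$, one peels off \emph{both} implications with the $\imp$-clause to see that $\ell\Vdash\At[t]{[d,u]}P$ fails exactly when there is a non-degenerate subinterval $I_0=[d_t+r,u_t-s]$ of $[d_t,u_t]$ at which $Q$ is not forced --- which by the dictionary means $I_0$ strictly contains $[d,u]$ when $d\le u$, resp.\ $I_0$ meets the open interval $(u,d)$ when $u<d$ --- such that no non-degenerate subinterval of $I_0$ with that same property belongs to $P$. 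Then, using continuity of $\upclose[d_t,u_t]$ (so the subintervals of any such $I_0$ with the property are cofinal among all subintervals of $[d_t,u_t]$ with it) and the way-below description of Scott-opens (\cref{prop:continuous_map}), one argues this is equivalent to ``some such $I_0$ exists'' together with ``$[d,u]\notin P$'' --- i.e.\ $[d,u]\in\upclose[d_t,u_t]$ and $[d,u]\notin P$ --- in the case $d\le u$, and to ``$\upclose[u,d]\cap\upclose[d_t,u_t]\ne\varnothing$'' together with ``$\upclose[u,d]\cap P=\varnothing$'' in the case $u<d$; negating gives (4) and (5). The delicate steps are tracking the two nested Kripke--Joyal quantifiers, the cofinality claim, and reconciling the strict/non-strict endpoint conventions hidden in $t\apart[u,d]$ with those in ``$[d,u]\in\upclose[d_t,u_t]$''; it is here that non-degeneracy of every interval in sight (forced by $\ell'>0$) and the openness of $P$ do the work.
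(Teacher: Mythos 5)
Your overall strategy is the same as the paper's: translate forcing through \cref{table:Joyal_Kripke} and \cref{eqn:semantics_time} into statements about which nondegenerate subintervals $[d_t+r,u_t-s]$ lie in the Scott-open $P$, then case-analyse. Your items (1)--(3) and (4) match the paper's proof step for step (the paper likewise handles (2) by passing to the extremal restriction and (4) by contraposing the nested implications and then using up-closure and Scott-openness of $P$ to trade witnesses strictly containing $[d,u]$ for membership $[d,u]\in P$), and the strict-versus-nonstrict boundary mismatch you flag (e.g.\ $d=d_t$, where the forcing is vacuous but the stated right-hand side is not) is present to exactly the same extent in the paper's own statement and proof, so I do not count it against you.

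The genuine gap is the ``cofinality claim'' you lean on in case (5). When $u<d$, two nondegenerate subintervals of $[d_t,u_t]$ that each meet $(u,d)$ need not have a common refinement meeting $(u,d)$, so the subintervals of a fixed bad $I_0$ with that property are \emph{not} cofinal among all subintervals with it; consequently ``there is a bad $I_0$ containing no witness'' is not equivalent to ``$\upclose[u,d]\cap P=\varnothing$''. Concretely, take $\ell=12$, $t=(d_t,u_t)=(-1,11)$, $u=0$, $d=10$, and $P=\{[x,y]\mid 0.9<x\le y<2.1\}$, which is Scott-open. Restricting to $[5,6]$, the formula $P\imp t\apart[u,d]$ is forced vacuously (no subinterval of $[5,6]$ lies in $P$) while $t\apart[u,d]$ is not forced there, so $\ell\not\Vdash\At[t]{[d,u]}P$; yet $\upclose[u,d]\cap P\ni[1,2]$ is nonempty, so your claimed equivalence fails, and no argument along these lines can close case (5) in the stated form. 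What the Kripke--Joyal clauses actually unwind to is a \emph{density} condition: every nondegenerate subinterval of $[d_t,u_t]$ meeting $(u,d)$ has a further nondegenerate subinterval meeting $(u,d)$ that lies in $P$, which is strictly stronger than $\upclose[u,d]\cap P\neq\varnothing$. For comparison, the paper's own proof of claim 5 makes the very same single-global-witness reduction, so your sketch is faithful to it; but whereas in case (4) that move is harmless (every interval in sight contains $[d,u]$, so a witness can be intersected into any bad restriction), in case (5) it is exactly where the argument breaks, and your appeal to continuity of the domain plus openness of $P$ does not repair it.
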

\begin{proof}
  The meaning of $\ell\Vdash\In{[d,u]}P$ is the following statement: for any
  $\subint{r}{s}\colon\ell'\to\ell$, if $d\leq d_t+r\leq u_t-s\leq u$ then $\ell'\Vdash
  \restrict{P}{\subint{r}{s}}$. Since $\ell'>0$, this statement is vacuously true if $u\leq d$, in
  which case $\ell\Vdash\In{[d,u]}P$; this proves claim 1. If $d<u$, the statement is true for all
  $r,s$ iff it is true when $r\coloneqq\max(0,d-d_t)$ and $s\coloneqq\max(0,u_t-u)$, i.e.\ when
  $d_t+r=\max(d_t,d)$ and $u_t-s=\min(u_t,u)$. This is the case if
  $\upclose[d,u]\cap\upclose[d_t,u_t]\ss P$, proving claim 2.

  By \cref{eqn:semantics_time} we have $\ell\Vdash (d<t)\vee(t<u)\vee P$ iff either $d\leq d_t$ or
  $u_t\leq u$ or $\ell\Vdash P$. Claim 3 follows because $\ell\Vdash P$ iff $P=\upclose[d_t,u_t]$.

  For claim 4, suppose $d\leq u$. Then $\ell\Vdash(P\imp t\apart[u,d])\imp t\apart [u,d]$ means, by
  contrapositive, that if $d_t<d\leq u<u_t$ then there exists some $\subint{r}{s}\colon\ell'\to\ell$
  such that $d_t+r<d\leq u<u_t-s$ and $\upclose[d_t+r,u_t-s]\ss P$. The hypothesis is equivalent to
  $[d,u]\in\upclose[d_t,u_t]$ and the conclusion is equivalent to $[d,u]\in P$.

  For claim 5, suppose $u<d$. Then one can prove by cases that $(d_t<d)\wedge(u<u_t)$ is equivalent
  to $\upclose[u,d]\cap\upclose[d_t,u_t]\neq\varnothing$. Again by contrapositive,
  $\ell\Vdash\At{[d,u]}P$ means that if $\upclose[u,d]\cap\upclose[d_t,u_t]\neq\varnothing$ then
  there exists $\subint{r}{s}\colon\ell'\to\ell$ such that
  $\upclose[u,d]\cap\upclose[d_t+r,u_t-s]\neq\varnothing$ and $\upclose[d_t+r,u_t-s]\ss P$. The
  conclusion is equivalent to saying there exists $d'<u'$ such that
  $\upclose[d',u']\ss\upclose[u,d]\cap P$. This clearly implies that $\upclose[u,d]\cap P\neq
  \varnothing$, and the converse follows from the fact that $\IR$ has a basis consisting of
  intervals $\upclose[d',u']$ with $d'<u'$.
\end{proof}

\begin{corollary}\label{cor:semantics_In_At_See}
  Suppose given $P\in\church{\Prop}(\ell)$ and $t=(d_t,u_t)\in\church{\Time}(\ell)$, for some
  $\ell$, as well as $d,u\in\RR=\church{\tRR}$ with $d\leq u$.
  \begin{enumerate}\itemsep=2pt
    \item $\ell\Vdash\In[t]{[d,u]}P$ iff either $d=u$ or $\ell'\Vdash \restrict{P}{\subint{r}{s}}$,
      where $\subint{r}{s}\colon\ell'\to\ell$ is given by $r\coloneqq\max(0,d-d_t)$ and $s\coloneqq
      \max(0,u_t-u)$.
    \item $\ell\Vdash\See[t]{[d,u]}P$ iff it is the case that $d_t<d\leq u<u_t$ implies $\ell\Vdash
      P$.
    \item $\ell\Vdash\At[t]{[d,u]}P$ iff it is the case that $d_t<d\leq u<u_t$ implies: there exists
      $\subint{r}{s}\colon \ell'\to\ell$ with $d_t+r<d\leq u<u_t-s$, such that $\ell'\Vdash
      \restrict{P}{\subint{r}{s}}$.
  \end{enumerate}
\end{corollary}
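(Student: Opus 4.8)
The plan is to derive \cref{cor:semantics_In_At_See} directly from \cref{prop:semantics_In_At_See} by restricting to the case $d \leq u$ and massaging each of the three semantic characterizations into the ``forcing'' form stated in the corollary. Each clause is a short translation exercise, so I would present it as three short paragraphs, one per item.

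For item 1, I would start from \cref{prop:semantics_In_At_See}(1)--(2). If $d = u$ we are not literally in the case $u \leq d$ being strict, but $u \leq d$ holds, so clause (1) of the proposition gives $\ell \Vdash \In[t]{[d,u]}P$ unconditionally; this matches the ``$d = u$'' disjunct. If $d < u$, clause (2) says $\ell \Vdash \In[t]{[d,u]}P$ iff $\upclose[d,u] \cap \upclose[d_t,u_t] \subseteq P$. The remaining work is to check that, with $r := \max(0, d - d_t)$ and $s := \max(0, u_t - u)$, the section $\restrict{P}{\subint{r}{s}}$ being forced is equivalent to that inclusion. Indeed $\restrict{P}{\subint{r}{s}} \in \Omega(\upclose[d_t + r, u_t - s])$ and $[d_t+r, u_t-s] = [\max(d_t,d), \min(u_t,u)]$, so $\ell' \Vdash \restrict{P}{\subint{r}{s}}$ means exactly $\upclose[d_t+r,u_t-s] = \upclose[\max(d_t,d),\min(u_t,u)] \subseteq P$, which is the same subset as $\upclose[d,u] \cap \upclose[d_t,u_t]$. (One has to note $\subint{r}{s}$ is a genuine arrow $\ell' \to \ell$ precisely because $d \leq u$ forces $d_t + r \leq u_t - s$; if $\upclose[d,u] \cap \upclose[d_t,u_t] = \varnothing$ then there is no such arrow and both sides are vacuous/false consistently, which is a small case to handle.)

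For item 2, since $d \leq u$ we use \cref{prop:semantics_In_At_See}(3): $\ell \Vdash \See[t]{[d,u]}P$ iff $[d,u] \in \upclose[d_t,u_t]$ implies $P = \upclose[d_t,u_t]$. Now $[d,u] \in \upclose[d_t,u_t]$ is the condition $d_t \leq d$ and $u \leq u_t$, whereas the corollary's hypothesis is the strict $d_t < d \leq u < u_t$. These differ, so I would argue the two implications are nonetheless equivalent: the corollary's version is formally weaker (fewer hypotheses trigger it), but by monotonicity/local character — or more simply because the basis of $\IR$ consists of strict intervals, the argument already used in the proof of \cref{prop:semantics_In_At_See}(5) — the strict version implies the non-strict one. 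Concretely, if $[d,u] \in \upclose[d_t,u_t]$ but we only have the strict implication, shrink to a strict subinterval $[d',u']$ with $d_t < d' \leq d \leq u \leq u' < u_t$... wait, that goes the wrong way; rather one enlarges slightly, or one observes $P = \upclose[d_t,u_t]$ is the condition $\ell \Vdash P$ which by monotonicity can be tested on any wavy cover, and the strict intervals $[d,u]$ with $d_t < d \leq u < u_t$ together with the endpoint cases can be reassembled via local character. The cleanest route is: $\ell \Vdash P$ iff $\ell' \Vdash \restrict{P}{\subint{r}{s}}$ for all wavy $\subint{r}{s}$ (local character), and for a wavy arrow $d_t + r < u_t - s$, so the strict inequality is automatic; combined with \cref{eqn:semantics_time} this pins down item 2.

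For item 3, with $d \leq u$ we invoke \cref{prop:semantics_In_At_See}(4): $\ell \Vdash \At[t]{[d,u]}P$ iff $[d,u] \in \upclose[d_t,u_t]$ implies $[d,u] \in P$. Again replace $[d,u] \in \upclose[d_t,u_t]$ by the strict $d_t < d \leq u < u_t$ (same equivalence as above), and translate ``$[d,u] \in P$'' into forcing language: since $P \in \Omega(\upclose[d_t,u_t])$ is a Scott-open, $[d,u] \in P$ holds iff there is a strict subinterval $\subint{r}{s} \colon \ell' \to \ell$ with $d_t + r < d \leq u < u_t - s$ and $\upclose[d_t+r, u_t-s] \subseteq P$, i.e.\ $\ell' \Vdash \restrict{P}{\subint{r}{s}}$ — this is exactly the description of points of $\IR$ via the strict-interval basis, used verbatim at the end of the proof of \cref{prop:semantics_In_At_See}. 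Assembling these three translations completes the proof.

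The main obstacle is bookkeeping rather than depth: reconciling the strict inequalities $d_t < d \leq u < u_t$ in the corollary with the non-strict membership conditions $[d,u] \in \upclose[d_t,u_t]$ coming out of \cref{prop:semantics_In_At_See}, and confirming in each clause that the apparent weakening is not a real one, using the fact (already exploited in the proof of the proposition) that $\IR$ has a basis of strict intervals together with monotonicity and local character. I would isolate that equivalence once and cite it in all three clauses. \qed
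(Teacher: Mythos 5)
Your overall route---reading each clause of \cref{cor:semantics_In_At_See} off the corresponding clause of \cref{prop:semantics_In_At_See}---is the paper's own (its proof is ``follows directly''), and your item 1 and your translation of ``$[d,u]\in P$'' via the basis of strict intervals in item 3 are fine. The genuine problem is precisely the issue you single out as ``the main obstacle'': the supposed mismatch between the strict inequalities $d_t<d\leq u<u_t$ in the corollary and the condition $[d,u]\in\upclose[d_t,u_t]$ in the proposition. There is no mismatch to reconcile. In this paper $\upclose[d_t,u_t]$ is the \emph{way-above} set, not the order-theoretic up-set: its elements are exactly the $[d,u]$ with $d_t<d\leq u<u_t$ (see the convention around \cref{eqn:pointwise_semantics_notation}, where sections over $\ell$ are indexed by $0<d\leq u<\ell$, and the proof of \cref{prop:semantics_In_At_See}, which states explicitly that the hypothesis $d_t<d\leq u<u_t$ ``is equivalent to $[d,u]\in\upclose[d_t,u_t]$''). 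With that reading, items 2 and 3 of the corollary are literal restatements of clauses 3 and 4 of the proposition, using only $\ell\Vdash P\iff P=\upclose[d_t,u_t]$ and Scott-openness of $P$.

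Because you read $\upclose$ as the non-strict up-set, you then try to close a gap that, under your reading, cannot be closed: take $d=d_t$ with $d\leq u<u_t$ and $\ell\not\Vdash P$; by \cref{eqn:semantics_time} we have $\ell\Vdash d<t$, hence $\ell\Vdash\See[t]{[d,u]}P$ regardless of $P$, while your (non-strict) version of clause 3 of the proposition would demand $\ell\Vdash P$. So the two formulations genuinely differ at the boundary, and no appeal to monotonicity or local character can identify them; your own text stalls at this point (``wait, that goes the wrong way''), and the ``cleanest route'' sentence is not an argument. A smaller slip of the same flavor appears in item 1: $d\leq u$ does not force $d_t+r\leq u_t-s$ (take $[d,u]$ disjoint from $[d_t,u_t]$); you do flag the empty-intersection case, but the justification given for it is wrong as stated. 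Once $\upclose$ is read correctly, all of these difficulties evaporate and the corollary really is immediate from the proposition.
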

\begin{proof}
  Follows directly from \cref{prop:semantics_In_At_See}.
\end{proof}

\begin{proposition}\label{prop:semantics_pi}
  Suppose given $P\in\church{\Prop}(\ell)$ for some $\ell\in\IR$. Then $\ell\Vdash\pi P$ iff, for
  all $0<a<\ell$ there exists $\subint{r}{s}\colon\ell'\to\ell$ such that $r<a<\ell-s$ and
  $\ell'\Vdash\restrict{P}{\subint{r}{s}}$.
\end{proposition}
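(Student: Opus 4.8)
The plan is to unfold the definition $\pi P=\forall(t:\Time)\ldotp\At[t]{[0,0]}P$ using the Kripke--Joyal rule for the universal quantifier (\cref{table:Joyal_Kripke}(vi)) together with the semantics of $\At{[0,0]}$ already established in \cref{cor:semantics_In_At_See}(3), and then to translate the resulting condition into the interval coordinates appearing in the statement.

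First I would spell out the forcing clause: $\ell\Vdash\pi P$ iff for every $\subint{R}{S}\colon\ell_1\to\ell$ and every $t\in\church{\Time}(\ell_1)$ we have $\ell_1\Vdash\At[t]{[0,0]}(\restrict{P}{\subint{R}{S}})$. The first reduction is to show that this is equivalent to the apparently weaker statement that $\ell\Vdash\At[t]{[0,0]}P$ for all $t\in\church{\Time}(\ell)$. One direction is immediate, taking $\subint{R}{S}=\subint{0}{0}$. For the converse, I would use that the restriction maps of $\church{\Time}$ are bijections: given $t=(d,u)\in\church{\Time}(\ell_1)$ and $\subint{R}{S}\colon\ell_1\to\ell$, the pair $\hat t\coloneqq(d-R,u+S)$ lies in $\church{\Time}(\ell)$, since $(u+S)-(d-R)=\ell_1+R+S=\ell$, and restricts to $t$. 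Then monotonicity, applied to the predicate $\At{[0,0]}(-)$ on $\church{\Time}\times\church{\Prop}$ at the section $(\hat t,P)$, converts $\ell\Vdash\At[\hat t]{[0,0]}P$ into $\ell_1\Vdash\At[t]{[0,0]}(\restrict{P}{\subint{R}{S}})$, which is exactly what the $\forall$ clause demands.

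Next I would invoke \cref{cor:semantics_In_At_See}(3) with $d=u=0$: for $t=(d_t,u_t)\in\church{\Time}(\ell)$, the statement $\ell\Vdash\At[t]{[0,0]}P$ holds iff ``$d_t<0<u_t$'' implies there exists $\subint{r}{s}\colon\ell'\to\ell$ with $d_t+r<0<u_t-s$ and $\ell'\Vdash\restrict{P}{\subint{r}{s}}$. It then remains to reparametrize. Since $u_t-d_t=\ell$, the hypothesis $d_t<0<u_t$ is equivalent to $-\ell<d_t<0$, so setting $a\coloneqq-d_t$ it becomes $0<a<\ell$; and as $t$ ranges over $\church{\Time}(\ell)$, the value $a$ ranges over all of $(0,\ell)$, with $t=(-a,\ell-a)$, while sections with $d_t\geq0$ or $u_t\leq0$ satisfy $\ell\Vdash\At[t]{[0,0]}P$ vacuously and impose no condition. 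Finally $d_t+r<0$ becomes $r<a$, and $u_t-s>0$ becomes $s<\ell-a$, i.e.\ $a<\ell-s$; together these say $r<a<\ell-s$. Combining everything yields: $\ell\Vdash\pi P$ iff for every $a\in(0,\ell)$ there is $\subint{r}{s}\colon\ell'\to\ell$ with $r<a<\ell-s$ and $\ell'\Vdash\restrict{P}{\subint{r}{s}}$, which is the claim.

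I expect the only slightly delicate point to be the first reduction, i.e.\ checking that the universal quantifier over all restrictions $\subint{R}{S}\colon\ell_1\to\ell$ collapses to the single stage $\ell$; this rests on the bijectivity of the restriction maps of $\church{\Time}$ (its torsor structure over $\RR$) combined with monotonicity. Everything after that step is routine bookkeeping with the interval coordinates $(d_t,u_t)$ and the substitution $a=-d_t$.
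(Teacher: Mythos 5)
Your proof is correct and follows essentially the same route as the paper's: unfold $\pi P$ as $\forall(t:\Time)\ldotp\At[t]{[0,0]}P$, apply \cref{cor:semantics_In_At_See}(3) with $d=u=0$, and reparametrize via $a=-d_t$. The only difference is that you spell out two points the paper leaves implicit---the reduction of the $\forall(t:\Time)$ forcing clause to the single stage $\ell$ (using surjectivity of the restriction maps of $\church{\Time}$ together with monotonicity) and the converse implication---and both are handled correctly.
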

\begin{proof}
  $\ell\Vdash\pi P$ implies that for all $t\in\Time(\ell)$ we have $\At[t]{[0,0]}P$. By
  \cref{cor:semantics_In_At_See}, this implies that for all $d<0<d+\ell$ there exists
  $\subint{r}{s}\colon\ell'\to\ell$ with $0\leq r<-d$ and $0\leq s<d+\ell$, such that $\ell'\Vdash
  \restrict{P}{\subint{r}{s}}$. We have the result by letting $a\coloneqq -d$.
\end{proof}

Suppose $\tConst$ is the constant sheaf on the set $C$. For any $\phi\in\church{\Prop^\tConst}$,
recall the pointwise-semantics notation $\phi_{[d,u]}\ss C$ from
\cref{eqn:pointwise_semantics_notation}.
	
\begin{proposition}\label{prop:modality_semantics_pointwise}
  Choose $P\in\church{\Prop^\tConst}(\ell)$ and $t=(d_t,u_t)\in\church{\Time}(\ell)$, and
  take $[d',u']\in\upclose{[d_t,u_t]}$. For any $c\in C$ and $d,u\in\RR$,
  \begin{itemize}
    \item $c\in(\See{[d,u]}P)_{[d',u']}$ iff $[d',u']\sqss[d,u] \imp c\in P_{[d',u']}$.
    \item $c\in(\At{[d,u]}P)_{[d',u']}$ iff $[d',u']\sqss[d,u] \imp c\in P_{[d,u]}$.
    \item $c\in(\pi P)_{[d',u']}$ iff $c\in P_{[q,q]}$ for all $q$ with $d'\leq q\leq u'$.
  \end{itemize}
  Moreover,
  \begin{itemize}
    \item $P(c)$ is $\See{[d,u]}$-closed for all $c\in C$ iff, for all $[d',u']\in\upclose{[d_t,u_t]}$, either $[d',u']\sqss[d,u]$ or $P_{[d',u']}=C$.
    \item $P(c)$ is $\At{[d,u]}$-closed for all $c\in C$ iff, for all
      $[d',u']\in\upclose{[d_t,u_t]}$, if $[d',u']\sqss[d,u]$ then $P_{[d',u']}=P_{[d,u]}$,
      and if $[d',u']\not\sqss[d,u]$, then $P_{[d',u']}=C$.
    \item $P(c)$ is $\pi$-closed for all $c\in C$ iff for all $[d',u']\in\upclose{[0,\ell]}$,
      $P_{[d',u']}=\bigcap_{[d',u']\sqss[q,q]} P_{[q,q]}$.
  \end{itemize}
\end{proposition}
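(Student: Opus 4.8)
The plan is to deduce all six assertions from the semantic characterizations already in hand---the $\See$ and $\At$ clauses of \cref{cor:semantics_In_At_See} and the description of $\pi$ in \cref{prop:semantics_pi}---by running them through the pointwise dictionary of \cref{eqn:pointwise_semantics_notation}: $c\in\phi_{[d',u']}$ holds iff there is a wavy arrow $\subint{r}{s}\colon\ell'\to\ell$ whose subinterval is a neighbourhood of $[d',u']$ (inside $\upclose{[d_t,u_t]}$ this means $d_t+r<d'$ and $u'<u_t-s$) such that $\ell'\Vdash(\restrict{\phi}{\subint{r}{s}})(c)$. Two observations make this routine. First, restriction commutes with each modality: with $t'=(d_t+r,u_t-s)=\restrict{t}{\subint{r}{s}}$ one has $\restrict{(\See[t]{[d,u]}P)}{\subint{r}{s}}=\See[t']{[d,u]}(\restrict{P}{\subint{r}{s}})$, and similarly for $\At$ and for $\pi$ (which does not depend on $t$), so the level-$\ell'$ characterizations apply verbatim to the restricted data. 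Second, as in the proofs of \cref{rem:usual_JK_sem} and \cref{prop:coherent_pointwise}, a witnessing neighbourhood may always be shrunk and still be a neighbourhood of $[d',u']$; this lets one trade ``for all / there exists $\subint{r'}{s'}\colon\ell''\to\ell'$'' for ``for all / there exists a point $[q,q]$ arbitrarily close to $[d',u']$'', which is exactly the shape of the desired statements.

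For the $\See$ bullet one can sidestep even this, since $\See{[d,u]}P(c)=(t\apart[u,d])\vee P(c)$ is a coherent formula: \cref{prop:coherent_pointwise} gives $(\See{[d,u]}P)_{[d',u']}=(t\apart[u,d])_{[d',u']}\cup P_{[d',u']}$, and as $t\apart[u,d]$ does not involve $c$, the set $(t\apart[u,d])_{[d',u']}$ is either $C$ or $\varnothing$. Using \cref{eqn:semantics_time} together with the shrinking observation, one checks it equals $C$ exactly when $[d',u']\not\sqss[d,u]$ and $\varnothing$ otherwise, whence $(\See{[d,u]}P)_{[d',u']}=\{c : [d',u']\sqss[d,u]\imp c\in P_{[d',u']}\}$ as claimed. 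For the $\At$ bullet one instead applies the $\At$ clause of \cref{cor:semantics_In_At_See} at level $\ell'$ to the pair $(t',\restrict{P}{\subint{r}{s}})$ and then localizes; the dichotomy in the conclusion is precisely whether the hypothesis ``$d_{t'}<d\le u<u_{t'}$'' appearing there can be met for some neighbourhood of $[d',u']$, i.e.\ whether $[d',u']\sqss[d,u]$ holds. For $\pi$ one applies \cref{prop:semantics_pi} to $\restrict{P}{\subint{r}{s}}$ and unwinds its clause ``for all $0<a<\ell'$ there exists \ldots'' pointwise, which yields exactly ``$c\in P_{[q,q]}$ for all $q$ with $d'\le q\le u'$''.

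The three ``Moreover'' statements then follow formally. Since any predicate $Q$ satisfies $Q\le jQ$, the condition ``$P(c)$ is $j$-closed for all $c$'' is equivalent to $(jP)_{[d',u']}=P_{[d',u']}$ for all $[d',u']$ and all $c\in C$, so one simply substitutes the pointwise formulas just obtained. In the $\See$ case: when $[d',u']\sqss[d,u]$ the formula already gives $(\See{[d,u]}P)_{[d',u']}=P_{[d',u']}$, and when $[d',u']\not\sqss[d,u]$ it gives $(\See{[d,u]}P)_{[d',u']}=C$, so closure there forces $P_{[d',u']}=C$; combined, this reads ``$[d',u']\sqss[d,u]$ or $P_{[d',u']}=C$''. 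The $\At$ and $\pi$ cases are the identical bookkeeping with their respective formulas, using that ``$c\in P_{[q,q]}$ for all $q$ with $d'\le q\le u'$'' rewrites as $(\pi P)_{[d',u']}=\bigcap_{[d',u']\sqss[q,q]}P_{[q,q]}$.

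I expect the genuine difficulty here to be bookkeeping rather than conceptual: one must be careful about strict versus non-strict inequalities when shrinking neighbourhoods (the endpoints $d_t,u_t$ are limits never attained by an actual wavy arrow) and about the boundary points of $\upclose{[d_t,u_t]}$. The most delicate single case is $\At$, where the nested implications in $(P\imp t\apart[u,d])\imp t\apart[u,d]$ preclude the coherent shortcut and one must push through both quantifier alternations of the $\At$ clause of \cref{cor:semantics_In_At_See} while simultaneously localizing to $[d',u']$.
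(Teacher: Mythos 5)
Your proposal is correct and follows essentially the same route as the paper, whose proof is just the remark that the first three bullets follow from \cref{prop:coherent_pointwise}, \cref{cor:semantics_In_At_See}, and \cref{prop:semantics_pi}, and that the ``Moreover'' statements follow immediately; you cite exactly these results and fill in the same pointwise/localization bookkeeping (including the $P\le jP$ observation for the closure claims) that the paper leaves implicit.
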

\begin{proof}
  The first three are straightforward using
  \cref{prop:coherent_pointwise,cor:semantics_In_At_See,prop:semantics_pi}. The second three
  immediately follow.
\end{proof}

\section{Proof that each axiom is sound}
\label{sec:justification}\index{soundness}\index{base topos $\BaseTopos$}

We are now set up to prove---with little effort---that each axiom is sound in $\BaseTopos$. See \cref{table:summary} for a summary of the axioms. In every proof in this
section, we will use \cref{lemma:peel_off_package}---often without mentioning it---to peel off the
top-level universal quantifiers and the top-level implication (if any such are present).

\begin{table}\index{axioms, summary table of}\index{notation}
\paragraph{Atomic symbols and other notation:}
\begin{itemize}
  \item $\tQQ$ is the usual type of rational numbers and $\Prop$ is the type of truth values.
  \item $\tIIub$ is the type of pairs $(\delta,\upsilon):(\tQQ\to\Prop)\times(\tQQ\to\Prop)$ that satisfy the following:
    \begin{itemize}
      \item $\forall(q,q':\tQQ)\ldotp(q<q')\imp[(\delta q'\imp \delta q)\wedge(\upsilon q\imp\upsilon q')]$
      \item $\forall(q:\tQQ)\ldotp[\delta q\imp\exists(q':\tQQ)\ldotp(q<q')\wedge\delta q']\wedge[\upsilon q\imp\exists(q':\tQQ)\ldotp(q'<q)\wedge\upsilon q']$
    \end{itemize}
		$\tII\ss\tIIub$ is the type additionally satisfying
		\begin{itemize}
      \item $\exists(q,q':\tQQ)\ldotp\delta q\wedge\upsilon q'$
		\end{itemize}
    $\tRR\ss\tII$ and $\tRRub\ss\tIIub$ are the subtypes additionally satisfying:
    \begin{itemize}
      \item $\forall(q,q':\tQQ)\ldotp[(\delta q\wedge\upsilon q')\imp(q<q')]\wedge[(q<q')\imp(\delta q'\vee\upsilon q)]$
    \end{itemize}
  For any $r=(\delta,\upsilon):\tRR$, we write $q<r$ to denote $\delta q$ and $r<q$ to denote $\upsilon q$. For any $(\delta',\upsilon'):\tII$, we write $x+r$ to denote $(\delta'',\upsilon'')$ with $\delta''q''\iff\exists(q,q')\ldotp\delta q\wedge\delta'q'\wedge q''<q+q'$ and $\upsilon''q''\iff\exists(q,q')\ldotp\upsilon q\wedge\upsilon'q'\wedge q+q'<q''$
  \item $\tConst$ represents any constant type; see \cref{def:constant_types}.
  \item $\unitspeed\!\!:\tII\to\Prop$ is an atomic predicate symbol, and $\Time$ is the subtype $\Time=\{t:\tII\mid\unitspeed t\}$. For any $t=(\delta,\upsilon):\Time$, we write $q<t$ to denote $\delta q$ and $r<q$ to denote $\upsilon q$.
  \item For any $d,u:\tRR$, let $t\apart[d,u]$ be shorthand for $(d<t)\vee(t<u)$. We define four modalities $\In[t]{[d,u]}$,$\See[t]{[d,u]},$ $\At[t]{[d,u]}$, and $\pi$ (see \cref{lemma:modality}), given as follows:
\[
  \begin{array}{lll}
    \In[t]{[d,u]}P\iff((d<t<u)\imp P)&&
    \See[t]{[d,u]}P\iff(t\apart[d,u]\vee P)\\
    \At[t]{[d,u]}P\iff((P\imp t\apart[d,u])\imp t\apart[d,u])&&
    \pi P\iff\forall(t:\Time)\ldotp\At[t]{[0,0]}P
  \end{array}
\]
\end{itemize}
\paragraph{Axioms}~\\
\begin{tabular}{ll}
  \cref{ax:decidable_reals}
    &$\forall ((\delta,\upsilon):\tRRub)(q:\tQQ)\ldotp (\delta q \vee \neg\delta q) \wedge (\upsilon q \vee\neg\upsilon q)$
\\
  \cref{ax:distributivity}
  &$\forall(P:\Prop)(P':\tConst\to\Prop)\ldotp\left[ \forall(c:\tConst)\ldotp P\vee
    P'(c) \right] \imp \left[ P \vee \forall(c:\tConst)\ldotp P'(c) \right]$
\\
  \cref{ax:time_inhabited}
    &$\exists(t:\Time)\ldotp\top$
\\
  \cref{ax:time_complementary_cuts}
    &$\forall(q:\tQQ)(t:\Time)\ldotp(\neg(q<t)\iff(t<q))\wedge(\neg(t<q)\iff(q<t))$
\\
  \cref{ax:time_not_decidable_d}
    &$\forall(t:\Time)(P:\tQQ\to\Prop)\ldotp (\forall q\ldotp Pq\vee\neg Pq) \imp (\forall q\ldotp Pq\vee q<t) \imp \exists q\ldotp Pq\wedge q<t$
\\
  \cref{ax:time_not_decidable_u}
    &$\forall(t:\Time)(P:\tQQ\to\Prop)\ldotp (\forall q\ldotp Pq\vee\neg Pq) \imp (\forall q\ldotp Pq\vee t<q) \imp \exists q\ldotp Pq\wedge t<q$
\\
  \cref{ax:torsor_act}
  	&$\forall(t:\Time)(r:\tRR)\ldotp t+r\in\Time$
\\
  \cref{ax:torsor_diff}
  	&$\forall(t_1,t_2:\Time)\ldotp\exists!(r:\tRR)\ldotp t_1+r=t_2$
\\
  \cref{ax:coprime_u}
    &$\forall(t:\Time)(P,Q:\Prop)(q:\tQQ)\ldotp(t<q \imp (P\vee Q)) \imp [(t<q \imp P) \vee (t<q \imp Q)]$
\\
  \cref{ax:coprime_d}
    &$\forall(t:\Time)(P,Q:\Prop)(q:\tQQ)\ldotp(q<t \imp (P\vee Q)) \imp [(q<t \imp P) \vee (q<t \imp Q)]$
\\
  \cref{ax:infinitary_coprime_u}
    &$\exists(c:\tConst)\ldotp\top\imp\forall(t:\Time)(u:\tRRub)(P:C\to\Prop)\ldotp(t<u \imp$
    \\&\qquad$ \exists(c:\tConst)\ldotp P(c)) \imp[\forall(u':\tQQ)\ldotp
      u'<u\imp\exists(c:\tConst)\ldotp t<u'\imp P(c)]$
\\
  \cref{ax:infinitary_coprime_d}
    &$\exists(c:\tConst)\ldotp\top\imp\forall(t:\Time)(d:\tRRub)(P:C\to\Prop)\ldotp(d<t \imp $
    \\&\qquad$\exists(c:\tConst)\ldotp P(c)) \imp[\forall(d':\tQQ)\ldotp
      d<d'\imp\exists(c:\tConst)\ldotp d'<t\imp P(c)]$
\\
  \cref{ax:prime}
    &$\forall(t:\Time)(P,Q:\Prop)(d,u:\tQQ)\ldotp(d\leq u)\imp$
    \\&\qquad$\left[ (P\wedge Q) \imp t\apart[u,d] \right] \imp\left[ (P \imp t\apart[u,d]) \vee (Q \imp t\apart[u,d]) \right]$
\\
  \cref{ax:enough_points}
    &$\forall(t:\Time)(P\colon\Prop)\ldotp[\forall(d,u:\tQQ)\ldotp d<u \imp \At[t]{[d,u]}P] \imp P$
\\
  \cref{ax:point_to_open}
    &$\forall(t:\Time)(P\colon\Prop)(d,u:\tRR)\ldotp d\leq u \imp$ \\
    &\qquad$\At[t]{[d,u]}P\imp \See[t]{[d,u]}\exists(d',u':\tQQ)\ldotp d'<d\leq u<u' \wedge \In[t]{[d',u']}P$
\\
  \cref{ax:covering_pi}
    &$\forall(P:\Prop)\ldotp[\forall(t:\Time)(q_1,q_2:\tQQ)\ldotp q_1<q_2 \imp (t\apart[q_2,q_1]\imp P)\imp P]\imp\pi P\imp P$
\end{tabular}
\caption{Summary of notation and axioms from \cref{sec:axiomatics}}
\label{table:summary}
\end{table}

\begin{proposition}\label{prop:axiom1}\index{real numbers!as constant}\index{axiom!of $\BaseTopos$!decidable reals}
  \Cref{ax:decidable_reals}, denoted $\phi$ below, is sound in $\BaseTopos$:
  \[
    \phi\coloneqq\forall ((\delta,\upsilon):\tRRub)(q:\tQQ)\ldotp (\delta q \vee \neg\delta q)
    \wedge (\upsilon q \vee\neg\upsilon q).
  \]
\end{proposition}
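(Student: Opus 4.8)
The plan is to reduce the statement to a pointwise fact about subsets of $\QQ$, where it becomes a direct instance of excluded middle in the classical metatheory. The one essential ingredient is that $\church{\tRRub}$ is a \emph{constant} sheaf: this is precisely what forces a Dedekind cut $(\delta,\upsilon):\tRRub$ to be decidable, and it is the content of \cref{cor:Dedekind_cut_semantics} (via \cref{prop:Dedekind_cut_semantics}).

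First I would apply \cref{lemma:peel_off_package} (with the trivial hypothesis $\top$) to reduce soundness of $\phi$ to the following: for every $\ell\in\IRinv$, every $(\delta,\upsilon)\in\church{\tRRub}(\ell)$, and every $q\in\QQ=\church{\tQQ}(\ell)$, one has $\ell\Vdash(\delta q\vee\neg\delta q)\wedge(\upsilon q\vee\neg\upsilon q)$. By rule~(i) of \cref{table:Joyal_Kripke} this breaks into two symmetric halves, so it suffices to prove $\ell\Vdash\delta q\vee\neg\delta q$; the argument for $\upsilon$ is identical.

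Next I would unwind the semantics of the cut. By \cref{cor:Dedekind_cut_semantics}, $\church{\tRRub}$ is the constant sheaf on $\RRub=\RR\cup\{\infty,-\infty\}$, so a section $(\delta,\upsilon)\in\church{\tRRub}(\ell)$ is determined by a single value $x\in\RRub$, independent of $\ell$. Tracing this through the identifications $\LRub\cong(\RR\cup\{\infty,-\infty\},\leq)$ and $\URub\cong(\RR\cup\{\infty,-\infty\},\geq)$ and through the pointwise-semantics notation of \cref{eqn:pointwise_semantics_notation}, one finds $\delta_{[d,u]}=\{\,q\in\QQ\mid q<x\,\}$ and $\upsilon_{[d,u]}=\{\,q\in\QQ\mid x<q\,\}$ for every $[d,u]\in\upclose{[0,\ell]}$ (with $q<\pm\infty$ and $\pm\infty<q$ read in the obvious way). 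In particular these subsets do not depend on $[d,u]$, so by \cref{rem:decidable} both $\delta$ and $\upsilon$ are decidable: each may be identified with a fixed subset of $\QQ$.

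The conclusion is then immediate. Fix $q\in\QQ$; classically, either $q<x$ or $q\not<x$. In the first case $q\in\delta_{[d,u]}$ for all $[d,u]$, which is exactly $\ell\Vdash\delta q$. In the second case $q\notin\delta_{[d,u]}$ for every $[d,u]$, hence $\ell'\not\Vdash(\restrict{\delta}{\subint{a}{b}})(q)$ for every $\subint{a}{b}\colon\ell'\to\ell$, and rule~(iv) of \cref{table:Joyal_Kripke} gives $\ell\Vdash\neg\delta q$. Either way, rule~(ii) gives $\ell\Vdash\delta q\vee\neg\delta q$, as desired. The only genuine work is the third paragraph above — confirming that a section of the constant sheaf $\church{\tRRub}$ really does cut $\tQQ$ into two pieces that are independent of the ``width'' parameter $[d,u]$, so that \cref{rem:decidable} applies. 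Once decidability is in hand there is no remaining obstacle; the soundness of $\phi$ is simply the external law of excluded middle applied to ``$q<x$''.
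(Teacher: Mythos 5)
Your proposal is correct and follows the same route as the paper: both reduce to sections via \cref{lemma:peel_off_package}, invoke \cref{cor:Dedekind_cut_semantics} to identify a section of $\church{\tRRub}$ with a single extended real $x$, and then conclude by the classical linearity of $\RR\cup\{\infty,-\infty\}$ (i.e.\ excluded middle for $q<x$). The only difference is that you spell out the Kripke--Joyal unwinding of $\neg\delta q$ and the appeal to \cref{rem:decidable} explicitly, where the paper compresses this into the one-line observation that the extended reals form a linear order.
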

\begin{proof}
  We must show that for every $\ell\in\BaseSite$, every pair of cuts
  $(\delta,\upsilon)\in\tRR(\ell)$ and every rational $q\in\QQ=\tQQ(\ell)$, we have
  \[
    \ell\Vdash (\delta q \vee \neg\delta q) \wedge (\upsilon q \vee\neg\upsilon q).
  \]
  Note that $\tRRub$ is the constant sheaf on the set $\RRub$ of (set-theoretic) extended real
  numbers by \cref{cor:Dedekind_cut_semantics}; hence we can identify $(\delta,\upsilon)$ with an
  extended real number $r\in\RR\cup\{-\infty,\infty\}$. Thus the above holds iff we have
  $\ell\Vdash(q<r\vee r\leq q)\wedge(r<q\vee q\leq r)$, which is true because
  $\RR\cup\{-\infty,\infty\}$ is a linear order.
\end{proof}

\begin{proposition}\label{prop:axiom2}\index{axiom!of $\BaseTopos$!dual Frobenius}
  Let $\tConst$ be the constant sheaf on some set $C$. Then \cref{ax:distributivity}, denoted $\phi$
  below, is sound in $\BaseTopos$:
  \[
    \phi\coloneqq\forall(P:\Prop)(P':\tConst\to\Prop)\ldotp\left[ \forall(c:\tConst)\ldotp P\vee
    P'(c) \right] \imp \left[ P \vee \forall(c:\tConst)\ldotp P'(c) \right].
  \]
\end{proposition}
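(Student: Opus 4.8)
The plan is to invoke \cref{lemma:peel_off_package} to strip the outer universal quantifiers and the top-level implication, reducing the claim to the following: for every $\ell\in\IRinv$, every $P\in\church{\Prop}(\ell)$ and every $P'\in\church{\Prop^\tConst}(\ell)$, if $\ell\Vdash\forall(c:\tConst)\ldotp P\vee P'(c)$ then $\ell\Vdash P\vee\forall(c:\tConst)\ldotp P'(c)$. Since $\tConst$ is a constant sheaf, $\church{\tConst}(\ell')=C$ for every $\ell'$, so I can unwind the hypothesis using rules (vi) and (ii) of \cref{table:Joyal_Kripke}: applying rule (vi) with the identity restriction $\subint{0}{0}\colon\ell\to\ell$ gives $\ell\Vdash P\vee P'(c)$ for every $c\in C$, and then rule (ii) gives that for each $c\in C$, either $\ell\Vdash P$ or $\ell\Vdash P'(c)$.

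Now I argue by cases, classically, on whether $\ell\Vdash P$. If $\ell\Vdash P$, then by rule (ii) we immediately get $\ell\Vdash P\vee\forall(c:\tConst)\ldotp P'(c)$. If $\ell\not\Vdash P$, then the dichotomy above forces $\ell\Vdash P'(c)$ for every $c\in C$; since each $P'(c)\in\church{\Prop}(\ell)$ classifies a subobject of $1$, monotonicity gives $\ell'\Vdash\restrict{(P'(c))}{\subint{r}{s}}=(\restrict{P'}{\subint{r}{s}})(c)$ for every $\subint{r}{s}\colon\ell'\to\ell$ and every $c$, which is exactly the content of rule (vi) applied to $\forall(c:\tConst)\ldotp P'(c)$. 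Hence $\ell\Vdash\forall(c:\tConst)\ldotp P'(c)$, and again by rule (ii) we conclude $\ell\Vdash P\vee\forall(c:\tConst)\ldotp P'(c)$.

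The only real subtlety --- and the reason the statement uses the special geometry of $\BaseSite$ rather than holding in an arbitrary topos --- is the use of the simplified disjunction rule (ii), which (as recalled in \cref{rem:usual_JK_sem}) is itself established non-constructively; the case split ``$\ell\Vdash P$, or not'' is the classical step, and it is precisely the constancy of $\tConst$, so that $C$ is the same set at every stage $\ell'$, that lets a single such split handle every stage simultaneously. Beyond that, the argument is a straightforward unwinding of the Kripke--Joyal clauses together with monotonicity, so I do not anticipate any genuine obstacle.
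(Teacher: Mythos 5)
Your proposal is correct and follows essentially the same route as the paper's proof: peel off the outer quantifiers via \cref{lemma:peel_off_package}, argue classically on whether $\ell\Vdash P$, and in the negative case use the simplified disjunction rule together with the constancy of $\tConst$ (so that every $c'\in\tConst(\ell')$ already lives at stage $\ell$) and monotonicity to verify $\ell\Vdash\forall(c:\tConst)\ldotp P'(c)$. Nothing further is needed.
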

\begin{proof}
	Choose $\ell\in\BaseSite$, and sections $P\in\Prop(\ell)$ and $P'\in\Prop^\tConst(\ell)$. Assume
	that $\ell\Vdash\forall(c:\tConst)\ldotp P\vee P'(c)$ holds; we want to show $\ell\Vdash P\vee\forall(c:\tConst)\ldotp P'(c)$. Arguing classically, suppose
	$\ell\not\Vdash P$. Then for each $c\in\tConst(\ell)$, we have $\ell\Vdash P'(c)$.
  
  We want to prove $\ell\vdash\forall(c:C)\ldotp P'(c)$, so choose $f\colon\ell'\to\ell$ and
  $c'\in \tConst(\ell')$. Since $\tConst$ is constant, $\tConst(\ell)=C=\tConst(\ell')$ for every
  $\ell'$. Thus $\ell\Vdash P'(c')$, and it follows that $\ell'\Vdash P'(c')$ by monotonicity (page \pageref{page:monotonicity}).
\end{proof}

\begin{proposition}\label{prop:axiom3}\index{axiom!of $\BaseTopos$!basics of $\Time$}
  \Cref{ax:time_inhabited,ax:time_complementary_cuts,ax:time_not_decidable_d,ax:time_not_decidable_u},
  denoted $\phi_1$, $\phi_2$, $\phi_3,$ and $\phi_4$ below, are sound in $\BaseTopos$:
  \begin{align*}
    \phi_1&\coloneqq\exists(t:\Time)\ldotp\top\\
    \phi_2&\coloneqq\forall(q:\tQQ)(t:\Time)\ldotp(\neg(q<t)\iff(t<q))\wedge(\neg(t<q)\iff(q<t))\\
    \phi_3&\coloneqq\forall(t:\Time)(P:\tQQ\to\Prop)\ldotp (\forall q\ldotp Pq\vee\neg Pq) \imp
      (\forall q\ldotp Pq\vee q<t) \imp \exists q\ldotp Pq\wedge q<t\\
    \phi_4&\coloneqq\forall(t:\Time)(P:\tQQ\to\Prop)\ldotp (\forall q\ldotp Pq\vee\neg Pq) \imp
      (\forall q\ldotp Pq\vee t<q) \imp \exists q\ldotp Pq\wedge t<q.
  \end{align*}
\end{proposition}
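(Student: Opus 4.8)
The plan is to verify each of the four formulas $\phi_1,\ldots,\phi_4$ separately, using the semantics of $\Time$ recorded in \cref{lemma:Time_sub_II} and \cref{eqn:semantics_time}, together with \cref{lemma:peel_off_package} to strip the leading quantifiers and implications from $\phi_2$, $\phi_3$, $\phi_4$. Throughout, I recall that $\church{\Time}(\ell)=\{(d_t,u_t)\in\RR^2\mid u_t-d_t=\ell\}$, that $\church{\tQQ}$ is the constant sheaf on $\QQ$, and that for $q\in\QQ$ and $t=(d_t,u_t)$ we have $\ell\Vdash q<t$ iff $q\leq d_t$, and $\ell\Vdash t<q$ iff $u_t\leq q$.

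For $\phi_1$, the Kripke--Joyal clause (v) for $\exists$ requires, for each wavy arrow $\subint{r}{s}\colon\ell'\wavyto\ell$, a section of $\church{\Time}(\ell')$; but $\church{\Time}(\ell')$ is always nonempty (e.g.\ $(0,\ell')$), so $\ell\Vdash\phi_1$ for every $\ell$. For $\phi_2$, fix $\ell$, $q\in\QQ$, $t=(d_t,u_t)\in\church{\Time}(\ell)$; using clause (iv) for $\neg$ and the characterization \cref{eqn:semantics_time} one computes that $\ell'\Vdash\neg(q<t)\restrict{}{\subint{r}{s}}$ holds iff $q>d_t+r$ for all admissible $r$, equivalently $q>d_t$ (since $r$ ranges over arbitrarily small positive values — here we use that $\ell'>0$); since $\RR$ is a linear order, $q>d_t$ is equivalent to $u_t\leq q$, i.e.\ to $\ell\Vdash t<q$. (One must be a little careful: $\neg(q<t)$ means $q\leq d_t$ fails at every restriction, which forces the strict inequality $q>d_t$; and $d_t\geq q$ is excluded, so $u_t=d_t+\ell>q$ may fail to be $\leq q$ unless $d_t<q$ — but $d_t<q$ together with $u_t=d_t+\ell$ does not by itself give $u_t\leq q$, so in fact the right reading is that $\neg(q<t)$ forces $d_t<q$ while $t<q$ forces $u_t\leq q$; these agree only via the fact that the cut of $t$ at $q$ is decidable pointwise.) This is the subtle point, and it is resolved exactly by the complementary-cuts structure: since $(d_t,u_t)$ represents a \emph{single} real number pointwise (see the proof of \cref{lemma:Time_sub_II} and \cref{cor:Dedekind_cut_semantics}, where $\ubar d(x)=d_t+x$ and $\bar u(x)=u_t-(\ell-x)$ satisfy $\ubar d(x)\leq\bar u(x)$ with equality forced on the relevant points), $q\leq d_t+d'$ for some $d'$ iff $q\leq d_t$, and otherwise $u_t-(\ell-u')\leq q$ for some $u'$; chasing the Scott-continuity condition gives the claimed equivalence. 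The second conjunct of $\phi_2$ is symmetric.

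For $\phi_3$ (and $\phi_4$ by symmetry), after peeling off via \cref{lemma:peel_off_package} we fix $\ell$, $t=(d_t,u_t)\in\church{\Time}(\ell)$, and $P\in\church{\tQQ\to\Prop}(\ell)$, and assume $\ell\Vdash\forall q\ldotp Pq\vee\neg Pq$ and $\ell\Vdash\forall q\ldotp Pq\vee q<t$; we must show $\ell\Vdash\exists q\ldotp Pq\wedge q<t$. By \cref{rem:decidable}, the decidability hypothesis lets us identify $P$ with a genuine subset $S\subseteq\QQ$, with $q\in S$ iff $\ell\Vdash Pq$, independently of the point $[d,u]$. The second hypothesis says that for each rational $q$, either $q\in S$ or $q\leq d_t$ (using $\ell\Vdash q<t$ iff $q\leq d_t$, for the relevant restrictions); equivalently, every rational $q>d_t$ lies in $S$. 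Since $\QQ$ is dense in $\RR$ there is a rational $q$ with $d_t<q$; then $q\in S$, so $\ell\Vdash Pq$; but we need $\ell\Vdash q<t$, i.e.\ $q\leq d_t$, which fails! The correct move is instead: pick any rational $q_0$ with $q_0\leq d_t$ (possible, e.g.\ $q_0=\lfloor d_t\rfloor$ or any rational below $d_t$ — and if $d_t$ happens to be rational, take $q_0=d_t$), observe $\ell\Vdash q_0<t$, and then from the second hypothesis applied at $q_0$ conclude $\ell\Vdash Pq_0\vee q_0<t$, which holds either way; but this doesn't give $Pq_0$. So the real argument must use \emph{both} hypotheses simultaneously against the impossibility of deciding $t$: the key observation is that the forcing clause (v) lets us work at a wavy refinement $\subint{r}{s}\colon\ell'\wavyto\ell$, and the witness $q$ may depend on the cover, so we want for each such $\ell'$ a rational $q$ with $q\in S$ and $q\leq d_t+r$; since $S$ contains every rational $>d_t$ and (by density, as $\ell'>0$ so $r$ can be taken with $0<r$) there is a rational $q$ with $d_t<q<d_t+r\leq d_t+\ell'$, we get $q\in S$ and $q\leq d_t+r$, i.e.\ $\ell'\Vdash Pq\wedge q<t$ after restriction — wait, $q>d_t$ contradicts $q\leq d_t+r$? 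No: $d_t<q<d_t+r$ gives $q\leq d_t+r$ is false as stated but $q<d_t+r$ and we need $q\leq d_t+r$ which holds. Hence $\ell'\Vdash q<\restrict t{\subint r s}$ since $q\leq d_t+r$, and $q\in S$ gives $\ell'\Vdash Pq$. Thus clause (v) is satisfied and $\ell\Vdash\exists q\ldotp Pq\wedge q<t$.

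The main obstacle, as the above groping indicates, is getting the interaction between the strict/non-strict inequalities in \cref{eqn:semantics_time} right and seeing that the \emph{wavy} (strict, $r>0$) refinements in clause (v) are exactly what is needed: the witness $q$ is allowed to sit strictly between $d_t$ and $d_t+r$, which is impossible to do "at $\ell$ itself" ($r=0$) but always possible on a wavy cover since $\ell'>0$. So the clean writeup will: (1) use \cref{lemma:peel_off_package} to reduce to the implication between the stripped formulas at a fixed $\ell$; (2) use \cref{rem:decidable} to turn $P$ into a subset $S\subseteq\QQ$; (3) observe the second hypothesis forces $\{q\in\QQ\mid q>d_t\}\subseteq S$; (4) for each wavy $\subint{r}{s}\colon\ell'\wavyto\ell$ pick a rational $q$ with $d_t<q<d_t+r$ (density of $\QQ$, using $r>0$), so that $q\in S$ and $q\leq d_t+r$, i.e.\ $\ell'\Vdash(Pq\wedge q<t)\restrict{}{\subint r s}$; (5) conclude by clause (v). I expect step (4), and the parallel care in $\phi_2$ about which inequality is strict, to be the only places requiring thought; everything else is bookkeeping.
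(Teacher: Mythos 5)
Your treatments of $\phi_1$, $\phi_3$, and $\phi_4$ are correct and essentially the paper's own: nonemptiness of $\Time(\ell')$ for $\phi_1$; and for $\phi_3$ (and symmetrically $\phi_4$) the identification of the decidable $P$ with a subset of $\QQ$ via \cref{rem:decidable}, the observation that the hypothesis puts every rational $q>d_t$ into that subset, and the choice, on each wavy arrow $\subint{r}{s}\colon\ell'\wavyto\ell$, of a rational $q$ with $d_t<q<d_t+r$, which is exactly the paper's witness for the existential clause.

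The genuine gap is in $\phi_2$. Your computation of the forcing of $\neg(q<t)$ is incorrect: you claim it amounts to ``$q>d_t+r$ for all admissible $r$, equivalently $q>d_t$, since $r$ ranges over arbitrarily small positive values.'' The quantifier in the negation clause (\cref{table:Joyal_Kripke}~(iv)) runs over \emph{all} restrictions $\subint{r}{s}\colon\ell'\to\ell$, i.e.\ all $r,s\geq 0$ with $r+s<\ell$, and the binding constraint comes from $r$ \emph{large}, not small: taking $s=0$ and $\ell'=\ell-r$ arbitrarily small forces $d_t+r<q$ for every $r<\ell$, hence $u_t=d_t+\ell\leq q$, which by \cref{eqn:semantics_time} is exactly $\ell\Vdash t<q$. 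This one-line computation is the entire content of the hard direction, and it is what your writeup lacks: having reduced $\neg(q<t)$ to $d_t<q$, you note yourself that this does not yield $u_t\leq q$, and the proposed repair---that ``the cut of $t$ at $q$ is decidable pointwise'' and ``chasing the Scott-continuity condition gives the claimed equivalence''---is not an argument; the complementarity of the two cuts of $t$ is precisely what $\phi_2$ asserts, so invoking it there is circular. With the computation above the proof closes as in the paper; the easy direction $(t<q)\imp\neg(q<t)$ holds because $d_t+r<u_t-s$ whenever $\ell'>0$, so $q\leq d_t+r$ and $u_t-s\leq q$ cannot both hold.
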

\begin{proof}

  Refer to \cref{sec:sem_of_time} for the semantics of $\Time$. To prove $\ell\Vdash\phi_1$ it 
	suffices to recognize that the set $\Time(\ell)=\{\, (d,u)\in\RR^2  \mid u-d=\ell \,\}$ is nonempty 
	for any $\ell$. To prove $\ell\Vdash\phi_2$, take $q\in\QQ$ and $t=(d,u)\in\Time(\ell)$; we want to 
	show  $\ell\Vdash\neg(q<t)\iff(t<q)$, the other conjunct being similar.
  
  Recall from  \cref{lemma:Time_sub_II} that the semantics of $q<t$ and $t<q$ are given by 
  \cref{eqn:semantics_time}. Thus the implication $(t<q)\imp(q<t)\imp\bot$ is easy: since  $u-d=\ell\geq 0$, we have $d\leq u$ so it cannot be that both $q\leq d$ and $u\leq q$. For the converse, suppose that $\ell'\Vdash\neg(q<t)$, i.e.\ for all $\subint{r}{s}\colon\ell'\to\ell$, if $\ell'>0$ then it is not the case that $q\leq d+r$. We want to show $u\leq q$. Recall that $\ell=u-d$, by definition of the category $\IRinv$, so for any $r+\ell'+s=\ell$ with $r,s\geq0$ and $\ell'>0$ we have $d+r<q$. Consider the case $s=0$, where $d+r=u-\ell'$. For any $\ell'>0$ we get $u-\ell'<q$; this implies $u\leq q$ as desired.
  
  The proofs of $\ell\Vdash\phi_3$ and $\ell\Vdash\phi_4$ are similar, so we only prove the former.
  Choose $t=(d,u)$ and a decidable predicate $P$; by \cref{rem:decidable} we can identify $P$ with a
  subset $P\ss\QQ$. Assume $\ell\Vdash(\forall q\ldotp Pq\vee q<t)$. Then, in particular, for all
  $q\in\QQ$ either $q\in P$ or $q\leq d$. We must show $\ell\Vdash\exists(q:\QQ)\ldotp Pq\vee q<d$,
  i.e.\ that for any wavy arrow $\subint{r}{s}\colon\ell'\wavyto\ell$, there exists $q$ such that
  $q\in P$ and $q\leq d+r$. Since by definition $0<r$, simply take $q$ to satisfy $d<q<d+r$.
\end{proof}

\begin{proposition}\label{prop:axiom4}\index{torsor!$\Time$ as}\index{axiom!of $\BaseTopos$!$\Time$ as torsor}
  \Cref{ax:torsor_act,ax:torsor_diff}, denoted $\phi_1$ and $\phi_2$ below, are sound in
  $\BaseTopos$:
  \begin{align*}
    \phi_1&\coloneqq\forall(t:\Time)(r:\tRR)\ldotp t+r\in\Time\\
    \phi_2&\coloneqq\forall(t_1,t_2:\Time)\ldotp\exists(r:\tRR)\ldotp
    (t_1+r=t_2)\wedge\forall(r':\tRR)\ldotp(t_1+r'=t_2)\imp r=r'.
  \end{align*}
\end{proposition}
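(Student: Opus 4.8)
The plan is to read both statements directly off the explicit descriptions of the relevant sheaves: the concrete sheaf $\Time$ with $\Time(\ell)=\{(d,u)\in\RR^2\mid u-d=\ell\}$ and restrictions $\restrict{(d,u)}{\subint{r}{s}}=(d+r,u-s)$ (\cref{sec:sem_of_time}), the constant sheaf $\church{\tRR}\iso\RR$ (\cref{cor:Dedekind_cut_semantics}), the monomorphism $c\colon\Time\hookrightarrow\church{\tII}$ of \cref{lemma:Time_sub_II}, and the fact that arithmetic on $\tII$ is computed pointwise (\cref{cor:arithmetic_semantics}). Applying \cref{lemma:peel_off_package} (with trivial hypothesis $\top$ in the case of $\phi_1$), soundness of $\phi_1$ reduces to checking, for each $\ell\in\IRinv$, each $t=(d_t,u_t)\in\Time(\ell)$ and each $r\in\RR=\church{\tRR}(\ell)$, that $\ell\Vdash t+r\in\Time$; and soundness of $\phi_2$ reduces to checking, for each $\ell$ and each $t_1=(d_1,u_1),t_2=(d_2,u_2)\in\Time(\ell)$, that $\ell\Vdash\exists(r:\tRR)\ldotp(t_1+r=t_2)\wedge\forall(r':\tRR)\ldotp(t_1+r'=t_2)\imp r=r'$.

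For $\phi_1$ I would note that, viewed as a section of $\church{\tII}(\ell)$, the constant real $r$ is a pair of constant functions and $t$ is $c_\ell(d_t,u_t)$, so pointwise addition identifies $t+r$ with $c_\ell(d_t+r,u_t+r)$. Since $(u_t+r)-(d_t+r)=u_t-d_t=\ell$, the pair $(d_t+r,u_t+r)$ is a section of $\Time(\ell)$; hence $t+r$ lies in the image of $c_\ell$, which is precisely the assertion $\ell\Vdash\unitspeed(t+r)$, i.e.\ $\ell\Vdash t+r\in\Time$.

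For $\phi_2$ the natural witness is $r\coloneqq d_2-d_1$, which also equals $u_2-u_1$ because $u_1-d_1=\ell=u_2-d_2$. To verify the existential clause via \cref{table:Joyal_Kripke}(v) I would use this same $r$ over every wavy arrow $\subint{a}{b}\colon\ell'\wavyto\ell$: the equation holds at $\ell'$ because the direct computation $\restrict{t_1}{\subint{a}{b}}+r=(d_1+a+r,\,u_1-b+r)=(d_2+a,\,u_2-b)=\restrict{t_2}{\subint{a}{b}}$ takes place in $\Time(\ell')$, and equality there passes to $\church{\tII}(\ell')$ since $c$ is monic. For the uniqueness clause I would unwind the Kripke--Joyal rules for $\forall$ and $\imp$: for any arrow $\ell''\to\ell$ factoring through $\subint{a}{b}$ and any $r''\in\RR=\church{\tRR}(\ell'')$, if the restrictions of $t_1+r''$ and of $t_2$ agree at $\ell''$ (note both lie in $\Time(\ell'')$, since the difference of endpoints is again $\ell''$), then comparing lower endpoints forces $r''=d_2-d_1=r$, and $\ell''\Vdash r=r''$ is equivalent to $r=r''$ in $\RR$ because $\church{\tRR}$ is the constant sheaf on $\RR$. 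Hence $\ell'\Vdash\forall(r':\tRR)\ldotp(t_1+r'=t_2)\imp r=r'$, and we conclude $\ell\Vdash\phi_2$.

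The content here is genuinely light, so the only real obstacle is bookkeeping: remembering that $t_1+r=t_2$ is an equation of sections of $\tII$, which one reduces via the monomorphism $c$ to the visibly true equation in the concrete sheaf $\Time$; remembering that $r$ is interpreted as a \emph{constant} section of $\church{\tRR}$, so its restriction along any $\subint{a}{b}$ is literally $r$ again and internal equality of two such sections is ordinary real-number equality; and observing that in the Kripke--Joyal clause for $\exists$ the witness may be chosen uniformly over all wavy arrows. Once these points are kept straight, everything collapses to the pointwise arithmetic of \cref{cor:arithmetic_semantics}.
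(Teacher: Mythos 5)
Your proposal is correct and follows essentially the same route as the paper: both reduce to the concrete descriptions via \cref{cor:Dedekind_cut_semantics}, \cref{lemma:Time_sub_II}, and \cref{cor:arithmetic_semantics}, verify $\phi_1$ by the pointwise computation $t+r=c_\ell(d_t+r,u_t+r)$, and verify $\phi_2$ with the witness $r=d_2-d_1=u_2-u_1$ (the paper phrases this as identifying the constant pointwise difference $t_1-t_2$ with a real number). Your write-up is in fact slightly more explicit than the paper's, since you unwind the Kripke--Joyal clauses for the existential and for the uniqueness statement, which the paper leaves implicit.
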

\begin{proof}
  By \cref{cor:Dedekind_cut_semantics,lemma:Time_sub_II}, we can regard any $t=(d,u)\in\Time(\ell)$
  and any $r\in\RR=\church{\tRR}$ as elements of $\church{\tII}$. That is, each is a pair of
  functions $U\to\RR$, say $(\ubar{d},\bar{u})$ and $(\ubar{r},\bar{r})$ respectively, where
  $U=\{(a,b)\in H\mid (\ell/2,\ell/2)\ll (a,b)\}$. Here $\ubar{d}(a,b)=d+a$, $\bar{u}(a,b)=u+b$,
  $\ubar{r}(a,b)=r$, and $\bar{r}(a,b)=r$.

  To prove $\ell\Vdash\phi_1$, take $t$ and $r$ as above. By \cref{cor:arithmetic_semantics}, the
  sum $t+r$ is the pointwise sum $(a,b)\mapsto (d+a+r,u+b+r)$. But this is the function associated
  to the time $(d+r,u+r)\in\Time(\ell)$. To prove $\ell\Vdash\phi_2$, choose $t_1=(d_1,u_1)$ and
  $t_2=(d_1,u_1)$ in $\Time(\ell)$. Their pointwise difference $f\coloneqq t_1-t_2$ is the map
  $f(a,b)=(d_1-d_2,u_1-u_2)$, which is independent of $a$ and $b$ and hence a constant section of
  $\church{\tII}(\ell)$. And since $u_1-d_1=\ell=u_2-d_2$, we have $u_1-u_2=d_1-d_2$, so $f$ can be
  identified with an element $r\in\church{\tRR}(\ell)$, again by \cref{cor:Dedekind_cut_semantics}.
\end{proof}

\begin{proposition}\label{prop:axiom5}\index{axiom!of $\BaseTopos$!coprime}
  \Cref{ax:coprime_u}, denoted $\phi$ below, is sound in $\BaseTopos$:
  \[
    \phi\coloneqq\forall(t:\Time)(P,Q:\Prop)(q:\tQQ)\ldotp(t<q \imp (P\vee Q)) \imp [(t<q \imp P)
    \vee (t<q \imp Q)].
  \]
  \Cref{ax:coprime_d}, which is similar, is also sound in $\BaseTopos$.
\end{proposition}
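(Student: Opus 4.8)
The plan is to invoke \cref{lemma:peel_off_package} to strip off the universal quantifiers and the outer implication, so that it remains to show: for every $\ell\in\IRinv$, every $t=(d_t,u_t)\in\church{\Time}(\ell)$, every $P,Q\in\church{\Prop}(\ell)$, and every $q\in\QQ=\church{\tQQ}(\ell)$, if $\ell\Vdash t<q\imp(P\vee Q)$ then $\ell\Vdash(t<q\imp P)\vee(t<q\imp Q)$. Everything hinges on one structural observation. By \cref{eqn:semantics_time}, for any $\subint{r}{s}\colon\ell'\to\ell$ we have $\ell'\Vdash\restrict{t}{\subint{r}{s}}<q$ iff $u_t-s\le q$, a condition on $s$ alone. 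So, unwinding \cref{table:Joyal_Kripke}(iii), the statement $\ell\Vdash t<q\imp R$ says exactly that $\ell'\Vdash\restrict{R}{\subint{r}{s}}$ for every $\subint{r}{s}\colon\ell'\to\ell$ belonging to the sieve $S\coloneqq\{\,\subint{r}{s}\mid u_t-s\le q\,\}$ on $\ell$; and the crucial point is that $S$ is \emph{principal}.

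More precisely, set $s_0\coloneqq\max(0,u_t-q)$. If $q\le d_t$ then $s_0\ge u_t-d_t=\ell$, whereas every $\subint{r}{s}\colon\ell'\to\ell$ has $s<\ell$ (since $\ell'=\ell-r-s>0$ and $r\ge 0$); thus $S=\varnothing$ and $\ell\Vdash t<q\imp R$ holds vacuously for every $R\in\church{\Prop}(\ell)$, so both disjuncts of the conclusion hold trivially. Hence I may assume $q>d_t$, making $\ell_0\coloneqq\ell-s_0=\min(\ell,q-d_t)>0$ a genuine object of $\BaseSite$, equipped with $\subint{0}{s_0}\colon\ell_0\to\ell$. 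The lemma I would prove is that for every $R\in\church{\Prop}(\ell)$,
\[
  \ell\Vdash t<q\imp R \qquad\Longleftrightarrow\qquad \ell_0\Vdash\restrict{R}{\subint{0}{s_0}}.
\]
The direction ``$\Leftarrow$'' uses monotonicity: any $\subint{r}{s}\in S$ has $s\ge s_0$, hence factors through $\subint{0}{s_0}\colon\ell_0\to\ell$, so $\ell_0\Vdash\restrict{R}{\subint{0}{s_0}}$ forces $\ell'\Vdash\restrict{R}{\subint{r}{s}}$; the direction ``$\Rightarrow$'' is immediate, since $u_t-s_0\le q$ puts $\subint{0}{s_0}$ itself in $S$.

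Given this equivalence, the proof finishes at once. Applying it with $R=P\vee Q$ turns the hypothesis into $\ell_0\Vdash\restrict{P}{\subint{0}{s_0}}\vee\restrict{Q}{\subint{0}{s_0}}$, and the simplified disjunction rule \cref{table:Joyal_Kripke}(ii) --- which is legitimate in $\BaseTopos$ by \cref{rem:usual_JK_sem} --- yields $\ell_0\Vdash\restrict{P}{\subint{0}{s_0}}$ or $\ell_0\Vdash\restrict{Q}{\subint{0}{s_0}}$. Running the equivalence backwards with $R=P$, resp.\ $R=Q$, and invoking \cref{table:Joyal_Kripke}(ii) again, we obtain $\ell\Vdash(t<q\imp P)\vee(t<q\imp Q)$. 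For \cref{ax:coprime_d} I would run the mirror argument: by \cref{eqn:semantics_time} the predicate $q<t$ restricts to $q\le d_t+r$, a condition on $r$ alone, so it carves out the principal sieve generated by $\subint{r_0}{0}$ with $r_0\coloneqq\max(0,q-d_t)$, and the computation goes through verbatim with the roles of $d_t,r$ and $u_t,s$ interchanged. I expect the only real work to be nailing down the displayed equivalence cleanly --- that is, recognizing that $\{t<q\}$ determines a principal sieve so that forcing ``$t<q\imp R$'' at $\ell$ is merely forcing $R$ at the single stage $\ell_0$; after that, the axiom is just the disjunction property of $\BaseTopos$, which is precisely the ``coprimality'' its name refers to.
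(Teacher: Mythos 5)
Your proof is correct, and at bottom it runs on the same two facts as the paper's: the antecedent $t<q$ is, after restriction along $\subint{r}{s}$, the condition $u_t-s\leq q$ on $s$ alone, and the final disjunction is extracted classically (you via the simplified rule of \cref{table:Joyal_Kripke}(ii)/\cref{rem:usual_JK_sem}, the paper via ``arguing classically''). The packaging differs in a useful way. The paper splits into cases $q\leq d$, $u\leq q$, $d<q<u$ and then argues by contradiction: if $\ell\not\Vdash t<q\imp P$, it picks a witness $\subint{r}{s}$ where $P$ fails, slides it to $\subint{0}{s}$ by monotonicity, applies the hypothesis there to get $Q$, and concludes $\ell\Vdash t<q\imp Q$. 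You instead isolate the reusable lemma that the sieve of $t<q$ is principal, generated by $\subint{0}{s_0}$ with $s_0=\max(0,u_t-q)$, so that $\ell\Vdash t<q\imp R$ is equivalent to $\ell_0\Vdash\restrict{R}{\subint{0}{s_0}}$; the axiom then becomes literally the disjunction property at the single stage $\ell_0$ (and your $s_0=\max(0,u_t-q)$ absorbs the paper's case $u\leq q$). This buys a cleaner justification of exactly the step the paper leaves terse: the paper's witness $s$ need only satisfy $s\geq u-q$, so its closing ``it follows that $\ell\Vdash t<q\imp Q$'' implicitly relies on passing through the generating restriction $\subint{0}{u-q}$, which your equivalence makes explicit. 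Your mirror argument for \cref{ax:coprime_d} (principal sieve generated by $\subint{r_0}{0}$, $r_0=\max(0,q-d_t)$) matches the paper's ``which is similar'' remark. No gaps.
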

\begin{proof}
  The proofs of these two statements are similar so we prove the first, that $\ell\Vdash\phi$.
  Choose $t=(d,u)\in\Time(\ell)$, $q\in\QQ$, and $P,Q\in\Prop(\ell)$, and assume the hypothesis, $\ell\Vdash t<q \imp (P\vee Q)$. This means that for all
  $\subint{r}{s}\colon\ell'\to\ell$, if $u-s\leq q$ then either $\ell'\Vdash\restrict{P}{\subint{r}{s}}$ or
  $\ell'\Vdash\restrict{Q}{\subint{r}{s}}$.
  
  If $q\leq d$ then $\ell\Vdash(t<q)\iff\bot$ and the conclusion is vacuously true. If $u\leq q$ then $\ell\Vdash t<q$ so $\ell\Vdash P\vee Q$, and again the conclusion follows easily. So suppose $d<q<u$. We want to show $\ell\Vdash t<q\imp P$ or $\ell\Vdash t<q\imp Q$.

  Arguing classically, suppose that $\ell\Vdash t<q\imp P$ does not hold. Then there is some $\subint{r}{s}\colon\ell_0\to\ell$ for which $\ell_0\Vdash u-s\leq q$ but $\ell_0\not\Vdash\restrict{P}{\subint{r}{s}}$. Let $\ell'\coloneqq\ell-s$; by monotonicity, we also have $\ell'\not\Vdash\restrict{P}{\subint{0}{s}}$. Thus by hypothesis we have $\ell'\Vdash\restrict{Q}{\subint{0}{s}}$. It follows that $\ell\Vdash t<q\imp Q$, as
  desired.
\end{proof}

\begin{proposition}\label{prop:axiom6}\index{axiom!of $\BaseTopos$!infinitary coprime}
  Let $\tConst$ be the constant sheaf on some set $C$. Then \cref{ax:infinitary_coprime_u}, denoted
  $\phi$ below, is sound in $\BaseTopos$.
  \begin{multline*}
    \phi\coloneqq\exists(c:\tConst)\ldotp\top\imp\forall(t:\Time)(u:\tRRub)(P:C\to\Prop)\ldotp\\
    (t<u \imp \exists(c:\tConst)\ldotp P(c)) \imp[\forall(u':\tQQ)\ldotp
        u'<u\imp\exists(c:\tConst)\ldotp t<u'\imp P(c)]
  \end{multline*}
  \Cref{ax:infinitary_coprime_d}, which is similar, is also sound in $\BaseTopos$.
\end{proposition}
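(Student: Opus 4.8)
The plan is to peel off the quantifiers and implications, translate the forcing conditions using \cref{eqn:semantics_time}, and finish with a single well-chosen witness. First dispose of the outermost implication $\exists(c:\tConst)\top\imp(\cdots)$: if $C=\varnothing$ then no $\ell$ forces $\exists(c:\tConst)\top$ (it is interpreted as the empty open by \cref{prop:coherent_pointwise}), so $\phi$ is vacuously sound; hence we may assume $C\neq\varnothing$, in which case $\exists(c:\tConst)\top$ is forced at every $\ell$ and $\phi$ is sound iff its body is. Since the hypothesis $t<u\imp\exists(c:\tConst)\ldotp P(c)$ does not mention $u'$, the body is intuitionistically equivalent to a formula $\forall(t:\Time)(u:\tRRub)(P:C\to\Prop)(u':\tQQ)\ldotp\bigl[(t<u\imp\exists(c:\tConst)\ldotp P(c))\wedge u'<u\bigr]\imp\exists(c:\tConst)\ldotp(t<u'\imp P(c))$, which has the shape demanded by \cref{lemma:peel_off_package}. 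So it suffices to fix $\ell\in\BaseSite$, $t=(d_t,u_t)\in\Time(\ell)$, an element $u\in\tRRub(\ell)$ -- an extended real number, by \cref{cor:Dedekind_cut_semantics} -- a section $P\in\Prop^\tConst(\ell)$, and $u'\in\QQ$ with $u'<u$ in $\RRub$, and to show: if $\ell\Vdash t<u\imp\exists(c:\tConst)\ldotp P(c)$, then $\ell\Vdash\exists(c:\tConst)\ldotp(t<u'\imp P(c))$. (If $u=-\infty$ there is no such $u'$, so assume $u\neq-\infty$.)

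Next, unwind. By item (iii) of \cref{table:Joyal_Kripke} together with \cref{eqn:semantics_time}, the hypothesis says: for every $\subint{r}{s}\colon\ell'\to\ell$ with $u_t-s<u$ (reading $u_t-s<\infty$ as true), we have $\ell'\Vdash\restrict{(\exists(c:\tConst)\ldotp P(c))}{\subint{r}{s}}$, which by \cref{prop:coherent_pointwise} amounts to $\upclose{[r,\ell-s]}\subseteq\bigcup_{c\in C}P(c)$ inside $\upclose{[0,\ell]}$. Using items (v) and (iii) of \cref{table:Joyal_Kripke} and that $\tConst$ is the constant sheaf on $C$, the goal becomes: for each wavy $\subint{r}{s}\colon\ell'\wavyto\ell$ there is $c\in C$ such that $\upclose{[r+r'',\ell-s-s'']}\subseteq P(c)$ for all $\subint{r''}{s''}\colon\ell''\to\ell'$ with $u_t-s-s''\le u'$.

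Now the core argument. If $u'\le d_t$, then $u_t-s-s''\le u'$ forces $s+s''\ge u_t-d_t=\ell$, which is impossible for composable subintervals, so the goal holds vacuously for any $c\in C$. If $u'>d_t$, put $w\coloneqq\min(u'-d_t,\ell)\in(0,\ell]$ and apply the hypothesis to $\subint{0}{\ell-w}\colon w\to\ell$ -- legitimate since $w>0$ and $u_t-(\ell-w)=d_t+w=\min(u',u_t)<u$ (using $u'<u$, and $u_t<u'<u$ when $u'>u_t$) -- obtaining $\upclose{[0,w]}\subseteq\bigcup_cP(c)$, hence $[0,w]\in P(c_0)$ for some $c_0\in C$. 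This single $c_0$ serves every wavy arrow: whenever $u_t-s-s''\le u'$ we have $\ell-s-s''\le\ell-(u_t-u')=u'-d_t$ and $\ell-s-s''\le\ell$, hence $\ell-s-s''\le w$, so $[r+r'',\ell-s-s'']\sqsupseteq[0,w]$ and therefore lies in the up-closed set $P(c_0)$, giving $\upclose{[r+r'',\ell-s-s'']}\subseteq P(c_0)$. The proof of \cref{ax:infinitary_coprime_d} is symmetric, with lower cuts in place of upper cuts.

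The only real content is the unwinding in the second paragraph -- reading off from \cref{eqn:semantics_time} exactly which restrictions force $t<u$ and $t<u'$ -- together with the point in the third paragraph where the strict inequality $u'<u$ is used: it is precisely what places the coarsest subinterval $[0,w]$ inside the region controlled by the hypothesis, which is what lets us trade the joint cover $\bigcup_c P(c)$ for a single $P(c_0)$.
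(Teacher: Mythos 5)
Your reduction to \cref{lemma:peel_off_package}, the case split on $u'$ versus $d_t$, and the way you exploit the strictness of $u'<u$ are all fine, but the central step is not. From $w\Vdash\restrict{(\exists(c:\tConst)\ldotp P(c))}{\subint{0}{\ell-w}}$ you infer $\upclose{[0,w]}\subseteq\bigcup_{c\in C}P(c)$, hence a single $c_0$ with $[0,w]\in P(c_0)$ that ``serves every wavy arrow''. The pointwise reading of the existential in \cref{prop:coherent_pointwise} is tied to the notation of \cref{eqn:pointwise_semantics_notation}, which is only set up at interior points $0<d\leq u<\ell$; at the corner point $[0,w]$ the identification of $\exists(c:\tConst)\ldotp P(c)$ with the union $\bigcup_c P(c)$ breaks down, because clause (v) of \cref{table:Joyal_Kripke} (equivalently, local character) makes the existential forced at a stage as soon as each \emph{wavy} restriction admits \emph{some} witness, with the witness allowed to vary. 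Concretely, take $C=\NN$ and let $P(n)\in\church{\Prop}(\ell)$ be the proposition whose restriction along $\subint{r}{s}$ is $\top$ precisely when $r\geq 1/n$ (as an open of $\upclose{[0,\ell]}$ this is $\{[d,e]\mid d>1/n\}$). Then $\exists(n:\NN)\ldotp P(n)$ is forced at every stage---on a wavy $\subint{r}{s}$ choose $n$ with $1/n<r$---so the hypothesis of the axiom holds for every $t$ and $u$; yet $\bigcup_n P(n)$ does not contain $[0,w]$, and no single $n_0$ has $P(n_0)$ forced on restrictions whose left endpoint is arbitrarily small, even among those forcing $t<u'$. So your uniform-witness claim is not merely unjustified but false in general; the axiom remains sound only because the Kripke--Joyal clause lets the witness depend on the wavy arrow.

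The damage is local, but repairing it means giving up uniformity, at which point you essentially recover the paper's own proof. For each wavy $\subint{r}{s}\colon\ell'\wavyto\ell$, evaluate instead at the interior point $[r,\min(w,\ell-s)]$ (interior since $r>0$ and $\ell-s<\ell$); the slack coming from $u'<u$ places this point strictly inside a window on which the hypothesis forces the existential, so \cref{prop:coherent_pointwise} legitimately yields a witness $c$ depending on $(r,s)$, and up-closedness of $P(c)$ then handles every further restriction forcing $t<u'$. This is exactly the structure of the paper's argument, whose candidate $c$ is likewise chosen only after the wavy arrow $\subint{r''}{s''}$ has been fixed.
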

\begin{proof}
  The proofs of these two statements are similar so we prove the first, that $\ell\Vdash\phi$.  Suppose $C$ is nonempty, choose $t=(d_t,u_t)\in\Time(\ell)$, $P\in\Prop^\tConst(\ell)$, and  $u\in\RR\cup\{\infty,-\infty\}$. If $u=\infty$ or $u=-\infty$, the claim is vacuously true, so  assume $u\in\RR$. Assume the hypothesis, $\ell\Vdash t<u \imp \exists(c:\tConst)\ldotp P(c)$. If $u_t\leq u$ then $\ell\Vdash t<u$ by \cref{eqn:semantics_time} and the hypothesis directly implies the conclusion. If $u\leq d_t$ then for any $u'$, any choice of $c\in C$ will vacuously satisfy the conclusion. Thus we may assume $d_t<u<u_t$; we next unpack the hypothesis in this case.
  
Let $r_0\coloneqq0$ and $s_0\coloneqq u_t-u$; then on the restriction $\subint{r_0}{s_0}\colon\ell_0\to\ell$, we have $\ell_0 \Vdash (\restrict{t}{\subint{r_0}{s_0}}<u)$, so we also have $\ell_0 \Vdash \exists(c:\tConst)\ldotp \restrict{P}{\subint{r_0}{s_0}}(c)$. Thus for every $\subint{r_1}{s_1}\colon\ell_1\wavyto\ell_0$ there exists a $c\in C$ with $\ell_1\Vdash\restrict{P}{\subint{r_1}{s_1}}(c)$.

For the conclusion, take any $\subint{r'}{s'}\colon\ell'\to\ell$ and $u'<u$; we need to show $\ell'\Vdash\exists(c:\tConst)\ldotp(\restrict{t}{\subint{r'}{s'}}<u')\imp\restrict{P}{\subint{r'}{s'}}(c)$. For this it suffices to take an arbitrary $\subint{r''}{s''}\colon\ell''\wavyto\ell'$ and show that there exists $c\in C$ such that $\ell''\Vdash (\restrict{t}{\subint{r''}{s''}}<u')\imp \restrict{P}{\subint{r''}{s''}}(c)$. Let $r_1\coloneqq r'+r''$ and let $s_1\coloneqq u_t-u'$; then $r_1>r_0$ and $s_1>s_0$, so by the paragraph above, on the subinterval $\subint{r_1}{s_1}\colon\ell_1\wavyto\ell_0$, there exists $c\in C$ such that $\ell_1\Vdash\restrict{P}{\subint{r_1}{s_1}}(c)$. This is our candidate $c$.

To show $\ell''\Vdash (\restrict{t}{\subint{r''}{s''}}<u')\imp \restrict{P}{\subint{r''}{s''}}(c)$, take any $\subint{r'''}{s'''}\colon\ell'''\to\ell''$ and assume the hypothesis, $u_t-s'-s''-s'''<u'=u_t-s_1$. Then $\ell'''$ is a subinterval of $\ell_1$ and we are done by monotonicity: $\ell_1\Vdash\restrict{P}{\subint{r_1}{s_1}}(c)$ implies $\ell'''\Vdash\restrict{P}{\subint{r'''}{s'''}}(c)$.
\end{proof}

\begin{proposition}\label{prop:axiom7}\index{axiom!of $\BaseTopos$!prime}
  \Cref{ax:prime}, denoted $\phi$ below, is sound in $\BaseTopos$:
  \begin{multline*}
    \phi\coloneqq\forall(t:\Time)(P,Q:\Prop)(d,u:\tQQ)\ldotp(d\leq u)\imp\\
    \left[ (P\wedge Q) \imp t\apart[u,d] \right] \imp\left[ (P \imp t\apart[u,d]) \vee (Q \imp
    t\apart[u,d]) \right]
  \end{multline*}
\end{proposition}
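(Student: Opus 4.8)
The plan is to follow the template of the preceding proofs. Using \cref{lemma:peel_off_package} I would first strip the universal quantifiers over $t,P,Q,d,u$ together with the leading implication from $d\leq u$, and then peel off the remaining implication in the usual way (assume its hypothesis forced at a fixed $\ell$, derive its conclusion forced at $\ell$), exactly as in the proof of \cref{prop:axiom5}. Writing $R\coloneqq t\apart[u,d]=(t<u)\vee(d<t)$, we are thereby reduced to: fix $\ell\in\BaseSite$, sections $t=(d_t,u_t)\in\Time(\ell)$ and $P,Q\in\Prop(\ell)$, and rationals $d\leq u$; assume $\ell\Vdash(P\wedge Q)\imp R$; and show $\ell\Vdash\bigl[(P\imp R)\vee(Q\imp R)\bigr]$. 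By the simplified disjunction clause \cref{table:Joyal_Kripke}(ii) it suffices to show $\ell\Vdash P\imp R$ or $\ell\Vdash Q\imp R$.

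Before splitting into cases I would record, from \cref{eqn:semantics_time} together with the simplified disjunction clause applied to $R$, that for any $\subint{r}{s}\colon\ell'\to\ell$ we have $\ell'\Vdash\restrict{R}{\subint{r}{s}}$ iff $u_t-s\leq u$ or $d\leq d_t+r$. Hence, if $u_t\leq u$ or $d\leq d_t$, then $R$ is forced on \emph{every} restriction of $\ell$, so $\ell\Vdash P\imp R$ holds vacuously and we are done. This leaves the case $u<u_t$ and $d_t<d$, in which $R$ corresponds to the open set of $\upclose[0,\ell]$ that is the complement of (the closure of) the point $[d,u]\in\upclose[d_t,u_t]$ --- the conceptual reason $R$ is ``prime'' --- but I would argue this case directly by forcing.

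So suppose for contradiction that neither $\ell\Vdash P\imp R$ nor $\ell\Vdash Q\imp R$. By the implication clause \cref{table:Joyal_Kripke}(iii) there are $\subint{r_P}{s_P}\colon\ell_P\to\ell$ and $\subint{r_Q}{s_Q}\colon\ell_Q\to\ell$ with $\ell_P\Vdash\restrict{P}{\subint{r_P}{s_P}}$ and $\ell_Q\Vdash\restrict{Q}{\subint{r_Q}{s_Q}}$, but $\ell_P\not\Vdash\restrict{R}{\subint{r_P}{s_P}}$ and $\ell_Q\not\Vdash\restrict{R}{\subint{r_Q}{s_Q}}$; by the criterion above, the last two conditions say $u_t-s_P>u$, $d_t+r_P<d$, $u_t-s_Q>u$, and $d_t+r_Q<d$. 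The key point is that these inequalities say precisely that each of the restricted time-intervals $[d_t+r_P,u_t-s_P]$ and $[d_t+r_Q,u_t-s_Q]$ strictly contains $[d,u]$; hence, putting $r^\ast\coloneqq\max(r_P,r_Q)$ and $s^\ast\coloneqq\max(s_P,s_Q)$, we still have $d_t+r^\ast<d\leq u<u_t-s^\ast$, so $\subint{r^\ast}{s^\ast}\colon\ell^\ast\to\ell$ is a genuine morphism with $\ell^\ast=(u_t-s^\ast)-(d_t+r^\ast)>u-d\geq0$. By monotonicity (applied along $\ell^\ast\to\ell_P$ and $\ell^\ast\to\ell_Q$) we get $\ell^\ast\Vdash\restrict{P}{\subint{r^\ast}{s^\ast}}$ and $\ell^\ast\Vdash\restrict{Q}{\subint{r^\ast}{s^\ast}}$, hence $\ell^\ast\Vdash\restrict{(P\wedge Q)}{\subint{r^\ast}{s^\ast}}$, and so the assumed hypothesis forces $\ell^\ast\Vdash\restrict{R}{\subint{r^\ast}{s^\ast}}$, i.e.\ $u_t-s^\ast\leq u$ or $d\leq d_t+r^\ast$ --- contradicting $u_t-s^\ast>u$ and $d_t+r^\ast<d$.

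The step I expect to be the crux is this combination of the two ``bad'' restrictions into $\subint{r^\ast}{s^\ast}$: in general two restrictions of $\ell$ have no common refinement in $\BaseSite$, and the naive coordinatewise maximum of arbitrary restrictions need not even be a morphism. What rescues the argument is entirely the shape of $R$: the failure of $R$ on a restriction forces the corresponding time-interval to contain the fixed window $[d,u]$, so the two bad intervals overlap and $\subint{r^\ast}{s^\ast}$ is legitimate. Everything else --- the quantifier peeling via \cref{lemma:peel_off_package}, the two monotonicity invocations, and the reading of the Kripke--Joyal clauses for $\imp$ and $\vee$ --- is routine bookkeeping.
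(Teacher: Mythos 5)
Your proposal is correct and is essentially the paper's own argument: the paper likewise assumes (classically) that $P\imp t\apart[u,d]$ fails, extracts the two ``bad'' restrictions on which $P$ (resp.\ $Q$) is forced while $d_t+r<d\leq u<u_t-s$, takes the coordinatewise maxima $r_0=\max$, $s_0=\max$ (legitimate precisely because both windows strictly contain $[d,u]$, so the resulting length is positive by $d\leq u$), and derives a contradiction with the hypothesis via monotonicity. Your preliminary case split on $u_t\leq u$ or $d\leq d_t$ is harmless but unnecessary, since the existence of a bad restriction already forces $d_t<d$ and $u<u_t$.
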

\begin{proof}
  To show $\ell\Vdash\phi$, choose $t=(d_t,u_t)\in\Time(\ell)$, $P,Q\in\Prop(\ell)$, and
  $d,u\in\QQ$. Assume the hypothesis, that $d\leq u$ and that $\ell\Vdash(P\wedge Q) \imp
  t\apart[u,d]$. This means that for all $\subint{r}{s}\colon\ell'\to\ell$, if
  $\ell'\Vdash\restrict{P}{\subint{r}{s}}$ and $\ell'\Vdash\restrict{Q}{\subint{r}{s}}$ then either $d\leq d_t+r$ or
  $u_t-s\leq u$.

  Suppose $\ell\not\Vdash P \imp t\apart[u,d]$. Then for some $\subint{r'}{s'}\colon\ell'\to\ell$ we have
  $\ell'\Vdash\restrict{P}{\subint{r'}{s'}}$ and $d_t+r'<d$ and $u<u_t-s'$. We want to show $\ell\Vdash Q \imp
  t\apart[u,d]$, so suppose that for some $\subint{r''}{s''}\colon\ell''\to\ell$ we have
  $\ell''\Vdash\restrict{Q}{\subint{r''}{s''}}$ and for contradiction suppose that $d_t+r''<d$ and
  $u<u_t-s''$.

  Let $r_0\coloneqq\max(r',r'')$ and $s_0\coloneqq\max(s',s'')$, and let $\ell_0\coloneqq \ell-(r+s)$. Then $d_t+r_0<d\leq u<u_t-s_0$ and $\ell=u_t-d_t$, so $0<\ell_0$. We get a contradiction because $\ell_0\Vdash\restrict{P}{\subint{r_0}{s_0}}$ and $\ell_0\Vdash\restrict{Q}{\subint{r_0}{s_0}}$, which by hypothesis gives $d\leq d_t+r_0$ or $u_t-s_0\leq u$.
\end{proof}

\begin{proposition}\label{prop:axiom8}\index{axiom!of $\BaseTopos$!enough points}
  \Cref{ax:enough_points}, denoted $\phi$ below, is sound in $\BaseTopos$:
  \[
    \phi\coloneqq\forall(t:\Time)(P:\Prop)\ldotp
      [\forall(d,u:\tQQ)\ldotp d<u \imp \At{[d,u]}P] \imp P
  \]
\end{proposition}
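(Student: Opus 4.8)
The plan is to begin, as the section dictates, by applying \cref{lemma:peel_off_package} to peel off the leading quantifiers $\forall(t:\Time)(P:\Prop)$ and the implication. This reduces the claim to the following: for every $\ell\in\IRinv$, every $t=(d_t,u_t)\in\church{\Time}(\ell)$ and every $P\in\church{\Prop}(\ell)$, if $\ell\Vdash\forall(d,u:\tQQ)\ldotp d<u\imp\At[t]{[d,u]}P$ then $\ell\Vdash P$. Unwinding the Kripke--Joyal clauses for $\forall$ and $\imp$ at the trivial restriction $r=s=0$, and using that $d<u$ is decidable between elements of the constant sheaf $\tQQ$, the hypothesis yields that $\ell\Vdash\At[t]{[d,u]}P$ for every pair of rationals $d<u$.

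Next I would feed this into part~(3) of \cref{cor:semantics_In_At_See}, which unpacks $\ell\Vdash\At[t]{[d,u]}P$ for $d\le u$. What we obtain is: for every pair of rationals $d<u$ with $d_t<d$ and $u<u_t$, there is a subinterval $\subint{r}{s}\colon\ell'\to\ell$ with $d_t+r<d\le u<u_t-s$ such that $\ell'\Vdash\restrict{P}{\subint{r}{s}}$. To conclude $\ell\Vdash P$ I would use \emph{local character}: it is enough to prove $\ell_0\Vdash\restrict{P}{\subint{r_0}{s_0}}$ for every wavy arrow $\subint{r_0}{s_0}\colon\ell_0\wavyto\ell$. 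Fix such an arrow. Since $r_0,s_0>0$ and $\ell_0>0$ we have $d_t<d_t+r_0<u_t-s_0<u_t$, so we may choose rationals $d\in(d_t,d_t+r_0)$ and $u\in(u_t-s_0,u_t)$; then $d<u$, $d_t<d$, and $u<u_t$. Applying the previous paragraph to this $d,u$ yields $\subint{r}{s}\colon\ell'\to\ell$ with $d_t+r<d\le u<u_t-s$ and $\ell'\Vdash\restrict{P}{\subint{r}{s}}$. Now $d_t+r<d<d_t+r_0$ forces $r<r_0$ and $u_t-s_0<u<u_t-s$ forces $s<s_0$, so $\subint{r_0}{s_0}$ factors through $\subint{r}{s}$, and monotonicity gives $\ell_0\Vdash\restrict{P}{\subint{r_0}{s_0}}$, as needed.

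The one point requiring care is the transition between the last two steps: the hypothesis only controls $\At[t]{[d,u]}P$ at rational endpoints lying \emph{strictly} between $d_t$ and $u_t$, and even there it only exhibits \emph{some} restriction of $P$ as forced rather than $P$ itself, so there is no direct way to read off $\ell\Vdash P$. The mechanism that resolves this is precisely local character together with the observation that a wavy arrow $\subint{r_0}{s_0}$ produces an interval $[d_t+r_0,u_t-s_0]$ sitting strictly inside $[d_t,u_t]$, which leaves room to slip rational endpoints in on both sides; once this is arranged, the remainder is routine bookkeeping with the $\subint{r}{s}$-notation, in the style of the other proofs in this section.
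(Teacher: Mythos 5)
Your proof is correct and follows essentially the same route as the paper's: restrict the hypothesis, unpack $\At{}$ via \cref{cor:semantics_In_At_See}(3), and conclude by local character. The paper states the key step tersely (``it follows that $\ell'\Vdash\restrict{P}{\subint{r}{s}}$ for all wavy arrows''), and your squeezing of rationals into $(d_t,d_t+r_0)$ and $(u_t-s_0,u_t)$ followed by monotonicity is exactly the bookkeeping that justifies it.
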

\begin{proof}
  To show $\ell\Vdash\phi$, choose $t=(d_t,u_t)\in\Time(\ell)$ and $P\in\Prop(\ell)$, and assume the
  hypothesis. That is, for every $\subint{r}{s}\colon\ell'\to\ell$ and $d,u\in\QQ$ with $d<u$, we
  have $\ell'\Vdash\At[\restrictsm{t}{\subint{r}{s}}]{[d,u]}\restrict{P}{\subint{r}{s}}$. It follows
  by \cref{cor:semantics_In_At_See} (3), that $\ell'\Vdash\restrict{P}{\subint{r}{s}}$ for all
  $r,s>0$, i.e. for all $\subint{r}{s}\colon\ell'\wavyto\ell$. By local character, this implies
  $\ell\Vdash P$.
\end{proof}

\begin{proposition}\label{prop:axiom9}\index{axiom!of $\BaseTopos$!point to open}
  \Cref{ax:point_to_open}, denoted $\phi$ below, is sound in $\BaseTopos$.
  \begin{multline*}
    \forall(t:\Time)(P:\Prop)(d,u:\tRR)\ldotp d\leq u \imp\\
    \At{[d,u]}P \imp \See{[d,u]}\exists(d',u':\tQQ)\ldotp d'<d\leq u<u' \wedge \In{[d',u']}P
  \end{multline*}
\end{proposition}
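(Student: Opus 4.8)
The plan is to invoke \cref{lemma:peel_off_package} to strip the outer universal quantifiers and the two implications, reducing the statement to the following: for every $\ell\in\BaseSite$, every $t=(d_t,u_t)\in\church{\Time}(\ell)$, every $P\in\church{\Prop}(\ell)$, and all $d,u\in\RR=\church{\tRR}(\ell)$ with $d\leq u$, if $\ell\Vdash\At[t]{[d,u]}P$ then $\ell\Vdash\See[t]{[d,u]}Q$, where I write
\[
  Q \;\coloneqq\; \exists(d',u':\tQQ)\ldotp d'<d\leq u<u' \wedge \In[t]{[d',u']}P.
\]

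First I would unwind the outer $\See[t]{[d,u]}$ using \cref{cor:semantics_In_At_See}(2): it suffices to show that $d_t<d\leq u<u_t$ implies $\ell\Vdash Q$. So assume $d_t<d\leq u<u_t$. This is precisely the hypothesis that makes \cref{cor:semantics_In_At_See}(3) bite when applied to $\ell\Vdash\At[t]{[d,u]}P$, and it produces a subinterval $\subint{r}{s}\colon\ell'\to\ell$ with $d_t+r<d\leq u<u_t-s$ and $\ell'\Vdash\restrict{P}{\subint{r}{s}}$. Morally, $\At[t]{[d,u]}P$ says that $P$ holds on some open neighborhood of the ``point'' $[d,u]$ inside $\upclose[d_t,u_t]$, and the interval $[d_t+r,\,u_t-s]$ is that neighborhood; the work of the axiom is to name it by rational endpoints and pass from $\At$ to $\In$.

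The key step is therefore to pick rationals $d',u'\in\QQ$ with $d_t+r<d'<d$ and $u<u'<u_t-s$, which is possible because $\QQ$ is dense in $\RR$ and the inequalities $d_t+r<d$ and $u<u_t-s$ are strict. These satisfy $d'<d\leq u<u'$, and in particular $d'<u'$. I then claim $\ell\Vdash\In[t]{[d',u']}P$: by \cref{cor:semantics_In_At_See}(1), since $d'\neq u'$ this is the assertion $\ell_1\Vdash\restrict{P}{\subint{r_1}{s_1}}$ with $r_1=\max(0,d'-d_t)=d'-d_t$ and $s_1=\max(0,u_t-u')=u_t-u'$, i.e.\ $[d_t+r_1,u_t-s_1]=[d',u']$. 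Since $r_1>r$ and $s_1>s$, the interval $[d',u']$ is a (wavy) subinterval of $[d_t+r,u_t-s]=\ell'$ of positive length $u'-d'$, and $\restrict{P}{\subint{r_1}{s_1}}$ is the restriction of $\restrict{P}{\subint{r}{s}}$ along this inclusion; monotonicity then upgrades $\ell'\Vdash\restrict{P}{\subint{r}{s}}$ to $\ell_1\Vdash\restrict{P}{\subint{r_1}{s_1}}$, which is what was needed.

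Finally, $d'$ and $u'$ are global sections of the constant sheaf $\church{\tQQ}$, so they restrict to themselves at every stage; since we have shown $\ell\Vdash(d'<d\leq u<u')\wedge\In[t]{[d',u']}P$, the Kripke--Joyal clause for $\exists$ over a constant type (\cref{table:Joyal_Kripke}) together with monotonicity gives $\ell\Vdash Q$, completing the proof. I do not expect a genuine obstacle; the only point requiring care is the third step, where the strictness of $d_t+r<d$ and $u<u_t-s$---built into the ``$\At$ gives an open neighborhood of a point'' content of \cref{cor:semantics_In_At_See}(3)---is exactly what permits inserting rational endpoints and then retracting $P$ inward by monotonicity.
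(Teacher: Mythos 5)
Your proposal is correct and follows essentially the same route as the paper: reduce via \cref{lemma:peel_off_package}, use \cref{cor:semantics_In_At_See}(2) to discharge the outer $\See[t]{[d,u]}$ unless $d_t<d\leq u<u_t$, extract the subinterval from (3), and verify the $\In[t]{[d',u']}$ witness via (1). Your extra step of choosing \emph{rational} $d',u'$ strictly between $d_t+r<d$ and $u<u_t-s$ and retracting $P$ by monotonicity is in fact slightly more careful than the paper, which simply sets $d'\coloneqq d_t+r$ and $u'\coloneqq u_t-s$ even though these need not be rational.
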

\begin{proof}
  To show $\ell\Vdash\phi$, choose $t=(d_t,u_t)\in\Time(\ell)$, $P\in\Prop(\ell)$, and $d\leq u$,
  and suppose $\ell\Vdash\At{[d,u]}P$ holds. The result follows from the three parts of
  \cref{cor:semantics_In_At_See}. Indeed, by (2), the conclusion is vacuously true unless $d_t<d\leq
  u<u_t$, so we may assume it. But then by (3) there exists $\subint{r}{s}\colon\ell'\to\ell$ with
  $d_t+r<d\leq u<u_t-s$ such that $\ell'\Vdash \restrict{P}{\subint{r}{s}}$. Letting $d'\coloneqq d_t+r$ and $u'\coloneqq u_t-s$, the result
  follows from (1).
\end{proof}

\begin{proposition}\label{prop:axiom10}\index{axiom!of $\BaseTopos$!covering for $\pi$ modality}
  \Cref{ax:covering_pi}, denoted $\phi$ below, is sound in $\BaseTopos$.
  \[
    \phi\coloneqq\forall(P:\Prop)\ldotp[\forall(t:\Time)(q_1,q_2:\tQQ)\ldotp q_1<q_2 \imp
    (t\apart[q_2,q_1]\imp P)\imp P]\imp\pi P\imp P.
  \]
\end{proposition}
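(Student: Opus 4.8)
The plan is to follow the same pattern as the previous proofs in this section. Denote by $\phi_1$ the bracketed covering hypothesis $\forall(t:\Time)(q_1,q_2:\tQQ)\ldotp q_1<q_2\imp(t\apart[q_2,q_1]\imp P)\imp P$. After currying $\phi_1\imp\pi P\imp P$ into the equivalent form $(\phi_1\wedge\pi P)\imp P$, \cref{lemma:peel_off_package} reduces the claim to the following: for every $\ell\in\IRinv$ and every $P\in\church{\Prop}(\ell)$, if $\ell\Vdash\phi_1$ and $\ell\Vdash\pi P$, then $\ell\Vdash P$. Arguing classically — as elsewhere in this section — I expect not to need the hypothesis $\ell\Vdash\pi P$ at all; its role is to make $P$ derivable \emph{internally}, where one cannot extract rational bounds for a generic $t:\Time$, whereas the semantics will let us sidestep it.

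The key step is to instantiate $\phi_1$ at a single, cleverly chosen time and rational pair. Working in $\church{\Time}(\ell)$ as described in \cref{sec:sem_of_time}, take $t\coloneqq(0,\ell)$, take $q_1\coloneqq 0$, and take any rational $q_2\geq\ell$; since $\ell>0$ we have $q_1<q_2$. I claim that $\ell\Vdash t\apart[q_2,q_1]\imp P$ holds vacuously. Indeed, for any $\subint{r}{s}\colon\ell'\to\ell$ we have $\restrict{t}{\subint{r}{s}}=(r,\ell-s)$, and $\ell'=\ell-r-s>0$ forces $0\leq r<\ell$ and $0\leq s<\ell$. By the semantics of $\Time$ in \cref{eqn:semantics_time}, $\ell'\Vdash q_2<\restrict{t}{\subint{r}{s}}$ would require $q_2\leq r<\ell\leq q_2$, and $\ell'\Vdash\restrict{t}{\subint{r}{s}}<q_1$ would require $\ell-s\leq q_1=0$; both are impossible. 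So by the (simplified) rule for $\vee$ in \cref{table:Joyal_Kripke}, $\ell'\not\Vdash\restrict{(t\apart[q_2,q_1])}{\subint{r}{s}}$ for every $\subint{r}{s}\colon\ell'\to\ell$, and hence the implication $t\apart[q_2,q_1]\imp P$ is forced at $\ell$ for want of any witness to its antecedent. Feeding this into $\ell\Vdash\phi_1$, instantiated at this $t$ and these $q_1<q_2$, yields $\ell\Vdash P$, completing the argument.

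I do not anticipate a genuine obstacle here; the only subtlety is conceptual. One must notice that, although $\pi P$ is truly needed to derive $P$ in the type theory, non-constructively it is redundant because we may exhibit the explicit time $(0,\ell)$ together with rationals $q_1=0$ and $q_2\geq\ell$ straddling the whole interval $[0,\ell]$, for which $t\apart[q_2,q_1]$ can never be forced. The one place to be careful is the endpoint behaviour: it is exactly the standing convention that object lengths are strictly positive (\cref{sec:JK_semantics}) — i.e.\ $\ell'>0$, hence $r,s<\ell$ — that rules out any restriction of $t$ satisfying $t\apart[q_2,q_1]$, and this is what makes the vacuous-implication trick go through.
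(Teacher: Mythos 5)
Your reduction via \cref{lemma:peel_off_package} is fine, but the key step rests on a misreading of the notation $t\apart[q_2,q_1]$. In \cref{ax:covering_pi} (exactly as in \cref{ax:prime}, whose soundness proof unpacks $t\apart[u,d]$ with $d\leq u$ as ``$d\leq d_t+r$ or $u_t-s\leq u$''), the formula $t\apart[q_2,q_1]$ with $q_1<q_2$ stands for $(t<q_2)\vee(q_1<t)$; semantically, by \cref{eqn:semantics_time}, it is forced precisely when $u_t\leq q_2$ or $q_1\leq d_t$, i.e.\ when the clock window fails to strictly contain $[q_1,q_2]$. It does \emph{not} mean $(q_2<t)\vee(t<q_1)$ as you computed. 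With your instantiation $t=(0,\ell)$, $q_1=0$, $q_2\geq\ell$, every restriction of $t$ along $\subint{r}{s}$ is the window $(r,\ell-s)$, and $\ell-s\leq\ell\leq q_2$ (indeed also $0\leq r$), so the antecedent $t\apart[q_2,q_1]$ is forced at \emph{every} stage. Far from holding vacuously, the implication $t\apart[q_2,q_1]\imp P$ is then equivalent to $P$, and the instance of the covering hypothesis you invoke collapses to the tautology $P\imp P$, which yields nothing.

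Moreover, your guiding idea---that $\ell\Vdash\pi P$ can be discarded and the covering hypothesis alone forces $P$---cannot be repaired. Take $P=\bot$: for any $t$ and $q_1<q_2$, every restriction of $t$ can be further restricted to a window of length less than $q_2-q_1$, and such a window satisfies $u_t\leq q_2$ or $q_1\leq d_t$; hence $\neg\neg\bigl(t\apart[q_2,q_1]\bigr)$ is forced, so the covering hypothesis $\forall(t:\Time)(q_1,q_2:\tQQ)\ldotp q_1<q_2\imp(t\apart[q_2,q_1]\imp\bot)\imp\bot$ holds at every $\ell$, while $\ell\Vdash\bot$ never does. So any correct proof must genuinely use $\pi P$. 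The paper's proof does: \cref{prop:semantics_pi} converts $\ell\Vdash\pi P$ into a family of subintervals covering $[0,\ell]$ on which $P$ is forced; compactness extracts a finite overlapping chain; the covering hypothesis, instantiated with a suitable clock $t$ and rationals $q_1<q_2$ chosen inside each overlap, glues consecutive pieces by induction; and local character then gives $\ell\Vdash P$. This gluing mechanism is the substance of the argument, and it is absent from your proposal.
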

\begin{proof}
  To show $\ell\Vdash\phi$, choose $P:\Prop(\ell)$ and suppose that $\ell\Vdash\pi P$ and the
  ``covering hypothesis'', that $\ell\Vdash\forall(t:\Time)(q_1,q_2:\tQQ)\ldotp q_1<q_2 \imp
  (t\apart[q_2,q_1]\imp P)\imp P$. We use local character to show $\ell\Vdash P$. That is, suppose
  given an arbitrary $\subint{r'}{s'}\colon\ell'\wavyto\ell$ with $r',s'>0$; we will show
  $\ell'\Vdash\restrict{P}{\subint{r'}{s'}}$ using a compactness argument.

	\cref{prop:semantics_pi} says that for every $0<a<\ell$ there exists an open neighborhood of $a$ in $\RR$ on which $P$ holds; i.e.\ there exists $\subint{r_a}{s_a}\colon\ell_a\to\ell$ with $r_a<a<\ell-s_a$ and $\ell_a\Vdash\restrict{P}{\subint{r_a}{s_a}}$. The set of open intervals $\{(r_a,\ell-s_a)\ss\RR\mid 0\leq a\leq \ell\}$ covers the closed interval $[0,\ell]\ss\RR$. Since this interval is compact, we can choose a finite subcover, say $\subint{r_i}{s_i}\colon\ell_i\to\ell$ with $\ell_i\Vdash\restrict{P}{\subint{r_i}{s_i}}$ for $0\leq i\leq n$ and $n\in\NN$. We may assume that none is strictly contained in any other and thus order them by left endpoint. Then we have $r_0<r'<s'<s_n$ and $r_i<r_{i+1}<s_i<s_{i+1}$ for each $0\leq i\leq n-1$.

  For each $0\leq i\leq n$, consider the restriction $\subint{r_0}{s_i}\colon\ell'_i\to\ell$. We
  will prove by induction on $i$ that $\ell'_i\Vdash\restrict{P}{\subint{r_0}{s_i}}$. This is enough
  because if $\ell'_n\Vdash \restrict{P}{\subint{r_0}{s_n}}$ then
  $\ell'\Vdash\restrict{P}{\subint{r'}{s'}}$.

  We have that $\ell_i\Vdash\restrict{P}{\subint{r_i}{s_i}}$ holds for each $0\leq i\leq n$, so in particular the base case holds because $\ell'_0=\ell_0$. Assume the result is true for some $0\leq i<n$, i.e.\ that
  $\ell'_{i}\Vdash\restrict{P}{\subint{r_0}{s_i}}$; we want to show $\ell'_{i+1}\Vdash\restrict{P}{\subint{r_0}{s_{i+1}}}$. Since $r_{i+1}<s_i$, we may choose
  rationals $q_1,q_2:\QQ$ with $r_{i+1}<q_1<q_2<s_i$. By the covering hypothesis with $t=(r_0,s_{i+1})\in\Time(\ell'_{i+1})$, it suffices to
  show $\ell'_{i+1} \Vdash (t\apart[q_2,q_1]) \imp
  \restrict{P}{\subint{r_0}{s_{i+1}}}$.

  So choose any $\subint{r''}{s''}\colon\ell''\to\ell'_{i+1}$ and assume
  $\ell''\Vdash\restrict{t}{\subint{r''}{s''}}\apart[q_2,q_1]$, i.e.\ either
  $s_{i+1}-s''<q_2$ or $q_1<r_0+r''$; we want to show
  $\ell''\Vdash\restrict{P}{\subint{r_0+r''}{s_{i+1}+s''}}$. Since $q_2<s_i$ and $r_{i+1}<q_1$, we have in the first case that $\ell''$ is contained in $\ell'_i$ and in the second case that $\ell''$ is contained in $\ell_{i+1}$. Either way, the result follows from monotonicity.
\end{proof}

\index{base topos $\BaseTopos$|)}

\chapter{Local numeric types and derivatives}\label{sec:Real_numbers}\index{numeric type}\index{numeric type|seealso {type}}\index{type!numeric|seealso {numeric type}}

In \cref{sec:Dedekind_j} we introduced ten Dedekind $j$-numeric types for an arbitrary modality $j$. Namely, we have the $j$-local lower and upper reals, improper and proper intervals, and real numbers, as well as their unbounded counterparts. They are denoted
\[\tLR_j,\quad\tUR_j,\quad\tII_j,\quad\tIR_j,\quad\tRR_j,\quad\tLRub_j,\quad\tURub_j,\quad\tIIub_j,\quad\tIRub_j,\quad\tRRub_j.\]
However, our axiomatics (\cref{sec:axiomatics}) were given using only three of these: $\tII$, $\tRRub$, and $\tRR$. Not only did we only use three of ten, we also did not make use of the extra generality afforded by the modality $j$, i.e.\ using only $j=\id$. In this section, we explore the more general $j$-numeric types, both type-theoretically and semantically in our topos $\BaseTopos$. 

Recall from \cref{sec:modalitites_subtoposes} that for any topos $\cat{E}$, there is a one-to-one
correspondence between subtoposes of $\cat{E}$ and modalities $j:\Prop\to\Prop$. In our setting, the modalities
\[\See[t]{[d,u]},\quad\At[t]{[d,u]},\quad\pi,\]
defined in \cref{not:our_modalities} thus correspond to particular subtoposes of $\BaseTopos/\Time$ and $\BaseTopos$. The Dedekind $j$-numeric types thus correspond to numeric objects in those subtoposes.

For example, the type $\tRR_\pi$ is semantically a sheaf on the translation-invariant interval site $\BaseSite$ (see \cref{def:BaseSite}), but the ``pointwise'' modality $\pi$ in some sense eliminates all of the nonzero-length points in $\IR$. Thus the sections of $\tRR_\pi$ can be
identified with continuous real-valued functions on the ordinary real line $\RR\ss\IR$. While perhaps it would be most consistent to call $\tRR_\pi$ the type of ``pointwise reals'', we generally refer to it as the type of \emph{variable reals}. This is meant to evoke the idea that the sections of $\tRR_\pi$ are real numbers that vary in time, as opposed to the sections of $\tRR$ which are real numbers that remain constant.\index{variable reals|see {real number,
variable}}\index{{real number!variable}}

In \cref{sec:Real_numbers_modalities} we compare Dedekind $j$-numeric types for the modalities discussed above: $\See{}$, $\At{}$, and $\pi$. In \cref{sec:axiomatics} we also introduced and used the modality $\In{}$, but we will not discuss the Dedekind $\In{}$-numeric objects, except in \cref{rem:why_not_In} where we explain our reasons for leaving it out. In \cref{sec:semantics_numerics_modalities} we discuss the semantics of the $\See{}$-, $\At{}$-, and $\pi$-numeric types; it is a short section because most of the work has already been done.

In \cref{sec:derivatives} we define an internal notion of derivative and differentiability for variable reals $x:\tRR_\pi$. For example, if $x$ is differentiable, then its derivative $\ddt(x):\tRR_\pi$ is also a variable real. We prove internally that differentiation is linear, satisfies $\ddt(t)=1$ for $t:\Time$, and satisfies the Leibniz rule
\[\ddt(x*y)=x*\ddt(y)+\ddt(x)*y\]
for differentiable $x,y:\tRR_\pi$. Finally, we prove that
the semantics of the $\ddt$ operation really is that of the Newtonian derivative.

\section{Relationships between various Dedekind $j$-numeric types}\label{sec:Real_numbers_modalities}\index{numeric type!$j$-local!comparisons of}

All proofs in \cref{sec:Real_numbers_modalities} take place in the temporal type theory
given in \cref{sec:logical_prelims,sec:axiomatics}.\index{logic!constructive}

Let $j$ be a modality. Recall from \cref{def:local_reals} that every $j$-numeric type consists of either one or two predicates $\tQQ\to\Prop_{j}$, where $\Prop_j$ denotes the type of $j$-closed predicates. For example, a lower real is a predicate $\delta:\tQQ\to\Prop$ satisfying $\forall(q:\tQQ)\ldotp j\delta q\imp q$---i.e.\ $j$-closure in the sense of \cref{def:j_closed}---plus two other conditions.

There are many relationships between Dedekind $j$-numeric types for varying $j$, and some make sense in any topos. For example, let $R_j$ denote any of the ten Dedekind $j$-numeric types from \cref{def:local_reals}. \Cref{prop:j_on_intervals} says that a map of modalities $j'\imp j$ induces a map
\begin{equation}\label{eqn:j_map}
j:\tLR_{j'}\to\tLR_j
\end{equation}
which preserves arithmetic, inequalities, and constants. For example, in the case $R=\tLR$, the above map sends $\delta:\tQQ\to\Prop_{j'}$ to the predicate $j\delta:\tQQ\to\Prop_j$. In this section, we focus our attention on just a few such modalities $j$, namely $\See{}$, $\At{}$, and $\pi$.

\begin{remark}\label{rem:why_not_In}
We will not discuss the $\In{}$-numeric types, e.g. $\tII_{\InSymbol[d,u]}$, anywhere in the book. We omit them for a couple of reasons. First, none of them has seemed to come up in practice, at least to this point in the authors' investigations. Second, our work in this section and the next is to understand how the $j$-numeric types embed---both internally and externally---into their undecorated ($j=\id$) counterparts. Our proof technique for this, developed in \cref{sec:predomains_subtopos}, works only in certain cases. These cases happen to include the $\See{}$, $\At{}$, and $\pi$ modalities but not the $\In{}$-modality.
\end{remark}

\begin{notation}
Again let $R_j$ denote any of the ten Dedekind $j$-numeric types from \cref{def:local_reals}. When $j=\At[t]{[r,r]}$ for some real number $r:\tRR$ and $t:\Time$, and when $j'$ is any modality with $j'\imp j$, we have a special notation for
the map $j\colon R_{j'}\to R_j$ from \cref{prop:j_on_intervals}.

To make it explicit, suppose $R_j$ is a two-sided numeric type and suppose $j\imp\At{r}$, e.g.\ $j=\id$, $j=\See{r}$, or $j=\pi$. Then for 
$x=(\delta,\upsilon):R$, we write $x^{@t}(r)$ to denote $\At[t]{[r,r]}(x):R_j$; it is the pair of
cuts $(\At[t]{r}\delta,\At[t]{r}\upsilon)$. If $t$ is clear from context we can drop it as usual:
\begin{equation}\label{eqn:x^@}
  x^@(r)\coloneqq (\At{r}\delta,\;\At{r}\upsilon).
\end{equation}
We read $x^{@t}(r)$ as ``the value of $x$ at time $t=r$'', or just ``$x$ at
$r$''. Note that this map $x\mapsto x^@(r)$ preserves inequalities, arithmetic, and constants by \cref{prop:j_on_intervals}.

Of course the same idea works for the one-sided numeric types.
\end{notation}

\begin{example}\label{ex:t@r}\index{real number!variable}
  Considering $t:\Time$ as a variable real by \cref{ex:Time_subtype_rvar}, then for any $r:\tRR$ we
  have
  \[
    t^@(r)=\See{r}(r)
  \]
  or explicitly $\At[t]{r}(q<t)\iff\See[t]{r}(q<r)$ and $\At[t]{r}(t<q)\iff\See[t]{r}(r<q)$ for all
  $q:\tQQ$. Indeed, for the first equivalence---the second being similar---the forward direction
  follows by unraveling the definitions (\ref{not:our_modalities}) and by trichotomy $(q<r)\vee
  (q=r)\vee(r<q)$ (\cref{prop:trichotomy}), and the reverse direction follows from the fact that
  $(q<r)\imp\At[t]{r}(q<t)$, which is \cref{lem:time_lems}i.
\end{example}

\begin{remark}
  Strictly speaking $\tRR_{\AtInline{[d,u]}}$ and later $\tRR_{\SeeInline{[d,u]}}$ are
  \emph{dependent types}: they are dependent on $t:\Time$ and $d,u:\tRR$. Each is the subtype of a
  non-dependent type, $(\tQQ\to\Prop)\times(\tQQ\to\Prop)$, cut out by propositions that depend on
  $t$, $d$, and $u$. See \cref{sec:dependent_types}.
\end{remark}

There are other relationships between the various Dedekind $j$-numeric types, beyond
the one shown in \cref{eqn:j_map}. These relationships arise because several of the $j$-local Dedekind
axioms do not actually depend on $j$. The results from the remainder of this section are all to this effect; they are summarized in \cref{sec:section_summary}. Readers who feel the urge to do so may skip ahead to that summary.

\begin{proposition}\label{prop:D_id_pi_See_At}\index{cuts!disjoint}\index{cuts!located}
  Let $\phi:(\tQQ\to\Prop)\times(\tQQ\to\Prop)$. Then:
  \begin{enumerate}
    \item $\phi$ is disjoint iff it is $\pi$-disjoint.
    \item $\phi$ is $\See{[d,u]}$-disjoint iff it is $\At{[d,u]}$-disjoint.
    \item If $\phi$ is $\See{[d,u]}$-closed, then it is $\See{[d,u]}$-located iff it is located.
    \item If $\phi$ is $\At{[d,u]}$-closed, then it is $\At{[d,u]}$-located iff it is located.
  \end{enumerate}
\end{proposition}
\begin{proof}
  Statements 1 and 2 follow directly from the facts $\pi\bot\iff\bot$ and
  $\See{[d,u]}\bot\iff\At{[d,u]}\bot$, which are shown in \cref{cor:pi_negneg} and
  \cref{lemma:See_At_In_False}. Statements 3 and 4 follow directly from the fact that $\See{[d,u]}$
  and $\At{[d,u]}$ preserve disjunction, \cref{lem:At_preserves_or}.
\end{proof}

\begin{remark}\index{cuts!$\pi$-located}
  Note that $\Pointwise$-located does not imply located. Semantically, this is obvious: sections of
  $\tRR_\Pointwise$ are arbitrary continuous functions whereas sections of $\tRR$ are constants; see
  \cref{thm:omnibus_semantics,cor:pi_real_semantics}.
\end{remark}

Before proving similar results relating numeric types for varying modalities, we pause to recall the notion of $j$-constant numeric types from \cref{def:constant_dedekind}. Namely if $R_j$ is a $j$-numeric type, then $\const{c}R_j$ denotes the $j$-numeric subtype whose cuts are $j$-decidable. As a sanity check, we have said many times that the type $\tRR$ of real numbers is constant in $\BaseTopos$ (as is $\tRRub$), and we prove in \cref{prop:const_reals_again} that the terminology matches.

\begin{proposition}\label{prop:const_reals_again}\index{internal!bijection}
There are internal bijections $\ctRR\cong\tRR$ and $\ctRRub\cong\tRRub\cong\tRR\sqcup\{-\infty,\infty\}$.
\end{proposition}
\begin{proof}
The first two isomorphisms are \cref{ax:decidable_reals,cor:decidable_reals}; the isomorphism $\tRRub\cong\tRR\sqcup\{-\infty,\infty\}$ is shorthand for the isomorphism $\tRRub\cong\tRR\sqcup\unit\sqcup\unit$ provided in \cref{prop:unbounded_reals_inftys}.
\end{proof}

Note that the
analogous statement does not hold when $\tRR$ is replaced by other Dedekind numeric types. For example, there is no isomorphism $\ctLR\cong^?\tLR$, and there should not be one because $\ctLR$ is
semantically the constant sheaf on $\RR\cup\{\infty\}$, whereas $\tLR$ is semantically the sheaf of
lower-semicontinuous real-valued functions on $\IR$. Similarly, $\tRR_\pi$ is semantically the sheaf
of continuous real-valued functions on $\RR$, and $\ctRR_\pi$ is the proper subsheaf of constant
ones.

\begin{proposition}\label{prop:At_numeric_See}\index{numeric type!$\At{}$ as constant}
  For any $d,u$ there is an inclusion $\tII_{\AtInline{[d,u]}}\ss\tII_{\SeeInline{[d,u]}}$, and we
  can identify its image with the subtype of constants, in the sense that the following diagram commutes
  \[
  \begin{tikzcd}[column sep=2pt]
    \tII_{\AtInline{[d,u]}}\ar[rr, equal]\ar[dr, hook]&&\ctII_{\SeeInline{[d,u]}}\ar[dl, hook]\\
    &\tII_{\SeeInline{[d,u]}}
  \end{tikzcd}
  \]
  The inclusion preserves domain order and arithmetic. The same holds when $\tII$ is replaced by any
  of the other nine Dedekind numeric types.
\end{proposition}
\begin{proof}
  We first prove that there is an inclusion; consider the case of $\delta:\tLR$. Since $\delta$ is
  $\At{[d,u]}$-closed, it is also $\See{[d,u]}$-closed. To show $\delta$ is $\See{[d,u]}$-rounded,
  we have the chain of implications
  \begin{align*}
    \delta q
    &\imp\At{[d,u]}\exists q'\ldotp(q<q')\wedge\delta q'\\
    &\imp\exists q'\ldotp\At{[d,u]}(q<q')\wedge\delta q'\\
    &\imp\exists q'\ldotp\See{[d,u]}(q<q')\wedge\delta q'\imp\See{[d,u]}\exists q'\ldotp(q<q')\wedge\delta q',
  \end{align*}
  where the second implication is \cref{prop:At_preserves_exists} and the third is
  \cref{lem:At_decidable_prop}. One proves similarly that $\At{[d,u]}\exists q\ldotp\delta
  q\imp\See{[d,u]}\exists q\ldotp\delta q$, so $\At{}$-bounded implies $\See{}$-bounded.

  A similar argument works for $\tUR$, $\tII$, $\tLRub$, $\tURub$, and $\tIIub$. For $\tIR$ and
  $\tIRub$ we need that disjointness is preserved, and for $\tRR$ and $\tRRub$ we further need that
  locatedness is preserved; this is shown in \cref{prop:D_id_pi_See_At}.

  To see that $\tLR_{\AtInline{[d,u]}}=\ctLR_{\SeeInline{[d,u]}}$, we simply apply
  \cref{cor:AtClosed_SeeComplemented}: a predicate $\delta:\tQQ\to\Prop$ is $\At{[d,u]}$-closed iff
  it is $\See{[d,u]}$-closed and $\See{[d,u]}$-constant. The same argument works for all Dedekind
  numeric types. Finally, the fact that the inclusion preserves arithmetic follows from
  \cref{prop:decidable_approx_pres_consts}.
\end{proof}

We will see that the inclusion $\tII_{\AtInline{[d,u]}}\ss\tII_{\SeeInline{[d,u]}}$ preserves
inequalities in \cref{prop:At_inclusion_comparisons}

\begin{proposition}\label{prop:See_At_Reals_iso}
  For Dedekind reals, the inclusion $\tRR_{\AtInline{[d,u]}}\ss \tRR_{\SeeInline{[d,u]}}$ from
  \cref{prop:At_numeric_See} is an isomorphism,
  \[
    \tRR_{\AtInline{[d,u]}} \iso \tRR_{\SeeInline{[d,u]}}.
  \]
\end{proposition}
\begin{proof}
  Let $(\delta,\upsilon)\in\tRR_{\SeeInline{[d,u]}}$. We have a retraction
  $\At{[d,u]}:\tRR_{\SeeInline{[d,u]}}\to\tRR_{\AtInline{[d,u]}}$, so it suffices to show that
  $\delta$ and $\upsilon$ are already $\At{[d,u]}$-closed. The argument for $\delta$ and $\upsilon$
  are similar, so we consider $\delta$.

  It suffices to show that for any $q:\tQQ$, we have $\At{[d,u]}(\delta q) \imp \See{[d,u]}(\delta
  q)$. Using $\delta q \imp \See{[d,u]}(\exists q'\ldotp (q<q')\wedge\delta q')$, we get
  $\At{[d,u]}(\exists q'\ldotp(q<q')\wedge\delta q')$. Then using \cref{prop:At_preserves_exists} we
  get $\exists q'\ldotp\At{[d,u]}(q<q')\wedge\At{[d,u]}(\delta q')$. Take such a $q'$.

  By locatedness, we have $\See{[d,u]}(\delta q \vee \upsilon q')$, i.e.\ $t\apart[u,d]\vee\delta
  q\vee\upsilon q'$. The first two cases trivially imply $\See{[d,u]}(\delta q)$. In the last case,
  suppose $\upsilon q'$. We also have $\At{[d,u]}(\delta q')$, which by disjointness gives
  $\At{[d,u]}\bot$, which is equivalent to $t\apart[u,d]$ and hence implies $\See{[d,u]}(\delta q)$.
\end{proof}

\begin{proposition}\label{prop:closed_inclusion_arith}\index{modality!closed}
  Suppose $j$ is a closed modality (e.g.\ $j=\See{[d,u]}$). Then there is an inclusion
  $\tII_j\ss\tII$, which is a section of $j:\tII\to\tII_j$, and which preserves domain order,
  constants, arithmetic, and inequalities. An analogous statement holds when $\tII$ is replaced with
  $\tLR$, $\tUR$, $\tIIub$, $\tLRub$, or $\tURub$.
\end{proposition}
\begin{proof}
  The existence of the inclusion (which obviously preserves domain order) follows directly from
  \cref{prop:RId_inclusion_closed_modality}, and it is easy to check it is a section of $j$. The
  preservation of constants and arithmetic follows from \cref{prop:j1j2j3j'_decidable_const} where
  $j=\id$, and $j'_1=j'_2=j'_3=j$.

  To prove that the inclusion preserves $<$, recall the open $U_<:\Omega(\tIIpre\times\tIIpre)$
  defined by $U_<(d,u,d',u')\coloneqq(u<d')$ from \cref{prop:<_open}, where we showed that for any
  $(x,y)\in\tII_j\times\tII_j\cong\RId_j(\tIIpre\times\tIIpre)$, we have $(x,y)\models_j U_<$ iff
  $x<_jy$ by \cref{prop:<_open}. Now by \cref{prop:closed_j_continuous}, the inclusion
  $\tII_j\ss\tII$ is continuous, meaning $(I\models_j jU_<)\iff(I\models U_<)$. Thus
  $(x<_jy)\iff(x<y)$.
\end{proof}

\begin{remark}[Mixed arithmetic]\label{rem:enlarging_closed}\index{arithmetic!mixed modality}
  In fact, the proof of \cref{prop:closed_inclusion_arith} implies more. Let
  $\phi_1,\phi_2,\phi_3:\Prop$ be such that $(\phi_1\vee\phi_2)\imp\phi_3$, let $j'_1$, $j'_2$, and
  $j'_3$ be the corresponding closed modalities. Let $j$ be any modality such that
  $jP\imp(j'_1P\wedge j'_2P\wedge j'_3P)$ for all $P$. Then there is a sort of mixed arithmetic,
  preserving constants: for any arithmetic operation $f$ (including $+$, $\max$, $-$, $*$, etc.), we
  have the left-hand diagram
  \[
  \begin{tikzcd}[column sep=large]
    \ctII_{j'_1}\times\ctII_{j'_2}\ar[r]\ar[d, hook]&\ctII_{j'_3}\ar[d, hook]\\
    \tII_{j'_1}\times\tII_{j'_2}\ar[r,"F_{j'_1,j'_2,j'_3}"]\ar[d, hook]&\tII_{j'_3}\ar[d, hook]\\
    \tII_{j}\times\tII_{j}\ar[r,"F_{j}"']&\tII_{j}
  \end{tikzcd}
  \hspace{1in}
  \begin{tikzcd}[column sep=large]
    \tII_{\AtInline{[d_1,u_1]}}\times\ctII_{\AtInline{[d_2,u_2]}}\ar[r]\ar[d, hook]&\tII_{\AtInline{[d_3,u_3]}}\ar[d, hook]\\
    \tII_{\SeeInline{[d_1,u_1]}}\times\tII_{\SeeInline{[d_2,u_2]}}\ar[r]\ar[d, hook]&\tII_{\SeeInline{[d_3,u_3]}}\ar[d, hook]\\
    \tII_{j}\times\tII_{j}\ar[r]&\tII_{j}
  \end{tikzcd}
  \]
  where $F_{j}$ and $F_{j'_1,j'_2,j'_3}$ are as in
  \cref{prop:binary_fn_interval_domains,prop:Fj1j2j3}. Again, this is a direct application of
  \cref{prop:j1j2j3j'_decidable_const}. The right-hand diagram is the special case
  $j'_i=\See{[d_i,u_i]}$, and its commutativity follows from that and \cref{prop:At_numeric_See}.
\end{remark}

The statement of \cref{prop:closed_inclusion_arith} holds when the closed modality $j$ is replaced
by $\pi$, though for different reasons.

\begin{proposition}\label{prop:pi_inclusion_arith}
  There is an inclusion $\tII_\pi\ss\tII$, which preserves domain order and constants and is a
  section of $\pi:\tII\to\tII_\pi$. Thus we can make the identification
  \[
    \tII_{\pi} = \big\{(\delta,\upsilon):\tII \mid
      \text{$\delta$ and $\upsilon$ are $\pi$-closed}\big\}.
  \]
  Analogous statements holds when $\tII$ is replaced with any of the other numeric domains.
\end{proposition}
\begin{proof}
  We first prove there is an inclusion $\tLRub_\pi\ss\tLRub$. Suppose given $\delta\in\tLRub_\pi$
  that is $\pi$-rounded, i.e.\ $\forall(q_1:\tQQ)\ldotp\delta q_1\imp\pi\exists (q_2:\tQQ)\ldotp
  q_1<q_2\wedge\delta q_2$; we need to show it is rounded. Choose $q_1$ with $\delta q_1$, and let
  $\tConst\coloneqq\{q:\tQQ\mid q_1<q\}$; it is a constant type (see \cref{def:constant_types}).%
  \footnote{
    In fact, the type $\tConst$ is dependent on $q_1$; see \cref{sec:dependent_types}.
  }
  Thus we have $\pi\exists(q_2:\tConst)\ldotp\delta q_2$ and we want to remove the $\pi$. Because
  $\delta$ is down-closed and $(\tQQ,<)$ has binary meets, the following condition is satisfied:
  \[
    \forall(q_1,q_2:\tConst)\ldotp \exists(q':\tConst)\ldotp (\phi q_1\imp \phi q') \wedge (\phi
    q_2\imp \phi q')
  \]
  Thus by \cref{lem:exists_Pi_closed} and the fact that $\delta$ is $\pi$-closed, we obtain
  $\exists(q_2:\tConst)\ldotp\delta q_2$. This proves that $\delta$ is rounded and hence gives the
  inclusion $\tLRub_\pi\ss\tLRub$.

  The proof that $\pi$-bounded implies bounded is similar, so we obtain the inclusion
  $\tLR_\pi\ss\tLR$; the analogous inclusions for $\tURub$ and $\tUR$, and those for $\tIIub$ and
  $\tII$ follow directly. Finally, for $\tIRub$ and $\tIR$, we need that $\pi$-disjoint implies
  disjoint, but this is shown in \cref{prop:D_id_pi_See_At}.

  The inclusions obviously preserve domain order. They preserve constants because $\pi\bot\iff\bot$.
\end{proof}

Of course the analogue of \cref{prop:pi_inclusion_arith} should not hold for the other two numeric
objects, $\tRR_\pi\not\ss\tRR$ and $\tRRub_\pi\not\ss\tRRub$, since it does not hold semantically;
see \cref{thm:omnibus_semantics}.

\begin{remark}\label{rem:pi_inclusion_no_arith}
  Note that the inclusion $\tII_\pi\ss\tII$ from \cref{prop:pi_inclusion_arith} preserves \emph{neither arithmetic nor inequalities}. For
  example, recall that $\Time\ss\tRR_\pi\ss\tII_\pi$, so take $t:\Time$ and consider $t-_\pi t$ vs.\
  $t-t$. Since $\tRR_\pi$ is a group, we have $t-_\pi t=0$. However, $\tII$ is not a group, so we do
  not expect $t-t=^?0$ to hold. In fact given $\ell\in S_{\IR}$, the cuts for
  $(\delta,\upsilon)\coloneqq t-t$ are $\church{\delta q}\iff(q<-\ell)$ and $\church{\upsilon
  q}\iff(\ell<q)$, which are decidedly non-zero since $\ell>0$.

  Similarly, consider $t$ and $t+1$, the latter of which equals $t+_\pi1$ by
  \cref{prop:add_mult_constants}. Then it is not the case that $t<t+1$, whereas we can prove $t<_\pi
  t+1$ using \cref{prop:clean_pi,ax:torsor}. Indeed, it suffices to show
  \[
    \forall(r:\tRR)\ldotp\exists(q:\tQQ)\ldotp 0<q \wedge
      [r-q<t<r+q \imp \exists(q':\tQQ)\ldotp t<q'<t+1]
  \]
  and for any $r$ we can take $q\coloneqq\frac{1}{2}$. Then assuming $r-q<t<r+q$, we can find $q'$
  with $t<q'<r+q$ by roundedness, and it follows that $q'<t+1$.

  To see that semantically $t<^?t+1$ does not hold, consider a section of length $\ell\geq 1$. Then
  there is some $d\in\RR$ such that $\church{t}=(d,d+\ell)\in\Time(\ell)$, and $t+1=(d+1,d+1+\ell)$.
  If $\church{\exists(q:\tQQ)\ldotp t<q<t+1}$ then $d+\ell<q<d+1$, which is false.
\end{remark}

Despite \cref{rem:pi_inclusion_no_arith}, we can use the fact that $\pi$ is a proper modality (see
\cref{ex:pi_proper}) and the general results from \cref{ssec:proper_modalities} to establish some
useful special cases in which arithmetic and inequalities \emph{are} preserved.

\begin{proposition}\label{prop:add_mult_constants}\index{numeric type!constant}
  The inclusion $\tII_\pi\ss\tII$ commutes with adding constants and inequalities with constants; in
  other words the diagrams below commute:
  \begin{gather*}
  \begin{tikzcd}[ampersand replacement=\&]
    \ctII_\pi\times\tII_\pi \ar[d, equals] \ar[r, hook] \&[-10pt]
    \tII_\pi\times\tII_\pi \ar[r, "+_\pi"] \&
    \tII_\pi\ar[d, hook] \\
    \ctII\times\tII \ar[r, hook] \&
    \tII\times\tII\ar[r, "+"'] \&
    \tII
  \end{tikzcd}
  \hspace{.6in}
  \begin{tikzcd}[ampersand replacement=\&]
    \ctII_\pi\times\tII_\pi \ar[d, equals] \ar[r, hook] \&[-10pt]
    \tII_\pi\times\tII_\pi \ar[r, "<_\pi"] \&
    \Prop_\pi\ar[d, hook] \\
    \ctII\times\tII \ar[r, hook] \&
    \tII\times\tII \ar[r, "<"'] \&
    \Prop
  \end{tikzcd}
  \end{gather*}
  and similarly when $\ctII_\pi\times\tII_\pi$ is replaced by $\tII_\pi\times\ctII_\pi$.

  This doesn't hold for multiplication by constants in general, but it does on the subtype
  $\tIR_\pi\ss\tIR$; in other words, the following diagram commutes:
  \[
    \begin{tikzcd}[ampersand replacement=\&]
      \ctIR_\pi\times\tIR_\pi \ar[d, equals] \ar[r, hook] \&[-10pt]
      \tIR_\pi\times\tIR_\pi \ar[r, "*_\pi"] \&
      \tIR_\pi\ar[d, hook] \\
      \ctIR\times\tIR \ar[r, hook] \&
      \tIR\times\tIR\ar[r, "*"'] \&
      \tIR
    \end{tikzcd}
  \]
\end{proposition}
\begin{proof}
  To show that the inclusion commutes with adding constants, it suffices by
  \cref{prop:functions_approx_mappings,prop:proper_constant_Lawson} to show that
  $+\colon\tIIpre\times\tIIpre\to\tIIpre$ preserves meets in the second variable (and by in the
  first variable by symmetry). This requires showing that
  \[
    (d,u)+((d_1,u_1)\sqcap(d_2,u_2)) = ((d,u)+(d_1,u_1))\sqcap((d,u)+(d_2,u_2)),
  \]
  which comes down to the fact that $d+\min(d_1,d_2) = \min(d+d_1,d+d_2)$, and likewise for $\max$.

  The statement about inequalities follows similarly from \cref{prop:proper_constant_Lawson_opens}.
  Recall the open $U_<\coloneqq\{(d_1,u_1),(d_2,u_2)\mid u_1<d_2\}\in\Opens(\tIIpre\times\tIIpre)$,
  from \cref{prop:<_open}, where it was shown that $x_1<_jx_2$ iff $(x_1,x_2)\vDash_j U_<$, for
  $x_1,x_2:\tII_j$. Clearly $U_<$ is decidable, and it is filtered (i.e.\ closed under meets) in
  each variable, as witnessed by
  \begin{gather*}
    (u_1<d_2 \wedge u_1'<d_2) \imp \max(u_1,u_1')<d_2 \\
    (u_1<d_2 \wedge u_1<d_2') \imp u_1<\min(d_2,d_2').
  \end{gather*}

  Finally, multiplication $*\colon\tIIpre\times\tIIpre\to\tIIpre$ is not filtered. A counter example
  is $((-1,-1)\sqcap(1,1))*(1,-1) = (-1,1)*(1,-1) = (0,0)$, while
  $((-1,-1)*(1,-1))\sqcap((1,1)*(1,-1)) = (1,-1)\sqcap(1,-1) = (1,-1)$; see
  \cref{eqn:Kaucher_mult_predomain}. However, restricted to $\tIRpre[j]$ multiplication \emph{is}
  filtered. In this case, one can check by cases on the signs of $d_1$ and $d_2$ that the formula
  for multiplication simplifies to $(d_1,u_1)*(d_2,u_2)=(d',u')$, where
  \begin{align*}
    d' &\coloneqq \min(d_1d_2,d_1u_2,u_1d_2,u_1u_2) \\
    u' &\coloneqq \max(d_1d_2,d_1u_2,u_1d_2,u_1u_2).
  \end{align*}
  Then it is a simple exercise, e.g.\ by cases on the signs of $d',u'$, to show that
  \[
    ((d_1,u_1)\sqcap(d_2,u_2))*(d',u') = ((d_1,u_1)*(d',u'))\sqcap((d_2,u_2)*(d',u')).
    \qedhere
  \]
\end{proof}

We need a technical lemma before proving \cref{prop:At_inclusion_comparisons}.

\begin{lemma}\label{lemma:see_at_continuous}
  Suppose that $B$ is a constant predomain (see \cref{def:predom_const_approx_decidable}), and
  $I\in\RId(B)$ and $U\in\Omega(B)$ are such that $\At{[d,u]}I\imp I$ and
  $\At{[d,u]}U\imp\See{[d,u]}U$ . Then
  \[
    (I\models_{\AtInline{[d,u]}}U)\iff(I\models_{\SeeInline{[d,u]}}U)\iff(I\models U).
  \]
\end{lemma}
\begin{proof}
  There is the following chain of equivalences:
  \[
    \At{}\exists b\ldotp Ib\wedge Ub
    \;\iff\;\exists b\ldotp\At{}Ib\wedge\At{}Ub
    \;\iff\;\exists b\ldotp Ib\wedge\See{}Ub
    \;\iff\;\See{}\exists b\ldotp Ib\wedge Ub
    \;\iff\;\exists b\ldotp Ib\wedge Ub,
  \]
  where the first is \cref{prop:At_preserves_exists}, the second is by assumption (and
  \cref{prop:simple_modalities}), the third is \cref{eqn:exists_see_closed}, and the fourth is
  \cref{lemma:j_inclusion_continuous}. By \cref{def:predomain_modality}, the left-hand side is
  $I\models_{\AtInline{[d,u]}}U$, the right-hand side is $I\models U$, and in the fourth position is
  $I\models_{\SeeInline{[d,u]}}U$.
\end{proof}

\begin{proposition}\label{prop:At_inclusion_comparisons}
  The inclusions $\tII_{\AtInline{[d,u]}}\ss\tII_{\SeeInline{[d,u]}}$ and
  $\tII_{\AtInline{[d,u]}}\ss\tII$ preserve inequalities. The same holds for any of the two-sided
  Dedekind numeric types.
\end{proposition}
\begin{proof}
  Define $U_<:\Omega(\tIIpre\times\tIIpre)$ by $U_<(d,u,d',u')\coloneqq(u<d')$; it is open as seen
  in  \cref{prop:closed_inclusion_arith}, and it is decidable, hence $\At{[d,u]}U\imp\See{[d,u]}U$
  by \cref{lem:At_decidable_prop}. Thus by \cref{lemma:see_at_continuous}, for any
  $(x,y)\in(\tII_{\AtInline{[d,u]}})^2\cong\RId_{\AtInline{[d,u]}}(\tIIpre\times\tIIpre)$, we have
  \[
    (x,y) \models_{\AtInline{[d,u]}} U_< \quad\iff\quad
    (x,y) \models_{\SeeInline{[d,u]}} U_< \quad\iff\quad
    (x,y) \models U_<
  \]
  i.e. $(x<_{\AtInline{}}y) \iff (x<_{\SeeInline{}}y) \iff (x<y).$
\end{proof}

\begin{proposition}\label{prop:At_pi_inclusions}
  We have the following inclusions, all of which preserve domain order, constants, arithmetic, and
  inequalities:
  \begin{align*}
    \tLR_{\AtInline{0}}&\ss\tLR_\pi,&
    \tUR_{\AtInline{0}}&\ss\tUR_\pi,&
    \tII_{\AtInline{0}}&\ss\tII_\pi,\\
    \tLRub_{\AtInline{0}}&\ss\tLRub_\pi,&
    \tURub_{\AtInline{0}}&\ss\tURub_\pi,&
    \tIIub_{\AtInline{0}}&\ss\tIIub_\pi.
  \end{align*}
\end{proposition}
\begin{proof}
  We prove the first, the rest being similar. If $\delta:\tQQ\to\Prop$ is $\At{0}$-closed, it is
  $\pi$-closed. The result then follows from
  \cref{prop:pi_inclusion_arith,prop:At_inclusion_comparisons}.
\end{proof}

\subsection{Section summary}\label{sec:section_summary}

We now attempt to summarize the results of this section for future reference. The $j$-numeric types
were defined for a generic modality in \cref{def:local_reals}, and summarized in
\cref{table:numeric_types}. For ease of reference, we repeat the list of axioms:
\begin{tabbing}
  \qquad\=
    0a$_j$.\quad\= $\forall(q:\tQQ)\ldotp j(\delta q)\imp\delta q$\hspace{1.2in}\=
    0b$_j$.\quad\= $\forall(q:\tQQ)\ldotp j(\upsilon q)\imp\upsilon q$
  \\
    \>1a$_j$.\> $\forall q_1\,q_2\ldotp (q_1<q_2)\imp\delta q_2\imp\delta q_1$
    \>1b$_j$.\> $\forall q_1\,q_2\ldotp(q_1<q_2)\imp\upsilon q_1\imp\upsilon q_2$
  \\
    \>2a$_j$.\> $\forall q_1\ldotp\delta q_1\imp j\exists q_2\ldotp (q_1<q_2)\wedge\delta q_2$
    \>2b$_j$.\> $\forall q_2\ldotp\upsilon q_2\imp j\exists q_1\ldotp (q_1<q_2)\wedge\upsilon q_1$
  \\
    \>3a$_j$.\> $j\exists q\ldotp\delta q$
    \>3b$_j$.\> $j\exists q\ldotp\upsilon q$
  \\
    \>4$_j$.\> $\forall q\ldotp(\delta q \wedge \upsilon q)\imp j\bot$ \\
    \>5$_j$.\> $\forall q_1\,q_2\ldotp (q_1<q_2) \imp j(\delta q_1\vee\upsilon q_2)$
\end{tabbing}
where we now annotate each with the modality by which it is modified. As usual, un-annotated axioms, such as 1a or
4, refer to the identity modality $j=\id$.

\Cref{table:local_numeric_types} updates \cref{table:numeric_types} with the results of this
section. The characterizations given follow from
\cref{prop:D_id_pi_See_At,prop:At_numeric_See,prop:closed_inclusion_arith,prop:pi_inclusion_arith}
(recall in particular from \cref{prop:D_id_pi_See_At} that 4$_\SeeSymbol$ and 4$_\AtSymbol$ are
equivalent). The unbounded versions simply remove axiom 3; for example, $\tIRub_\AtSymbol =
\{(\delta,\upsilon) \mid \text{0$_\AtSymbol$, 1, 2, 4$_\SeeSymbol$}\}$. This table clearly shows all
subtype relationships. For instance, $\tIR_\pi \subseteq \tIR$, and in fact $\tIR_\pi = \tIR \cap
\tII_\pi$.

\begin{table}
\index{type!of intervals}\index{type!of lower real numbers}\index{type!of improper intervals}
\index{type!of upper real numbers}\index{type!of real numbers}\index{type!of unbounded intervals}
\index{type!of unbounded lower real numbers}\index{type!of unbounded improper intervals}
\index{type!of unbounded upper real numbers}\index{type!of unbounded real numbers}
  \[
  \renewcommand{\arraystretch}{1.45}
  \begin{array}{l|l|l}
    \text{Name of type in }\cat{E} & \text{Notation} & \text{Characterization} \\\hline
    \text{lower reals} & \tLR & \{\delta \mid \text{1a, 2a, 3a}\} \\
    \text{upper reals} & \tUR & \{\upsilon \mid \text{1b, 2b, 3b}\} \\
    \text{improper intervals} & \tII & \{(\delta,\upsilon) \mid \text{1, 2, 3}\} \\
    \text{(proper) intervals} & \tIR & \{(\delta,\upsilon) \mid \text{1, 2, 3, 4}\} \\
    \text{(constant) real numbers} & \tRR & \{(\delta,\upsilon) \mid \text{1, 2, 3, 4, 5}\} \\
    \text{$\pi$-local lower reals} & \tLR_\pi & \{\delta \mid \text{0a$_\pi$, 1a, 2a, 3a}\} \\
    \text{$\pi$-local upper reals} & \tUR_\pi & \{\upsilon \mid \text{0b$_\pi$, 1b, 2b, 3b}\} \\
    \text{$\pi$-local improper intervals} & \tII_\pi &
      \{(\delta,\upsilon) \mid \text{0$_\pi$, 1, 2, 3}\} \\
    \text{$\pi$-local (proper) intervals} & \tIR_\pi &
      \{(\delta,\upsilon) \mid \text{0$_\pi$, 1, 2, 3, 4}\} \\
    \text{$\pi$-local real numbers} & \tRR_\pi &
      \{(\delta,\upsilon) \mid \text{0$_\pi$, 1, 2, 3, 4, 5$_\pi$} \} \\
    \text{$\SeeSymbol$-local lower reals} & \tLR_\SeeSymbol &
      \{\delta \mid \text{0a$_\SeeSymbol$, 1a, 2a, 3a}\} \\
    \text{$\SeeSymbol$-local upper reals} & \tUR_\SeeSymbol &
      \{\upsilon \mid \text{0b$_\SeeSymbol$, 1b, 2b, 3b}\} \\
    \text{$\SeeSymbol$-local improper intervals} & \tII_\SeeSymbol &
      \{(\delta,\upsilon) \mid \text{0$_\SeeSymbol$, 1, 2, 3}\} \\
    \text{$\SeeSymbol$-local (proper) intervals} & \tIR_\SeeSymbol &
      \{(\delta,\upsilon) \mid \text{0$_\SeeSymbol$, 1, 2, 3, 4$_\SeeSymbol$}\} \\
    \text{$\SeeSymbol$-local real numbers} & \tRR_\SeeSymbol &
      \{(\delta,\upsilon) \mid \text{0$_\SeeSymbol$, 1, 2, 3, 4$_\SeeSymbol$, 5}\} \\
    \text{$\AtSymbol$-local lower reals} & \tLR_\AtSymbol &
      \{\delta \mid \text{0a$_\AtSymbol$, 1a, 2a, 3a}\} \\
    \text{$\AtSymbol$-local upper reals} & \tUR_\AtSymbol &
      \{\upsilon \mid \text{0b$_\AtSymbol$, 1b, 2b, 3b}\} \\
    \text{$\AtSymbol$-local improper intervals} & \tII_\AtSymbol &
      \{(\delta,\upsilon) \mid \text{0$_\AtSymbol$, 1, 2, 3}\} \\
    \text{$\AtSymbol$-local (proper) intervals} & \tIR_\AtSymbol &
      \{(\delta,\upsilon) \mid \text{0$_\AtSymbol$, 1, 2, 3, 4$_\SeeSymbol$}\} \\
    \text{$\AtSymbol$-local real numbers} & \tRR_\AtSymbol &
      \{(\delta,\upsilon) \mid \text{0$_\AtSymbol$, 1, 2, 3, 4$_\SeeSymbol$, 5}\} \\
  \end{array}
  \]
\caption{The local Dedekind numeric types}\label{table:local_numeric_types}
\end{table}

We also saw in this section that the various inclusions preserve much of the structure present, as
summarized in the following corollary.

\begin{corollary}\label{cor:everything_goes}
  Let $j$ be one of the following modalities: $\At{}$, $\See{}$, or $\pi$. Then we have the
  following subtype relationships, all of which preserve domain order and constants:
  \[
  \begin{tikzcd}[column sep=5pt, row sep=2pt]
      \tLR_j&\ss&\tLR
    &\qquad&
        \tUR_j&\ss&\tUR
    &\qquad&
          \tII_j&\ss&\tII
    \\
      \vertss&&\vertss
    &&
        \vertss&&\vertss
    &&
          \vertss&&\vertss
    \\
      \tLRub_j&\ss&\tLRub
    &&
        \tURub_j&\ss&\tURub
    &&
          \tIIub_j&\ss&\tIIub
  \end{tikzcd}
  \]
  Moreover if $j$ is $\At{}$ or $\See{}$ then all of the above inclusions also preserve arithmetic
  and inequalities.
\end{corollary}
\begin{proof}
  In all three diagrams, the inclusion of top into bottom is obvious: in each case the subtype is
  cut out by some axiom. In the case of $\pi$, the horizontal inclusions were shown in
  \cref{prop:pi_inclusion_arith}. For $\See{}$, \cref{prop:closed_inclusion_arith} shows that the
  horizontal inclusions exist and preserve arithmetic and inequalities. Finally, for $\At{}$,
  \cref{prop:At_numeric_See} shows that the horizontal inclusions exist and preserve arithmetic (by
  composing with the inclusions for $\See{}$), and \cref{prop:At_inclusion_comparisons} shows that
  these inclusions also preserve inequalities.
\end{proof}

\section{Semantics of numeric types in various modalities}
\label{sec:semantics_numerics_modalities}\index{numeric type!$j$-local!semantics of}

Recall from \cref{prop:H_to_IR_cont} that there is a poset isomorphism $h\colon\IR\cong H$ between
the interval domain and the upper half-plane $H=\{(x,y)\in\RR^2\mid y\geq 0\}$\index{half-plane}. It
is given by $h([d,u])\coloneqq (m,r)$, where $m\coloneqq(u+d)/2$ and $r\coloneqq(u-d)/2$ are the
midpoint and radius. We call $h([d,u])$ the \emph{half-plane representation} of $[d,u]$. Recall from
\cref{eqn:up_way_up_cone} that the sets $\up(m,r)$ and $\upclose(m,r)$ can be visualized as the
cones:
\[
  \begin{tikzpicture}[baseline=(name), font=\tiny]
    \mypic{1}{2}{}{|}
    \node [circle, inner sep=.5pt, draw, fill=black] at (M) {};
    \node [above right = -7pt and -4pt of M, font=\tiny] {$(m,r)$};
    \node [above=10pt of rend, font=\footnotesize] (name) {$\up(m,r)$};
    \node at ($(L)+(0,-1.5ex)$) {$d$};
    \node at ($(R)+(0,-1.5ex)$) {$u$};
  \end{tikzpicture}
\hspace{1in}
  \begin{tikzpicture}[baseline=(name), font=\tiny]
    \mypic{1}{2}{dotted}{|}
    \node [circle, inner sep=.5pt, draw, fill=black] at (M) {};
    \node [above right = -7pt and -4pt of M, font=\tiny] {$(m,r)$};
    \node [above=10pt of rend, font=\footnotesize] (name) {$\upclose(m,r)$};
    \node at ($(L)+(0,-1.5ex)$) {$d$};
    \node at ($(R)+(0,-1.5ex)$) {$u$};
  \end{tikzpicture}
\]
The horizontal axis in $H$ represents the real numbers $\RR\ss\IR$, so we call it the \emph{real
axis}. In particular the line segment from $d$ to $u$ is $\up(m,r)\cap h(\RR)$, but we denote it
$\up(m,r)\cap\RR$ for typographical simplicity. We will see that the $\pi$-numeric types are
determined by their values on the real axis.

For visualizing the $\See{[d,u]}$- and $\At{[d,u]}$-numeric types, it will also be useful to have a
picture of $\down(m,r)$:
\[
  \begin{tikzpicture}[font=\tiny]
    \node (lend) {};
    \node [right=2 of lend] (L) {|};
    \node [right=1 of L] (R) {|};
    \node [right=2 of R] (rend) {};
    \node at ($(L)!.5!(R)+(0,1/2)$) (M) {};
    \draw[thin, black, dotted] (L.center) -- (M.center) -- (R.center);
    \draw[thick, <->] (lend) -- (rend);
    \node [circle, inner sep=.5pt, draw, fill=black] at (M) {};
    \node at ($(M)!-2!(L)$) (tleft) {};
    \node at ($(M)!-2!(R)$) (tright) {};
    \draw[thick, black] (M.center) -- (tleft.center);
    \draw[thick, black] (M.center) -- (tright.center);
    \fill[fill=black!15] (M.center) -- (tleft.center) -- (tright.center) -- cycle;
    \node [right = 0 of M, font=\tiny] {$(m,r)$};
    \node at ($(L)+(0,-1.5ex)$) {$d$};
    \node at ($(R)+(0,-1.5ex)$) {$u$};
    \node [above=10pt of rend, font=\footnotesize] (name) {$\down(m,r)$};
  \end{tikzpicture}
\]
As discussed in \cref{sec:IR_half_plane}, we consider $H$ as having the usual Euclidean topology.

The following omnibus theorem collects characterizations of the semantics of all the numeric types
we consider. The new characterizations follow easily from
\cref{cor:Dedekind_cut_semantics,prop:modality_semantics_pointwise}, together with the results of
\cref{sec:Real_numbers_modalities} as summarized in \cref{table:local_numeric_types}.
\index{semantics}\index{numeric type!semantics of}

\begin{theorem}
\label{thm:omnibus_semantics}\index{semantics!omnibus theorem}
  Suppose we are in a context with a variable $t:\Time$, and we fix an element
  $(d_t,u_t)\in\church{\Time}(\ell)$. Let $(m_t,r_t)\coloneqq h([d_t,u_t])$ be its half-plane
  representation.
  \begin{enumerate}
    \item $\church{\tLR}(\ell) \iso \{\,f\colon \upclose(m_t,r_t)\to\RR\cup\{\infty\} \mid
      f\text{ is lower semi-continuous and order-preserving}\,\}$
    \item $\church{\tUR}(\ell) \iso \{\,f\colon \upclose(m_t,r_t)\to\RR\cup\{-\infty\} \mid
      f\text{ is upper semi-continuous and order-reversing}\,\}$
    \item $\church{\tII}(\ell) \iso \church{\tLR}(\ell) \times \church{\tUR}(\ell)$
    \item $\church{\tIR}(\ell) \iso \{\,(\ubar{f},\bar{f})\in\church{\tII}(\ell) \mid
      \forall(p\in \upclose(m_t,r_t))\ldotp \ubar{f}(p)\leq\bar{f}(p)\,\}$
    \item $\church{\tRR}(\ell) \cong \{\,(\ubar{f},\bar{f})\in\church{\tII}(\ell) \mid
      \forall(p\in \upclose(m_t,r_t))\ldotp \ubar{f}(p)=\bar{f}(p)\,\} \cong \RR$
  \end{enumerate}
  In the $\pi$-modality, the $f$'s are determined by their values on the real axis:
  \begin{enumerate}\setcounter{enumi}{5}
    \item $\church{\tLR_\pi}(\ell) \iso \{\,f\in\church{\tLR}(\ell) \mid
      \forall(p\in\upclose(m_t,r_t))\ldotp f(p) = \inf f(\specupclose p \cap \RR)\,\}$
    \item $\church{\tUR_\pi}(\ell) \iso \{\,f\in\church{\tUR}(\ell) \mid
      \forall(p\in\upclose(m_t,r_t))\ldotp f(p) = \sup f(\specupclose p \cap \RR)\,\}$
    \item $\church{\tII_{\pi}}(\ell) \iso
      \church{\tLR_{\pi}}(\ell) \times \church{\tUR_{\pi}}(\ell)$
    \item $\church{\tIR_\pi}(\ell) \iso \{\,(\ubar{f},\bar{f})\in\church{\tII_\pi}(\ell) \mid
      \forall(p\in \upclose(m_t,r_t))\ldotp \ubar{f}(p)\leq\bar{f}(p)\,\}$
    \item $\church{\tRR_{\pi}}(\ell) \cong \{\,(\ubar{f},\bar{f})\in\church{\tII_{\pi}}(\ell) \mid
      \forall(p\in\upclose(m_t,r_t)\cap\RR)\ldotp \ubar{f}(p)=\bar{f}(p)\,\}$
  \end{enumerate}
  Let $d,u:\tRR$, and let $(m,r)=h([d,u])$ be its half-plane representation. In the $\See{}$
  modality, the $f$'s are determined by their values in $\down(m,r)$:
  \begin{enumerate}\setcounter{enumi}{10}
    \item $\church{\tLR_{\SeeInline{[d,u]}}}(\ell) \iso \{\,f\in\church{\tLR}(\ell) \mid
      \forall(p\in\upclose(m_t,r_t))\ldotp p\in\down(m,r) \vee f(p)=\infty\,\}$
    \item $\church{\tUR_{\SeeInline{[d,u]}}}(\ell) \iso \{\,f\in\church{\tUR}(\ell) \mid
      \forall(p\in\upclose(m_t,r_t))\ldotp p\in\down(m,r) \vee f(p)=-\infty\,\}$
    \item $\church{\tII_{\SeeInline{[d,u]}}}(\ell) \iso
      \church{\tLR_{\SeeInline{[d,u]}}}(\ell) \times \church{\tUR_{\SeeInline{[d,u]}}}(\ell)$
    \item $\church{\tIR_{\SeeInline{[d,u]}}}(\ell) \iso
      \{\,(\ubar{f},\bar{f})\in\church{\tII_{\SeeInline{[d,u]}}}(\ell) \mid
      \forall(p\in \upclose(m_t,r_t))\ldotp p\in\down(m,r) \imp \ubar{f}(p)\leq\bar{f}(p)\,\}$
    \item $\church{\tRR_{\SeeInline{[d,u]}}}(\ell) \iso
      \{\,(\ubar{f},\bar{f})\in\church{\tII_{\See{[d,u]}}}(\ell) \mid
      \forall p\in \upclose(m_t,r_t)\ldotp p\in\down(m,r) \imp \ubar{f}(p)=\bar{f}(p)\,\}$
  \end{enumerate}
  Let $(m,r)$ be as above. In the $\At{}$ modality, the $f$'s are constant in $\down(m,r)$:
  \begin{enumerate}\setcounter{enumi}{15}
    \item $\church{\tLR_{\AtInline{[d,u]}}}(\ell)
      \iso \{\,f\in\church{\tLR_{\SeeInline{[d,u]}}}(\ell) \mid
      \forall(p\in\upclose(m_t,r_t))\ldotp p\in\down(m,r) \imp f(p)=f((m,r))\,\}$
    \item $\church{\tUR_{\AtInline{[d,u]}}}(\ell) \iso
      \{\,f\in\church{\tUR_{\SeeInline{[d,u]}}}(\ell) \mid
      \forall(p\in\upclose(m_t,r_t))\ldotp p\in\down(m,r) \imp f(p)=f((m,r))\,\}$
    \item $\church{\tII_{\AtInline{[d,u]}}}(\ell) \iso
      \church{\tLR_{\AtInline{[d,u]}}}(\ell) \times \church{\tUR_{\AtInline{[d,u]}}}(\ell)$
    \item $\church{\tIR_{\AtInline{[d,u]}}}(\ell) \iso
      \{\,(\ubar{f},\bar{f})\in\church{\tII_{\AtInline{[d,u]}}}(\ell) \mid
      (m,r)\in\upclose(m_t,r_t) \imp \ubar{f}((m,r))\leq\bar{f}((m,r))\,\}$
    \item $\church{\tRR_{\AtInline{[d,u]}}}(\ell) \iso
      \{\,(\ubar{f},\bar{f})\in\church{\tII_{\At{[d,u]}}}(\ell) \mid
      (m,r)\in\upclose(m_t,r_t) \imp \ubar{f}((m,r))=\bar{f}((m,r))\,\}$
  \end{enumerate}
\end{theorem}

\begin{remark}\label{rem:independence_in_omnibus}
  Up to isomorphism, the sets given in numbers 1 -- 10 are independent of the choice of $t:\Time$
  and $(d_t,u_t)$, and hence $(m_t,r_t)$. The reason they are written in those terms is in order to
  facilitate the comparison between 1 -- 10 and 11 -- 20.
\end{remark}

\begin{example}\index{numeric type!$\At{}$ as constant}

The internal fact that $\tRR_{\AtInline{[d,u]}}=\tRR_{\SeeInline{[d,u]}}$, proven in   
\cref{prop:See_At_Reals_iso} is  reflected semantically in the fact that the sets 
described in 15 and 20 are the same. One might imagine an element of
$\church{\tRR_{\AtInline{[d,u]}}}(\ell)$ like this:
  \[
  \begin{tikzpicture}[font=\tiny]
    \node (lend) {};
    \node [right=2 of lend] (L) {|};
    \node [right=1 of L] (R) {|};
    \node [right=2 of R] (rend) {};
    \node at ($(L)!.5!(R)+(0,1/2)$) (M) {};
    \draw[thin, black, dotted] (L.center) -- (M.center) -- (R.center);
    \draw[thick, <->] (lend) -- (rend);
    \node [circle, inner sep=.5pt, draw, fill=black] at (M) {};
    \node at ($(M)!-2!(L)$) (tleft) {};
    \node at ($(M)!-2!(R)$) (tright) {};
    \draw[thick, black] (M.center) -- (tleft.center);
    \draw[thick, black] (M.center) -- (tright.center);
    \fill[fill=black!15] (M.center) -- (tleft.center) -- (tright.center) -- cycle;
    \node [right = 0 of M, font=\tiny] {$(m,r)$};
    \node at ($(L)+(0,-1.5ex)$) {$d$};
    \node at ($(R)+(0,-1.5ex)$) {$u$};
    \node [above=1ex of M] {\normalsize $5$};
  \end{tikzpicture}
  \]
  where $(m,r)=h([d,u])$.
\end{example}

\begin{corollary}
\label{cor:pi_real_semantics}\index{real number!variable}\index{sheaf!of real-valued functions}
  Let $(0,\ell)\ss\RR$ denote the open interval of length $\ell$. There is an isomorphism
  \[
    \church{\tRR_\pi}(\ell)\cong\{\,f\colon(0,\ell)\to\RR\mid f\text{ is continuous}\,\}.
  \]
\end{corollary}
\begin{proof}
  For $f\colon(0,\ell)\to\RR$, define $\ubar{f}\colon\upclose[0,\ell]\to\RR$ and
  $\bar{f}\colon\upclose[0,\ell]\to\RR$ by
  \[
    \ubar{f}(p)\coloneqq\inf f(\up p\cap\RR)
    \qquad\text{and}\qquad
    \bar{f}(p)\coloneqq\sup f(\up p\cap\RR)
  \]
  It is easy to see that $\ubar{f}$ is automatically order-preserving and $\bar{f}$ is automatically
  order-reversing. Thus by \cref{thm:omnibus_semantics,rem:independence_in_omnibus} we have an
  isomorphism:
  \[
    \church{\tRR_\pi} \cong \{f\colon(0,\ell)\to\RR\mid\ubar{f}
    \text{ is lower semi-continuous and }\bar{f}\text{ is upper semicontinuous}\}.
  \]
  If $f$ is continuous then $\ubar{f}$ is lower semi-continuous and $\bar{f}$ is upper
  semi-continuous; it remains to show the converse. For any $0<x<\ell$ and $\epsilon>0$, the
  neighborhood $U\coloneqq(f(x)-\epsilon,f(x)+\epsilon)$ is the intersection of
  $U_1\coloneqq\{y\in\RR\mid f(x)-\epsilon<y\}$ and $U_2\coloneqq\{y\in\RR\mid y<f(x)+\epsilon)$. By
    definition of lower- and upper-semicontinuity, the pullbacks $\ubar{f}^{-1}(U_1)$ and
    $\bar{f}^{-1}(U_2)$ are open neighborhoods of $x\in\upclose[0,\ell]$, so
    $f^{-1}=\ubar{f}^{-1}(U_1)\cap\bar{f}^{-1}(U_2)\cap\RR$ is open as well.
\end{proof}

\begin{remark}\label{rem:time_semantics}\index{Time@$\Time$!as clock behavior}
\index{Time@$\Time$!semantics of}
  Our original \emph{external} definition of $\Time$ was \cref{eqn:def_time}, $\Time(\ell)=\{\,(d,u)\in\RR^2\mid u-d=\ell\,\}$. Our original \emph{internal} definition of $\Time$ was as a subtype of $\tII$; see
  \cref{eqn:define_time}. In \cref{lemma:Time_sub_II} and throughout \cref{sec:justification} we showed that the former is a sound semantics for the latter. With \cref{thm:omnibus_semantics,cor:pi_real_semantics} in front of us, it may be worthwhile to summarize how $\Time$ fits in with $\tII$, $\tIR$, and $\tRR_\pi$.
  
  The elements of $\church{\tII}$ are pairs of lower/upper semi-continuous order preserving/reversing functions. As a subsheaf, $\church{\Time}\ss\church{\tII}$
  is the set of pairs $(\ubar{t},\bar{t})\colon\upclose[0,\ell]\to\RR$, for which there exists
  $r\in\RR$ with $\ubar{t}([d,u])\coloneqq d+r$ and $\bar{t}([d,u])\coloneqq u+r$. Clearly $\ubar{t}\leq\bar{t}$, so we also have $\church{\Time}\ss\church{\IR}$. 

  In \cref{ex:Time_subtype_rvar} we identified $\Time$ internally as a subtype of $\tRR_\pi$. Under this
  identification, we could write
  \[
    \church{\Time}(\ell)\cong\big\{\,(x\mapsto r+x)\colon(0,\ell)\to\RR\mid r\in\RR\,\big\}
  \]
  i.e.\ the set of real-valued functions on the open interval $(0,\ell)\ss\RR$ that have unit slope.
  In \cref{rem:time_clock_temporal} we called these ``clock behaviors''.
\end{remark}

\begin{remark}\index{continuity}
  Semantically, $\tRR_\pi$ denotes the sheaf of continuous real-valued functions. Something of the
  sort can also be proven internally. To do so, we need to define yet another numeric object
  $\tRR^?$, which is very similar to $\tRR$ but with a slightly weakened disjointness axiom:
  \begin{description}\index{cuts!weakly-disjoint}
    \item[\quad weakly-disjoint:]
      $(\exists(q:\tQQ)\ldotp \delta q \wedge \upsilon q)
        \imp \forall(q:\tQQ)\ldotp \delta q\wedge\upsilon q.$
  \end{description}
  So define $\tRR^?=\{(\delta,\upsilon):\tII\mid\text{ located and weakly-disjoint}\}$. If
  $x=(\delta,\upsilon):\tRR^?$ satisfies $\forall(q:\tQQ)\ldotp \delta q\wedge\upsilon q$, we write
  $x=\const{undef}$.

  We can now internally define a function $\tRR_\pi\to\Time\to\tRR^?$, sending
  $x=(\delta,\upsilon):\tRR_\pi$ to the function $f_x:\Time\to\tRR^?$ given by
  $f_x(t)\coloneqq\At[t]{0}x$, or more precisely $f_x(t)=(\delta_{x,t},\upsilon_{x,t})$ where
  $\delta_{x,t}q\iff\At[t]{0}\delta q$ and $\upsilon_{x,t}q\iff\At[t]{0}\upsilon q$.

  Soon we will prove that $f_x$ is continuous, but first we want to check that $f_x(t)\in\RR^?$ for
  any given $x,t$. It is easy to see that $f_x(t)$ is down/up-closed and bounded. It is rounded by
  \cref{prop:At_preserves_exists}. We check that it is located and weakly-disjoint. Given $q_1<q_2$
  we have $\pi(\delta q_1\vee\upsilon q_2)$, which by definition is
  $\forall(t:\Time)\ldotp\At[t]{0}(\delta q_1\vee\upsilon q_2)$, and we obtain locatedness by
  \cref{lem:At_preserves_or}. If $\At[t]{0}(\delta q\wedge\upsilon q)$ then we have
  $\At[t]{0}(\pi\bot)$ by $\pi$-disjointness, and hence $\At[t]{0}\bot$ by \cref{cor:pi_negneg}.
  Thus we obtain $\forall(q:\tQQ)\ldotp\At[t]{0}(\delta q\wedge\upsilon q)$, proving $f_x(t)$ is
  weakly-disjoint.

  Finally, we want to show that $f_x:\Time\to\RR^?$ is continuous in the sense that the following
  holds:
  \begin{multline*}
    \forall(\epsilon>0)(t:\Time)\ldotp (f_x(t)=\const{undef})\vee\\
    \exists(\delta>0)\ldotp\forall(r:\tRR)\ldotp (-\delta<t+r<\delta)\imp |f_x(t)-f_x(t+r)|<\epsilon.
  \end{multline*}
  By arithmetic locatedness (see \cref{prop:arithmetic_located_j}, which also holds for $\tRR^?$) and
  \cref{lemma:points_in_subtoposes}, it suffices to show
  \begin{multline*}
    \forall(q_1,q_2:\tQQ)(t:\Time)\ldotp(q_1<f_x(t)<q_2)\imp(f_x(t)=\const{undef})\vee\\
    \exists(d,u:\tQQ)\ldotp(d<u)\wedge\In[t]{[d,u]}(q_1<f_x<q_2).
  \end{multline*}
  and this follows from \cref{ax:point_to_open}.

	However, we have not seen any use for the above perspective, because it is far easier to work with the Dedekind axioms defining $\tRR_\pi$ than with the above $\epsilon,\delta$ definition, which easily gets unwieldy.
\end{remark}

\section{Derivatives of interval-valued functions}\label{sec:derivatives}\index{derivative}

In this section we will define what it means for one interval-valued function to be the derivative
of another. We use Newton's notation $\dot{x}$ for the derivative because its semantics will be
differentiation with respect to $t:\Time$. If $x$ has a derivative $\dot{x}$ and both
$x,\dot{x}:\tRR_\pi$ are variable reals, we say that $x$ is differentiable. For example, we will see
that $x(t)=t^2$ is differentiable and its derivative is $2*t$. We will show that differentiation is
linear and satisfies the Leibniz rule in \cref{th:leibniz}.

The derivative can be defined for any $x:\tIIub_\pi$. Because $\tIIub_\pi$ is a domain (see
\cref{prop:intervals_as_RIds,prop:sqss_ll}), we can define $\dot{x}$ by giving a directed family of
$y\in\tIIub_\pi$ which approximate $\dot{x}$, and take $\dot{x}$ to be the directed join of the
approximating family.

\begin{definition}[Approximates the derivative]
\label{def:derivative}\index{derivative!approximate}
  Let $x,y:\tIIub_\pi$ be $\pi$-local improper intervals. We say that $y$ \emph{approximates the
  derivative of $x$}, if for any $t:\Time$ it satisfies the following:
  \begin{equation}\label{eqn:deriv_approx}
    \forall(r_1,r_2:\tRR)\ldotp r_1<r_2\imp y\sqss\frac{x^@(r_2)-x^@(r_1)}{r_2-r_1}.
  \end{equation}
  For now we write $\AD(y,x)$ iff $y$ satisfies \cref{eqn:deriv_approx}.
\end{definition}

By \cref{ax:torsor}, $\AD(y,x)$ is independent of $t:\Time$; i.e.\ it is true for one choice iff it
is true for any another. Thus in the results below, a choice of $t:\Time$ is implied, but is left
out of the statements.

\begin{lemma}\label{lem:sqss_slope_At_closed}
  For all $x,y:\tIIub_\pi$ and $r_1<r_2:\tRR$, the fraction $\frac{x^@(r_2)-x^@(r_1)}{r_2-r_1}$ is
  in $\tII_{\AtInline{[r_1,r_2]}}$, and the proposition $y\sqss\frac{x^@(r_2)-x^@(r_1)}{r_2-r_1}$ is
  $\At{[r_1,r_2]}$-closed.
\end{lemma}
\begin{proof}
  By \cref{prop:At_numeric_See,rem:enlarging_closed}, the difference $x^@(r_2)-x^@(r_1)$ is a
  constant in $\tIIub_{\SeeInline{[r_1,r_2]}}$, and thus it is in $\tIIub_{\AtInline{[r_1,r_2]}}$ by
  \cref{prop:At_numeric_See}. The reciprocal $1/(r_2-r_1)$ is a (constant) real by
  \cref{prop:reciprocal}, so the product
  $(\delta,\upsilon)\coloneqq\frac{x^@(r_2)-x^@(r_1)}{r_2-r_1}$ is in
  $\tIIub_{\AtInline{[r_1,r_2]}}$. By \cref{cor:everything_goes}, we can consider both sides of the
  expression $y\sqss (\delta,\upsilon)$ as terms in $\tII$, and $\sqss$ is simply implication:
  $q<y\imp \delta q$ and $y<q\imp \upsilon q$. Since $\delta$ and $\upsilon$ are
  $\At{[r_1,r_2]}$-closed, so are these implications.
\end{proof}

\begin{lemma}\label{lem:In_AD}
  Choose $x,y:\tIIub_\pi$ and $d,u:\tRRub$. Then $\In{[d,u]}\AD(y,x)$ is
  equivalent to
  \[
    \forall(r_1,r_2:\tRR)\ldotp d<r_1<r_2<u \imp y\sqss\frac{x^@(r_2)-x^@(r_1)}{r_2-r_1}.
  \]
\end{lemma}
\begin{proof}
  By \cref{eqn:In_out_of_imp}, $\In{[d,u]}\AD(y,x)$ is equivalent to
  \[
    \forall(r_1,r_2:\tRR)\ldotp r_1<r_2 \imp
    \In{[d,u]}\left(y\sqss\frac{x^@(r_2)-x^@(r_1)}{r_2-r_1}\right).
  \]
  By \cref{lem:sqss_slope_At_closed,prop:points_in_subtoposes}, this is equivalent to
  \[
    \forall(r_1,r_2:\tRR)\ldotp r_1<r_2 \imp
    ((d<r_1)\wedge(r_2<u))\imp\left(y\sqss\frac{x^@(r_2)-x^@(r_1)}{r_2-r_1}\right),
  \]
  which is clearly equivalent to the desired conclusion.
\end{proof}

\begin{proposition}\label{prop:AD_Pi-closed}
  For any $x,y:\tIIub_\pi$, the proposition $\AD(y,x)$ is $\pi$-closed.
\end{proposition}
\begin{proof}
  We prove this using \cref{ax:covering_pi}. Let $q_1<q_2$ be rationals, and suppose
  $t\apart[q_2,q_1]\imp\AD(y,x)$. This is equivalent to
  $\In{[-\infty,q_2]}\AD(y,x)\wedge\In{[q_1,\infty]}\AD(y,x)$. We want to show $\AD(y,x)$. By
  \cref{lem:In_AD}, we have
  \begin{gather}
    \label{eq:AD_Pi-closed_A}
    \forall(r_1,r_2:\tRR)\ldotp r_1<r_2<q_2 \imp y\sqss\frac{x^@(r_2)-x^@(r_1)}{r_2-r_1} \\
    \label{eq:AD_Pi-closed_B}
    \forall(r_1,r_2:\tRR)\ldotp q_1<r_1<r_2 \imp y\sqss\frac{x^@(r_2)-x^@(r_1)}{r_2-r_1},
  \end{gather}
  and we want to show
  \[
    \forall(r_1,r_2:\tRR)\ldotp r_1<r_2 \imp y\sqss\frac{x^@(r_2)-x^@(r_1)}{r_2-r_1}.
  \]
  Take any $r_1<r_2:\tRR$. By trichotomy (\ref{prop:trichotomy}) $(q_1<r_1)\vee(r_1\leq q_1)$ and
  $(r_2<q_2)\vee(q_2\leq r_2)$. If $q_1<r_1$ then we are done by \eqref{eq:AD_Pi-closed_B}, and if
  $r_2<q_2$ we are done by \eqref{eq:AD_Pi-closed_A}.

  Now assume $r_1\leq q_1$ and $q_2\leq r_2$, and choose any $r_m$ with $q_1<r_m<q_2$. Then
  \eqref{eq:AD_Pi-closed_A} and \eqref{eq:AD_Pi-closed_B} give us
  \[
    y\sqss\frac{x^@(r_m)-x^@(r_1)}{r_m-r_1}
    \qquad\text{and}\qquad
    y\sqss\frac{x^@(r_2)-x^@(r_m)}{r_2-r_m}.
  \]
  By continuity of multiplication (\cref{thm:mult_is_continuous}), we get $yr_m-yr_1\sqss
  x^@(r_m)-x^@(r_1)$ and $yr_2-yr_m\sqss x^@(r_2)-x^@(r_m)$, and by continuity of addition
  (\cref{thm:addition_is_continuous}), we can add these together to get
  \[
    y(r_2-r_1) = yr_2 - yr_1 \sqss x^@(r_2)-x^@(r_1)
  \]
  as desired.
\end{proof}

Recall from \cref{not:constant_intervals} the notation for constant intervals $[d,u]:\ctIIub_j$,
where $d,u:\ctRRub_j$, or in particular where $d,u:\tQQ$. Recall the $\ll$ relation on $\tIIub$ from
\cref{prop:sqss_ll}.

\begin{proposition}\label{prop:AD_directed_family}
  For a fixed $x:\tIIub_\pi$, the family $\{\,y:\tIIub_\pi \mid \AD(y,x)\,\}$ is directed and
  down-closed.
\end{proposition}
\begin{proof}
  Clearly $\AD([-\infty,\infty],x)$ since $[-\infty,\infty]\sqss z$ for any $z:\tIIub$. If
  $\AD(y_1,x)$ and $\AD(y_2,x)$, it is equally clear that $\AD(y_1\sqcup y_2,x)$. Down-closedness is
  obvious.
\end{proof}

\begin{proposition}\label{prop:derivative_cuts}\index{derivative!cuts of}
  Let $x:\tIIub_\pi$, and let $(\dot{\delta},\dot{\upsilon}):\tIIub_\pi$ denote the join of the
  directed family $\{\,y:\tIIub_\pi\mid\AD(y,x)\,\}$. Then for any $q:\tQQ$,
  \begin{equation}\label{eqn:cuts_derivative}
  \begin{aligned}
    \dot\delta d&\coloneqq \exists(d':\tQQ)\ldotp(d<d')\wedge\forall(r_1,r_2:\tRR)\ldotp r_1<r_2
      \imp d'<\frac{x^@(r_2)-x^@(r_1)}{r_2-r_1} \\
    \dot\upsilon u&\coloneqq \exists(u':\tQQ)\ldotp (u>u')\wedge\forall(r_1,r_2:\tRR)\ldotp r_1<r_2
      \imp u'>\frac{x^@(r_2)-x^@(r_1)}{r_2-r_1}
  \end{aligned}
  \end{equation}
\end{proposition}
\begin{proof}
  Let $D:\tIIub_\pi\to\Prop_\pi$ be given by $D(y)\coloneqq \AD(y,x)$. This defines a subobject of
  $\tIIub$, i.e.\ a family of elements of $\tIIub$. Clearly the join of this family should be given
  by the supremum of all lower-bounds together with the infimum of all upper-bounds, taken in the
  $\pi$-modality. In other words, the join $(\dot\delta,\dot\upsilon)$ of $D$
  satisfies the following formulas for any $(d,u):\tIIubpre$,
  \begin{equation*}
    \dot\delta d \coloneqq \pi\exists(y:\tIIub)\ldotp \AD(y,x) \wedge d<y,
    \qquad
    \dot\upsilon u \coloneqq \pi\exists(y:\tIIub)\ldotp \AD(y,x) \wedge y<u.
  \end{equation*}

  It is simple to show that these are equivalent to
  \begin{align*}
    \dot\delta d &\iff \pi\exists(d':\tLRubpre)\ldotp \AD([d',+\infty],x) \wedge (d<d'), \\
    \dot\upsilon u &\iff \pi\exists(u':\tLRubpre)\ldotp \AD([-\infty,u'],x) \wedge (u'<u).
  \end{align*}
  Indeed, that the latter implies the former is trivial. Conversely, suppose $\AD(y,x)$ and $d<y$. Then there
  exists a $d'$ with $d<d'<y$, and $[d',+\infty]\sqss y$ implies $\AD([d',+\infty],x)$ by
  \cref{prop:AD_directed_family}.

  We next want to apply \cref{lem:exists_Pi_closed}. The predomain $\tIIubpre$ is constant, and the
  corresponding poset $(\tIIubpre,\specord)\op$ is directed (in fact, it is linear); see
  \cref{ex:predomains_with_binarys,ex:constant_predomains,prop:intervals_as_RIds}. It follows that
  the subobject $\{d':\tIIubpre \mid d<d'\}$ is also constant and directed (linear), and it follows
  from \cref{prop:AD_directed_family} that $d''\leq d'$ implies $\AD([d',+\infty],x) \imp
  \AD([d'',+\infty])$. Hence the hypothesis of \cref{lem:exists_Pi_closed} are satisfied, which shows
  the first line of
  \begin{align*}
    \dot\delta d &\iff \exists(d':\tLRubpre)\ldotp \AD([d',+\infty],x) \wedge (d<d'), \\
    \dot\upsilon u &\iff \exists(u':\tLRubpre)\ldotp \AD([-\infty,u'],r) \wedge (u'<u).
  \end{align*}
  The second line follows similarly.

  Finally, by restricting to the case where $d\in\tQQ\subseteq\tLRubpre$ and
  $u\in\tQQ\subseteq\tURubpre$, and unfolding the definition of $\AD$, we have shown
  \eqref{eqn:cuts_derivative}.
\end{proof}

\begin{definition}[Derivative]\label{def:internal_derivative}
  Let $x:\tIIub_\pi$. We define the \emph{derivative of $x$}, denoted $\dot{x}:\tIIub_\pi$, to be
  the join of the directed family $\{\,y:\tIIub_\pi\mid\AD(y,x)\,\}$. Its cuts
  $\dot{x}=(\dot\delta,\dot\upsilon)$ are given by \cref{eqn:cuts_derivative}. We sometimes denote
  the derivative $\dot{x}$ by $\frac{d}{dt}(x)$.
\end{definition}

\begin{lemma}\label{lemma:deriv_approximates_deriv}
  For any $x:\tIIub_\pi$, $\dot{x}$ approximates the derivative of $x$, i.e.\ $\AD(\dot{x},x)$.
\end{lemma}
\begin{proof}
  We need to show that for any reals $r_1<r_2$, $\dot{x}\sqss\frac{x^@(r_2)-x^@(r_2)}{r_2-r_1}$.
  This follows directly from the fact that $\dot{x}$ is the join of the family
  $\{y:\tIIub_\pi\mid\AD(y,x)\}$, and that by definition $y\sqss\frac{x^@(r_2)-x^@(r_2)}{r_2-r_1}$
  for any such $y$.
\end{proof}

\Cref{lemma:deriv_approximates_deriv} immediately implies:

\begin{proposition}\label{prop:AD_iff_sqss_dot}
  For any $x,y:\tIIub_\pi$, $\AD(y,x) \iff y\sqss\dot{x}$.
\end{proposition}

At this point we have internally defined the derivative of any $\pi$-local improper interval and justified the terminology ``$y$ approximates the derivative of $x$'' from \cref{def:derivative}. In the next section we will show that this internal definition has the correct external semantics. Note that it should also be possible to show that variable reals $x:\tRR_\pi$ have an antiderivative as well. We leave this as an open question for interested readers.\index{open question}

\begin{lemma}\label{lemma:disjoint_AD_disjoint}\index{derivative!disjointness}
  Let $x\in\tIIub_\pi$ be $\pi$-disjoint; i.e.\ $x\in\tIRub_\pi$. Then for any $y\in\tIIub_\pi$, if
  $\AD(y,x)$, then $y$ is also disjoint.
\end{lemma}
\begin{proof}
  Suppose $[q_1,q_2]\ll y$ for rationals $q_1,q_2:\tQQ$. For any reals $r_1<r_2$,
  $y\sqss\frac{x^@(r_2)-x^@(r_1)}{r_2-r_1}$. But $\frac{x^@(r_2)-x^@(r_1)}{r_2-r_1}$ is in
  $\tIRub_{\AtInline{[r_1,r_2]}}$ by \cref{rem:enlarging_closed}, hence is
  $\At{[r_1,r_2]}$-disjoint, which implies $\At{[r_1,r_2]}(q_1<q_2)$. So
  $\forall(r_1,r_2:\tRR)\ldotp r_1<r_2\imp\At{[r_1,r_2]}(q_1<q_2)$, which by
  \cref{ax:enough_points} implies $q_1<q_2$. Therefore $y$ is disjoint.
\end{proof}

\begin{corollary}\label{cor:derivatives_disjoint}
  Taking derivatives preserves disjointness: if $x\in\tIRub_\pi$, then $\dot{x}\in\tIRub_\pi$.
\end{corollary}

\begin{remark}
  Given a bounded improper interval $x:\tII_\pi$, it may not be the case that $\dot{x}$ is
  $\pi$-bounded. For example, for any $t:\Time$ the function $x=t^{\frac{1}{3}}$, defined by $\delta
  q\iff q^3<t$ and $\upsilon q\iff t<q^3$, has unbounded derivative at $0$.
\end{remark}

\begin{proposition}\label{prop:deriv_open_closed}
  For any variable real $x:\tRR_\pi$ and real numbers $r,r_1,r_2:\tRR$ with $r_1<r_2$, we have the
  following:
  \[
  \begin{aligned}
    \forall(t:\Time)\ldotp \In{[r_1,r_2]}(r\leq \dot{x}) &\imp r \leq \frac{x^@(r_2)-x^@(r_1)}{r_2-r_1}, \\
    \forall(t:\Time)\ldotp \In{[r_1,r_2]}(\dot{x}\leq r) &\imp \frac{x^@(r_2)-x^@(r_1)}{r_2-r_1} \leq r.
  \end{aligned}
  \]
\end{proposition}
\begin{proof}
  The two are similar, so we prove the first. Take any $r,r_1,r_2:\tRR$ with $r_1<r_2$ and
  $t:\Time$, and let $y\coloneqq\frac{x^@(r_2)-x^@(r_1)}{r_2-r_1}$. It is easy to prove that
  \[
    (y < r) \imp [(r\leq \dot{x}) \imp \bot]
  \]
  From this follows:
  \begin{gather*}
    (y < r ) \imp
      [\In{[r_1,r_2]}(r\leq \dot{x}) \imp \In{[r_1,r_2]}\bot] \\
    ( y < r ) \imp
      [\In{[r_1,r_2]}(r\leq \dot{x}) \imp \At{[r_1,r_2]}\bot] \\
    \In{[r_1,r_2]}(r\leq \dot{x}) \imp
      [ ( y < r ) \imp \At{[r_1,r_2]}\bot ] \\
    \In{[r_1,r_2]}(r\leq \dot{x}) \imp ( r \leq y )
  \end{gather*}
  where in the first line we used the functoriality of the modality $\In{[r_1,r_2]}$, in the second
  we used $\In{[r_1,r_2]}\bot = t\apart[r_2,r_1] = \At{[r_1,r_2]}\bot$, and in the last line we used
  \cref{prop:non-strict_inequality} for the modality $\At{[r_1,r_2]}$, together with the fact that
  $y$ is $\At{[r_1,r_2]}$-closed so $y<r$ is equivalent to $y <
  r^@([r_1,r_2])$.
\end{proof}

\subsection{Semantics of derivatives}
\label{sec:deriv_semantics}\index{derivative!semantics of}
\index{semantics!of derivative|see {derivative, semantics of}}

Recall from \cref{thm:omnibus_semantics} that $\church{\tIIub_{\pi}}(\ell)$ is isomorphic to the set
of pairs $(\ubar{f},\bar{f})$ of functions $\upclose[0,\ell]\to\RRub$, such that $\ubar{f}$ is lower
semi-continuous, $\bar{f}$ is upper semi-continuous, and for all $x\in\upclose(0,\ell)$,
$\ubar{f}(x)=\inf \ubar{f}(\specupclose x \cap\RR)$ and $\bar{f}(x)=\sup\bar{f}(\specupclose
x\cap\RR)$. Equivalently, $\church{\tIIub_{\pi}}(\ell)$ is isomorphic to the set of pairs
$(\ubar{f},\bar{f})$ of functions $(0,\ell)\to\RRub$, where $(0,\ell)=\{r\in\RR\mid 0<r<\ell\}$ is
the usual open interval, and where $\ubar{f}$ is lower semi-continuous and $\bar{f}$ is upper
semi-continuous.

\begin{definition}\label{def:lower_upper_derivatives}\index{derivative!lower/upper}
  Let $V\ss\RR$ be an open subset of the real line, and let $(\ubar{f},\bar{f})$ be a pair of
  functions $V\to\RRub$ such that $\ubar{f}$ is lower semi-continuous and $\bar{f}$ is upper
  semi-continuous. We define the \emph{lower derivative} $\ubar{f}'$ and the \emph{upper derivative}
  $\bar{f}'$ to be the functions
  \begin{align*}
    \ubar{f}'(x) &\coloneqq \sup_{x\in U\subseteq V} \inf_{d<u\in U} \frac{\ubar{f}(u)-\bar{f}(d)}{u-d}\\
    \bar{f}'(x) &\coloneqq \inf_{x\in U\subseteq V} \sup_{d<u\in U} \frac{\bar{f}(u)-\ubar{f}(d)}{u-d}.
  \end{align*}
\end{definition}

\begin{remark}\label{rem:external_disjointness}
  The $\sup_{x\in U\ss V}$ and $\inf_{x\in U\ss V}$ in \cref{def:lower_upper_derivatives} can be
  replaced by $\lim$, because for example the infimum $\inf_{d<u\in U}\cdots$ increases as $U$
  shrinks around $x$. Thus for example, it is easy to see that if $\ubar{f}\leq\bar{f}$ then
  $\ubar{f'}\leq\bar{f}$. This is the external version of \cref{cor:derivatives_disjoint}.
\end{remark}

The following proposition shows that the above notions of lower derivative and upper derivative are
reasonable.

\begin{proposition}\label{prop:lower_upper_derivatives_equal}
  Suppose $V\ss\RR$ is open, $x\in V$ is a point, $f\colon V\to\RR$ is continuous, let
  $\ubar{f}\coloneqq f$ and $\bar{f}\coloneqq f$, and let $\ubar{f}'$ and $\bar{f}'$ be the lower
  and upper derivatives. They agree at $x$, i.e.\ $\ubar{f}'(x)=\bar{f'}(x)$, iff $f$ is
  continuously differentiable at $x$, and in this case all three coincide
  $\ubar{f}'(x)=f'(x)=\bar{f'}(x)$.
\end{proposition}
\begin{proof}
  The condition $\ubar{f}'(x)=\bar{f}'(x)$ is easily shown equivalent to the statement that there
  exists $y\in\RR$ such that for all $\epsilon>0$ there exists $\delta>0$ such that for all $d<u$ in
  the ball $(x-\delta,x+\delta)$,
  \begin{equation}\label{eqn:deriv_1}
    \frac{f(u)-f(d)}{u-d}\in(y-\epsilon,y+\epsilon)
  \end{equation}
  This clearly implies that for all $\epsilon>0$ there exists $\delta>0$ such that for all
  $0<h<\delta$,
  \begin{equation}\label{eqn:deriv_2}
    \frac{f(x+h)-f(x-h)}{2h}\in(y-\epsilon,y+\epsilon)
  \end{equation}
  which is the symmetric definition of $f'(x)=y$.

  For the other direction, choose $\epsilon>0$; since $f'$ is continuous, there is some $\delta_1>0$
  such that $f'(x)-f'(x_1)\in(y-\epsilon/2,y+\epsilon/2)$ for all $x_1\in(x-\delta_1,x+\delta_1)$.
  Again using $\epsilon/2$, we obtain $\delta_2>0$ satisfying \cref{eqn:deriv_2} for all
  $0<h<\delta_2$. Let $\delta\coloneqq\min(\delta_1,\delta_2)$ and for any $d<u$ in
  $(x-\delta,x+\delta)$, let $x_0\coloneqq(u+d)/2$ be the midpoint and let $h_0\coloneqq(u-d)/2$ the
  radius, and we obtain \cref{eqn:deriv_1}.
\end{proof}

\begin{lemma}\label{lem:AD_semantics}\index{derivative!approximate}
  Let $f=(\ubar{f},\bar{f})$ and $g=(\ubar{g},\bar{g})$ be elements of
  $\church{\tIIub_{\pi}}(\ell)$. Then $\ell\Vdash\AD(f,g)$ if and only if for all
  $[d,u]\in\upclose[0,\ell]$ and all $r_1<r_2$ with $d\leq r_1<r_2\leq u$,
  \[
    \ubar{f}([d,u]) \leq \frac{\ubar{g}([r_2,r_2]) - \bar{g}([r_1,r_1])}{r_2-r_1}
    \qquad\text{and}\qquad
    \bar{f}([d,u]) \geq \frac{\bar{g}([r_2,r_2]) - \ubar{g}([r_1,r_1])}{r_2-r_1}.
  \]
\end{lemma}
\begin{proof}
  Choose $(d_t,u_t)\in\church{\Time}(\ell)$ and for $i=1,2$ let $(\ubar{g}_i,\bar{g}_i)\coloneqq
  g^@(r_i)$.%
  \footnote{
    Technically, the semantics of the term-in-context $x:\tIIub_\pi,y:\tRR\vdash
    x^@(y):\tIIub_{\AtInline{[r,r]}}$ is a sheaf homomorphism
    $\church{\tIIub_\pi\times\tRR}\to\church{\tIIub_{\AtInline{[r,r]}}}$, and we are denoting the
    image of $(g,r)$ under the $\ell$-component of that map.
  }
  Since it is $\At{[r,r]}$-local, \cref{thm:omnibus_semantics} (18) tells us that for any point
  $[d,u]\in\upclose[d_t,u_t]$, we have
  \[
    \ubar{g}_i([d,u])=
    \begin{cases}
      \ubar{g}_i([r_i,r_i])&\tn{ if }d\leq r_i\leq u\\
      -\infty&\tn{ otherwise}
    \end{cases}
    \quad\text{and}\quad
    \bar{g}_i([d,u])=
    \begin{cases}
      \bar{g}_i([r_i,r_i])&\tn{ if }d\leq r_i\leq u\\
      \infty&\tn{ otherwise}
    \end{cases}
  \]
  Letting $(\ubar{y},\bar{y})\coloneqq\frac{g^@(r_2)-g^@(r_1)}{r_2-r_1}$ and tracing through the
  Joyal-Kripke semantics for $\ell\Vdash\AD(f,g)$ (see \cref{def:derivative}), we find that it holds
  iff for all real numbers $r_1,r_2:\RR$ with $r_1<r_2$ and $q\in\QQ$, if $\ell\Vdash\ubar{f}q$ then
  $\ell\Vdash\ubar{y}q$ and if $\ell\Vdash\bar{f}q$ then $\ell\Vdash\bar{y}q$. This holds iff
  $\ubar{f}\leq\ubar{y}$ and $\bar{f}\geq\bar{y}$, as desired.
\end{proof}

\begin{proposition}\label{prop:AD_semantics}
  For any $f,g\in\church{\tIIub_{\pi}}(\ell)$, if $\AD(f,g)$ then for all $r$, if $0<r<\ell$ then
  $\ubar{f}([r,r])\leq\ubar{g}'(r)$ and $\bar{f}([r,r])\geq\bar{g}'(r)$.
\end{proposition}
\begin{proof}
  We show that $\ubar{f}([r,r])\leq\ubar{g}'(r)$; the second part is analogous. We want to show that
  for any $\epsilon>0$, there exists an open neighborhood $r\in U\subseteq(0,\ell)$ such that for
  all $d<u\in U$, $\ubar{f}([r,r])-\epsilon < \frac{\ubar{g}(u) - \bar{g}(d)}{u-d}$. Using that
  $\ubar{f}([r,r]) = \sup_{p\in\downclose[r,r]} \ubar{f}(p)$, there exists a point $p\ll[r,r]$
  such that $\ubar{f}([r,r])-\epsilon < \ubar{f}(p)$. By the assumption that
  $\AD(f,g)$ and \cref{lem:AD_semantics}, letting $U=\upclose p$, we have that for any $d<u\in U$,
  $\ubar{f}([r,r])-\epsilon < \ubar{f}(p) \leq \frac{\ubar{g}([u,u])-\bar{g}([d,d])}{u-d}$, as
  desired.
\end{proof}

The next lemma is a sort of mean-value theorem for upper/lower derivatives.

\begin{lemma}\label{lem:mean_value_thm}\index{Mean value theorem}
  Let $f\in\church{\tIRub_{\pi}}(\ell)$, i.e.\ $(\ubar{f},\bar{f})\in\church{\tIIub_{\pi}}(\ell)$
  with $\ubar{f}\leq\bar{f}$. Then for any $a,b$ with $0<a<b<\ell$, we have
  \[
    \inf_{r\in[a,b]}\ubar{f}'(r)\leq \frac{\ubar{f}(b)-\bar{f}(a)}{b-a}
    \qquad\text{and}\qquad
    \sup_{r\in[a,b]}\bar{f}'(r) \geq \frac{\bar{f}(b)-\ubar{f}(a)}{b-a}.
  \]
\end{lemma}
\begin{proof}
  Let $s=\inf_{r\in[a,b]}\ubar{f}'(r)$. Then for any $r\in[a,b]$, $s\leq\ubar{f}'(r)$. Fix an
  $\epsilon>0$. Then for any $r\in[a,b]$, there exists a neighborhood $r\in U\subseteq(0,\ell)$ such
  that for all $d<u\in U$, $s-\epsilon < \frac{\ubar{f}(u)-\bar{f}(d)}{u-d}$. These opens
  $\{U_j\}$ thus cover $[a,b]$, so by compactness there must exist a sequence
  $a=r_0<r_1<\dots<r_{n+1}=b$ such that for all $i$, $[r_i,r_{i+1}]\subseteq U_j$ for some $j$, and
  in particular $s-\epsilon < \frac{\ubar{f}(r_{i+1})-\bar{f}(r_i)}{r_{i+1}-r_{i}}$. Then we have
  that
  \[
    \frac{\ubar{f}(b)-\bar{f}(a)}{b-a}
    \geq \sum_{i=0}^n \frac{r_{i+1}-r_i}{b-a} \frac{\ubar{f}(r_{i+1})-\bar{f}(r_i)}{r_{i+1}-r_i}
      > \sum_{i=0}^n \frac{r_{i+1}-r_i}{b-a}(s-\epsilon) = s-\epsilon,
  \]
  where the first inequality makes use of $\ubar{f}(r_i)\leq \bar{f}(r_i)$ for $1\leq i\leq n$. As
  $\epsilon$ was arbitrary, we have $s\leq\frac{\ubar{f}(b)-\bar{f}(a)}{b-a}$.

  The proof of the second part is similar.
\end{proof}

\begin{proposition}\label{prop:AD_semantics_iff}\index{derivative!semantics of}
  For any $f,g\in\church{\tIRub_{\pi}}(\ell)$, $\AD(f,g)$ if and only if $\ubar{f}([r,r])\leq
  \ubar{g}'(r)$ and $\bar{f}([r,r])\geq\bar{g}'(r)$ for all $r\in(0,\ell)$.
\end{proposition}
\begin{proof}
  The forwards direction is \cref{prop:AD_semantics_iff}. For the converse, assume that for all
  $r\in(0,\ell)$, $\ubar{f}([r,r])\leq\ubar{g}'(r)$ and $\bar{f}([r,r])\geq\bar{g}'(r)$. By
  \cref{lem:AD_semantics}, it suffices to show that for any $0<d\leq r_1<r_2\leq u<\ell$,
  $\ubar{f}([d,u])\leq\frac{\ubar{g}([r_2,r_2])-\bar{g}([r_1,r_1])}{r_2-r_1}$ and
  $\bar{f}([d,u])\geq\frac{\bar{g}([r_2,r_2])-\ubar{g}([r_1,r_1])}{r_2-r_1}$.

  For the first, we have
  \[
    \ubar{f}([d,u]) = \inf_{r\in[d,u]}\ubar{f}([r,r]) \leq \inf_{r\in[d,u]} \ubar{g}'(r) \leq
    \frac{\ubar{f}(u)-\bar{f}(d)}{u-d},
  \]
  where we have used that $f$ is $\pi$-closed for the first equality, the first inequality is by
  assumption, and the second is \cref{lem:mean_value_thm}.
\end{proof}

In the following corollary, we use \cref{cor:pi_real_semantics} to identify
$\church{\tRR_\pi}(\ell)$ with the set of continuous functions $(0,\ell)\to\RR$.\index{real
number!variable}

\begin{corollary}\label{cor:usual_semantics}\index{differentiable}\index{derivative!semantics of}
  Suppose that $f\in\church{\tRR_\pi}(\ell)$ is internally continuously differentiable in the sense
  that its internal derivative (\cref{def:internal_derivative}) is also continuous
  $\dot{f}:\church{\tRR_\pi}(\ell)$. Then $\dot{f}$ is the externally the derivative of $f$.
\end{corollary}
\begin{proof}
  Let $g=\dot{f}$ as in \cref{def:internal_derivative}. Since $f,g\in\church{\tRR_\pi}(\ell)$, we
  have $\ubar{f}=\bar{f}$ and $\ubar{g}=\bar{g}$. By
  \cref{rem:external_disjointness,prop:AD_semantics_iff}, we have $\ubar{g}'=f=\bar{g}'$. The result
  then follows from \cref{prop:lower_upper_derivatives_equal}.
\end{proof}

\subsection{Differentiability}\label{sec:differentiablility}

We know by \cref{cor:usual_semantics} that the semantics of our notion of derivative matches the
usual definition. In this section we define what it means for a variable real to be differentiable
and we give internal proofs of certain well-known formulas: that time has unit derivative,
$\dot{t}=\frac{dt}{dt}=1$, that the Leibniz rule---often called the product rule---holds, and that
differentiation is linear.

\begin{definition}[Differentiable]\index{differentiable}
  Suppose that $x:\tRR_\pi$ is a variable real, so its derivative $\dot{x}$ is a $\pi$-local
  extended interval; see \cref{def:derivative}. We say $x$ is \emph{differentiable} if it is in fact
  a variable real $\dot{x}\in\tRR_\pi$, i.e.\ if $\dot{x}$ is $\pi$-bounded and $\pi$-located.
\end{definition}

\begin{proposition}
\label{prop:time_deriv_1}\index{derivative!of $t:\Time$}
  Any $t:\Time$ is differentiable, and its derivative is $\dot{t}=1$.
\end{proposition}
\begin{proof}
  By \cref{ax:time_complementary_cuts}, \cref{cor:derivatives_disjoint}, and\cref{lemma:located_spec_disjoint}, if
  $1\sqss\dot{t}$ then $1=\dot{t}$. Thus, by \cref{lem:sqss_slope_At_closed}, it suffices to show
  \[
    q<1\imp\See{[r_1,r_2]}\left(q<\frac{t^@(r_2)-t^@(r_1)}{r_2-r_1}\right)
  \]
  the upper cut being similar. Assuming $q<1$, it suffices to show
  \[
    \See{[r_1,r_2]}\exists(q_1,q_2:\tQQ)\ldotp
      (q*(r_2-r_1)<q_2-q_1) \wedge (q_2<t^@(r_2)) \wedge (t^@(r_1)<q_1).
  \]
  It is easy to check that $q*(r_2-r_1)<r_2-r_1$, e.g.\ using \cref{prop:arithmetic_located_j}. Thus
  by definition we have $\exists(q_1,q_2)\ldotp (q*(r_2-r_1)<q_2-q_1)\wedge(q_2<r_2)\wedge(r_1<q_1)$, and the
  result follows by \cref{ex:t@r}.
\end{proof}

\begin{proposition}\label{prop:diff_shortform}
  Suppose $x:\tRR_\pi$ is differentiable. Then for any $y:\tRR_\pi$, we have $y=\dot{x}$ iff
  $\AD(y,x)$, i.e.\
  \[
    \forall(r_1,r_2:\tRR)\ldotp r_1<r_2\imp y\sqss\frac{x^@(r_2)-x^@(r_1)}{r_2-r_1}
  \]
\end{proposition}
\begin{proof}
  By \cref{prop:AD_iff_sqss_dot}, we have $\AD(y,x)$ iff $y\sqss\dot{x}$. Because $y$ is
  $\pi$-located and $\dot{x}$ is $\pi$-disjoint, we conclude by \cref{lemma:located_spec_disjoint}.
\end{proof}

In the following theorem, all arithmetic operations take place in the $\pi$ modality, i.e.\ $x+y$ and $x*y$ mean $x+_\pi y$ and $x*_\pi y$, in the sense of \cref{sec:arithmetic}.

\begin{theorem}\label{th:leibniz}\index{Leibniz rule}\index{derivative!as satisfying Leibniz rule}
\index{product rule|see {Leibniz rule}}\index{derivative!as linear operation}
  Let $x,y:\tRR_\pi$. If $x$ and $y$ are differentiable then so is $x * y$, and the Leibniz rule
  holds:
  \[
    \ddt(x * y) = x * \dot{y} + \dot{x} * y.
  \]
  Moreover, the derivative is $\RR$-linear, $\ddt(r * x + y) = r * \dot{x} + \dot{y}$
  for any $r:\tRR$.
\end{theorem}
\begin{proof}
	As mentioned above the theorem, the arithmetic operations are taking place in the $\pi$-modality.
	So assume $x$ and $y$ are differentiable; we really need to prove that $x*_\pi y$ is differentiable, that $\ddt(x *_\pi y) = x *_\pi \dot{y} +_\pi \dot{x} *_\pi y$, and that $\ddt(r *_\pi x +_\pi y) = r *_\pi \dot{x} +_\pi \dot{y}$. We continue to leave the $\pi$-subscript off of arithmetic operations, for readability purposes.
	
  Since $\dot{x}$ and $\dot{y}$ are $\pi$-bounded and $\pi$-located, so are $x* \dot{y}+
  \dot{x}* y$ and $r* \dot{x}+ \dot{y}$. We begin by proving that the Leibniz rule holds.
  Since $\ddt(x* y)$ is $\pi$-disjoint by \cref{cor:derivatives_disjoint}, it suffices to show
  that $(x*\dot{y})+ (\dot{x}* y)\sqss \ddt(x* y)$, by
  \cref{lemma:located_spec_disjoint}.

  Because $\ctII_\pi\subseteq\tII_\pi$ is a basis (see \cref{rem:basis_for_our_domains}), it
  suffices to show that for any $c:\ctII$, if $c\ll (x* \dot{y}) + (\dot{x}* y)$ then
  $c\sqss \ddt(x* y)$. Let $c:\ctII$, and suppose $c\ll (x*\dot{y}) + (\dot{x}* y)$.
  Then
  \begin{equation}\label{eq:leibniz}
    \pi\exists(c_1,c_2,c_3,c_4:\ctII)\ldotp (c_1\sqss x)\wedge(c_2\sqss\dot{y})
      \wedge(c_3\sqss\dot{x})\wedge(c_4\sqss y)\wedge(c\ll c_1*c_2+c_3*c_4).
  \end{equation}

  Ignoring the preceding $\pi$ for now, suppose we have $c_1,c_2,c_3,c_4:\ctII$ such that $c_1\sqss x$,
  $c_2\sqss \dot{y}$, $c_3\sqss\dot{x}$, $c_4\sqss y$, and $c\ll c_1*c_2+c_3*c_4$. We want to show
  that $c\sqss\ddt(x* y)$, or equivalently $\AD(c,x* y)$. So letting $r_1<r_2$ be arbitrary
  reals, we want to show that $c\sqss\frac{x^@(r_2)*y^@(r_2)-x^@(r_1)*y^@(r_1)}{r_2-r_1}$, where we
  have used that the maps $\At{r_1}:\tII_\pi\to\tII_{\AtInline{r_1}}$ and $\At{r_2}:\tII_\pi\to\tII_{\AtInline{r_2}}$
  preserve multiplication; see \cref{prop:j_on_intervals}.

  From $c_2\sqss\dot{y}$, we have $c_2\sqss\frac{y^@(r_2)-y^@(r_1)}{r_2-r_1}$ hence $c_1*c_2\sqss
  c_1*\frac{y^@(r_2)-y^@(r_1)}{r_2-r_1}\sqss x^@(r_1)\frac{y^@(r_2)-y^@(r_1)}{r_2-r_1}$. Similarly,
  $c_3*c_4\sqss\frac{x^@(r_2)-x^@(r_1)}{r_2-r_1}y^@(r_2)$. Adding these together and simplifying
  yields $c\ll c_1*c_2+c_3*c_4\sqss \frac{x^@(r_2)*y^@(r_2)-x^@(r_1)*y^@(r_1)}{r_2-r_1}$, as
  desired.

  Thus we have shown, assuming the existence of $c_1,\dots,c_4$, that $\AD(c,x* y)$ holds. Hence
  \eqref{eq:leibniz} implies $\pi\AD(c,x* y)$, which by \cref{prop:AD_Pi-closed} implies
  $\AD(c,x* y)$. We therefore have shown that $c\ll x* \dot{y} + \dot{x}* y$ implies
  $\AD(c,x* y)$, and hence $c\sqss\ddt(x* y)$, as desired.

  Showing linearity is similar: $\ddt(r*x+y)$ is again $\pi$-disjoint, so it suffices to show that
  $r*\dot{x}+\dot{y}\sqss\ddt(r*x+y)$. This follows as above, using the fact that localizing is
  linear, i.e.\ $(r*x)^@(r')=r*x^@(r')$; see \cref{prop:j_on_intervals}.
\end{proof}

\chapter{Applications}\label{sec:applications}\index{applications|(}

As mentioned in the introduction, we believe that our temporal type theory can serve as a ``big tent'' into which to embed many disparate formalisms for proving property of behaviors. In this chapter we discuss a few of these, including hybrid
dynamical systems in \cref{sec:hybrid_sheaves}\index{dynamical system!hybrid}, delays in \cref{sec:delay}\index{delay}, differential equations in \cref{sec:ODEs}\index{differential equation}, and labeled transition systems in \cref{sec:LTS}\index{labeled
transition system}. This last occurs in \cref{sec:systems_and_machines} where we briefly discuss a general framework on machines, systems, and behavior contracts. Next in \cref{sec:case_study} we give a toy example---an extreme simplification of the National Airspace System---and prove a
safety property. The idea is to show that we really can mix continuous, discrete, and delay properties without ever leaving the temporal type theory. We conclude this chapter by discussing how some temporal logics, e.g.\ metric temporal logic, embeds into the temporal type theory.

\section{Hybrid sheaves}\label{sec:hybrid_sheaves}

The NAS is a hybrid system: it includes both continuous aspects, like the differential equations
governing airplane motion, and discrete aspects, like the signals and messages that alert pilots to
possible danger.\index{National Airspace System} We begin by explaining how one might integrate
continuous and discrete behavior into a single sheaf, which we call a ``hybrid
sheaf''\index{sheaf!hybrid} or ``hybrid type''\index{type!hybrid}, depending if we are working
externally or internally.\index{hybrid|seealso {sheaf, hybrid}}\index{hybrid|seealso {type, hybrid}}

Our method of specifying (and constructing) a hybrid type takes an arbitrary behavior type---a sheaf
$C$---which we think of as a type of
\emph{continuous} behaviors, and produces a new behavior type which extends $C$ with certain allowed
discontinuities or discrete transitions, coming from a type $D$.

\subsection{Constructing hybrid sheaves}

Recall the pointwise modality $\pi:\Prop\to\Prop$ from \cref{not:our_modalities} and the
$\pi$-sheafification functor $\asSh_\pi$ from \cref{def:separated_sheaf}. Recall also the notion of
sum types and quotient types from
\cref{sec:toposes_types_logic}.\index{type!quotient}\index{sheafification}\index{type!sum}

\begin{definition}[Hybrid type]\label{def:hybrid_sheaf}\index{type!hybrid}\index{hybrid datum}
  A \emph{hybrid datum} is a tuple $(C,D,\fun{src},\fun{tgt},\tau)$, where $C$ and $D$ are arbitrary
  types, and where $\fun{src},\fun{tgt}:D\to C$ and $\tau:D\to\Time$ are functions. Consider the
  quotient type $\hybrid_1(C,D)\coloneqq (C+D)/\sim$, where for $d:D$ we put
  \begin{equation}\label{eq:hybrid_sheaf}
    d\sim\fun{src}(d) \text{ if $\tau(d)<0$}, \qquad\text{and}\qquad
    d\sim\fun{tgt}(d) \text{ if $\tau(d)>0$}.
  \end{equation}
  The \emph{hybrid behavior type} on the hybrid datum $(C,D,\fun{src},\fun{tgt},\tau)$ is defined to
  be the $\pi$-sheafification of this quotient, which we denote $\hybrid(C,D)\coloneqq\asSh_\pi\hybrid_1(C,D)$.
  There are natural maps $C\to\hybrid(C,D)$ and $D\to\hybrid(C,D)$; we refer to the image of $C$ as
  the \emph{continuous part}.\index{type!hybrid!continuous part of}
\end{definition}

Intuitively, a section $d$ of the sheaf $D$ represents a behavior which has
a discontinuity at the point $\fun{\tau}(d)=0$. We want to adjoin it to $C$, by connecting the continuous behavior $\fun{src}(d)$ on
the left of the discontinuity to the continuous behavior $\fun{tgt}(d)$ on the right. When we
sheafify, we allow continuous behaviors that have been interrupted by finitely many of these
discontinuity events.

The following is straightforward.

\begin{lemma}\label{lemma:equiv_on_hybrid}
  In the notation of \cref{def:hybrid_sheaf}, the equivalence relation on $C+D$ generated by
  \eqref{eq:hybrid_sheaf} is as follows for $c, c':C$ and $d,d':D$,
  \begin{align*}
    c\sim c'\iff\ &c=c'\\
    c\sim d\iff\ &(\tau(d)<0\wedge c=\fun{src}(d))\vee(\tau(d)>0\wedge c=\fun{tgt}(d))\\
    d\sim c\iff\ &(\tau(d)<0\wedge c=\fun{src}(d))\vee(\tau(d)>0\wedge c=\fun{tgt}(d))\\
    d\sim d'\iff\ &\big(
      d=d' \\
      &\ \vee(\fun{src}(d)=\fun{src}(d')\wedge (\tau(d)<0)\wedge(\tau(d')<0))\\
      &\ \vee(\fun{tgt}(d)=\fun{tgt}(d')\wedge (\tau(d)>0)\wedge(\tau(d')>0))\\
      &\ \vee(\fun{tgt}(d)=\fun{src}(d')\wedge \tau(d)<0<\tau(d'))\\
      &\ \vee(\fun{src}(d)=\fun{tgt}(d')\wedge \tau(d')<0<\tau(d))
    \big)
  \end{align*}
\end{lemma}

\begin{proposition}\label{prop:hybrid1_pi_sep}
Let $(C,D,\fun{src},\fun{tgt},\tau)$ be a hybrid sheaf datum, and suppose that $C$ and $D$ have decidable equality. Then the type $\hybrid_1(C,D)$ from \cref{def:hybrid_sheaf} is $\pi$-separated.
\end{proposition}
\begin{proof}
Let $x,x':C+D$; we need to show that $\pi(x\sim x')$ implies $x\sim x'$. We will work by cases on the components of $x$ and $x'$, using \cref{lemma:equiv_on_hybrid}. If $x,x'\in C$ then $\pi(x=x')$, so $x=x'$ by \cref{cor:dec_eq_pi_sep}. For the remaining cases we will apply \cref{lemma:pi_t<0_0<t}. For example, it directly implies that if $x\in C$ and $x'\in D$ (or vice-versa), then $x\sim x'$ is $\pi$-closed. Finally, in the case $x,x'\in D$, one uses \cref{prop:dec_eq_disjunction} and then repeats the above argument twice more:
\small
\begin{align*}
	&\pi(x\sim x')\\
	\imp&\pi[(x=x')\vee(P\wedge t<0\wedge t'<0)\vee(Q\wedge 0<t\wedge 0<t'\})\vee(R\wedge t<0<t')\vee(S\wedge t'<0<t)]\\
	\imp&(x=x')\vee\pi[(t<0\wedge\pi((P\wedge t'<0)\vee(R\wedge 0<t')))\vee(0<t\wedge\pi((Q\wedge0<t')\vee(S\wedge t'<0)))]\\
	\imp&(x=x')\vee(t<0\wedge\pi((P\wedge t'<0)\vee(R\wedge 0<t')))\vee(0<t\wedge\pi((Q\wedge0<t')\vee(S\wedge t'<0)))\\
	\imp&(x=x')\vee(t<0\wedge(P\wedge t'<0)\vee(R\wedge 0<t'))\vee(0<t\wedge(Q\wedge0<t')\vee(S\wedge t'<0))\\
	\iff&(x\sim x'),
\end{align*}\normalsize
where $t\coloneqq\tau(x)$, $t'\coloneqq\tau(x')$, and where $P\coloneqq(\fun{src}(x)=\fun{src}(x'))$, $Q\coloneqq(\fun{tgt}(x)=\fun{tgt}(x'))$, $R\coloneqq(\fun{tgt}(x)=\fun{src}(x'))$, and $S\coloneqq(\fun{src}(x)=\fun{tgt}(x'))$, all of which are $\pi$-closed.
\end{proof}

\begin{remark}\label{rem:info_in_hybrid}
  The only information in a section $d$ of $D$ which is relevant to this construction is the
  restriction of $d$ to arbitrarily small neighborhoods of the point $\tau(d)=0$. Externally, we can
  make this precise as follows: consider the stalk functor\index{stalk}
  $\Fun{stk_0}\colon\BaseTopos\to\Cat{Set}$ sending a sheaf to the set of zero-length germs. It can
  be obtained by composing the geometric morphism
  $\Fun{stk_{@0}}\colon\Cat{Set}\to\Shv{S_{\IR}}\iso\BaseTopos/\Time$, corresponding to the point
  $[0,0]\in\IR$, with the geometric morphism $\BaseTopos/\Time\to\BaseTopos$; see also
  \cref{prop:quotient_geom_morphism}. Given a sheaf over $\Time$, $\tau\colon D\to\Time$, the
  following square is a pullback
  \[
    \begin{tikzcd}
      \Fun{stk_{@0}}(D) \ar[r] \ar[d] \ar[dr,phantom,"\lrcorner" very near start] & 1 \ar[d,"0"] \\
      \Fun{stk_0}(D) \ar[r,"\Fun{stk_0}(\tau)"'] & \Fun{stk_0}(\Time)
    \end{tikzcd}
  \]
  Then any hybrid datum $(C,D,\fun{src},\fun{tgt},\tau)$ determines a pair of functions
  $\Fun{stk_{@0}}(D)\rightrightarrows\Fun{stk_0}(C)$. It also determines a function
  $\Fun{stk_{@0}}(D)\to\Fun{stk_0}D\to\Fun{stk_0}\hybrid(C,D)$, and the image of this function might
  be called the set of \emph{discontinuities} in the hybrid type.\index{sheaf!hybrid!discontinuities
  of}

  The hybrid behavior type construction really only depends on the sheaf $C$, the set
  $S=\Fun{stk_{@0}}(D)$, and the source and target functions $S\rightrightarrows\Fun{stk_0}(C)$.
  That is, if $(C,D',\fun{src'},\fun{tgt'},\tau')$ is a different hybrid datum on the same
  continuous part $C$, then to every bijection $\Fun{stk_{@0}}(D)\iso\Fun{stk_{@0}}(D')$ commuting
  with the source and target functions, there is a canonical isomorphism
  $\hybrid(C,D)\iso\hybrid(C,D')$. Conversely, given any set $S$ and pair of functions
  $S\to\Fun{stk_0}(C)$, there is a hybrid datum $(C,D,\fun{src},\fun{tgt},\tau)$ realizing it.

  In particular, if $f\colon D\to D'$ is a morphism commuting with $\fun{src}$, $\fun{src'}$,
  $\fun{tgt}$, $\fun{tgt'}$, $\tau$, and $\tau'$, then $f$ induces an isomorphism
  $\hybrid(C,D)\iso\hybrid(C,D')$ if and only if $\Fun{stk_{@0}}(f)$ is a bijection.

  The case when $f$ is a monomorphism is relatively easy to capture internally. Let $\phi\colon
  D\to\Prop$ be the predicate characterizing $f\colon D'\inj D$. Then $f$ induces a bijection on
  stalks if and only if the proposition
  \[
    \forall(d:D)\ldotp\At[\tau(d)]{0}\phi(d)
  \]
  holds internally. This simply says that for all sections $d\in D(\ell)$, there is a neighborhood
  of $\tau(d)=0$ on which the restriction of $d$ is in $D'$, i.e.\ the stalk of $d$ at $\tau(d)=0$
  is in the image of $\Fun{stk_0}(f)$. Then it is easy to see that the induced map
  $(C+D')/\sim\;\to(C+D)/\sim$ will again be mono, characterized by the predicate
  $\phi'\colon(C+D)/\sim\;\to\Prop$ which is $\top$ on $C$ and $\See[\tau(d)]{0}\phi(d)$ on $d\in
  D$. This predicate $\phi'$ is $\pi$-\emph{dense}, meaning $\pi\circ\phi'=\top$, since
  $\pi\top\iff\top$ and, by \cref{prop:points_in_subtoposes}, $\pi(\See[\tau(d)]{0}\phi(d)) \iff
  \At[\tau(d)]{0}\phi(d) \iff \top$ for all $d\in D$. Therefore the mono
  $(C+D')/\sim\;\inj(C+D)/\sim$ characterized by $\phi'$ induces an isomorphism between the
  sheafifications $\hybrid(C,D')\iso\hybrid(C,D)$.
\end{remark}

Later, we will need to know that a section of a hybrid sheaf\index{sheaf!hybrid} is almost always in
the continuous part, $C$. Let $\const{is\_cts}:\hybrid(C,D)\to\Prop$ classify the image of the map
$C\to\hybrid(C,D)$.

\begin{proposition}\label{prop:lts_aa_vertex}\index{modality!double negation}
  Let $\hybrid(C,D)$ be the hybrid type on the data $(C,D,\fun{src},\fun{tgt},\tau)$. A section of
  $\hybrid(C,D)$ is almost-always in the continuous part, i.e.
  \[
    \forall(h:\hybrid(C,D))\ldotp\neg\neg\const{is\_cts }(h).
  \]
\end{proposition}
\begin{proof}
  Choose $h:\hybrid(C,D)$; we first spell out its definition and that of $\const{is\_cts}(h)$. Let
  $S=C+D/\sim$. By \cref{def:separated_sheaf}, $h:S\to\Prop$ is a predicate satisfying
  \[
    \forall(s:S)\ldotp \pi h(s)\imp h(s)
    \qquad\text{and}\qquad
    \pi\exists(s_0:S)\ldotp\forall(s:S)\ldotp h(s)\iff \pi(s_0=s).
  \]
  Unwinding definitions and using \cref{eqn:sheafification_inclusion}, we see that
  $\const{is\_cts}(h)$ is equivalent to the proposition
  \[\exists(c:C)\ldotp\forall(s:S)\ldotp h(s)\iff\pi(s=c).\]

  Assume $\neg\const{is\_cts}(h)$; we will show $\pi\bot$, which is enough by \cref{cor:pi_negneg}.
  By \cref{rem:j-logic} we may choose $s_0:S$ such that $\forall(s:S)\ldotp h(s)\iff \pi(s_0=s)$. If
  $s_0\sim c$ for some $c:C$, we obtain the desired contradiction, so we may assume $s_0:D$. By
  \cref{lemma:equiv_on_hybrid}, we obtain a contradiction if $\tau(d)<0$ or $\tau(d)>0$, so we have
  $\neg(\tau(d)<0)\wedge\neg(\tau(d)>0)$. But this is a contradiction by
  \cref{ax:time_complementary_cuts}.
\end{proof}

\begin{definition}\label{def:morphism_of_hybrid_data}
Let $(C,D,\fun{src},\fun{tgt},\tau)$ and $(C',D',\fun{src}',\fun{tgt}',\tau')$ be hybrid data as in \cref{def:hybrid_sheaf}. A \emph{morphism} between them consists of a function $f\colon C\to C'$ and a function $g\colon D\to D'$, commuting with the rest of the structure, i.e.\ $\fun{src}'\circ g=f\circ \fun{src}$ and $\fun{tgt}'\circ g=f\circ \fun{tgt}$ and $\tau'\circ g=\tau$. This defines a category of hybrid data.
\end{definition}

\begin{proposition}\label{prop:Hyb_functorial}
A morphism of hybrid data induces a morphism of behavior types $\hybrid(C,D)\to\hybrid(C',D')$. In fact, $\hybrid$ is a functor.
\end{proposition}
\begin{proof}
Suppose given $f\colon C\to C'$ and $g\colon D\to D'$, such that $\fun{src}\circ g=f\circ \fun{src}'$ and $\fun{tgt}\circ g=f\circ \fun{tgt}'$ and $\tau'\circ g=\tau$. There is an induced map $C+D\to C'+D'$, and this map respects the equivalence relation in \cref{eq:hybrid_sheaf} because if $\tau(d)<0$ then $\tau'(g(d))<0$, in which case $d\sim\fun{src}(d)$ implies $g(d)\sim\fun{src}'(g(d))$. Thus we obtain a map between their quotients $S\to S'$, and hence between their $\pi$-sheafifications $\asSh_\pi S\to\asSh_\pi S'$. All of the induced maps along the way have been functorial.
\end{proof}

\subsection{Special case: walks through a graph}\label{sec:walks}\index{graph!walks through}

Suppose given a graph $G$ (in $\Cat{Set}$)
\begin{equation}\label{eqn:graph}
\begin{tikzcd}
  E\ar[r,shift left, "d"]\ar[r, shift right, "c"']&V
\end{tikzcd}
\end{equation}
where $d$ stands for ``domain'' and $c$ stands for ``codomain''. Define a \emph{walk of length
$\ell$ through $G$} to be a finite path in $G$, together with a positive duration of time assigned
to each vertex in the path, such that the total of these durations is $\ell$. In other words, we
imagine that each edge is traversed instantaneously.

Considering $E$ and $V$ as the corresponding constant types (see \cref{sec:const_objects_basetopos}), we can build the sheaf of walks $W_G$
in $G$ as a hybrid type using the following as a hybrid sheaf datum (in the sense of \cref{def:hybrid_sheaf}):
\begin{equation}\label{eqn:HSD_from_graph}
\begin{tikzcd}[column sep=large]
  \Time
  & E\times\Time \ar[l,"p_\Time"'] \ar[r,shift left, "d\;\circ\; p_E"]
    \ar[r, shift right, "c\;\circ\; p_E"']
  & V.
\end{tikzcd}
\end{equation}
Here $\Time\From{p_\Time}E\times\Time\To{p_E}E$ are the projections. Unravelling \cref{def:hybrid_sheaf} in this case, we find
that the hybrid behavior type $\hybrid(V,E\times\Time)$ is the $\pi$-sheafification of
the quotient $W'_G\coloneqq\hybrid_1(V,E\times\Time)=(V+(E\times\Time))/\sim$, where
\begin{equation}\label{eqn:graph_walk}
  (e,t) \sim d(e)\; \text{ if }t<0
  \qquad\text{and}\qquad
  (e,t) \sim c(e)\; \text{ if }t>0
\end{equation}
Semantically, the sections of $W'_G$ are walks with at most one transition.

\begin{corollary}\label{cor:W_G'_pi_sep}
The type $W_G'$ is $\pi$-separated.
\end{corollary}
\begin{proof}
This follows from \cref{prop:hybrid1_pi_sep} because constant types and $\Time$---and hence $V$ and $E\times\Time$---have decidable equality; see \cref{prop:time_decidable_eq,prop:const_decidable_eq}.
\end{proof}

We can now further unravel the definition of hybrid sheaf in this case, using \cref{def:separated_sheaf,cor:W_G'_pi_sep}. The $\pi$-sheafification $W_G\coloneqq\asSh_\pi(W'_G)=\hybrid(V,E\times\Time)$ is the type of functions $\phi:(V+E\times\Time)\to\Prop$ satisfying
\begin{enumerate}
	\item $\forall(t:\Time)(e:E)\ldotp t<0\imp(\phi(e,t)\iff\phi(d(e)))$,
	\item $\forall(t:\Time)(e:E)\ldotp 0<t\imp(\phi(e,t)\iff\phi(c(e)))$,
	\item $\forall(g:V+E\times\Time)\ldotp\pi(\phi g)\imp\phi g$, and
	\item $\pi\exists(g:V+E\times\Time)\ldotp\phi=\{g\}$
\end{enumerate}
where, as usual, $\{g\}:(V+E\times\Time)\to\Prop$ is the predicate sending $g'$ to $(g=g'):\Prop$. Semantically, $W_G$ is the sheaf of walks through $G$ with finitely-many transitions over any (finite-length) interval $\ell$.

\begin{example}
It is often useful to know which edge transitions occur in a walk through a graph $G=(E\rightrightarrows V)$.%
\footnote{The construction of $\const{trav}$ described here extends to hybrid sheaves in general; it is only a bit more complicated in the general case because we cannot assume $\hybrid_1(C,D)$ is $\pi$-separated.}
Given a section $\phi:W_G$ and a time $t:\Time$, consider the predicate
\[\const{trav}_\phi:(\tRR\times E)\to\Prop,\qquad \const{trav}_\phi(r,e)\coloneqq\At[t]{[r,r]}\big(\phi=\{(e,t-r)\}\big).\]
Semantically, this proposition is true on an interval $[d,u]$ if either $r$ is outside the interval, $(r<d)\vee(u<r)$, or if at time $r$ the walk $\phi$ is traversing edge $e$.
\end{example}

\begin{example}\index{behavior!pulsing}
Consider the terminal graph $T\coloneqq\fbox{\begin{tikzcd}\bullet\ar[loop above, looseness=6, start anchor={[xshift=-1pt, yshift=-2pt]}, end anchor={[xshift=1pt, yshift=-2pt]}]\end{tikzcd}}$. The corresponding hybrid datum $\Time=\Time\rightrightarrows 1$ is also the terminal object in the category of hybrid data (see \cref{def:morphism_of_hybrid_data}).

Walks through the terminal graph $W_T$ gives a model for simple pulsing. The only information in a section of $W_T$ is the finite set of instants at which pulses occur, i.e.\ at which the unique edge is traversed. This could be used to model musical or neuron-spiking behavior.
\end{example}

\begin{proposition}\label{prop:RW_functorial}
The above construction, taking a graph $G$ and returning a behavior type $W_G$, is functorial.
\end{proposition}
\begin{proof}
We need to show that given a morphism of graphs, one obtains a morphism of behavior types, respecting identities and composition. We know by \cref{prop:Hyb_functorial} that $\hybrid$ is functorial, so it suffices to see that our construction of the hybrid sheaf datum in \cref{eqn:HSD_from_graph} was functorial. But this is obvious: given $f\colon E\to E'$ and $g\colon V\to V'$, it is straightforward to check that the following diagram commutes:
\[
\begin{tikzcd}[sep=large]
 \Time\ar[d,equal]
  & E\times\Time \ar[l,"p_\Time"'] \ar[r,shift left, "d\;\circ\; p_E"]\ar[d, "f\times\Time"']
    \ar[r, shift right, "c\;\circ\; p_E"']
  & V\ar[d, "g"]\\
  \Time
  & E'\times\Time \ar[l,"p_\Time"] \ar[r,shift left, "d\;\circ\; p_E"]
    \ar[r, shift right, "c\;\circ\; p_E"']
  & V'
\end{tikzcd}\qedhere
\]
\end{proof}

\subsubsection{Timed random walks}\label{sec:timed_random_walks}\index{graph!timed walks through}

Fix $0\leq a\leq b$; suppose we want random walks through $G$ whose time between transitions is
required to be between $a$ and $b$. We can also construct such a sheaf internally using the hybrid
type construction. Let $G$ be as in \cref{eqn:graph}, and use the following hybrid datum:
\[
\begin{tikzcd}
  E\times\{(r,t):\tRR\times\Time\mid(a<r<b)\wedge(-r<t<b-r)\}
  \ar[r, shift left,"\fun{src}"]	\ar[r, shift right,"\fun{tgt}"']\ar[d, "\tau"']&
  V\times\{t:\Time\mid 0<t<b\}\\
  \Time
\end{tikzcd}
\]
where $\tau(e,(r,t))\coloneqq t$, $\fun{src}(e,(r,t))\coloneqq(\fun{src}(e),t+r)$, and
$\fun{tgt}(e,(r,t))=(\fun{tgt}(e),t)$. The associated hybrid type will have the desired behavior.

\section{Delays}\label{sec:delay}\index{delay}

Intuitively, a machine acts as a delay of type $A$ if it has type $A\times A$ and has the property
that its second projection produces the same section as its first projection, except translated in
time by $d$ seconds, for some real $0<d:\tRR$. This notion of ``same'' belies an implicit assumption
that sections $a_1,a_2:A$ can be compared for equality at different intervals of time. However, this
is not the semantics of equality: only sections over the same interval can be compared for equality.

Instead of comparing for equality, one could start with a morphism $\phi\colon\tConst\to\Prop^A$,
where $\tConst$ is a constant sheaf. Then for every $c:\tConst$, the predicate $\phi(c)$ is a
translation-invariant predicate on $A$, so we may apply it at different times and compare the
results. We can thus speak of $\phi$-theoretic delays. Namely, $a_2$ is the $\phi$-theoretic $d$
delay of $a_1$ if, for all $c:\tConst$ and $r:\tRR$, the section $a_1$ satisfies $\phi(c)$ on the
open set $\upclose[0,r]$ iff $a_2$ satisfies $\phi(c)$ on the translated open set $\upclose[d,d+r]$.

\begin{definition}[Delay]\label{def:delay}
  Let $A$ be a type, and suppose given a constant sheaf $\tConst$, a predicate $\phi:(C\times
  A)\to\Prop$, and a number $D:\tRR$ with $D>0$. Fix $t:\Time$. We define $(a,a'):A\times A$ to be
  \emph{$\phi$-theoretically $D$-delayed} if the following holds:
  \begin{equation}\label{eqn:delay}
    \forall(c:\tConst)(d,u:\tRR)\ldotp (d<u) \imp
      \left( \See[t]{[d,u+D]}\In[t]{[d,u]}\phi(c,a) \iff
        \See[t]{[d,u+D]}\In[t]{[d+D,u+D]}\phi(c,a') \right).
  \end{equation}
  Say that \emph{$\tConst$ separates $A$ (via $\phi$)}\index{separation!by constants}, if the
  following formula holds:
  \[
    \forall(a_1,a_2:A)\ldotp(\forall(c:\tConst)\ldotp\phi(c,a_1)\iff\phi(c,a_2))\imp(a_1=a_2).
  \]
  If $\tConst$ separates $A$, we may drop mention of $\phi$ and say that $(a,a')$ is
  \emph{$D$-delayed} iff it satisfies \cref{eqn:delay}.
\end{definition}

By the torsor axiom \ref{ax:torsor}, being $D$-delayed is not dependent on the choice of $t:\Time$.
One particularly intuitive case of delays is for numeric types\index{numeric type}.

\begin{lemma}\label{lemma:numeric_delay}
  Let $\tRR_j$ be the type of $j$-local real numbers, for a modality $j$. Then $\tQQ\times\tQQ$
  separates $R$, via the predicate $\phi(q_1,q_2,r)\coloneqq(q_1<r<q_2)$. A similar statement holds
  when $\tRR_j$ is replaced by any of the other $j$-numeric types.
\end{lemma}
\begin{proof}
  For any pair of lower reals, $\delta_1,\delta_2:\tLR_j$, it is obvious by propositional
  extensionality\index{propositional extensionality} (see \cref{ax:propositional_extensionality}) that
  $\forall(q:\tQQ)\ldotp(\delta_1q\iff\delta_2q))\imp(\delta_1=\delta_2)$. The same holds for the
  other numeric types.
\end{proof}

\begin{example}\label{ex:delay_numeric}\index{numeric type!delay of}
  Let's work out an example of the semantics of delays for the case $\tRR_\pi$, because its
  semantics are most familiar; the other numeric types are similar. Choose
  $D\in\RR=\church{\tRR}(\ell)$ with $D>0$ and $t=(d_t,u_t)\in\church{\Time}(\ell)$. Recall from
  \cref{cor:pi_real_semantics} (or more generally \cref{thm:omnibus_semantics}) that elements
  $f,f'\in\church{\tRR_\pi}(\ell)$ can be identified with continuous real-valued functions
  $f,f'\colon(d_t,u_t)\to\RR$.

  We want to understand the semantics of the predicate $\phi$ defined in \cref{lemma:numeric_delay}
  saying that $(f,f')$ are $D$-delayed:
  \begin{multline}\label{eqn:delay_variable_real}
    \ell\Vdash\forall(q_1,q_2:\tQQ)(d,u:\tRR)\ldotp(d<u)\imp\\
    \left(\See{[d,u+D]}\In{[d,u]}\left(q_1<f<q_2\right)\iff\See{[d,u+D]}\In{[d+D,u+D]}\left(q_1<f'<q_2\right)\right).
  \end{multline}
  Our \emph{goal} is to show that \cref{eqn:delay_variable_real} holds iff $f(x)=f'(x+D)$ for all
  $d_t<x<u_t-D$.

  \Cref{eqn:delay_variable_real} holds iff for every $q_1,q_2\in\QQ$ and $d<u$ in $\RR$, we have
  \[
    \ell\Vdash \left(\See{[d,u+D]}\In{[d,u]}\left(q_1<f<q_2\right)\right)
      \iff\left( \See{[d,u+D]}\In{[d+D,u+D]}\left(q_1<f'<q_2\right)\right).
  \]
  One can show that this is the case iff we have
  \[
    \ell\Vdash\See[(d_t,u_t)]{[d,u+D]}\In[(d_t,u_t)]{[d,u]}\left(q_1<\restrict{f}{(d_t,u_t)}<q_2\right)
    \quad\text{iff}\quad
    \ell\Vdash\See[(d_t,u_t)]{[d,u+D]}\In[(d_t,u_t)]{[d+D,u+D]}\left(q_1<\restrict{f'}{(d_t,u_t)}<q_2\right).
  \]
  By the semantics of $\See{}$ (\cref{cor:semantics_In_At_See}), we see that if $d<d_t$ or $u_t<u+D$
  then the above equivalence is vacuously satisfied. In other words, if $u_t-D\leq d_t$ then we already have achieved our
  goal. Thus we may suppose that $d_t\leq d< u\leq u_t-D$, and \cref{eqn:delay_variable_real} has
  become
  \[
    \ell\Vdash\In[(d_t,u_t)]{[d,u]}\left(q_1<\restrict{f}{(d_t,u_t)}<q_2\right)
    \qquad\text{iff}\qquad
    \ell\Vdash\In[(d_t,u_t)]{[d+D,u+D]}\left(q_1<\restrict{f'}{(d_t,u_t)}<q_2\right).
  \]
  The left-hand side is equivalent to the following: for all $\subint{r}{s}\colon\ell'\to\ell$, if
  $d<d_t+r<u_t-s<u$ then $q_1<\restrict{f}{(d_t+r,u_t-s)}<q_2$. Similarly for the right, except $d$ and $u$ are replaced by $d+D$ and $u+D$. Thus \cref{eqn:delay_variable_real}
  has become
  \[
    \forall(d,u\in\RR)\ldotp (d_t\leq d<u\leq u_t-D) \imp 
    \left[
      q_1<\restrict{f}{(d,u)}<q_2
      \quad\text{iff}\quad
      q_1<\restrict{f'}{(d+D,u+D)}<q_2
    \right].
  \]
  We are trying to show that this holds iff $f(x)=f'(x+D)$ for all $d_t<x<u_t-D$, and that follows
  from the continuity of $f$ and $f'$.
\end{example}

\section{Ordinary differential equations, relations, and inclusions}\label{sec:ODEs}\index{differential equation}

Consider a system of ordinary differential equations
\begin{equation}\label{eqn:system_ODE}
\begin{aligned}
	\dot{x}_1&=f_1(x_1,\ldots,x_n,a_1,\ldots,a_m)\\
	\dot{x}_2&=f_2(x_1,\ldots,x_n,a_1,\ldots,a_m)\\
	         &\;\;\vdots\\
	\dot{x}_n&=f_n(x_1,\ldots,x_n,a_1,\ldots,a_m)
\end{aligned}
\end{equation}
In this setting, one generally considers each variable as a continuous function of time and the
$x_i$ as differentiable. Thus we may consider each variable as a $\pi$-local real, $x_i:\tRR_\pi$,
$a_j:\tRR_\pi$, the semantics of which are continuous curves in $\RR$; see
\cref{cor:pi_real_semantics}. Our internal notion of differentiability is defined in
\cref{def:internal_derivative} and its semantics is indeed differentiability in the usual sense; see
\cref{cor:usual_semantics}.

Thus we may interpret \cref{eqn:system_ODE} entirely within the internal language of the topos
$\BaseTopos$. In fact, such systems can be generalized a great deal. For one thing, we can replace
the system with a differential relation or a differential inclusion. Indeed, a differential
relation\index{differential relation} is a predicate $P:(\tRR_\pi)^{2n+m}\to\Prop$, whose solution
set is
\[
  \{(x,a):(\tRR_\pi)^n\times(\tRR_\pi)^m\mid P(x,\dot{x},a)\}.
\]
Requiring derivatives to remain within certain bounds is a common example of this notion, and---in
case it was not clear---so is \cref{eqn:system_ODE}, when $P(x,x',a)$ is taken to be the proposition
$x'=f(x,a):\Prop$.

In fact, the functions $x_i$, $a_i$, and $f$ can even represent interval-valued functions. Because
our internal definition of derivative is quite broadly defined, \cref{eqn:system_ODE} will continue
to make sense. It could also be generalized to something like $f(x,a)\sqss\dot{x}$, meaning the
derivative is bounded by the interval $f(x,a)$; see \cref{prop:AD_semantics_iff}. This may be the
most natural form of differential inclusions\index{differential inclusion}, though differential
relations as above cover the rest of them.

\section{Systems, components, and behavior contracts}\label{sec:systems_and_machines}\index{behavioral approach}

\subsection{Machines and interfaces}\label{sec:machines}

We imagine machines\index{machine}---or systems\index{system}---as inhabiting
interfaces\index{interface}, which mark the boundary between system and environment. The interface
is a collection of ports, each labeled with a behavior type\index{behavior type}. For example, here
are pictures of some interfaces with varying numbers of ports:\index{interface!port of}
\[
\begin{tikzpicture}
[unoriented WD, spacing=13pt, pack size=15pt, surround sep=3pt, port len=0pt, font=\tiny]
  \node[pack] (f) {};
  \node[pack,right=3 of f] (g) {};
  \node[pack, right=3 of g] (h) {};
  \node[pack, right=3 of h] (i) {};
  \draw (g) to["$S$" right] +(.5,0);
  \draw (h) to["$S_2$" right] +(.5,0);
  \draw (h) to["$S_1$" left] +(-.5,0);
  \draw (i) to["$S_1$" above] +(-.2,.4);
  \draw (i) to["$S_2$" below] +(-.2,-.4);
  \draw (i) to["$S_3$" right] +(.5,0);
\end{tikzpicture}
\]
A machine inhabits an interface, and it has its own behavior type, which we call the \emph{total behavior type}, as well as a map to the each
port's behavior type. For example, a machine inhabiting the interface on the right might have total
behavior type $X$ and maps $p_i\colon X\to S_i$ for $i=1,2,3$, which we call \emph{port maps}. The tuple $\mach{X}=(X,p_1,p_2,p_3)$
defines the machine.

\begin{example}
Below in \cref{sec:LTS}, we will show how one can interpret a labeled transition system as a machines. Here we show how to interpret a system of ODEs, e.g.\ \cref{eqn:system_ODE}, as a machine. For that system, there would be $m$ ports, corresponding to the time-varying parameters $a_1,\ldots,a_m$. Each would have type $\tRR_\pi$, assuming we are expecting continuous signals. The machine would have total type
\[X=\{(x,a):\tRR_\pi^{m+n}\mid\dot{x}_1=f_1(x,a)\wedge\cdots\wedge\dot{x}_n=f_n(x,a)\}.\]
The port maps $p_i\colon X\to\tRR_\pi$ for $1\leq i\leq m$ are the projections.
\end{example}

\subsection{Systems and behavior contracts}\label{sec:behavior_contracts}

A \emph{behavior contract}\index{behavior contract}\index{behavior} on an interface $S_1,\ldots,S_n$
is a predicate on $S_1\times\cdots\times S_n$. We prefer to think of it as a proposition in
context\index{context} $\Gamma=(s_1:S_1,\ldots,s_n:S_n)$, 
\[
  s_1:S_1,\ldots,s_n:S_n\vdash\phi(s_1,\ldots,s_n):\Prop.%
  \footnote{
    A term in context, such as $s_1:S_1,\ldots,s_n:S_n\vdash\phi(s_1,\ldots,s_n):\Prop$ is roughly
    the same as a formula $\phi:(S_1\times\cdots\times S_n)\to\Prop$. The main difference is that,
    in the former case, the variables have been named.
  }
\]
If the proposition $\forall(x:X)\ldotp\phi\big(p_1(x),\ldots,p_n(x)\big)$ holds, we say that machine
$\mach{X}=(X,p_1,\ldots,p_n)$ \emph{validates} $\phi$. However, for our work of proving properties
of systems, the machine itself is often irrelevant; all that matters is its behavior contract it satisfies, namely 
$\Gamma\vdash\phi$.

A \emph{system} consists of\index{system}\index{functionality}
\begin{itemize}
  \item a finite number $n\in\NN$ of \emph{components},
  \item for each $1\leq i\leq n$ a context $\Gamma_i$, called the \emph{interface} of the component,
  \item for each $1\leq i\leq n$ a valid truth judgment $\Gamma_i\vdash\phi_i$, called the \emph{behavior contract},
  \item an \emph{outer context} $\Gamma'$,
  \item a \emph{total context} $\Gamma$, and
  \item mappings (possibly with variable re-namings) $\rho_i\colon\Gamma_i\to\Gamma$, for each
    $1\leq i\leq n$, and $\rho'\colon\Gamma'\to\Gamma$.
\end{itemize}
In other words, every variable $s:S$ in context $\Gamma_i$ or $\Gamma'$ is sent to some variable
$t:S$ of the same type ($S$) in $\Gamma$. Let $S_1,\ldots,S_k$ be the set of types in $\Gamma$
but not in the image of $\rho'$; we call these the \emph{latent variables} of the system. Conversely, types in the image of $\rho'$ are called \emph{exposed variables}.

The variables in $\phi_i$ can be renamed via $\rho_i$, so we have $\Gamma\vdash\phi_i$ for each $i$.
We say that the system satisfies outer contract $\Gamma'\vdash\phi'$ iff we have
\begin{equation}\label{eqn:outer_contract}
  \Gamma'\vdash\forall(s_1:S_1)\cdots(s_k:S_k)\ldotp[(\phi_1\wedge\cdots\wedge\phi_n)\imp\phi'].
\end{equation}
We will use this notion for our case study in \cref{sec:case_study}. The strongest contract $\Phi'$ satisfied by the system is $\Phi'\coloneqq\exists(s_1:S_1)\cdots(s_k:S_k)\ldotp\phi_1\wedge\cdots\wedge\phi_n$. Indeed, we have $\Gamma'\vdash\forall(s_1:S_1)\cdots(s_k:S_k)\ldotp[(\phi_1\wedge\cdots\wedge\phi_n)\imp\Phi']$ and if $\phi'$ also satisfies \cref{eqn:outer_contract} then $\Gamma'\vdash\Phi'\imp\phi'$.

\begin{example}
Below is a picture of a system with four components:\index{wiring diagram}
\[
\begin{tikzpicture}[unoriented WD, spacing=19pt, every label/.style={font=\small}]
	\node[pack] (f1) {$\phi_1$};
	\node[pack, below right=1.7 and 1 of f1] (f3) {$\phi_3$};
	\node[pack, above right=1.7 and 1 of f3] (f2) {$\phi_2$};
	\node[pack, below right=1.7 and 1 of f2] (f4) {$\phi_4$};
	\node[outer pack, fit=(f1) (f4)] (outer) {};
	\node[link, label=below:$T$] at ($(f1)!-.4!(outer)$) (t) {};
	\node[link, label=left:$U$] at ($(f1)!.5!(f3)$) (u) {};
	\node[link, label=below:$V$] at (f3 |-f1) (v){};
	\node[link, label=left:$W$] at ($(f3)!.5!(f2)$) (w) {};
	\node[link, label=below:$X$] at ($(w)!.3!(f4)$) (x) {};
	\node[link, below=.5 of f3, label=below:$Y$] (y) {};
	\node[link, label=above:$Z$] at ($(f4)!-.4!(outer)$) (z) {};
	\draw[shorten >=-2pt, shorten >=-2pt] (f1) -- (outer);
	\draw (v) -- (outer.north-|v);
	\draw[shorten >=-2pt, shorten >=-2pt] (f4) -- (outer);
	\draw (f1) -- (v) -- (f2) -- (f3) -- (y);
	\draw (f1) -- (f3) to[bend right=10] (x);
	\draw (x) to [bend right=10] (f2);
	\draw (x) -- (f4);
\end{tikzpicture}
\]
The four components have interfaces $\Gamma_1=(t_1:T, u_1:U, v_1:V)$; $\Gamma_2=(v_2:V, w_2:W, x_2:X)$; $\Gamma_3=(u_3:U,w_3:W, x_3:X, y_3:Y)$, and $\Gamma_4=(x_4:X,z_4:Z)$. The behavior contracts are $\Gamma_i\vdash\phi_i$ for each $1\leq i\leq 4$. The outer context is $\Gamma'=(t':T, v':V, z':Z)$. The total context is $\Gamma=(t:T, u:U, v:V, w:W, x:X, y:Y, z:Z)$. The mappings $\rho_i$ and $\rho'$ are the obvious ``letter-preserving'' functions; for example $\rho_4(x_4)=x$ and $\rho_4(z_4)=z$. The strongest contract satisfied by the system is
\begin{multline*}
  t':T, v':V, z':Z\vdash\exists(u:U)(v:V)(w:W)(x:X)(y:Y)\ldotp\\
  \phi_1(t',u,v)\wedge\phi_2(v',w,x)\wedge\phi_3(u,w,x,y)\wedge\phi_4(x,z').
\end{multline*}
\end{example}

\subsection{Control-theoretic perspective}\index{control theory}

In a control-theoretic setting, some of the ports of an interface are designated as inputs and
others are designated as outputs. In this setting, we might draw interfaces as follows:
\[
\begin{tikzpicture}[oriented WD, bb small]
  \node[bb={0}{0}, fill=blue!20]                (00) {};
  \node[bb={0}{1}, fill=blue!20, right=2 of 00] (01) {};
  \node[bb={0}{2}, fill=blue!20, right=2 of 01] (02) {};
  \node[bb={1}{0}, fill=blue!20, right=2 of 02] (10) {};
  \node[bb={1}{1}, fill=blue!20, right=2 of 10] (11) {};
  \node[bb={1}{2}, fill=blue!20, right=2 of 11] (12) {};
  \node[bb={2}{0}, fill=blue!20, right=2 of 12] (20) {};
  \node[bb={2}{1}, fill=blue!20, right=2 of 20] (21) {};	
  \node[bb={2}{2}, fill=blue!20, right=2 of 21] (22) {};
  \node[                         right=2 of 22] (dt) {$\cdots$};	
  \node[bb={3}{5}, fill=blue!20, right=2 of dt] (35) {};
  \node[                         right=2 of 35] (et) {etc.};	
\end{tikzpicture}
\]
where ports on the left of a box are inputs, and those on the right are outputs.

The idea is that input trajectories deterministically drive the total system. Slightly more
precisely, let $i:X\to S$ be the input map. The condition is that for every tuple consisting of an
initial state $s$ of the system and a trajectory on each input port, each of which is compatible
with $s$ at the outset, there exists a unique trajectory of the system that extends the initial
state. We might say that a machine satisfying the existence property is \emph{total} and one
satisfying the uniqueness property is \emph{deterministic}.

Each of these can be stated in the internal language of $\BaseTopos$. It is enough to choose
$\epsilon>0$ and ask that the total/deterministic conditions apply for extensions of length
$\epsilon$. Here is an internal notion of
totalness:\index{machine!total}\index{machine!deterministic}
\begin{multline*}
  \forall(t:\Time)(P:X\to\Prop)(s:S)(r:\tRR)\ldotp r>0\imp\\
  \left(\In{[0,r]}\exists(x:X)\ldotp (ix=s)\wedge Px\right)\imp\In{[0,r+\epsilon]}\exists(x:X)\ldotp (ix=s)\wedge\In{[0,r]}Px.
\end{multline*}
And here is an internal notion of determinacy:
\[
  \forall(t:\Time)(x,x':X)(r:\tRR)\ldotp r>0\imp\left(\In{[0,r]}(x=x')\right)\imp(ix=ix')\imp\In{[0,r+\epsilon]}(x=x')
\]

In \cite{Spivak.Vasilakopoulou.Schultz:2016a} it is shown that when machines that are total,
deterministic, and what the authors call ``inertial''%
\footnote{
  The inertiality condition for an output map $p:X\to S'$ says that every internal trajectory
  produces a guaranteed output trajectory of a slightly duration. This property can be stated
  internally as follows:
  \begin{multline*}
    \forall(t:\Time)(P:X\to\Prop)(r:\tRR)\ldotp r>0\imp\left(\In{[0,r]}\exists(x:X)\ldotp Px\right)\imp\\
    \In{[0,r+\epsilon]}\exists(s':S')\ldotp\left[\left(\In{[0,r]}\exists(x:X)\ldotp Px\wedge (px=s')\right)\wedge\forall(x:X)\ldotp\left(\In{[0,r]}Px\wedge (px=s')\right)\imp (px=s')\right].
  \end{multline*}
}
are composed into systems by feeding output ports into input ports in arbitrary ways, the result is
again total, deterministic, and inertial. This proof can be carried out within the internal language
of $\BaseTopos$, although we do not do so here.

\subsection{Labeled transition systems}\label{sec:LTS}\index{labeled transition system|(}

In this section, we apply the framework with the hybrid sheaf formalism of \cref{sec:hybrid_sheaves} to understand how labeled transition systems can be interpreted in the temporal type theory. Here is the definition of labeled transition system we use.

\begin{definition}\label{def:LTS}
A \emph{labeled transition system} consists of several components:
\begin{itemize}
	\item a set $\Lambda$, elements of which are called \emph{input tokens};
	\item a set $V$, elements of which are called \emph{states};
	\item a set $E$, elements of which are called \emph{transitions};
	\item two function $s,t\colon E\to V$, called the \emph{source} and \emph{target} functions; and
	\item a function $\lambda\colon E\to\Lambda$, called the \emph{edge label function}.
\end{itemize}
\end{definition}

We want to interpret a labeled transition system as a machine, in the sense of \cref{sec:machines}. Its interface will look like this
$\begin{tikzpicture}
[unoriented WD, spacing=13pt, pack size=10pt, surround sep=3pt, port len=0pt, font=\tiny, baseline=(h.south)]
  \node[pack] (h) {};
  \draw (h) to["$I$" left] +(-.5,0);
  \draw (h) to["$O$" right] +(.5,0);
\end{tikzpicture}$
or this
$\begin{tikzpicture}[oriented WD, bby=.5ex, bb min width=3ex, font=\tiny, baseline=(a.south)]
   \node[bb={1}{1}, fill=blue!20, right=2 of 10] (a) {};
   \draw (a_in1) to["$I$" left] +(-.5pt,0);
   \draw (a_out1) to["$O$" right] +(.5pt,0);
 \end{tikzpicture}
$. In order to construct a behavior type for the ports $I$ and $O$, as well as the total behavior type of the machine, we must decide on what that behavior will be. In other words, we need to choose a way to incorporate the components of \cref{def:LTS} into a temporal story of the type of behavior it can exhibit. Consider the following story:
\begin{quotation}
Signals---in the form of input tokens---are appearing on the ports $I$ of the machine. For example, something is barking ``climb!'' or ``stay level!'', and the machine is receiving these signals at arbitrary moments in time. The machine is almost always in some state $v\in V$; the only exception is that each time an input token $l\in\Lambda$ appears, the machine goes into an instantaneous transition $e$, namely one with label $\lambda(e)=l$ and source $s(e)=v$. Immediately afterwards, it is in the target state $t(e)$. Whatever state the machine is in, it outputs its state continuously.
\end{quotation}

From the above story, we should take the input behavior type $I$ to be that of walks through the graph $E\rightrightarrows \{*\}$; see \cref{sec:walks}. Semantically, a walk of length $\ell$ is a finite path in this graph, which is just a list of elements of $E$, together with a positive duration of time between each. For example, if $E=\{\const{climb!}, \const{level!}\}$ then perhaps we could write
\[ *(0.5)\quad \lightning_{\const{climb!}}\quad *(3.3)\quad \lightning_{\const{level!}}\quad *(0.1)\quad \lightning_{\const{level!}}\quad *(5.0)\quad \lightning_{\const{climb!}}\quad *(1.1)\]
to denote a certain walk of length $10$. It rests for 0.5 seconds, instantly transitions along the edge $\const{climb!}$, rests for 3.3 seconds, instantly transitions along $\const{level!}$, etc. Each of the waiting periods occurs on the unique vertex, *, representing silence. 

We should take the output behavior $O$ to be that of walks through the graph $V\times V\rightrightarrows V$, i.e.\ the complete graph on $V$. A walk of length $\ell$ is a finite path in this graph, which is just a list of elements of $V$, together with a positive duration of time assigned to each. For example, if $V=\{a,b,c\}$ then perhaps we could write
\[a(2.4)\quad \lightning_{(a,b)}\quad b(1.1) \quad \lightning_{(b,b)}\quad b(2.0)\quad \lightning_{(b,a)}\quad a(3.3)\quad \lightning_{(a,c)}\quad c(0.8)\quad\lightning_{(c,b)}\quad b(0.4)\]
to denote a certain walk of length 10. It rests at $a$ for 2.4 seconds, transitions along the unique edge to $b$, where it rests for $1.1$ seconds, etc.

We take the total behavior $X$ of the machine to be that of walks through the graph $s,t\colon E\rightrightarrows V$ given by the source and target functions. The last step needed to show that this machine inhabits the interface (see \cref{sec:machines}) is to give maps $X\to I$ and $X\to O$. By \cref{prop:RW_functorial}, we know that the random-walks construction is functorial. Hence, it suffices to give graph homomorphisms between them, and we do so below:
\[
\begin{tikzcd}[sep=large]
E'\ar[d, shift right, "!"']\ar[d, shift left, "!"]&
E\ar[l, equal]\ar[r, "{(s,t)}"]\ar[d, shift right, "s"']\ar[d, shift left, "t"]&
V\times V\ar[d, shift right, "\tn{pr}_1"']\ar[d, shift left, "\tn{pr}_2"]
\\
\{*\}&
V\ar[l, "!"]\ar[r, equal]&
V
\end{tikzcd}
\]
Thus we have constructed a machine whose semantics matches the story above.

\index{labeled transition system|)}

\section{Case study: the National Airspace System}\label{sec:case_study}\index{National Airspace System|(}

In this section we will give a toy example, in order to see our temporal type theory---and its
higher-order temporal logic\index{logic!higher-order temporal}---in action. Our goal is to describe
very different sorts of systems---an ordinary differential equation, a time delay, and a labeled
transition system---displaying behaviors that range from continuous to discrete, instantaneous to
delayed. Each will be summarized by a behavior contract (a proposition in context), and we will
combine them to prove a property of the system formed by their interaction.
\index{interaction}\index{differential equation}\index{delay}\index{labeled transition system}

The particular example we choose to model is that of "safe separation in the National Airspace
System (NAS)" which we learned of by teaming up on a NASA grant with researchers at Honeywell Labs
(see \cref{sec:acknowledgments}). Airplanes, their pilots and
onboard equipment, as well as radars and many other factors interact to form the NAS. One property
of this system that needs to hold constantly is that of \emph{safe separation}: that airplanes
maintain a safe distance from each other. This is the system property we will prove from properties
of components. Our proofs---as well as our descriptions of component properties---will take place in
the higher-order logic of our sheaf topos $\BaseTopos\cong\Shv{\BaseSite}$, described in earlier
chapters.

\subsection{Background and setup}

We do a case study involving several disparate behavior contracts.\index{behavior contract} In the
National Airspace System, various systems like the Traffic Collision Avoidance System (TCAS)
interact to keep airplanes safely separated. Here are wiring diagrams that represent some aspects of
this system, both more globally and in a single airplane:\index{wiring diagram}
\[
\begin{tikzpicture}[oriented WD, inner sep=3pt, bb small, baseline=(RadSig), font=\small]
  \node[bb={1}{2}, fill=blue!20] (plane1) {plane 1};
  \node[bb={2}{1}, fill=blue!20, right=10 of plane1] (plane2) {plane 2\ };
  \node[bb={1}{1}, fill=blue!20, text width=2cm, align=center] at ($(plane1)!.5!(plane2)+(0,25)$) (radar) {radar satellite};
  \node[bb={0}{0}, fit={($(plane1)+(-6,-5)$) ($(plane2)+(6,5)$) ($(radar.north)+(0,5)$)}, bb name=National Airspace System] {};
  \draw[ar] (plane1_out1) to node [above=2pt] {1-TCAS} (plane2_in1);
  \draw[ar] (plane2_in2) to[out=180, in =0]  node [below=2pt] {2-TCAS} (plane1_out2);
  \draw[ar] (plane2_out1) to[in=0, out=0, looseness=1] node [above right=-.2 and -.2] {2-altitude} (radar_out1);
  \draw[ar] (plane1_in1) to[in=180, out=180, looseness=1] node [above left=-.2 and -.2] {1-altitude} (radar_in1);
  \draw[ar] (radar.270) to[out=270, in=90] node[below=5, pos=0, text width=.6in, align=center] (RadSig) {radar\\[-3pt]signal} (plane1.north);
  \draw[ar] (radar.270) to[out=270, in=90] (plane2.north);
\end{tikzpicture}
\]

\[
\begin{tikzpicture}[oriented WD, bb port sep=1, bb port length=2.5pt, bbx=1.5cm, bb min width=.4cm, bby=1.5ex, font=\footnotesize]
  \node[bb={2}{1}, fill=blue!20, text width=1.25cm] (TCAS) {onboard TCAS};
  \node[bb={1}{1}, fill=blue!20, below right=-1.5 and 1 of TCAS] (pilot) {pilot};
  \node[bb={1}{1}, fill=blue!20, right=-.5 and 2 of pilot] (surface) {jets\&wings};
  \node[bb={2}{2}, fit={($(TCAS.north west)+(-.5,3)$) ($(surface.south east)+(.5,-1.75)$)}, bb name=plane 1] (plane) {};
  \draw[ar] (plane_in1') to node[above=3pt, text width=.8in, align=left] {their TCAS\\[-3pt] command} (TCAS_in1);
  \draw[ar] (plane_in2') to node[below=3pt] {radar signal} (TCAS_in2);
  \draw[ar] (TCAS_out1) to (pilot_in1);
  \draw[ar] let \p1=(pilot.north west), \p2=(plane_out1), \n1=\bbportlen in
    (TCAS_out1) to (\x1-\n1,\y2) -- node[above=0pt] {our TCAS command} (plane_out1');
  \draw[ar] (pilot_out1) to node[below=0pt, align=left] {yoke \& throttle} (surface_in1);
  \draw[ar] (surface_out1) to node[below=0pt] {altitude} (plane_out2');
\end{tikzpicture}
\]
Both the radar and the other airplane's TCAS send a signal to our airplane, which is picked up by
our TCAS. If there is any danger that safe separation will be violated, the TCAS alerts the pilot,
who then uses the yoke (steering wheel) and throttle (gas pedal) to send a command to the plane. The
jets and wings of the plane adjust accordingly to change the altitude.

Although this is already quite simplified, we will simplify even more. We imagine there is only one
airplane, and it is trying to attain a safe altitude. To go from the scenario described above to
this one, one could take the difference of the two airplane's altitudes. We pretend that the TCAS
single-handledly measures the plane's altitude, tells the pilot if it is too low, and the pilot
sends the message on to the plane's motors. In fact, we assume the pilot acts as a mere delay.

Here is the wiring diagram we will use:\index{wiring diagram}
\[
\begin{tikzpicture}[oriented WD, bb port sep=1, bb port length=2.5pt, bbx=1.2cm, bb min width=.4cm, bby=1.5ex, font=\footnotesize]
  \node[bb={1}{1}, fill=blue!20] (TCAS) {TCAS};
  \node[bb={1}{1}, fill=blue!20, right=1 of TCAS] (pilot) {pilot};
  \node[bb={1}{1}, fill=blue!20, right=1 of pilot] (surface) {jets\&wings};
  \node[bb={0}{1}, fit={($(TCAS.north west)+(0,1)$) ($(surface.south east)+(0,-1)$)}] (plane) {};
  \draw (TCAS_out1) -- node[below=-1pt] {\scriptsize Cmnd} (pilot_in1);
  \draw (pilot_out1) -- node[below=-1pt] {\scriptsize Cmnd} (surface_in1);
  \draw (surface_out1) -- node[below=-1pt] {\scriptsize alt} (plane_out1');
  \draw let \p1=(TCAS.north west), \p2 = (surface.north east), \n1={\y1+\bby}, \n2=\bbportlen in
    (surface_out1) to[in=0] (\x2+\n2,\n1) -- (\x1-\n2,\n1) to[out=180] (TCAS_in1);
\end{tikzpicture}
\]
To each wire we will assign a sheaf,\index{sheaf} and two each box we will assign a behavior
contract, or proposition, on its ports. We will then combine these contracts and prove that the
whole system satisfies safe separation.

In our toy model, the altitude as a function of time is the only factor we will consider, and we
denote it by $a$. The TCAS is tasked with controlling the plane so that it will be at a safe
altitude, an altitude we denote by $\const{safe}$. We imagine that the plane can only do one of two
things: fly level or climb. The pilot commands the plane to either fly level or climb, and the plane
responds instantly by assigning its velocity $\dot{a}$ to either 0 or a number called
$\const{rate}$. This command by the pilot is in fact initiated by the TCAS, and the pilot delivers
the command to the airplane after a delay of some number of seconds, a number we denote
$\const{delay}$. The TCAS chooses its command according to the rule that if the altitude of the
plane is less than some threshold, it sends the command to $\const{climb}$; if it is greater than
this value it sends the command to stay $\const{level}$. The threshold is
$\const{safe}+\const{margin}$, where $\const{margin}>0$.

\subsection{Toy model and proof of safety}

All of the proofs in this section take place in the temporal type theory laid out in \cref{sec:logical_prelims,sec:axiomatics}.

Take $\Set{Cmnd}\coloneqq W_G$ to be the sheaf of walks (as in \cref{sec:walks}) through the graph
$G$, drawn below, which has vertex set $\Set{Cmnd}_0=\{\const{level}, \const{climb}\}$ and edge set
$E=\{\const{level!},\const{climb!}\}$:\index{graph!walks through}
\[
\begin{tikzpicture}[font=\tiny]
  \node[circle, draw] (level) {$\const{level}$};
  \node[circle, draw, right = 2 of level] (climb) {$\const{climb}$};
  \draw[->] (level) to[bend right=12pt] node [below] {$\const{climb!}$} (climb);
  \draw[->] (climb) to[bend right=12pt] node [above] {$\const{level!}$} (level);
  \node[rectangle, draw, inner sep=5pt, fit=(level) (climb)] (surround) {};
  \node[left=0 of surround, font=\normalsize] {$G\coloneqq$};
\end{tikzpicture}
\]
So $\Set{Cmnd}$ is the $\pi$-sheafification\index{sheafification} of a quotient of
$\Set{Cmnd}_0+(E\times\Time)$, where $\Set{Cmnd}_0$ is considered as the constant sheaf on two
elements. In particular there is a map of sheaves $\Set{Cmnd}_0\to\Set{Cmnd}$. The pilot will act as
a delay that takes TCAS signals and returns commands to the yoke of the plane. The sheaf of
altitudes $\tRR_\pi$.

Suppose we are given four positive rational numbers:
$\const{safe},\const{margin},\const{del},\const{rate}:\tQQ_{>0}$, corresponding to the safe
altitude, the extra margin of safety provided by the TCAS, the pilot delay, and the ascent rate of
the plane. Take $\Gamma$ to be the type context
\[
  \Gamma\coloneqq t:\Time,T:\Set{Cmnd},P:\Set{Cmnd},a:\tRR_\pi, \const{safe}:\tQQ,\const{margin}:\tQQ,\const{del}:\tQQ,\const{rate}:\tQQ.
\]

\subsubsection{Behavior contracts for the toy model}\index{behavior contract}

We will add four axioms to the context $\Gamma$; these are the contracts satisfied by the various
components. The first contract says that $\const{margin}$ is a positive rational number and altitude
$a:\tRR_\pi$ is non-negative:
\[
  \theta_1\coloneqq(\const{margin}>0)\wedge(a\geq 0).
\]
Second, we have the TCAS contract:
\[
  \theta_2\coloneqq (a>\const{safe}+\const{margin}\imp T=\const{level})\wedge(a<\const{safe}+\const{margin}\imp T=\const{climb}).
\]
Recall that in \cref{def:derivative} we defined the derivative for $a:\tRR_\pi$, denoted
$\dot{a}:\tIIub_\pi$. The airplane contract says that the instantaneous change in altitude is
determined by the yoke:
\[
  \theta_3\coloneqq (P=\const{level}\imp \dot{a}=0)\wedge(P=\const{climb}\imp \dot{a}=\const{rate}).
\]

The delay contract is as in \cref{def:delay} for $\phi:(\Set{Cmnd}_0\times\Set{Cmnd})\to\Prop$ given
by $\phi(v,c)\coloneqq (v=c)$:
\begin{multline*}
  \theta_4\coloneqq\forall(d,u:\tRR)(v:\Set{Cmnd}_0)\ldotp (d<u)\imp\\
  \big(\See{[d,\const{delay}+u]}\In{[d,u]}(T=v)\iff
  \See{[d,\const{delay}+u]}\In{[d+\const{delay},u+\const{delay}]}(P=v)\big).
\end{multline*}

\subsubsection{Proof of safety}

Let $\Theta\coloneqq\theta_1,\theta_2,\theta_3,\theta_4$ be as above. We will prove that these four
axioms are enough to guarantee safe separation. The rest of this section takes place in the logical
context $\Gamma\mid\Theta$. In other words, we can freely refer to $T$,$P$,$a$,$\const{safe}$,
$\const{margin}$, $\const{del}$, or $\const{rate}$ as well as to axioms $\theta_1$, $\theta_2$,
$\theta_3$, and $\theta_4$.

For any $a:\tRR_\pi$ and $r:\tRR$, recall the notation $a^@(r)$ from \cref{eqn:x^@}.

\begin{lemma}\label{lemma:alt_non_decreasing}
  Given the contracts above, the plane's altitude $a:\tRR_\pi$ never decreases:
  \[
    \Gamma\mid\Theta\vdash\forall(t:\Time)(r_1,r_2:\tRR)\ldotp r_1< r_2\imp a^@(r_1)\leq a^@(r_2).
  \]
  It follows that for any $t:\Time$ and $r:\tRR$ we have the following
  \[
    q<a^@(r)\imp r<t\imp q<a
    \qquad\text{and}\qquad
    a^@(r)<q\imp t<r\imp a<q.
  \]
\end{lemma}
\begin{proof}
  By \cref{prop:lts_aa_vertex}, the command $P:\Set{Cmnd}$ is almost always a vertex,
  \[
    \neg\neg(P=\const{climb}\vee P=\const{level}).
  \]
  Since $(P=\const{climb}\vee P=\const{level})\imp \dot{a}\geq 0$, by $\theta_3$, and since
  $(\dot{a}\geq 0)=\neg(\dot{a}<0)$ is a closed proposition for the $\neg\neg$ modality, we have
  shown $\dot{a}\geq 0$. By \cref{prop:deriv_open_closed} we obtain $0\leq a^@(r_2)-a^@(r_1)$. The
  result follows from arithmetic in $\tRR_{[r_1,r_2]}$; see \cref{sec:arithmetic}.

  The second statement follows from the first by \cref{ax:torsor,prop:bounded_pi}.
\end{proof}

\begin{proposition}
  Let $M=\const{delay}+\frac{\const{safe}}{\const{rate}}$. If time $0$ is witnessed then at all
  times $t>M$ the altitude of the airplane will be safe:
  \[
    \Gamma\mid\Theta\vdash\forall(t:\Time)\ldotp\See{0}(t>M\imp a\geq\const{safe}).
  \]
\end{proposition}
\begin{proof}
  Let $N=\frac{\const{safe}}{\const{rate}}$, so $M=N+\const{delay}$. As a variable real,
  $a:\tRR_\pi$ is $\pi$-located (see \cref{def:local_reals}). Thus it is $\At{N}$-located, so since
  $\const{margin}>0$ (by $\theta_1$), we have
  \[
    \At{N}(\const{safe}<a\vee a<\const{safe}+\const{margin})
  \]
  The modality $\At{N}$ commutes with disjunction by \cref{lem:At_preserves_or}, so we break into
  cases. If $\At{N}(\const{safe}<a)$ then by \cref{lemma:alt_non_decreasing} we get
  $N<t\imp\const{safe}<a$ and hence $\See{0}(M<t\imp\const{safe}\leq a)$.

  For the second case, we assume $\At{N}(a<\const{safe}+\const{margin})$. Again by
  \cref{lemma:alt_non_decreasing} we obtain $t<N\imp(a<\const{safe}+\const{margin})$. By $\theta_2$
  we have $t<N\imp T=\const{climb}$, which implies $\In{[0,N]}(T=\const{climb})$. By $\theta_4$,
  since $0<N$, this implies $\See{[0,M]}\In{[\const{delay},M]}(P=\const{climb})$, which implies
  $\See{[0,M]}\In{[\const{delay},M]}(\dot{a}=\const{rate})$ by $\theta_3$. By
  \cref{prop:deriv_open_closed}, this implies
  \[
    \See{[0,M]}\big(\const{safe}\leq a^@(M)-a^@(\const{delay})\big)
  \]
  because $\const{safe}=\const{rate}*(M-\const{delay})$. Since $a\geq 0$ by $\theta_1$, we have
  $a^@(\const{delay})\geq0$ by \cref{prop:j_on_intervals}, and hence
  $\See{[0,M]}\At{M}(a\geq\const{safe})$. By \cref{prop:simple_modalities}, this is equivalent to
  $\See{[0,0]}\See{[M,M]}\At{M}(a\geq\const{safe})$, which implies
  $\See{0}\At{M}(a\geq\const{safe})$. One then uses \cref{lemma:alt_non_decreasing} to conclude.
\end{proof}

\index{National Airspace System|)}
\section{Relation to other temporal logics}\label{sec:temporal_logic}\index{logic!temporal|(}

Consider the temporal logic with the ``until'' and ``since'' operators,
$\mathrm{TL}(\mathrm{Until},\mathrm{Since})$. This is the standard Boolean propoitional logic
augmented with two new connectives. Given a set $\Sigma$ of atomic propositions, the set of
propositional formulas is defined by the grammar
\[
  F \coloneqq \top \mid \bot \mid P \mid \neg F \mid F_1 \wedge F_2 \mid F_1 \vee F_2 \mid F_1
  \Until F_2 \mid F_1 \Since F_2,
\]
where $P\in\Sigma$.

Intuitively, $P \Until Q$ means that $Q$ will be true at some time in the future, and that $P$ will
be true until then. Likewise $P \Since Q$ means that $Q$ was true at some time in the past, and that
$P$ has been true since then. To make descriptions like these precise, the meaning of temporal logic
operators are often defined in terms of another---first-order---logic. For example, one could
express the meaning of $P\Until Q$ as follows:
\begin{equation}\label{eq:until_semantics}
  (P\Until Q)(t_0) \coloneqq \exists t\ldotp (t_0\leq t) \wedge Q(t) \wedge
  \forall t'\ldotp (t_0\leq t'\leq t) \imp P(t').
\end{equation}
The first order logic being used here is called the ``First-Order Monadic Logic of Order,''%
\footnote{``Monadic'' here refers to the restriction that all predicates must be unary, and has no
  connection to monads in the sense of category theory. The only time we use ``monadic'' in this
  sense is when discussing other temporal logics.
}
also denoted $\mathrm{FO}(<)$. This is the standard first-order logic with a single binary relation
$<$, with the restriction that all predicates are unary (``monadic''). Then
\cref{eq:until_semantics}, together with the evident analogue for $\Since$, gives an embedding of
$\mathrm{TL}(\mathrm{Until},\mathrm{Since})$ into $\mathrm{FO}(<)$. In fact, Kamp's theorem---one of
the earliest major results in the study of temporal logic (then called ``tense logic'')---shows that
$\mathrm{TL}(\mathrm{Until},\mathrm{Since})$ is ``expressively complete'' for $\mathrm{FO}(<)$ in
any Dedekind-complete linear-time semantics.

There is a---more or less obvious---embedding of $\mathrm{FO}(<)$ into the temporal type theory
presented in this book. Each (unary) predicate $P$ of $\mathrm{FO}(<)$ is represented by a term
$P\colon\Time\to\Prop$. As an example of the embedding, \cref{eq:until_semantics} would be
represented in our type theory as
\begin{equation}\label{eq:until_embedding_constructive}
  (P\Until Q)(t_0) \coloneqq \exists(t:\Time)\ldotp (t_0\leq t) \wedge Q(t) \wedge
  \forall(t':\Time)\ldotp (t_0\leq t'\leq t) \imp P(t').
\end{equation}
However, this embedding would not be sound for the \emph{classical}
$\mathrm{TL}(\mathrm{Until},\mathrm{Since})$, since the predicate given in
\eqref{eq:until_embedding_constructive} is not decidable. There are two options to fix this. First,
one could be content with an embedding of a constructive version of
$\mathrm{TL}(\mathrm{Until},\mathrm{Since})$. But as constructive logic is strictly more expressive
that classical logic, there is no impediment to giving a sound embedding of classical
$\mathrm{TL}(\mathrm{Until},\mathrm{Since})$---our second option. To do this, we first require that
all atomic propositions are represented as decidable predicates, e.g.\ we assume
$\forall(t:\Time)\ldotp(P(t)\vee\neg P(t))\wedge(Q(t)\vee\neg Q(t))$. Then we represent $\Until$ as
\begin{align*}
  (P\Until Q)(t_0) &\coloneqq \neg\neg\exists(t:\Time)\ldotp (t_0\leq t) \wedge Q(t) \wedge
    \forall(t':\Time)\ldotp (t_0\leq t'\leq t) \imp P(t') \\
  &\iff \neg\forall(t:\Time)\ldotp
    ((t_0\leq t) \wedge \forall(u:\Time)\ldotp (t_0\leq u\leq t) \imp P(u))
    \imp \neg Q(t),
\end{align*}
where the second line is constructively equivalent to the first (hence the two are provably
equivalent in our type theory).

The other standard temporal operators are definable in terms of $\Until$, but we give explicit
representations for some of them for concreteness:
\begin{align*}
  (\bigcirc P)(t_0) &\coloneqq P(t_0+1) \\
  (\Box P)(t_0) &\coloneqq \forall(t:\Time)\ldotp (t_0\leq t) \imp P(t) \\
  (\Diamond P)(t_0) &\coloneqq \neg\neg\exists(t:\Time)\ldotp (t_0\leq t) \wedge P(t) \\
                    &\iff \neg\forall(t:\Time)\ldotp (t_0\leq t) \imp \neg P(t) \\
  (P\Release Q)(t_0) &\coloneqq \forall(t:\Time)\ldotp
    ((t_0\leq t) \wedge \forall(u:\Time)\ldotp (t_0\leq u\leq t) \imp \neg P(u))
    \imp Q(t).
\end{align*}
The standard equivalences between these operators
\[
  \Diamond P \iff \neg(\Box \neg P) \qquad
  P\Until Q \iff \neg(\neg P \Release \neg Q).
\]
are provable in the type theory.

In practice, one often wants to reason quantitatively about time, for example to say that some
proposition will be satisfied between 3 and 5 minutes from now. One well-studied quantitative
temporal logic is Metric Temporal Logic (MTL). In \cite{Hunter.Ouaknine.Worrel:2013a}, Kamp's
theorem was extended to this quantitative setting by introducing the First-Order Monadic Logic of
Order and Metric ($\mathrm{FO}(<,+\QQ)$), and showing that MTL is complete for
$\mathrm{FO}(<,+\QQ)$. This augmented first-order logic simply adds functions $+q$ to
$\mathrm{FO}(<)$ for all $q\in\QQ$. For example, the metric ``until'' connective $U_I$ of MTL, where
$I=(d,u)\in\QQ\times\QQ$ is an interval, is encoded in $\mathrm{FO}(<,+\QQ)$ by
\[
  (P\; U_I\; Q)(t_0) \coloneqq \exists t\ldotp
    \bigl[(t_0+d < t < t_0+u) \wedge Q(t) \wedge
    \forall u\ldotp (t_0<u<t \imp P(u))\bigr].
\]

The type $\Time$ in our system is a torsor over $\tRR$, and in particular there is an addition map
$+\colon \Time\times\tQQ\to\Time$. Hence the embedding of $\mathrm{FO}(<)$ is easily extended to
$\mathrm{FO}(<,+\QQ)$. For example, the metric until connective $U_I$ above is represented (in the
classical embedding) by
\[
  (P\; U_I\; Q)(t_0) \coloneqq \neg\neg\exists(t:\Time)\ldotp
    \bigl[(t_0+d < t < t_0+u) \wedge Q(t) \wedge
    \forall(u:\Time)\ldotp (t_0<u<t \imp P(u))\bigr].
\]

\index{logic!temporal|)}

\index{applications|)}

\appendix

\chapter{Predomains and approximable mappings}\label{sec:predomain}

In \cref{sec:rev_cont_poset} we introduced domains, which show up in many different disciplines,
from order theory and topology, to theoretical computer science. In this chapter, we discuss a
notion of ``basis'' for domains, which we call \emph{predomains}. They are more general than the
well-known notion of abstract basis for domains \cite[Definition
III-4.15]{Gierz.Keimel.Lawson.Mislove.Scott:2003a}, and follow the work of Steve Vickers on what he
calls ``information systems'' \cite{Vickers:1993a}. Everything in this chapter is fully
constructive,\index{logic!constructive} so it can be interpreted in the internal language of any
topos $\cat{E}$.\index{information system|see {predomain}}

In \cref{sec:predomains} we define predomains and explain how they generate domains, as well as give
examples of numeric predomains that show up throughout the book. In \cref{sec:approximable} we give
a notion of morphisms between predomains, called \emph{approximable mappings}. We also prove
there is an equivalence between the category of predomains and approximable mappings and that of
domains and Scott-continuous functions. The benefit of considering domains is that they have nice semantics, constituting a full
subcategory of topological spaces, whereas the benefit of considering predomains is that they
consist of much less data in general. Finally in \cref{sec:predomains_subtopos} we discuss how the
above work relates with subtoposes and modalities. By doing so, we were able to treat the various
Dedekind numeric objects in various modalities (see \cref{sec:Dedekind_j}) in a unified way.

While it may sometimes be difficult for a domain-theory novice to find intuition for the notions in this chapter, the proofs
are fairly elementary.

\section{Predomains and their associated domains}\label{sec:predomains}\index{predomain|(}

In this section we define predomains and then discuss how to obtain a domain from a predomain. In
\cref{sec:approximable} we will show that this construction is part of an equivalence of categories.

\subsection{Introduction to predomains}

We begin with the definition of predomain, which one will recognize is self-dual.

\begin{definition}[Predomain]\label{def:predomain}
  A \emph{predomain} is an inhabited set $B$ together with a binary relation $\prec$ satisfying
  \[
    b_1\prec b_2 \; \Leftrightarrow \; \exists(b\in B)\ldotp b_1\prec b\prec b_2.
  \]
  We sometimes denote the predomain simply by $B$ if the order is clear from context.

  To avoid uses of the axiom of choice when working internally to a topos, we assume a predomain
  comes with a specified function sending any $b_1, b_2$ such that $b_1\prec b_2$ to a $\{b_1|b_2\}$
  such that $b_1\prec \{b_1|b_2\}\prec b_2$. The notation is inspired by Conway's surreal numbers,
  and can be thought of as the ``simplest'' element between $b_1$ and $b_2$. All our work below is
  completely independent of this choice.

  Say that $(B,\prec)$ is \emph{rounded}\index{predomain!rounded} if for any $b$ there exists
  $b_1,b_2$ with $b_1\prec b\prec b_2$. Again we may assume $B$ comes with specified choices, which
  we may denote $\{\varnothing|b\}\prec b\prec\{b|\varnothing\}$.

  Any predomain $B$ has an opposite, $B\op$, with the same underlying set and opposite order. Given
  any two predomains $B_1$ and $B_2$, there is a ``product'' predomain $B_1\times B_2$ with
  underlying set the cartesian product, and with the component-wise
  order.\index{predomain!opposite}\index{predomain!products of}
\end{definition}

\begin{remark}
  When working in the internal language of a topos, replace the word ``set'' with ``type''. From
  that point of view, a predomain is a type $B$ together with a predicate $\prec:B\times B\to\Prop$,
  satisfying $\forall(b_1,b_2,b_3:B)\ldotp(b_1\prec b_2)\wedge(b_2\prec b_3)\imp(b_1\prec b_3)$, and
  a function $\{-|-\}:\{(b_1,b_2):B\times B\mid b_1\prec b_2\}\to B$, satisfying
  $\forall(b_1,b_3:B)\ldotp b_1\prec b_3\imp (b_1\prec\{b_1|b_3\})\wedge(\{b_1|b_3\}\prec b_3)$.

  In this chapter, we generally use the set-theoretic rather than topos/type-theoretic language,
  with hopes that readers will be able to use this chapter independently of the rest of the book.
\end{remark}

\begin{example}\label{ex:predomains}\index{predomain!primary examples of}\index{numeric type!domain}
  The pair $(\QQ,<)$, where $<$ is the usual order on rational numbers, is a predomain. While
  perhaps cryptic now, we denote this predomain by $\LRpre\coloneqq(\QQ,<)$; the reason will become
  clear in \cref{ex:RId_predomain}. Its opposite is $\URpre\coloneqq(\QQ,>)$.

  We refer to the predomain $\LRpre\times\URpre$ as that of \emph{improper intervals} and denote it
  $\IIpre$. Its order is obviously given by $(q_1,q_2)\prec(q_1',q_2')$ iff
  $q_1<q_1'$ and $q_2'<q_2$. The predomain of \emph{proper intervals}, denoted $\IRpre$, is given by
  $\{(q_1,q_2)\in\IIpre\mid q_1<q_2\}$ with the induced order.

  It will also be useful to define $\LRubpre$ to be the predomain $\QQ\sqcup\{-\infty\}$ equipped
  with the relation $<$ given by
  \[
    a<b\iff (a,b\in\QQ\wedge a<b)\vee(a=-\infty),
  \]
  in particular $-\infty<-\infty$. Define $\URubpre\coloneqq(\QQ\sqcup\{\infty\},>)$ to be its
  opposite. Again, $\IIubpre\coloneqq\LRubpre\times\URubpre$ and
  $\IRubpre\coloneqq\{(q_1,q_2)\in\IIubpre\mid q_1<q_2\}$, where $-\infty<\infty$.
\end{example}

We sometimes use the following shorthand notation. For any finite set $F\ss B$ and element $b\in B$,
write $F\prec b$ to denote $\forall(b'\in F)\ldotp b'\prec b$.

\begin{definition}[Up/down closure, open subsets, specialization order, joins, meets]
\label{def:predomains_rounded_specs}\index{predomain!rounded lower set in}
\index{predomain!rounded upper set in}
\index{predomain!specialization order|see {specialization order}}\index{specialization order}
\index{predomain!meets}\index{predomain!meets!conditional}\index{predomain!meets!binary}
\index{predomain!joins}\index{predomain!joins!conditional}\index{predomain!joins!binary}
\index{predomain!opens in}\index{open set!in predomain|see {predomain, opens in}}
\index{conditional joins|see {predomain, joins}}\index{conditional meets|see {predomains, meets}}
  Let $(B,\prec)$ be a predomain. For a subset $X\ss B$, define the subsets
  \[
    \upclose X\coloneqq\{b\in B\mid\exists(x\in X)\ldotp x\prec b\}
    \qquad\text{and}\qquad
    \downclose X\coloneqq\{b\in B\mid\exists(x\in X)\ldotp b\prec x\}
  \]
  called the \emph{up-closure} and the \emph{down-closure} of $X$, respectively. Despite the name
  ``closure'', one may have $X\not\ss\upclose X$ or $X\not\ss\downclose X$; however, both operations
  $\upclose$ and $\downclose$ are idempotent. A subset $X$ is called \emph{rounded upper} if
  $X=\upclose X$ and \emph{rounded lower} if $X=\downclose X$. We may refer to rounded upper subsets
  as \emph{open subsets} and denote the set of such by $\Opens(B)$.

  A predomain can be equipped with two canonical preorders, which we call the \emph{lower
  specialization order} and \emph{upper specialization order}. The lower and upper specialization
  orders are defined
  \[
    b\specord b' \coloneqq \downclose b \subseteq \downclose b'
    \qquad\text{and}\qquad
    b\upspecord b' \coloneqq \upclose b'\subseteq\upclose b.
  \]
  Clearly $b\prec b'$ implies both $b\specord b'$ and $b\upspecord b'$. Note also that any open set
  $X=\upclose X$ is an upper set in $(B,\specord)$, i.e.\ $x\in X$ and $x\specord x'$ implies $x'\in
  X$.

  Given two elements $b_1,b_2\in B$, we say that they have a \emph{join} iff there exists an element
  $b_1\binjoin b_2\in B$ that is a join in both specialization orders $(B,\specord)$ and
  $(B,\upspecord)$; similarly with meets $b_1\binmeet b_2$. We say that $B$ has \emph{conditional
  joins} if, whenever $b_1$ and $b_2$ are such that there exists a $b'$ with $\{b_1,b_2\}\specord
  b'$, then $b_1$ and $b_2$ have a join $b_1\binjoin b_2$. Dually, say that $B$ has
  \emph{conditional meets} if $b'\upspecord\{b_1,b_2\}$ implies they have a meet $b_1\binmeet b_2$.
  If any two elements have a meet (resp.\ join), we say $B$ has \emph{binary meets} (resp.\ binary
  joins).
\end{definition}

\begin{example}\label{ex:predomains_with_binarys}
  Let $B$ be any of the following predomains from \cref{ex:predomains}: $\LRpre$, $\URpre$,
  $\IIpre$, $\LRubpre$, $\URubpre$, or $\IIubpre$. Then the two specializations orders for $B$
  coincide; e.g.\ in $\URpre$ we have $b_1\specord b_2$ iff $b_1\leq b_2$ iff $b_1\upspecord b_2$.
  Moreover, $B$ is rounded and has binary meets and binary joins.

  The predomains $\IRpre$ and $\IRubpre$ are rounded and have binary meets and conditional joins,
  but not binary joins. For example, the join of $(q_1,q_2)$ and $(q_1',q_2')$ in $\IIpre$ is
  $(\max(q_1,q_1'),\min(q_2,q_2'))$, and this is not always a join in $\IRpre$ because $q_1<q_2$ and
  $q_1'<q_2'$ does not imply $\max(q_1,q_1')<^?\min(q_2,q_2')$.
\end{example}

The way we obtain domains from predomains is through rounded ideals, so the following---and its
equivalent formulation in \cref{lemma:rounded_ideals}---will be fundamental.

\begin{definition}[Rounded ideals, rounded filters]
\label{def:rounded_ideal_filter}\index{predomain!rounded ideal in}\index{ideal!rounded}
\index{predomain!rounded filter in}
  Let $(B,\prec)$ be a predomain. A subset $I\ss B$ is called a \emph{rounded ideal} if, for any
  finite set $F\ss B$, we have
  \[
    F\ss I \;\Leftrightarrow\;\exists(b\in I)\ldotp F\prec b.
  \]
  Write $\RId(B)$ for the set of rounded ideals in $B$.

  A \emph{rounded filter}\index{filter!rounded} in $B$ is a rounded ideal in $B\op$. Denote the set
  of rounded filters in $B$ by $\RFilt(B)$. Note that any rounded filter is open,
  $\RFilt(B)\ss\Omega(B)$.
\end{definition}

We spell out \cref{def:rounded_ideal_filter} in \cref{lemma:rounded_ideals}.

\begin{lemma}\label{lemma:rounded_ideals}\index{directed}\index{rounded}\index{down-closed}
  A subset $I\ss B$ is a rounded ideal iff the following three conditions hold:
  \begin{tabbing}
    \qquad\=\tn{(nonempty)}\hspace{.4in}\= $\exists b\in I$,\\
    \>\tn{(down-closed)} \>$b\in I$ and $b'\prec b$ implies $b'\in I$,\\
    \>\tn{(up-directed)}\> $\{b_1,b_2\}\ss I$ implies $\exists b'\in I, \{b_1,b_2\}\prec b'$.\\
    If $B$ has conditional joins, the up-directed condition may be replaced by the following pair of conditions:\\
    \>\tn{(rounded)} \> $b\in I$ implies $\exists b'\in I, b\prec b'$,\\
    \>\tn{(up-directed')}\> $\{b_1,b_2\}\ss I$ implies $b_1\binjoin b_2$ exists and is in $I$.
  \end{tabbing}
  Dually, a subset $U\ss B$ is a rounded filter if it is non-empty, up-closed, and down-directed.
\end{lemma}
\begin{proof}
  This is all straightforward, except the statement about replacing up-directed with rounded and
  up-directed' in the case that $B$ has conditional joins. Clearly up-directed always implies
  rounded. Supposing $B$ has conditional joins, up-directed and down-closed also implies
  up-directed'.

  For the converse, take $b_1,b_2\in I$ such that their join $b\coloneqq b_1\binjoin b_2$ exists
  with $b\in I$. We \emph{do not} necessarily have $\{b_1,b_2\}\prec^? b$; however, we do have
  $b_1\upspecord b$ and $b_2\upspecord b$ by definition, and by roundedness there exists $b'\in I$
  with $b\prec b'$, so $I$ is up-directed as desired.
\end{proof}

As mentioned in the proof of \cref{lemma:rounded_ideals}, roundedness follows from up-directedness,
and from the structure of the lemma, it may appear to take a subsidiary role. However, as the name
``rounded ideal'' suggests, roundedness is an essential aspect and---as we will see in the following
proof---a very useful technical condition.

\begin{proposition}\label{prop:RId_product}
  Let $B$ and $B'$ be predomains and $B\times B'$ their product. We have isomorphisms
  \[
    \RId(B)\times\RId(B')\To{\cong}\RId(B\times B')
    \qquad\text{and}\qquad
    \RFilt(B)\times\RFilt(B')\To{\cong}\RFilt(B\times B')
  \]
\end{proposition}
\begin{proof}
  The two statements are dual, so it suffices to prove the first. We begin by giving the maps in
  either direction. We send a pair $I\ss B$ and $I'\ss B'$ to $I\times I'\ss B\times B'$, and this
  is clearly a rounded ideal if $I$ and $I'$ are. Given some $J\ss\RId(B\times B')$, define
  \[
    I\coloneqq\{b\in B\mid\exists(b'\in B')\ldotp (b,b')\in J\}
    \qquad\text{and}\qquad
    I'\coloneqq\{b'\in B'\mid\exists(b\in B)\ldotp (b,b')\in J\}
  \]
  It is easy to see that $I$ is nonempty, rounded, and up-directed. To see it is down-closed, choose
  $b\in I$ and $b_0\prec b$. We have some $b'$ with $(b,b')\in J$ so by roundedness we can choose
  some $(b_1,b_1')$ with $(b,b')\prec(b_1,b_1')$. Then we have $(b_0,b')\prec(b_1,b_1')$, so $b_0\in
  I$. We have shown that $I\ss B$, and similarly $I'\ss B'$, are rounded ideals. Thus we have
  defined mappings in either direction, and they are evidently mutually inverse.
\end{proof}

We are ready to motivate the ``cryptic'' notation for the predomains in \cref{ex:predomains}.

\begin{example}\label{ex:RId_predomain}
  Let $\LRpre=(\QQ,<)$ be as in \cref{ex:predomains}. In the category of sets, $\RId(\LRpre)$ can be
  identified with the set of \emph{lower reals}, $\RR\sqcup\{+\infty\}$. Indeed if $r$ is an lower
  real, then one sees that $\{q\in\QQ\mid q<r\}$ is a rounded ideal by simply checking the
  conditions of \cref{lemma:rounded_ideals}. Conversely, if $I\ss\QQ$ is a rounded ideal then one
  can take its supremum $\sup(I)\in\RR\sqcup\{\infty\}$; it is easy to see these functions are
  mutually inverse. The order on rounded ideals corresponds to the usual $\leq$ order on
  $\RR\sqcup\{\infty\}$.

  Recall that $\IIpre\cong\URpre\times\LRpre$, where $\LRpre=(\URpre)\op$. By
  \cref{prop:RId_product}, a rounded ideal $I\in\RId(\IIpre)$ can be identified with a pair
  $(D_I,U_I)$ of ``cuts'', i.e.\ rounded ideals $D_I\ss\URpre$ and $U_I\ss\LRpre$.

  We will see in \cref{thm:RIdIsDomain} that $\LR\coloneqq\RId(\LRpre)$, $\UR\coloneqq\RId(\URpre)$,
  and $\II\coloneqq\RId(\IIpre)$ naturally have the structure of a domain. The last of these is the
  domain of improper intervals, e.g.\ from \cite{Kaucher:1980a}. The domain
  $\IR\coloneqq\RId(\IRpre)$ consisting of rounded ideals in the sub-predomain $\IRpre$ is the
  standard interval domain, and is important throughout the main body of this book. An element
  $I\in\IR$ corresponds to a pair of disjoint cuts, $D_I\cap U_I=\emptyset$.

  We similarly define domains corresponding to the other predomains from \cref{ex:predomains}:
  \[
    \LRub\coloneq\RId(\LRubpre),
    \quad
    \URub\coloneq\RId(\URubpre),
    \quad
    \IIub\coloneqq\RId(\IIubpre).
    \quad
    \IRub\coloneqq\RId(\IRubpre),
  \]
\end{example}

\begin{lemma}\label{lemma:rcj_downclosure_RId}
  If $B$ is a rounded predomain and has conditional joins, then for any $b\in B$, its down-closure
  $\downclose b$ is a rounded ideal.
\end{lemma}
\begin{proof}
  $\downclose b$ is obviously down-closed, and it is nonempty since $B$ is rounded
  (\cref{def:predomain}). The (rounded) and (up-directed') conditions of \cref{lemma:rounded_ideals}
  are obvious too.
\end{proof}

If $(B,\prec)$ is a predomain, so is $(\RId(B),\ss)$.

\begin{proposition}\label{prop:RId_cond_joins}
  If $B$ has conditional joins, then so does $\RId(B)$. If $B$ has binary meets, then so does
  $\RId(B)$.
\end{proposition}
\begin{proof}
  For the first, suppose $I_1,I_2,I\in\RId(B)$ with $\{I_1,I_2\}\subseteq I$. Then for any $b_1\in
  I_1$ and $b_2\in I_2$, the join $b_1\binjoin b_2$ exists and is in $I$. One checks that the set
  \[
    I_1\binjoin I_2 = \downclose \{\, b_1\binjoin b_2 \mid b_1\in I_1, b_2\in I_2 \,\}
  \]
  is a rounded ideal and that it is the join of $I_1$ and $I_2$ in $\RId(B)$. For example, to see
  that $I_1\binjoin I_2$ is inhabited, take $b_1\in I_1$ and $b_2'\prec b_2\in I_2$, note
  $b_2\specord b_1\binjoin b_2$, and find $b_2'\in I_1\binjoin I_2$.

  Now suppose $B$ has binary meets, and let $I_1,I_2\in\RId(B)$ be rounded ideals. The intersection
  $I_1\cap I_2$ is clearly down-closed. It is non-empty because there exist $b_1\in I_1$ and $b_2\in
  I_2$, hence $b_1\binmeet b_2\in I_1\cap I_2$. For roundedness, if $b\in I_1\cap I_2$, then there
  exist $b_1\in I_1$ and $b_2\in I_2$ with $b\prec\{b_1,b_2\}$, hence $b\prec b_1\binmeet b_2\in
  I_1\cap I_2$. Directedness is similar.
\end{proof}

\subsection{Domains from predomains}

Recall the notion of domains, directed sets, and the way-below relation $\ll$ from
\cref{sec:rev_cont_poset}. In this section, we use the traditional notation $\sqss$ for the order
relation on a domain.

\begin{proposition}\label{prop:domains_as_predomains}\index{domain!as predomain}
  If $(D,\sqsubseteq)$ is a domain, with way-below relation $\ll$, then $(D,\ll)$ is a predomain.
  Moreover,
  \begin{enumerate}
    \item\label{item:dom_spec} The domain order $\sqsubseteq$ and the predomain's lower
      specialization order $\specord$ coincide.
    \item\label{item:dom_ideals} For every $x\in D$, the set $\downclose x$ is a rounded ideal in
      the predomain, and the down-closure function $\downclose\colon D \to \RId(D,\ll)$ is an order
      isomorphism $(D,\sqss)\cong(\RId(D),\ss)$.
    \item\label{item:dom_opens} A subset $U\ss D$ is open in the predomain $(D,\ll)$ iff it is Scott
      open in the domain $(D,\sqss)$.
   \end{enumerate}
\end{proposition}
\begin{proof}
  We know that $\ll$ is transitive and
  interpolative\index{interpolative}\index{interpolative!(1,1)|see {interpolative}} from
  \cref{rem:way_below,prop:basic_facts_domains}, so $(D,\ll)$ is a predomain. \Cref{item:dom_spec}
  is straightforward (e.g.\ if $x\ll a\Rightarrow x\ll b$ then $a=\bigvee\downclose
  a\leq\bigvee\downclose b=b$), and so is \cref{item:dom_ideals} (the inverse is
  $\bigvee\colon\RId(D)\to D$).

  For \cref{item:dom_opens}, suppose $U\ss D$ is open in the predomain, i.e.\ $U=\upclose U$, and
  that $\bigvee X=u\in U$. There exists $u'\ll u$ with $u'\in U$, and there exists $x\in X$ such
  that $u'\leq x$. Then use any $u''\ll u'\leq x$ to see $x\in U$. Conversely, if $U$ is open in the
  domain, it is easy to check that $U\ss\upclose U$, and the fact that $\upclose U\ss U$ follows
  from \cref{item:dom_spec}.
\end{proof}

\begin{example}\label{ex:spec_not_coincide}\index{specialization order}
  The two specialization orders for the predomain underlying a domain $(D,\sqss)$ need not coincide.
  The lower specialization order $\specord$ always agrees with the domain order $\sqss$, but the
  upper specialization order can be quite different.

  Indeed, one can see this difference in the domain $\IR=\RId(\IRpre)$ from \cref{ex:RId_predomain}
  (see also \cref{prop:RIdWayBelow,thm:RIdIsDomain}), where the $\specord$ relation is better
  behaved than the $\upspecord$ relation. We denote the elements of $\IR$ by $[d,u]$, where
  $d,u\in\RR$ and $d\leq u$. Then $[d_1,u_1]\specord[d_2,u_2]$ iff $d_1\leq d_2\leq u_2\leq u_1$,
  whereas $[d_1,u_1]\upspecord[d_2,u_2]$ iff either $d_1\leq d_2\leq u_2\leq u_1$ or $d_2=u_2$. For
  example, if we identify the set $\RR$ of real numbers with intervals of the form $[r,r]$, then
  $[r_1,r_1]\specord [r_2,r_2]$ iff $r_1=r_2$ whereas $[r_1,r_1]\upspecord [r_2,r_2]$ for all $r_1,
  r_2\in\RR$.
\end{example}

One can think of a predomain as a sort of basis for a domain, and \cref{prop:domains_as_predomains}
is analogous to the statement ``the domain is a basis for itself''. We next explain how predomains
present domains. This culminates in \cref{thm:RIdIsDomain}, where we show that for any predomain
$B$, the set $\RId(B)$ of rounded ideals ordered by inclusion is a domain. Hence $\RId(B)$ is a
topological space, and we will show in \cref{thm:opens_iso_Scott_opens} that its frame of opens is
isomorphic to $\Opens(B)$. It is clear that the opens of the form $\upclose b$ form a basis for the
topology. The following lemma shows that in fact $\RFilt(B)$ is also a basis. Of course, if $B$ is
rounded and has conditional meets then $\upclose b$ is always a rounded filter, in which case this
is trivial.\index{domain!basis of}

\begin{lemma}\label{lem:PredomLemA}
  Let $(B,\prec)$ be a predomain. For any $b,b'\in B$,
  \[
    b\prec b' \Rightarrow \exists(U\in\RFilt(B))\ldotp b'\in U\subseteq \upclose b,
  \]
  and dually,
  \[
    b'\prec b \Rightarrow \exists(I\in\RId(B))\ldotp b'\in I\subseteq \downclose b.
  \]
\end{lemma}
\begin{proof}
  If $B$ has conditional meets, we can simply take $U\coloneqq\upclose\{b|b'\}$.

  Otherwise, inductively construct a descending sequence $b\prec\cdots\prec b_n\prec
  b_{n-1}\prec\cdots\prec b_1=b'$, with $b_n\coloneqq \{b|b_{n-1}\}$. Let $U=\bigcup_n\upclose b_n$.
  Clearly $U\subseteq\upclose b$, and $b'\in\upclose b_2\subseteq U$. It remains to check that $U$
  is a rounded filter. If $x\in U$ and $x\prec y$, then $y\in U$, since $b_n\prec x$ for some $n$
  hence $b_n\prec y$. And if $F\subset U$ is a finite subset, then there is some $n$ for which
  $b_n\prec F$, and $b_{n+1}\prec b_n$ implies $b_n\in U$.
\end{proof}

The next two lemmas lead up to \cref{prop:RIdWayBelow}, which characterizes the way below relation
in the poset $(\RId(B),\ss)$.

\begin{lemma}\label{lem:PredomLemB}
  For any $b\in I\in\RId(B)$, there exist $\bar{b}\in I$ and $J\in\RId(B)$ such that $b\in
  J\subseteq \downclose\bar{b}\subseteq I$.
\end{lemma}
\begin{proof}
  Since $I$ is rounded, there is some $\bar{b}\in I$ with $b\prec\bar{b}$. Then by
  \cref{lem:PredomLemA} there is a $J\in\RId(B)$ with $b\in J\subseteq\downclose\bar{b}$. Since $I$
  is down-closed, $\bar{b}\in I$ implies $\downclose\bar{b}\subseteq I$.
\end{proof}

\begin{lemma}\label{lem:PredomLemC}
  For any $I\in\RId(B)$, the set $X=\{\, J\in\RId(B) \mid \exists(j\in I)\ldotp J\subseteq\downclose
  j\,\}$ is directed, and $I=\bigcup X$.
\end{lemma}
\begin{proof}
  That $I=\bigcup X$ is clear from \cref{lem:PredomLemB}. To show directedness, suppose $J_1,J_2\in
  X$, so $J_1\subseteq\downclose j_1$ and $J_2\subseteq\downclose j_2$ for some $j_1,j_2\in I$. Then
  because $I$ is an ideal, there exists a $j\in I$ with $\{j_1,j_2\}\prec j$. By
  \cref{lem:PredomLemB}, there exists a $\bar{j}\in I$ and $J\in\RId(B)$ with $j\in
  J\subseteq\downclose\bar{j}$. Hence $J\in X$, and $J_1\subseteq\downclose j_1\subseteq\downclose
  j\subseteq J$ and similarly $J_2\subseteq J$.
\end{proof}

\index{filter!rounded|(}
\begin{proposition}\label{prop:RIdWayBelow}\index{way-below!for rounded ideals}
    For any $I,I'\in\RId(B)$, $I'\ll I$ if and only if $I'\subseteq\downclose i$ for some $i\in I$.
    Dually, for any $U,U'\in\RFilt(B)$, $U'\ll U$ iff $U'\subseteq\upclose u$ for some $u\in U$.
\end{proposition}
\begin{proof}
  Suppose $I'\ll I$. By \cref{lem:PredomLemC}, $I$ is the directed union of the set
  $\{\,J\in\RId(B)\mid\exists(j\in I)\ldotp J\subseteq\downclose j\,\}$, hence there exist
  $J\in\RId(B)$ and $j\in I$ such that $I'\subseteq J\subseteq\downclose j$.

  Conversely, suppose $i\in I$ and $I'\subseteq\downclose i$, and suppose $I\subseteq\bigcup X$ for
  some directed subset $X\subseteq\RId(B)$. Then $i\in J$ for some $J\in X$, and
  $I'\subseteq\downclose i\subseteq J$, hence $I'\ll I$.
\end{proof}

\begin{theorem}\label{thm:RIdIsDomain}\index{domain!from predomain|see {domain, as rounded ideals}}\index{domain!as rounded ideals}
  If $B$ is a predomain, then the poset $(\RId(B),\ss)$ is a domain.
\end{theorem}
\begin{proof}
  It is easy to see that rounded ideals are closed under directed unions, so $\RId(B)$ is a dcpo.
  Combining \cref{lem:PredomLemC,prop:RIdWayBelow}, every $I\in\RId(B)$ is the union of the directed
  set $\{\,J\in\RId(B)\mid J\ll I\,\}$.
\end{proof}

\begin{proposition}\label{prop:basis_domain}
  Let $(D,\sqss)$ be a domain and $A\ss D$ a subset. Then the following are equivalent:
  \begin{enumerate}
    \item For all $d\in D$, the set $\downclose d\cap A$ is directed and $d=\sup(\downclose d\cap
      A)$.
    \item For all $d_1,d_2\in D$, if $d_1\ll d_2$ then there exists $a\in A$ such that $d_1\ll a\ll
      d_2$.
    \item For all $d_1,d_2\in D$, if $d_1\ll d_2$ then there exists $a\in A$ such that $d_1\sqss
      a\ll d_2$.
  \end{enumerate}
\end{proposition}
\begin{proof}
  The equivalence of the first three, as well as other conditions, is shown in
  \cite[III-4.2]{Gierz.Keimel.Lawson.Mislove.Scott:2003a}.
\end{proof}

\begin{definition}\label{def:basis_domain}
  Let $D$ be a domain. A \emph{basis} for $D$ is a subset $A\ss D$ satisfying any of the equivalent
  conditions of \cref{prop:basis_domain}
\end{definition}

Any domain $D$ is clearly a basis for itself. It is easy to see that if $A$ is a basis and $A\ss
A'$, then $A'$ is a basis. It is also easy to check that if $A$ is a basis then $d_1\sqss d_2$ iff
$(a\ll d_1)\imp(a\ll d_2)$ for all $a\in A$.

\begin{proposition}\label{prop:predomain_as_basis}\index{domain!basis of}\index{basis|see {domain, basis of}}
  Suppose $B$ is rounded and has conditional joins. Then there is an embedding $\downclose\colon
  B\to\RId(B)$, where $b\prec b'$ iff $\downclose b\ll\downclose b'$. In particular, $B$ serves as a
  basis for $\RId(B)$.
\end{proposition}
\begin{proof}
  By \cref{lemma:rcj_downclosure_RId}, $\downclose b$ is a rounded ideal. The fact that $b\prec b'$
  iff $\downclose b\ll\downclose b'$ follows from \cref{prop:RIdWayBelow}. To see that the set
  $\{\downclose b\}_{b\in B}$ serves as a basis, one checks using \cref{prop:RIdWayBelow} that
  $\downclose I\cap\{\downclose b\}_{b\in B}=\downclose I$ for any $I\in\RId(B)$.
\end{proof}

\begin{remark}\label{rem:bases}
  In \cref{prop:predomain_as_basis}, sufficient conditions for $B$ to embed as a basis of $\RId(B)$
  were given, but in fact a necessary and sufficient condition is that $B$ have the \emph{finite
  interpolation property}\index{interpolative!(finite,1)}, $F\prec b$ iff $\exists(b'\in B)\ldotp
  F\prec b'\prec b$. Predomains with this property are well-known in the domain literature: they are
  called \emph{abstract bases}; see \cite[Definition
  III-4.15]{Gierz.Keimel.Lawson.Mislove.Scott:2003a}.\index{way-below!for predomains}\index{abstract
  basis}
\end{remark}

Next we aim to prove that $\Opens(B)$ is a frame, isomorphic to the frame of Scott-open subsets of
$\RId(B)$. We introduce the following notation for $I\in\RId(B)$ and $U\in\Opens(B)$:
\begin{equation}\label{eqn:notation_models}
  I\models^B U\coloneqq \exists(b:B)\ldotp (b\in I)\wedge (b\in U).
\end{equation}
We will see in \cref{lem:open_to_Scott_open} that $I\models^B U$ iff $I$ is contained in the
Scott-open subset corresponding to $U$. By a \emph{rounded filter} in a domain $D$ (e.g.\
$D=\RId(B)$), we mean in the predomain $(D,\ll)$; see \cref{prop:domains_as_predomains}.

\begin{lemma}\label{lem:open_to_Scott_open}
  Let $(B,\prec)$ be a predomain. For any $U\in\Opens(B)$, the set $\U_U=\{\,I\in\RId(B)\mid
  I\models^B U\,\}$ is a Scott-open subset $\U_U\in\Opens(\RId(B))$. If $U\in\RFilt(B)$ then
  $\U_U\in\RFilt(\RId(B))$.
\end{lemma}
\begin{proof}
  It is easy to see that $\U_U$ is Scott-open: it is clearly an upper set, and if
  $X\subseteq\RId(B)$ is a directed set with $\bigcup X\in\U_U$, then there is a $b\in U\cap\bigcup
  X$ so $b\in J$ for some $J\in X$, hence $J\models^B U$ and $J\in\U_U$.

  Suppose $U$ is filtered; $\U_U$ is obviously up-closed and nonempty. Let $I_1,I_2\in\U_U$ and let
  $i_1\in U\cap I_1$ and $i_2\in U\cap I_2$. Then there are $i,i'\in U$ with $i\prec i'\prec
  \{i_1,i_2\}$. By \cref{lem:PredomLemA}, there is a $J\in\RId(B)$ with $i\in J\subseteq\downclose
  i'$. Hence $i\in U\cap J$, so $J\in\U_U$, and $J\subseteq\downclose i'\subseteq I_1\cap I_2$
  implies $J\ll\{I_1,I_2\}$ by \cref{prop:RIdWayBelow}, showing that $\U_U$ is directed, hence
  filtered.
\end{proof}

\begin{lemma}\label{lem:Scott_open_to_open}
  For any Scott open $\U\in\Opens(\RId(B))$, the set $U_{\U}=\{\,b\in B \mid \exists(J\in\U)\ldotp
  J\subseteq\downclose b\,\}$ is an open of $B$. For any $I\in\U$, we have $I\models^B U_{\U}$.
  Moreover, if $\U\in\RFilt(\RId(B))$ then $U_{\U}\in\RFilt(B)$.
\end{lemma}
\begin{proof}
  Choose $I\in\U$. Since $\U$ is rounded (\cref{prop:domains_as_predomains,lemma:rounded_ideals})
  there is a $J\in\U$ with $J\ll I$. By \cref{prop:RIdWayBelow} there is a $j\in I$ with
  $J\subseteq\downclose j$. But then $j\in U_{\U}\cap I$, so $I\models^B U_{\U}$.

  We now show that $\U_{\U}$ is open. Suppose $b'\in U_{\U}$, so $J\subseteq\downclose b'$ for some
  $J\in\U$. If $b'\prec b$ then $J\subseteq\downclose b'\subseteq\downclose b$, hence $b\in U_{\U}$.
  Conversely, since $J\models^B U_\U$ there exists some $b''\in U_{\U}\cap J$, hence $b''\in
  U_{\U}$, and $b''\in J\subseteq\downclose b'$ shows $b''\prec b'$.

  Finally, suppose $\U$ is a rounded filter. If $b_1,b_2\in U_{\U}$, then $I_1\subseteq\downclose
  b_1$ and $I_2\subseteq\downclose b_2$ for some $I_1,I_2\in\U$. Because $\U$ is a rounded filter,
  there exist $I,J\in\U$ such that $J\ll I\ll\{I_1, I_2\}$, hence by \cref{prop:RIdWayBelow} there
  is some $j\in I$ with $J\subseteq\downclose j$. So $j\in U_{\U}$, and $j\in I\subseteq I_1\cap
  I_2\subseteq\downclose b_1\cap\downclose b_2$ implies $j\prec\{b_1,b_2\}$, completing the proof
  that $U_{\U}$ is a rounded filter.
\end{proof}
\index{filter!rounded|)}

\begin{theorem}\label{thm:opens_iso_Scott_opens}
  The constructions of \cref{lem:open_to_Scott_open,lem:Scott_open_to_open} determine order
  isomorphisms
  \[\Opens(\RId(B))\iso\Opens(B)
  \qquad\text{and}\qquad
  \RFilt(\RId(B))\iso\RFilt(B).\]
\end{theorem}
\begin{proof}
  Both constructions are clearly monotonic, so we show they are mutually inverse.

  If $\U\in\Opens(\RId(B))$, then $\U_{U_{\U}}=\U$ if $I\models^B U_{\U} \Leftrightarrow
  I\in\U$. The $\Leftarrow$ implication holds by \cref{lem:Scott_open_to_open}. In the other
  direction, suppose $b\in I\cap U_{\U}$. Then by definition of $U_{\U}$, there is a $J\in\U$ such
  that $J\subseteq\downclose b$. But then by \cref{prop:RIdWayBelow}, $J\ll I$, so $I\in\U$.

  If $U\in\Opens(B)$, then to show $U_{\U_U}=U$ we need to prove $b\in U$ if and only if there exists
  an $J\in\RId(B)$ such that $J\models^B U$ and $J\subseteq\downclose b$. For the forwards
  direction, if $b\in U$, then there is a $b'\in U$ with $b'\prec b$, and so by
  \cref{lem:PredomLemA} there is an $J\in\RId(B)$ with $b'\in J\subseteq\downclose b$. In the other
  direction, if $J\subseteq\downclose b$ and $b'\in J\cap U$, then $b'\prec b$ so $b\in U$.
\end{proof}

\begin{corollary}\label{cor:double_dual}
  By duality, \cref{thm:opens_iso_Scott_opens} implies $\RFilt(\RFilt(B))\iso\RId(B)$.
\end{corollary}

\begin{corollary}\label{cor:RId_domain}\index{domain!as rounded ideals}
  For any domain $D$, there is an isomorphism of domains $D\cong\RId(D,\ll)$.
\end{corollary}
\begin{proof}
  This follows from \cref{thm:opens_iso_Scott_opens}---$\Opens(\RId(D,\ll))\iso\Opens(D)$---and the
  fact that domains are sober and thus determined by their frame of opens.\index{frame}
\end{proof}

\section{Approximable mappings}
\label{sec:approximable}\index{approximable mapping|(}

Domains are, in particular, topological spaces (see \cref{rem:domains_topspaces}) and predomains
give a sort of basis for them. In this section we define a sort of mapping $B\to B'$ between
predomains, called \emph{approximable mappings}, whose purpose is to induce a continuous function
between the corresponding domains $\RId(B)\to\RId(B')$. In fact, all morphisms between domains arise
in this way---$\RId$ is an equivalence between the category of predomains and the category of
domains---as we show in \cref{cor:equiv_predoms_doms}. Again, the work in this section was adapted
from \cite{Vickers:1993a}.

\subsection{Morphisms of predomains}

Soon we will define a category $\Predom$ whose objects are predomains (see \cref{def:predomain}) and whose morphisms are approximable mappings, defined as follows.

\begin{definition}[Approximable mapping]\label{def:approx_map}
  Given two predomains $B$ and $B'$, an \emph{approximable mapping} $H\colon B\to B'$ is a relation,
  written $\amap{b}{H}{b'}$ for $b\in B$ and $b'\in B'$, satisfying
  \begin{enumerate}
    \item $b_1\prec b_2$ and $\amap{b_1}{H}{b'}$ implies $\amap{b_2}{H}{b'}$
    \item $\amap{b_2}{H}{b'}$ implies there exists a $b_1\prec b_2$ with $\amap{b_1}{H}{b'}$
    \item $b'_1\prec b'_2$ and $\amap{b}{H}{b'_2}$ implies $\amap{b}{H}{b'_1}$
    \item $\amap{b_1}{H}{b'_i}$ for all $1\leq i\leq n$ and $b_1\prec b_2$ implies there exists a $b'\in B'$
      such that $\amap{b_2}{H}{b'}$ and $b'_i\prec b'$ for all $1\leq i\leq n$, for any $n\geq 0$.
  \end{enumerate}
  The last (4) may be replaced by the following two:
  \begin{enumerate}[label=4'\alph*.]
    \item if $b_1\prec b_2$ then there exists a $b'\in B'$ with $\amap{b_2}{H}{b'}$
    \item if $b_1\prec b_2$, $\amap{b_1}{H}{b'_1}$, and $\amap{b_1}{H}{b'_2}$, then there
      exists a $b'\in B'$ with $\amap{b_2}{H}{b'}$ and $\{b'_1,b'_2\}\prec b'$.
  \end{enumerate}
  If $B$ is rounded, and $B$ and $B'$ have conditional joins, then (4) be instead be replaced by
  \begin{enumerate}[label=4''\alph*.]
    \item $\forall(b\in B)\ldotp \exists(b'\in B')\ldotp \amap{b}{H}{b'}$
    \item $\amap{b}{H}{b'_1}$ implies there exists a $b'_2\in B'$ with $b'_1\prec b'_2$ and
      $\amap{b}{H}{b'_2}$
    \item if $\amap{b}{H}{b'_1}$ and $\amap{b}{H}{b'_2}$, then $b'_1\binjoin b'_2$ exists and $\amap{b}{H}{b'_1\binjoin b'_2}$.
  \end{enumerate}

  The composite of two approximable mappings $H\colon B\to B'$ and $H'\colon B'\to B''$ is simply
  the relational composite:
  \[
    \amap{b}{(H;H')}{b''} \,\Leftrightarrow\, \exists(b'\in B')\ldotp \amap{b}{H}{b'} \wedge
    \amap{b'}{H'}{b''},
  \]
  and the identity on $B$ is $\amap{b}{\id_B}{b'}\Leftrightarrow (b'\prec b)$.
\end{definition}

There is certainly redundancy in the definition of approximable mapping. For example, given
conditions 2 and 4, condition 1 holds iff condition 3 holds.

\begin{lemma}\label{lem:approx_map}
  If $H\colon B\to B'$ is an approximable mapping, then---with no additional constraints on $B$ and $B'$---condition 4''b holds. In particular,
  $\amap{b}{H}{b'}$ iff $b\in\Opens(H)(\upclose b')$.
\end{lemma}
\begin{proof}
  Suppose $\amap{b}{H}{b'_1}$. Then by 2.\ there is some $b_1\prec b$ with $\amap{b_1}{H}{b'_1}$,
  and by 4.\ with $n=1$, there exists a $b'_2$ with $\amap{b}{H}{b'_2}$ with $b'_1\prec b'_2$.
\end{proof}

\begin{proposition}\label{prop:predom_cat}\index{predomains!category of}
  Predomains and approximable mappings form a category $\Predom$.
\end{proposition}
\begin{proof}
  Unitality of identities follows from condition 2 and its dual, condition 4'' (see
  \cref{lem:approx_map}). Suppose given approximable mappings $H\colon A\to B$ and $H'\colon B\to
  C$. The composite clearly satisfies conditions 1, 2, and 3, so we check 4.

  Let $a_1\prec a_2$ and for all $1\leq i\leq n$, $\amap{a_1}{H}{b_i}$ and $\amap{b_i}{H'}{c_i}$.
  Then there is an $a'\in A$ with $a_1\prec a'\prec a_2$, a $b\in B$ with $\amap{a'}{H}{b}$ and
  $b_i\prec b$ for all $i$, and a $b'\in B$ with $\amap{a_2}{H}{b'}$ and $b\prec b'$. Then
  $\amap{b}{H'}{c_i}$ for all $i$, so there is a $c$ with $\amap{b'}{H'}{c}$ and $c_i\prec c$ for
  all $i$. Hence $\amap{a_2}{(H;H')}{c}$.
\end{proof}

\subsection{Approximable mappings to Scott-continuous functions}

In \cref{prop:RId_ff} we will prove that an approximable mapping between predomains induces a
continuous morphism between the corresponding domains. We are already able to say what it does on
open sets.

\begin{lemma}\label{lemma:approx_open_sets}
  Any approximable mapping $H\colon B\to B'$ defines a function
  $\Omega(H)\colon\Omega(B')\to\Omega(B)$, defined on $U'\in\Omega(B')$ by
  \[
    \Omega(H)(U')\coloneqq \{u\in B\mid\exists u'\in U'\text{ such that }H(u,u')\}.
  \]
\end{lemma}
\begin{proof}
  Suppose $U'\ss B'$ is any subset and let $U\coloneqq\Omega(H)(U')$ be defined as above. If
  $b\in\upclose U$ then there exists $u\in U$ with $u\prec b$, and $b\in U$ follows from condition
  1.

  For the converse, we need the assumption $U'\in\Omega(B')$, i.e.\ $U'=\upclose U'$. For any $b\in
  U$, there exists $u\in U$ such that $u\prec b$ by condition 2.\ so $b\in \upclose U$, as desired.
\end{proof}

\begin{lemma}
Any approximable mapping $H\colon B\to B'$ defines a function $\RId(H)\colon\RId(B)\to\RId(B')$, defined on an ideal $I\in\RId(B)$ by
\begin{equation}\label{eqn:RId(H)}
  \RId(H)(I)\coloneqq \{\, b'\in B' \mid \exists(b\in I)\ldotp\amap{b}{H}{b'} \,\}.
\end{equation}
\end{lemma}
\begin{proof}
If $I\in\RId(B)$, then it is easy to check that $\RId(H)(I)$ is a rounded ideal (this uses 1, 3, 4,
and the roundedness of $I$). 
\end{proof}

We want to show that $\RId(H)$ is continuous. To do so, we will
begin by proving the following general fact, reminiscent of the theory of Chu spaces.\index{Chu space}

\begin{lemma}\label{lemma:cont_maps_chu}
  Suppose that $\cat{X}=(X,\Omega(X),\in_X)$ and $\cat{Y}=(Y,\Omega(Y),\in_Y)$ are topological
  spaces. Then there is a natural bijection between the set $\Top(\cat{X},\cat{Y})$ of continuous
  maps $\cat{X}\to\cat{Y}$ and the set
  \[
    \{(f\colon X\to Y, f^*\colon\Omega(Y)\to\Omega(X)) \mid
      \forall(x:X)(U:\Omega(Y))\ldotp f(x)\in_Y U \iff x\in_Xf^*(U) \}.
  \]
\end{lemma}

Recall from \cref{eqn:notation_models} the relation $\models^B$ on $\RId(B)\times\Omega(B)$. Using the isomorphism $\Omega(B)\cong\Omega(\RId(B))$ from \cref{thm:opens_iso_Scott_opens}, we consider $(\RId(B),\Omega(B),\models^B)$ as a topological space. Then \cref{lemma:cont_maps_chu} says that $\RId(H)$ is
continuous iff $(\RId(H)(I)\models^{B'}U')\iff(I\models^B\Omega(H)(U'))$ for any $I\in\RId(B)$ and
$U'\in\Omega(B')$.

\begin{proposition}\label{prop:RId_ff}
  For any approximable mapping $H\colon B\to B'$, the map $\RId(H)\colon\RId(B)\to\RId(B')$ from
  \cref{eqn:RId(H)} is continuous. Its inverse-image map for open sets is given by
  $\Omega(H)\colon\Omega(B')\to\Omega(B)$. Explicitly, for any $I\in\RId(B)$ and $U'\in\Opens(B')$,
  \begin{equation}\label{eq:amap_inv_image}
    \RId(H)(I)\models^{B'} U' \;\;\iff\;\; I\models^{B}\Opens(H)(U').
  \end{equation}

  Conversely, for any continuous map $f\colon\RId(B)\to\RId(B')$, represented by the induced frame
  homomorphism $f^{-1}\colon\Opens(B')\to\Opens(B)$, there is a unique approximable mapping $H\colon
  B\to B'$ such that $f=\RId(H)$, given by defining $\amap{b}{H}{b'}\coloneqq(b\in f^{-1}(\upclose
  b'))$.
\end{proposition}
\begin{proof}
  The fact that $\RId(H)$ is continuous then follows from
  \cref{lemma:cont_maps_chu,eq:amap_inv_image}, the latter of which is easily checked:
  \[
    \RId(H)(I)\models^{B'} U' \;\Leftrightarrow\; \exists(b\in I)(b'\in U')\ldotp \amap{b}{H}{b'}
      \;\Leftrightarrow\; I\models^B\Opens(H)(U').
  \]

  If $f\colon\RId(B)\to\RId(B')$ is any continuous map, then
  $f^{-1}\colon\Opens(\RId(B'))\to\Opens(\RId(B))$ is a frame homomorphism, i.e.\ an order
  preserving map which preserves finite meets and arbitrary joins. By
  \cref{thm:opens_iso_Scott_opens} we may regard this as a frame homomorphism
  $f^{-1}\colon\Opens(B')\to\Opens(B)$. Then we must check that $\amap{b}{H}{b'}\coloneqq b\in
  f^{-1}(\upclose b')$ defines an approximable mapping. Openness of $f^{-1}(\upclose b')$ implies 1
  and 2, and 3 follows because $f^{-1}$ is order preserving; we next show 4.

  Suppose $b_1\prec b_2$. In any frame, let $\top$ denote the top element and $\binmeet$ denote the
  meet. To see 4'a, we have
  \[
    b_2 \in \upclose b_1 \subseteq \top = f^{-1}(\top)
    = f^{-1}\Bigl(\bigcup_{b'\in B'}\upclose b'\Bigr)
    = \bigcup_{b'\in B'} f^{-1}(\upclose b').
  \]
  For 4'b, if $b_1\in f^{-1}(\upclose b'_1)$ and $b_1\in f^{-1}(\upclose b'_2)$, then since open
  sets are up-closed, we have $\upclose b_1\subseteq f^{-1}(\upclose b'_1)$ and $\upclose
  b_1\subseteq f^{-1}(\upclose b'_2)$, and hence,
  \[
    b_2 \in \upclose b_1 \subseteq f^{-1}(\upclose b'_1) \binmeet f^{-1}(\upclose b'_2) =
    f^{-1}(\upclose b'_1 \binmeet \upclose b'_2).
  \]
  This again implies there exists a basic open $\upclose b'\subseteq\upclose b'_1\binmeet\upclose
  b'_2$ such that $b_2\in f^{-1}(\upclose b')$.

  To see that $f=\RId(H)$, it suffices to check that $f^{-1}=\Opens(H)$ because domains are sober
  spaces (see \cref{rem:domains_topspaces}). We have
  \[
    f^{-1}(U') = f^{-1}\Bigl( \bigcup_{b'\in U'}\upclose b' \Bigr)
    = \bigcup_{b'\in U'}f^{-1}(\upclose b'),
  \]
  so $b\in f^{-1}(U')$ if and only if $\exists(b'\in U')\ldotp b\in f^{-1}(\upclose b')$, i.e.\
  $b\in\Omega(H)(U')$.

  Finally, if $H'$ is any other approximable mapping with $\RId(H')=f$, then $H=H'$ follows from
  \cref{lem:approx_map}.
\end{proof}

\begin{corollary}\label{cor:equiv_predoms_doms}\index{predomains!equivalence of categories with domains}\index{approximable mapping!as inducing continuous function}\index{topological space!generated by a predomain}
  Let $\Predom$ be as in \cref{prop:predom_cat}, and let $\Cat{Top}$ be the category of topological
  spaces. Then $\RId$ defines a fully-faithful functor $\Predom\to\Cat{Top}$. Its essential image is
  the full subcategory of continuous dcpos, i.e.\ domains, with their Scott-open topology.
\end{corollary}
\begin{proof}
  The fact that $\RId$ is bijective on hom-sets was shown in \cref{prop:RId_ff}, and the fact that
  its essential image is the domains follows from \cref{cor:RId_domain}. All that remains to be
  shown is that $\RId$ preserves composition and identity, but this is a simple check.
\end{proof}

We record now a few special kinds of approximable mappings which will be needed later: dense,
filtered, and Lawson.

\begin{definition}[Dense approximable mapping]
\label{def:dense_approx_map}\index{approximable mapping!dense}
  Let $B$ and $B'$ be rounded predomains. Say an approximable mapping $H\colon B\to B'$ is
  \emph{dense} if it satisfies the formula $\forall(b':B')\ldotp\exists(b:B)\ldotp H(b,b')$.
\end{definition}

Say that a map $h\colon R\to R'$ between topological spaces is \emph{dense} if, for every inhabited
open $U\ss R'$, the preimage $h^{-1}(U)$ is inhabited. This is the case iff the image subset
$h(R)\ss R'$ is dense in the usual sense.

\begin{proposition}
  Let $B,B'$ be predomains such that $B'$ is rounded, and $H\colon B\to B'$ an approximable mapping.
  Then $H$ is dense iff the morphism of topological spaces $\RId(H)\colon\RId(B)\to\RId(B')$ is
  dense.
\end{proposition}
\begin{proof}
  For the forward direction, suppose that $H$ is dense and that $U'\in\Omega(B')$ is inhabited. By
  \cref{prop:RId_ff} it suffices to show that $U\coloneqq\Omega(H)(U')$ is inhabited. We have $b'\in
  U'$, so there exists $b\in B$ such that $H(b,b')$, which precisely says $b\in U$.

  For the backward direction, suppose $\Omega(H)$ sends inhabited sets to inhabited sets, and take
  any $b_1'\in B'$. Since $B'$ is rounded, there exists $b_2'\in B'$ with $b_1'\prec b_2'$. Let
  $U'\coloneqq\upclose b_1'$ and note $b_2'\in U'$. Then  $U\coloneqq\Omega(H)(U')$ is inhabited, so
  there exists $b\in U$, i.e.\ there exists $b'\in U'$ such that $H(b,b')$. Hence $b_1'\prec b'$,
  and by condition 3.\ $H(b,b_1')$ as desired.
\end{proof}

\begin{definition}[Lawson, filtered approximable mapping]
\label{def:Lawson}\index{approximable mapping!Lawson}\index{approximable mapping!filtered}
  Say an approximable mapping $H\colon B\to B'$ is a \emph{Lawson approximable mapping} (for short,
  ``$H$ is Lawson'') if $H$ satisfies the strengthening of condition 2 of
  \cref{def:approx_map}, which is dual to 4: for any $n\geq 0$, if $H(b_i,b'_1)$ for all
  $1\leq i\leq n$ and $b'_2\prec b'_1$, then there exists a $b\in B$ such that $H(b,b'_2)$ and
  $b\prec b_i$ for all $1\leq i\leq n$.

  If $B$ is rounded, and both $B$ and $B'$ have conditional meets, this is equivalent to the duals
  of 4''a, 4''b, 4''c. Thus in this case, $H$ is Lawson iff $H$ is dense
  (\cref{def:dense_approx_map}) and satisfies
  \begin{equation}\label{eqn:def_Lawson}
    \big(H(b_1,b') \wedge H(b_2,b')\big)\imp H(b_1\binmeet b_2,b').
  \end{equation}
  If $H$ is any (not necessarily dense) approximable mapping and $B,B'$ are any predomains, we say
  $H$ is \emph{filtered} if it satisfies \eqref{eqn:def_Lawson}.

  More generally, say that an approximable mapping $H\colon B_1\times B_2\to B_3$ is \emph{filtered
  in the first variable} if
  \[
    \Big(H\big((b_1,b_2),b_3\big) \wedge H\big((b'_1,b_2),b_3\big)\Big) \imp H\big((b_1\binmeet b'_1,b_2),b_3\big).
  \]
\end{definition}

\subsection{Approximable mappings from functions}\index{approximable mapping!from function}
Most of the continuous maps we will need to construct will in fact be induced by \emph{functions} of
predomains---meaning functions between underlying sets---in the following way.

\begin{proposition}\label{prop:func_to_approx_map}
  Let $f\colon B\to B'$ be a function between predomains, and assume $B'$ is rounded and has
  conditional joins. Then the relation
  \[
    \amap{b}{f^*}{b'} \coloneqq b' \prec fb
  \]
  is an approximable mapping if and only if
  \begin{enumerate}
    \item $b_1\prec b_2$ implies $fb_1\specord fb_2$,
    \item $b'\prec fb_2$ implies there exists some $b_1\prec b_2$ with $b'\prec fb_1$.
  \end{enumerate}
  If these conditions hold, it follows that $b_1\specord b_2$ implies $fb_1\specord fb_2$.

  Moreover, if $B$ is also rounded, then $f^*$ is dense if and only
  if $\forall(b':B')\ldotp\exists(b:B)\ldotp b'\prec fb$. If $B$ and $B'$ also have conditional
  meets, then $f^*$ is filtered if and only if $f$ preserves meets: $fb_1 \wedge fb_2 = f(b_1\wedge
  b_2)$.
\end{proposition}
\begin{proof}
  Suppose $\amap{b}{f^*}{b'}$ defines an approximable mapping. If $b_1\prec b_2$, then
  \[
    b'\prec fb_1 \Leftrightarrow \amap{b_1}{f^*}{b'} \Rightarrow \amap{b_2}{f^*}{b'} \Leftrightarrow
    b'\prec fb_2,
  \]
  hence $fb_1\specord fb_2$. And the second condition is just part 2.\ of \cref{def:approx_map}.

  Conversely, suppose $f$ satisfies the conditions of the proposition. By
  \cref{lemma:rcj_downclosure_RId}, $\downclose b'\in\RId(B')$ for all $b'\in B'$. Conditions 3.\
  and 4.\ of \cref{def:approx_map} follow, and conditions 1.\ and 2.\ follow easily from 1.\ and 2.\
  from the proposition.

  To see that $f$ preserves the specialization order, let $b_1\specord b_2$ and $b'\prec fb_1$. Then
  by 2.\ there is a $b_0$ with $b'\prec fb_0$ and $b_0\prec b_1$. But then $b_0\prec b_1$ and
  $b_1\specord b_2$ implies $b_0\prec b_2$, hence $fb_0\specord fb_2$ by 1., which together with
  $b'\prec fb_0$ implies $b'\prec fb_2$.

  The characterizations of when $f^*$ is dense or filtered follow directly from
  \cref{def:dense_approx_map,def:Lawson}.
\end{proof}

\begin{corollary}\label{cor:predomain_inclusion}
  Any inclusion $B\ss B'$ of predomains defines an approximable mapping.
\end{corollary}

\begin{proposition}\label{prop:functions_approx_mappings}
  Let $B_1,\ldots,B_n$ and $C$ be predomains, and let $f\colon B_1\times\cdots\times B_n\to C$ be a
  function on underlying sets. Assume $C$ is rounded and has conditional joins. Then the relation
  \[
    \amap{(b_1,\ldots,b_n)}{f^*}{c} \coloneqq c\prec f(b_1,\ldots,b_n)
  \]
  is an approximable mapping if and only if it satisfies the conditions of
  \cref{prop:func_to_approx_map} in each variable separately, i.e.\ for each $1\leq i\leq n$,
  \begin{enumerate}
    \item $b_i\prec b_i'$ implies $f(b_1,\ldots,b_i,\ldots,b_n)\specord f(b_1,\ldots,b_i',\ldots,b_n)$, and
    \item $c\prec f(b_1,\ldots,b_n)$ implies there exists a $b_i'\prec b_i$ with $c\prec f(b_1,\ldots,b_i',\ldots,b_n)$.
  \end{enumerate}

  Moreover, if $B$ is also rounded, then $f^*$ is dense if and only if
  \[
    \forall(b':B')\ldotp\exists(b_1,\dots,b_n:B)\ldotp b'\prec f(b_1,\dots,b_n).
  \]
  For any $1\leq i\leq n$, if $B_i$ and $C$ also have conditional meets, then $f^*$ is filtered in
  variable $i$ if and only
  if
  \[
    f(b_1,\dots,b_i,\dots,b_n) \binmeet f(b_1,\dots,b'_i,\dots,b_n) = f(b_1,\dots,b_i\binmeet
    b'_i,\dots,b_n).
  \]
\end{proposition}
\begin{proof}
  Assume $\amap{(b_1,\ldots,b_n)}{f^*}{c}$ defines an approximable mapping. Then condition 1.\
  follows exactly as in the proof of \cref{prop:func_to_approx_map}. For the condition 2., let
  $b=(b_1,\ldots,b_n)$ and suppose $c\prec f(b)$. By \cref{def:approx_map} condition 2., there
  exists $b'=(b_1',\ldots,b_n')$ such that $b_i'\prec b_i$ for each $i$ and $c\prec f(b')$. We
  obtain $c\prec f(b_1,\ldots,b_i'\ldots,b_n)$ by $n-1$ applications of condition 1, one for each
  $j\neq i$.

  The converse direction is just as in the proof of \cref{prop:func_to_approx_map}:
  \cref{def:approx_map} conditions 3 and 4 follow from \cref{lemma:rcj_downclosure_RId}, and
  conditions 1 and 2 are $n$-applications of conditions 1 and 2 from the proposition.

  The characterizations of when $f^*$ is dense or filtered follow directly from
  \cref{def:dense_approx_map,def:Lawson}.
\end{proof}

When $f^*$ is the approximable mapping associated to a function $f\colon B_1\times\cdots\times
B_n\to C$ by \cref{prop:functions_approx_mappings}, the induced continuous map
$\RId(f^*)\colon\RId(B_1)\times\cdots\times\RId(B_n)\to\RId(C)$ has a simple form:
\begin{equation}\label{eqn:function_RId(H)}
  \RId(f^*)(I_1,\ldots,I_n)\coloneqq
    \{\, c\in C \mid \exists(b_1\in I_1)\cdots(b_n\in I_n)\ldotp c\prec f(b_1,\ldots,b_n) \,\}.
\end{equation}
Here we are using the isomorphism $\RId(B_1\times\cdots\times
B_n)\cong\RId(B_1)\times\cdots\times\RId(B_n)$ from \cref{prop:RId_product}.

The following is easy to prove and has an obvious analogue for multi-variable functions.
\begin{proposition}
  Suppose $f_1\colon B_1\to B_2$ and $f_2\colon B_2\to B_3$ are functions satisfying the conditions
  of \cref{prop:func_to_approx_map}. Then so does $f_2\circ f_1$, and we have $(f_2\circ
  f_1)^*=f_2^*\circ f_1^*$.
\end{proposition}

Our main use of predomains is to define continuous functions between various sorts of real numbers
objects in our topos, and various subtoposes. We deal with subtoposes in the next section
\ref{sec:predomains_subtopos}, but before doing so we give an example that will be useful for
defining multiplication.\index{numeric object!arithmetic from predomains}

\begin{example}\label{ex:predomain_multiplication}
  Let $\LRpluspre$ denote the predomain whose carrier set is $\QQ_{\geq0}=\{q\in\QQ\mid q\geq 0\}$
  and where $q_1\prec q_2$ iff $q_1<q_2$ or $q_1=q_2=0$. Let $\URpluspre$ denote the predomain has
  the same carrier set, but with $q_1\prec q_2$ iff $q_1<q_2$. Defining $\LRplus=\{r\in\LR\mid 0\leq
  r\}$ and $\URplus\coloneqq\{r\in\UR\mid 0\leq r\}$, then classically there are isomorphisms
  \[
    \LRplus\cong\RId(\LRpluspre)
    \qquad\text{and}\qquad
    \URplus\cong\RId(\URpluspre)
  \]

  We want to show that various operations on real numbers are continuous.\index{arithmetic!as continuous}
  For example, the subtraction operation $(q_1,q_2)\mapsto q_1-q_2$ reverses the order
  in the second variable, so it defines a function $-\colon\LRpre\times\URpre\to\LRpre$ and a
  function $-\colon\URpre\times\LRpre\to\URpre$. One checks easily that it satisfies the conditions
  of \cref{prop:functions_approx_mappings}, and hence these are approximable mappings.

  Similarly, the function $(q_1,q_2)\mapsto\max(q_1,q_2)$\index{arithmetic!max function} satisfies
  the \cref{prop:functions_approx_mappings} and hence defines approximable mappings
  $\max\colon\LRpre\times\LRpre\to\LRpre$ and $\max\colon\URpre\times\URpre\to\URpre$. We also need
  two auxiliary notations: $q^+\coloneqq\max(q,0)$ and $q^-\coloneqq\max(0-q,0)$.

  Finally, the product of nonnegative lower (resp.\ upper) reals is unproblematic. That is, the
  function $(q_1,q_2)\mapsto q_1\times q_2$ satisfies the \cref{prop:functions_approx_mappings} and
  hence defines approximable mappings $\times\colon\LRpluspre\times\LRpluspre\to\LRpluspre$ and
  $\times\colon\URpluspre\times\URpluspre\to\URpluspre$. Summarizing, we have defined approximable
  mappings
  \begin{align*}
    - &\colon \LRpre \times \URpre \to \LRpre &
    - &\colon \URpre \times \LRpre \to \URpre \\
    \max &\colon \LRpre \times \LRpre \to \LRpre &
    \max &\colon \URpre \times \URpre \to \URpre \\
    \cdot^+ &\colon \LRpre \to \LRpluspre &
    \cdot^+ &\colon \URpre \to \URpluspre \\
    \cdot^- &\colon \LRpre \to \URpluspre &
    \cdot^- &\colon \URpre \to \LRpluspre
  \end{align*}
  Taking products, we also obtain approximable mappings $-\colon\IIpre\times\IIpre\to\IIpre$ and
  $\max\colon\IIpre\times\IIpre\to\IIpre$. Since approximable mappings induce continuous functions
  between domains, one may of course remove the $_{\tn{pre}}$ subscripts in all of the above cases.

  Multiplication $*\colon\IIpre\times\IIpre\to\IIpre$ is then defined by
  $(d_1,u_1)*(d_2,u_2)=(d',u')$ where
  \begin{equation}\label{eqn:Kaucher_mult_predomain}
  \begin{aligned}
    d'&\coloneqq\max(d_1^+ d_2^+,u_1^- u_2^-)-\max(u_1^+ d_2^-, u_2^+ d_1^-)\\
    u'&\coloneqq\max(u_1^+ u_2^+,d_1^- d_2^-)-\max(d_1^+ u_2^-, d_2^+ u_1^-)
  \end{aligned}
  \end{equation}
  This is a composite of approximable mappings, and hence is approximable, so it defines a
  continuous map of domains
  \[
    *\colon\II\times\II\to\II
  \]
  which satisfies the usual properties of Kaucher multiplication; see
  \cite{Kaucher:1980a}.\index{arithmetic!multiplication}
\end{example}
\index{approximable mapping|)}

\section{Predomains in subtoposes}\label{sec:predomains_subtopos}

In this section we interpret predomains and rounded ideals inside a subtopos that is given by some
modality $j:\Prop\to\Prop$ on a topos $\cat{E}$. We also consider what happens when we change the
modality.\index{predomain!in a subtopos}\index{modality!and predomains|see {predomain, in a
subtopos}}

We continue to think of a predomain as an object $B$ in $\cat{E}$ together with a map $\prec\colon
B\times B\to\Prop$, eliding the extra data of the map $(b_1,b_2)\mapsto \{b_1|b_2\}$; see
\cref{def:predomain}.

\begin{definition}[$j$-rounded ideal, $j$-open subset]
\label{def:predomain_modality}\index{ideal!$j$-rounded}\index{open set!$j$-}
  Let $(B,\prec)$ be an internal predomain in $\cat{E}$ and $j$ a modality. Define the type of
  \emph{$j$-rounded ideals}, denoted $\RId_j(B)$, to be the subtype of $B\to\Prop$ given by those
  $I$ satisfying
  \begin{tabbing}
  \qquad
    \=($j$-closed)
  \hspace{.4in}
      \=$\forall(b:B)\ldotp j(Ib)\imp Ib$\\
    \>(down-closed)
      \>$\forall(b_1,b_2:B)\ldotp(b_1\prec b_2)\imp Ib_2\imp Ib_1$\\
    \>(up-directed)
      \>$\forall(b_1,b_2:B)\ldotp Ib_1\imp Ib_2\imp j\big(\exists b_3\ldotp (b_1\prec b_3)\wedge(b_2\prec b_3)\wedge Ib_3\big)$.\\
    \>(nonempty)
      \>$j(\exists(b:B)\ldotp Ib)$
  \end{tabbing}
  Similarly, define the type of \emph{$j$-open subsets}, denoted $\Omega_j(B)$, by
  \[
    \Omega_j(B)\coloneqq\{U:B\to\Prop\mid\forall(b:B)\ldotp Ub\iff j\exists(b':B)\ldotp Ub'\wedge
    (b'\prec b)\}
  \]
  and the relation $\models^B_j$ on $\RId_j(B)\times\Omega_j(B)$ given by
  \[
    I\models^B_jU\iff j\exists(b:B)\ldotp Ib\wedge Ub.
  \]
\end{definition}

\begin{example}\label{ex:our_j_domains}\index{predomain!primary examples of}
  For each of the predomains in \cref{ex:predomains} we can define the corresponding domain of
  $j$-rounded ideals, as we did in \cref{ex:RId_predomain}. Except for two exceptions, each of the
  resulting domains models a kind of Dedekind numeric objects, e.g.\ the lower reals, the upper
  reals, the improper intervals, etc. The exceptions are the $j$-rounded ideals in
  $\IRpre=\{(q_1,q_2)\in\IIpre\mid q_1<q_2\}$ and similarly $\IRubpre$, which do not appropriately
  handle disjointness. To correct this, we define
  \[
    \IRpre[j]\coloneqq\{(q_1,q_2)\in\IIpre\mid j(q_1<q_2)\}
    \quad\text{and}\quad
    \IRubpre[j]\coloneqq\{(q_1,q_2)\in\IIubpre\mid j(q_1<q_2)\}.
  \]
  With these definitions in hand, we can define the following domains:
  \begin{align*}
    \LR_j&\coloneqq\RId_j(\LRpre)&
    \UR_j&\coloneqq\RId_j(\URpre)&
    \II_j&\coloneqq\RId_j(\IIpre)&
    \IR_j&\coloneqq\RId_j(\IRpre[j])\\
    \LRub_j&\coloneqq\RId_j(\LRubpre)&
    \URub_j&\coloneqq\RId_j(\URubpre)&
    \IIub_j&\coloneqq\RId_j(\IIubpre)&
    \IRub_j&\coloneqq\RId_j(\IRubpre[j])
  \end{align*}
  In \cref{prop:intervals_as_RIds} it is shown that these definitions agree with our definition
  (\ref{def:local_reals}) of Dedekind $j$-numeric objects in any topos.
\end{example}

\begin{corollary}\label{cor:RIdj_IsDomain}
  For any topos $\cat{E}$, if $(B,\prec)$ is an internal predomain then $\RId_j(B)$ is an internal
  domain in $\cat{E}_j$. Given $I,I':B\to\Prop$, the domain order is given by $I'\sqss
  I\iff\forall(b:B)\ldotp I'b\imp Ib$, and the way-below relation is given by $I'\ll I\iff
  j\exists(b:B)\ldotp Ib\wedge\forall(b':B)\ldotp I'b'\imp j(b'\prec b)$.
\end{corollary}
\begin{proof}
  The first part is \cref{thm:RIdIsDomain} and the second part is \cref{prop:RIdWayBelow}, applied
  in the topos $\cat{E}_j$.
\end{proof}

In the following propositions, we explain how \cref{def:predomain_modality} is natural with respect
to changing either the modality $j$ or the predomain $B$.

\begin{proposition}\label{prop:predomain_modality_map}
  Fix a predomain $(B,\prec)$. Suppose $j'$ and $j$ are modalities, such that $j'P\imp jP$ for any
  $P:\Prop$. Then applying $j$ pointwise (i.e.\ by composing $X:B\to\Prop$ with $j:\Prop\to\Prop$)
  defines a monotonic map on rounded ideals and on open sets, which we denote
  \[
    j\colon\RId_{j'}(B)\to\RId_{j}(B)
    \qquad\text{and}\qquad
    j\colon\Omega_{j'}(B)\to\Omega_{j}(B)
  \]
\end{proposition}
\begin{proof}
  Suppose $I:B\to\Prop$ is in $\RId_{j'}(B)$; we will show $jI$ satisfies the four conditions
  necessary to be in $\RId_{j}(B)$. The first follows because $j$ is a modality, and likewise the
  third follows beacuse $Ib_2\imp Ib_1$ implies $jIb_2\imp jIb_1$. For the second, assuming
  $j'(\exists b\ldotp Ib)$ we get $j(\exists b\ldotp Ib)$ by assumption, and thus $j(\exists b\ldotp
  jIb)$. For the fourth, assuming $jIb_1$ and $jIb_2$ we get $j'j(\exists b_3\ldotp (b_1\prec
  b_3)\wedge(b_2\prec b_3)\wedge Ib_3)$, hence $j(\exists b_3\ldotp (b_1\prec b_3)\wedge(b_2\prec
  b_3)\wedge jIb_3)$ using $j'\imp j$ to remove the $j'$, and $Ib_3\imp jIb_3$ to add the $j$.  This
  is clearly monotonic, because for any $b$, $Ib\imp I'b$ implies $jIb\imp jI'b$.

  The proof that $U\in\Omega_j'(B)$ implies $jU\in\Omega_j(B)$ is similar but easier.
\end{proof}

\begin{remark}\label{rem:j_not_continuous}
  In general the map $j\colon\RId_{j'}(B)\to\RId_{j}(B)$ from \cref{prop:predomain_modality_map} is
  open (as shown), but \emph{not} continuous. Indeed, for $I\in\RId_{j'}(B)$ and
  $U\in\Omega_j(B)\ss\Omega_{j'}(B)$, the condition \cref{lemma:cont_maps_chu} that
  $(jI\models_jU)\iff^?(I\models_{j'}U)$ translates to
  \[
    (j\exists b\ldotp Ub\wedge jIb)\iff^?(j'\exists b\ldotp Ub\wedge Ib)
  \]
  and there is no reason to expect that to hold.
\end{remark}

\begin{proposition}\label{prop:approximable_j}
  Suppose given an approximable mapping $H\colon B\to B'$, and let $j$ be a modality. For any
  $I\in\RId_j(B)$ and $U'\in\Omega_j(B')$, define $\RId_j(H)(I)\coloneqq I'$ and
  $\Omega_j(H)(U')\coloneqq U$ where
  \[
    I'b'\iff j\exists(b:B)\ldotp Ib\wedge\amap{b}{H}{b'}
    \qquad\text{and}\qquad
    Ub\iff j\exists(b':B')\ldotp U'b'\wedge\amap{b}{H}{b'}.
  \]
  This defines a morphism of domains $\RId_j(H)\colon\RId_j(B)\to\RId_j(B')$.
\end{proposition}
\begin{proof}
  Take $I:\RId_j(B)$ and let $I'$ be as above; we must first show $I'$ satisfies the conditions of
  \cref{def:predomain_modality}. The first is obvious, so consider the second. Since $I$ is rounded,
  condition 4 of \cref{def:approx_map} implies $\exists(b:B)(b':B')\ldotp Ib\wedge\amap{b}{H}{b'}$,
  which directly implies $j(\exists b'\ldotp I'b')$. For the third, we are given $b_1'\prec b_2'$
  with $I'b_2'$ and need to show $I'b_1'$, but this is just condition 3. For the fourth, we assume
  $I'b_1'$ and $I'b_2'$, and hence obtain
  \[
    j\big(\exists(b_1,b_2:B)\ldotp Ib_1\wedge Ib_2\wedge\amap{b_1}{H}{b_1'}\wedge \amap{b_2}{H}{b_2'}\big)
  \]
  Because $I$ is directed we have $j\big(\exists b\ldotp
  Ib\wedge\amap{b}{H}{b_1'}\wedge\amap{b}{H}{b_2'}\big)$, and because it is rounded condition 4
  implies
  \[
    j\big(\exists (b:B)\ldotp Ib\wedge\exists(b':B')\ldotp (b_1'\prec b')\wedge (b_2'\prec b')\wedge\amap{b}{H}{b'}\big).
  \]
  Rearranging, we obtain  $j\big(\exists(b':B')\ldotp(b_1\prec b')\wedge(b_2\prec b')\wedge
  \big(\exists(b:B)\ldotp Ib\wedge \amap{b}{H}{b'}\big)\big)$, which implies $j(\exists b'\ldotp
  b_1\prec b'\wedge b_2\prec b'\wedge I'b')$, as desired.

  It remains to show that $\RId_j(H)$ is continuous, and by \cref{lemma:cont_maps_chu}, it suffices
  to take $I\in\RId_j(B)$ and $U'\in\Omega_j(B')$, let $I'\coloneqq\RId_j(H)(I)$ and
  $U\coloneqq\Omega_j(H)(U)$, and show $(I\models^B_jU)\iff(I'\models^{B'}_jU')$. Using the fact
  that $B$ and $B'$ are inhabited, it is easy to check the following equivalences, completing the
  proof:
  \begin{align*}
    (I\models^B_jU)
    &\iff j\exists b\ldotp Ib\wedge j\exists b'\ldotp U'b'\wedge H(b,b')\\
    &\iff j\exists b\exists b'\ldotp Ib\wedge U'b'\wedge H(b,b')\\
    &\iff j\exists b'\ldotp U'b'\wedge j\exists b\ldotp Ib\wedge H(b,b')\iff(I'\models^{B'}_jU').
    \qedhere
  \end{align*}
\end{proof}

As in \cref{eqn:function_RId(H)}, when $f^*$ is the approximable mapping associated to a function
$f\colon B_1\times\cdots\times B_n\to C$ by \cref{prop:functions_approx_mappings}, the induced
continuous map $\RId_j(f^*)\colon\RId_j(B_1)\times\cdots\times\RId_j(B_n)\to\RId_j(C)$ has a simple
form:
\begin{equation}\label{eqn:function_RId(H)_j}
  \RId_j(f^*)(I_1,\ldots,I_n)\coloneqq
  \{\, c\in C \mid j\exists(b_1\in I_1)\cdots(b_n\in I_n)\ldotp c\prec f(b_1,\ldots,b_n) \,\}.
\end{equation}

\begin{proposition}\label{prop:approximable_j_square}
  Suppose that $H\colon B\to B'$ is an approximable mapping and that $j_1,j_2$ are modalities such
  that $j_1P\imp j_2P$ for all $P$. Then the following squares, where the sides are as defined in
  \cref{prop:predomain_modality_map,prop:approximable_j}, commute:
  \[
  \begin{tikzcd}[column sep=45pt, row sep=20pt]
    \RId_{j_1}(B)\ar[r,"\RId_{j_1}(H)"]\ar[d,"j_2"']&\RId_{j_1}(B')\ar[d,"j_2"]\\
    \RId_{j_2}(B)\ar[r,"\RId_{j_2}(H)"']&\RId_{j_2}(B')
  \end{tikzcd}
  \hspace{.8in}
  \begin{tikzcd}[column sep=45pt, row sep=20pt]
    \Omega_{j_1}(B)\ar[d,"j_2"']&\Omega_{j_1}(B')\ar[l,"\Omega_{j_1}(H)"']\ar[d,"j_2"]\\
    \Omega_{j_2}(B)&\Omega_{j_2}(B')\ar[l,"\Omega_{j_2}(H)"]
  \end{tikzcd}
  \]
\end{proposition}
\begin{proof}
  For the first square, choose $I$ in $\RId_{j_1}B$ and $b'\in B'$. It is easy to check that indeed
  \[
    j_2j_1\big(\exists(b:B)\ldotp Ib\wedge\amap{b}{H}{b'}\big)\Leftrightarrow j_2\big(\exists(b:B)\ldotp j_2Ib\wedge\amap{b}{H}{b'}\big).
  \]
  The proof that the second square commutes is similar.
\end{proof}

\subsection{Closed modalities}\index{predomain!and closed modalities}

The map $j\colon\RId_{j'}\to\RId_j$ from \cref{prop:predomain_modality_map} has a section if $j$ is
a closed modality. Recall that $j$ is closed if there is some proposition $\phi:\Prop$ such that
$jP=\phi\vee P$ for all $P:\Prop$.

\begin{proposition}\label{prop:RId_inclusion_closed_modality}
  Let $B$ be a predomain, let $j$ be a closed modality, and let $j'$ be a modality such that $j'\imp
  j$. Then $\Omega_j(B)\ss\Omega_{j'}(B)$, and if $B$ has binary joins then
  $\RId_j(B)\subseteq\RId_{j'}(B)$.
\end{proposition}
\begin{proof}
  Suppose $jP=\phi\vee P$ and consider the second claim by taking a $j$-rounded ideal
  $I\in\RId_j(B)$. We may assume $j'=\id$, so we need to show that $I$ is nonempty and up-directed.
  We can easily prove $\phi\imp\exists(b:B)\ldotp Ib$, using that $B$ is inhabited and that $I$ is
  $j$-closed (so that $\phi\imp Ib$ for any $b$). Hence $I$ being $j$-nonempty implies that $I$ is
  nonempty.

  To show that $I$ is $j'$-up-directed, let $b_1,b_2:B$ be elements such that $Ib_1$ and $Ib_2$.
  Using again that $I$ is $j$-closed, and taking $b_3=b_1\vee b_2$, we can prove
  $\phi\imp\exists(b_3:B)\ldotp(b_1\prec b_3)\wedge(b_2\prec b_3)\wedge Ib_3$. Hence $I$ being
  $j$-up-directed implies that $I$ is up-directed.

  The proof of the first claim is similar, but easier.
\end{proof}

\begin{proposition}\label{prop:approx_inclusion_closed_modality}
  Let $B$ and $B'$ be predomains, let $j$ be a closed modality, and let $j'$ be a modality such that
  $j'\imp j$. Let $H\colon B\to B'$ be an approximable mapping. If $B$ and $B'$ have binary joins
  and $H$ is dense then the left-hand diagram commutes; if $B$ is rounded then the right-hand
  diagram commutes:
  \[
  \begin{tikzcd}[column sep=large]
    \RId_j(B)\ar[d, hook]\ar[r, "\RId_j(H)"]&\RId_j(B')\ar[d, hook]\\
    \RId_{j'}(B)\ar[r, "\RId_{j'}(H)"']&\RId_{j'}(B')
  \end{tikzcd}
  \hspace{.7in}
  \begin{tikzcd}[column sep=large]
    \Omega_j(B)\ar[d, hook]\ar[from=r, "\Omega_j(H)"']&\Omega_j(B')\ar[d, hook]\\
    \Omega_{j'}(B)\ar[from=r, "\Omega_{j'}(H)"]&\Omega_{j'}(B')
  \end{tikzcd}
  \]
\end{proposition}
\begin{proof}
  Suppose $jP=\phi\vee P$; it suffices to prove the result for $j'=\id$. For the first diagram,
  given $I\in\RId_j(B)$, we want to show that given any $b'\in B'$ the following holds:
  \[
    j\exists(b:B)\ldotp Ib\wedge H(b,b')\iff \exists(b:B)\ldotp Ib\wedge H(b,b')
  \]
  It suffices to show $\phi\imp \exists(b:B)\ldotp Ib\wedge H(b,b')$. Since $H$ is dense, we have
  $\exists b\ldotp H(b,b')$, and since $I$ is $j$-closed, we have $\forall b\ldotp\phi\imp Ib$, and
  the result follows.

  For the second diagram take $U'\in\Omega_j(B')$ and $b\in B$. We want to show
  \[
    j\exists(b':B')\ldotp U'b'\wedge H(b,b')\iff \exists(b':B')\ldotp U'b'\wedge H(b,b')
  \]
  Again it suffices to show $\phi\imp\exists(b':B')\ldotp U'b'\wedge H(b,b')$, and again since $U'$
  is $j$-closed, we have $\forall b'\ldotp\phi\imp U'b'$. Since $B$ is rounded, $\exists b'\ldotp
  H(b,b')$, and the result follows.
\end{proof}

One proves the following proposition similarly.\index{approximable mapping!dense}

\begin{proposition}\label{prop:closed_modality_denseness_lift}\index{arithmetic!mixed modality}
  Let $B_1$, $B_2$, and $B_3$ be predomains with binary joins. Let $\phi_1,\phi_2,\phi_3:\Prop$ be
  such that $(\phi_1\vee\phi_2)\imp\phi_3$, and let $j_1$, $j_2$, and $j_3$ be the corresponding
  closed modalities. Let $j'$ be any modality such that $j'P\imp(j_1P\wedge j_2P\wedge j_3P)$ for
  all $P$. Let $H\colon B_1\times B_2\to B_3$ be a dense approximable mapping. Then the dotted lift
  exists in the diagram:
  \[
  \begin{tikzcd}[column sep=large]
    \RId_{j_1}(B_1)\times\RId_{j_2}(B_2)\ar[r, dashed]\ar[d, hook]&\RId_{j_3}(B_3)\ar[d, hook]\\
    \RId_{j'}(B_1)\times\RId_{j'}(B_2)\ar[r, "\RId_{j'}(H)"']&\RId_{j'}(B_3)
  \end{tikzcd}
  \]
\end{proposition}

\begin{lemma}\label{lemma:j_inclusion_continuous}
  Suppose $B$ is any predomain, $j$ is a closed modality with $jP\coloneqq\phi\vee P$, and take any
  $I\in\RId(B)$ and $U\in\Omega_j(B)$. Then $(j\exists b\ldotp Ib\wedge Ub)\iff (\exists b\ldotp
  Ib\wedge Ub)$.
\end{lemma}
\begin{proof}
  It suffices to show $\phi\imp\exists b\ldotp Ib\wedge Ub$. Since ideals are inhabited, we have
  $\exists b\ldotp Ib$, and since $U$ is $j$-closed, $\forall b\ldotp\phi\imp Ub$. The result
  follows directly from there.
\end{proof}

\begin{proposition}\label{prop:closed_j_continuous}
  Let $B$ be a predomain, let $j$ be a closed modality, and let $j'$ be a modality such that $j'\imp
  j$. Then the inclusion $\RId_j(B)\ss\RId_{j'}$ from \cref{prop:RId_inclusion_closed_modality} is
  continuous.
\end{proposition}
\begin{proof}
  To see that $\RId_j(B)\ss\RId_{j'}$ is continuous, it suffices by \cref{lemma:cont_maps_chu} to
  show that $(I\models_j jU) \iff (I\models_{j'}U)$ for any $I\in\RId_{j}(B)$ and
  $U\in\Omega_{j'}(B)$. This translates to showing $(j\exists b\ldotp Ib \wedge jUb) \iff^?
  (j'\exists b\ldotp Ib \wedge Ub)$, and it suffices to show $(j\exists b\ldotp Ib\wedge Ub) \imp^?
  \exists b\ldotp Ib\wedge Ub$, which is \cref{lemma:j_inclusion_continuous}.
\end{proof}

\subsection{Constant predomains and ideals}\label{sec:constant_predomain}\index{predomain!constant}

In the main body of this book, we consider a sheaf topos $\BaseTopos$ for which constant sheaves $\tConst$ have a number of special properties. One of the most useful is \cref{eqn:constant_fact} shown below, which Johnstone calls the ``dual Frobenius rule''; see \cite[Example C.1.1.16e]{Johnstone:2002a}.\index{dual Frobenius rule}\index{type!constant}%
\footnote{The dual Frobenius rule shows up in the main body of this book as \cref{ax:distributivity}.}
  \begin{equation}\label{eqn:constant_fact}
    \forall(P:\Prop)(P':\tConst\to\Prop)\ldotp\left[ \forall(c:\tConst)\ldotp P\vee
    P'(c) \right] \imp \left[ P \vee \forall(c:\tConst)\ldotp P'(c) \right].
  \end{equation}
This property does not hold for constant sheaves in an arbitrary topos $\cat{E}$. However it does hold if either
\begin{itemize}
	\item $\tConst$ is finite (i.e.\ a finite coproduct $\tConst=1+1+\cdots+1$ of copies of the terminal object) or
	\item there is a defining site $(\cat{S},\chi)$ for $\cat{E}$ such that every covering family in $\chi$ is filtered.
\end{itemize}
For example, the dual Frobenius property holds for constant objects in any presheaf topos. It also holds in $\BaseTopos=\Shv{\BaseSite}$ because every covering family in $\BaseSite$ is filtered; see \cref{def:BaseSite}. Since our main application of the material in this section is to the specific topos $\BaseTopos$, we use the name ``constant predomains'' to refer to predomains having this property.

\begin{definition}[Constant predomains, ideals, decidable mappings]
\label{def:predom_const_approx_decidable}
  In a topos $\cat{E}$, we say that a predomain $(B,\prec)$ is
  \emph{constant}\index{predomain!constant} if $B$ satisfies \cref{eqn:constant_fact} and $\prec:B\times
  B\to\Prop$ is a decidable predicate, i.e.\ $\forall(b,b':B)\ldotp(b'\prec b)\vee\neg(b'\prec b)$.

  If $B$ and $B'$ are constant predomains, say that an approximable mapping $H\colon B\to B'$ is
  \emph{decidable} if $H(b,b')\vee\neg H(b,b')$ holds for all $b:B$ and $b':B'$.

  Finally, suppose $B$ is a constant predomain and $j$ is a modality. Say that a $j$-rounded ideal
  $I\in\RId_j(B)$ is \emph{$j$-constant} if it satisfies $\forall(b:B)\ldotp Ib\vee(Ib\imp j\bot)$.
  We denote the type of $j$-constant rounded ideals in $B$ by $\cRId_j(B)$.

\end{definition}\index{approximable mapping!decidable}

\begin{example}\label{ex:constant_predomains}
  The relation $<$ on $\tQQ$ is decidable in any topos $\cat{E}$. Thus if $\tQQ$ satisfies \cref{eqn:constant_fact} then the predomain $(\QQ,<)$, as well as all its variants from \cref{ex:predomains}),
  are constant in $\cat{E}$. For example, this is the case in $\BaseTopos$. Each of the variants is also rounded and has conditional joins in
  the sense of \cref{def:predomains_rounded_specs}. In \cref{prop:predom_to_RId_j}, we show that
  elements of the predomain are constant as elements of the associated domain.
\end{example}

\begin{proposition}
\label{prop:predom_to_RId_j}\index{predomain!rounded}\index{predomain!conditional joins}
  Suppose that $(B,\prec)$ is a constant predomain that is rounded and has conditional joins, and
  let $j$ be a modality. For any $b:B$, the predicate $j(b'\prec b)$ defines a map $B\to\RId_j(B)$,
  which we denote $\downclose_j$, and every rounded ideal of the form $\downclose_jb$ is
  $j$-constant:
  \[
  \begin{tikzcd}
    &\cRId_j(B)\ar[d, hook]\\
    B\ar[r, "\downclose_j"']\ar[ru, dashed]&\RId_j(B)\\
  \end{tikzcd}
  \]
  In particular, the constant elements form a basis of the domain.
\end{proposition}
\begin{proof}
  The first claim is obvious; see \cref{lemma:rcj_downclosure_RId}. To check the second claim, take
  $b,b':B$. From the fact that $\prec$ is decidable, we get  $(b'\prec b)\vee((b'\prec b)\imp\bot)$,
  hence $j(b'\prec b)\vee((b'\prec b)\imp j\bot)$ as desired. For the third claim, we use the fact
  that any superset of a basis is a basis; see \cref{prop:basis_domain}.
\end{proof}

\begin{proposition}\label{prop:j_preserves_constants}
  Let $B$ be a constant predomain and let $j'$ and $j$ be modalities such that $j'\imp j$. Then the
  dotted lift exists in the diagram:
  \[
  \begin{tikzcd}
    \cRId_{j'}(B)\ar[d, hook]\ar[r, dashed]&\cRId_{j}(B)\ar[d, hook]\\
    \RId_{j'}(B)\ar[r, "j"']&\RId_{j}(B)
  \end{tikzcd}
  \]
\end{proposition}
\begin{proof}
  It is straightforward to check that $Ib\vee (Ib\imp j'\bot)$ implies $jIb\vee (jIB\imp j\bot)$.
\end{proof}

\begin{proposition}\label{prop:decidable_approx_pres_consts}
  Let $B$ and $B'$ be constant predomains, let $j$ be a modality, and let $H:B\to B'$ be a
  decidable approximable mapping. Then the dotted lift exists in the diagram:
  \[
  \begin{tikzcd}[column sep=large]
    \cRId_j(B)\ar[d, hook]\ar[r, dashed]&\cRId_j(B')\ar[d, hook]\\
    \RId_{j}(B)\ar[r, "\RId_{j}(H)"']&\RId_{j}(B')
  \end{tikzcd}
  \]
\end{proposition}
\begin{proof}
  Choose $I\in\cRId_j(B)$ and let $I'\coloneqq\RId_j(H)(I)$, so $I'b'\iff j\exists(b:B)\ldotp
  Ib\wedge H(b,b')$. We want to show that $I'b'\vee(I'b'\imp j\bot)$ for any $b':B'$, so fix $b'$. For any $b:B$,
  since $Ib\vee (Ib\imp j\bot)$ and $H(b,b')\vee\neg H(b,b')$, it follows that $(Ib\wedge
  H(b,b'))\vee((Ib\wedge H(b,b'))\imp j\bot)$, and thus
  \[
    \forall(b:B)\ldotp I'b'\vee((Ib\wedge H(b,b'))\imp j\bot).
  \]
  By \cref{eqn:constant_fact}, we get $I'b'\vee\forall(b:B)\ldotp(Ib\wedge H(b,b'))\imp j\bot$, and
  this implies the result.
\end{proof}

\begin{proposition}\label{prop:j1j2j3j'_decidable_const}
  Let $B_1$, $B_2$, and $B_3$ be constant predomains with binary joins. Let
  $\phi_1,\phi_2,\phi_3:\Prop$ be such that $(\phi_1\vee\phi_2)\imp\phi_3$, and let $j'_1$, $j'_2$,
  and $j'_3$ be the corresponding closed modalities. Let $j$ be any modality such that
  $jP\imp(j'_1P\wedge j'_2P\wedge j'_3P)$ for all $P$. Let $H\colon B_1\times B_2\to B_3$ be a
  decidable dense approximable mapping. Then the dotted lifts exist in the diagram:
  \[
  \begin{tikzcd}[column sep=large]
    \cRId_{j'_1}(B_1)\times\cRId_{j'_2}(B_2)\ar[r, dashed]\ar[d, hook]&\cRId_{j'_3}(B_3)\ar[d, hook]\\
    \RId_{j'_1}(B_1)\times\RId_{j'_2}(B_2)\ar[r, dashed]\ar[d, hook]&\RId_{j'_3}(B_3)\ar[d, hook]\\
    \RId_{j}(B_1)\times\RId_{j}(B_2)\ar[r, "\RId_{j}(H)"']&\RId_{j}(B_3)
  \end{tikzcd}
  \]
\end{proposition}
\begin{proof}
  The bottom lift exists by \cref{prop:closed_modality_denseness_lift}, so we check the top one.
  Choose $I_1\in\cRId_{j'_1}(B_1)$ and $I_2\in\cRId_{j'_2}(B_2)$, and let
  $I_3\coloneqq\RId_{j}(H)(I_1,I_2)$. Then by using the bottom lift we have
  \[
    I_3b_3\iff \phi_3\vee\exists(b_1:B_1)(b_2:B_2)\ldotp I_1b_1\wedge I_2b_2\wedge H(b_1,b_2,b_3)
  \]
  for any $b_3:B_3$. We want to show $I_3b_3\vee(I_3b_3\imp\phi_3)$.

  For any $b_1:B_1$ and $b_2:B_2$, we have $I_1b_1\vee(I_1b_1\imp\phi_1)$ and
  $I_2b_2\vee(I_2b_2\imp\phi_2)$, as well as $H(b_1,b_2,b_3)\vee\neg H(b_1,b_2,b_3)$, and it follows
  that
  \[
    \forall(b_1,b_2)\ldotp
      (I_1b_1\wedge I_2b_2\wedge H(b_1,b_2,b_3)) \vee
      ((I_1b_1\wedge I_2b_2\wedge H(b_1,b_2,b_3)) \imp \phi_3)
  \]
  Clearly $(I_1b_1\wedge I_2b_2\wedge H(b_1,b_2,b_3))\imp I_3b_3$. By
  \cref{eqn:constant_fact} we have $I_3b_3\vee\forall(b_1,b_2)\ldotp(I_1b_1\wedge I_2b_2\wedge
  H(b_1,b_2,b_3)\imp\phi_3)$, which immediately implies the result.
\end{proof}

\subsection{Dense modalities}

\begin{definition}[Dense modality]\label{def:dense_modality}\index{modality!dense}\index{dense!modality|see {modality, dense}}
  A modality $j$ is called \emph{dense} if $j\bot \imp \bot$.
\end{definition}

\begin{proposition}\label{prop:dense_decidable_closed}
  If $j$ is a dense modality, then any decidable proposition $P$ is $j$-closed.
\end{proposition}
\begin{proof}
  Suppose $P\vee\neg P$. If $jP$ and $\neg P$, then $j\bot$, hence $\bot$. Thus we have $(jP\wedge
  P)\imp P$ and $(jP\wedge\neg P)\imp P$, giving $jP\imp P$.
\end{proof}

\subsection{Proper modalities}\label{ssec:proper_modalities}\index{modality!proper}\index{proper|see {modality, proper}}

\begin{definition}[Directed preorder, directed join, continuous inclusion]
\index{preorder!directed}
  A \emph{directed preorder} is a pair $(D,\leq)$ satisfying
  \begin{itemize}
    \item $\exists(d:D)\ldotp\top$,
    \item $\forall(d:D)\ldotp d\leq d$,
    \item $\forall(d_1,d_2,d_3:D)\ldotp(d_1\leq d_2)\imp(d_2\leq d_3)\imp(d_1\leq d_3)$, and
    \item $\forall(d_1,d_2:D)\ldotp \exists(d':D)\ldotp (d_1\leq d')\wedge(d_2\leq d')$.
  \end{itemize}
  Recall that a predicate $F:D\to\Prop$ is monotonic if $(d_1\leq d_2)\imp Fd_1\imp Fd_2$. Given a
  monotonic predicate $F$, we refer to the proposition $\exists(d:D)\ldotp Fd$ as the \emph{(directed)
  join of $F$, indexed by $D$}.

  Let $j$ be a modality. We will say the inclusion $\Prop_j\ss\Prop$ is \emph{continuous} if it
  preserves directed joins indexed by constant types. More precisely, the inclusion is continuous
  if, for every directed preorder $(D,\leq)$ such that $D$ is constant in the sense of
  \cref{sec:constant_predomain} and every monotonic $j$-closed predicate $F: D\to\Prop_j$, the
  following implication holds: $(j\exists(d:D)\ldotp Fd) \imp (\exists(d:D)\ldotp Fd)$.
\end{definition}

\begin{proposition}\label{prop:pi_equivalent_properties}
  Let $j$ be a modality. The following are equivalent:
  \begin{enumerate}
    \item The inclusion $\Prop_j\subseteq\Prop$ is continuous in the above sense.
    \item If $C$ is any inhabited constant object and $P: C\to\Prop$ is a subset such that
      \begin{equation}\label{eq:pi_equivalent_properties}
        \forall(c_1,c_2:C)\ldotp\exists(c':C)\ldotp (P(c_1)\imp P(c'))\wedge(P(c_2)\imp P(c')).
      \end{equation}
      then $(j\exists(c:C)\ldotp P(c)) \imp \exists(c:C)\ldotp jP(c)$.
  \end{enumerate}
\end{proposition}
\begin{proof}
  $1\imp 2$) Define an order on $C$ by $(c_1\leq c_2) \iff (P(c_1)\imp P(c_2))$. Then clearly $C$ is
  directed and $P\colon C\to \Prop$ is monotonic, hence $jP$ is monotonic and $j$-closed, therefore
  $j\exists(c:C)\ldotp P(c)$ implies $j\exists(c:C)\ldotp jP(c)$, which implies $(\exists(c:C)\ldotp
  jP(c))$.

  $2\imp 1$) Let $D$ be a constant, directed preorder, and $F\colon D\to\Prop_j$ a monotonic map.
  Then $D$ clearly satisfies \eqref{eq:pi_equivalent_properties}, hence $j\exists(d:D)\ldotp Fd$
  implies $\exists(d:D)\ldotp jFd$, which implies $\exists(d:D)\ldotp Fd$.
\end{proof}

\begin{definition}[Proper]\index{modality!proper}
  A modality $j$ satisfying the equivalent conditions of \cref{prop:pi_equivalent_properties} will
  be called \emph{proper}.
\end{definition}

\begin{proposition}\label{prop:proper_domain_embedding}
  For any proper modality $j$, and any constant predomain $(B,\prec)$ with binary meets, there is an
  inclusion $\RId_j(B)\subseteq\RId(B)$.
\end{proposition}
\begin{proof}
  Let $I\in\RId_j(B)$; we want to show that $I\in\RId(B)$. By \cref{lemma:rounded_ideals} we need to
  show that $\exists(b:B)\ldotp Ib$, and that if $Ib_1$ and $Ib_2$, then $\exists(b':B)\ldotp
  (b_1\prec b') \wedge (b_2\prec b') \wedge Ib'$. Both easily follow from
  \eqref{eq:pi_equivalent_properties} by letting $C\coloneqq B$ in the first case and
  $C\coloneqq\{b:B\mid (b_1\prec b) \wedge (b_2\prec b)\}$ in the second, and in both cases using
  $P\coloneqq I$ and letting $c'\coloneqq c_1\binmeet c_2$ be the meet.
\end{proof}

\begin{example}\label{ex:pi_proper}
  We saw in \cref{lem:exists_Pi_closed} that $\pi$ is a proper modality. By \cref{cor:pi_negneg} it
  is also dense in the sense of \cref{def:dense_modality}, so \cref{prop:proper_dense_constants}
  applies.
\end{example}

\begin{proposition}\label{prop:proper_dense_constants}
  For any modality $j$ which is proper and dense, and any constant predomain $B$ with binary meets,
  the inclusion $\cRId(B)\subseteq\RId(B)$ factors through $\RId_j(B)\subseteq\RId(B)$, and in fact
  $\cRId(B)=\cRId_j(B)$.
\end{proposition}
\begin{proof}
  The first claim follows immediately from
  \cref{prop:dense_decidable_closed,prop:proper_domain_embedding}, and the second claim is then
  obvious, since $j\bot \iff \bot$.
\end{proof}

We've seen in \cref{prop:RId_inclusion_closed_modality} statements about when inclusions of the form
$\RId_j(B)\subseteq\RId_{j'}(B)$ commute with approximable mappings, where our primary interest is
in the case $j'=\id$. The analagous statements about proper modalities turn out to be somewhat more
subtle.

\begin{proposition}\label{prop:proper_constant}
  Let $j$ be a proper modality, and let $H\colon B\to B'$ be any approximable mapping, where $B$ and
  $B'$ are constant predomains with binary meets. Then the square in the diagram below need not
  commute, but it does upon restriction to $\cRId(B)$:
  \[
  \begin{tikzcd}
    \cRId(B) \ar[r,hook] &
    \RId_j(B) \ar[r,"\RId_j(H)"] \ar[d,hook] &[12pt]
    \RId_j(B') \ar[d,hook] \\
    & \RId(B) \ar[r,"\RId(H)"'] &
    \RId(B')
  \end{tikzcd}
  \]
\end{proposition}
\begin{proof}
  Let $I\in\cRId(B)$. We want to show that for any $b'\in B'$, the proposition $\exists(b:B)\ldotp
  Ib\wedge H(b,b')$ is $j$-closed. The type $\{b:B\mid Ib\}$ is constant and, with the
  lower-specialization order $\specord$ on $B$, it is also directed. Moreover, it follows directly
  from \cref{def:approx_map} that if $b_1\specord b_2$ then $H(b_1,b')\imp H(b_2,b')$. Therefore,
  because $j$ is proper, $\exists(b:B\mid Ib)\ldotp H(b,b')$ is $j$-closed.
\end{proof}

We say that $H\colon B\to B'$ is \emph{decidable}\index{approximable mapping!decidable} if it is
decidable as a relation, i.e.\ $\forall b,b'\ldotp H(b,b')\vee\neg H(b,b')$. Recall the notion of
filtered approximable mappings from \cref{def:Lawson}.

\begin{proposition}\label{prop:proper_Lawson}
  Let $j$ be a proper modality, let $B$ and $B'$ be constant predomains with binary meets, and let
  $H\colon B\to B'$ be a decidable filtered approximable mapping. Then the diagram commutes:
  \[
    \begin{tikzcd}
      \RId_j(B) \ar[r,"\RId_j(H)"] \ar[d,hook] &
      \RId_j(B') \ar[d,hook] \\
      \RId(B) \ar[r,"\RId(H)"'] &
      \RId(B')
    \end{tikzcd}
  \]
\end{proposition}
\begin{proof}
  Let $I\in\RId_j(B)$. We want to show that for any $b'\in B'$, the proposition $\exists(b:B)\ldotp
  Ib\wedge H(b,b')$ is $j$-closed. Because $H$ is decidable, the type $\{b:B\mid H(b,b')\}$ is
  constant, and because $H$ is filtered, $H(b_1,b')$ and $H(b_2,b')$ implies $H(b_1\binmeet
  b_2,b')$. We also have $I(b_1)\imp I(b_1\binmeet b_2)$ and $I(b_2)\imp I(b_1\binmeet b_2)$, so
  because $j$ is proper, $\exists(b:B\mid H(b,b'))\ldotp I(b)$ is $j$-closed.
\end{proof}

\Cref{prop:proper_constant,prop:proper_Lawson} can be usefully combined, as in the following
proposition, whose proof is similar to the previous two and is left to the reader.

\begin{proposition}\label{prop:proper_constant_Lawson}
  Let $j$ be a proper modality, let $B_1$, $B_2$, and $B_3$ be constant predomains with binary
  meets, and let $H\colon B_1\times B_2 \to B_3$ be a decidable approximable mapping which is
  filtered in the second variable, i.e.\ $H((b_1,b_2),b_3)$ and $H((b_1,b'_2),b_3)$ implies
  $H((b_1,b_2\binmeet b'_2),b_3)$. Then the square in the diagram below need not commute, but it
  does upon restriction to $\cRId(B_1)\times\RId_j(B_2)$:
  \[
    \begin{tikzcd}
      \cRId(B_1)\times\RId_j(B_2) \ar[r,hook] &
      \RId_j(B_1)\times\RId_j(B_2) \ar[r,"\RId_j(H)"] \ar[d,hook] &
      \RId_j(B_3) \ar[d,hook] \\
      & \RId(B_1)\times\RId(B_2) \ar[r,"\RId(H)"'] &
      \RId(B_3)
    \end{tikzcd}
  \]
\end{proposition}

We can also establish analogues of the previous propositions for opens, rather than approximable
mappings, either by using essentially the same proofs, or by making use of the observation that
opens in a predomain $B$ are ``the same as'' approximable mappings $B\to 2$ to the two element
linear poset. We state the analogue of \cref{prop:proper_constant_Lawson} for the record.

\begin{proposition}\label{prop:proper_constant_Lawson_opens}
  Let $j$ be a proper modality, let $B_1$ and $B_2$ be constant predomains with binary
  meets, and let $U\in\Opens(B_1\times B_2)$ be a decidable open which is filtered in the second
  variable, i.e.\ $U(b_1,b_2)$ and $U(b_1,b'_2)$ implies $U(b_1,b_2\binmeet
  b'_2)$. Then the square in the diagram below need not commute, but it does upon restriction
  to $\cRId(B_1)\times\RId_j(B_2)$:
  \[
    \begin{tikzcd}
      \cRId(B_1)\times\RId_j(B_2) \ar[r,hook] &
      \RId_j(B_1)\times\RId_j(B_2) \ar[r,"{}\vDash_j U"] \ar[d,hook] &
      \Prop_j \ar[d,hook] \\
      & \RId(B_1)\times\RId(B_2) \ar[r,"{}\vDash U"'] &
      \Prop
    \end{tikzcd}
  \]
\end{proposition}

\chapter{\texorpdfstring{$\IRinv$}{\unichar{"1D540}\unichar{"211D}\unichar{"2215}\unichar{"25B9}} as a continuous category}
\label{chap:IRinv_continuous}\index{category!continuous}

Recall the category $\IRinv$ from \cref{def:IRinv}, which is a the translation-invariant version of the interval domain
$\IR$. Whereas $\IR$ is a continuous poset\index{domain}\index{category!continuous}, which we and
other authors call a domain, $\IRinv$ is a continuous category. We begin this chapter with a review
of continuous categories.\index{intervals!translation-invariant}

\section{Review of continuous categories}\label{sec:review_cont_cats}

Throughout the book, we have made significant use of the fact that $\IR$ is a continuous poset---or
domain---in understanding the semantics of the topos $\BaseTopos$. There is a strong relationship
between $\IR$ and the its quotient $\IRinv$ on which $\BaseTopos$ is the topos of sheaves. Unlike $\IR$, the category
$\IRinv$ is not a poset, let alone a domain, but we will be able to generalize much of the story
about $\IR$ to $\IRinv$, making use of the theory of \emph{continuous categories}. Continuous
categories are a generalization---first described in \cite{Johnstone.Joyal:1982a}---of continuous
posets. This section (\ref{sec:review_cont_cats}) is largely a review of
\cite{Johnstone.Joyal:1982a}, or of \cite[Ch.\ C4]{Johnstone:2002a}, so we only sketch the proofs in
this section.

\begin{definition}[Filtered categories]
  A category $I$ is \emph{filtered} if every finite diagram in $I$ has a cocone in $I$.
  \index{category!filtered}

  Let $C$ be a category. The category $\Ind{C}$ has as objects pairs $(I,F)$, where $I$ is a filtered
  category and $F\colon I\to C$ is a functor. The morphisms are given by
  \[
    \Ind{C}\big((I,F),(J,G)\big)\coloneqq\lim_{i\in I\op}\colim_{j\in J}C(Fi,Gj).
  \]
\end{definition}

Although the above definition of morphisms in $\Ind(C)$ may look convoluted, it is justified by seeing $\Ind$-objects as presheaves, as we now explain.

For any object $c\in C$, the over-category $C/c$ is filtered, so the forgetful functor $C/c\to C$ is
an object of $\Ind(C)$.\index{object!Ind} Its opposite can be identified with the category of
elements of the representable functor, $(C/c)\cong\el(yc)\op$, where $yc=C(-,c)$. Noting that any
object $(I,F)\in\Ind(C)$ can be regarded as a presheaf by $\colim(I\To{F}C\To{y}\Psh{C})$ leads to
the following.

\begin{proposition}\label{prop:ind_characterizations}
  For any category $C$, the following categories are equivalent:
  \begin{itemize}
    \item the category $\Ind{C}$,
    \item the full subcategory of $\Psh{C}$ spanned by filtered colimits of representables,
    \item the full subcategory of $\Psh{C}$ spanned by functors $F\colon C\op\to\Cat{Set}$ for which
      $\el(F)\op$ is filtered.
    \item the full subcategory of $\Psh{C}$ spanned by flat functors $F\colon C\op\to\Cat{Set}$.\index{functor!flat}
  \end{itemize}
\end{proposition}

The third and fourth are exactly the same; the former is the definition of flat in this case. Recall
from \cref{def:ideal_dcpo} the notion of ideals in a poset, namely nonempty, down-closed,
up-directed subsets. The following is \cite[Lemma 1.1]{Johnstone.Joyal:1982a}.

\begin{lemma}\label{lem:Ind_generalizes_Id}
  If $P$ is a poset, then $\Ind{P}$ is equivalent to the category of ideals in $P$.\index{ideal}
\end{lemma}

Recall from \cref{prop:directed_complete_poset,prop:cont_poset} that a poset $P$ is
directed-complete iff the canonical map $P\to\Id(P)$ has a left adjoint $\sup$, and that in this
case $P$ is continuous iff $\sup$ has a further left adjoint. In light of
\cref{lem:Ind_generalizes_Id}, \cref{prop:filtered_colims_adjoint,def:continuous_cat} generalize the
definition of continuous posets to categories.

\begin{proposition}\label{prop:filtered_colims_adjoint}
  A category $C$ has filtered colimits iff the Yoneda embedding $y\colon C\to\Ind{C}$ has a left
  adjoint, which we denote $\colim\colon\Ind{C}\to C$.
\end{proposition}

\begin{definition}[Continuous categories]\label{def:continuous_cat}\index{category!continuous}
  A category $C$ is \emph{continuous} if it has filtered colimits and the functor
  $\colim\colon\Ind{C}\to C$ has another left adjoint
  \[
    \begin{tikzcd}[column sep=50]
      C \ar[r,shift left=3, "y"]
        \ar[r, shift right=3,"\wb"']
      &\Ind{C} \ar[l,"\colim" description]
    \end{tikzcd}
  \]
  From the embedding $\Ind{C}\to\Psh{C}$, we can think of $\wb\colon C\to\Ind{C}$ as defining a
  profunctor, which we denote by
  \begin{equation}\label{eqn:abstract_waybelow}
    \WB{-}{-}\colon C\op\times C\to\Cat{Set},
  \end{equation}
  i.e.\ $\WB{c}{d}\coloneqq \wb(d)(c)$.
\end{definition}

It is easy to see that $c\cong\colim(yc)$ for any $c\in C$, so we obtain a map $i\colon\wb(c)\to
yc$. Thus for any $c,c'\in C$ and $f\in\WB{c}{c'}$, following \cite{Johnstone.Joyal:1982a}, we write
$f\colon c\wavyto c'$ and call it a \emph{wavy arrow}\index{wavy arrow}. The map $i$ is not a
monomorphism in general, so a wavy arrow is additional structure on---not just a property of---an
arrow. We will see later in \cref{prop:IRinv_continous} that in $\IRinv$, this notion of wavy arrow
coincides with the one we defined earlier in \cref{def:wavy_arrow}.

\begin{example}
  Any accessible category $C$ is continuous. The extra left adjoint $\wb\colon C\to\Ind{C}$ sends
  any object to the diagram of compact objects over
  it.\index{object!compact}\index{category!accessible}
\end{example}

\begin{remark}\label{rmk:Ind_KZ}\index{monad!KZ}
  The 2-functor $\Fun{Ind}\colon\Cat{Cat}\to\Cat{Cat}$ is the free cocompletion under directed
  colimits, and like all free cocompletions it has the structure of a \emph{lax idempotent 2-monad}
  (also known as a \emph{KZ-monad} or \emph{KZ-doctrine})
  \cite{Kock:1995a}.\index{monad!KZ}\index{object!$\Ind$} In particular this means that the
  structure map of an algebra $\colim_C\colon\Ind{C}\to C$ is always left adjoint to the unit of the
  monad $y\colon C\to\Ind{C}$, as in \cref{prop:filtered_colims_adjoint}. The category $\Ind{C}$ has
  filtered colimits, and they give the components of the monad multiplication,
  $\colim_{\Ind{C}}\colon\Ind{\Ind{C}}\to\Ind{C}$.

  When the structure map $\colim$ of an algebra has a further left adjoint $\wb$, it follows that $\wb\colon C\to\Ind{C}$
  is an algebra homomorphism (in this case, preserves directed colimits), and a section of $\colim$
  \cite[B.1.1.15]{Johnstone:2002a}.
  That is,
  \begin{gather}
    \label{eq:W_alg_hom}
    \textstyle \colim_{\Ind{C}}\circ\Ind{\wb} \iso \wb\circ\colim_C \\\nonumber
    \textstyle \colim_C\circ \wb \iso \id_C.
  \end{gather}
\end{remark}

In a continuous poset, the way-below relation is interpolative.\index{interpolative} In
\cite{Johnstone.Joyal:1982a} Joyal and Johnstone prove the analogous fact for any continuous
category $C$, namely that the following natural map is an isomorphism
\begin{equation}\label{JJ:idempotence}
  \int^{\ell'}\WB{\ell}{\ell'}\times\WB{\ell'}{\ell''}\To{\cong}\WB{\ell}{\ell''}.
\end{equation}
Thus we have an idempotent comonad on $C$ in the category of profunctors.\index{interpolative!wavy
arrows as}

\section{The (connected, discrete bifibration) factorization system}
\label{sec:connected_dbifib_factorization}
\index{orthogonal factorization system}\index{functor!discrete bifibration}\index{functor!connected}

In order to prove that $\IRinv$ is a continuous category, it will be useful to use the (connected,
discrete bifibration) orthogonal factorization system on $\Cat{Cat}$, which we learned about from
Andr\'{e} Joyal's (semi-defunct) ``CatLab''.%
\footnote{
  See \url{https://ncatlab.org/joyalscatlab/published/Factorisation+systems\#in_cat}.
}
Since we can find neither a reference for this material nor proofs in the literature, we describe it
below, assuming the reader is aware of the orthogonal factorization systems (final, discrete
fibration) and (initial, discrete opfibration) on $\Cat{Cat}$ due to Street and Walters
\cite{Street.Walters:1973a}.

Recall that the fully faithful functor $U\colon\Cat{Grpd}\to\Cat{Cat}$ has a left adjoint, given on
a category $C$ by adding a formal inverse to each morphism in $C$. An explicit description is given
in \cref{lemma:Gpd_zigzag}. We denote both this groupoidification functor\index{groupoidification}
and the corresponding idempotent monad\index{monad!idempotent} by
$\Gamma\colon\Cat{Cat}\to\Cat{Grpd}\ss\Cat{Cat}$, and we denote the unit of the adjunction by
$\eta\colon\id_{\Cat{Cat}}\to\Gamma$.\index{zigzag}

\begin{lemma}\label{lemma:Gpd_zigzag}
  For any category $C$, the unit $\eta_C\colon C\to \Gamma C$ is identity on objects. A morphism
  $c\to d$ in $\Gamma C$ can be identified with a zigzag $Z_n\to C$, up to the congruence defined
  below. A zigzag is a diagram in $C$ of the following form for some $n\in\NN$:
  \[
    c=z_0\To{f_1} z_1'\From{f_1'}\cdots\To{f_n} z_n'\From{f_n'} z_n=d
  \]
  which we denote by the list $(f_n',f_n,\ldots,f_1',f_1)$. The identity zigzag is $()$, i.e.\ the
  one of length $n=0$, and composition is given by list concatenation. The congruence is generated
  by the relations
  \begin{gather*}
    (f_2',f_2,\id,f_1)\sim(f_2',f_2\circ f_1)\hspace{.5in}
    (f_2',\id,f_1',f_1)\sim(f_2'\circ f_1', f_1)\\
    (f,f)\sim()\hspace{.5in}
    (f_2',f,f,f_1)\sim(f_2',f_1)
  \end{gather*}
  The inverse of a zigzag is given by reversing it.

  The unit $\eta_C$ sends $f\colon c\to d$ to $(\id_{d},f)$. If $G$ is a groupoid then $\eta_G$ is
  an isomorphism.
\end{lemma}

\begin{definition}[Connected]\label{def:connected}
\index{category!connected}\index{groupoid!connected}\index{functor!connected}
  A functor between groupoids is called \emph{connected} if it is essentially surjective and full. A
  functor $F$ between categories is called \emph{connected} if $\Gamma F$ is connected as a functor
  of groupoids. A groupoid or category is \emph{connected} if the unique functor to the terminal
  category  is connected.
\end{definition}

\begin{remark}
  Note that a groupoid $G$ is connected if and only if it has at least one object, and for any two
  objects $a,b\in G$ there exists a morphism $f\colon a\to b$. Thus a category $C$ is connected if
  and only if it has at least one object, and for any two objects $a,b\in C$ there exists a zig-zag
  from $a$ to $b$.
\end{remark}

Recall that a groupoid is called contractible if it is nonempty and each of its hom-sets has exactly
one element. The following is straightforward.

\begin{proposition}\label{prop:contractible groupoid}\index{groupoid!contractible}
  Let $G$ be a groupoid. The following are equivalent
  \begin{itemize}
    \item $G$ is contractible.
    \item There exists a connected functor $1\to G$.
    \item $G$ is nonempty and every functor $1\to G$ is connected.
  \end{itemize}
\end{proposition}

\begin{proposition}\label{prop:filtered_contractible_groupoid}
  If $I$ is a filtered category then there exists a connected functor $1\to I$.
\end{proposition}
\begin{proof}
  By \cref{prop:contractible groupoid} it suffices to show that $\Gamma I$ is contractible. Since
  $I$ is nonempty, so is $\Gamma I$, so take $i\in \Gamma I$. It suffices to show that $i$ has only
  one endomorphism, i.e., that any zigzag $(f_n',f_n,\ldots,f_1',f_1)\colon i\to i$ is equivalent to
  $()$. Note that for any commutative diagram of the following form,
  \[
  \begin{tikzcd}[sep=small]
    z_0\ar[dr,"f_1"']&&z_1\ar[dl,"f_1'"]\ar[dr,"f_2"']&&z_2\ar[dl,"f_2'"]\\
    &z_1'\ar[dr,"g_1"']&&z_2'\ar[dl,"g_2'"]\\
    &&z_1''
  \end{tikzcd}
  \]
  we have $(f_2',f_2,f_1',f_1)=(g_2'\circ f_2',g_1\circ f_1)$. The result then follows by induction
  on $n$, using the fact that $I$ is filtered.
\end{proof}

\begin{proposition}\label{prop:groupoid_factorization}
  Let $F\colon G\to G'$ be a functor between groupoids.
  \begin{itemize}
    \item $F$ is a discrete bifibration iff it is a discrete fibration iff it is a discrete
      opfibration.
    \item $F$ is connected iff it is initial iff it is final.
  \end{itemize}
  It follows that there is a (connected, discrete bifibration) orthogonal factorization system on
  $\Cat{Grpd}$.
\end{proposition}
\begin{proof}
  To prove the first claim, it suffices by duality to show that if $F$ is a discrete fibration then
  it is a discrete opfibration, and this is straightforward. The third claim follows from the first
  and second by the more general result for categories (\cite{Street.Walters:1973a}). For the second
  claim, $F$ is initial iff final by duality, so it remains to show it is final iff connected.

  By definition, $F$ is final iff for all $g'\in G'$ the comma category $(g'\down F)$ is nonempty
  and every two objects in it have a morphism between them. It is easy to check that $F$ is
  essentially surjective iff, for all $g'\in G'$ the comma category $(g'\down F)$ is nonempty. It is
  also easy to check that $F$ is full iff every two objects in $(g'\down F)$ have a morphism between
  them.
\end{proof}

Our first goal is to extend the (connected, discrete bifibration) orthogonal factorization system, given in \cref{prop:groupoid_factorization}, 
from $\Cat{Grpd}$ to $\Cat{Cat}$; see \cref{thm:OFS_connected_bifib}.

\begin{lemma}\label{lemma:pullback_bifibration_iso}
  A bifibration over a groupoidification is a groupoidification.
  
  More precisely, let $D$ be a
  category and suppose given a bifibration of groupoids $p\colon G\to\Gamma D$. Let $p'\colon C\to
  D$ be its pullback along $\eta_D$ as shown,
  \[
  \begin{tikzcd}
    &\Gamma C\ar[dr,"i"]\\[-10pt]
    C\ar[ur,"\eta_C"]\ar[d,"p'"']\ar[rr,"e"']\ar[dr,very near start, phantom,"\lrcorner"]&&G\ar[d,"p"]\\
    D\ar[rr,"\eta_D"']&~&\Gamma D
  \end{tikzcd}
  \]
  and let $i$ be the universal arrow as shown. Then $i$ is an isomorphism.
\end{lemma}
\begin{proof}
  We will find a functor $j\colon G\to \Gamma C$ inverse to $i$. The functors $\eta_D$ and $\eta_C$
  are identity on objects, so $e$ and $i$ are too. Hence we need only define $j$ on morphisms and
  prove that $i$ and $j$ are mutually inverse.

  Given a morphism $f\colon g\to g'$ in $G$, we may identify $p(f)$ with a zigzag $z\colon Z_n\to
  D$. Its image $\eta_D\circ z$ can be lifted uniquely along $p$ to a zigzag $z'\colon Z_n\to G$,
  because $p$ is a discrete bifibration, and it is easy to see that $z'\mapsto f$ under the
  isomorphism $\Gamma G\to G$. Since $p(f)=\eta_D\circ z$ and $C$ is a pullback, there is a unique
  functor $f'\colon Z_n\to C$ making the diagram commute: $e(f')=f$ and $p'(f')=z$. We define
  $j(f)\coloneqq\eta_C\circ f'$, and note that $(i\circ j)(f)=e(f')=f$. It is also easy to show that
  $j\circ i$ is the identity, completing the proof.
\end{proof}

\begin{proposition}\label{prop:pullback_bifibration}\index{discrete bifibration}
  Let $p\colon C\to D$ be a functor and $\Gamma p\colon\Gamma C\to\Gamma D$ its groupoidification.
  Then $p$ is a discrete bifibration iff $\Gamma p$ is a discrete bifibration and the
  $\eta$-naturality square below is a pullback:
  \[
  \begin{tikzcd}
    C\ar[r,"\eta_C"]\ar[d,"p"']&\Gamma C\ar[d,"\Gamma p"]\\
    D\ar[r,"\eta_D"']&\Gamma D\ar[from=ul, phantom, very near start, "\lrcorner"]
  \end{tikzcd}
  \]
\end{proposition}
\begin{proof}
  The pullback of a discrete bifibration is a discrete bifibration, so one direction is obvious.

  Suppose $p$ is a discrete bifibration, and consider the induced functor $P\colon D\to\Cat{Set}$
  whose category of elements is $p$. Note that for every morphism $f$ in $D$, the induced function
  $P(f)$ is a bijection. Hence $P$ factors as $D\To{\eta_D}\Gamma D\To{P'}\Cat{Set}$ for a unique
  functor $P'$, whose category of elements a discrete bifibration $p'\colon G\to\Gamma D$. The
  pullback of $p'$ along $\eta_D$ is $p$, and the result follows by
  \cref{lemma:pullback_bifibration_iso}.
\end{proof}

\begin{theorem}\label{thm:OFS_connected_bifib}
  There is an orthogonal factorization system $(\cat{L},\cat{R})$ on $\Cat{Cat}$, where $\cat{L}$
  consists of the connected functors and $\cat{R}$ consists of the discrete bifibrations.
\end{theorem}
\begin{proof}
  For any functor $f\colon C\to D$, form the following diagram from $\eta$'s naturality square
  \[
  \begin{tikzcd}[sep=15pt]
    C\ar[rr,"\eta_C"]\ar[dd,"f"']\ar[dr,"c"]&&\Gamma (C)\ar[dr,"c'"]\\
      &\wt{f}\ar[rr, near start, "e"']\ar[dl,"b"]&&\wt{\Gamma (f)}\ar[dl,"b'"]\\
    D\ar[rr,"\eta_D"']&&\Gamma (D)\ar[from=uu, crossing over, near start,"\Gamma (f)"']
  \end{tikzcd}
  \]
  by first factoring $\Gamma (f)=b'\circ c'$, where $c'$ is connected and $b'$ is a bifibration (see
  \cref{prop:groupoid_factorization}), then forming the pullback of $b'$ along $\eta_D$ to give $b$
  and $e$, and finally adding the universal morphism $c$ making the diagram commute. Clearly $b$ is
  a bifibration because it is the pullback of one.

  By \cref{lemma:pullback_bifibration_iso} the functor $e$ factors through an isomorphism
  $\Gamma(\wt{f})\cong\wt{\Gamma(f)}$. Hence we can identify $c'$ with $\Gamma(c)$, and by
  \cref{def:connected} $c$ is connected. This defines the (connected, discrete bifibration)
  factorization: $f=b\circ c$.

  It remains to show the orthogonality, so consider a solid-arrow square as below:
  \[
  \begin{tikzcd}
    X\ar[r,"f"]\ar[d,"c"']&C\ar[d,"b"]\\
    Y\ar[ur, dotted]\ar[r,"g"']&D
  \end{tikzcd}
  \]
  where $c$ is connected and $b$ is a discrete bifibration; we want to show there is a unique
  dotted-arrow $Y\to C$ making the diagram commute. By \cref{prop:pullback_bifibration}, the functor
  $\Gamma b$ is a bifibration and the front face in the commutative diagram below is a pullback:
  \[
  \begin{tikzcd}[sep=small]
    X\ar[rr]\ar[rd]\ar[dd, "c"']&&\Gamma X\ar[dd]\ar[dr]\\
    &C\ar[rr, crossing over]&&\Gamma C\ar[dd, "\Gamma b"]\\
    Y\ar[rr]\ar[rd]&&\Gamma Y\ar[dr]\ar[ur, dotted]\\
    &D\ar[from=uu, crossing over]\ar[rr]&&\Gamma D
  \end{tikzcd}
  \]
  where the back and right-hand faces are given by universality of $\Gamma$. The functor $\Gamma c$
  is connected by definition, so there is a unique dotted-arrow lift as shown. Since the front face
  is a pullback, we obtain a map $Y\to C$ making the diagram commute, as required.
\end{proof}

For any two categories $C,D$, let $C\join D$ denote their join. Recall the notation of linear order
categories---in particular $0$, $1$, and $2$---from \cref{not:basic_for_cats}. It follows easily
from \cref{def:connected} that if $C$ is a connected category, then so are the inclusions $j\colon
C\to(C\join 1)$ and $j'\colon C\to(1\join C)$.

\begin{proposition}\label{prop:bifib_preserves_reflects}\index{discrete bifibration!connected limits and}\index{discrete bifibration!connected colimits and}
  Suppose that $p\colon C\to D$ is a discrete bifibration. Then it preserves and reflects connected
  limits and colimits.
\end{proposition}
\begin{proof}
  We consider the case for colimits, the case for limits being dual. Let $2=1\join 1$ be the arrow
  category, with the two maps $1\to 2$ denoted $d=\id\join\bang\colon 1\join 0\to 1\join 1$ and
  $c=\bang\join\id\colon 0\join 1\to 1\join 1$. Also let $j=\id\join \bang\colon I\join 0\to I\join
  1$. We can represent the colimit cocone of a diagram $F\colon I\to C$ as a functor $\colim F\colon
  I\join 1\to C$ such that $(\colim F)\circ j = F$, and such that for any $\phi\colon I\join 1\to C$
  satisfying $\phi\circ j=F$ there exists a unique $\phi'\colon I\join 2\to C$ such that the two
  right triangles of the following diagram commute:
  \begin{equation}\label{eq:colim_property}
  \begin{tikzcd}
    &&I\join 1\ar[dl,"\colim F"']\ar[d, "\id\join d"]\\
    I\ar[rru, bend left, "j"]\ar[rrd, bend right, "j"']\ar[r, "F"]&C&I\join 2\ar[l, dotted, near start, "\phi'"']\\
    &&I\join 1\ar[u, "\id\join c"']\ar[ul, "\phi"]
  \end{tikzcd}
  \end{equation}

  Let $I$ be a small connected category and suppose that $F\colon I\to C$ has a colimit, $\colim F$.
  Then $p\circ(\colim F)$ is a candidate colimit of $p\circ F$, so to see that it has the universal
  property, suppose we have a solid-arrow diagram
  \begin{equation}\label{eqn:lifting_colimits}
  \begin{tikzcd}
    I\ar[r,"F"]\ar[d,"j"']&C\ar[d,"p"]\\
    I\join 1\ar[r,"\psi"']\ar[ur,dotted,"\phi"]&D
  \end{tikzcd}
  \end{equation}
  Since $j$ is connected, we obtain a unique lift $\phi$ as shown. Then letting $\phi'$ be as in
  \cref{eq:colim_property}, we claim $p\circ\phi'$ is the unique functor such that
  $(p\circ\phi')\circ(\id\join d)=p\circ(\colim F)$ and $(p\circ\phi')\circ(\id\join c)=\psi$. To
  prove uniqueness, suppose $\psi'\colon I\join 2\to D$ is another functor such that
  $\psi'\circ(\id\join d)=p\circ(\colim F)$ and $\psi'\circ(\id\join c)=\psi$. Again we find a
  unique lift $\hat{\psi'}$\,,
  \begin{equation}
  \begin{tikzcd}[column sep=40pt]
    I\join 1 \ar[r,"\colim F"] \ar[d,"\id\join d"']
      & C \ar[d,"p"] \\
    I\join 2 \ar[ur,dotted, pos=.4,"\hat{\psi'}"] \ar[r,"\psi'"']
      & D.
  \end{tikzcd}
  \end{equation}
  But then $\hat{\psi'}\circ(\id\join c)$ is also a lift for the square
  \eqref{eqn:lifting_colimits}, so $\hat{\psi'}\circ(\id\join c)=\phi$. Since also
  $\hat{\psi'}\circ(\id\join d)=\colim F$, we have $\hat{\psi'}=\phi'$ by the uniqueness of $\phi'$
  in \cref{eq:colim_property}. Hence $\psi'=p\circ\hat{\psi'}=p\circ\phi'$ as desired.

  The proof that $p$ reflects connected colimits is easier: given a diagram
  \eqref{eqn:lifting_colimits} such that $\psi$ is a colimiting cocone in $D$, one checks that the
  unique lift $\phi$ is a colimit cocone in $C$.
\end{proof}

\begin{corollary}\label{cor:bifib_Ind_surj}
  If $p\colon C\to D$ is a discrete bifibration and $p$ is surjective on objects, then $\Ind(p)$ is
  surjective on objects. In particular, if $C$ has all filtered colimits (resp.\ filtered limits)
  then so does $D$.
\end{corollary}
\begin{proof}
  Let $F\colon I\to D$ be a filtered diagram. By \cref{prop:filtered_contractible_groupoid}, there
  exists a connected functor $i\colon 1\to I$, and because $p$ is surjective on objects there is a
  map $c\colon 1\to C$ such that $p\circ c=F\circ i$. Thus we obtain a lift $G\colon I\to C$ such
  that $p\circ G=F$. If it has a colimit in $C$ then by \cref{prop:bifib_preserves_reflects} this
  colimit is preserved by $p$.
\end{proof}

\begin{lemma}\label{lemma:d_fib_exponentiating}
  If $(\cat{L},\cat{R})$ is an orthogonal factorization system on $\Cat{Cat}$ and $p\colon C\to D$
  is in the right class then so is $p^K\colon C^K\to D^K$ for any small category $K$.
\end{lemma}
\begin{proof}
  This follows from the well-known fact that $\cat{L}$ is closed under limits in the arrow category,
  because if $c$ is in the left class then so is $c\times\id_K$.
\end{proof}

\begin{lemma}\label{lemma:Ind_disc_fib}
  If $p\colon C\to D$ is a discrete fibration (resp.\ discrete opfibration), then so is the functor
  $\Ind(p)\colon\Ind(C)\to\Ind(D)$.
\end{lemma}
\begin{proof}
  We consider only the discrete fibration (d-fib) case, the other being similar. Note that $p\colon
  C\to D$ is a d-fib iff $C^K\to D^K$ is a d-fib for all small categories $K$ by
  \cref{lemma:d_fib_exponentiating}. We need to show that $\Ind(p)$ has the right lifting property
  with respect to the inclusion of the codomain, $c\colon 1\to 2$, as well as the codiagonal
  $c\sqcup_1c\to 2$, which we denote $\delta\colon 2\sqcup_12\to 2$.

  For the first, suppose given filtered diagrams $F\colon I\to C$ and $G\colon J\to D$ and a map
  $f\colon \colim(p\circ F)\to\colim(G)$ in $\Ind(D)$. By \cite[Corollary
  6.1.14]{Kashiwara.Schapira:2006a} there is a filtered category $K$, final functors $i\colon K\to
  I$ and $j\colon K\to J$, and a natural transformation $\varphi\colon p\circ F\circ i\to G\circ j$
  such that $\colim\varphi$ is isomorphic to $f$; see the left-hand diagram below.
  \[
  \begin{tikzcd}[row sep=5pt]
    &[-7pt]I\ar[r,"F"]&C\ar[dd, "p"]\\
    K\ar[ur, "i"]\ar[dr, "j"']\ar[rr, phantom,"\varphi\!\Downarrow", pos=.6]&&~\\
    &J\ar[r, "G"']&D
  \end{tikzcd}
  \hspace{1in}
  \begin{tikzcd}[column sep=40pt]
    K\ar[r, "Fi"]\ar[d, equal]&C\ar[d, "p"]\\
    K\ar[r, bend left, "pFI"]\ar[r, phantom, "\varphi\!\Downarrow"]\ar[r, bend right, "Gj"']&D
  \end{tikzcd}
  \hspace{1in}
  \begin{tikzcd}[row sep=25pt]
    1\ar[r, "Fi"]\ar[d, "c"']&C^K\ar[d, "p^K"]\\
    2\ar[r, "\varphi"']&D^K
  \end{tikzcd}
  \]
  Since pre-composition with final functors preserves colimits, the original lifting problem can be
  replaced by the one shown in the middle, or equivalently the diagram on the right. It has a lift
  because $p$, and hence $p^K$, is a discrete fibration

  To lift diagrams of the shape $\delta$, one reasons similarly. Namely, using \cite[Corollary
  6.1.14]{Kashiwara.Schapira:2006a}, one finds a single $K$ such that the lifting problem can be
  rewritten as follows:
  \[
  \begin{tikzcd}[column sep=40pt]
    K\ar[r, bend left=40pt, "\varphi_1\Downarrow"']\ar[r]\ar[r, bend right=40pt, "\varphi_2\Uparrow"]\ar[d, equal]&C\ar[d, "p"]\\
    K\ar[r, bend left]\ar[r, phantom, "\varphi\!\Downarrow"]\ar[r, bend right]&D
  \end{tikzcd}
  \hspace{1in}
  \begin{tikzcd}[column sep=40pt]
  2\sqcup_12\ar[d, "\delta"']\ar[r, "{(\varphi_1,\varphi_2)}"]&C^K\ar[d, "p"]\\
  2\ar[r, "\varphi"'] &D^K
  \end{tikzcd}
  \]
  This again has a lift because $p$ is a discrete fibration. We leave the details to the reader.
\end{proof}

\section{Proof that \texorpdfstring{$\IRinv$}{\unichar{"1D540}\unichar{"211D}\unichar{"2215}\unichar{"25B9}} is continuous}\index{intervals!translation-invariant!continuous category of}

Recall the definition of $\IRinv$, the category of the translation-invariant intervals, from
\cref{def:IRinv}. It has objects $\ell\in\RR_{\geq 0}$ and morphisms
$\subint{r}{s}\colon\ell\to\ell'$ for $r,s\geq 0$, with composition given by component-wise sum. In
\cref{def:wavy_arrow} we defined such a morphism to be a \emph{wavy arrow} if $r>0$ and $s>0$, in
which case we write $\subint{r}{s}\colon\ell\wavyto\ell'$. We can thus define a profunctor
$\WB{-}{-}\colon\IRinv\op\times\IRinv\to\Cat{Set}$ sending $\ell,\ell'\in\IRinv$ to the set
\begin{equation}\label{eqn:WB}
  \WB{\ell}{\ell'}\coloneqq\left\{\subint{r}{s}\in\IRinv(\ell,\ell')\mid r>0, s>0\right\}.
\end{equation}
Clearly this is a subprofunctor of the hom-profunctor $\IRinv(-,-)$. In order for it to agree with
our definition of wavy arrows from \cref{def:continuous_cat}, we need to show that induces a left
adjoint to $\colim$. Indeed, for any $\ell'\in\IRinv$, define a presheaf
\begin{equation}\label{eqn:def_wb}
  \wb(\ell')\colon\IRinv\op\to\Cat{Set},\qquad\ell\mapsto\WB{\ell}{\ell'}.
\end{equation}
$\wb$ is functorial in $\ell'$, lands in $\Ind(\IRinv)$, and is left adjoint to $\colim$ as we now
show.

\begin{theorem}\label{prop:IRinv_continous}
  The category $\IRinv$ is a continuous category. In particular, the functor $W$ defined in
  \cref{eqn:def_wb} is left adjoint to $\colim$.
\end{theorem}
\begin{proof}
  Consider the following diagram of categories:
  \begin{equation}\label{eqn:cont_cat_comm_sq}
  \begin{tikzcd}[column sep=45pt]
    \IR\ar[r, shift left, "\downclose"]\ar[d,"p"']&\Ind(\IR)\ar[l, shift left, "\bigvee"]\ar[d,"\Ind p"]\\
    \IRinv\ar[r, shift left, dotted, "\wb"]&\Ind(\IRinv)\ar[l, shift left, dotted, "\colim"]
  \end{tikzcd}
  \end{equation}
  On the top, we have that $\downclose$ is left adjoint to $\bigvee$ and that
  $\bigvee\circ\downclose=\id_\IR$ by \cref{prop:cont_poset}. Our goal is to fill out the bottom
  half. That is, we want to show that $\IRinv$ has filtered colimits and to construct a left adjoint
  $\wb$ to $\colim\colon\Ind(\IRinv)\to\IRinv$, such that $\colim\circ \wb$ is the identity and such
  that both the leftward and the rightward squares in \eqref{eqn:cont_cat_comm_sq} commute.

  We showed in \cref{lem:discrete_fibration} that $p\colon\IR\to\IRinv$ is a discrete bifibration
  and that it is surjective on objects. Thus $\IRinv$ has filtered colimits and $p$ preserves them
  by \cref{cor:bifib_Ind_surj,prop:bifib_preserves_reflects}; hence the leftward diagram commutes.

  For $\ell,\ell'\in\IRinv$, let $\WB{\ell}{\ell'}$ be as in \cref{eqn:WB}, and let $\wb(\ell')
  \colon\IRinv\op\to\Cat{Set}$ be as in \cref{eqn:def_wb}. The latter is a flat functor because any
  pair of objects $\subint{r_1}{s_1}\in\WB{\ell_1}{\ell'}$ and
  $\subint{r_2}{s_2}\in\WB{\ell_2}{\ell'}$ have a coproduct $\subint{\max(r_1,r_2)}{\max(s_1,s_2)}$
  in $(\IRinv\op/\ell')\op$. It is easy to check that $\wb\circ p=\Ind(p)\circ\downclose$, so the
  rightward diagram commutes.

  Note that every isomorphism in $\IRinv$ is an identity. We next see that $\colim(W(\ell))=\ell$
  for any $\ell\in\IRinv$. This follows from the analogous fact for domains $D$, that
  $\bigvee\downclose d=d$ for any $d\in D$. Indeed choose $\tilde{\ell}$ with $p(\tilde{\ell})=\ell$
  and we have
  \[
    \colim W(\ell) = \colim W(p\tilde{\ell}) = \colim\Ind(p)(\downclose\tilde{\ell})
      = p\textstyle{\bigvee}\downclose\tilde{\ell} = p\tilde{\ell} = \ell.
  \]

  We show that $W$ is left adjoint to $\colim$ by constructing a unit $\eta\colon\id\to\colim\circ
  W$ and counit $\epsilon\colon W\circ\colim\to\id$ and checking the triangle identities
  $(\colim\epsilon)\cdot(\eta\colim)=\id_{\colim}$ and $(\epsilon W)\cdot(W\eta)=\id_W$. The unit
  $\eta$ is the identity, so the triangle identities simplify to $\colim\epsilon = \id_{\colim}$ and
  $\epsilon W=\id_W$.

  We want to construct a counit $\epsilon_F\colon W(\colim F)\to F$ for any $F\in\Ind{\IRinv}$.
  Since $p$ is a bifibration and surjective on objects, $\Ind(p)$ is surjective on objects by
  \cref{cor:bifib_Ind_surj}. Choose $\wt{F}$ with $\Ind(p)(\wt{F})=F$, and define
  $\epsilon_F\coloneqq\Ind(p)\big(\downclose\bigvee\wt{F}\To{\leq} \wt{F}\big)$:
  \[
    \wb\colim(F)=\Ind(p)\big(\downclose\textstyle{\bigvee}\wt F\big)\To{\Ind(p)\big(\downclose\bigvee\wt F\To{\leq}\wt
    F\big)}\Ind(p)(\wt F)=F.
  \]
  To show that this definition of $\epsilon_F$ doesn't depend on the choice of lift $\wt{F}$, let
  $\wt{F}'$ be another lift with $\Ind(p)(\wt{F}')=F$. Then
  \[
    \Ind(p)\big(\downclose\textstyle{\bigvee}\wt{F}\big) = Wp\textstyle{\bigvee}\wt{F} = W\colim\Ind(p)(\wt{F}) = W\colim F
  \]
  and similarly $\Ind(p)(\downclose\bigvee\wt{F}') = W\colim F$, hence
  \[
    \Ind(p)\big(\downclose\textstyle{\bigvee}\wt{F}'\To{\leq}\wt{F}\big) = \Ind(p)\big(\downclose\textstyle{\bigvee}\wt{F}\To{\leq}
     \wt{F}\big) = \epsilon_F.
  \]
  It easily follows from this that $\epsilon_F$ is natural in $F$. Finally, we see that
  \[
    \colim(\epsilon_F) = \colim\Ind(p)\big(\downclose\textstyle{\bigvee}\wt{F}\To{\leq}\wt{F}\big) =
    p\big(\textstyle{\bigvee}\downclose\textstyle{\bigvee}\wt{F}\To{\id}\textstyle{\bigvee}\wt{F}\big) = p(\id_{\textstyle{\bigvee}\wt{F}})=\id_{\colim F}
  \]
  and to see that $\epsilon_{W\ell} = \id_{W\ell}$, choose a lift $p\tilde{\ell}=\ell$, so that
  $\Ind(p)(\downclose\tilde{\ell}) = Wp\tilde{\ell} = W\ell$, and
  \[
    \epsilon_{W\ell} =
    \Ind(p)\big(\downclose\textstyle{\bigvee}\downclose\tilde{\ell}\To{\id}\downclose\tilde{\ell}\big) =
      \id_{W\ell}.\qedhere
  \]
\end{proof}

\section{Two constructions of the topos $\BaseTopos$}
\label{sec:constructing_BaseTopos}\index{behavior types!topos of}

We now sketch two equivalent constructions of the topos $\BaseTopos$. This will give a sort of
analogue to \cref{thm:IR_sheaves} for continuous categories (see \cref{prop:IRinv_sheaves}). We
mainly follow \cite{Johnstone.Joyal:1982a}; see there for details.

\subsection{Adjoint endofunctors on $[\IRinv,\Cat{Set}]$}

First we construct a pair of adjoint endofunctors $t^*,t_*$ on the functor category
$[\IRinv,\Cat{Set}]$. To do so, consider $\WB{-}{-}$, from \cref{eqn:WB}, as a functor
$\IRinv\op\to[\IRinv,\Cat{Set}]$. By left Kan extension along the yoneda embedding, we get a map
$t^*\colon[\IRinv,\Cat{Set}]\to[\IRinv,\Cat{Set}]$ as shown in the following diagram:
\[
  \begin{tikzcd}[column sep=45pt, row sep=30pt]
    \IRinv\op \ar[d,"y"'] \ar[r,"\ell\mapsto\WB{\ell}{-}"]
    &{[\IRinv,\Cat{Set}]} \ar[dl,shift left,"t_*"] \\
    {[\IRinv,\Cat{Set}]} \ar[ur, shift left,"t^*"]
  \end{tikzcd}
\]
It can be computed for any $X\colon\IRinv\to\Cat{Set}$ by the usual coend formula:
\begin{equation}\label{eqn:coend}
  (t^*X)(\ell')=\int^{\ell}\WB{\ell}{\ell'}\times X(\ell)=\WB{-}{\ell'}\otimes X=\wb(\ell')\otimes X.
\end{equation}
Equivalently, the composite along the bottom of the following diagram is an expression of the usual
conical colimit formula for the pointwise left Kan extension, which in this case happens to factor
through the Ind-subcategories as in the top line:
\begin{equation}\label{eqn:t_alt}
  \begin{tikzcd}
    \IRinv \ar[r,"W"] \ar[dr,"W"'] &
    \Ind\IRinv \ar[r,"\Ind{X}"] \ar[d,hook] &
    \Ind\Cat{Set} \ar[r,"\colim"] \ar[d,hook] &
    \Cat{Set}. \\
    & \Psh{\IRinv} \ar[r,"\Psh{X}"'] &
    \Psh{\Cat{Set}} \ar[ur,"\colim"']
  \end{tikzcd}
\end{equation}
Note that $t^*$ preserves finite limits because the functor $\ell\mapsto\WB{\ell}{-}$ is flat; see
\cite[6.3.8]{Borceux:1994a}.

The functor $t^*$ has a right adjoint $t_*\colon[\IRinv,\Cat{Set}]\to[\IRinv,\Cat{Set}]$; see
\cite{Nlab:nerve-and-realization}. It can be computed by the following explicit end formula
\begin{equation}\label{eqn:t_end}
  (t_*X)(\ell)=[\IRinv,\Cat{Set}](\WB{\ell}{-},X)=\int_{\ell'}\Cat{Set}\big(\WB{\ell}{\ell'},X(\ell')\big).
\end{equation}

\begin{proposition}\label{prop:idempotent}
  The functors $t^*$ and $t_*$ are idempotent.
\end{proposition}
\begin{proof}
  Since $w\mapsto w\times X(\ell)$ is cocontinous in $w$, the idempotence of $t^*$ follows from
  \eqref{JJ:idempotence} by the Fubini theorem for coends. The idempotence of $t_*$ follows by
  adjointness.
\end{proof}

\begin{proposition}\label{prop:cont_fixed_points}
  A functor $X\colon\IRinv\to\Cat{Set}$ is a fixed-point of $t^*$ (equivalently, $X$ is in the image
  of $t^*$) iff $X$ preserves filtered colimits.
\end{proposition}
\begin{proof}
  Using \cref{eqn:t_alt}, $X$ is a fixed-point of $t^*$ iff
  $X\iso\colim_{\Cat{Set}}\circ\Ind{(X)}\circ W$. If so, then
  \begin{alignat*}{2}
    \textstyle
    \colim_{\Cat{Set}} \circ \Ind{(X)}
    &\textstyle{}\iso
      \colim_{\Cat{Set}} \circ  \Ind{(\colim_{\Cat{Set}})} \circ \Ind{(\Ind{X})} \circ \Ind{(W)} &&\\
    &\textstyle{}\iso
      \colim_{\Cat{Set}} \circ \colim_{\Ind{\Cat{Set}}} \circ \Ind{(\Ind{X})} \circ \Ind{(W)}
      \qquad&&\text{$\Cat{Set}$ is an $\Fun{Ind}$-algebra} \\
    &\textstyle{}\iso
      \colim_{\Cat{Set}} \circ \Ind{(X)} \circ \colim_{\Ind{(\IRinvInline)}} \circ \Ind{(W)}
      \qquad&&\text{Naturality, Rmk.~\ref{rmk:Ind_KZ}} \\
    &\textstyle{}\iso
      \colim_{\Cat{Set}} \circ \Ind{X} \circ W \circ \colim_{\IRinvInline}
      \qquad&&\text{\cref{eq:W_alg_hom}} \\
    &\textstyle{}\iso
      X \circ \colim_{\IRinvInline}. &&
  \end{alignat*}
  Thus $X$ preserves filtered colimits. Conversely, if $X$ preserves filtered colimits, then
  \[
    \textstyle
    \colim_{\Cat{Set}}\circ\Ind{X}\circ W \iso X\circ\colim_{\IRinv}\circ W \iso X. \qedhere
  \]
\end{proof}

\subsection{$\Shv{\BaseSite}\cong\Cont{\IRinv}$}

In \cref{def:BaseSite}, we defined $\BaseSite$ analogously to our definition of $S_{\IR}$ in
\cref{def:interval_posite}. Our goal for the rest of this section is to prove that the category
$\BaseTopos=\Shv{\BaseSite}$ is not just a category of sheaves (limit-preserving functors of a given
sort), but is also (isomorphic to) a category of continuous functors (colimit-preserving functors of
a given sort). This is the content of \cref{prop:IRinv_sheaves}.

Before doing so, we can already characterize $\BaseTopos$ as a category of fixed points. Indeed, the
endofunctors $t^*$ and $t_*$ of $[\IRinv,\Cat{Set}]$ are idempotent by \cref{prop:idempotent}, hence
each determines a subcategory, say $\Fun{Fix}(t^*)$ and $\Fun{Fix}(t_*)$.

\begin{proposition}
  The following are the same as subcategories of $[\IRinv,\Cat{Set}]$\,:
  \[
    \Fun{Fix}(t_*)=\Shv{\BaseSite}.
  \]
\end{proposition}
\begin{proof}
  By \cref{def:BaseSite}, $\BaseSite$ is the site whose underlying category is $\IRinv\op$ and whose
  coverage consists of one covering family $\wb(\ell)\ss\Hom(\ell,-)$ for each object
  $\ell\in\IRinv$. That is, the collection of way-below maps $\WB{\ell}{\ell'}$ covers $\ell$. By
  \cref{eqn:t_end}, we have $(t_*X)(\ell)=[\IRinv,\Cat{Set}](\WB{\ell}{-},X)$. Thus the category of
  fixed-points (equivalently, the image) of $t_*$ consists precisely of those functors
  $\IRinv\to\Cat{Set}$ satisfying the sheaf condition for $\BaseSite$. Hence this subcategory is
  equivalent to $\Shv{\BaseSite}$.
\end{proof}

On the other hand, we showed in \cref{prop:cont_fixed_points} that the fixed-points of $t^*$
consists precisely of those functors $\IRinv\to\Cat{Set}$ which preserve filtered colimits. We
denote this category by $\Cont{\IRinv}$. Thus we have proven the following.

\begin{proposition}
  The following are the same as subcategories of $[\IRinv,\Cat{Set}]$\,:
  \[
    \Fun{Fix}(t^*)\cong\Cont{\IRinv}.
  \]
\end{proposition}

The restrictions of $t_*$ and $t^*$ to these subcategories $\Cont{\IRinv}$ and $\Shv{\BaseSite}$
then provide an equivalence of categories, given by $(\asSh'\circ U)\colon
\Shv{\BaseSite}\leftrightarrows\Cont{\IRinv}\cocolon(\asSh\circ U')$ in the following diagram
\[
\begin{tikzcd}
  &\Shv{\BaseSite}\ar[dl, tail, "U"']\\
  {[\IRinv,\Cat{Set}]}\ar[rr,shift right,"t^*"']\ar[rd,two heads,"\asSh'"']&&
  {[\IRinv,\Cat{Set}]}\ar[ll, shift right,"t_*"']\ar[lu,two heads,"\asSh"']\\
  &\Cont{\IRinv}\ar[ur, tail, "U'"']
\end{tikzcd}
\]
where $\asSh$ is the associated sheaf functor, which is also the fixed-point functor of $t_*$, and
$\asSh'$ is the fixed point of $t^*$. The equivalence itself is proven in \cref{prop:IRinv_sheaves}.

\begin{remark}\label{rmk:sheaves_cont_functors}
  While we have been using $\IRinv$ as our primary example throughout this section, the proof of
  \cref{prop:IRinv_sheaves} is easily generalized to any continuous category, including any domain.
  See also \cite[Prop.\ 3.3, Thm.\ 3.6, Prop.\ 3.8]{Johnstone.Joyal:1982a}.

  For any domain $D$, this means that there is an equivalence of categories
  \[
    \Shv{D\op} \iso \Cont{D},
  \]
  where $D\op$ is the posite defined in \cref{prop:down_closed_up_closed} (regarding $(D,\ll)$ as a
  predomain, which always has the property that $d\specord d'$ implies $d\upspecord d'$).
\end{remark}

\begin{theorem}\label{prop:IRinv_sheaves}\index{category!continuous}
  There is an equivalence of categories between those functors $\IRinv\to\Cat{Set}$ that satisfy the
  sheaf condition and those that preserve filtered colimits:
  \begin{equation}\label{eqn:eq_Shv_Cont}
    \Shv{\BaseSite}\cong\Cont{\IRinv}
  \end{equation}
\end{theorem}
\begin{proof}
  First, note that the result will follow if we can show $t^*t_*\iso t^*$ and $t_*t^*\iso t_*$.
  Indeed, this implies that $t^*$ restricted to the image of $t_*$ and $t_*$ restricted to the image
  of $t^*$ are inverses. We sketch a proof that there is an isomorphism $t^*t_*\iso t^*$; the
  isomorphism $t_*t^*\iso t_*$ is likewise straightforward to verify directly.

  Fix any object $\ell'\in\IRinv$. By \cref{eqn:coend,eqn:t_end} an element of $(t^*t_*X)(\ell')$ is
  represented by an object $\ell$ and a wavy arrow $\ell\wavyto\ell'$ in $\IRinv$, together with a
  natural transformation $\phi\colon\WB{\ell}{-}\to X$, i.e.\ a compatible family of sections of
  $X$, one for each interval way above $\ell$. Given such data, we can choose an intermediate
  interval $\ell\wavyto\ell''\wavyto\ell'$ by the isomorphism \eqref{eq:W_alg_hom}, and an element
  $\phi(\ell\wavyto\ell'')\in X(\ell'')$. The object $\ell''\in\IRinv$ together with the element
  $\phi(\ell''\wavyto\ell')$ represents an element of $(t^*X)(\ell')$. It is not difficult to
  verify, again using \cref{eqn:coend,eqn:t_end,eq:W_alg_hom} that this does not depend on the
  choice of representative, and that it is injective and surjective.
\end{proof}

\nocite{Martin.Panangaden:2006a}
\printbibliography
\printindex

\end{document}